\newtheorem{theorem}{Theorem}
\newtheorem{prop}[theorem]{Proposition}
\newtheorem{lem}[theorem]{Lemma}
\newtheorem{cor}[theorem]{Corollary}
\newtheorem{conj}[theorem]{Conjecture}
\newtheorem{defn}[theorem]{Definition}
\newtheorem{remark}[theorem]{Remark}
\newtheorem{rem}[theorem]{Remark}
\newtheorem{example}[theorem]{Example}
\newtheorem{theo}{Theorem}
\newtheorem{corro}{Corollary}
\newcommand{\Z}{{\mathbb Z}}
\newcommand{\N}{{\mathbb N}}
\newcommand{\C}{{\mathbb C}}
\newcommand{\BB}{{\mathbb B}}
\newcommand{\Ric}{\operatorname{Ric}}       
\newcommand{\Hess}{\operatorname{Hess}}   
\newcommand{\Res}{\operatorname{Res}}      
\newcommand{\G}{{\mathcal G}}                    
\newcommand{\scal}{\operatorname{scal}}      
\newcommand{\tr}{\operatorname{tr}}            
\newcommand{\grad}{\operatorname{grad}}    
\newcommand{\res}{\operatorname{Res}}       
\newcommand{\id}{\operatorname{Id}}           
\newcommand{\var}{{\textit{var}}}                 
\newcommand{\CT}{\operatorname{CT}}        
\newcommand{\LT}{\operatorname{LT}}         
\newcommand{\SC}{\mathcal{S}}                   
\newcommand{\JF}{\mathcal{F}}                    
\newcommand{\Po}{\mathcal {P}}                  
\newcommand{\Sc}{\mathscr{S}}                   
\newcommand{\Con}{\mathcal{C}}                 
\newcommand{\LO}{{\mathcal L}}                  
\newcommand{\LA}{{\mathcal A}}                  
\newcommand{\He}{\mathcal H}                     
\newcommand{\D}{{\bf D}}                            
\newcommand{\QC}{{\bf Q}}                          
\newcommand{\PO}{{\bf P}}                           
\newcommand{\E}{{\mathcal E}}                    
\newcommand{\SCY}{{(\mathcal {SY})}}         
\newcommand{\W}{\mathcal {W}}                   
\newcommand{\NV}{\mathcal {N}}                   
\newcommand{\A}{{\mathcal A}}                    
\newcommand{\supp}{\operatorname{supp}}    
\newcommand{\lo}{\mathring{L}}                     
\newcommand{\loh}{\hat{\mathring{L}}}
\def\Rho{{\sf P}}
\def\T{{\mathcal T}}                             
\def\J{{\sf J}}                                       
\def\R{{\mathbb R}}
\def\B{{\mathcal B}}                               
\def\OB{{\mathcal O}}                            
\def\BB{{\mathfrak B}}                            
\def\st{\stackrel{\text{def}}{=}}
\numberwithin{theorem}{section} \numberwithin{equation}{section}
\title[Residue families, singular Yamabe problems and conformal Laplacians]
{Residue families, singular Yamabe problems and extrinsic conformal Laplacians}
\author[Andreas Juhl and Bent {\O}rsted] {Andreas Juhl and Bent {\O}rsted}
\address{Department of Mathematics of {\AA}rhus University, Ny Munkegade 118,
8000 {\AA}rhus, Denmark}
\email{orsted@math.au.dk}
\address{Humboldt-Universit\"at, Institut f\"ur Mathematik, Unter den Linden 6, 10099 Berlin,
Germany}
\address{IHES, Bures sur Yvette}
\email{ajuhl@math.hu-berlin.de}
\email{juhl.andreas@googlemail.com}
\keywords{
AdS-CFT correspondence,
Branson's $Q$-curvature,
conformal differential geometry,
conformally compact spaces,
conformal Laplacian,
GJMS-operator,
holography,
intertwining operator,
obstruction,
Poincar\'e-Einstein metric,
renormalized volume,
residue family,
scattering operator,
submanifolds,
Yamabe problem,
Yamabe obstruction,
symmetry breaking operator,
Willmore energy,
Willmore functional}
\begin{document}

\begin{abstract}
Let $(X,g)$ be a compact manifold with boundary $M^n$ and $\sigma$ a defining function 
of $M$. To these data, we associate natural conformally covariant polynomial one-parameter families 
of differential operators $C^\infty(X) \to C^\infty(M)$. They arise through a residue construction 
which generalizes an earlier construction in the framework of Poincar\'e-Einstein metrics and are 
referred to as residue families. Residue families may be viewed as curved analogs of conformal 
symmetry breaking differential operators. The main ingredient of the definition of residue families 
are eigenfunctions of the Laplacian of the singular metric $\sigma^{-2}g$. We prove that if $\sigma$ is 
an approximate solution of a singular Yamabe problem, i.e., if $\sigma^{-2}g$ has constant scalar 
curvature $-n(n+1)$, up to a sufficiently small remainder, these families can be written as compositions 
of certain degenerate Laplacians (Laplace-Robin operators). This result implies that the notions of extrinsic 
conformal Laplacians and extrinsic $Q$-curvature introduced in recent works by Gover and Waldron can 
naturally be rephrased in terms of residue families. This new spectral theoretical perspective allows 
easy new proofs of several results of Gover and Waldron. Moreover, it enables us to relate the 
extrinsic conformal Laplacians and the critical extrinsic $Q$-curvature to the scattering operator of 
the asymptotically hyperbolic metric $\sigma^{-2}g$ extending the work of Graham and Zworski. The 
relation to the scattering operators implies that the extrinsic conformal Laplacians are self-adjoint. 
We describe the asymptotic expansion of the volume of a singular Yamabe metric in terms of 
Laplace-Robin operators (reproving results of Gover and Waldron). We also derive new local 
holographic formulas for all extrinsic $Q$-curvatures (critical and sub-critical ones) in terms of 
renormalized volume coefficients, the scalar curvature of the background metric, and the asymptotic
expansions of eigenfunctions of the Laplacian of the singular metric $\sigma^{-2}g$. These results 
naturally extend earlier results in the Poincar\'e-Einstein case. Furthermore, we prove a new formula 
for the singular Yamabe obstruction $\B_n$. The simple structure of these formulas shows the 
benefit of the systematic use of so-called adapted coordinates. We use the latter formula for 
$\B_n$ to derive explicit expressions for the obstructions in low-order cases (confirming earlier results). 
Finally, we relate the obstruction $\B_n$ to the supercritical $Q$-curvature $\QC_{n+1}$.
\end{abstract}

\subjclass[2020]{Primary 35J30 53B20 53B25 53C18; Secondary 35J70 35Q76 53C25 58J50}

\maketitle

\begin{center} \today \end{center}

\tableofcontents

\section{Introduction}\label{intro}

Conformal differential geometry studies natural geometric quantities 
associated to Riemannian (and pseudo-Riemannian) manifolds (such as curvature
invariants and natural differential operators) which transform nicely under
conformal changes of the metric. In recent years, it has developed in profound and
surprising ways, which are connected to the spectral theory of Laplace-type operators,
scattering theory, holographic principles (AdS-CFT correspondence), Cartan geometry,
non-linear partial differential equations, and representation theory. Particular
powerful tools are tractor calculus and the Poincar\'e-Einstein and ambient metrics 
in the sense of Fefferman-Graham \cite{CS, DGH, FG-Cartan, FG-final, BJ, KS, KS2}.

A central role in those parts of geometric analysis which are related to conformal differential
geometry play the conformally invariant powers of the Laplacian, which are known as
GJMS-operators \cite{GJMS} and the related Branson's $Q$-curvatures \cite{sharp}.
The GJMS-operators $P_{2N}(h)$ act on the space $C^\infty(M)$ on any Riemannian
manifold $(M,h)$. They are of the form $\Delta_h^N + \mbox{lower-order terms}$,
where the lower-order terms are given in terms of the metric and covariant
derivatives of its curvature. We recall that for general $h$, the operators
$P_{2N}(h)$ exist for all orders $2N$ if $n$ is odd but only for $2N \le n$ if $n$
is even. The GJMS-operators $P_{2N}$ generalize the conformal Laplacian (Yamabe
operator)
$$
   P_2 = \Delta - \left(\frac{n}{2}-1\right) \J, \quad  2 (n-1) \J = \scal
$$
and the Paneitz operator
$$
    P_4 = \Delta^2 - \delta ((n-2) \J - 4 \Rho) d
   + \left(\frac{n}{2}-2\right) \left(\frac{n}{2}\J^2 - 2 |\Rho|^2 - \Delta (\J)\right),
$$
where $\Rho$ is the Schouten tensor of $h$ (for the notation, we refer to Section
\ref{notation}). The scalar curvature quantities
$$
   Q_2 = \J \quad \mbox{and} \quad Q_4 = \frac{n}{2} \J^2 - 2 |\Rho|^2 - \Delta(\J)
$$
are the lowest-order cases of Branson's $Q$-curvatures. In particular, the quantity $Q_4$ plays a
central role in geometric analysis \cite{CBull, IMC}.

In \cite{J1, J2}, one of the authors analyzed the structure of GJMS-operators
and Branson's $Q$-curvatures of a Riemannian manifold $(M^n,h)$ through a theory of
conformally covariant polynomial one-parameter families of differential operators
$C^\infty(X) \to C^\infty(M)$, where $X^{n+1}$ is a manifold of one more dimension.
Such an extrinsic perspective on objects living on $M$ is sometimes referred
to as a {\em holographic} point of view following \cite{W}. These families of differential operators
can be regarded as {\em curved versions} of certain intertwining operators in
representation theory. Following the framework introduced in a series of works by T.
Kobayashi and his coauthors, these intertwining operators are now known as conformal
{\em symmetry breaking operators} $C^\infty(S^{n+1}) \to C^\infty(S^n)$. The notion
is motivated by the fact that they are equivariant only with respect to the subgroup
of the conformal group of $S^{n+1}$ consisting of diffeomorphisms that leave the
equatorial subsphere $S^n \hookrightarrow S^{n+1}$ invariant. Such intertwining
operators acting on functions have analogs acting on sections of homogeneous vector
bundles on spheres as well as in contexts where other groups replace the conformal group 
of the sphere. We refer the interested reader to \cite{KS, KS2, KKP, FJS, FW}.

In the curved case, the conformal covariance property takes the role of the intertwining property, 
and the definition of the operators uses a Poincar\'e-Einstein metric in the sense of Fefferman 
and Graham \cite{FG-final}. The curved versions of the symmetry breaking operators then are 
defined in terms of the asymptotic expansions of eigenfunctions of the Laplacian of a 
Poincar\'e-Einstein metric on $X$ and the resulting so-called renormalized volume coefficients. 
The latter quantities are curvature invariants of a metric on $M$, the study of which originally 
was motivated by the AdS/CFT-correspondence \cite{W, G-vol}. Since the construction of the curved
versions of the symmetry breaking differential operators can be expressed as a
residue construction, these were termed {\em residue families} in \cite{J1}. A key
feature of the theory of residue families is that each GJMS-operator $P_{2N}(h)$
comes along with a residue family (see \eqref{GJMS-res}), and one can effectively utilize the
family parameter to study the structure of these special values \cite{J1, J2}.

Another basic perspective on GJMS-operators and $Q$-curvatures of a metric $h$ on
$M$ was developed in \cite{GZ} by showing how the scattering operator
$\Sc(\lambda)$ of the Laplacian of a Poincar\'e-Einstein metric on $X$ in normal
form relative to $h$ naturally encodes both quantities. The scattering operator
$\Sc(\lambda)$ is a one-parameter meromorphic family of conformally
covariant pseudo-differential operators, which may be regarded as a curved version of
the Knapp-Stein intertwining operator for spherical principal series
representations. Thus, both $\Sc(\lambda)$ and residue families describe
GJMS-operators and $Q$-curvatures. But whereas the former are meromorphic families
of pseudo-differential operators, the latter are polynomial families of differential
operators.

In \cite{GW}, Gover and Waldron developed an alternative perspective on the above
holographic descriptions of GJMS-operators and $Q$-curvatures. In this approach, the 
conformal geometry of the metric $h$ on $M$ {\em and} the associated Poincar\'e-Einstein metric 
$g_+$ on $X$ play a central role. In particular, this leads to a deeper understanding of the role 
of the Einstein property of $g_+$ in constructions of GJMS-operators. The conformal tractor 
calculus on $X$ naturally associates a conformally covariant polynomial one-parameter family 
of second-order differential operators on functions on $X$ to a metric $g$ on $X$ {\em and} a 
defining function $\sigma$ of the hypersurface $\iota^*: M \hookrightarrow X$. Later, Gover 
and Waldron termed this operator the {\em Laplace-Robin operator} \cite{GW-LNY}.\footnote{In 
\cite{GW}, the Laplace-Robin operators are regarded as operators acting on densities.} We shall 
follow this terminology here. The Laplace-Robin operators degenerate on $M$ in the sense that the
coefficients of its leading parts vanish on $M$. The data $(g,\sigma)$ induce a
metric $\iota^*(g)$ on $M$ and a singular metric $\sigma^{-2} g$ on the complement
of $M$ in $X$. It is the latter metric that generalizes the Poincar\'e-Einstein
metric. Now, if $\sigma^{-2} g$ is Einstein, then certain compositions of
Laplace-Robin operators on $X$ reduce to GJMS-operators on $M$. Even if the Einstein
condition is violated, analogous compositions of (renormalized) Laplace-Robin
operators still reduce to conformally covariant operators on $M$. We emphasize that the
notion of conformal covariance in the latter statement concerns constructions that 
depend on a metric $g$ on $X$ {\em and} a defining function $\sigma$ of the boundary
$M$ of $X$, i.e., constructions which are invariant under conformal changes of both
$g$ and $\sigma$: $\hat{g} = e^{2\varphi} g$ and $\hat{\sigma} = e^\varphi \sigma$.
Note that $\sigma^{-2} g$ is invariant under such substitutions. Now a version of
the singular Yamabe problem asks to find (for a given metric $g$ on $X$) a defining
function $\sigma$ so that the scalar curvature of $\sigma^{-2}g$ is a negative
constant. This conformally invariant condition for pairs $(g,\sigma)$ actually
determines the defining function $\sigma$ by the background metric $g$ (at least to
some extent) and the embedding $\iota$. If $\sigma$ solves the singular Yamabe
problem, the above constructions of conformally covariant operators on $M$ 
only depend on the metric $g$ and the embedding $\iota$. In other words, these
operators live on $M$, their definition depends on the embedding of $M
\hookrightarrow X$, and they are conformally covariant with respect to conformal
changes of the background metric $g$ on $X$. Gover, Waldron,
and coauthors used this idea in a series of works to develop a theory of conformal invariants of
hypersurfaces. To a large extent, this theory rests on the conformal tractor
calculus \cite{GW-announce, GW, GW-LNY, GW-reno, GW-reno-b, GGHW, GW-Obata, AGW}.

In recent years, Laplace-Robin operators independently also appeared in other
contexts, albeit not under this name. As described above, their original and most
general definition comes from conformal tractor calculus. But already in \cite{GW},
it was observed that a special case was crucial in \cite{GZ} in the setting of
Poincar\'e-Einstein metrics. Some other special cases were found to be interesting in
representation theory. Clerc \cite{C} proved that the symmetry breaking differential
operators $C^\infty(\R^{n+1}) \to C^\infty(\R^n)$ introduced in \cite{J1} also arise
as compositions of some second-order intertwining operator for spherical principal
series representations on functions on $\R^{n+1}$. In \cite{FJO}, an attempt to
generalize the result of Clerc led to a description of the general residue families
of \cite{J1} as compositions of some second-order operators, which turned out to be
Laplace-Robin operators. In other words, the latter result could be viewed as a
curved version of the quoted result of Clerc. In \cite{FOS, FO}, the notions of
spectral {\em shift operators} and {\em Bernstein-Sato operators} are used in the
context of symmetry breaking operators on functions, differential forms, and spinors.
These results suggest extensions of the notion of Laplace-Robin operators on forms.
Such extensions on forms using tractor calculus have been developed in the
monograph \cite{GLW}.

The present work sheds new light on the above results of Gover and Waldron. We
connect the various strands of development by introducing an extension of the notion
of residue families. Following the principles of \cite{J1}, we prove a series of new
results, and - as by-products - we confirm and give alternative proofs of some
already known results. The treatment replaces tractor calculus and distributional calculus 
by more classical arguments. Among other things, we stress the role of the scattering operator 
extending \cite{GZ}. We hope that our methods enhance the understanding of the
subject.

We continue with a more detailed description of the main new results.

We consider a compact Riemannian manifold $(X,g)$ with boundary $M$. Let $\iota: 
M \hookrightarrow X$ be the canonical embedding. Let $h = \iota^*(g)$ be the
induced metric on $M$ and $\sigma \in C^\infty(X)$ a boundary defining function,
i.e., $\sigma \ge 0$ on $X$, $\sigma^{-1}(0)=M$ and $d\sigma|_M \ne 0$.

The data $(g,\sigma)$ define a Laplace-Robin operator \cite{GW-LNY}
\begin{equation*}
   L(g,\sigma;\lambda) \st (n+2\lambda-1) (\nabla_{\grad_g (\sigma)} + \lambda \rho)
   - \sigma (\Delta_g + \lambda \J), \; \lambda \in \C
\end{equation*}
on $C^\infty(X)$. Here $2n \J = 2n \J^g = \scal^g$ and $(n+1) \rho(g,\sigma) =
-\Delta_g (\sigma) - \sigma \J$. The significance of this operator rests on its
conformal covariance property 
\begin{equation}\label{CTL-basic}
   L(\hat{g}, \hat{\sigma}; \lambda) \circ e^{\lambda\varphi}
   = e^{(\lambda-1)\varphi} \circ L(g,\sigma;\lambda), \; \lambda \in \C
\end{equation}
for all conformal changes $(\hat{g},\hat{\sigma})=(e^{2\varphi}g,e^{\varphi}\sigma)$
with $\varphi \in C^\infty(X)$. This property implies that all
compositions
\begin{equation}\label{LR-basic}
   L_N(g,\sigma;\lambda)
  \st L(g,\sigma;\lambda\!-\!N\!+\!1) \circ \cdots \circ L(g,\sigma;\lambda), \; N \in \N
\end{equation}
are conformally covariant: $L_N(\lambda)$ shifts the conformal weight $\lambda$
into $\lambda-N$.

The notion of residue families has its origin in a far-reaching generalization of
Gelfand's distribution $r_+^\lambda$ \cite{Gelfand}. As the first generalization of
$r_+^\lambda$ we consider the distribution
$$
   \left\langle M(\lambda),\psi \right\rangle = \int_X \sigma^\lambda \psi dvol_g, \; \Re(\lambda) > -1
$$
with $\psi \in C_c^\infty(X)$. Later $\sigma$ will be a solution of the singular
Yamabe problem, i.e., $\sigma^{-2}g$ has constant scalar curvature
$-n(n+1)$.\footnote{Then $\sigma$ is not necessarily $C^\infty$ up to the boundary.}
Note that $\left\langle M(0),1 \right\rangle = \int_X dvol_g$. The one-parameter
family of distributions $M(\lambda)$ admits a meromorphic continuation to $\C$ with
simple poles in $-\N$. In order to describe the residues of $M(\lambda)$, we
introduce coordinates on $X$ near its boundary as follows. Let $\mathfrak{X} \st
\NV/|\NV|^2$ with $\NV = \grad (\sigma)$ and define a local diffeomorphism
$$
   \eta: I \times M \to X, \; (s,x) \mapsto \Phi^s_{\mathfrak{X}}(x), \quad I =  (0,\varepsilon)
$$
using the flow $\Phi^s_\mathfrak{X}$ of $\mathfrak{X}$. Such coordinates will be
called {\em adapted coordinates}. Then $\eta^*(\sigma) = s$ and $\sigma^\lambda$
pulls back to $s^\lambda$. Moreover, it holds
$$
    dvol_{\eta^*(g)} = v(s) ds dvol_h
$$
with some $v \in C^\infty(I \times M)$. Hence studying the residues of $M(\lambda)$
reduces to studying the residues of
$$
   \left \langle M(\lambda),\psi \right\rangle = \int_I \int_M s^\lambda v(s) \psi(s,x) ds dvol_h.
$$
Now Gelfand's formula
$$
   \Res_{\lambda=-N-1} \left(\int_0^\infty r^\lambda \psi dr \right) = \frac{\psi^{(N)}(0)}{N!}
$$
shows that
\begin{equation}\label{M-res-g}
    \Res_{\lambda=-N-1}  \left \langle M(\lambda),\psi \right \rangle = \frac{1}{N!} \int_M (v\psi)^{(N)}(0)  dvol_h.
\end{equation}
In particular, if $\psi = 1$ near the boundary, then
\begin{equation}\label{M-res}
    \Res_{\lambda=-N-1}  \left \langle M(\lambda),\psi \right \rangle = \int_M v_N dvol_h,
\end{equation}
where the expansion $v(s) = \sum_{j \ge 0} s^j v_j$ defines the coefficients
$v_j(g,\sigma) \in C^\infty(M)$. The above definitions immediately imply that the
residue of $\left \langle M(\lambda),1\right\rangle$ at $\lambda=-n-1$ is a
conformal invariant in the sense that
$$
    I(\hat{g},\hat{\sigma}) = I(g,\sigma), \quad I(g,\sigma) \st \int_M v_n dvol_g.
$$
where $\hat{g} = e^{2\varphi}g$ and $\hat{\sigma} = e^{\varphi} \sigma$. Moreover, the formula
$$
  \left\langle M(\lambda),1 \right\rangle = \int_X \sigma^{\lambda+n+1} dvol_{\sigma^{-2}g}, \; \Re(\lambda) > -1
$$
suggests to regard the finite part of $\left\langle M(\lambda),1\right\rangle$ at
$\lambda = -n-1$ as a {\em renormalized volume} of the singular metric
$\sigma^{-2}g$. In fact, this should be called the {\em Riesz renormalization} of
the volume of $\sigma^{-2} g$ \cite{Albin}. In Theorem \ref{RVE} and Theorem
\ref{RVE-g}, we shall return to the issue of renormalized volumes. We shall see that
the residues in \eqref{M-res} for $N \le n$ determine the singular terms in the {\em
Hadamard renormalization} of the volume of $\sigma^{-2}g$. Similar 
renormalization techniques have also been used in the context of Möbius invariant energies of
knots and their generalizations \cite{Bryl, oha-book, oha-residue}.

Now we continue with a definition of residue families. Here we assume that
$|d\sigma|^2_g = 1$ on $M$. This assumption implies that the metric $\sigma^{-2}g$ is
asymptotically hyperbolic. The data $(g,\sigma)$ give rise to a holomorphic
one-parameter family
\begin{equation}\label{MU}
   \lambda \mapsto \langle M_u(\lambda),\psi \rangle \st
  \int_X \sigma^\lambda u \psi dvol_g, \; \Re(\lambda) \gg 0
\end{equation}
of distributions $M_u(\lambda)$ on $X$. Here $\psi$ is a test function on $X$ with
support up to the boundary, and the additional datum $u$ is an eigenfunction of the
Laplacian of the singular metric $\sigma^{-2} g$:
$$
   -\Delta_{\sigma^{-2} g} u = \mu (n-\mu) u, \; \Re(\mu) = n/2, \, \mu \ne n/2
$$
with {\em boundary value} $f \in C^\infty(M)$. In particular, it holds $M(\lambda) =
M_1(\lambda)$ since $u=1$ is an eigenfunction of $\Delta_{\sigma^{-2}g}$ with
eigenvalue $0$ and boundary value $1$. Since $\sigma^{-2}g$ is asymptotically
hyperbolic, there is an eigenfunction $u$ for any $f \in C^\infty(M)$ so that in its
asymptotic expansion $f$ defines one of the leading terms. Now $M_u(\lambda)$ admits
a meromorphic continuation to $\C$ with simple poles in $\{ -\mu-N-1, \, |\, N \in
\N \}$. For $N \in \N_0$, we set
\begin{equation}\label{res-fam-basic}
   \D_N^{res}(g,\sigma;\lambda)
   \st N!(2\lambda\!+\!n\!-\!2N\!+\!1)_N \delta_N(g,\sigma;\lambda\!+\!n\!-\!N),
\end{equation}
where $(a)_N = a (a+1) \cdots (a+N-1)$ is the Pochhammer symbol and
$$
   \Res_{\lambda=-\mu-1-N} (\langle M_u(\lambda), \psi \rangle)
   = \int_M f \delta_N(g,\sigma;\mu)(\psi) dvol_h
$$
for a meromorphic family $\delta_N(g,\sigma;\mu)$ of differential operators
$C^\infty(X) \to C^\infty(M)$ of order $\le N$. $M_u(\lambda)$ should be regarded as
a further generalization of Gelfand's distribution $r_+^\lambda$ \cite{Gelfand}. The
factor in \eqref{res-fam-basic} guarantees that the {\em residue family}
$\D_N^{res}(\lambda)$ is polynomial in $\lambda$. Its degree equals $2N$. It easily
follows from these definitions that $\D_N^{res}(\lambda)$ satisfies a conformal
transformation law which is analogous to that of $L_N(\lambda)$. In view of
$M(\lambda) = M_1(\lambda)$, the residue formula \eqref{M-res-g} can be restated as
$$
   \int_M \D_N^{res}(g,\sigma;0)(\psi) dvol_h \sim \int_M (v \psi)^{(N)}(0) dvol_h
$$
(for $N < n$). The above definition generalizes the notion of residue families
introduced in \cite{J1}. For the details, we refer to Section \ref{res-fam}.

We also emphasize that while $L_N(\lambda)$ maps functions on $X$ to functions on
$X$, residue families map functions on $X$ to functions on the boundary $M$.


The families $L_N(g,\sigma;\lambda)$ and $D_N^{res}(g,\sigma;\lambda)$ are the main
objects of the present paper. These objects and most related discussions will only depend on 
approximations of the data $(g,\sigma)$ in a sufficiently small neighborhood of the boundary $M$.

As a preparation for the following discussion, we briefly recall the role of residue
families for Poincar\'e-Einstein metrics in \cite{J1, J2}. These polynomial families
of differential operators are defined through a residue construction as in
\eqref{res-fam-basic}, where $g_+ = r^{-2}(dr^2 + h_r)$ is a Poincar\'e-Einstein
metric on $X = (0,\varepsilon) \times M$ in normal form relative to a given metric
$h$ on $M$. Let $\iota: M \hookrightarrow X$ be the embedding $m \mapsto (0,m)$. The
theory in \cite{J1, J2} deals with approximations of Poincar\'e-Einstein metrics
which are completely {\em determined} by the metric $h$. In particular, for even
$n$, these define even-order residue families $D_{2N}^{res}(h;\lambda)$ of order $2N
\le n$. Since
\begin{equation}\label{GJMS-res}
   D_{2N}^{res}\left(h;-\frac{n}{2}+N\right) = P_{2N}(h) \iota^*,
\end{equation}
the residue family $D_{2N}^{res}(h;\lambda)$ can be regarded as a perturbation of
the GJMS-operator $P_{2N}(h)$. Moreover, residue families satisfy systems of
recursive relations that involve lower-order residue families and GJMS-operators.
These imply recursive relations among GJMS-operators and recursive
relations for $Q$-curvatures. In other words, residue families may be viewed as a
device to study the GJMS-operators and $Q$-curvatures on the boundary $M$.

The above more general construction deals with a general metric $g$ on $X$. It
induces a metric $h = \iota^*(g)$ on $M$, but of course is not determined by $h$. In
that case, the resulting residue families depend on the metric $g$ {\em and} a
boundary defining function $\sigma$. To get residue families, which are
determined only by the metric $g$ and the embedding $\iota$, we put an extra
condition on $\sigma$. At the same time, we restrict considerations to residue
families for $N \le n$. Similarly, as in the Poincar\'e-Einstein case, this restriction means
that it suffices to deal with finite approximations of true eigenfunctions. This
way, we obtain specific conformally covariant polynomial one-parameter families of
differential operators $C^\infty(X) \to C^\infty(M)$. However, we emphasize that 
the problem of describing all such conformally covariant families is more complicated \cite{J1, GP}.

Now, following \cite{GW-LNY}, the extra condition which we pose is that $\sigma$
solves a singular Yamabe problem. The version of that problem of interest here asks
to find a defining function $\sigma$ of $M$ so that $\scal(\sigma^{-2}g) = -n(n+1)$.
By \cite{AMO1, ACF} such $\sigma$ exist and are unique. However, in
general, $\sigma$ is not smooth up to the boundary. More precisely, the existence of
smooth solutions is obstructed by a conformally covariant scalar curvature invariant
$\B_n$ called the {\em singular Yamabe obstruction}. Although this lack of smoothness 
will only play a minor role for our main purposes, we also have 
some new results for the invariant $\B_n$. By the conformal transformation law of scalar 
curvature, the condition that $\sigma$ solves the singular Yamabe problem can be restated as
\index{$\SC(g,\sigma)$}
$$
    \SC(g,\sigma)  \st |d\sigma|_g^2 + 2 \sigma \rho \stackrel{!}{=} 1.
$$
This role of the functional $\SC(g,\sigma)$ also implies its conformal invariance:
$$
    \SC(\hat{g},\hat{\sigma}) = \SC(g,\sigma).
$$
The main results of the present work only require us to assume that $\sigma$ satisfies
the weaker condition
\begin{equation}\label{condition-Y}
    \SCY:  \hspace{0.5cm}  \SC(g,\sigma) = 1 + \sigma^{n+1} R_{n+1} \qquad \qquad
\end{equation}
with a smooth remainder term $R_{n+1}$. The restriction of the remainder term to
$\sigma = 0$ then defines the singular Yamabe obstruction $\B_n$. For more details
and background on the singular Yamabe problems, we refer to Section \ref{Yamabe}.

Now we are ready to state the first main result. Let $\iota: M \hookrightarrow X$ be
the embedding of $M$ into $X$.

\begin{theo}\label{MT-1}
Let $\N \ni N \le n$ and assume that the condition $\SCY$ is satisfied. Then
\begin{equation}\label{equivalence}
   \iota^* L_N(g,\sigma;\lambda) = \D_N^{res}(g,\sigma;\lambda), \; \lambda \in \C.
\end{equation}
\end{theo}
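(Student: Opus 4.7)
The plan is to verify (\ref{equivalence}) by evaluating both sides on the ansatz $\psi = \sigma^\lambda u$, where $u$ is a formal asymptotic eigenfunction of the Laplacian of the singular metric $\sigma^{-2}g$, and then using polynomial continuation in $\lambda$. The key device is the simplification of the Laplace--Robin operator in the conformal frame $\hat g := \sigma^{-2}g$, $\hat\sigma := 1$: under SCY, $\scal(\hat g) = -n(n+1)$ modulo $O(\sigma^{n+1})$ and hence $\hat\J = -(n+1)/2 + O(\sigma^{n+1})$; since $\hat\sigma \equiv 1$ forces $\grad_{\hat g}(\hat\sigma) = 0$ and $(n+1)\hat\rho = -\hat\J$, one obtains $L(\hat g, 1; \lambda) = \lambda(n+\lambda) - \Delta_{\hat g} + O(\sigma^{n+1})$. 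Applying the conformal covariance (\ref{CTL-basic}) with $\varphi = -\log\sigma$ gives $L(g,\sigma;\lambda) = \sigma^{\lambda-1}\circ L(\hat g,1;\lambda)\circ \sigma^{-\lambda}$, which iterates to
$$L_N(g,\sigma;\lambda) = \sigma^{\lambda-N}\circ L(\hat g,1;\lambda\!-\!N\!+\!1)\cdots L(\hat g,1;\lambda)\circ \sigma^{-\lambda}.$$

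Evaluated on $\psi = \sigma^\lambda u$, where $u = \sigma^{n-\mu}F + \sigma^\mu G$ has boundary value $F|_M = f$, each factor $L(\hat g, 1;\lambda-k)$ acts on $u$ as the scalar $(\lambda-k)(n+\lambda-k) + \mu(n-\mu)$, so
$$L_N(g,\sigma;\lambda)(\sigma^\lambda u) = \sigma^{\lambda-N}\,u\,\prod_{k=0}^{N-1}\bigl[(\lambda-k)(n+\lambda-k) + \mu(n-\mu)\bigr] + O(\sigma^{\lambda-N+n+1}).$$
The elementary identity $(\lambda-k)(n+\lambda-k) + \mu(n-\mu) = (N-k)(2\mu+N-k-n)$ at $\lambda = \mu + N - n$ telescopes this product into $N!(2\mu-n+1)_N$, and restriction to $M$ at this value of $\lambda$ isolates the $F$-branch of $u$, yielding
$$\iota^*L_N(g,\sigma;\mu+N-n)(\sigma^{\mu+N-n}u) = f \cdot N!(2\mu-n+1)_N.$$

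For the right-hand side, the definition (\ref{res-fam-basic}) reads $\D_N^{res}(g,\sigma;\lambda) = N!(2\lambda+n-2N+1)_N \delta_N(g,\sigma;\lambda+n-N)$. At $\lambda = \mu + N - n$ the Pochhammer prefactor becomes $N!(2\mu-n+1)_N$, matching the scalar just obtained; a direct residue computation of $\Res_{\lambda'=-\mu-1-N}\langle M_{u'}(\lambda'),\sigma^{\mu+N-n}u\rangle$ via the Gelfand-type formula (\ref{M-res-g}) then confirms that $\delta_N(\mu)(\sigma^{\mu+N-n}u)|_M = f$, giving the full match. Since the boundary values $f$ span $C^\infty(M)$ as $u$ and $\mu$ vary, and since both sides of (\ref{equivalence}) are polynomial in $\lambda$ of degree $\le 2N$ with the same conformal transformation law, the identity follows for all $\lambda \in \C$ and all test functions.

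The main obstacle is controlling the $O(\sigma^{n+1})$ error from SCY through the $N$ iterations in the simplification of $L(\hat g, 1; \cdot)$: after $N$ applications, a residual error of order $O(\sigma^{n+1-N})$ remains, which vanishes on $M$ upon restriction precisely when $N \le n$, matching the hypothesis of the theorem. A secondary technical point is that true eigenfunctions of $\Delta_{\sigma^{-2}g}$ need not be smooth up to $M$ in the SCY setting, but formal asymptotic expansions $u = \sigma^{n-\mu}F + \sigma^\mu G$ valid to the required finite order suffice for the argument.
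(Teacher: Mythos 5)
Your conjugation step is correct and is essentially the paper's conjugation formula: the observation that under $\SCY$ one has
$$
L_N(g,\sigma;\lambda)(\sigma^\lambda u)=\sigma^{\lambda-N}u\prod_{k=0}^{N-1}\bigl[(\lambda-k)(n+\lambda-k)+\mu(n-\mu)\bigr]+O(\sigma^{\lambda-N+n+1})
$$
is exactly the Bernstein--Sato functional equation obtained from Theorem~\ref{MCF}. Up to that point you and the paper coincide. The gap comes in how you try to turn this functional equation into the \emph{operator identity} $\iota^*L_N(\lambda)=\D_N^{res}(\lambda)$.

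You evaluate both sides on $\psi=\sigma^\lambda u$. This does not work, for two reasons. First, $\psi$ is not a smooth test function: writing (in the paper's convention) $u=\sigma^\mu F+\sigma^{n-\mu}G$, at $\lambda=\mu+N-n$ one has $\psi=\sigma^{2\mu+N-n}F+\sigma^N G$, whose $F$-branch has a non-integer power $\sigma^{2\mu+N-n}$ and so has no $N$-jet at $M$; since both $\iota^*L_N(\lambda)$ and $\D_N^{res}(\lambda)$ are differential operators from $C^\infty(X)$ to $C^\infty(M)$, one cannot meaningfully ``evaluate both sides on $\psi$'' and restrict to $M$, and in particular the restriction $\iota^*(\sigma^{\lambda-N}u)=\iota^*(F+\sigma^{2\mu-n}G)$ is not even well-defined for $\Re\mu=n/2$, $\mu\ne n/2$ (the term $\sigma^{2\mu-n}$ oscillates). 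Second, the identity \eqref{delta-def} that \emph{defines} $\delta_N(\mu)$ via the Gelfand residue is proved only for $\psi\in C_c^\infty(X)$, i.e.\ smooth up to the boundary; if one formally plugs $\psi=\sigma^{\mu+N-n}u$ into the residue integral with a second eigenfunction $u'$, the residue at $\lambda'=-\mu-1-N$ is carried by the cross-terms between the two branches of $u'$ and of $\psi$ and yields (up to the volume expansion) $\int_M\bigl(f'\Sc(\mu)f+\Sc(\mu)f'\cdot f\bigr)dvol_h$, i.e.\ $2\Sc(\mu)f$, not $f$. So the claimed ``confirmation'' $\delta_N(\mu)(\sigma^{\mu+N-n}u)|_M=f$ by a Gelfand computation does not hold: the residue formula and the differential operator disagree off the space of smooth test functions.

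Even setting these points aside, your closing argument is a gap. You verify agreement only at $\lambda=\mu+N-n$ on the single (and $\mu$-dependent) function $\psi=\sigma^{\mu+N-n}u$, then appeal to ``boundary values $f$ span $C^\infty(M)$'' and polynomial continuation. But the functions $\sigma^\lambda u$ do not span the space of $N$-jets of smooth functions at $M$, so agreement of two order-$\le N$ operators on them does not imply equality; and since the test function changes with $\lambda$, you do not have a polynomial identity in $\lambda$ on a fixed $\psi$ to continue. The same-conformal-transformation-law observation is also no help: conformal covariance does not pin down a polynomial family uniquely.

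The missing ingredient, and the crux of the paper's argument, is the \emph{adjoint relation} $L(g,\sigma;\lambda)^*=L(g,\sigma;-\lambda-n)$ and its extension to functions vanishing on $M$ (Corollary~\ref{L-adjoint} and Proposition~\ref{adjoint-gen}). One keeps $\psi$ an \emph{arbitrary} smooth test function, uses your functional equation to replace $\sigma^\lambda u$ by $L((\lambda+1)_N)(\sigma^{\lambda+N}u)/\bigl[(\lambda+\mu+1)_N(\lambda-\mu+n+1)_N\bigr]$ inside the integral $\int_X\sigma^\lambda u\,\psi\,dvol_g$, integrates by parts $N$ times to move the $L$-factors onto $\psi$, and \emph{then} takes the residue at $\lambda=-\mu-1-N$. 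The boundary terms vanish because $\sigma^{\lambda+N}u$ is $O(\sigma^{\Re\lambda+N+n/2})$ and one works with $\Re\lambda$ in a suitable half-plane. Only after this step does one obtain $\Res_{\lambda=-\mu-1-N}\langle M_u(\lambda),\psi\rangle=\frac{1}{N!(2\mu-n+1)_N}\int_M f\,\iota^*L_N(\mu-n+N)(\psi)\,dvol_h$, and comparison with \eqref{delta-def} gives the operator identity because both $f$ and $\psi$ are arbitrary. Without the adjoint step, the functional equation tells you how $L$ acts on $\sigma^\lambda u$, but it does not tell you how $L_N$ acts on a generic test function, which is what the theorem asserts.
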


Some comments on this result are in order. The identity \eqref{equivalence} is to be
interpreted as an identity of operators acting on smooth functions with support in a
sufficiently small neighborhood of $M$. Although the assumptions guarantee that the
operators on both sides of \eqref{equivalence} only depend on the metric $g$ and the 
embedding $\iota$, we keep the notation of the general case indicating the dependence 
of the operators on $g$ and $\sigma$. If $g = r^2 g_+$ is a Poincar\'e-Einstein metric 
in normal form relative to the metric $h$ on $M$, then
the equality \eqref{equivalence} was proven in \cite{FJO}.\footnote{\cite{FJO} uses
a different normalization of residue families.} In particular, if $g$ is the
Euclidean metric on $\R^{n+1}$, then Theorem \ref{MT-1} reduces to the main result
of \cite{C} (for more details, we refer to \cite{FJO}).

Theorem \ref{MT-1} is a consequence of the following identity.

\begin{theo}\label{MT-2} For any boundary defining function $\sigma \in C^\infty(X)$, it
holds
\begin{equation*}\label{M-CF}
   L(g,\sigma;\lambda) + \sigma^{\lambda-1} \circ \left(\Delta_{\sigma^{-2}g} -
   \lambda(n\!+\!\lambda) \id \right) \circ \sigma^{-\lambda} = \lambda
   (n\!+\!\lambda) \sigma^{-1}(\SC(g,\sigma)-1) \id,\; \lambda \in \C.
\end{equation*}
\end{theo}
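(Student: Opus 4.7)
The identity is of first order in $\sigma^{-\lambda}$, $\nabla_{\grad\sigma}$ and $\Delta_g$, so the natural plan is a direct verification: apply both sides to a test function $u \in C^\infty(X)$ and match coefficients. The only non-trivial input is the conformal transformation law for the Laplacian in dimension $m = n+1$,
$$
   \Delta_{e^{2\varphi}g} f = e^{-2\varphi}\bigl(\Delta_g f + (n-1)\langle d\varphi, df\rangle_g\bigr),
$$
which, applied with $\varphi = -\log\sigma$, gives
$$
   \Delta_{\sigma^{-2}g} f = \sigma^2 \Delta_g f - (n-1)\sigma\, \nabla_{\grad_g\sigma} f.
$$

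\textbf{Main computation.} I would first expand $\Delta_{\sigma^{-2}g}(\sigma^{-\lambda} u)$ by the Leibniz rule for $\Delta_g$, using
$$
   \Delta_g(\sigma^{-\lambda}) = -\lambda \sigma^{-\lambda-1}\Delta_g\sigma + \lambda(\lambda+1)\sigma^{-\lambda-2}|d\sigma|_g^2,
$$
$$
   \langle d\sigma^{-\lambda}, du\rangle_g = -\lambda \sigma^{-\lambda-1}\nabla_{\grad_g\sigma}u.
$$
Combining these and multiplying by $\sigma^{\lambda-1}$ yields, after collecting terms,
$$
   \sigma^{\lambda-1}\Delta_{\sigma^{-2}g}(\sigma^{-\lambda}u) = \sigma \Delta_g u - (n+2\lambda-1)\nabla_{\grad_g\sigma} u - \lambda u \Delta_g\sigma + \lambda(n+\lambda)\sigma^{-1}|d\sigma|_g^2 u.
$$
The coefficient $(n+2\lambda-1)$ appearing here is exactly the same as the one in the definition of $L(g,\sigma;\lambda)$; this is the key combinatorial coincidence that makes the statement work.

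\textbf{Assembly.} Adding $L(g,\sigma;\lambda) u = (n+2\lambda-1)(\nabla_{\grad_g\sigma} u + \lambda\rho u) - \sigma(\Delta_g u + \lambda \J u)$ and subtracting $\lambda(n+\lambda)\sigma^{-1}u$, the derivative terms $\sigma \Delta_g u$ and $(n+2\lambda-1)\nabla_{\grad_g\sigma} u$ cancel, leaving
$$
   \lambda u \Bigl[(n+2\lambda-1)\rho - \sigma \J - \Delta_g\sigma + (n+\lambda)\sigma^{-1}|d\sigma|_g^2 - (n+\lambda)\sigma^{-1}\Bigr].
$$
Now the defining identity $(n+1)\rho = -\Delta_g\sigma - \sigma \J$ replaces $-\Delta_g\sigma - \sigma\J$ by $(n+1)\rho$, giving $(n+2\lambda-1)\rho + (n+1)\rho = 2(n+\lambda)\rho$. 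Factoring out $(n+\lambda)\sigma^{-1}$ then produces
$$
   \lambda(n+\lambda)\sigma^{-1}\bigl[2\sigma\rho + |d\sigma|_g^2 - 1\bigr] u = \lambda(n+\lambda)\sigma^{-1}(\SC(g,\sigma)-1)u,
$$
which is the desired right-hand side.

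\textbf{Difficulty.} The argument is entirely algebraic; the only real obstacle is keeping sign and dimension conventions straight, in particular verifying that the coefficient $(n-1)$ from the conformal law in dimension $n+1$ combines with the $2\lambda$ from Leibniz to reproduce precisely the prefactor $(n+2\lambda-1)$ in the definition of the Laplace-Robin operator, and that the Schouten-trace identity $(n+1)\rho = -\Delta_g\sigma - \sigma\J$ balances $\rho$ into an overall factor $2(n+\lambda)$. No structural input beyond these identities is needed, and in particular no assumption on $\sigma$ (such as $|d\sigma|_g^2 = 1$ on $M$ or $\SCY$) is used — the formula is an unconditional identity, which is how $\SC(g,\sigma)-1$ emerges as the obstruction on the right.
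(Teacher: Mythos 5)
Your proof is correct and follows essentially the same route as the paper's: both start from the relation $\Delta_{\sigma^{-2}g} = \sigma^2\Delta_g - (n-1)\sigma\nabla_{\grad_g\sigma}$, conjugate by $\sigma^{-\lambda}$, cancel the derivative terms against $L(g,\sigma;\lambda)$, and reduce to a zeroth-order identity which the definition of $\rho$ and $\SC(g,\sigma)$ finishes off. The only variation is that you compute the scalar term directly via the chain rule for $\Delta_g(\sigma^{-\lambda})$, whereas the paper observes that this term is a quadratic polynomial in $\lambda$ and pins it down by evaluating at $\lambda=0$ and $\lambda=-1$ and extracting the leading coefficient from a normal-direction decomposition of the Laplacian; the two calculations are interchangeable and equally elementary.
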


Again, Theorem \ref{MT-2} is to be interpreted as an identity near the boundary of $X$.

We apply Theorem \ref{MT-2} to prove the existence of a meromorphic continuation of the
family $M_u(\lambda)$ using a  Bernstein-Sato argument. We recall that for any polynomial $p$,
the classical Bernstein-Sato argument \cite{Bern} proves the existence of a meromorphic 
distribution-valued function $p^\lambda$ by using the functional equation
$$
     D(\lambda) (p^{\lambda+1}) = b(\lambda) p^\lambda
$$
for some polynomial $b(\lambda)$ and a family $D(\lambda)$ of differential operators. Here 
$\sigma^\lambda u$ takes the role of $p^\lambda$ and $L(\lambda)$ takes the role of 
the family $D(\lambda)$. The Bernstein-Sato argument yields formulas for the residues and 
proves Theorem \ref{MT-1}. 

The conjugation formula in Theorem \ref{MT-2} easily implies the commutator relations
\begin{equation}\label{sl2}
    L_N(g,\sigma;\lambda) \circ \sigma - \sigma \circ L_N(g,\sigma;\lambda\!-\!1)
  = N(2\lambda\!+\!n\!-\!N) L_{N-1}(g,\sigma;\lambda\!-\!1)
\end{equation}
if $\SC(g,\sigma)=1$. The relation \eqref{sl2} was discovered in \cite{GW},
where the special case $N=1$ is regarded as one commutator relation in a basic
$sl(2)$-structure. In turn, it follows that for $2\lambda = -n+N$ the operator
$L_N(g,\sigma;\lambda)$ reduces to a tangential operator $\PO_N(g,\sigma)$ on $M$.
By Theorem \ref{MT-1}, we obtain
\begin{equation}\label{PO-Dres}
    \PO_N(g,\sigma) \iota^* = \D_N^{res}\left(g,\sigma;\frac{-n+N}{2}\right).
\end{equation}
In other words, each operator $\PO_N(g,\sigma)$ on $M$ comes with a
polynomial one-parameter family. The definition of $\PO_N$ in terms of $L_N$ is due
to \cite{GW}. The spectral theoretic description \eqref{PO-Dres} of $\PO_N$ will be
shown to have a series of significant consequences.

The conformal covariance of the families $L_N(\lambda)$ implies that the operators
$\PO_N$ on $C^\infty(M)$ are conformally covariant in the sense that
$$
   e^{\frac{n+N}{2} \iota^*(\varphi)} \circ  \PO_N(\hat{g},\hat{\sigma})
  = \PO_N(g,\sigma) \circ e^{\frac{n-N}{2} \iota^* (\varphi)}, \; \varphi \in C^\infty(X).
$$
For $N \le n$, these conformally covariant operators are completely determined by
the metric $g$ and the embedding $M \hookrightarrow X$. Following \cite{GW-LNY}, we
call them {\em extrinsic} conformal Laplacians. The notion is motivated by the fact
that for even $N$ the leading term of $\PO_N(g,\sigma)$ is given by a constant
multiple of $\Delta_h^{N/2}$, where $h =\iota^*(g)$ (see Theorem \ref{LT-M}). If the
background metric $g$ is the conformal compactification $r^2 g_+$ of a
Poincar\'e-Einstein metric in normal form relative to $h$, then these even-order
operators actually are constant multiples of GJMS-operators of $h$. But, in general,
the operators $\PO_N(g,\sigma)$ depend on the metric $g$ in a neighborhood of $M$
and the embedding $M \hookrightarrow X$.

The following result makes the leading parts of all extrinsic conformal Laplacians
explicit. Its proof rests on the spectral theoretic interpretation \eqref{PO-Dres}
of $\PO_N$.

\begin{theo}\label{LT-M} It holds
\begin{equation}\label{LT-P-even-M}
   \PO_{2N} = (2N\!-\!1)!!^2 \Delta^N + LOT
\end{equation}
for $2N \le n$ and
\begin{equation}\label{LT-P-odd-M}
   \PO_N = (2N\!-\!2) (N\!-\!1)!  \sum_{r=0}^{\frac{N-3}{2}} m_N(r)
   \Delta^r \delta(\lo d) \Delta^{\frac{N-3}{2}-r} + LOT
\end{equation}
for odd $N$ with $n \ge N \ge 3$ with rational coefficients given by
\eqref{m-coeff}. Here $\Delta$ is the Laplacian of $h$, and $\lo$ is the trace-free
part of the second fundamental form $L$. LOT refers to terms of order $2N-2$ and
$N-3$, respectively.
\end{theo}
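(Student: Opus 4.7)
The plan is to use the spectral-theoretic interpretation \eqref{PO-Dres} combined with Theorem \ref{MT-1} to obtain
$$
   \PO_N(g,\sigma)\,\iota^* \;=\; \iota^* L_N\bigl(g,\sigma;\tfrac{-n+N}{2}\bigr).
$$
Because the commutator relation \eqref{sl2} makes the right-hand side tangential at the critical value $\lambda = (-n+N)/2$, the computation of the leading part of $\PO_N$ reduces to evaluating the highest-derivative part of $\iota^* L_N(g,\sigma;\lambda) f$ at $\lambda = (-n+N)/2$ on a test function $f \in C^\infty(M)$, extended to $X$ as a function that is constant in the transverse direction of an adapted coordinate system.

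Next, I would work in adapted coordinates $(s,x)$, in which $\sigma = s$ and $\grad_g(\sigma) = |d\sigma|_g^2 \partial_s$. Under the condition $\SCY$, one has $|d\sigma|_g^2 = 1 + O(s)$, so
$$
   L(g,\sigma;\lambda) \;=\; (n+2\lambda-1)\partial_s \;-\; s\,\Delta_g \;+\; \text{lower-order corrections},
$$
and the leading symbol of the $N$-fold composition is governed by the model $\tilde L(\lambda) = (n+2\lambda-1)\partial_s - s\Delta_g$ with $[\partial_s, s] = 1$ and $\Delta_g$ treated as commuting with $\partial_s$ and $s$ to top order. Iterating $\tilde L$ on $f(x)$ and restricting to $s = 0$ selects precisely those interleavings in which the number of $\partial_s$-factors equals the number of $s\Delta_g$-factors, since $j$ applications of $\partial_s$ and $N-j$ of $s\Delta_g$ produce a polynomial in $s$ of net degree $N-2j$. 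For $N = 2m$ this forces $j = m$; at $\lambda = (-n+2m)/2$ the scalar coefficients of the $\partial_s$-factors become $c_k = 2(m-k)-1$ for $k=0,\dots,2m-1$, and a combinatorial sum of products over the $\binom{2m}{m}$ interleavings collapses to $((2m-1)!!)^2$. Since $\Delta_g f|_{s=0} = \Delta_h f$, this gives \eqref{LT-P-even-M}. The combinatorial identity can be proved by induction on $m$ via the factorization $L_{2m}(\lambda) = L_{2m-1}(\lambda-1)\circ L(\lambda)$ together with \eqref{sl2}, or by a generating-function argument exploiting the underlying $sl(2)$-structure.

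For odd $N \ge 3$, the degree-balance argument forbids $j = N/2$, so every order-$N$ contribution to $\iota^* L_N$ vanishes. The surviving leading terms are of order $N-1$ and must come from next-to-leading pieces in the adapted expansion: the leaf metric $h_s = h + 2sL + O(s^2)$ injects the second fundamental form $L$ into $\Delta_{h_s}$, while the first-order transverse part of $\Delta_g$ brings in the mean curvature $H = \tr_h L$. Carrying the iterated composition to one order beyond the leading symbol, and reorganizing the sum by the positions at which these curvature corrections are inserted, produces divergence-type terms $\Delta^r \delta(\lo d)\Delta^{(N-3)/2-r}$; the passage from $L$ to the trace-free $\lo$ is forced by the conformal covariance of $\PO_N$ together with the freedom to adjust $\sigma$ by $O(s^2)$, which absorbs the pure-trace $H$-piece into LOT of order $N-3$. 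The coefficients $m_N(r)$ then arise from the same combinatorial enumeration as in the even case, shifted by one step. I expect the main obstacle to be precisely this odd case: tracking the next-to-leading corrections cleanly enough to confirm that only $\lo$ (and not $L$) survives at order $N-1$, and to extract the rational numbers $m_N(r)$ in closed form. For the even case, the only nontrivial step is the combinatorial identity $\sum\prod c_k = ((2m-1)!!)^2$, which is the curved counterpart of Juhl's classical factorization identity in the flat model and should propagate to the curved setting because the leading-symbol computation depends only on the canonical commutation $[\partial_s,s] = 1$.
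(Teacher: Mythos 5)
The approach you propose---computing $\iota^* L_N(g,\sigma;\tfrac{-n+N}{2})$ directly by iterating the Laplace--Robin operator in adapted coordinates---is genuinely different from the paper's, which works through the recursion for the solution operators $\T_N(\lambda)$ (Proposition \ref{LT-P-even}, Proposition \ref{LT-P-odd-n}, via Theorem \ref{power-spec}); your route is closer in spirit to \cite{FJO}. However, there is a concrete error in your combinatorial reduction, and it breaks the even case already at $\PO_4$.

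You assert that the model operator $\tilde L(\lambda) = (n+2\lambda-1)\partial_s - s\Delta_g$ can be handled ``with $\Delta_g$ treated as commuting with $\partial_s$ and $s$ to top order.'' But $\Delta_g = a\,\partial_s^2 + \cdots + \Delta_{h_s}$ (see \eqref{Laplace-adapted}), and the term $a\,\partial_s^2$ does \emph{not} commute with $s$. Although $s\,\partial_s^2$ carries no $x$-derivatives, it acts on $s^j g$ as $j(j-1)s^{j-1}g$---net $s$-degree $-1$, just like a $\partial_s$-step---so paths using the $-s\,\partial_s^2$ piece of $-s\Delta_g$ contribute to the coefficient of $\Delta_h^{N/2}$ at $s=0$, and your simplified degree-balance argument misses them. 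Concretely, at $N=4$ (so $m=2$, $c_0=3,c_1=1,c_2=-1,c_3=-3$) your interleavings $UDUD$ and $UUDD$ give $c_1c_3 + 2c_2c_3 = -3 + 6 = 3$, i.e.\ $3\Delta_h^2$; the theorem requires $(2\cdot 2-1)!!^2 = 9$. The missing $6\Delta_h^2$ comes from the path $UU\,(-s\partial_s^2)\,D$: after two $-s\Delta_h$ steps one has $s^2\Delta_h^2 f$, then $-s\partial_s^2$ gives $-2s\Delta_h^2 f$, then $c_3\partial_s$ gives $-2c_3\Delta_h^2 f = 6\Delta_h^2 f$. So the identity ``$\sum\prod c_k = ((2m-1)!!)^2$'' is false in the model you actually set up, and no appeal to the $sl(2)$-structure will produce it without restoring the $s\,\partial_s^2$-contributions. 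The odd case inherits the same defect and, as you note, is only sketched. The paper avoids all of this because the recursion for $\T_N(\lambda)$ derived from the full eigenvalue equation (the sum \eqref{L-A-Sol} contains $s^2 a\,\partial_s^2$ explicitly) carries the $\partial_s^2$-dependence automatically; the odd-case $H$-terms are then killed by conformal covariance rather than by an absorption into the defining function.
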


In the Poincar\'e-Einstein case, \cite[Theorem 4.5]{GW} identifies the operator
$\PO_{2N}$ with a constant multiple of the GJMS operator $P_{2N}$. An alternative
proof of that result is given in \cite[Theorem 5.6]{FJO}. In the general case,
formula \eqref{LT-P-even-M} is also stated in \cite[Theorem 8.1]{GW-LNY} although
the proof only refers to results in the Poincar\'e-Einstein case in \cite{GW}. Note
also that in the Poincar\'e-Einstein case, the operators $\PO_N$ for odd $N$ vanish.
Note that the order of $\PO_{2N}$ equals $2N$ and the order of $\PO_N$ for odd $N$
equals $N-1$ in the generic case.

Now we use the operators $\PO_N$ to define analogs of Branson's $Q$-curvatures.
Theorem \ref{MT-1} implies that for $N < n$ the identity
$$
   \PO_N(g,\sigma) (1) = \left(\frac{n-N}{2}\right) \QC_N(g,\sigma)
$$
defines a function $\QC_N(g,\sigma)$. These functions will be called the {\em
subcritical} extrinsic $Q$-curvatures. The {\em critical} extrinsic $Q$-curvature
$\QC_n(g,\sigma)$ can be defined either through a limiting argument from the
subcritical ones or more elegantly by
\begin{equation}\label{Q-crit-M}
   \QC_n^{res}(g,\sigma) = - \dot{\D}_n^{res}(g,\sigma;0)(1).
\end{equation}
We recall that the condition $\SCY$ guarantees that the quantities in\eqref{Q-crit-M} are determined 
by the metric $g$ and the embedding $M \hookrightarrow X$. In the Poincar\'e-Einstein case, these 
definitions reduce to constant multiples of Branson's $Q$-curvatures. If $\SC(g,\sigma)=1$, it follows
from Theorem \ref{MT-1} that the definition \eqref{Q-crit-M} is a special case of
the notion of $Q$-curvature defined in \cite[Theorem 3.9]{GW-reno}.\footnote{The
numbering in the published version differs from that in the arXiv version.} By
differentiating the conformal transformation law of $\D_n^{res}(g,\sigma;\lambda)$
(see Theorem \ref{CTL-RF}) at $\lambda=0$, it follows that
\begin{equation}\label{fundamental-M}
   e^{n\iota^*(\varphi)} \QC_n(\hat{g},\hat{\sigma})
   = \QC_n(g,\sigma) + \PO_n(g,\sigma)(\iota^*(\varphi))
\end{equation}
for $\varphi \in C^\infty(X)$.

As noticed above, the metric $\sigma^{-2}g$ is asymptotically hyperbolic if
$|d\sigma|^2_g=1$ on $M$. Associated to such metrics, there is a scattering operator
$\Sc(\lambda)$ acting on $C^\infty(M)$. It describes the asymptotic expansion of
eigenfunctions of the Laplacian of $\sigma^{-2}g$. In \cite{GZ}, Graham and Zworski
showed how the GJMS-operators $P_{2N}$ and the critical Branson $Q$-curvature $Q_n$
of a metric $h$ on $M$ are encoded in the scattering operator of a
Poincar\'e-Einstein metric with the conformal class of $h$ as conformal infinity.
The following result extends these interpretations to the present framework.

\begin{theo}\label{scatt-PQ-M}
Suppose that condition $\SCY$ is satisfied. Let $N \in \N$ with $2 \le N \le n$ and suppose
that $(n/2)^2-(N/2)^2$ is not in the discrete spectrum of $-\Delta_{\sigma^{-2}g}$. Then
$$
   \PO_N = 2 (-1)^N (N-1)! N! \Res_{\frac{n-N}{2}}(\Sc(\lambda)).
$$
The function $\Sc(\lambda)(1)$ is regular at $\lambda=0$ and its value at $\lambda=0$ is denoted
by $\Sc(0)(1)$. Then
$$
    \QC_n = 2 (-1)^n (n-1)! n! \Sc(0)(1).
$$
\end{theo}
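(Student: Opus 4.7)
The plan is to relate the residue family and the scattering operator through their common source, namely the asymptotic expansion of eigenfunctions of $\Delta_{\sigma^{-2}g}$. I would recall that $\Sc(\mu)$ is defined by $\Sc(\mu)f = g_0$, where $u = u(\mu,f)$ is the eigenfunction with $-\Delta_{\sigma^{-2}g}u = \mu(n-\mu)u$ and formal expansion in adapted coordinates
\begin{equation*}
   u \sim \sigma^{n-\mu}(f + \sigma f_1 + \sigma^2 f_2 + \cdots) + \sigma^\mu(g_0 + \sigma g_1 + \cdots),
\end{equation*}
with higher coefficients determined by local recursions in $f$ and $g_0$. Under the spectral hypothesis on $(n/2)^2-(N/2)^2$, the recursion operators producing $g_j = A_j(\mu) g_0$ for $0 \le j < N$ are regular at $\mu_* := (n-N)/2$, so any singularity of $\delta_N$ at $\mu_*$ is inherited from the pole of $\Sc(\mu)$.

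Next I would compute $\Res_{\lambda=-\mu-1-N}\langle M_u(\lambda),\psi\rangle$ directly in adapted coordinates using Gelfand's formula. For generic $\mu$, only the $\sigma^\mu$ series of $u$ produces a pole at $\lambda = -\mu-1-N$, giving
\begin{equation*}
   \Res_{\lambda=-\mu-1-N}\langle M_u(\lambda),\psi\rangle = \int_M \sum_{j+k=N} g_j\,(\psi v)_k\,dvol_h,
\end{equation*}
where $(\psi v)_k$ is the $k$-th Taylor coefficient of $\psi v$ in $s$. This identifies $\delta_N(g,\sigma;\mu)$ as a composition built from $\Sc(\mu)$, the $A_j$, and the volume coefficients, and locates its simple pole at $\mu_*$.

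I would then exploit \eqref{PO-Dres} and the factorization $\D_N^{res}(\lambda) = N!(2\lambda+n-2N+1)_N\,\delta_N(\lambda+n-N)$. At $\lambda_* = (N-n)/2$, the Pochhammer has a simple zero: writing $\epsilon = \lambda - \lambda_*$, one factor equals $2\epsilon$ while the remaining $N-1$ factors contribute $(-1)^{N-1}(N-1)!$ at $\epsilon=0$. This zero cancels the simple pole of $\delta_N$, yielding
\begin{equation*}
   \PO_N\iota^* = 2(-1)^{N-1}N!(N-1)!\,\Res_{\mu_*}\delta_N(g,\sigma;\mu).
\end{equation*}
Because the left-hand side is tangential, the lower-order corrections from $A_j$ with $j \ge 1$ must cancel, reducing $\Res\delta_N$ (up to sign) to $[\Res_{\mu_*}\Sc(\mu)]\iota^*$ and producing the asserted constant $2(-1)^N(N-1)!N!$. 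The critical case $N = n$ runs in parallel at $\lambda = 0$: the constant function is an honest eigenfunction with eigenvalue $0$, so $\Sc(\lambda)(1)$ is regular at $0$ and $\D_n^{res}(g,\sigma;0)(1) = 0$, whence
\begin{equation*}
   \QC_n = -\dot\D_n^{res}(g,\sigma;0)(1) = -n!\,\left[\tfrac{d}{d\lambda}\big|_{0}(2\lambda-n+1)_n\right]\delta_n(0)(1),
\end{equation*}
and the derivative of the Pochhammer equals $2(-1)^{n-1}(n-1)!$ while $\delta_n(0)(1) = \Sc(0)(1)$, yielding $\QC_n = 2(-1)^n(n-1)!n!\,\Sc(0)(1)$.

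The principal technical difficulty lies in the third paragraph: verifying that $\Res_{\mu_*}\delta_N$ reduces cleanly, modulo the stated constant, to $\Res_{\mu_*}\Sc(\lambda)\circ\iota^*$ as an operator identity on $C^\infty(X)$, rather than merely modulo the tangential condition. I would address this either by a direct book-keeping of the recursion for the $g_j$ (using $A_0 = \id$ and the expansion $v = \sum s^j v_j$), or more cleanly by appealing to uniqueness: by Theorem \ref{LT-M} both sides are tangential conformally covariant operators with the same principal symbol, and such operators are determined by their leading behavior at $M$. A related subtlety is the resonance structure of the eigenfunction expansion at the collision $\mu = \mu_*$ of indicial roots $\mu + N$ and $n - \mu$, where logarithmic terms may arise; the spectral hypothesis is precisely what ensures that these contribute only to the finite residues already captured.
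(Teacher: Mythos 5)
Your proposal correctly isolates the Pochhammer zero at $\lambda_* = \frac{-n+N}{2}$ and its cancellation against the pole of $\delta_N$, but the central identification is not sound and the decisive spectral ingredient is missing. In the paper's convention \eqref{asymp}, the boundary value $f$ sits in the $s^{\mu+j}$ series and $\Sc(\mu)(f)$ in the $s^{n-\mu+j}$ series, which is the opposite of your set-up. Consequently, for generic $\mu$ the residue of $M_u(\lambda)$ at $\lambda = -\mu-1-N$ is produced entirely by the $s^{\mu+j}$ series and involves only the local solution operators $\T_j(\mu)$: $\delta_N(\mu)$ is a differential operator, expressible through the $\T_j^*(\mu)$ and the volume coefficients (Theorem \ref{D-res-ex}), and does \emph{not} contain $\Sc(\mu)$. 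The $s^{n-\mu+j}$ series produces a disjoint second ladder of poles, which the paper explicitly sets aside. So $\Res_{\mu_*}\delta_N$ reduces to $\Res_{\mu_*}\T_N^*$ (since all other factors are regular there), not to a residue of $\Sc$ by book-keeping; and the proposed fallback that tangential conformally covariant operators with matching principal symbol must coincide is not available.

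What is actually needed, and what your proposal omits, is the Graham--Zworski resonance argument: under the hypothesis $(n/2)^2-(N/2)^2\notin\sigma_{pp}$, the Poisson operator $\Po(\lambda)$ is \emph{holomorphic} at $\lambda=\mu_*=\frac{n-N}{2}$, even though both $\T_N(\lambda)$ and $\Sc(\lambda)$ have simple poles there. Holomorphy of $\Po(\lambda)$ forces cancellation, $\Res_{\mu_*}(\T_N)+\Res_{\mu_*}(\Sc)=0$, and together with the self-adjointness of $\Sc$ and the identity $\PO_N=(-1)^{N-1}2(N-1)!N!\Res_{\mu_*}(\T_N^*)$ from Theorem \ref{power-spec} this delivers the formula. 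Your critical-case identity $\delta_n(0)(1)=\Sc(0)(1)$ is likewise incorrect: the paper instead notes that $\Po(0)(1)=1$ has vanishing coefficient at $s^n$, whence $\T_n(0)(1)+\Sc(0)(1)=0$, and converts $\T_n(0)(1)$ to $\QC_n$ via \eqref{T-1} and \eqref{QT-critical}. The Pochhammer-derivative computation you sketch is on the right track but by itself cannot reach $\Sc(0)(1)$.
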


Note that the scattering operator $\Sc(\lambda)$ is a pseudo-differential operator
with principal symbol
$$
   2^{2\lambda-n} \frac{\Gamma(\lambda-\frac{n}{2})}{\Gamma(-\lambda+\frac{n}{2})}|\xi|_h^{n-2\lambda}.
$$
This shows that the residues of $\Sc(\lambda)$ at $\lambda=\frac{n}{2}-N$ are caused
by the $\Gamma$-factors and that the residue of its pole at $\lambda =
\frac{n}{2}-N$ is an operator with principal part given by a constant multiple of $\Delta^N$. Moreover,
Theorem \ref{scatt-PQ-M} yields an identification of this residue. In particular,
these residues are elliptic operators. On the other hand, the above formula yields
no information on the structure of the residues of $\Sc(\lambda)$ at $\frac{n-N}{2}$
for {\em odd} $N$. This makes the second part of Theorem \ref{LT-M} interesting.

Similarly as in \cite{GZ}, the self-adjointness of $\Sc(\lambda)$ for $\lambda \in
\R$ combined with Theorem \ref{scatt-PQ-M} implies

\begin{corro}\label{PO-sa}
Let $\SCY$ be satisfied. Then the operators $\PO_N(g)$ are formally self-adjoint
with respect to the scalar product on $C^\infty(M)$ defined by $h$.
\end{corro}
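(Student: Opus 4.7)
The plan is to transfer the classical formal self-adjointness of the scattering operator $\Sc(\lambda)$ of the asymptotically hyperbolic metric $\sigma^{-2}g$ to the operators $\PO_N$ by means of the residue identification supplied by Theorem \ref{scatt-PQ-M}, in direct analogy with the Graham-Zworski argument.

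First, I would record the functional symmetry
\[
   \Sc(\lambda)^* = \Sc(\bar\lambda),
\]
with the adjoint taken in the $L^2$-product on $C^\infty(M)$ induced by $h = \iota^*(g)$. In the Poincar\'e-Einstein setting this is exactly the identity used in \cite{GZ}, and in the present, slightly more general, setting the same argument applies: one pairs the two leading coefficients in the asymptotic expansion of generalized eigenfunctions of $\Delta_{\sigma^{-2}g}$ and uses the symmetry of the associated resolvent, which is governed by the self-adjoint Laplacian of an asymptotically hyperbolic metric. In particular, at each real $\lambda$ where $\Sc(\lambda)$ is regular the operator is formally self-adjoint, and at any real simple pole $\lambda_0$ the Laurent expansion $\Sc(\lambda) = A(\lambda-\lambda_0)^{-1} + B(\lambda)$, with $B$ holomorphic near $\lambda_0$, combined with the displayed symmetry forces $A^* = A$ upon comparing principal parts. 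Thus the residue at such a pole is itself formally self-adjoint.

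Under the spectral hypothesis of Theorem \ref{scatt-PQ-M}, the point $\lambda_0 = (n-N)/2 \in \R$ is precisely such a pole, and
\[
  \PO_N = 2(-1)^N (N-1)!\,N!\,\Res_{\frac{n-N}{2}}\Sc(\lambda);
\]
since the prefactor is real, self-adjointness of the residue transfers to $\PO_N$. To dispense with the spectral hypothesis and obtain the statement for all data $(g,\sigma)$ satisfying $\SCY$, I would invoke a continuity argument: the operator $\PO_N(g,\sigma)$ is built algebraically from the Laplace-Robin operators via \eqref{sl2} and \eqref{PO-Dres}, so its coefficients depend continuously on a finite jet of $(g,\sigma)$ at $M$, and the self-adjointness condition is an algebraic condition on these coefficients. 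Since the excluded pairs $(g,\sigma)$ form a nowhere dense set, the property passes to the closure. The main technical point is the opening step: checking that the $L^2$-pairing intrinsic to the scattering formalism (through leading asymptotic coefficients in adapted coordinates) matches the pairing on $M$ defined by $h$ that enters the residue construction of Section \ref{res-fam}; once the normalizations are aligned, the corollary follows with essentially the same one-line argument as in \cite{GZ}.
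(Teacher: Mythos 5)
Your proposal takes essentially the same route as the paper: it establishes formal self-adjointness of $\Sc(\lambda)$ for real $\lambda$ via a Green's-formula pairing of leading asymptotic coefficients of eigenfunctions (Lemma \ref{scatt-sd}), concludes by meromorphy that the residue at the real simple pole $\lambda=\frac{n-N}{2}$ is again self-adjoint, and transfers this to $\PO_N$ through the residue identification of Theorem \ref{LS}. The continuity argument you append to remove the spectral hypothesis $(n/2)^2-(N/2)^2\notin\sigma_{pp}$ is a supplementary detail the paper's proof quietly passes over; it is legitimate and arguably needed for the unconditional statement, precisely because $\PO_N$ depends only on a finite jet of $(g,\sigma)$ at $M$ while the discrete spectrum $\sigma_{pp}$ is a global quantity that can be perturbed away by altering $(g,\sigma)$ in the interior.
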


For closed $M$, combining \eqref{fundamental-M} with the self-adjointness of $\PO_n$
and $\PO_n(1)=0$ shows that the integral
$$
   \int_M \QC_n(g) dvol_h
$$
is a global conformal invariant.

Next, we describe a {\em local} formula for the critical extrinsic $Q$-curvature.
It extends the holographic formula for Branson's critical $Q$-curvature proved in
\cite{GJ}. The formulation of that result requires two more ingredients:
renormalized volume coefficients $v_k$ and solution operators $\T_k(0)$. These data
are defined in local coordinates. We choose a diffeomorphism $\eta$ between the
product $[0,\varepsilon) \times M$ with coordinates $(s,x)$ and a neighborhood of
$M$ in $X$. $\eta$ is defined by a renormalized gradient flow of $\sigma$. It
satisfies $\eta^*(\sigma) = s$. These coordinates will be called {\em adapted
coordinates}. Then
$$
    \eta^* (dvol_{\sigma^{-2}g}) = s^{-n-1} v(s,x) ds dvol_h,
$$
and the expansion $v(s,x) = \sum_{k\ge 0} v_k(x) s^k$ defines the coefficients
$v_k$. Let $(v\J)_k$ be the coefficients in the analogous expansion of $v
\eta^*(\J)$, where $\J = \J^g = \scal^g/2n$. Secondly, the differential operators
$\T_k(0)$ describe the asymptotic expansions of harmonic functions of the Laplacian
of $\sigma^{-2} g$. The details of these definitions are given in Sections
\ref{AC-RVC}--\ref{res-fam}.

\begin{theo}[\bf Holographic formula for $\QC_n$]\label{Q-holo-crit-M}
Let $n$ be even. If $\SCY$ is satisfied, then it holds
\begin{equation*}\label{Q-holo-form-M}
   \QC_n(g,\sigma) = (n\!-\!1)!^2 \sum_{k=0}^{n-1} \frac{1}{n\!-\!1\!-\!2k}
   \T_k^*(g,\sigma;0) \left( (n\!-\!1)(n\!-\!k) v_{n-k} + 2k (v \J)_{n-k-2} \right),
\end{equation*}
where $\J = \J^g$. For $k=n-1$, the second term in the sum is defined as $0$.
\end{theo}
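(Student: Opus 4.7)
The strategy is to unfold the definition \eqref{Q-crit-M}, $\QC_n(g,\sigma) = -\dot{\D}_n^{res}(g,\sigma;0)(1)$, via the residue construction of $\D_n^{res}$ and to carry out the resulting computation in adapted coordinates. From \eqref{res-fam-basic},
$$\D_n^{res}(g,\sigma;\lambda) = n!\,(2\lambda+1-n)_n\,\delta_n(g,\sigma;\lambda),$$
and since $(2\lambda+1-n)_n = \prod_{k=0}^{n-1}(2\lambda-k)$ has a simple zero at $\lambda=0$ coming from the $k=0$ factor, the product rule yields
$$\dot{\D}_n^{res}(g,\sigma;0) = 2(-1)^{n-1}\,n!(n-1)!\,\delta_n(g,\sigma;0).$$
For even $n$ this reduces the identification of $\QC_n$ to the pointwise evaluation of $\delta_n(g,\sigma;0)(1)$:
$$\QC_n(g,\sigma) = 2\,n!(n-1)!\,\delta_n(g,\sigma;0)(1).$$

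To compute $\delta_n(g,\sigma;0)(1)$, I would work in the adapted coordinates $\eta:[0,\varepsilon)\times M \to X$ with $\eta^*(\sigma)=s$ and $\eta^*(dvol_g) = v(s,x)\,ds\,dvol_h$. By Gelfand's residue formula, the residue at $\lambda=-\mu-n-1$ of $\langle M_{u_\mu}(\lambda),\psi\rangle$ equals $\tfrac{1}{n!}\,\partial_s^n[u_\mu v\psi]\big|_{s=0}$ integrated over $M$. Substituting the solution-operator expansion $u_\mu \sim \sum_k \T_k(\mu)(f)\,s^{k+\mu}$, specializing to $\psi=1$ near $M$ and $\mu=0$, and integrating by parts on $M$ to transfer the action of $\T_k(0)$ from the boundary value $f$ onto the $v$-data, one obtains a preliminary pointwise identity of the form
$$\delta_n(g,\sigma;0)(1) = \sum_{k=0}^{n-1}\T_k^*(g,\sigma;0)\bigl(c_k\,v_{n-k} + d_k\,(v\J)_{n-k-2}\bigr)$$
for combinatorial coefficients $c_k,\,d_k$ still to be determined.

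The precise form of $c_k,\,d_k$ and the denominator $\frac{1}{n-1-2k}$ is dictated by the transport recursion satisfied by $\T_k(\mu)$. This recursion is obtained by substituting the asymptotic ansatz for $u_\mu$ into the eigenvalue equation $-\Delta_{\sigma^{-2}g}u_\mu = \mu(n-\mu)u_\mu$, which via Theorem \ref{MT-2} is equivalent under $\SCY$ to a Laplace-Robin identity for $\sigma^\mu u_\mu$. Comparing coefficients of $s^{k+\mu}$ in adapted coordinates, the inhomogeneous side splits in two: a tangential piece driven by $\Delta_h$ and the $v$-expansion, producing the $v_{n-k}$ summand with factor $(n-1)(n-k)$; and a second piece driven by the $\lambda\J$ term in $L(g,\sigma;\lambda)$, producing the $(v\J)_{n-k-2}$ summand with factor $2k$. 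The indicial factor of the recursion contributes the denominator $\frac{1}{n-1-2k}$. Combining with the overall prefactor $2\,n!(n-1)!$ from the first step yields the universal constant $(n-1)!^2$. In the Poincar\'e-Einstein case the $(v\J)$-contributions reorganize into the familiar form and one recovers the holographic formula of \cite{GJ} for Branson's critical $Q$-curvature.

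The principal difficulty is the critical order $k=n-1$: the indicial factor in the recursion for $\T_k(\mu)$ vanishes at $\mu=0$, and the asymptotic expansion of $u_\mu$ develops a logarithmic obstruction at order $s^{n+\mu}$ as $\mu \to 0$, so that $\T_n(\mu)$ is a priori ill-defined in the limit. The condition $\SCY$ is used exactly here: it guarantees that this log-obstruction vanishes to the order needed, so that after multiplication by the simple zero $(2\lambda+1-n)_n$ at $\lambda=0$ from the first step, only regular contributions survive in $\dot{\D}_n^{res}(g,\sigma;0)(1)$. Once this cancellation is verified, the remaining combinatorial bookkeeping closely parallels the Poincar\'e-Einstein treatment in \cite{GJ}, with the additional scalar-curvature contributions tracked explicitly through Theorem \ref{MT-2}.
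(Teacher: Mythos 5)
Your first step is correct, and the normalization $\QC_n = 2n!(n-1)!\,\delta_n(g,\sigma;0)(1)$ is exactly right for even $n$. But the route you then propose cannot actually produce the stated formula. The Gelfand residue computation applied to $\psi=1$ and $\mu=0$, together with the solution-operator expansion, gives precisely
\begin{equation*}
   \delta_n(g,\sigma;0)(1) = \sum_{k=0}^{n} \T_k^*(g,\sigma;0)(v_{n-k}),
\end{equation*}
a sum with unit coefficients, no $(v\J)$-terms, and crucially including the singular $k=n$ term $\T_n^*(0)(1)$. There is no mechanism in this naive residue picture that generates the $\frac{1}{n-1-2k}$ denominators or the $(v\J)_{n-k-2}$ contributions, and your appeal to a ``transport recursion'' and ``indicial factors'' does not supply one; it merely asserts the desired answer. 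The two formulas are genuinely different pointwise identities (they only agree after integration, by an independent cancellation involving $\T_n(0)(1)$), so some nontrivial reorganization is required, and you have not identified it.

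The paper's proof performs exactly this reorganization by a different opening move: it invokes the factorization $\D_n^{res}(\lambda) = \D_{n-1}^{res}(\lambda-1)\circ L(\lambda)$ (Corollary~\ref{factor}) and $L(0)(1)=0$ to obtain $\dot{\D}_n^{res}(0)(1) = \D_{n-1}^{res}(-1)\,\dot{L}(0)(1)$, then expands $\D_{n-1}^{res}(-1)$ via Theorem~\ref{D-res-ex}. The heart of the argument is the local identity \eqref{reduction},
\begin{equation*}
   \iota^*\partial_s^k\bigl(v\,\dot{L}(0)(1)\bigr)
   = -\frac{n-1}{n-1-2k}\,\iota^*\partial_s^{k+1}(v)
   - \frac{2k(n-1-k)}{n-1-2k}\,\iota^*\partial_s^{k-1}(v\J),
\end{equation*}
which is derived from $\dot{L}(0)(1) = (n-1)\rho-s\J$, the equation $v\,\Delta_g(s) = \partial_s(v\,a)$ from \eqref{bL2}, and the $\SCY$ relation $a = |\NV|^2 = 1-2s\rho+O(s^{n+1})$. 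This is where both the $(v\J)$-terms and the $\frac{1}{n-1-2k}$ factors arise, and also where the evenness of $n$ enters (so that $n-1-2k\ne 0$ for $k=0,\dots,n-1$) -- not via the $(-1)^n$ sign or a log-obstruction argument, as you suggest. Without the factorization step and the identity \eqref{reduction}, your approach stalls at the naive sum and does not reach the theorem.
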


The proof of Theorem \ref{Q-holo-crit-M} generalizes a proof of the holographic
formula for $Q_n$ for Poincar\'e-Einstein metrics given in \cite{FJO}. The arguments
rest on Theorem \ref{MT-1} and an alternative description of residue families in
terms of the coefficients $v_k$ and the operators $\T_k(\lambda)$. We conjecture
that the result extends to odd $n$ (Conjecture \ref{van}). There are analogous
formulas for all subcritical extrinsic $Q$-curvatures which extend \cite{J3}. 

Theorem \ref{Q-holo-crit-M} implies the representation
$$
  \QC_n = n! (n\!-\!1)! v_n + \sum_{k=1}^{n-1} \T_k^*(0)(\cdots).
$$
Integration of this identity for closed $M$ implies the equality
\begin{equation}\label{GI}
   \int_M \QC_n dvol_h =  n! (n\!-\!1)!  \int_M v_n dvol_h
\end{equation}
of global conformal invariants. This extends a result of \cite{GZ} for
Poincar\'e-Einstein metrics and reproves a special case of \cite[Theorem
3.9]{GW-reno}. We shall also give a second proof of the identity \eqref{GI} which
utilizes residue families and extends to odd $n$. We emphasize again that the
integrands on both sides depend on the embedding of $M$ in $X$, i.e., these are
global conformal invariants of the embedding.

The integral on the right-hand side of \eqref{GI} appears as a coefficient of $\log
\varepsilon$ in the asymptotic expansion of the volume
\begin{equation}\label{reno-volume-M}
   vol_{\sigma^{-2}g}(\{\sigma > \varepsilon \}) = \int_{\sigma > \varepsilon} dvol_{\sigma^{-2}g}
\end{equation}
of the singular metric $\sigma^{-2}g$ for $\varepsilon \to 0$. This implies its
conformal invariance. Through that interpretation of the right-hand side of
\eqref{GI}, this identity is a special case of \cite[Theorem 3.9]{GW-reno}. The
singular terms in this expansion of the volume are given by the integrals of the
renormalized volume coefficients $v_k$ for $k <n$. All these integrals admit the
uniform expressions
\begin{equation}\label{HF-v}
   \int_{M} v_{k} dvol_h
   = (-1)^k \frac{(n\!-\!1\!-\!k)!}{ (n-1)! k!} \int_{M} \iota^* L_k(-n\!+\!k)(1) dvol_h
\end{equation}
in terms of Laplace-Robin operators. This reproves a special case of \cite[Proposition 3.5]{GW-reno}.

The above results for the expansion of the volumes \eqref{reno-volume-M} have analogs for general 
defining functions $\sigma$. In that case, the operators $L_k$ are replaced by composition $\tilde{L}_k 
= (L \circ \SC^{-1})_k$ and the coefficient of $\log \varepsilon$ is a constant multiple of the integral 
of a curvature quantity $\tilde{\QC}_n(g,\sigma)$ (Theorem \ref{RVE-g}, \eqref{tilde-Qn}). The total 
integral of $\tilde{\QC}_n$ is a conformal invariant of the pair $(g,\sigma)$ (Lemma \ref{Q-tilde}).

The integrals in \eqref{GI} are generalizations of the Willmore energy of a closed
surface in $\R^3$. In fact, for a surface $M^2 \hookrightarrow X^3$ with Gauss
curvature $K$, it holds
$$
   2 v_2 = \QC_2 = -K + \frac{1}{2} |\lo|^2
$$
(by \eqref{Q2-gen}, \eqref{v2-2} and $\J^h = K$).  But if $X$ is the flat $\R^3$,
then $H^2 - K = |\lo|^2$. Hence for closed $M \hookrightarrow \R^3$ the integral
$\int_M v_2 dvol_h$ is a linear combination of the Euler characteristic $\chi(M)$ and
the Willmore energy                  \index{$\W$ \quad Willmore energy}
$$
   \W =  \int_M H^2 dvol_h.
$$
The Willmore energy also contributes to the rigid string action introduced in \cite[(4)]{Pol},
\cite[(5)]{Klein}.

Finally, the usage of adapted coordinates leads to a new formula for the singular Yamabe
obstruction $\B_n$ (see Section \ref{Yamabe} for its definition). In order to state that formula,
we note that the pull-back $\eta^*(g)$ of $g$ takes the form $a^{-1} ds^2 + h_s$ with some
coefficient $a \in C^\infty ((0,\varepsilon) \times M)$ and a one-parameter family $h_s$ of
metrics on $M$. For
$$
   \mathring{v} \st  dvol_{h_s}/dvol_h,
$$
it holds
$$
   \frac{\mathring{v}'}{\mathring{v}} = \frac{1}{2} \tr (h_s^{-1} h_s').
$$
Let $\J$ and $\rho$ denote the pull-backs of $\J^g$ and $\rho(g,\sigma)$ by $\eta$, respectively. The
following result restates Theorem \ref{obstruction-ex}.

\begin{theo}[\bf The obstruction $\B_n$]\label{B-form}
If $\sigma$ satisfies $\SCY$, then
\begin{equation}\label{ob-magic-2}
   (n\!+\!1)! \B_n = -2\partial_s^n \left( \frac{\mathring{v}'}{\mathring{v}}\right)|_0
  + 4  \sum_{j=1}^n j \binom{n}{j} \partial_s^{j-1}(\rho)|_0 \partial_s^{n-j}
   \left( \frac{\mathring{v}'}{\mathring{v}}\right)|_0 - 2n \partial_s^{n-1}(\J)|_0
\end{equation}
for $n \ge 1$.
\end{theo}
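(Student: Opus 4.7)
The plan is to work entirely in adapted coordinates, extract $\B_n$ from the definition $\SC(g,\sigma) = 1 + \sigma^{n+1} R_{n+1}$ with $\B_n = R_{n+1}|_{s=0}$ by differentiating $n+1$ times, and then use the defining equation $(n+1)\rho = -\Delta_g(\sigma) - \sigma \J$ to rewrite the resulting expression in the desired form.

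First I would record the basic quantities in adapted coordinates $(s,x)$. Since $\eta^*(g) = a^{-1} ds^2 + h_s$ and $\eta^*(\sigma) = s$, one has $g^{ss} = a$, so $|d\sigma|^2_g = a$. Next, using $\sqrt{|\eta^*g|} = a^{-1/2} \mathring{v}\sqrt{|h|}$, a one-line computation gives
\begin{equation*}
   \Delta_g(\sigma) \;=\; \frac{1}{\sqrt{|\eta^*g|}}\,\partial_s\!\left(\sqrt{|\eta^*g|}\, g^{ss}\right) \;=\; \frac{a'}{2} + a\,\frac{\mathring{v}'}{\mathring{v}}.
\end{equation*}
Substituting into $(n+1)\rho = -\Delta_g(\sigma) - \sigma \J$ therefore yields the key identity
\begin{equation*}
   2(n+1)\rho \;=\; -a' \,-\, 2a\,\frac{\mathring{v}'}{\mathring{v}} \,-\, 2s\,\J. \tag{$\ast$}
\end{equation*}
By definition, $\SC(g,\sigma) = |d\sigma|^2_g + 2\sigma\rho = a + 2s\rho$, so condition $\SCY$ is the single scalar relation
\begin{equation*}
    a + 2s\rho \;=\; 1 + s^{n+1} R_{n+1}, \qquad \B_n \;=\; R_{n+1}|_{s=0}.
\end{equation*}

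The second step is to apply $\partial_s^{n+1}$ and evaluate at $s=0$. Since $\partial_s^{n+1}(s^{n+1} R_{n+1})|_0 = (n+1)!\,\B_n$ and $\partial_s^{n+1}(s\rho)|_0 = (n+1)\,\partial_s^{n}(\rho)|_0$, I obtain
\begin{equation*}
   (n+1)!\,\B_n \;=\; \partial_s^{n+1}(a)|_0 \,+\, 2(n+1)\,\partial_s^{n}(\rho)|_0.
\end{equation*}
Now I would use $(\ast)$ differentiated $n$ times at $s=0$, together with $\partial_s^{n}(s\J)|_0 = n\,\partial_s^{n-1}(\J)|_0$, to replace the $\rho$-term. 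Writing $F \st \mathring{v}'/\mathring{v}$, this gives
\begin{equation*}
   2(n+1)\,\partial_s^{n}(\rho)|_0 \;=\; -\partial_s^{n+1}(a)|_0 \,-\, 2\,\partial_s^{n}(aF)|_0 \,-\, 2n\,\partial_s^{n-1}(\J)|_0,
\end{equation*}
and the $\partial_s^{n+1}(a)|_0$ cancels, leaving
\begin{equation*}
   (n+1)!\,\B_n \;=\; -2\,\partial_s^{n}(aF)|_0 \,-\, 2n\,\partial_s^{n-1}(\J)|_0.
\end{equation*}

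The final step is to expand $\partial_s^{n}(aF)|_0$ via Leibniz using the explicit form of $a$. From $a = 1 - 2s\rho + s^{n+1} R_{n+1}$ one reads off $a|_0 = 1$ and, for $1 \le k \le n$, $\partial_s^k(a)|_0 = -2k\,\partial_s^{k-1}(\rho)|_0$ (the $s^{n+1} R_{n+1}$ contribution vanishes for $k \le n$). Therefore
\begin{equation*}
   \partial_s^{n}(aF)|_0 \;=\; \partial_s^{n}(F)|_0 \,-\, 2\sum_{j=1}^{n} j\binom{n}{j}\,\partial_s^{j-1}(\rho)|_0\,\partial_s^{n-j}(F)|_0,
\end{equation*}
and substituting back recovers \eqref{ob-magic-2} exactly.

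The only non-routine ingredient is the computation of $\Delta_g(\sigma)$ in adapted coordinates, which in turn depends on recognising $\sqrt{|\eta^*g|} = a^{-1/2} \mathring{v}\sqrt{|h|}$ and that $g^{ss} = a$; after this is in place everything else is Leibniz combined with the identity $(\ast)$, which is nothing other than the definition of $\rho$ rewritten in adapted coordinates. The cancellation of $\partial_s^{n+1}(a)|_0$ between the two contributions to $(n+1)!\,\B_n$ is the mechanism that makes the final formula depend only on $F = \mathring{v}'/\mathring{v}$, $\rho$ and $\J$, without any remaining dependence on $a$.
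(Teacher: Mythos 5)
Your proof is correct and follows essentially the same route as the paper: both work in adapted coordinates, both rest on the formula $\Delta_g(\sigma)=\tfrac{1}{2}a'+a\,\mathring{v}'/\mathring{v}$ (the paper's \eqref{Laplace-s}, equivalently \eqref{bL2}), both exploit the defining relation for $\rho$ together with the condition $\SCY$, and both finish with Leibniz using $\partial_s^k(a)|_0=-2k\,\partial_s^{k-1}(\rho)|_0$. The only difference is bookkeeping: the paper first assembles the single intermediate identity $-s\rho'+n\rho+a\,\mathring{v}'/\mathring{v}+s\J=-\tfrac{1}{2}\partial_s(\SC-1)$ (equation \eqref{Basic-R}) and then differentiates it $n$ times, whereas you differentiate $\SC=1+s^{n+1}R_{n+1}$ once more and observe that $\partial_s^{n+1}(a)|_0$ cancels against the corresponding term from the $\rho$-definition — which is precisely the same cancellation, just performed one step later.
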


For $n=1$, the identity \eqref{ob-magic-2} yields $\B_1 = 0$.

Moreover, the Taylor coefficients of $\rho$ obey a recursive relation in terms of the
Taylor coefficients of $\mathring{v}'/\mathring{v}$ and $\J$ (Proposition \ref{rec-rho}).

This result has several consequences of independent interest. For a flat
background metric, the scalar curvature term on the right-hand side of \eqref{ob-magic-2}
vanishes, and the recursive structure of all terms implies that for even $n$
\begin{equation*}
   (n\!+\!1)! \B_n =  - 2 (n\!-\!1)!!/(n\!-\!2)!! \Delta^\frac{n}{2} (H) + LOT
\end{equation*}
(Theorem \ref{B-linear}), where LOT refers to terms of lower differential order. Up
to the numerical coefficients, that description of the leading term of $\B_n$ was
first formulated in \cite{GW-announce} \cite[Theorem 5.1]{GW-LNY} (for general
backgrounds). We also use Theorem \ref{B-form} to reproduce the known explicit
formulas for $\B_2$ (for general backgrounds) and to calculate $\B_3$ for
(conformally) flat backgrounds confirming earlier results.

Moreover, Theorem \ref{Q-holo-crit-M} and Theorem \ref{B-form} play a key role in the proof of the
residue formula
\begin{equation*}
   \Res_{n=N-1} (\QC_N) = (-1)^{n-1} n! (n\!+\!1)! \frac{n}{2} \B_n
\end{equation*}
for even $n$ (Theorem \ref{QB-residue}), where we regard $\QC_N$ as a function of
$n$ which is singular at $n=N-1$. This formula relates $\B_n$ for even $n$ to the
super-critical $Q$-curvature $\QC_{n+1}$. It proves a conjecture in \cite[Remark
8.6]{GW-LNY}. We conjecture that this result extends to all dimensions.

\cite[Theorem 8.11]{GW-LNY} established a coordinate-free formula for the Yamabe obstructions 
$\B_n$ in terms of tractor calculus constructions. This formula involves an analog of the critical 
extrinsic conformal Laplacian acting on the so-called normal tractor of the hypersurface $M$ as a 
key ingredient.\footnote{This result coincides with Theorem 7.7 in \url{arXiv:150602723v4}.}

We finish this section with an outline of the content of the paper. We combine this
review with additional comments on the relations of the new results to the
literature. Section \ref{Yamabe} briefly recalls basic results on the version of the
singular Yamabe problem of interest here. In Section \ref{CFormula}, we establish
Theorem \ref{M-CF} and describe first consequences. This result is the basic link that
connects the Laplace-Robin operators of Gover and Waldron with the spectral theory
of the Laplacian of the singular metric $\sigma^{-2}g$. In Section \ref{SBO}, we
describe some of the representation theoretical aspects of the Laplace-Robin
operators. Section \ref{AC-RVC} introduces the notion of adapted coordinates. This
notion is basic for the whole work. Adapted coordinates give rise to the definition
of renormalized volume coefficients $v_k$, which enables us to prove local formulas
for $Q$-curvatures (Theorem \ref{Q-holo-crit-M}) and to prove the formula for the
obstruction $\B_n$ in Theorem \ref{B-form}. In this connection, we show that in
adapted coordinates, the function $\rho$ obeys an extremely beneficial ordinary differential
equation in the variable $s$, which implies recursive relations for the Taylor
coefficients of $\rho$. We emphasize that Graham \cite{Graham-Yamabe} utilizes a
different notion of renormalized volume coefficients which are defined in terms of
the normal exponential map. This notion seems to be less appropriate from the
present point of view. Section \ref{res-fam} introduces the notion of residue
families. Here we extend earlier definitions in the context of Poincar\'e-Einstein
metrics \cite{J1, J2, BJ}. The constructions are built on basic facts in the
spectral theory of asymptotically hyperbolic metrics, which are detailed in
\cite{GZ}. In particular, we use a version of a Poisson transform and introduce the
scattering operator $\Sc(\lambda)$. We derive a formula for residue families in
terms of the coefficients $v_k$ and the operators $\T_k(\lambda)$. Section
\ref{products} contains the proof of Theorem \ref{MT-1}. The identity \eqref{HF-v}
is a direct consequence. Here we also prove that these identities can be regarded as
consequences of a beautiful formula for the action of $L_N(-1)$ on the distribution
$\sigma^*(\delta) = \delta_M$ of $M$ (Theorem \ref{dist-volume}). This
distributional formula reproves a basic technical result in \cite{GW-reno}. In
Section \ref{ECPL}, we introduce the notion of extrinsic conformal Laplace operators
$\PO_N$. Here we derive the spectral theoretical interpretation \eqref{PO-Dres} of
$\PO_N$, use it to determine the leading terms of $\PO_N$ (Theorem \ref{LT-M}) and
recognize these operators as residues of the scattering operator (first part of
Theorem \ref{scatt-PQ-M}). These results extend results of \cite{GZ}. Section
\ref{Q-curvature} defines extrinsic $Q$-curvatures and establishes the second part
of Theorem \ref{scatt-PQ-M} which extends another result of \cite{GZ}. Moreover, we
supply two proofs of the equality \eqref{GI}. The integrated coefficients $v_k$ are
shown to describe the singular coefficients in the asymptotic expansion of
\eqref{reno-volume-M}. These expansions and their generalizations (Theorem
\ref{RVE-g}) reprove results of \cite{GZ, GW-reno}. In Section \ref{Q-hol}, 
we establish extensions of the holographic formulas for $Q$-curvatures in \cite{GJ,J3}. In
particular, we prove Theorem \ref{Q-holo-crit-M}. Section \ref{f-comm} contains
comments on further perspectives.

In the main body of the text, we usually suppress detailed calculations and the
discussion of examples and special cases. However, the reader can find this material in
Section \ref{comm-cal}. It starts with an overview of its own. The results presented
here may be used to gain a deeper understanding of the material. In particular,
this section contains full details on low-order Yamabe obstructions, low-order cases
of conformal Laplacians, extrinsic $Q$-curvatures, and renormalized volume
coefficients. All proofs are independent of the literature.

Hopefully, the attached list of symbols facilitates reading.

Finally, we like to emphasize that, although the present work is deeply inspired by
the pioneering works of Gover and Waldron, the current treatment is fully
independent and self-contained. We also stress again that our perspective is (via
the residue families and their applications) one of a spectral-theoretic nature
exploiting the structure of eigenfunctions of the Laplacian.

After the present work had been posted, the paper \cite{CMY} again discussed extrinsic 
conformal Laplacians from the perspective of scattering theory. It {\em defines} 
extrinsic conformal Laplacians in terms of the scattering operator of the singular 
Yamabe metric $\sigma^{-2}g$ by mimicking the known relations between 
GJMS-operators and the scattering operator of Poincar\'e-Einstein metrics \cite{GZ}. 
However, the relations between these definitions and the notions introduced by Gover 
and Waldron are established only here.

{\em Acknowledgments}. The work on this project started during a visit of the first
author at the University of {\AA}rhus in autumn 2019. A large part of the paper's
final version was written during a stay of the first author at IHES in early
2020. He is grateful to both organizations for financial support and very
stimulating atmospheres. Finally, we thank the anonymous referee who
provided valuable detailed comments on an earlier version of the manuscript.

\section{General notation}\label{notation}

\index{$\N$ \quad natural numbers}
\index{$\N_0$ \quad non-negative integers}
\index{$(a)_N$ \quad Pochhammer symbol}
\index{$\mathfrak{X}(X)$ \quad space of vector fields on $X$}

$\N$ is the set of natural numbers, and $\N_0$ is the set of non-negative integers. For
a complex number $a\in\C$ and an integer $N\in\N$, the Pochhammer symbol $(a)_N$ is
defined by $(a)_N \st a(a+1) \cdots (a+N-1)$. We set $(a)_0 \st 1$.

\index{$R$ \quad curvature tensor}
\index{$C^\infty(X)$}
\index{$C^{-\infty}(X)$ \quad space of distributions on $X$}

All manifolds are smooth. For a manifold $X$, $C^\infty(X)$ and $C_c^\infty(X)$
denote the respective spaces of smooth functions and smooth functions with compact
support on $X$. If $X$ is a manifold with boundary, then $C^\infty(X)$ is the space of 
functions that are smooth up to the boundary. $C^{-\infty}(X)$ is the space of distributions 
on $X$.  Let $\mathfrak{X}(X)$ be the space smooth vector fields on $X$. Metrics 
on $X$ usually are denoted by $g$. $dvol_g$ is the Riemannian volume element defined by $g$. The
Levi-Civita connection of $g$ is denoted by $\nabla_X^g$ or simply $\nabla_X$ for $X
\in \mathfrak{X}(X)$ if $g$ is understood. In these terms, the curvature tensor $R$
of the Riemannian manifold $(X,g)$ is defined by $R(X,Y)Z =\nabla_X \nabla_Y (Z) -
\nabla_Y \nabla_X (Z) - \nabla_{[X.Y]}(Z)$ for vector fields $X,Y,Z \in
\mathfrak{X}(X)$. We also set $\nabla_X (u) = \langle du,X \rangle$ for $X \in
\mathfrak{X}(X)$ and $u \in C^\infty(X)$.

\index{$dvol_g$ \quad volume element}
\index{$\Omega^p$ \quad space of $p$-forms}
\index{$\nabla^g$ \quad Levi-Civita connection of $g$}
\index{$\grad_g(u)$ \quad gradient field}
\index{$\delta^g$ \quad divergence operator}
\index{$\Delta_g$ \quad Laplacian of $g$}

For a metric $g$ on $X$ and $\sigma \in C^\infty(X)$, let $\grad_g(\sigma)$ be the
gradient of $\sigma$ with respect to $g$, i.e., it holds $g(\grad_g(\sigma),V) =
\langle d\sigma,V \rangle$ for all vector fields $V \in \mathfrak{X}(X)$. $g$
defines pointwise scalar products and norms $|\cdot|_g$ on $\mathfrak{X}(X)$ and on
forms $\Omega^*(X)$. Then $|\grad_g(\sigma)|_g^2 = |d\sigma|_g^2$. $\delta^g$ is the
divergence operator on differential forms or symmetric bilinear forms. On forms, it
coincides with the negative adjoint $-d^*$ of the enaböe  differential $d$ with
respect to the Hodge scalar product defined by $g$. Let $\Delta_g = \delta^g d$ be
the non-positive Laplacian on $C^\infty(X)$. On the Euclidean space $\R^n$, it
equals $\sum_i \partial_i^2$.

\index{$\Ric^g$ \quad Ricci tensor of $g$}
\index{$\scal^g$ \quad scalar curvature of $g$}
\index{$\scal(g)$ \quad scalar curvature of $g$}
\index{$\Rho^g$ \quad Schouten tensor of $g$}
\index{$\J^g$}

A metric $g$ on a manifold $X$ with boundary $M$ induces a metric $h$ on $M$.
Curvature quantities of $g$ and $h$ have the respective metric as an index if
required by clarity. In particular, the scalar curvature of the metric $g$ on $X$ is
denoted by $\scal^g$ or $\scal(g)$. $\Ric^g$ denote the Ricci tensor of $g$. On a
manifold $(M,g)$ of dimension $n$, we set $\J^g =\frac{1}{2(n-1)} \scal^g$ if $n \ge
2$ and define the Schouten tensor $\Rho^g$ of $g$ by $(n-2)\Rho^g = \Ric^g - \J^g g$
if $n \ge 3$.

\index{$L$ \quad second fundamental form}

Let $M$ be a hypersurface in $(X,g)$ with the induced metric $h$. The second
fundamental form $L$ of $M$ is defined by $L(X,Y) = -g(\nabla_X(Y),N)$ for vector
fields $X,Y \in \mathfrak{X}(M)$ and a unit normal vector field $N$. In particular,
if $X$ is a manifold with boundary $M$ with defining function $\sigma$ so that
$|d\sigma|_g=1$ on $M$, then we set $L(X,Y) = -g(\nabla_X(Y),\NV)$ with $\NV \st
\grad_g(\sigma)$. With these conventions, $L=g$ for the round sphere $S^n \subset
\R^{n+1}$ if $\sigma$ is the distance function of $S^n$. We set $n H = \tr_h(L)$ if
$M$ has dimension $n$. $H$ is the mean curvature of $M$. Let $\lo = L - H h$ be the
trace-free part of $L$. We sometimes identify $L$ with the shape operator $S$ defined by
$h(X,S(Y)) = L(X,Y)$. For a bilinear form $b$ on $T(M)$, we denote its trace-free
part with respect to a metric known by context by $\mathring{b}$ or $b_\circ$.

\index{$\NV$ \quad gradient of $\sigma$}
\index{$H$ \quad mean curvature}
\index{$\lo$ \quad trace-free part of $L$}
\index{$\iota$ \quad embedding}
\index{$b_\circ$ \quad trace-free part of $b$}

$\iota$ denotes various canonical embeddings such as $\iota: M \hookrightarrow X$. 
The symbol $\iota^*$ will be used for the induced pull-back of functions, forms, and metrics. 
For any diffeomorphism $f$, the symbol $f_* = (f^{-1})^*$ denotes the push-forward by $f$.
Differentiation with respect to the variable $\lambda$ will often be denoted by
$^\cdot$. In contrast, differentiation with respect to the variables $r$ and $s$
will usually be denoted by $'$. The symbol $\circ$ denotes compositions of
operators. The symbol $\sim$ indicates a proportionality.

\section{The singular Yamabe problem}\label{Yamabe}

Let $(X,g)$ be a compact manifold with boundary $M$ of dimension $n$. The problem to
ask for a defining function $\sigma$ of $M$ so that
\begin{equation}\label{syp}
    \scal (\sigma^{-2}g) = -n(n+1)
\end{equation}
is known as (a version of) the singular Yamabe problem \cite{LN}. The metric $\sigma^{-2} g$ 
is called a singular Yamabe metric if \eqref{syp} is true.\footnote{Later we shall often deal with 
a weaker condition.} The conformal transformation law of scalar curvature shows that
$$
    \scal(\sigma^{-2}g) = -n(n+1) |d\sigma|_g^2 + 2n \sigma \Delta_g(\sigma) + \sigma^2 \scal(g).
$$
Following \cite{GW}, we write this equation in the form
$$
   \scal(\sigma^{-2}g) = -n(n+1) \SC(g,\sigma),
$$
where
$$
   \SC(g,\sigma) = |d\sigma|_g^2 + 2 \rho \sigma
$$
(see Definition \eqref{LRO}).\footnote{In \cite[Section 2.2]{GW-reno}, the quantity
$\SC(g,\sigma)$ is termed the $\SC$-curvature of $(g,\sigma)$.} In these terms,
$\sigma$ is a solution of \eqref{syp} iff $\SC(g,\sigma)=1$. Such $\sigma$ exist
\cite{AMO1} and are unique \cite{ACF}. However, in general, $\sigma$ is not smooth up to
the boundary. The smoothness is obstructed by a locally determined conformally
invariant scalar function on $M$, called the singular Yamabe obstruction.
Moreover, the solution is smooth up to the boundary iff the obstruction vanishes
\cite{ACF}.

In order to describe the structure of $\sigma$ more precisely, we follow \cite{ACF}
and \cite{Graham-Yamabe}. We use geodesic normal coordinates (see Section
\ref{AC-RVC}). Let $r = d_M$ be the distance function of $M$ for the background metric $g$.
Then there are uniquely determined coefficients $\sigma_{(k)} \in C^\infty(M)$ for $2 \le k \le
n+1$ so that the smooth defining function                           \index{$\sigma_F$}
\begin{equation}\label{sigma-finite}
   \sigma_F \st  r + \sigma_{(2)} r^2 + \dots + \sigma_{(n+1)} r^{n+1}
\end{equation}
satisfies
\begin{equation}\label{Yamabe-finite}
   \SC(g,\sigma_F) = 1 + R r^{n+1}
\end{equation}
with a smooth remainder term $R$. We briefly describe how these coefficients are
recursively determined. In geodesic normal coordinates, the metric $g$ takes the form
$dr^2 + h_r$ with a one-parameter family $h_r$ of metrics on $M$. The condition
\eqref{Yamabe-finite} is equivalent to
$$
  |d\sigma_F|_g^2 - \frac{2}{n+1} \sigma_F \Delta_g(\sigma_F) - \frac{1}{n(n+1)} \sigma_F^2 \scal^g
  = 1 +  R r^{n+1}.
$$
We write the left-hand side of this equation in the form
\begin{align}\label{Y-F}
  & \partial_r(\sigma_F)^2 + h_r^{ij} \partial_i (\sigma_F) \partial_j (\sigma_F) \notag \\
  & - \frac{2}{n+1} \sigma_F \left (\partial_r^2 (\sigma_F) + \frac{1}{2} \tr (h_r^{-1} h_r') \partial_r (\sigma_F)
  + \Delta_{h_r} (\sigma_F) \right) - \frac{1}{n(n+1)} \sigma_F^2 \scal^g
\end{align}
and expand this sum into a Taylor series in the variable $r$. Then the vanishing of
the coefficient of $r^k$ for $k \le n$ is equivalent to an identity of the form
$$
   (k-1-n) \sigma_{(k+1)} = LOT,
$$
where $LOT$ involves only lower-order Taylor coefficients of $\sigma$. The latter relation
also shows that there is a possible obstruction to the existence of an improved
solution $\sigma_F'$ which contains a term $\sigma_{(n+2)} r^{n+2}$ and satisfies
$\SC(g,\sigma_F') = 1 + R r^{n+2}$. However, by setting          \index{$\LO_n$}
\begin{equation}\label{sol-log-d}
   \sigma = \sigma_F + \LO_n r^{n+2} \log r
\end{equation}
with an appropriate coefficient $\LO_n \in C^\infty(M)$ one may get a solution of
\begin{equation}\label{sol-log}
   \SC(g,\sigma) = 1 + O(r^{n+2} \log r).
\end{equation}
The coefficient $\LO_n$ is determined by the condition that the coefficient of
$r^{n+1}$ in the expansion of \eqref{Y-F} vanishes. But that coefficient equals
$$
   \left( r^{-n-1} (\SC(g,\sigma_F) - 1) \right)|_{r=0} - 2 \frac{n+2}{n+1} \LO_n.
$$
The first term exists by the construction of $\sigma_F$ and the second term is
generated by the action of the terms $\partial_r(\sigma)^2$ and $\sigma
\partial_r^2(\sigma)$ in \eqref{Y-F} on the log-term in $\sigma$.\footnote{In fact, $\partial_r (r^{n+2} \log r)
= r^{n+1} + \cdots$ and $\partial_r^2 (r^{n+2} \log r) = (2n+3) r^n + \cdots$.} Hence for
$$
   \LO_n \st \frac{1}{2} \frac{n+1}{n+2} \left( r^{-n-1} (\SC(g,\sigma_F) - 1) \right)|_{r=0}
$$
the condition \eqref{sol-log} is satisfied. Following \cite{ACF}, we define the
{\em singular Yamabe obstruction} by
\index{$\B_n$ \quad singular Yamabe obstruction}
\begin{equation}\label{B-def}
   \B_n \st \left( r^{-n-1} (\SC(g,\sigma_F) - 1) \right)|_{r=0} .
\end{equation}
In these terms, we see that with
\begin{equation}\label{obstruction-two}
   \LO_n = \frac{1}{2} \frac{n+1}{n+2} \B_n
\end{equation}
the improved $\sigma$ defined in \eqref{sol-log-d} satisfies \eqref{sol-log}. By
\cite{ACF}, the unique solution $\sigma$ of the singular Yamabe problem has an
expansion of the form
$$
   \sigma = r + \sigma_{(2)} r^2 + \dots + \sigma_{(n+1)} r^{n+1} + \LO_n r^{n+2} \log r + \dots.
$$
Graham \cite{Graham-Yamabe} calls $\LO_n$ the singular Yamabe obstruction. 

Since $\sigma_F$ is determined by $g$ (and the embedding $M \hookrightarrow X$), we regard
$\B_n$ as a functional of $g$ (and the embedding). It is a key result that $\B_n$ is a conformal invariant
of $g$. More precisely, we write $\hat{\B}_n$ for the obstruction defined by $\hat{g}=e^{2\varphi} g$
with $\varphi \in C^\infty(X)$. Then


\begin{lem}[\cite{ACF,Graham-Yamabe}]\label{B-CTL} $e^{(n+1) \iota^*(\varphi)} \hat{\B}_n = \B_n$.
\end{lem}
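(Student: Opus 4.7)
The plan is to exploit the conformal invariance of the true singular Yamabe solution: since $\SC(\hat{g},\hat{\sigma}) = \SC(g,\sigma)$ whenever $\hat{g} = e^{2\varphi} g$ and $\hat{\sigma} = e^{\varphi}\sigma$, the uniqueness result of \cite{ACF} implies that if $\sigma$ is the true solution for $g$, then $\hat{\sigma} = e^{\varphi}\sigma$ is the true solution for $\hat{g}$. By \eqref{obstruction-two}, the coefficient $\LO_n$ of the logarithmic term $r^{n+2}\log r$ in the polyhomogeneous expansion of $\sigma$ differs from $\B_n$ only by a numerical factor depending on $n$, so it suffices to determine how $\LO_n$ transforms under the substitution $(g,\sigma) \mapsto (\hat g, \hat \sigma)$.

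First I would set $a := e^{\iota^*(\varphi)}$ and record the boundary behaviour of the two geodesic distance functions $r$ and $\hat{r}$ associated to $g$ and $\hat{g}$. From $|d\hat{r}|_{\hat{g}}^2 = 1$ one has $|d\hat{r}|_g^2 = e^{2\varphi}$, and since both $dr|_M$ and $d\hat{r}|_M$ are conormal and outward-pointing, this forces $d\hat{r}|_M = a\, dr|_M$ and hence
$$\hat{r} = a r + O(r^2), \qquad r = a^{-1}\hat{r} + O(\hat{r}^2)$$
in a neighbourhood of $M$, with smooth correction terms.

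Next I would substitute these relations into $\hat{\sigma} = e^{\varphi}\sigma$ and extract the coefficient of $\hat{r}^{n+2}\log\hat{r}$. Writing $e^{\varphi} = a + O(\hat{r})$ and using $\log\hat{r} = \log r + \log a + \log(1 + O(r))$, the only source of a $\log(\hat{r})$-contribution at order $\hat{r}^{n+2}$ on the right-hand side is the term $\iota^*(e^{\varphi})\LO_n\, r^{n+2}\log r$, which rearranges into
$$a^{-(n+1)}\LO_n\, \hat{r}^{n+2}\log\hat{r} + (\text{smooth in }\hat{r}) + O(\hat{r}^{n+3}\log^N\hat{r}).$$
Matching with the analogous polyhomogeneous expansion of $\hat{\sigma}$ in $\hat r$ then yields $\hat{\LO}_n = a^{-(n+1)}\LO_n = e^{-(n+1)\iota^*(\varphi)}\LO_n$, and \eqref{obstruction-two} transfers this to the claimed transformation law for $\B_n$.

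The main obstacle is the bookkeeping required to argue that the logarithmic coefficient is well-defined under the change of radial variable $r \leftrightarrow \hat{r}$. One must verify that the cross-terms produced by substituting $r = a^{-1}\hat{r} + O(\hat{r}^2)$ into $r^{n+2}\log r$ contribute only to smooth $\hat{r}$-terms or to strictly higher-order polyhomogeneous terms, and in particular never add to the coefficient of $\hat{r}^{n+2}\log\hat{r}$. This is a routine but slightly delicate verification using the polyhomogeneous structure of $\sigma$ established in \cite{ACF}; once granted, the remaining steps are purely algebraic.
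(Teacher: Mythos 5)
Your proof is correct, but it takes a genuinely different route from the paper's. The paper's argument stays entirely at the level of the finite polynomial approximants $S_{n+1}$: it uses $\SC(\hat g, e^\varphi S_{n+1}(g)) = \SC(g, S_{n+1}(g))$, re-expands the left-hand side in $\hat r$, and observes that the coefficient of $\hat r^{n+1}$ in $\SC(\hat g,\,\cdot\,)$ is insensitive to the coefficient $\hat\sigma_{(n+2)}$ of $\hat r^{n+2}$ in the argument, because that coefficient enters the recursion with vanishing prefactor; hence $e^\varphi S_{n+1}(g)$ may be replaced by $S_{n+1}(\hat g)$ when extracting $\hat\B_n$, and the weight $-(n+1)$ drops out from $\hat r/r \to e^{\iota^*(\varphi)}$ on $M$. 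You instead appeal to the existence and uniqueness theorem of \cite{ACF} for the full polyhomogeneous solution, convert the problem to the log coefficient $\LO_n$ via \eqref{obstruction-two}, and track that coefficient through the smooth change of radial variable $r \leftrightarrow \hat r$ together with the multiplication by $e^\varphi$. Both routes ultimately rest on the same vanishing-prefactor phenomenon at order $n+1$ (in your version it manifests as the absence of any logarithmic cross-contamination below $\hat r^{n+2}\log\hat r$), but the paper's version is slightly more self-contained in that it requires only the finite-jet approximation scheme that already underlies the definition \eqref{B-def}, while yours needs the full polyhomogeneous structure from \cite{ACF}; conversely, your version makes the conformal weight of the obstruction geometrically transparent as the transformation law for the log coefficient of the exact Yamabe scale. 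Your closing verification is accurate: smooth terms $r^k$ with $k\le n+2$ remain smooth in $\hat r$ and polyhomogeneous terms of order $\ge n+3$ stay at order $\ge n+3$, so only $\LO_n\,r^{n+2}\log r$ feeds the $\hat r^{n+2}\log\hat r$ coefficient.
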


\begin{proof} Let $r$ be the distance function of $M$ for $g$. Let
$$
   S_N \st 1 + \sum_{j=2} r^j \sigma_{(j)}.
$$
Then for $N \le n+1$ the condition
\begin{equation}\label{scalar-exp}
   \SC (g,S_N) = 1 + 0 r + \cdots + 0 r^{N-1} + O(r^N) = 1 +  O(r^N)
\end{equation}
determines the coefficient $\sigma_{(N)}$ in terms of lower-order coefficients $\sigma_{(2)}, \dots, \sigma_{(N-1)}$.
In that case, we also write $S_N = S_N(g)$. Moreover, we recall that the coefficient of $r^k$ ($k \le N-1$) in the
expansion of $\SC(g,S_N)$ depends only on $\sigma_{(2)}, \dots, \sigma_{(k+1)}$.

Now we have the obvious relation
$$
   \SC(\hat{g},\hat{\sigma}) = \SC(g,\sigma)
$$
for any $\sigma \in C^\infty(X)$ and $\hat{\sigma} \st e^{\varphi} \sigma$.

We write the expansion \eqref{scalar-exp} (for $N \le n+1$) as an expansion in terms of
the distance function $\hat{r}$ of $M$ for $\hat{g}$. Hence
\begin{equation}\label{scalar-exp-hat}
     \SC(\hat{g},e^\varphi S_N(g)) = 1 + O(\hat{r}^N).
\end{equation}
This expansion determines the coefficients $\hat{\sigma}_{(2)}, \dots, \hat{\sigma}_{(N)}$ in the expansion
$$
   e^\varphi S_N(g) = \hat{r} + \hat{r}^2 \hat{\sigma}_{(2)} + \dots + \hat{r}^{N} \hat{\sigma}_{(N)} + \cdots
   = S_N(\hat{g}) + \mbox{higher-order terms}.
$$
In general, this expansion involves higher-order terms, i.e., $e^\varphi S_N(g) \ne S_N(\hat{g})$.
In particular, the remainder term in \eqref{scalar-exp-hat} depends on $\hat{\sigma}_{N+1}$. However,
for $N=n+1$, the coefficient of $\hat{r}^{n+1}$ in \eqref{scalar-exp-hat} does {\em not} depend
on $\hat{\sigma}_{(n+2)}$ since it appears with a prefactor $0$. Therefore,
\begin{align*}
     (\hat{r}^{-n-1} \SC(\hat{g},S_{n+1}(\hat{g})))|_{\hat{r}=0}
    & = (\hat{r}^{-n-1} \SC(\hat{g},e^\varphi S_{n+1}(g)))|_{\hat{r}=0} \\
    &= e^{-(n+1) \iota^*(\varphi)} (r^{-n-1} \SC(g,S_{n+1}(g)))|_{r=0}.
\end{align*}
This relation implies the assertion.
\end{proof}

In \cite{GW-LNY}, Gover and Waldron describe an elegant algorithm that recursively
determines the solution $\sigma$ of the singular Yamabe problem as a power series of some
boundary defining function $\sigma_0$ (like the distance function $\sigma_0 = d_M$). Note that
these power series are not Taylor series: their coefficients still live on the ambient space $X$.
This algorithm rests on the interpretation of the quantity $\SC(g,\sigma)$ as the squared length of
the scale tractor associated to $\sigma$.

For our purposes, it will be enough to consider smooth {\em approximate} solutions
of the singular Yamabe problem. This motivates the following definition.

\begin{defn}\label{Y-smooth} The defining function $\sigma \in C^\infty(X)$ of $M$ is said to
satisfy the condition $\SCY$ iff
$$
   \sigma = r  + \sigma_{(2)} r^2 + \dots + \sigma_{(n+1)} r^{n+1} + O(r^{n+2})
$$
and
$$
   \SC(g,\sigma) = 1 + R_{n+1} r^{n+1}
$$
for a smooth remainder term $R_{n+1}$. Equivalently, it holds
$$
   \SC(g,\sigma) = 1 + R_{n+1} \sigma^{n+1}
$$
for another smooth remainder term $R_{n+1}$. The restriction of either remainder
terms to $M$ is the singular Yamabe obstruction: $\B_n = \iota^* R_{n+1}$.
\end{defn}

We recall that the obstruction $\B_n$ satisfies
\begin{equation}\label{B-CI}
   e^{(n+1) \iota^*(\varphi)} \B_n(\hat{g}) = \B_n(g).
\end{equation}

Graham \cite{Graham-Yamabe} determined the first two non-trivial coefficients $\sigma_{(2)}$
and $\sigma_{(3)}$ in the expansion of $\sigma$ (Lemma \ref{sigma23}). An explicit formula
for the next coefficient $\sigma_{(4)}$ is given in \cite[(2.18)]{GG}. We reproduce these results
in a slightly different form in Section \ref{appY} and also display a formula for $\sigma_{(5)}$.


Graham \cite{Graham-Yamabe} shows that the obstruction $\B_1$ vanishes. The obstruction
for surfaces in a three-manifold is given by the formula                  \index{$\B_2$} \index{$\B_3$}
\begin{align}\label{B2}
   \B_2 & = -\frac{1}{3} (\delta^h \delta^h (\lo) + H |\lo|^2 + (\lo,\iota^*(\Rho^g))) \notag \\
   & = -\frac{1}{3} (\Delta_h (H) + \delta^h (\Ric^g (\NV,\cdot)) +  H |\lo|^2 + (\lo,\iota^*(\Rho^g)))
\end{align}
(\cite[Corollary 6.10]{GW-LNY}). The equivalence of both expressions follows from the
Codazzi-Mainardi equation. For the details of that argument and a derivation of these formulas from
Theorem \ref{B-form}, we refer to Section \ref{B2-details}. The first formula reproduces a result
in \cite[Theorem 1.3]{ACF}.

Explicit formulas for $\B_3$ were first derived in \cite{Ha} and \cite{GGHW} from a general universal tractor 
formula found in \cite{GW-LNY}.  In particular, it was proved that for a conformally flat background metric $g$ 
the obstruction $\B_3$ is given by the closed formula
\begin{equation}\label{B3-closed}
   6 \B_3 = 3 (\delta^h \delta^h + (\Rho^h,\cdot))((\lo^2)_\circ)  + |\lo|^4,
\end{equation}
where $b_\circ$ denotes the trace-free part of the symmetric bilinear form $b$. For general background 
metrics $g$, the formula for $\B_3$ contains additional terms defined by the Weyl tensor of $g$. For full details,
we refer to \cite{GGHW}. In the more recent work \cite{JO-Y}, these formulas for $\B_3$ were derived without
utilizing tractor calculus. In Section \ref{B3-flat-back}, we shall deduce \eqref{B3-closed} from the 
general formula in Theorem \ref{B-form}. Formula \eqref{B3-closed} manifestly implies the conformal 
invariance $e^{4 \varphi} \hat{\B}_3 = \B_3$ since the operator $b \mapsto \delta \delta (b) + (\Rho,b)$ is 
conformally covariant on trace-free symmetric bilinear forms on $M$.

Up to constant multiples, $\B_2$ and $\B_3$ are the respective variations of the functionals
$$
    \int_M |\lo|_h^2 dvol_h \quad \mbox{and} \quad \int_M (\lo,\JF)_h dvol_h
$$
(for the definition of the Fialkov tensor $\JF$ we refer to Section \ref{basic-Gauss}).
These are special cases of the variation formulas of the functional $\A \st \int_M v_n dvol_h$
(with respect to a one-parameter family of hypersurfaces $M \hookrightarrow X$) which were proved
in \cite[Theorem 3.1]{Graham-Yamabe} and \cite[Section 5]{GW-reno}. They state that the
variation of $\A$ is proportional to the obstruction $\B_n$. The equivalence of both results follows
from \eqref{obstruction-two}. This result may be regarded as an analog of the result that the metric
variation of the total critical $Q$-curvature of an even-dimensional closed manifold is given by the
corresponding Fefferman-Graham obstruction tensor \cite{GH}.

Poincar\'e-Einstein metrics are an important special class of singular Yamabe
metrics. In fact, if $g_+ = r^{-2} (dr^2 + h_r)$ is a Poincar\'e-Einstein metric in
normal form relative to $h=h_0$ \cite{FG-final}, then $\scal(g_+) = -n(n+1)$, the
background metric is $dr^2+h_r$ and the corresponding defining function is
$\sigma=r$. In this case, the singular Yamabe obstruction vanishes. We shall refer to this
case as the Poincar\'e-Einstein case.

\section{The conjugation formula}\label{CFormula}

In this section, we introduce Laplace-Robin operators (or degenerate Laplacians)
following \cite{GW-LNY}. We relate them to the spectral theory of the Laplacian of
singular metrics $\sigma^{-2}g$ and use this relation to prove basic properties of the
Laplace-Robin operators.

Let $X$ be a manifold of dimension $n+1$.

\index{$L(g,\sigma)$ \quad Laplace-Robin operator}
\index{$\rho(g,\sigma)$}

\begin{defn}[\bf Laplace-Robin operators]\label{LRO}
For any pair $(g,\sigma)$ consisting of a metric $g$ on $X$ and $\sigma \in
C^\infty(X)$, the one-parameter family
\begin{equation}\label{LR-OP}
   L(g,\sigma;\lambda) \st (n\!+\!2\lambda\!-\!1) (\nabla_{\grad_g(\sigma)} + \lambda \rho)
   - \sigma (\Delta_g + \lambda \J):  C^\infty(X) \to C^\infty(X)
\end{equation}
of differential operators is called the Laplace-Robin operator of the pair $(g,\sigma)$. Here
$\lambda \in \C$,
$$
   2 n \J \st \scal(g) \quad \mbox{and} \quad (n+1) \rho(g,\sigma) \st - \Delta_g (\sigma) - \sigma \J.
$$
Moreover, we set                                                            \index{$\SC(g,\sigma)$}
\begin{equation}\label{SC}
   \SC(g,\sigma)  \st  |d\sigma|_g^2 + 2 \sigma \rho.
\end{equation}
\end{defn}

Similarly, we define $L(g,\sigma;\lambda)$ for a manifold $X$ with boundary $M$. In
this case, $g$ and $\sigma$ are assumed to be smooth up to the boundary. Then
$L(\lambda)$ acts on the space $C^\infty(X)$ of smooth functions up to the boundary
and on the space $C^\infty(X^\circ)$ of smooth functions on the open interior
$X^\circ = X \setminus M$ of $X$.

From now on, we assume that $(X,g)$ is a compact manifold with boundary $M$ and
$\sigma$ is a defining function of $M$. We recall that $\sigma$ is a defining
function of $M$ if $\sigma^{-1}(0)=M$, $\sigma > 0$ on $X^\circ$ and $d\sigma|_M \ne
0$. Let $\iota: M \hookrightarrow X$ be the embedding and set $h \st \iota^*(g)$.

Then the operator $\iota^* L(g,\sigma;\lambda)$ degenerates to the first-order operator
\begin{equation}\label{bv-op}
   C^\infty(X) \ni u \mapsto (n\!+\!2\lambda\!-\!1) \iota^* \left[\nabla_{\grad_g(\sigma)}(u)
   - \frac{\lambda}{n\!+\!1}\Delta_g(\sigma) u \right] \in C^\infty(M).
\end{equation}
Since certain linear combinations of Dirichlet and Neumann boundary values are also known
as Robin boundary values, this naturally motivates the above notion of a Laplace-Robin operator.

If $\sigma^{-2} g$ has constant scalar curvature $-n(n+1)$, the boundary
operator \eqref{bv-op} reduces to the conformally covariant boundary operator
$$
   u \mapsto (n\!+\!2\lambda\!-\!1) \iota^* (\nabla_{\grad_g(\sigma)} - \lambda H) u,
$$
where $H$ is the mean curvature of $M$ (\cite[Section 3.1]{Gover-AE}, \cite[Lemma 2.3]{GW-reno}).

If $g_+ = r^{-2} g$ is Poincar\'e-Einstein in the sense that $\Ric(g_+) = - n g_+$, then 
$L(g,r;\lambda-n+1)$ equals the shift operator $S(g_+;\lambda)$ of \cite{FJO}.

\begin{theorem}[\bf Conjugation formula]\label{MCF}
Assume that $\sigma \in C^\infty(X)$ is a defining function of the boundary $M$ of $X$.
Then it holds
\begin{equation}\label{CF}
   L(g,\sigma;\lambda) + \sigma^{\lambda-1} \circ \left(\Delta_{\sigma^{-2}g} -
   \lambda(n\!+\!\lambda) \id \right) \circ \sigma^{-\lambda} = \lambda
   (n\!+\!\lambda) \sigma^{-1}(\SC(g,\sigma)-1) \id
\end{equation}
as an identity of operators acting on $C^\infty(X^\circ)$.
\end{theorem}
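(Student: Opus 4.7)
The proof is a direct, if delicate, computation. The plan is to expand everything on the left-hand side in terms of $\Delta_g$, $\nabla_N$ (where $N = \grad_g(\sigma)$), and zeroth-order terms, and then collect coefficients using the identities $(n+1)\rho = -\Delta_g(\sigma) - \sigma \J$ and $\SC(g,\sigma) = |d\sigma|_g^2 + 2\sigma\rho$.

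First, I would rewrite $\Delta_{\sigma^{-2}g}$ using the conformal transformation law of the Laplacian. Writing $\sigma^{-2}g = e^{2\varphi} g$ with $\varphi = -\log\sigma$, and using the standard formula $\Delta_{\hat g}(f) = e^{-2\varphi}\bigl(\Delta_g(f) + (m-2)g(d\varphi,df)\bigr)$ on an $m$-dimensional manifold with $m = n+1$, one obtains the clean identity
\begin{equation*}
   \Delta_{\sigma^{-2}g} = \sigma^2 \Delta_g - (n-1)\sigma \nabla_N
\end{equation*}
as operators on $C^\infty(X^\circ)$. This step is painless and isolates the scalar-curvature dependence of the right-hand side purely inside $L(g,\sigma;\lambda)$.

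Next, I would conjugate by $\sigma^{-\lambda}$. Since $\Delta_g$ satisfies the Leibniz-style identity $\Delta_g(uv) = u\Delta_g(v) + 2\langle du,dv\rangle_g + v\Delta_g(u)$ and $\Delta_g(\sigma^{-\lambda}) = -\lambda \sigma^{-\lambda-1}\Delta_g(\sigma) + \lambda(\lambda+1)\sigma^{-\lambda-2}|d\sigma|_g^2$, a routine expansion gives
\begin{equation*}
   \sigma^{\lambda+1} \Delta_g \sigma^{-\lambda}(f) = \sigma \Delta_g(f) - 2\lambda \nabla_N(f) - \lambda \Delta_g(\sigma) f + \lambda(\lambda+1)\sigma^{-1}|d\sigma|_g^2 f,
\end{equation*}
and similarly $\sigma^{\lambda}\nabla_N \sigma^{-\lambda}(f) = \nabla_N(f) - \lambda \sigma^{-1}|d\sigma|_g^2 f$. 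Combining these with the extra $-\lambda(n+\lambda)\sigma^{-1}$ term produced by conjugating the identity operator yields an explicit expansion of $\sigma^{\lambda-1}(\Delta_{\sigma^{-2}g} - \lambda(n+\lambda)\id)\sigma^{-\lambda}$ with first-order coefficient $-(2\lambda+n-1)\nabla_N$ and zeroth-order part containing $-\lambda\Delta_g(\sigma)$ and $\lambda(n+\lambda)\sigma^{-1}(|d\sigma|_g^2 - 1)$.

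Adding the definition of $L(g,\sigma;\lambda)$ term-by-term, the second-order term $-\sigma\Delta_g$ and the first-order term $(n+2\lambda-1)\nabla_N$ cancel exactly against their counterparts from the conjugation, leaving only zeroth-order contributions. These organize themselves as
\begin{equation*}
   \lambda(n+2\lambda-1)\rho - \lambda \sigma \J - \lambda \Delta_g(\sigma) + \lambda(n+\lambda)\sigma^{-1}(|d\sigma|_g^2 - 1).
\end{equation*}
Using $(n+1)\rho = -\Delta_g(\sigma) - \sigma\J$ to replace $-\lambda\sigma\J - \lambda\Delta_g(\sigma)$ by $\lambda(n+1)\rho$, the $\rho$-terms collapse to $2\lambda(n+\lambda)\rho$, so everything combines into $\lambda(n+\lambda)\sigma^{-1}\bigl(2\sigma\rho + |d\sigma|_g^2 - 1\bigr) = \lambda(n+\lambda)\sigma^{-1}(\SC(g,\sigma) - 1)$, which is precisely the claim.

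The main obstacle is purely bookkeeping: tracking the $\lambda$-dependent coefficients and signs through the conjugation so that the ostensibly distinct contributions $\lambda(\lambda+1)$, $(n-1)\lambda$, and $-\lambda(n+\lambda)$ combine correctly to produce the single factor $\lambda(n+\lambda)(|d\sigma|_g^2-1)$. Once the conformal transformation formula for the Laplacian is in hand and one resists the temptation to simplify $\rho$ prematurely, all cancellations are algebraic and no geometric input beyond the definition of $\J$, $\rho$, and $\SC$ is needed.
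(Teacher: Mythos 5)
Your proof is correct and follows essentially the same strategy as the paper: rewrite $\Delta_{\sigma^{-2}g} = \sigma^2\Delta_g - (n-1)\sigma\nabla_{\grad_g(\sigma)}$, conjugate by $\sigma^{-\lambda}$, cancel the first- and second-order parts against $L(g,\sigma;\lambda)$, and identify the remaining zeroth-order term. The only real divergence is in how the constant term of the conjugated operator is evaluated: you expand $\Delta_g(\sigma^{-\lambda})$ directly via the chain rule, while the paper identifies the resulting quadratic polynomial in $\lambda$ by interpolation (its values at $\lambda=0$ and $\lambda=-1$ together with the leading coefficient, which it extracts via the adapted-basis Laplacian formula \eqref{Laplace-basis}); your route is slightly more direct and avoids that auxiliary formula.
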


\begin{proof} Let $\Con(g,\sigma;\lambda)$ denote the operator
$$
   \sigma^{\lambda-1} \circ \left(\Delta_{\sigma^{-2}g} -
   \lambda(n\!+\!\lambda) \id \right) \circ \sigma^{-\lambda}.
$$
The relation
$$
   \Delta_{\sigma^{-2}g} = \sigma^2 \Delta_g - (n\!-\!1)  \sigma \nabla_{\grad_g(\sigma)}
$$
shows that
\begin{align*}
   \Con(g,\sigma;\lambda) (u) & = \sigma^{\lambda+1} \Delta_g (\sigma^{-\lambda} u)
   - (n\!-\!1) \sigma^\lambda \nabla_{\grad_g(\sigma)}(\sigma^{-\lambda} u) - \lambda(n\!+\!\lambda) \sigma^{-1} u \\
   & = \sigma \Delta_g (u) - (n\!+\!2\lambda\!-\!1) \nabla_{\grad_g(\sigma)}(u)
   + \Con(g,\sigma;\lambda)(1) u
\end{align*}
for $u \in C^\infty(X^\circ)$. Thus, it only remains to calculate the constant term $\CT(\Con)(\lambda) \st
\Con(\lambda)(1) \in C^\infty(X^\circ)$ of $\Con(\lambda)$. Note that
\begin{align*}
   \CT(\Con)(\lambda) & = \sigma^{\lambda-1}(\Delta_{\sigma^{-2}g}
   - \lambda(n\!+\!\lambda) \id )(\sigma^{-\lambda}) \\
   & = \sigma^{\lambda+1} \Delta_{g} (\sigma^{-\lambda})
   + \lambda (n\!-\!1) \sigma^{-1} |\grad_g(\sigma)|^2
   - \lambda(n\!+\!\lambda) \sigma^{-1}
\end{align*}
shows that $\CT(\Con)(\lambda)$ is a quadratic polynomial in $\lambda$. It is
obvious that $\CT(\Con)(0)=0$. Next, we determine the leading coefficient of that
polynomial. We choose orthonormal bases $\left\{\partial_i \right\}$ on the
tangent spaces of the level hypersurfaces $\sigma^{-1}(c)$ of $\sigma$. The sets
$\sigma^{-1}(c)$ are smooth manifolds if $c$ is sufficiently small. $\NV$ is
perpendicular to these hypersurfaces. Let $\left\{ \alpha, dx^i \right\}$ be the
dual basis of $\left\{ \NV, \partial_i \right\}$. Then
\begin{equation}\label{Laplace-basis}
   \Delta_g (u) = \frac{1}{|\NV|^2} \nabla_{\NV}^2 (u) + \Delta_{g_\sigma}(u) + \frac{1}{|\NV|} H_\sigma \nabla_\NV (u)
   - \frac{1}{|\NV|^2} \langle du, \nabla_{\NV}(\NV) \rangle,
\end{equation}
where $-H_\sigma = \langle \alpha,\nabla_{\partial_i} (\partial_i) \rangle$ and
$\Delta_{g_\sigma}$ denotes the tangential Laplacians for the induced metrics on
the leaves $\sigma^{-1}(c)$; for more details, see Section \ref{Laplace-expansion}.
Since $\Delta_{g_\sigma} (\sigma^{-\lambda}) = 0$, it follows that the coefficient of $\lambda^2$
in $\sigma^{\lambda+1} \Delta_{g}(\sigma^{-\lambda})$ is given by
$\sigma^{-1} |\NV|^{-2} \nabla_\NV(\sigma)^2  = \sigma^{-1} |\NV|^2$.
Hence the leading coefficient of the quadratic polynomial $\CT(\Con)(\lambda)$ equals
$$
   \sigma^{-1 }|\NV|^2 - \sigma^{-1}.
$$
Finally, we calculate
$$
   \CT(\Con)(-1) = \Delta_g(\sigma) - (n-1) \sigma^{-1} |\NV|^2 + (n-1) \sigma^{-1}.
$$
These arguments prove that
\begin{equation}\label{C-CT}
   \CT(\Con)(\lambda) = \lambda \left((n\!+\!\lambda) \sigma^{-1} (|\NV|^2-1) - \Delta_g(\sigma)\right).
\end{equation}
Hence
\begin{align*}
   \CT(L)(\lambda) + \CT(\Con)(\lambda) & = \lambda (n\!+\!2\lambda\!-\!1) \rho - \lambda \sigma \J
   + \lambda \left((n\!+\!\lambda) \sigma^{-1} (|\NV|^2-1) - \Delta_g(\sigma)\right) \\
   & = \lambda (n\!+\!\lambda) \sigma^{-1}(2 \rho \sigma + |\NV|^2-1) \\
   & = \lambda (n\!+\!\lambda) \sigma^{-1} (\SC(g,\sigma)-1)
\end{align*}
by the definition of $\rho$. This completes the proof.
\end{proof}

The above proof shows the identity \eqref{CF} only for functions with support near the boundary $M$. 
This will be enough for all later applications. For simplicity, we shall interpret \eqref{CF} and similar identities in this 
way without further mentioning.

\begin{remark}\label{CF-PE}
Let $g_+ = r^{-2} (dr^2 + h_r)$ be a Poincar\'e-Einstein metric in normal form
relative to the metric $h$ on $M$. It lives on the space $(0,\varepsilon) \times M$
and satisfies $\Ric(g_+) = -n g_+$. Hence $\scal(g_+)=-n(n+1)$. Then
$\SC(dr^2+h_r,r;\lambda) = \SC(g_+,1;\lambda)=1$, and the conjugation formula reads
$$
   - L(dr^2+h_r,r;\lambda) =  r^{\lambda-1} \circ (\Delta_{g_+} - \lambda(n+\lambda) \id ) \circ r^{-\lambda}.
$$
We refer to \cite[Section 3]{FJO} for the discussion of the relation between this conjugation formula
and a formula in \cite{GZ}.
\end{remark}

\begin{remark}\label{CF-simple}
The conjugation formula is equivalent to the identity
\begin{equation}\label{CF-s}
   L(g,\sigma;\lambda) + \sigma^{\lambda-1} \circ \Delta_{\sigma^{-2}g} \circ \sigma^{-\lambda}
  = \lambda (n\!+\!\lambda) \sigma^{-1} \SC(g,\sigma) \id
\end{equation}
of operators acting on $C^\infty(X^\circ)$.
\end{remark}

The conformal covariance of the Laplace-Robin operator is an immediate consequence of these identities. 
More precisely, we have

\begin{cor}\label{LR-CC} The Laplace-Robin operator satisfies
\begin{equation}\label{CT-ID}
   L(\hat{g}, \hat{\sigma}; \lambda) \circ e^{\lambda\varphi} = e^{(\lambda-1)\varphi} \circ L(g,\sigma;\lambda),
   \; \lambda \in \C
\end{equation}
for all conformal changes $(\hat{g},\hat{\sigma}) = (e^{2\varphi}g,e^{\varphi}\sigma)$, $\varphi \in
C^\infty(X)$.
\end{cor}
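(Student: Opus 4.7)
The plan is to deduce the conformal covariance identity directly from the conjugation formula in Remark \ref{CF-simple}, which expresses the Laplace-Robin operator as
$$
   L(g,\sigma;\lambda) = \lambda(n+\lambda)\sigma^{-1}\SC(g,\sigma)\,\id - \sigma^{\lambda-1} \circ \Delta_{\sigma^{-2}g} \circ \sigma^{-\lambda}
$$
on $C^\infty(X^\circ)$. The point is that the two ingredients on the right transform in a manifestly controlled way under $(g,\sigma) \mapsto (\hat{g},\hat{\sigma}) = (e^{2\varphi}g, e^{\varphi}\sigma)$.

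First I would record two elementary invariances. The singular metric $\sigma^{-2}g$ is fixed by the substitution: $\hat{\sigma}^{-2}\hat{g} = e^{-2\varphi}\sigma^{-2} \cdot e^{2\varphi}g = \sigma^{-2}g$, and therefore the Laplacian is unchanged, $\Delta_{\hat{\sigma}^{-2}\hat{g}} = \Delta_{\sigma^{-2}g}$. Similarly, since $\SC(g,\sigma) = -\scal(\sigma^{-2}g)/(n(n+1))$ only depends on the invariant metric $\sigma^{-2}g$, we have $\SC(\hat{g},\hat{\sigma}) = \SC(g,\sigma)$ (this is noted right after \eqref{SC} in Section \ref{Yamabe}).

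Next I would write down $L(\hat{g},\hat{\sigma};\lambda)$ using the conjugation formula and simply compose on the right with $e^{\lambda\varphi}$. Using $\hat{\sigma}^{-1} = e^{-\varphi}\sigma^{-1}$, $\hat{\sigma}^{\lambda-1} = e^{(\lambda-1)\varphi}\sigma^{\lambda-1}$, and the crucial commutation identity $\hat{\sigma}^{-\lambda} \circ e^{\lambda\varphi} = e^{-\lambda\varphi}\sigma^{-\lambda} \circ e^{\lambda\varphi} = \sigma^{-\lambda}$ (valid because these are multiplication operators), the $\SC$-term becomes $e^{(\lambda-1)\varphi}\lambda(n+\lambda)\sigma^{-1}\SC(g,\sigma)$ and the Laplacian term becomes $e^{(\lambda-1)\varphi}\sigma^{\lambda-1} \circ \Delta_{\sigma^{-2}g} \circ \sigma^{-\lambda}$. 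Factoring out $e^{(\lambda-1)\varphi}$ on the left and applying the conjugation formula in reverse yields exactly $e^{(\lambda-1)\varphi} \circ L(g,\sigma;\lambda)$, which is \eqref{CT-ID}.

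There is essentially no hard step here: the conjugation formula has already done all the analytic work, and what remains is a short formal calculation using the two invariances together with the fact that multiplication operators commute. The only minor subtlety is that the conjugation formula is proved on $C^\infty(X^\circ)$ (functions on the open interior where $\sigma^{-\lambda}$ makes sense), while \eqref{CT-ID} is an identity of differential operators on $C^\infty(X)$. Since both sides of \eqref{CT-ID} are differential operators with coefficients that are smooth up to the boundary, and since they agree on the dense subspace of functions supported in $X^\circ$ where the conjugation argument applies, they agree as operators on $C^\infty(X)$ by continuity in the $C^\infty$ topology.
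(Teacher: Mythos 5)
Your proposal follows exactly the paper's route: the proof of Corollary~\ref{LR-CC} in the text is the one-liner ``It suffices to note that $\hat{\sigma}^{-2}\hat{g} = \sigma^{-2}g$ and $\SC(\hat{g},\hat{\sigma}) = \SC(g,\sigma)$,'' and you have simply unfolded that remark through the conjugation formula of Remark~\ref{CF-simple}. The one place you go slightly beyond the paper is in explicitly closing the gap from $C^\infty(X^\circ)$ to $C^\infty(X)$ via the coefficient-continuity argument; the paper flags this issue in the paragraph after the corollary but defers it to tractor calculus or \cite[Proposition~2.2]{FJO} rather than resolving it in place, so your observation is a legitimate (if small) tightening for the boundary-defining case.
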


\begin{proof} It suffices to note that $\hat{\sigma}^{-2} \hat{g} = \sigma^{-2} g$ and
$\SC(\hat{g},\hat{\sigma}) = \SC(g,\sigma)$.
\end{proof}

Strictly speaking, the above arguments prove the conformal covariance of the
operator $L(g,\sigma;\lambda)$ for boundary defining $\sigma$ when acting on
$C^\infty(X^\circ)$. In \cite{GW}, the conformal covariance of the operator
$L(g,\sigma;\lambda)$ for {\em any} pair $(g,\sigma)$ acting on $C^\infty(X)$ follows from
its interpretation in terms of tractor calculus. For a direct proof, see \cite[Proposition 2.2]{FJO}.

The following consequence of the conjugation formula will be of central significance
in the rest of the paper. We continue to assume that $\sigma$ is a boundary defining function
and statements are valid near $M$.

\begin{cor}\label{main-c-c}
It holds
\begin{equation}\label{L-Delta}
   - L(g,\sigma;\lambda) = \sigma^{\lambda-1} \circ (\Delta_{\sigma^{-2}g} - \lambda(n+\lambda) \id )
   \circ \sigma^{-\lambda}
\end{equation}
for $\lambda \in \C$ iff $\SC(g,\sigma) = 1$. More generally, it holds
$$
  -L(g,\sigma;\lambda) = \sigma^{\lambda-1} \circ (\Delta_{\sigma^{-2}g}
  - \lambda(n+\lambda) \id ) \circ \sigma^{-\lambda} + O(\sigma^n)
$$
if $\sigma$ satisfies  $\SCY$. These identities are identities of operators acting on $C^\infty(X^\circ)$.
\end{cor}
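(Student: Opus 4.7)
The plan is to read off both statements directly from the conjugation formula established in Theorem \ref{MCF}. That identity reads
$$
   L(g,\sigma;\lambda) + \sigma^{\lambda-1} \circ \left(\Delta_{\sigma^{-2}g} - \lambda(n+\lambda) \id\right) \circ \sigma^{-\lambda}
   = \lambda(n+\lambda) \sigma^{-1}(\SC(g,\sigma)-1) \id,
$$
so the whole question is when the right-hand side vanishes (or becomes small) as an operator on $C^\infty(X^\circ)$.

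For the first assertion, I would argue in both directions. The \emph{if} direction is immediate: if $\SC(g,\sigma)=1$, the right-hand side is identically zero and rearranging gives the desired equality \eqref{L-Delta} for every $\lambda \in \C$. For the \emph{only if} direction, assume \eqref{L-Delta} holds for all $\lambda$. Applying both sides to the constant function $1$ and comparing with Theorem \ref{MCF} yields
$$
  \lambda(n+\lambda) \sigma^{-1}(\SC(g,\sigma)-1) = 0
$$
on $X^\circ$; choosing any $\lambda$ with $\lambda(n+\lambda)\neq 0$ forces $\SC(g,\sigma)=1$ on $X^\circ$, and by smoothness up to the boundary this extends to all of $X$ (near $M$).

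For the second assertion, I would use the defining property of condition $\SCY$ from Definition \ref{Y-smooth}, namely
$$
   \SC(g,\sigma) - 1 = R_{n+1}\, \sigma^{n+1}
$$
for some smooth $R_{n+1}$. Substituting this into the conjugation formula gives
$$
   L(g,\sigma;\lambda) + \sigma^{\lambda-1} \circ \left(\Delta_{\sigma^{-2}g} - \lambda(n+\lambda) \id\right) \circ \sigma^{-\lambda}
   = \lambda(n+\lambda)\, R_{n+1}\, \sigma^n,
$$
which is an operator of the form $O(\sigma^n)$ (viewed as a multiplication operator acting on smooth functions with support near $M$), yielding the claimed approximate identity.

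There is no serious obstacle here: once Theorem \ref{MCF} is in hand the corollary is a cosmetic rearrangement and a reading off of orders of vanishing. The only small point worth flagging is that the identities are interpreted as operator equalities on $C^\infty(X^\circ)$ (or on functions supported near $M$), so the extraction of $\SC(g,\sigma)=1$ from \eqref{L-Delta} should be done on the open interior where $\sigma^{-1}$ makes sense, and then transferred to a neighborhood of $M$ by continuity.
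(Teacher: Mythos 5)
Your proof is correct and takes the same route the paper intends: Corollary \ref{main-c-c} is stated without proof precisely because it is the direct rearrangement of the conjugation formula in Theorem \ref{MCF}, and your treatment of the two directions and of the $\SCY$ case via $\SC(g,\sigma)-1 = R_{n+1}\sigma^{n+1}$ is exactly the intended reading. The only-if step (applying to $1$, choosing $\lambda$ with $\lambda(n+\lambda)\neq 0$) and the remark about working on $X^\circ$ are fine and match the paper's conventions.
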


For $N \in \N$, we define
\begin{equation}\label{LN}
   L_N(g,\sigma;\lambda) \st L(g,\sigma;\lambda\!-\!N\!+\!1) \circ \cdots \circ L(g,\sigma;\lambda).
\end{equation}
In these terms, iterated application of \eqref{L-Delta} implies

\begin{cor}\label{der-c-c} Assume that $\SC(g,\sigma)=1$. Then it holds
\begin{equation*}
    \sigma^{-\frac{n}{2}-N} \circ 
    \prod_{j=0}^{2N-1} \left(\Delta_{\sigma^{-2}g} + \left(\frac{n}{2}\!+\!N\!-\!j\right)
     \left(\frac{n}{2}\!-\!N\!+\!j\right) \id \right) \circ \sigma^{\frac{n}{2}-N}
     = L_{2N} \left(g,\sigma;-\frac{n}{2}\!+\!N\right)
\end{equation*}
for $2N \le n$. In particular, we have
\begin{equation}
    \sigma^{-n} \circ  
\prod_{j=0}^{n-1} \left(\Delta_{\sigma^{-2}g} + (n\!-\!j)j \id \right) = L_n (g,\sigma;0)
\end{equation}
\end{cor}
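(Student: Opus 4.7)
The plan is to iterate the conjugation formula of Corollary \ref{main-c-c} in its form
$$
   -L(g,\sigma;\mu) = \sigma^{\mu-1} \circ \left( \Delta_{\sigma^{-2}g} - \mu(n+\mu) \id \right) \circ \sigma^{-\mu}, \; \mu \in \C,
$$
which is an exact equality (and not just modulo $O(\sigma^n)$) under the standing hypothesis $\SC(g,\sigma)=1$. Writing $A_\mu$ for the right-hand side, I plan to expand
$$
   L_{2N}(g,\sigma;\lambda) = L(\lambda\!-\!2N\!+\!1) \circ \cdots \circ L(\lambda\!-\!1) \circ L(\lambda)
$$
as $(-1)^{2N}\, A_{\lambda-2N+1} \circ A_{\lambda-2N+2} \circ \cdots \circ A_\lambda$ and use that $(-1)^{2N}=1$.

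Next I would observe the telescoping of the conjugating powers of $\sigma$. For two consecutive factors one has
$$
   A_{\mu-1} \circ A_\mu = \sigma^{\mu-2}(\Delta_{\sigma^{-2}g} - (\mu\!-\!1)(n\!+\!\mu\!-\!1)\id)\,\sigma^{1-\mu}\sigma^{\mu-1}(\Delta_{\sigma^{-2}g} - \mu(n\!+\!\mu)\id)\,\sigma^{-\mu},
$$
and the middle factor $\sigma^{1-\mu}\sigma^{\mu-1}=1$ collapses. Iterating this telescoping across all $2N$ factors yields
$$
   L_{2N}(g,\sigma;\lambda) = \sigma^{\lambda-2N} \circ \prod_{k=0}^{2N-1} \left( \Delta_{\sigma^{-2}g} - (\lambda\!-\!k)(n\!+\!\lambda\!-\!k)\id \right) \circ \sigma^{-\lambda},
$$
with the product ordered from $k=2N-1$ on the outside down to $k=0$ on the inside; note that the operators $\Delta_{\sigma^{-2}g} - (\lambda-k)(n+\lambda-k)\id$ for different $k$ commute with each other (they differ only by scalars), so this ordering is irrelevant.

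Finally I would specialize. Setting $\lambda = -\tfrac{n}{2}+N$, the exterior exponent becomes $\lambda-2N = -\tfrac{n}{2}-N$ and $-\lambda = \tfrac{n}{2}-N$, while each scalar simplifies via
$$
   (\lambda\!-\!k)(n\!+\!\lambda\!-\!k) = \left(N\!-\!k\!-\!\tfrac{n}{2}\right)\left(N\!-\!k\!+\!\tfrac{n}{2}\right) = -\left(\tfrac{n}{2}\!+\!N\!-\!j\right)\left(\tfrac{n}{2}\!-\!N\!+\!j\right)
$$
after the reindexing $j=k$, matching the signs in the statement. This yields the first displayed identity. Taking $2N=n$ (so $\lambda=0$) gives $\sigma^{-n/2-N}=\sigma^{-n}$, $\sigma^{n/2-N}=1$, and the scalars reduce to $(n-j)j$, producing the second identity $L_n(g,\sigma;0) = \sigma^{-n}\circ\prod_{j=0}^{n-1}(\Delta_{\sigma^{-2}g}+j(n-j)\id)$.

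There is no substantive obstacle here; the only points requiring care are the bookkeeping of the parameter shift in the composition $L_{2N}(\lambda)$, checking that $(-1)^{2N}=1$ absorbs the sign from each application of $-L(\mu)=A_\mu$, and the re-parametrization $k\leftrightarrow j$ that brings the scalar factors into the symmetric form stated in the corollary. Everything is a direct consequence of the conjugation formula and a telescoping identity on the conjugating powers of $\sigma$.
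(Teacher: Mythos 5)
Your proof is correct and follows essentially the same route the paper indicates: the text derives Corollary~\ref{der-c-c} by ``iterated application of \eqref{L-Delta},'' which is exactly your telescoping of the conjugation formula from Corollary~\ref{main-c-c}, followed by the specialization $\lambda=-\tfrac{n}{2}+N$. The bookkeeping of the parameter shift in $L_{2N}(\lambda)=L(\lambda-2N+1)\circ\cdots\circ L(\lambda)$, the cancellation of the sign via $(-1)^{2N}=1$, and the factorization $(\lambda-k)(n+\lambda-k)=-\left(\tfrac{n}{2}+N-j\right)\left(\tfrac{n}{2}-N+j\right)$ at $\lambda=-\tfrac{n}{2}+N$ are all correct.
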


The conjugation formula also sheds new light on the fact that the formal adjoint of a
Laplace-Robin operator is another Laplace-Robin operator. 

\begin{cor}\label{L-adjoint} The Laplace-Robin operator satisfies
\begin{equation}\label{L-ad}
   L(g,\sigma;\lambda)^* = L(g,\sigma;-\lambda\!-\!n), \; \lambda \in \C,
\end{equation}
where $^*$ denotes the adjoint operator with respect to the Riemannian volume of
$g$. More precisely, it holds
\begin{equation}\label{adjoint}
   \int_X L(g,\sigma;\lambda)(\varphi) \psi dvol_g = \int_X \varphi L(g,\sigma;-\lambda\!-\!n)(\psi) dvol_g
\end{equation}
for $\varphi, \psi \in C_c^\infty(X^\circ)$. 
\end{cor}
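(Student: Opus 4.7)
The plan is to derive the adjoint identity directly from the conjugation formula of Theorem \ref{MCF}, using the self-adjointness of $\Delta_{\sigma^{-2}g}$ with respect to its own volume form. The key observation is that the eigenvalue $\lambda(n+\lambda)$ is symmetric under the substitution $\lambda \mapsto -\lambda - n$, and the scalar multiplication term $\lambda(n+\lambda)\sigma^{-1}(\SC(g,\sigma)-1)$ in \eqref{CF} shares this symmetry; moreover, it is trivially self-adjoint as multiplication by a real function.

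First, I would rewrite Theorem \ref{MCF} as
\[
   L(g,\sigma;\lambda) = -\sigma^{\lambda-1} \circ \bigl(\Delta_{\sigma^{-2}g} - \lambda(n+\lambda)\id\bigr) \circ \sigma^{-\lambda} + \lambda(n+\lambda) \sigma^{-1}(\SC(g,\sigma)-1)\id,
\]
and note the basic volume relation $dvol_g = \sigma^{n+1}\,dvol_{\sigma^{-2}g}$. Then, for $\varphi,\psi \in C_c^\infty(X^\circ)$, pair the first (operator-theoretic) piece with $\psi$ against $dvol_g$ and convert to $dvol_{\sigma^{-2}g}$:
\[
   \int_X \sigma^{\lambda-1}\bigl(\Delta_{\sigma^{-2}g}-\lambda(n+\lambda)\id\bigr)(\sigma^{-\lambda}\varphi)\,\psi\,dvol_g
   = \int_X \bigl(\Delta_{\sigma^{-2}g}-\lambda(n+\lambda)\id\bigr)(\sigma^{-\lambda}\varphi)\,\sigma^{n+\lambda}\psi\,dvol_{\sigma^{-2}g}.
\]
Since $\varphi,\psi$ have compact support in $X^\circ$ (so both $\sigma^{-\lambda}\varphi$ and $\sigma^{n+\lambda}\psi$ are smooth and compactly supported away from $M$), there are no boundary terms, and the formal self-adjointness of $\Delta_{\sigma^{-2}g}$ with respect to $dvol_{\sigma^{-2}g}$ allows me to move the differential operator onto the second factor.

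Next, I set $\mu = -\lambda - n$, observe $\mu(n+\mu) = \lambda(n+\lambda)$ and $\sigma^{n+\lambda} = \sigma^{-\mu}$, and verify that after moving $\Delta_{\sigma^{-2}g} - \lambda(n+\lambda)\id$ across, the resulting expression is precisely the operator-theoretic piece of $L(g,\sigma;\mu)$ applied to $\psi$, paired with $\varphi$ against $dvol_g$. The multiplication term $\lambda(n+\lambda)\sigma^{-1}(\SC-1)$ contributes identically on both sides since it is symmetric in $\lambda \leftrightarrow \mu$ and is a multiplication operator by a real function. Assembling the two contributions gives \eqref{adjoint}.

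The only subtlety I anticipate is the compact support assumption in $X^\circ$, which is exactly what the statement of Corollary \ref{L-adjoint} requires and ensures no boundary contribution when integrating $\Delta_{\sigma^{-2}g}$ by parts; this is really not an obstacle but a bookkeeping point. The genuine conceptual content is just the identification of $L(g,\sigma;\lambda)$ with a conjugation of $\Delta_{\sigma^{-2}g}$ plus a real multiplication operator, after which $\lambda \mapsto -\lambda-n$ is forced by the symmetry of $\lambda(n+\lambda)$.
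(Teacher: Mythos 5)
Your proposal is correct and follows essentially the same route as the paper: conjugate $L(\lambda)$ to $\Delta_{\sigma^{-2}g}$ via Theorem~\ref{MCF}, switch to $dvol_{\sigma^{-2}g}$ with $dvol_g = \sigma^{n+1}dvol_{\sigma^{-2}g}$, exploit the self-adjointness of $\Delta_{\sigma^{-2}g}$, and then reconjugate back using the symmetry of $\lambda(n+\lambda)$ under $\lambda\mapsto-\lambda-n$. The only cosmetic difference is that the paper works from the rearranged identity \eqref{CF-s} of Remark~\ref{CF-simple} (with $\SC$ rather than $\SC-1$), which slightly streamlines the bookkeeping of the multiplication term.
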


\begin{proof} Let $\varphi,\psi \in C_c^\infty(X^\circ)$. The identity \eqref{CF-s} yields
\begin{align*}
   & \int_X L(g,\sigma;\lambda)(\varphi) \psi dvol_g \\
   & = - \int_X \sigma^{\lambda-1} \Delta_{\sigma^{-2}g} (\sigma^{-\lambda} \varphi) \psi dvol_g
   + \lambda (n\!+\!\lambda) \int_X \sigma^{-1} \SC(g,\sigma) \varphi \psi dvol_g.
\end{align*}
Note that $\lambda(n+\lambda)$ is invariant under the substitution $\lambda \mapsto
-\lambda-n$. We rewrite the first integral in terms of volumes with respect to the
metric $\sigma^{-2}g$ and apply the self-adjointness of $\Delta_{\sigma^{-2}g}$ with
respect to the volume of the metric $\sigma^{-2}g$. Using $dvol_{\sigma^{-2}g} =
\sigma^{-n-1} dvol_g$, we find
$$
   - \int_X \sigma^{-\lambda} \varphi  \Delta_{\sigma^{-2}g} (\sigma^{\lambda+n} \psi) dvol_{\sigma^{-2} g}
   = - \int_X \varphi \sigma^{-\lambda-n-1}  \Delta_{\sigma^{-2}g} (\sigma^{\lambda+n} \psi) dvol_g.
$$
Now another application of \eqref{CF-s} implies the assertion \eqref{adjoint}.
\end{proof}

For later applications, we need an extension of Corollary \ref{L-adjoint} to another class of test functions. 
The proof of the following result will be given in Section \ref{AC-RVC}.

\begin{prop}\label{adjoint-gen}
The identity \eqref{adjoint} continues to be true for $\psi \in C^\infty(X)$ and
$\varphi \in C^2(X)$ so that $\iota^*(\varphi)=0$.
\end{prop}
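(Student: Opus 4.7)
The plan is to prove the identity by direct integration by parts on $X$, keeping track of every boundary contribution on $M$ and checking that each vanishes as a consequence of the two hypotheses $\iota^*(\sigma)=0$ and $\iota^*(\varphi)=0$. I would write
$$L(g,\sigma;\lambda) = (n\!+\!2\lambda\!-\!1)\nabla_{\NV} + (n\!+\!2\lambda\!-\!1)\lambda\rho - \sigma\Delta_g - \sigma\lambda\J,$$
with $\NV=\grad_g(\sigma)$, and integrate each piece against $\psi$ separately.

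For the first-order part, the divergence theorem gives
$$\int_X \nabla_{\NV}(\varphi)\psi\, dvol_g = -\int_X \varphi\bigl(\nabla_{\NV}(\psi) + \Delta_g(\sigma)\psi\bigr)\, dvol_g + \int_M \varphi\psi\,\langle\NV,\nu\rangle\, dS,$$
and the boundary integral vanishes because $\iota^*(\varphi)=0$. For the second-order part, Green's identity gives
$$-\int_X \sigma\Delta_g(\varphi)\psi\, dvol_g = -\int_X \varphi\Delta_g(\sigma\psi)\, dvol_g - \int_M \bigl(\partial_\nu\varphi\cdot\sigma\psi - \varphi\,\partial_\nu(\sigma\psi)\bigr)\, dS,$$
and both surface integrals vanish: the first because $\iota^*(\sigma)=0$, the second because $\iota^*(\varphi)=0$. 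The $C^2$ regularity of $\varphi$ is exactly what is needed for these classical surface integrals to make sense; no additional regularity theory is required.

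Next I would expand $\Delta_g(\sigma\psi) = \Delta_g(\sigma)\psi + 2\nabla_{\NV}(\psi) + \sigma\Delta_g(\psi)$, use the defining relation $(n\!+\!1)\rho = -\Delta_g(\sigma) - \sigma\J$ to eliminate $\Delta_g(\sigma)$ in favour of $\rho$ and $\sigma\J$, and regroup the resulting bulk integrand as a linear combination of $\varphi\nabla_{\NV}(\psi)$, $\varphi\sigma\Delta_g(\psi)$, $\varphi\psi\rho$ and $\varphi\psi\sigma\J$. A short calculation then matches this combination to the expansion of $L(g,\sigma;-\lambda-n)(\psi)$ paired with $\varphi$; the invariance of $\lambda(n\!+\!\lambda)$ under $\lambda\mapsto -\lambda-n$ is reflected in the balancing of the $\rho$ and $\sigma\J$ coefficients.

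The main obstacle is only the coefficient bookkeeping of this final algebraic step, which one must execute carefully since contributions to $\rho$ and $\sigma\J$ arise both from the original zeroth-order terms of $L(\lambda)$ and from the conversion of $\Delta_g(\sigma)$. A cleaner alternative would be to integrate the identity already proven in Corollary \ref{L-adjoint} over $X_\epsilon=\{\sigma\ge\epsilon\}$ and let $\epsilon\to 0$: every boundary contribution on $\{\sigma=\epsilon\}$ acquires a factor of $\epsilon$, either through $\sigma$ itself or through the Taylor estimate $\varphi = O(\sigma)$ furnished by $\iota^*(\varphi)=0$, and therefore vanishes in the limit.
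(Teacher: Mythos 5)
Your proof is correct, and the coefficient bookkeeping you defer does close: after the two integrations by parts (the boundary integrals vanishing as you say), the coefficients of $\varphi\nabla_{\NV}(\psi)$, $\varphi\sigma\Delta_g(\psi)$, $\varphi\psi\Delta_g(\sigma)$, $\varphi\psi\rho$, $\varphi\psi\sigma\J$ are $-(n+2\lambda+1)$, $-1$, $-(n+2\lambda)$, $\lambda(n+2\lambda-1)$, $-\lambda$; substituting $\Delta_g(\sigma)=-(n+1)\rho-\sigma\J$ turns the $\rho$- and $\sigma\J$-coefficients into $(n+2\lambda+1)(\lambda+n)$ and $\lambda+n$, which are precisely the coefficients in $L(g,\sigma;-\lambda-n)(\psi)$. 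Conceptually your argument is the same as the paper's: integrate by parts, kill boundary terms via $\iota^*(\varphi)=0$ and $\iota^*(\sigma)=0$, and reduce everything to the defining equation of $\rho$. The difference is purely one of presentation. The paper works in adapted coordinates $(s,x)$ with $\eta^*(g)=a^{-1}ds^2+h_s$, which lets it split the integration by parts into a one-dimensional step in $s$ and a tangential step, and — because it keeps track of the boundary defects rather than just observing that they vanish — obtain the explicit formula \eqref{adjoint-g} for the bilinear concomitant along the way. Your coordinate-free version using the divergence theorem and Green's identity directly on $X$ is cleaner if one only wants the vanishing statement, but one loses the byproduct \eqref{adjoint-g}; you could recover it by retaining the boundary integrals instead of dropping them. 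Your closing remark about integrating over $X_\epsilon=\{\sigma\ge\epsilon\}$ and sending $\epsilon\to 0$ is a sound alternative, but as stated it still needs the same concomitant computation on each slice, so it does not actually save work.
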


The proof of this result actually shows that 
\begin{align}\label{adjoint-g}
   &\int_X L(g,\sigma;\lambda)(\varphi) \psi dvol_g - \int_X \varphi L(g,\sigma;-\lambda\!-\!n)(\psi) dvol_g \notag \\
   & = (n\!+\!2\lambda) \int_M \iota^*(\varphi \psi |\NV|) dvol_h
\end{align}
for $\varphi , \psi \in C^2(X)$. The proof of this identity rests on a calculation of the left-hand side 
(see also \cite[Theorem 2.2]{GW-reno}).

\index{$L_N(g,\sigma;\lambda)$}

Finally, we derive some basic commutator relations. 

\begin{cor}\label{comm} For any $N \in \N$, it holds
$$
   L(g,\sigma;\lambda\!+\!N) \circ \sigma^N - \sigma^N \circ L(g,\sigma;\lambda)
   = N(n\!+\!2\lambda\!+\!N) \sigma^{N-1} \SC(g,\sigma) \id.
$$
In particular, it holds
\begin{equation}\label{sl2-comm}
   L(g,\sigma;\lambda\!+\!1) \circ \sigma - \sigma \circ L(g,\sigma;\lambda) =
   (n\!+\!2\lambda\!+\!1) \SC(g,\sigma) \id.
\end{equation}
Hence
\begin{align}\label{LN-gen-b}
  & L_N(g,\sigma;\lambda) \circ \sigma - \sigma \circ L_N(g,\sigma;\lambda\!-\!1)
  = N(n\!+\!2\lambda\!-\!N) L_{N-1}(g,\sigma;\lambda-1)  \notag \\
  & + \sum_{j=1}^N (n\!+\!2\lambda\!-\!2j\!+\!1) \underbrace{L(g,\sigma;\lambda\!-\!N\!+\!1)
  \circ \cdots \circ (\SC(g,\sigma)-1) \circ \cdots \circ L(g,\sigma;\lambda\!-\!1)}_{N factors}.
\end{align}
Moreover, if $\SC(g,\sigma)$ is nowhere zero, then for any $N \in \N$ it holds
\begin{equation}\label{LN-gen-tilde}
   \tilde{L}_N(g,\sigma;\lambda\!+\!1) \circ \sigma - \sigma \circ \tilde{L}_N(g,\sigma;\lambda)
  =  N(n\!+\!2\lambda\!-\!N\!+\!2) \tilde{L}_{N-1}(g,\sigma;\lambda),
\end{equation}
where
\begin{equation}\label{LR-general}    \index{$\tilde{L}(g,\sigma)$}
   \tilde{L}(g,\sigma;\lambda) \st L(g,\sigma;\lambda) \circ \SC(g,\sigma)^{-1}
\end{equation}
and
\begin{equation}\label{LN-tilde}      \index{$\tilde{L}_N(g,\sigma)$}
   \tilde{L}_N(g,\sigma;\lambda) \st
   \tilde{L}(g,\sigma;\lambda\!-\!N\!+\!1) \circ \cdots \circ \tilde{L}(g,\sigma;\lambda).
\end{equation}
\end{cor}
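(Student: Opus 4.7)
The plan is to deduce all four identities from the conjugation formula \eqref{CF-s}, treating $\sigma$ and $\SC(g,\sigma)$ as (possibly singular) multiplication operators.

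First, for the $\sigma^N$-commutator, I would simply expand both sides using \eqref{CF-s}, i.e.\
\[
   L(g,\sigma;\lambda) = -\sigma^{\lambda-1}\circ \Delta_{\sigma^{-2}g}\circ \sigma^{-\lambda} + \lambda(n+\lambda)\,\sigma^{-1}\SC\,\id
\]
on $C^\infty(X^\circ)$. The differential parts of $L(\lambda+N)\circ\sigma^N$ and $\sigma^N\circ L(\lambda)$ agree, since $\sigma^{\lambda+N-1}\Delta_{\sigma^{-2}g}\sigma^{-\lambda-N}\cdot\sigma^N = \sigma^N\cdot \sigma^{\lambda-1}\Delta_{\sigma^{-2}g}\sigma^{-\lambda}$, so only the scalar parts survive, yielding $[(\lambda+N)(n+\lambda+N)-\lambda(n+\lambda)]\sigma^{N-1}\SC = N(n+2\lambda+N)\sigma^{N-1}\SC$. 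This is the first displayed identity, and the case $N=1$ is \eqref{sl2-comm}. Since both sides are honest differential operators, the relation initially valid on $C^\infty(X^\circ)$ extends to $C^\infty(X)$ by density.

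To obtain \eqref{LN-gen-b} I would iterate \eqref{sl2-comm}. Writing $L_N(\lambda) = L_{N-j}(\lambda-j)\circ L_j(\lambda)$, at the $j$-th step the rightmost remaining $L$-factor, $L(\lambda-j+1)$, is moved past $\sigma$, contributing a scalar remainder $(n+2\lambda-2j+1)\SC$ inserted between the two truncated products. After $N$ steps one lands at
\[
   L_N(\lambda)\circ\sigma = \sigma\circ L_N(\lambda-1) + \sum_{j=1}^{N}(n+2\lambda-2j+1)\, L_{N-j}(\lambda-j)\circ \SC\circ L_{j-1}(\lambda-1).
\]
Splitting $\SC = 1 + (\SC-1)$, the $1$-contributions collapse via $L_{N-j}(\lambda-j)\circ L_{j-1}(\lambda-1) = L_{N-1}(\lambda-1)$, and their coefficients sum to $\sum_{j=1}^N(n+2\lambda-2j+1) = N(n+2\lambda-N)$; the $(\SC-1)$-contributions are precisely the displayed sum in \eqref{LN-gen-b}.

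Finally, for \eqref{LN-gen-tilde} the decisive observation is that $\SC^{-1}$ commutes with $\sigma$ as a multiplication operator. Right-composing \eqref{sl2-comm} with $\SC^{-1}$ then converts it into $\tilde{L}(\mu)\circ\sigma - \sigma\circ\tilde{L}(\mu-1) = (n+2\mu-1)\,\id$, so iteration no longer generates any $(\SC-1)$-correction. Running the same telescoping as before, now with the pure scalar remainder $(n+2\lambda-2j+3)\,\id$ at step $j$ and the collapsing identity $\tilde{L}_{N-j}(\lambda-j+1)\circ\tilde{L}_{j-1}(\lambda) = \tilde{L}_{N-1}(\lambda)$, produces $\bigl(\sum_{j=1}^N(n+2\lambda-2j+3)\bigr)\tilde{L}_{N-1}(\lambda) = N(n+2\lambda-N+2)\,\tilde{L}_{N-1}(\lambda)$. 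The only non-routine aspect of the whole argument is careful bookkeeping of parameter shifts during the iteration; I do not anticipate any substantive analytic obstacle.
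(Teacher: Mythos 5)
Your proof is correct and follows the same route as the paper: both rest on the conjugation formula \eqref{CF-s}, whose differential parts cancel in $L(\lambda+N)\circ\sigma^N - \sigma^N\circ L(\lambda)$, leaving only the scalar terms, which give $N(n+2\lambda+N)\sigma^{N-1}\SC(g,\sigma)$. The paper dispatches \eqref{LN-gen-b} and \eqref{LN-gen-tilde} with ``the remaining claims are consequences''; your telescoping iteration and the index bookkeeping $L_{N-j}(\lambda-j)\circ L_{j-1}(\lambda-1)=L_{N-1}(\lambda-1)$ (and its $\tilde{L}$-analogue) correctly fill in exactly those omitted details.
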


\begin{proof} The identity \eqref{CF-s} implies
\begin{align*}
   & L(g,\sigma;\lambda) \circ \sigma^N - \sigma^N \circ L(g,\sigma;\lambda-N) \\
   & = \lambda (n\!+\!\lambda) \sigma^{N-1} \SC(g,\sigma) \id - (\lambda\!-\!N)
   (n\!+\!\lambda\!-\!N) \sigma^{N-1} \SC(g,\sigma) \id \\
   & = N(n\!+\!2\lambda\!-\!N) \sigma^{N-1} \SC(g,\sigma) \id.
\end{align*}
This proves the first commutator relation. The remaining claims are
consequences. This completes the proof.
\end{proof}

The above commutator relations substantially simplify if $\SC=1$. 

Although we proved the identities in Corollary \ref{comm} as identities of operators acting on $C^\infty(X^\circ)$, 
they are also valid for operators acting on $C^\infty(X)$ (\cite[Section 3]{GW}).

\section{Symmetry breaking operators}\label{SBO}

In the present section, we discuss some representation theoretical aspects of the
results in Section \ref{CFormula}.

The simplest special case of the Laplace-Robin operator $L$ appears for the
hyperplane $M=\R^n$ in $X=\R^{n+1}$ with the flat Euclidean metric $g_0$. Let $M$ be
given by the zero locus of the defining function $\sigma_0 = x_{n+1}$. We shall also
write $\sigma_0=r$ and $g_+ = r^{-2} g_0$. Then $\J = \rho = 0$ and we obtain
\begin{equation}\label{L-flat}
   L(g_0,\sigma_0;\lambda) = (n+2\lambda-1) \partial_{n+1}
   - x_{n+1} \Delta_{\R^{n+1}}: C^\infty(\R^{n+1}) \to C^\infty(\R^{n+1}).
\end{equation}
An easy calculation shows the conjugation formula
$$
   L(g_0,r;\lambda) = r^{\lambda-1} \circ (-\Delta_{g_+} + \lambda(n+\lambda)) \circ r^{-\lambda}.
$$
It implies that the operator $L(g_0,r;\lambda)$ is an intertwining operator for
spherical principal series representations. Indeed, let $\gamma \in SO(1,n+1)$ be an
isometry of the hyperbolic metric $g_+=r^{-2} g_0$ acting on the upper-half space $r
> 0$. Then we calculate
\begin{align*}
   & L(g_0,r;\lambda) \left( \left(\frac{\gamma_*(r)}{r}\right)^{-\lambda} \gamma_*(u)\right) \\
   & = r^{\lambda-1} (-\Delta_{g_+} + \lambda(n+\lambda)) \left(r^{-\lambda}
   \left(\frac{\gamma_*(r)}{r}\right)^{-\lambda} \gamma_*(u)\right) \\
   & = r^{\lambda-1} (-\Delta_{g_+} + \lambda(n+\lambda)) (\gamma_*(r^{-\lambda} u)) \\
   & = r^{\lambda-1}\gamma_* (-\Delta_{g_+} + \lambda(n+\lambda))(r^{-\lambda} u)) \\
   & = \left(\frac{\gamma_*(r)}{r}\right)^{-\lambda+1} \gamma_* (r^{\lambda-1}
   (-\Delta_{g_+}+\lambda(n+\lambda))(r^{-\lambda} u) \\
   & = \left(\frac{\gamma_*(r)}{r}\right)^{-\lambda+1} \gamma_* L(g_0,r;\lambda)(u).
\end{align*}
In other words, it holds
\begin{equation}\label{rep-flat}
   L(g_0,r;\lambda) \circ \pi^0_{-\lambda}(\gamma) = \pi^0_{-\lambda+1}(\gamma) \circ L(g_0,r;\lambda)
\end{equation}
with  \index{$\pi_\lambda^0$ \quad non-compact model of spherical principal series representation}
$$
   \pi^0_\lambda(\gamma) \st \left( \frac{\gamma_*(r)}{r} \right)^\lambda \gamma_*.
$$
Note that
$$
   \frac{\gamma_*(r)}{r} = e^{\Phi_\gamma},
$$
where $\Phi_\gamma$ is the conformal factor of the conformal transformation induced
by $\gamma$ with respect to the Euclidean metric, i.e., $\gamma_*(g_0) =
e^{2\Phi_\gamma} g_0$. The representation $\pi_\lambda^0(\gamma)$ is actually
well-defined for all $\gamma \in SO(1,n+2)$ acting on $\R^{n+1}$ (viewed as the
boundary of hyperbolic space of dimension $n+2$). However, the intertwining property
\eqref{rep-flat} holds true only for the subgroup of $SO(1,n+1)$ leaving the
boundary $r=0$ of the upper half-space invariant. The fact that $L(g_0,r;\lambda)$
is an intertwining operator for a subgroup of the conformal group of the Euclidean
metric on $\R^{n+1}$ connects it with the theory of symmetry breaking operators. In
fact, it follows from the above that the compositions
$$
   D_N(\lambda) \st \iota^* L(\lambda-N+1) \circ \cdots \circ L(\lambda):
   C^\infty(\R^{n+1}) \to C^\infty(\R^n), \; N \in \N
$$
satisfy
$$
   D_N(\lambda) \circ \pi_{-\lambda}^0(\gamma) = \pi_{-\lambda+N}^{0 \prime}(\gamma)
   \circ D_N(\lambda), \; \gamma \in SO(1,n+1)
$$
and Clerc \cite{C} proved that $D_N(\lambda)$ coincides with the symmetry breaking
operator introduced in \cite[Chapter 5]{J1}.\footnote{$\pi_\lambda^{0 \prime}$
denotes the analogous representation on functions on the subspace $\R^n$.}

Similarly, let $M = S^n$ be an equatorial subsphere of $X=S^{n+1}$ with the round
metric $g$. Let $M$ be defined as the zero locus of the height function $\sigma=\He$
being defined as the restriction of $x_{n+2}$ to $S^{n+1}$. Then $\J =
\frac{n+1}{2}$ and           \index{$\He$ \quad height}
$$
   (n+1) \rho = - \Delta_{S^{n+1}} \He + \J \He = -(n+1) \He + \frac{n+1}{2} \He = - \frac{n+1}{2} \He
$$
using the fact that $\He$ is an eigenfunction of the Laplacian on the sphere
$S^{n+1}$. Thus, $\rho = \frac{1}{2} \He$ and we obtain
\begin{equation}\label{L-sphere}
   L(g,\He;\lambda) = (n+2\lambda-1) \nabla_{\grad (\He)} - \He \Delta_{S^{n+1}}
   + \lambda(\lambda+1) \He: C^\infty(S^{n+1}) \to C^\infty(S^{n+1}).
\end{equation}
A calculation shows that
$$
   L(g,\He;\lambda) = \He^{\lambda-1} \circ (-\Delta_{\He^{-2}g} + \lambda(n+\lambda)) \circ \He^{-\lambda}.
$$
Again, the operator $L(g,\He;\lambda)$ is an intertwining operator for spherical
principal series representations. Indeed, it holds
$$
   L(g,\He;\lambda) \circ \pi_{-\lambda}(\gamma) = \pi_{-\lambda+1}(\gamma) \circ L(g,\He;\lambda)
$$
for all $\gamma \in SO(1,n+1)$ acting on the upper hemisphere $\He > 0$ of
$S^{n+1}$. Here  \index{$\pi_\lambda$ \quad spherical principal series representation}
$$
   \pi_\lambda(\gamma) \st \left( \frac{\gamma_*(\He)}{\He} \right)^\lambda \gamma_*.
$$
The latter representations are well-defined for $\gamma \in SO(1,n+2)$
acting on $S^{n+1}$. Note that
$$
   \frac{\gamma_*(\He)}{\He} = e^{\Phi_\gamma},
$$
where $\Phi_\gamma$ is the conformal factor of the conformal transformation induced
by $\gamma$ with respect to the round metric $g$, i.e., $\gamma_*(g) =
e^{2\Phi_\gamma} g$. We also note that the operator \eqref{L-sphere} is equivalent
to the intertwining operator displayed in \cite[Proposition 7.9]{C}. We omit the
details of that calculation.

Finally, we observe that the above two models of the Laplace-Robin operator are conformally equivalent.
In fact, let $\kappa: S^{n+1} \to \R^{n+1}$ be the stereographic projection. Then
$$
   \kappa^* (\He) = \Phi x_{n+1} \quad \mbox{and} \quad \kappa^*(g)
   = \Phi^2 \sum_{i=1}^{n+1} dx_i^2 = \Phi^2 g_0
$$
with $\Phi = 2/(1+|x|^2)$ \cite[Section 2.2]{J1}. Hence
\begin{align*}
   \kappa^* L(g,\He;\lambda) \kappa_* & = L(\kappa^*(g),\kappa^*(\He);\lambda) \\
   & = L(\Phi^2 g_0,\Phi x_{n+1};\lambda) \\
   & = \Phi^{\lambda-1} L(g_0,x_{n+1};\lambda) \Phi^{-\lambda}
\end{align*}
using a very special case of the conformal invariance of the Laplace-Robin operator (Corollary \ref{LR-CC}). 
By combining this conjugation formula with the results in the later sections, it follows that the equivariant 
families $D_{2N}^c(\lambda): C^\infty(S^{n+1}) \to C^\infty (S^n)$ constructed in \cite[Section 5.2]{J1} 
can be regarded as residue families $\D_{2N}^{res}(g,\He;\lambda)$ (as defined in Section \ref{res-fam}).

\section{Adapted coordinates, renormalized volume coefficients and a formula for $\B_n$}\label{AC-RVC}


Let $X$ be compact with closed boundary $M$ and let $\sigma$ be a defining function of
$M$, i.e., $\sigma^{-1}(0)=M$, $\sigma > 0$ on $X \setminus M$ and $d\sigma|_M \ne 0$. Let
$\iota: M \hookrightarrow X$ be the embedding and $h = \iota^*(g)$.

We start with the definition of two different types of local coordinates of $X$ near the boundary:
{\em geodesic normal} coordinates and {\em adapted} coordinates.

\index{$\kappa$} 
\index{$w(r)$}
\index{$w_j$ \quad renormalized volume coefficients (normal coordinates)}
\index{$h_r$}

Geodesic normal coordinates are defined by the normal geodesic flow of the hypersurface $M$,
i.e., we consider a diffeomorphism of $I \times M$ (with a small interval $I = [0,\varepsilon)$) 
onto a neighborhood of $M$ in $X$, which is defined by        \index{$\Phi^r$ \quad geodesic flow}
$$
   \kappa: I \times M \ni (r,x) \mapsto \Phi^r(x) \in X,
$$
where $\Phi^r$ is the geodesic flow with initial speed given by a unit normal field on $M$. Then $\kappa^*(g)$ 
has the form $dr^2 + h_r$ for a one-parameter family $h_r$ on $M$. Let  \index{$u(r)$}    \index{$u_j$ \quad volume coefficients}
\begin{equation}\label{v-geo}
   u(r) \st dvol_{h_r}/dvol_h, \quad u(r) = \sum_{j \ge 0} r^j u_j, \quad u_j \in C^\infty(M).
\end{equation}
Now, if $\SC(g,\sigma) = 1$, then the volume form of the singular metric $\sigma^{-2} g$ has the form
\begin{align}\label{def-w}
    dvol_{\kappa^*(\sigma^{-2} g)} & = \sigma(r)^{-n-1} u(r) dr dvol_h \notag \\
    & = r^{-n-1} w(r) dr dvol_h.
\end{align}
for $\sigma(r) = \kappa^*(\sigma)$ and some $w \in C^\infty(I \times M)$. Moreover, we have expansions
\begin{equation}\label{RVC-A}
   w(r) = 1 + \sum_{j \ge 1} r^j w_j 
\end{equation}
with $w_j \in C^\infty(M)$ and
$$
   dvol_{\kappa^*(\sigma^{-2} g)} = \sum_{j \ge 0} r^{-n-1+j} w_j dr dvol_h.
$$
Following \cite{Graham-Yamabe}, the coefficients $w_j \in C^\infty(M)$ for $j \le n$ are called {\em singular Yamabe} 
renormalized volume coefficients. Note that the definition of the coefficients $w_j \in C^\infty(M)$ involves the Taylor 
expansion of $\sigma(r)$ in $r$. Special interest deserves the critical coefficient $w_n$ since for closed $M$, the 
total integral
$$
   \int_M w_n dvol_h
$$
is conformally invariant \cite[Proposition 2.1]{Graham-Yamabe}.

\index{$\eta$ \quad adapted coordinates}

Similarly, {\em adapted coordinates} are associated to the data $(g,\sigma)$ through a diffeomorphism
$$
   \eta: I \times M \ni (s,x) \mapsto \Phi_\mathfrak{X}^s(x) \in X
$$
onto a open neighborhood of $M$ in $X$ (with some small interval $I = [0,\varepsilon)$), where 
$\Phi_\mathfrak{X}^s$ denotes the flow of the vector field    \index{$\mathfrak{X}$}
\begin{equation}\label{X-field}
   \mathfrak{X} \st \NV /|\NV|^2, \quad  \NV = \grad_g(\sigma)
\end{equation}
with $\Phi^0_\mathfrak{X} = \id$. We shall also use the notation
$\mathfrak{X}_\sigma$ in cases where the dependence on $\sigma$ is important. Note that $|\mathfrak{X}|
= 1/|\NV|$. Then
$$
   (d/ds)(\sigma \circ \eta) = \langle d\sigma,
   \mathfrak{X} \rangle = \langle d \sigma, \NV \rangle / |\NV|^2 \stackrel{!}{=} 1
$$
and the differential of $\eta$ maps the vector field $\partial_s$ to the vector field
$\mathfrak{X}$ (see Section \ref{notation}). This implies the important relation
\begin{equation}\label{key-pb}
   \eta^*(\sigma) = s
\end{equation}
and the intertwining property
\begin{equation}\label{intertwine}
   \eta^* \circ \mathfrak{X} = \partial_s \circ \eta^*,
\end{equation}
where $\partial_s$ and $\mathfrak{X}$ are viewed as first-order differential operators. Therefore,
$$
   \eta^* \circ \mathfrak{X}^k = \partial_s^k \circ \eta^*,
$$
and by composition with $\iota^*$, we obtain
\begin{equation}\label{translate}
   \iota^* \partial_s^k \circ \eta^* = \iota^* \mathfrak{X}^k = \iota^* \left(|\NV|^{-2}
   \nabla_\NV\right)^k.
\end{equation}
Now if $\SC(g,\sigma)=1$, i.e., if $|\NV|^2=1-2\sigma \rho$, then it follows from
\eqref{translate} that the Taylor coefficients in the variable $s$ of any function
$\eta^*(u)$ with $u \in C^\infty(X)$ can be written as linear combinations of
iterated gradients $\iota^* \nabla_\NV^k (u)$ with coefficients that are polynomials
in the quantities $\iota^* \nabla_\NV^k(\rho) \in C^\infty(M)$. In particular, it
holds
\begin{equation}\label{trans-1-2}
   \iota^* \partial_s \circ \eta^* = \iota^* \nabla_\NV \quad \mbox{and} \quad \iota^* \partial^2_s
   \circ \eta^* = \iota^* \nabla_\NV^2 - 2 H \nabla_\NV.
\end{equation}
For more details, we refer to Section \ref{ho-normals}. If $\sigma$ satisfies only the weaker
condition $\SCY$, then $|\NV|^2 = 1 - 2\sigma \rho + O(\sigma^{n+1})$ and the same
conclusions are true for sufficiently small $k$.

Note that the metric $\eta^*(g)$ has the form
\begin{equation}\label{normal-adapted}
   \eta^*(|\NV|^{-2}) ds^2 + h_s
\end{equation}
with a one-parameter family $h_s$ of metrics on $M$ so that $h_0 = h = \iota^*(g)$. We shall refer to
\eqref{normal-adapted} as the {\em normal form} of $g$ in adapted coordinates. We expand
$h_s = \sum_{j \ge 0} h_{(j)} s^j$. It follows from \eqref{translate} that
$$
   \iota^* \partial_s^k (\eta^*(|\NV|^{-2})) = \iota^* (\nabla_\NV(|\NV|^{-2}))^k.
$$
Thus, if $\sigma$ satisfies $\SCY$, the Taylor coefficients of the coefficient
$\eta^*(|\NV|^{-2})$ in the variable $s$ are polynomials in the quantities
$\nabla_\NV^k(\rho)$.

In later calculations in adapted coordinates, we shall often use the same notation for quantities like 
$\rho$ and $\J$ and their pull-backs by $\eta$ without further mentioning.

\index{$v(s)$}
\index{$v_j$ \quad renormalized volume coefficients (adapted coordinates)}

Now \eqref{normal-adapted} implies
$$
   dvol_{\eta^*(g)} = \eta^*(|\NV|)^{-1} ds dvol_{h_s} = v(s) ds dvol_h
$$
for some $v(s) \in C^\infty(I \times M)$. Since the condition $\SCY$
implies $|\NV|=1$ on $M$, we get $v(0,x)=1$, and we have an expansion
\begin{equation}\label{RVC-B}
   v(s) = 1 + \sum_{j \ge 1} s^j v_j \quad \mbox{with $v_j \in C^\infty(M)$}.
\end{equation}
The coefficients $v_j$ for $j \le n$ also will be called {\em singular Yamabe} renormalized
volume coefficients. They describe the volume of the singular metric $\sigma^{-2} g$ through the expansion
$$
   dvol_{\eta^*(\sigma^{-2} g)} = s^{-n-1} v(s) ds dvol_h = \sum_{j \ge 0} s^{-n-1+j} v_j ds dvol_h.
$$
Again, special interest deserves the {\em critical} coefficient $v_n$ since
$$
  \int_M v_n dvol_h
$$
is conformally invariant for closed $M$. This follows from the equality
\begin{equation}\label{w=v}
   \int_M v_n dvol_h = \int_M w_n dvol_h
\end{equation}
which can be proved by the following argument of \cite{Graham-Yamabe}. The identity
$|d\sigma|^2_{\tilde{g}}=1$ for $\tilde{g} = |d\sigma|_g^2 g$ shows that $\sigma$ can be viewed
as the distance function $d_M^{\tilde{g}}$ of $M$ in the metric $\tilde{g}$. Hence it holds
$$
   vol_{\sigma^{-2}g} (\{ \sigma > \varepsilon \})
   = vol_{\sigma^{-2} g} (\{ d_M^{\tilde{g}}  > \varepsilon \}) =
   vol_{\tilde{\sigma}^{-2} \tilde{g}} (\{ d_M^{\tilde{g}} > \varepsilon \})
$$
with $\tilde{\sigma}= |d\sigma|_g \sigma$. By comparing the coefficients of $\log \varepsilon$ in the
expansions of both sides, we find
$$
   \int_M v_n(g) dvol_{\iota^*(g)} = \int_M w_n(\tilde{g}) dvol_{\iota^*(\tilde{g})}.
$$
Now the conformal invariance of the latter integral implies the equality \eqref{w=v}. Following 
\cite{Graham-Yamabe}, the integral                                            \index{$\A$ \quad anomaly} 
\begin{equation}\label{def-A}
   \A  \st  \int_{M^n} v_n dvol_h
\end{equation}
for a closed $M$ is called the {\em singular Yamabe energy} of $M$. The quantity $\A$ appears 
in the asymptotic expansion of the volume of the singular metric $\sigma^{-2}g$ (Theorem \ref{RVE}).

Now we continue with the

\begin{proof}[Proof of Proposition \ref{adjoint-gen}] We use adapted coordinates. It suffices to prove 
that the operator $\mathbf{L}(\lambda) \st L(\eta^*(g),s;\lambda)$ satisfies
$$
   \int_X \mathbf{L}(\lambda)(\varphi) \psi dvol_{\eta^*(g)}
   = \int_X \varphi \mathbf{L}(-\lambda-n)(\psi) dvol_{\eta^*(g)}
$$
if $X=[0,\varepsilon) \times M$, $\psi \in C_c^2([0,\varepsilon) \times M)$ and
$\varphi \in C^2([0,\varepsilon) \times M)$ so that $\varphi(0,x)=0$. Now, by definition  
$$
   \mathbf{L}(\lambda) = (n\!+\!2\lambda\!-\!1) (\eta^*(|\NV|^2) \partial_s + \lambda \eta^*(\rho))
   - s (\Delta_{\eta^*(g)} + \lambda \eta^*(\J)).
$$
In the following, we simplify the notation by writing $g$, $\rho$, and $\J$ instead of the pull-backs of these
quantities by $\eta$. Then                                                 \index{$a$} \index{$h_s$}
\begin{equation}\label{LR-adapted}
   \mathbf{L}(\lambda) = (n\!+\!2\lambda\!-\!1) (a \partial_s + \lambda \rho) - s (\Delta_g + \lambda \J),
\end{equation}
where $a \st |\NV|^2$ . In these terms, the background metric reads $g = a^{-1} ds^2 + h_s$
and we obtain $dvol_g = a^{-1/2} ds dvol_{h_s}$. Hence 
\begin{equation*}
   v = a^{-1/2} (\det (h_s)/\det(h))^{1/2}
\end{equation*}
and
\begin{equation}\label{vol-g}
   \frac{v'}{v} = -\frac{1}{2} \frac{a'}{a} + \frac{1}{2} \tr (h_s^{-1} h'_s),
\end{equation}
where $'$ denotes the derivative in the variable $s$. An easy calculation shows that 
\begin{equation}\label{Laplace-adapted}
    \Delta_g = a \partial_s^2 + \frac{a}{2} \tr (h_s^{-1} h'_s) \partial_s
    + \frac{1}{2} a' \partial_s - \frac{1}{2} (d\log a,d \cdot)_{h_s}
    + \Delta_{h_s}.
\end{equation}
Now we observe that
\begin{align*}
    & \int_X a \varphi' \psi dvol_g =  \int_X a \varphi' \psi v ds dvol_h \\
   & = - \int_X \varphi \left[a \psi' + a \frac{v'}{v} \psi + a' \psi \right] v ds dvol_h
   = - \int_X \varphi \left[a \psi' + a \frac{v'}{v} \psi + a' \psi \right] dvol_g
\end{align*}
and
$$
   \int_X s \Delta_g(\varphi) \psi dvol_g = \int_X \varphi \Delta_g(s\psi) dvol_g
$$
using Green's formula and the assumptions. The expression \eqref{Laplace-adapted} shows that
$$
    \Delta_g(s\psi) = s \Delta_g(\psi) + 2 a \psi' + \frac{a}{2} \tr (h_s^{-1} h'_s) \psi + \frac{1}{2} a' \psi.
$$
Hence
\begin{align*}
  \int_X \mathbf{L}(\lambda)(\varphi) \psi dvol_g
  & = -(n\!+\!2\lambda\!-\!1) \int_X \varphi \left( a \psi' + a \frac{v'}{v} \psi + a' \psi \right) dvol_g \\
  & - \int_X \varphi  \left(s \Delta_g(\psi) + 2 a \psi' + \frac{a}{2} \tr (h_s^{-1} h'_s) \psi
  + \frac{1}{2} a' \psi \right) dvol_g \\
  & + \int_X \varphi \psi (\lambda(n\!+\!2\lambda\!-\!1)\rho - \lambda s \J) dvol_g.
\end{align*}
On the other hand, we have
\begin{align*}
   & \int_X \varphi \mathbf{L}(-\lambda\!-\!n)(\psi) dvol_g \\
   & = \int_X \varphi \left(-(n\!+\!2\lambda\!+\!1) a \psi' - s \Delta_g (\psi)
   + (n\!+\!2\lambda\!+\!1)(\lambda\!+\!n) \rho \psi
   +  (\lambda\!+\!n) s \J \psi \right) dvol_g.
\end{align*}
It follows that the assertion is equivalent to the identity
\begin{align*}
   & -(n\!+\!2\lambda\!-\!1) a \frac{v'}{v} - (n\!+\!2\lambda\!-\!1) a' - \frac{a}{2} \tr (h_s^{-1} h'_s)
   - \frac{1}{2} a' + \lambda(n\!+\!2\lambda\!-\!1) \rho - \lambda s \J \\
  & = (n\!+\!2\lambda\!+\!1)(\lambda\!+\!n) \rho + (\lambda \!+\!n) s\J.
\end{align*}
By \eqref{vol-g}, this identity is equivalent to
$$
   -a  \frac{v'}{v} - a' = (n+1) \rho + s \J.
$$
The identities \eqref{vol-g} and \eqref{Laplace-adapted} also show that
\begin{equation}\label{Laplace-s}
   \Delta_g(s) = \frac{a}{2} \tr (h_s^{-1} h'_s) + \frac{1}{2} a' \stackrel{!}{=} a \frac{v'}{v} + a'.
\end{equation}
Thus, we have reduced the assertion to the identity
$$
   - \Delta_g(s) = (n+1) \rho + s \J.
$$
But this is just the definition of $\rho$. The proof is complete. 

The above arguments also prove the relation \eqref{adjoint-g}. In fact, partial integration and 
Green's formula yield the additional terms
$$
   (n\!+\!2\lambda\!-\!1) \int_M \iota^*(a \varphi \psi v) dvol_h + \int_M \iota^* (\varphi \psi |\NV|) dvol_h 
   = (n+2\lambda) \int_M \iota^*(\varphi \psi |\NV|) dvol_h
$$
since the unit normal field on $M$ is $|\NV| \partial_s$ and $v_0 = |\NV|^{-1}$
\end{proof}

Note that equation \eqref{Laplace-s} can be written in the form
\begin{equation}\label{bL2}
   v(s) \Delta_{g} (s) = \partial_s (v(s) a);
\end{equation}
we recall that $a = \eta^*(|\NV|^2)$. As a corollary of this formula, we obtain a useful
formula for $v(s)$ in terms of $\rho$ and $\J$.

\begin{lem}\label{rec}
If $\sigma$ satisfies $\SCY$, then it holds
\begin{equation}\label{bL}
   \frac{v'}{v} = \frac{-(n-1)\rho + 2 s \rho' - s \J}{1-2s\rho} + O(s^n).
\end{equation}
Here $\rho$ and $\J$ are identified with their pull-backs by $\eta$. If $\rho=0$, then
it holds $v'/v = - s \J + O(s^n)$. The latter case contains the Poincar\'e-Einstein case.
\end{lem}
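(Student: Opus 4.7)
The plan is to combine the identity \eqref{bL2} with the definition of $\rho$ and with the $\SCY$-expansion of $a = \eta^*(|\NV|^2)$ and extract $v'/v$ by straightforward algebra. The identity \eqref{bL2}, which was derived just before the statement, says that $v \Delta_g(s) = \partial_s(v(s) a)$ in adapted coordinates. Expanding the right-hand side gives $v\Delta_g(s) = v' a + v a'$, so after dividing by $va$ (which is legitimate since $a(0)=1$),
$$
   \frac{v'}{v} = \frac{\Delta_g(s) - a'}{a}.
$$

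First I would rewrite the numerator using the definition of $\rho$. Since $\eta^*(\sigma) = s$ and $(n+1)\rho = -\Delta_g(\sigma) - \sigma \J$, pulling back via $\eta$ yields
$$
   \Delta_g(s) = -(n+1)\rho - s\J
$$
(where, as noted after \eqref{trans-1-2}, $\rho$ and $\J$ denote the pull-backs $\eta^*(\rho)$ and $\eta^*(\J)$). Next I would exploit $\SCY$ to expand $a$. By definition of $\SC$ we have $a = |\NV|_g^2 = \SC(g,\sigma) - 2\sigma\rho$, and the condition $\SCY$ gives $\SC(g,\sigma) - 1 = R_{n+1} \sigma^{n+1}$. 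Pulling back by $\eta$ therefore yields
$$
   a = 1 - 2 s \rho + O(s^{n+1}), \qquad a' = -2\rho - 2 s \rho' + O(s^n).
$$

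Substituting these into the formula for $v'/v$ gives
$$
   \Delta_g(s) - a' = -(n+1)\rho - s\J + 2\rho + 2s\rho' + O(s^n) = -(n-1)\rho + 2 s \rho' - s \J + O(s^n),
$$
while $1/a = 1/(1-2s\rho) + O(s^{n+1})$ since $1-2s\rho$ is invertible near $s=0$. Multiplying the two expansions gives exactly
$$
   \frac{v'}{v} = \frac{-(n-1)\rho + 2s\rho' - s\J}{1-2s\rho} + O(s^n),
$$
as claimed. The special case $\rho = 0$ is then immediate, and this contains the Poincar\'e-Einstein situation since there $\sigma = r$, $|d\sigma|_g = 1$, so $\rho$ vanishes identically.

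There is no real obstacle; the only point that needs minor care is tracking the order of the remainders, which requires observing that the $\SCY$-error in $a$ is $O(s^{n+1})$ and therefore contributes $O(s^n)$ after one differentiation, while the factor $1/a$ contributes no worse error. All other steps are direct substitutions from the identities \eqref{bL2}, Definition \ref{LRO}, and the hypothesis $\SCY$.
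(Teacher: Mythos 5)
Your proposal is correct and follows essentially the same route as the paper's proof: both start from \eqref{bL2}, substitute $\Delta_g(s) = -(n+1)\rho - s\J$ via the definition of $\rho$, and use the $\SCY$-expansion $a = 1-2s\rho + O(s^{n+1})$ to collect terms. The only cosmetic difference is that you divide by $va$ before substituting, whereas the paper expands $\partial_s(va)$ and solves for $v'/v$ at the end; the remainder-order bookkeeping is handled identically.
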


\begin{proof} We write \eqref{bL2} in the form
$$
   v(s) (-(n+1) \rho - s \J) = \partial_s (v(s) a).
$$
Now the assumption implies $a = \SC - 2s \rho = 1- 2s \rho + O(s^{n+1})$. Hence
\begin{align*}
   v(s) (-(n+1) \rho - s \J) & = \partial_s (v(s) (1-2s \rho+ O(s^{n+1}))) \\
   & = v'(s) (1-2s\rho) - 2 v(s) \rho - 2 v(s) s \rho' + v O(s^n).
\end{align*}
Now simplification proves the claim. In the Poincar\'e-Einstein case, one easily shows that $\rho=0$ 
and $v'/v = - s \J$ (see Example \ref{rho-rec-PE}).
\end{proof}

Lemma \ref{rec} can be used to derive formulas for the coefficients $v_k$ with $k \le n$ in terms
of the Taylor coefficients of $\rho$ and $\J$. In particular, we obtain

\begin{cor}\label{v-rho}
For $\N \ni k \le n$, we have
$$
   v_k = -(n\!-\!2k\!+\!1) \iota^* \frac{1}{k!} \partial_s^{k-1}(\rho) + LOT,
$$
where LOT refers to terms with lower-order derivatives of $\rho$ and $\J$.
\end{cor}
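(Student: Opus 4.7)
The plan is to extract the statement directly from the formula for $v'/v$ in Lemma \ref{rec} by comparing Taylor coefficients in $s$ on both sides of the identity $v' = v \cdot (v'/v)$, tracking which contributions involve the top derivative $\partial_s^{k-1}(\rho)$ and which are subsumed into LOT.

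First I would write $v(s) = 1 + \sum_{j\ge 1} s^j v_j$, so that $v'(s) = \sum_{j\ge 1} j s^{j-1} v_j$, and expand
\[
\frac{v'}{v} = \sum_{j\ge 0} b_j s^j + O(s^n)
\]
for some $b_j \in C^\infty(M)$. Equating Taylor coefficients in $v' = v \cdot (v'/v)$ at order $s^{k-1}$ yields the Newton-type recursion $k v_k = b_{k-1} + \sum_{i=1}^{k-1} v_i b_{k-1-i}$. The terms $v_i b_{k-1-i}$ with $i\ge 1$ are quadratic in the previously computed quantities and hence, by induction, involve only derivatives of $\rho$ and $\J$ of order $< k-1$; they contribute to LOT. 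So it remains to identify the leading term of $b_{k-1}$.

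Next I would compute $b_{k-1}$ from Lemma \ref{rec}. Writing
\[
\frac{v'}{v} = \bigl(-(n-1)\rho + 2 s \rho' - s \J\bigr)\bigl(1 + 2s\rho + 4 s^2 \rho^2 + \cdots\bigr) + O(s^n),
\]
the contributions of $-s\J$ and of all terms coming from the geometric series $2s\rho + 4s^2\rho^2 + \cdots$ involve only products of lower-order Taylor coefficients of $\rho$ (and of $\J$), so they are LOT. The remaining contribution to the coefficient of $s^{k-1}$ comes from $-(n-1)\rho + 2s\rho'$, and equals
\[
-(n-1)\,\frac{\iota^*\partial_s^{k-1}(\rho)}{(k-1)!} \;+\; 2\,\frac{\iota^*\partial_s^{k-1}(\rho)}{(k-2)!}
= \bigl(-(n-1) + 2(k-1)\bigr)\,\frac{\iota^*\partial_s^{k-1}(\rho)}{(k-1)!}
= -(n-2k+1)\,\frac{\iota^*\partial_s^{k-1}(\rho)}{(k-1)!}.
\]
Dividing by $k$ and combining with the recursion above gives exactly the claimed formula for $v_k$, with all remaining contributions absorbed into LOT.

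The main (minor) obstacle is bookkeeping: one has to be careful that the inductive use of the recursion, combined with the multiplication by $v$, only produces derivatives of $\rho$ and $\J$ of order strictly less than $k-1$, and that the $O(s^n)$ error in Lemma \ref{rec} is harmless for $k \le n$. Both points are immediate from the explicit form of the expansion above, so no further work is required.
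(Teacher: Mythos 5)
Your proof is correct and follows essentially the same route as the paper's: extract the coefficient of $s^{k-1}$ from the identity for $v'/v$ in Lemma \ref{rec}, note that the top-order contribution from $-(n-1)\rho + 2s\rho'$ gives the coefficient $-\frac{n-1}{(k-1)!} + \frac{2}{(k-2)!} = -\frac{n-2k+1}{(k-1)!}$, and absorb everything else into LOT. You are somewhat more explicit than the paper in writing out the Newton-type recursion $k v_k = b_{k-1} + \sum_{i\ge 1} v_i b_{k-1-i}$ and in justifying via induction that the lower-order $v_i$ contribute only LOT, but the argument is the same.
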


\begin{proof}
We consider the coefficient of $s^{k-1}$ in the expansion of $v'/v$.
On the one hand, it equals $k v_k$. On the other hand, \eqref{bL} yields the expression
$$
   \iota^* \partial_s^{k-1} (\rho) \left( - \frac{n-1}{(k-1)!} + \frac{2}{(k-2)!} \right)
$$
for this coefficient. This implies the assertion.
\end{proof}

In particular, the critical coefficient $v_n$ involves the quantity $\iota^* \partial_s^{n-1}(\rho)$.

Conversely, a version of Lemma \ref{rec} implies a recursive formula for the Taylor coefficients
of $\rho$. For the discussion of that formula, we introduce the notation  \index{$\mathring{v}(s)$}
\begin{equation}\label{ring-v}
   \mathring{v}(s) \st  dvol_{h_s} / dvol_h.
\end{equation}
Then
$$
   \frac{\mathring{v}'}{\mathring{v}} = \frac{1}{2} \tr (h_s^{-1} h_s').
$$
We also recall that $a = 1- 2s \rho + O(s^{n+1})$ if $\sigma$ satisfies $\SCY$. The recursive formula
for the Taylor coefficients of $\rho$ will be a consequence of a first-order differential equation.

\begin{lem}[\bf Differential equation for $\rho$]\label{rec-2}
If $\sigma$ satisfies $\SCY$, then $\rho$ solves the differential equation
\begin{equation}\label{magic-rec}
   - s \rho' + n \rho + a \frac{\mathring{v}'}{\mathring{v}} + s \J = O(s^n)
\end{equation}
with the initial condition $\rho(0) = - H$.
\end{lem}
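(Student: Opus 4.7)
The plan is to derive the ODE \eqref{magic-rec} by combining the definition of $\rho$ with the coordinate expression for $\Delta_g(s)$ that was already worked out during the proof of Proposition \ref{adjoint-gen}, and then invoke $\SCY$ to control $a = \eta^*(|\NV|^2)$. First I would rewrite equation \eqref{Laplace-s} as
\[
   \Delta_g(s) = \tfrac{1}{2} a' + a\,\frac{\mathring{v}'}{\mathring{v}},
\]
which follows from $v = a^{-1/2}\mathring{v}$ and hence $v'/v = -\tfrac{1}{2}a'/a + \mathring{v}'/\mathring{v}$. Combining with the definition $(n+1)\rho = -\Delta_g(\sigma) - \sigma\J$, and using $\eta^*(\sigma) = s$, I obtain in adapted coordinates the exact identity
\[
   (n+1)\rho = -\tfrac{1}{2} a' - a\,\frac{\mathring{v}'}{\mathring{v}} - s\J.
\]

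Next I would feed in the hypothesis $\SCY$. By definition $a = \SC(g,\sigma) - 2\sigma\rho$, so $\SCY$ gives $a = 1 - 2s\rho + O(s^{n+1})$, and differentiating yields $\tfrac{1}{2} a' = -\rho - s\rho' + O(s^n)$. Substituting this back into the identity above and cancelling $-\rho$ against one power of $\rho$ on the left reduces $(n+1)\rho$ to $n\rho$, and one finds
\[
   -s\rho' + n\rho + a\,\frac{\mathring{v}'}{\mathring{v}} + s\J = O(s^n),
\]
which is precisely \eqref{magic-rec}. The conceptual point of this step is that $\SCY$ is exactly the information needed to replace the auxiliary coefficient $a$ by an expression in $\rho$ itself; without such a replacement, the above identity would just be the defining relation for $\rho$ rewritten in adapted coordinates.

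For the initial condition, I would evaluate the ODE at $s=0$. Since $a(0)=1$ and $\mathring{v}(0)=1$ under $\SCY$, the equation collapses to $n\rho(0) + \mathring{v}'(0) = 0$, so it suffices to show $\mathring{v}'(0) = nH$. This reduces to the standard statement that $\tfrac{1}{2}\partial_s h_s|_{s=0}$ equals the second fundamental form $L$ of $M \hookrightarrow X$: for vector fields $X,Y$ on $M$ extended to be $\partial_s$-invariant (i.e.\ $[\mathfrak{X},X]=[\mathfrak{X},Y]=0$), one has $\partial_s h_s(X,Y)|_0 = \mathfrak{X} g(X,Y)|_M$, and since $\mathfrak{X}|_M = \NV$ with $|\NV|_g = 1$ on $M$, expanding the derivative and using $g(\nabla_X\NV, Y) = -g(\NV, \nabla_X Y) = L(X,Y)$ gives $2L(X,Y)$. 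Taking the trace yields $\tr(h_0^{-1} h_0') = 2nH$, hence $\mathring{v}'(0) = nH$ and $\rho(0) = -H$.

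The derivation of the ODE itself is essentially bookkeeping, since \eqref{Laplace-s} is already available. The only real obstacle is making sure the initial-condition calculation $\mathring{v}'(0)=nH$ is handled with the sign conventions of the paper; once that is pinned down, the rest is a one-line substitution.
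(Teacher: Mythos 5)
Your proof is correct and follows essentially the same route as the paper: both combine the adapted-coordinate formula $\Delta_g(s) = \tfrac12 a' + a\,\mathring{v}'/\mathring{v}$ (i.e.\ \eqref{Laplace-s}) with the definition $(n+1)\rho = -\Delta_g(\sigma) - \sigma\J$ and then use $\SCY$ in the form $a = 1 - 2s\rho + O(s^{n+1})$ to eliminate $a'$. The only organizational difference is that you bypass the intermediate step through Lemma \ref{rec} (equation \eqref{bL}, which the paper decomposes and multiplies by $a$) and that you rederive $h_{(1)}=2L$ rather than citing \eqref{h-adapted}; the underlying computation and all ingredients are the same.
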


\begin{proof} The identity \eqref{vol-g} implies
\begin{equation}\label{v-deco}
    \frac{v'}{v} = - \frac{1}{2} \frac{a'}{a} + \frac{\mathring{v}'}{\mathring{v}}.
\end{equation}
We use this decomposition on the left-hand side of \eqref{bL} and multiply the resulting identity
with $a$. Then
$$
   (n-1) \rho - 2 s \rho' + s \J - \frac{1}{2} a' + a  \frac{\mathring{v}'}{\mathring{v}} = O(s^n).
$$
By $a' = - 2\rho - 2 s\rho' + O(s^n)$, this identity simplifies to \eqref{magic-rec}. By restriction
of \eqref{magic-rec} to $s=0$, we obtain $n \rho(0) + \tr (L) = 0$ using $h_{(1)} = 2L$
(see \eqref{h-adapted}). Hence $\rho(0) = - H$.
\end{proof}

Another proof of $\rho(0) = -H$ will be given in Lemma \ref{rho-01}.

By repeated differentiation of the identity \eqref{magic-rec} in the variable $s$, it follows that the
Taylor coefficients of $\rho$ can be determined recursively using the Taylor coefficients of $h_s$
and $\J$. More precisely, we obtain

\begin{prop}[\bf Recursive formula for Taylor coefficients of $\rho$]\label{rec-rho}
Assume that $\sigma$ satisfies $\SCY$. Then
\begin{equation}\label{rec-rho-full}
   (n-k) \partial_s^k(\rho)|_0 = -\partial_s^k \left( \frac{\mathring{v}'}{\mathring{v}}\right)|_0
   + 2 \sum_{j=1}^k j \binom{k}{j} \partial_s^{j-1}(\rho)|_0 \partial_s^{k-j}
   \left( \frac{\mathring{v}'}{\mathring{v}}\right)|_0 - k \partial_s^{k-1}(\J)|_0
\end{equation}
for $1 \le k \le n-1$.
\end{prop}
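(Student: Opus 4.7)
The plan is to extract \eqref{rec-rho-full} by differentiating the ODE \eqref{magic-rec} of Lemma \ref{rec-2} $k$ times at $s=0$ and applying the Leibniz rule to the product $a\frac{\mathring{v}'}{\mathring{v}}$, with the key input being the simple structure of $a$ under condition $\SCY$.

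First, I would handle the ``elementary'' terms. Since $\partial_s^j(s) = 0$ for $j\ge 2$, the Leibniz rule gives $\partial_s^k(s\rho')|_0 = k\,\partial_s^{k-1}(\rho')|_0 = k\,\partial_s^k(\rho)|_0$ and similarly $\partial_s^k(s\J)|_0 = k\,\partial_s^{k-1}(\J)|_0$. The term $n\rho$ contributes $n\,\partial_s^k(\rho)|_0$, and the remainder $O(s^n)$ contributes nothing after at most $n-1$ differentiations at $0$, which is why the range $1\le k\le n-1$ is the natural one.

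Next, I would treat $a\,\frac{\mathring{v}'}{\mathring{v}}$. Under $\SCY$ we have $a = 1 - 2s\rho + O(s^{n+1})$, so $a|_0 = 1$, the remainder does not contribute through $\partial_s^k|_0$ for $k\le n-1$, and for $1\le j\le n-1$ the Leibniz computation $\partial_s^j(s\rho)|_0 = j\,\partial_s^{j-1}(\rho)|_0$ yields $\partial_s^j(a)|_0 = -2j\,\partial_s^{j-1}(\rho)|_0$. Applying the Leibniz rule to the product then produces
\[
\partial_s^k\!\left(a\tfrac{\mathring{v}'}{\mathring{v}}\right)\!|_0 = \partial_s^k\!\left(\tfrac{\mathring{v}'}{\mathring{v}}\right)\!|_0 - 2\sum_{j=1}^{k} j\binom{k}{j}\partial_s^{j-1}(\rho)|_0\,\partial_s^{k-j}\!\left(\tfrac{\mathring{v}'}{\mathring{v}}\right)\!|_0.
\]

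Finally, assembling the four contributions and using that the right-hand side $O(s^n)$ of \eqref{magic-rec} has vanishing $k$-th derivative at $0$ for $k\le n-1$, I would obtain
\[
-k\,\partial_s^k(\rho)|_0 + n\,\partial_s^k(\rho)|_0 + \partial_s^k\!\left(\tfrac{\mathring{v}'}{\mathring{v}}\right)\!|_0 - 2\sum_{j=1}^{k} j\binom{k}{j}\partial_s^{j-1}(\rho)|_0\,\partial_s^{k-j}\!\left(\tfrac{\mathring{v}'}{\mathring{v}}\right)\!|_0 + k\,\partial_s^{k-1}(\J)|_0 = 0,
\]
and solving for $(n-k)\partial_s^k(\rho)|_0$ gives precisely \eqref{rec-rho-full}. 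There is no real obstacle: the only points requiring care are the bookkeeping in the Leibniz expansion and the verification that for $1\le k\le n-1$ the remainders $O(s^{n+1})$ in $a$ and $O(s^n)$ in \eqref{magic-rec} are high enough order that their $k$-th derivatives vanish at $s=0$, both of which follow directly from $k\le n-1$.
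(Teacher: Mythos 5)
Your proof is correct and is exactly the paper's own argument: the paper derives \eqref{rec-rho-full} by differentiating \eqref{magic-rec} $k$ times at $s=0$, using the Leibniz rule on $a\,\mathring{v}'/\mathring{v}$ together with $\partial_s^j(a)|_0 = -2j\,\partial_s^{j-1}(\rho)|_0$ and the vanishing of the remainder derivatives for $k\le n-1$. Your bookkeeping of each term matches the intended computation.
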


For a discussion of more details of such types of formulas in low-order cases, we refer to Section \ref{TC+B}.
In particular, we use \eqref{rec-rho-full} to derive explicit formulas for the first two derivatives of $\rho$
in $s$ at $s=0$.


\begin{example}\label{rho-rec-PE}
Let $g_+ = s^{-2} (ds^2 + h_s)$ be a Poincar\'e-Einstein metric. Assume that $g=ds^2+h_s$ is smooth
up to the boundary. In particular, the obstruction tensor vanishes. In that case, adapted coordinates coincide
with geodesic  normal coordinates. Now it holds $\rho=0$. We show that the vanishing of the Taylor coefficients
of $\rho$ (in the variable $s$) up to order $n-1$ recursively follows from \eqref{rec-rho-full}. In fact, assume
that we know that $\partial_s^j (\rho)|_0=0$ for $j=0,\dots,k-1$. Then the right-hand side of \eqref{rec-rho-full}
simplifies to
$$
    -\partial_s^k \left( \frac{\mathring{v}'}{\mathring{v}}\right)|_0 - k  \partial_s^{k-1}(\J)|_0.
$$
But
$$
   k \partial_s^{k-1}(\J)|_0 = \partial_s^k(s \J)|_0 = - \frac{1}{2} \partial_s^k (\tr (h_s^{-1}h_s')|_0
  = - \partial_s^k\left( \frac{\mathring{v}'}{\mathring{v}}\right)|_0
$$
by
\begin{equation}\label{J-trace}
    \J = -\frac{1}{2s} \tr (h_s^{-1}h_s')
\end{equation}
(which follows by combining the Einstein condition with the conformal transformation law for scalar curvature
- for the details see \cite[(6.11.8)]{J1}). Hence \eqref{rec-rho-full} implies $\partial_s^k(\rho)|_0=0$.
Alternatively, we could note that the relation \eqref{J-trace} transforms the differential equation
\eqref{magic-rec} into
$$
   -s \rho' + n\rho - 2s \rho \frac{\mathring{v}'}{\mathring{v}} = O(s^n)
$$
with the initial condition $\rho(0) = 0$. Then $\rho=0$ is the unique solution of this initial value problem.
\end{example}

For $k=n$, the coefficient on the left-hand side of \eqref{rec-rho-full} vanishes. This suggests the following
formula for the singular Yamabe obstruction.

\begin{theorem}[\bf The obstruction $\B_n$]\label{obstruction-ex}
Assume that $\sigma$ satisfies $\SCY$. Then
\begin{equation}\label{obstruction-magic}
   (n\!+\!1)! \B_n = -2\partial_s^n \left( \frac{\mathring{v}'}{\mathring{v}}\right)|_0
  + 4  \sum_{j=1}^n j \binom{n}{j} \partial_s^{j-1}(\rho)|_0 \partial_s^{n-j}
   \left( \frac{\mathring{v}'}{\mathring{v}}\right)|_0 - 2n \partial_s^{n-1}(\J)|_0.
\end{equation}
\end{theorem}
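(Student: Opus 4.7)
The plan is to refine Lemma \ref{rec-2} so that the remainder term in the differential equation \eqref{magic-rec} is written with its leading coefficient made explicit in terms of the Yamabe obstruction, and then to take $\partial_s^n$ at $s=0$ exactly as was done for the lower-order recursion in Proposition \ref{rec-rho}. The formula will then drop out after some reindexing.

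The first step is to redo the derivation of Lemma \ref{rec-2} keeping track of terms of order $s^n$. Under $\SCY$, we have $\SC(g,\sigma) = 1 + R_{n+1}\sigma^{n+1}$, so in adapted coordinates
\[
   a = \eta^*(|\NV|^2) = \SC - 2s\rho = 1 - 2s\rho + R_{n+1} s^{n+1},
\]
where $R_{n+1}$ now stands for $\eta^*(R_{n+1})$. Differentiating gives
\[
   a' = -2\rho - 2s\rho' + (n+1)R_{n+1} s^n + O(s^{n+1}).
\]
Starting from \eqref{bL2}, i.e.\ $v(s)\Delta_g(s) = \partial_s(v(s)a)$ with $\Delta_g(s) = -(n+1)\rho - s\J$, the same computation as in the proof of Lemma \ref{rec} now yields the more precise identity
\[
   a\,\frac{v'}{v} = -(n-1)\rho + 2s\rho' - s\J - (n+1)R_{n+1} s^n + O(s^{n+1}).
\]
Using the decomposition \eqref{v-deco} in the form $a v'/v = -\tfrac{1}{2}a' + a\,\mathring{v}'/\mathring{v}$ and substituting the expression for $a'$ above, one arrives at the refined differential equation
\begin{equation}\label{plan-refined-ODE}
   -s\rho' + n\rho + a\,\frac{\mathring{v}'}{\mathring{v}} + s\J
   = -\frac{n+1}{2}\,R_{n+1}\,s^n + O(s^{n+1}).
\end{equation}
This is the key refinement of Lemma \ref{rec-2}.

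The second step is to apply $\partial_s^n$ to \eqref{plan-refined-ODE} and evaluate at $s=0$. By Leibniz the terms $-s\rho'$ and $n\rho$ contribute $-n\partial_s^n(\rho)|_0 + n\partial_s^n(\rho)|_0 = 0$, which is precisely the cancellation that makes the recursion \eqref{rec-rho-full} fail to determine $\partial_s^n(\rho)|_0$ and instead produces the obstruction. The term $s\J$ contributes $n\partial_s^{n-1}(\J)|_0$. For the remaining term, expand $a\,\mathring{v}'/\mathring{v} = \mathring{v}'/\mathring{v} - 2s\rho\,\mathring{v}'/\mathring{v} + O(s^{n+1})$ and use Leibniz:
\[
   \partial_s^n\!\left(a\,\frac{\mathring{v}'}{\mathring{v}}\right)\!\Big|_0
   = \partial_s^n\!\left(\frac{\mathring{v}'}{\mathring{v}}\right)\!\Big|_0
   - 2n\sum_{j=0}^{n-1}\binom{n-1}{j}\partial_s^j(\rho)|_0\,\partial_s^{n-1-j}\!\left(\frac{\mathring{v}'}{\mathring{v}}\right)\!\Big|_0,
\]
and the $O(s^{n+1})$ tail drops out. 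Reindexing by $k = j+1$ and using the identity $n\binom{n-1}{k-1} = k\binom{n}{k}$ rewrites the sum as $\sum_{k=1}^n k\binom{n}{k}\partial_s^{k-1}(\rho)|_0 \partial_s^{n-k}(\mathring{v}'/\mathring{v})|_0$. On the right-hand side of \eqref{plan-refined-ODE}, $\partial_s^n(-\tfrac{n+1}{2}R_{n+1}s^n)|_0 = -\tfrac{(n+1)!}{2}\iota^*(R_{n+1}) = -\tfrac{(n+1)!}{2}\B_n$.

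Collecting these contributions and multiplying through by $-2$ produces exactly the right-hand side of \eqref{obstruction-magic}. The main technical issue, and the only genuinely delicate point, is step one: being careful enough when extracting the $s^n$ coefficient that is hidden inside the $O(s^n)$ term of Lemma \ref{rec}. Once \eqref{plan-refined-ODE} is established, step two is a routine Leibniz-rule computation that mirrors the proof of Proposition \ref{rec-rho} verbatim.
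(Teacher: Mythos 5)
Your proposal is correct and takes essentially the same route as the paper: you re-derive the identity $-s\rho' + n\rho + a\,\mathring{v}'/\mathring{v} + s\J = -\tfrac{1}{2}\partial_s(\SC-1)$ (your refined ODE is this identity modulo $O(s^{n+1})$, made explicit via the $\SCY$ remainder $R_{n+1}$), differentiate $n$ times at $s=0$, and apply Leibniz with $\partial_s^j(a)|_0 = -2j\,\partial_s^{j-1}(\rho)|_0$. The only cosmetic difference is that the paper establishes the exact general identity \eqref{Basic-R} first and then specializes, whereas you impose $\SCY$ from the outset and reindex the Leibniz sum instead of using $\partial_s^j(a)|_0$ directly; the resulting calculation is the same.
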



\begin{proof}
We start with general data $(g,\sigma)$. As before, we identify $|\NV|^2$ and $\SC$ with their respective
pull-backs by $\eta$. Then $\SC = |\NV|^2 + 2 s \rho = a + 2s\rho$. In these terms, the identity
\eqref{bL2} reads
$$
   v (-(n+1) \rho - s \J) = v' a + v a'.
$$
Hence
\begin{align*}
   a \frac{v'}{v} & = - (n+1) \rho - s \J - \partial_s(a) \\
   & = -(n+1) \rho - s \J + 2 \partial_s (s\rho) - \partial_s (\SC-1) \\
   & = -(n-1) \rho - s \J + 2 s \rho' - \partial_s(\SC-1).
\end{align*}
We decompose the left-hand side using \eqref{v-deco} and reorder. This gives
$$
   a \frac{\mathring{v}'}{\mathring{v}} - \frac{1}{2} a' + (n-1) \rho + s \J - 2 s \rho' + \partial_s(\SC-1) = 0.
$$
Now, using $a = (1-2s\rho) + (\SC-1)$, we obtain the relation
\begin{equation}\label{Basic-R}
 - s \rho'  + n \rho + a \frac{\mathring{v}'}{\mathring{v}} + s \J = -\frac{1}{2} \partial_s (\SC-1)
\end{equation}
which improves \eqref{magic-rec}. Now, assuming that $\sigma$ satisfies $\SCY$, differentiate
\eqref{Basic-R} $n$ times in $s$. By $\partial_s^j(a)|_0 = -2j \partial_s^{j-1}(\rho)|_0$
for $1 \le j \le n$ and
$$
    \partial_s^{n+1}(\SC -1)|_0 = (n+1)! \B_n,
$$
this proves the assertion.
\end{proof}

The basic relation \eqref{Basic-R} will be confirmed in a number of special cases with
$\SC = 1$ in Examples \ref{ball}--\ref{mixed}.


Proposition \ref{rec-rho} and Theorem \ref{obstruction-ex} should be compared with
\cite[Proposition 6.4]{GW-LNY}. The latter result establishes formulas for the restrictions of normal derivatives
$\nabla_\NV^k(\rho)$ of $\rho$ to $M$ and for the obstruction $\B_n$ in terms of lower-order normal derivatives
of $\rho$ and additional terms. The above results clarify the structure of all such additional terms.
Here it is crucial to work in adapted coordinates.

Note that the formula \eqref{obstruction-magic} shows that the obstruction $\B_n$
involves the Taylor coefficients $h_{(k)}$ of $h_s$ (in the normal form
\eqref{normal-adapted} of $g$ in adapted coordinates) for $k \le n+1$.

In Section \ref{B2-details}, we shall derive the classical formula for $\B_2$ (see
\eqref{B2} and \cite{ACF}) from \eqref{obstruction-magic}. Similarly, in Section
\ref{B3-flat-back} we evaluate the formula \eqref{obstruction-magic} for the obstruction $\B_3$ 
in case of a (conformally) flat background.


Finally, we apply the above results to determine the leading term of the obstruction $\B_n$
for an embedding $M^n \hookrightarrow \R^{n+1}$ if $n$ is even.

First, we note that Theorem \ref{obstruction-ex} and Proposition \ref{rec-rho} show
that  $\B_n$ is a functional of the second fundamental form $L$.

\begin{theorem}\label{B-linear} For a flat background metric and even $n$, it holds
\begin{equation}\label{Bn-deco}
   (n+1)! \B_n = c_n \Delta^\frac{n}{2} (H) + nl
\end{equation}
with
$$
   c_n = - 2 \frac{(n-1)!!}{(n-2)!!}
$$
and a non-linear functional $nl$ of $L$. For odd $n$, the obstruction $\B_n$ is non-linear in $L$.
\end{theorem}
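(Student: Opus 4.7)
The plan is to start from the explicit formula \eqref{obstruction-magic} of Theorem \ref{obstruction-ex}, isolate its linear-in-$L$ part under the flatness hypothesis, and then use an invariance/scaling argument to identify the leading term. For a flat background, $\scal^g\equiv 0$ and hence $\J\equiv 0$, so the last term in \eqref{obstruction-magic} vanishes. When $L\equiv 0$ on $M$, the hypersurface is a hyperplane in $\R^{n+1}$ and the distance function $\sigma=r$ solves $\SC(g,\sigma)=1$ identically (the Poincar\'e--Einstein half-space case); consequently $\rho\equiv 0$, $h_s\equiv h$ and $\mathring{v}'/\mathring{v}\equiv 0$. By Proposition \ref{rec-rho}, each Taylor coefficient $\partial_s^k(\rho)|_0$ depends recursively on lower Taylor coefficients of $\rho$, on those of $\mathring{v}'/\mathring{v}$, and on those of $\J$, and hence vanishes identically when $L\equiv 0$; the analogous statement holds for $\partial_s^k(\mathring{v}'/\mathring{v})|_0$. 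Therefore every product term in the sum in \eqref{obstruction-magic} is at least quadratic in $L$ and belongs to $nl$, leaving
\begin{equation}\label{redB}
   (n+1)!\, \B_n \equiv -2\, \partial_s^n \left( \frac{\mathring{v}'}{\mathring{v}} \right) \Big|_{s=0} \pmod{nl}.
\end{equation}

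Next, parametrize $M$ as the graph $\{(x,u(x)):x\in\R^n\}$ of a small function $u$ over the hyperplane $M_0=\{x_{n+1}=0\}$, so that $L_{ij}=-\partial_i\partial_j u+O(u^2)$ and $H=-\Delta u/n+O(u^2)$. By naturality, the linear-in-$u$ part of $\B_n$ is a scalar linear differential operator $P$ in $u$, invariant under the subgroup $O(n)\ltimes\R^n$ of Euclidean motions of $\R^{n+1}$ stabilizing $M_0$ (together with translations along the normal direction, which annihilate constants); translation-invariance forces constant coefficients and $O(n)$-invariance forces $P$ to be a polynomial in the Laplacian $\Delta$ of $M_0$, say $P=\sum_k c_k^{(n)}\Delta^k$. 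Moreover, under the isometric dilation $g\mapsto\lambda^2 g$ paired with $\sigma\mapsto\lambda\sigma$, $\B_n$ has scaling weight $-(n+1)$ by the conformal covariance \eqref{B-CI} applied with $\varphi=\log\lambda$ constant. Under the corresponding graph rescaling $u_\lambda(y)=\lambda u(y/\lambda)$, the term $c_k^{(n)}\Delta^k u$ transforms with weight $1-2k$, so matching weights gives $k=(n+2)/2$. For odd $n$, this value is not an integer, the sum is empty, and the whole linear part vanishes; hence $\B_n$ is non-linear in $L$. For even $n$, the unique surviving term is a multiple of $\Delta^{n/2+1}u$, equivalently of $\Delta^{n/2}(H)$, yielding the stated form $(n+1)!\,\B_n=c_n\Delta^{n/2}(H)+nl$.

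To pin down the numerical constant $c_n=-2(n-1)!!/(n-2)!!$ in the even-$n$ case, linearize the right-hand side of \eqref{redB}. The key input is the ordinary differential equation \eqref{magic-rec} of Lemma \ref{rec-2}, which for $\J\equiv 0$ and to linear order in $L$ reduces to $-s\rho' + n\rho + \mathring{v}'/\mathring{v} = O(L^2)$ with $\rho(0)=-H$. Combined with the linear-in-$L$ expansion $\mathring{v}'/\mathring{v}=\tfrac{1}{2}\tr(h_s^{-1}h_s') + O(L^2)$ and the adapted-coordinate expansion of $h_s$ (in which the change from geodesic normal to adapted coordinates contributes linear-in-$L$ corrections through the singular Yamabe coefficients $\sigma_{(k)}$), the recursion implicit in Proposition \ref{rec-rho} can be unwound to extract the coefficient of $\Delta^{n/2+1}u$ in closed form. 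The main obstacle is precisely this last step: the shape of the leading term follows cleanly from the scaling/invariance analysis, but pinning down the double-factorial combinatorics of $c_n$ requires careful bookkeeping in the recursion, or, more economically, evaluation of both sides of \eqref{redB} on a one-parameter rotationally-symmetric perturbation of the hyperplane, for which every expansion reduces to elementary ordinary differential calculations.
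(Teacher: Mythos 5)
Your reduction of $(n+1)!\,\B_n$, modulo $nl$, to $-2\,\partial_s^n(\mathring{v}'/\mathring{v})|_0$ matches the paper's starting point, and your naturality-and-scaling argument is a genuinely different and rather cleaner way to establish the \emph{form} of the leading term and the vanishing of the linear part for odd $n$. The paper instead works the Taylor recursion explicitly: it expands $\mathring{v}'/\mathring{v}$ in the coefficients $h_{(k)}$, differentiates the determining relation \eqref{R-adapted-2} to get $\tfrac{1}{2}(n+1)!\,h_{(n+1)} = -(n-1)\Hess(\partial_s^{n-2}(\rho)|_0) + nl$, hence $(n+1)!\,\B_n = 2(n-1)\Delta(\partial_s^{n-2}(\rho)|_0)+nl$, and then combines Proposition \ref{rec-rho} with the determining relation again to obtain the step relation $(n-k)\partial_s^k(\rho)|_0 = (k-1)\Delta(\partial_s^{k-2}(\rho)|_0)+nl$, iterating down to $\rho|_0 = -H$ (even $n$) or to $\partial_s(\rho)|_0 = |\lo|^2/(n-1)$ (odd $n$, already quadratic in $L$). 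Your dilation argument buys the shape $\Delta^{n/2}(H)$ and the odd-$n$ vanishing by pure representation-theoretic reasoning, but it gives no access to the coefficient.

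That is the genuine gap: the theorem asserts the explicit value $c_n = -2(n-1)!!/(n-2)!!$, and your proof never derives it. You name the right ingredients (Lemma \ref{rec-2}, Proposition \ref{rec-rho}, the adapted-coordinate expansion of $h_s$), but you then explicitly acknowledge that ``pinning down the double-factorial combinatorics'' is ``the main obstacle'' and offer only a plan (recursion bookkeeping, or a rotationally symmetric model computation), which you do not carry out. To close the gap you would effectively have to run the recursion the paper writes: the ratio $(n-1)!!/(n-2)!!$ arises precisely from accumulating the factors $(k-1)$ over $k=n-2,n-4,\dots,2$ against the factors $(n-k)=2,4,\dots$ in the step relation above. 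As written, your argument proves that a constant $c_n$ exists with the stated structural properties, not that it equals $-2(n-1)!!/(n-2)!!$.
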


The non-linear part in \eqref{Bn-deco} can also be described as a term of lower
differential order. In fact, we can write $\B_n$ as a sum of terms that are
homogeneous in $L$. In each such term, the sum of the number of derivatives and the
homogeneous degree in $L$ is $n+1$. But the non-linear terms in \eqref{Bn-deco}
consist of homogeneous terms of degree at least $2$. One should compare that version 
of the structural result for $\B_n$ with \cite[Theorem 5.1]{GW-LNY}.

Theorem \ref{B-linear} extends the following observations. By the second formula in
\eqref{B2}, $\B_2$ is the sum of a constant multiple of $\Delta(H)$ being linear in
$L$ and a term that is cubic in $L$ and does not contain derivatives. Similarly,
the first three terms in \eqref{B3-final2} are homogeneous of degree $2$ in $L$ and
each such term involves $2$ derivatives.

\begin{proof} Let $n$ be even. We extract from the formula
\begin{equation*}\label{ob-magic}
   (n\!+\!1)! \B_n = -2 \partial_s^n \left( \frac{\mathring{v}'}{\mathring{v}}\right)|_0
  + 4  \sum_{j=1}^n j \binom{n}{j} \partial_s^{j-1}(\rho)|_0 \partial_s^{n-j}
   \left( \frac{\mathring{v}'}{\mathring{v}}\right)|_0
\end{equation*}
the contributions which are {\em linear} in $L$. In the following, the symbol $nl$
indicates non-linear terms. First, we ignore in this sum all products with at least
two factors. Hence
$$
   (n+1)! \B_n = - 2  \partial_s^n \left( \frac{\mathring{v}'}{\mathring{v}}\right)|_0 + nl.
$$
Moreover, the expansion
$$
   \frac{\mathring{v}'}{\mathring{v}} = \frac{1}{2} \tr (h_s^{-1} h_s')
   = \frac{1}{2} \sum_{k \ge 1} ( k \tr (h_{(k)}) + nl ) s^{k-1}
$$
implies
$$
    \partial_s^n \left( \frac{\mathring{v}'}{\mathring{v}}\right)|_0 = \frac{1}{2} (n+1)! \tr (h_{(n+1)}) + nl.
$$
In order to evaluate $h_{(n+1)}$, we $(n-1)$-times differentiate in $s$ the determining relation
\eqref{R-adapted-2} for $h_s$. Then
$$
   \frac{1}{2}\partial_s^{n+1}(h_s)|_0 = - \Hess (\partial_s^{n-1}(s\rho))|_0) + nl
$$
using $g^{00} = a = 1 - 2s \rho$. Hence
$$
   \frac{1}{2} (n+1)! h_{(n+1)} = - (n-1) \Hess (\partial_s^{n-2}(\rho)|_0 )+ nl.
$$
These results imply
\begin{align}\label{Bn-leading}
   (n+1)! \B_n & = - (n+1)! \tr (h_{(n+1)}) + nl  \notag \\
   & = 2 (n-1) \Delta (\partial_s^{n-2}(\rho)|_0) + nl.
\end{align}
Now Proposition \ref{rec-rho} shows that
\begin{align*}
   (n-k) \partial_s^k(\rho)|_0 & = - \partial_s^k \left(\frac{\mathring{v}'}{\mathring{v}}\right)|_0 + nl \\
   & = - \frac{1}{2} (k+1)! \tr (h_{(k+1)}) + nl.
\end{align*}
But $(k-1)$-times differentiating in $s$ the determining relation for $h_s$, shows that
\begin{align*}
   \frac{1}{2} \partial_s^{k+1} (h_s)|_0 & = - \Hess(\partial_s^{k-1}(s \rho)|_0) + nl \\
   & = - (k-1) \Hess (\partial_s^{k-2}(\rho)|_0) +nl.
\end{align*}
Hence
\begin{equation}\label{rho-leading}
   (n-k) \partial_s^k(\rho)|_0 = (k-1) \Delta (\partial_s^{k-2}(\rho)|_0) + nl.
\end{equation}
Combining \eqref{Bn-leading} and \eqref{rho-leading} gives
\begin{align*}
   (n+1)! \B_n & = 2 (n-1) \Delta (\partial_s^{n-2}(\rho)|_0) + nl \\
   & = (n-1)(n-3) \Delta^2 (\partial_s^{n-4}(\rho)|_0) + nl \\
   & = \cdots = 2 (n-1)!!/(n-2)!! \Delta^\frac{n}{2}(\rho|_0) + nl.
\end{align*}
This implies the assertion using $\rho|_0 = -H$ (Lemma \ref{rho-01}). For odd $n$, the same
arguments show that $\B_n$ is a constant multiple of $\Delta^{\frac{n-1}{2}}(\partial_s(\rho)|_0) + nl$.
Since $\partial_s(\rho)|_0$ is a constant multiple of $|\lo|^2$ (Lemma \ref{rho-01}), this completes the proof.
\end{proof}

\begin{example} For $n=2$ and $n=4$, we find
$$
   3! \B_2 = - 2 \Delta(H) + nl  \quad \mbox{and} \quad
   5! \B_4 = - 3 \Delta^2 (H) + nl,
$$
respectively. The first decomposition fits with \eqref{B2}.
\end{example}

\begin{rem}\label{Bn-odd}
For a flat background and odd $n$, the proof of Theorem \ref{B-linear} shows that one contribution
to $\B_n$ is a constant multiple of $\Delta^{\frac{n-1}{2}}(|\lo|^2)$. But $\B_n$ has further contributions
of the same differential order, which are quadratic in $L$. For instance, \eqref{B3-final2} shows that in
addition to $\Delta(|\lo|^2)$, $\B_3$ contains the contributions $|dH|^2$ and $(\lo,\Hess(H))$ of differential
order $2$.
\end{rem}

We finish this section with a representation theoretical argument proving the vanishing of the obstruction $\B_n$
for the equatorial subsphere $S^n \hookrightarrow S^{n+1}$. In the following, we use the notation of 
Section \ref{SBO}.

First, assume that $M^n \hookrightarrow \R^{n+1}$ (with the flat metric $g_0$). Let $\B_n^M(g_0)$ be the singular
Yamabe obstruction of $M$. Let $\gamma \in SO(1,n+2)$ be a conformal diffeomorphism of $g_0$, i.e.,
$\gamma_*(g_0) = e^{2 \Phi_\gamma} g_0$. Then
$$
   e^{(n+1) \iota^* \Phi_\gamma} \B_n^M (\gamma_*(g_0)) = \B_n^M(g_0)
$$
by Lemma \ref{B-CTL}. This relation is equivalent to
\begin{equation}\label{B-diffeo}
   e^{(n+1) \iota^* \Phi_\gamma} \gamma_* (\B_n^{\gamma(M)}(g_0))  = \B_n^M(g_0), \; \gamma \in SO(1,n+2).
\end{equation}
In particular, all $\gamma \in SO(1,n+1) \hookrightarrow SO(1,n+2)$ leave invariant
the hypersurface $M^n = \R^n \hookrightarrow \R^{n+1}$, and it holds
$$
   \iota^* \left(\frac{\gamma_*(r)}{r}\right)^{n+1} \gamma_* (\B_n^M (g_0) ) = \B_n^M(g_0),
$$
i.e.,
$$
   \pi_{n+1}^0(\gamma) (\B_n^M(g_0)) = \B_n^M(g_0).
$$
But since the identical representation is not a subrepresentation of $\pi_{n+1}^0$, it follows that $\B_n^M(g_0)=0$.
A similar argument proves the vanishing of $\B_n$ for the equatorial subsphere $S^n \hookrightarrow S^{n+1}$.
By the analog of \eqref{B-diffeo} for hypersurfaces of $S^{n+1}$, the obstruction of any $\gamma(S^n)$ vanishes, too.


\section{Residue families}\label{res-fam}

\index{$M_u(\lambda)$}

In the present section, we associate residue families to any pair $(g,\sigma)$ which satisfies the
condition $\SCY$ (see Section \ref{Yamabe}). Residue families are defined in terms of the residues
of one-parameter families
$$
   \lambda \mapsto \langle M_u(\lambda),\psi \rangle = \int_X \sigma^\lambda u \psi dvol_g, \;
   \Re(\lambda) \gg 0
$$
of distributions on $X$, where $u$ are eigenfunctions of the Laplacian
$\Delta_{\sigma^{-2}g}$ of the singular metric $\sigma^{-2}g$. These residues
express the obstruction extending $u$ as a distribution up to the boundary of $X$.

The restriction of $\SC(g,\sigma)$ to the boundary $M$ equals $|\grad_g(\sigma)|_g^2 =
|d\sigma|^2_g$. Therefore, the condition $\SCY$ implies that
$|d\sigma|_g^2=1$ on $M$. That property is equivalent to the property that the
sectional curvatures of $\sigma^{-2}g$ tend to $-1$ at the boundary $M$, i.e., the
metric $\sigma^{-2}g$ is  asymptotically hyperbolic. Next, we recall
some basic results in the spectral theory of the Laplacian of asymptotically hyperbolic
metrics. For more details, we refer to \cite[Section 3]{GZ}. The spectrum of
$-\Delta_{\sigma^{-2}g}$ is the union of a finite pure
point spectrum $\sigma_{pp} \subset (0,(n/2)^2)$ and an absolutely continuous
spectrum $\sigma_{ac} = [(n/2)^2,\infty)$ of infinite multiplicity. The generalized
eigenfunctions with smooth functions on $M$ as boundary values are described
by a {\em Poisson operator}. This operator is a far-reaching
generalization of the well-known Poisson transform of Helgason \cite{He} which
relates generalized eigenfunctions of the commutative algebra of invariant
differential operators on a symmetric space of the non-compact type to
hyperfunctions on a naturally associated boundary. It is defined by an integral
transform. In the present situation, the family $\Po(\lambda)$ of Poisson operators
is meromorphic for $\Re(\lambda)<n/2$, $\lambda \ne n/2$ with poles in $\lambda$ iff
$\lambda(n-\lambda) \in \sigma_{pp}$ such that       \index{$\Po(\lambda)$ \quad Poisson operator}
$$
   (\Delta_{\sigma^{-2}g} + \lambda(n-\lambda)) \Po(\lambda) (f) = 0
$$
for any $f \in C^\infty(M)$. In contrast to Helgason's definition of a Poisson operator by
an integral transform, it is defined in terms of the resolvent of the Laplacian. In both theories,
the argument $f$ is seen in the leading terms of the asymptotic expansion of the
eigenfunction $\Po(\lambda)(f)$. To describe the asymptotic expansion of
eigenfunctions in the range of the Poisson operator, we choose coordinates on $X$
near the boundary. Indeed, there is a unique defining
function $\theta$ and a diffeomorphism $\tau$ mapping $[0,\varepsilon] \times M$
with coordinates $(t,x)$ to a neighborhood of $M$ in $X$ so that    \index{$\tau$} \index{$\theta$}
$$
    \tau^* (\sigma^{-2} g) = t^{-2} (dt^2 + h_t) \quad \mbox{and} \quad \;
    \iota^*(\theta^2 \sigma^{-2}g) = h_0 \stackrel{!}{=} h, \; \tau^*(\theta) = t.
$$
This is the normal form of an asymptotically hyperbolic metric with prescribed conformal infinity
as used in \cite{GZ}. The function $\theta$ is a solution of the eikonal equation $|d\theta|_{\theta^2
\sigma^{-2} g}=1$ near $M$ and the gradient flow of $\theta$ with respect to the
metric $\theta^2 \sigma^{-2} g$ defines the diffeomorphism $\tau$. Note that
$\iota^*(\theta^2 \sigma^{-2}) = 1$. In these terms, the eigenfunctions
$\Po(\lambda)(f)$ have the following properties.
\begin{itemize}
\item [(i)] $ \tau^* \Po(\lambda) (f) = t^{\lambda}  f + t^{n-\lambda} g + O(t^{n/2+1})$
for some $g \in C^\infty(M)$ if $\Re(\lambda)=n/2$, $\lambda \ne n/2$.\footnote{Of course, the
function $g$ should not be confused with the metric $g$.}
\item [(ii)] $\tau^*  \Po(\lambda) (f) = t^\lambda F + t^{n-\lambda} G$ with smooth $F$ and
$G$ on $[0,\varepsilon) \times M$ so that $\iota^*(F)=f$ if $\Re(\lambda) \le n/2$ with
$\lambda \not\in \{n/2-N/2 \,|\, N \in \N_0\}$ and $\lambda(n-\lambda) \notin \sigma_{pp}$.
\end{itemize}
The function $f$ is called the {\em boundary value} of $u = \Po(\lambda)(f)$. The
function $F$ in (ii) depends on $\lambda$ and has poles in $\lambda \in \{n/2-N/2 \,|\, N \in \N\}$.
But these poles cancel against poles of the second term $G= G(\lambda)$ in the decomposition of $\Po(\lambda)$.
If $\lambda$ is as in (ii), we define         \index{$\SC(\lambda)$ \quad scattering operator}
\begin{equation}\label{scatt-def}
   \Sc(\lambda)(f) = \iota^*(G).
\end{equation}
The operator $\Sc(\lambda)$ is called the {\em scattering operator} of the asymptotically
hyperbolic metric $\sigma^{-2}g$.\footnote{The substitution
$\lambda \mapsto n-\lambda$ maps the operator $\Sc(\lambda)$ to the scattering operator in \cite{GZ}.}
It is a family of pseudo-differential operators with principal
symbol being a constant multiple of $|\xi|^{n-2\lambda}$. It is meromorphic in $\Re(\lambda) < n/2$
with poles in the set $\{\frac{n-N}{2} \,|\, N \in \N\}$ and if $\lambda(n-\lambda) \in \sigma_{pp}$.
The poles in $\lambda = \frac{n-N}{2}$ are sometimes referred to as its {\em trivial} poles. Its nontrivial poles
in $\Re(\lambda) > n/2$ will not be of interest here. Now let $u = \Po(\mu)(f) $ be a solution of
$$
   -\Delta_{\sigma^{-2}g} u = \mu (n-\mu)u, \quad \Re(\mu) = n/2, \; \mu \ne n/2
$$
with {\em boundary value} $f \in C^\infty(M)$. Instead of the asymptotic expansions
of $u$ as above, we will consider asymptotic expansion in terms of powers of
$\sigma$. The following formal arguments describe the expansion of $u$ using the
Laplace-Robin operators $L(g,\sigma;\mu)$.\footnote{The arguments are only formal
since the products $\sigma^j f_j$ are not functions on $X$ without a specification of
coordinates.} In the Poincar\'e-Einstein case, the following algorithm is contained
in the proof of \cite[Proposition 4.2]{GZ}. We start with $f_0 = f$ and define $f_N
\in C^\infty(M)$ recursively by
$$
   N(n-2\mu-N) f_N = \iota^*\left(\sigma^{-N+1} L(-\mu) \left(\sum_{j=0}^{N-1} \sigma^j f_j \right)\right).
$$
But the definition of $L(\lambda)$ implies
$$
   L(\lambda) (\sigma^j f_j) = j(n+2\lambda-j) \sigma^{j-1} f_j + O(\sigma^j), \; \lambda \in \C.
$$
Hence
$$
   L(-\mu)  \left(\sum_{j=0}^N \sigma^j f_j\right) = O(\sigma^N)
$$
and the conjugation formula yields
$$
   -(\Delta_{\sigma^{-2}g} + \mu(n-\mu)) \left(\sigma^\mu  \sum_{j=0}^{N} \sigma^j f_j \right)
   = \sigma^{\mu+1} L(-\mu)  \left(\sum_{j=0}^{N} \sigma^j f_j \right) = \sigma^{\mu}
   O(\sigma^{N+1}).
$$
The coefficients $f_j$ are given by differential operators $f \mapsto \T_j(\mu)(f)$. The construction shows that
the operator $\T_N(\mu)$ has simple poles in the set
$$
   \left \{ \frac{n-j}{2} \; |\, j \le N \right \}.
$$
These are the poles that appeared above in (ii). Note that, using $\iota^*(\rho)=-H$, it easily follows
that $\T_1(\mu)=-\mu H$ (see also Lemma \ref{sol-1}). We also observe that $L(\mu)(1) \in C^\infty(X)$
is a multiple of $\mu$. By an easy induction, this implies that
\begin{equation}\label{T-1}
    \T_N(\mu)(1) = \frac{\mu}{N! (n\!-\!2\mu\!-\!1) \cdots (n\!-\!2\mu\!-\!N)} \QC_{N}(\mu)
\end{equation}
for some polynomial $\QC_{N}(\mu) \in C^\infty(M)$. In particular, the function $\T_n(\mu)(1)$
is regular at $\mu=0$.

Now, in order to justify the above arguments, we use adapted coordinates. Let $u$ be as above. Then
$\tau^*(u)$ has the form $t^\mu F + t^{n-\mu} G$ with $\iota^*(F)=f$ and $\iota^*(G) = \SC(\mu)(f)$.
Let $\zeta \st \tau^{-1} \circ \eta$. Then                                           \index{$\zeta$}
$$
   \zeta^*(t) = \eta^* \tau_* (t) = \eta^* \left(\sigma \frac{\tau_*(t)}{\sigma}\right)
  = s \eta^* \left(\frac{\tau_*(t)}{\sigma}\right)
$$
by $\eta^*(\sigma)=s$. It follows that the pull-back by $\zeta$ of $\tau^*(u)$ equals
$$
   \eta^*(u) = s^\mu \Omega^\mu \zeta^*(F) + s^{n-\mu} \Omega^{n-\mu} \zeta^*(G)
$$
with $\Omega \st \tau_*(t)/\sigma$. But $\iota^* \zeta^* (F) = \iota^*(F)$, $\iota^* \zeta^*(G) = \iota^*(G)$
and
\begin{equation}\label{Omega-rest}
   \iota^* (\Omega) = 1
\end{equation}
imply that the leading terms in the expansion of $\eta^*(u)$ are $\iota^*(F) = f$ and $\iota^*(G) = \Sc(\mu)(f)$.
In order to prove the restriction property \eqref{Omega-rest}, we recall that
$$
   \tau_* (t^{-2} (dt^2+h_t)) = \sigma^{-2} g.
$$
Hence
$$
   \Omega^2 g = dt^2 + h_t.
$$
In particular, $\iota^* (\Omega^2) h = h_0$. But in the construction of $(\tau,\theta)$ we required that $h_0=h$.
This proves \eqref{Omega-rest}. Thus, the asymptotic expansion of the eigenfunction $\eta^*(u)$ of
$\Delta_{\eta^*(\sigma^{-2}g)}$ takes the form         \index{$\T_j(\lambda)$ \quad solution operators}
\begin{equation}\label{asymp}
   \sum_{j \ge 0} s^{\mu+j} \T_j(\mu)(f)(x)
   + \sum_{j \ge 0} s^{n-\mu+j} \T_j(n\!-\!\mu) \Sc(\mu)(f)(x), \quad s \to 0,
\end{equation}
where $\T_j(\mu)$ are families of differential operators on $M$; we shall refer to these operators as
{\em solution operators}. One easily find that the order of $\T_j(\mu)$ is $\le 2 [\frac{j}{2}]$. The above
formal arguments show that the families $\T_N(\mu)$ are rational in $\mu$ with simple poles in the set
$$
   \left\{ \frac{n-j}{2} \; |\, j \le N \right\}.
$$
Of course, the coefficients $\T_j(\mu)$ can be determined recursively in terms of the Laplace-Robin operator
in adapted coordinates.  In the following, it will often suffice to work with a finite version of the expansion 
\eqref{asymp}.

\begin{rem}\label{sol-op} The solution operators $\T_j(\lambda): C^\infty(M) \to C^\infty(M)$ describe
formal asymptotic expansions of eigenfunctions (with smooth boundary value) of the Laplacian of asymptotically
hyperbolic metrics $\sigma^{-2}g$. Another type of asymptotic expansions of eigenfunctions appears in
\cite[Section 5]{GW}. In an even more general setting, these are expansions, say in powers of a defining function,
the coefficients of which are functions on the space $X$ but not on its boundary $M$. Comparing both types of
expansions would require additional expansions of the coefficients.
\end{rem}

Later, we shall use the fact that the scattering operator $\Sc(\lambda)$ for $\lambda \in \R$
is formally selfadjoint with respect to the scalar product on $C^\infty(M)$ defined by $h$.
For the convenience of the reader, we include a proof that directly derives this property from
the expansion \eqref{asymp} (without invoking the definition of $\Sc(\lambda)$ in terms of
expansions in power series of $t$) (compare with \cite[Proposition 3.3]{GZ}).

\begin{lem}\label{scatt-sd} $\Sc(\lambda)^* = \Sc(\lambda)$ for $\lambda \in \R$, $\lambda < n/2$
such that $\lambda \notin \left\{ \frac{n-N}{2} \,|\, N \in \N \right\}$ and $\lambda(n-\lambda) \notin \sigma_{pp}$.
\end{lem}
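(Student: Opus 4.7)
The plan is to apply Green's formula for the Laplacian $\Delta_{\bar g}$ of the asymptotically hyperbolic metric $\bar g = \sigma^{-2}g$ on the compact region $X_\varepsilon \st \{\sigma \geq \varepsilon\}$ to two eigenfunctions $u_i = \Po(\lambda)(f_i)$ ($i=1,2$) with common eigenvalue $\lambda(n-\lambda)$ and boundary values $f_i \in C^\infty(M)$, and then let $\varepsilon \to 0$. Since the interior integrand $u_1 \Delta_{\bar g} u_2 - u_2 \Delta_{\bar g} u_1$ vanishes identically, Green's formula reduces to
$$
  0 = \int_{\{\sigma=\varepsilon\}} \left(u_1 \partial_{\nu_{\bar g}} u_2 - u_2 \partial_{\nu_{\bar g}} u_1\right) dvol_{\bar g, \partial},
$$
where $\nu_{\bar g}$ is the inward unit normal and $dvol_{\bar g,\partial}$ is the induced boundary volume form. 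The strategy is to extract the $O(\varepsilon^0)$ coefficient of the right-hand side and show that it is a constant multiple of $\int_M (f_1 \Sc(\lambda) f_2 - f_2 \Sc(\lambda) f_1)\, dvol_h$.

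I would work in adapted coordinates so that $\eta^*(\bar g) = s^{-2}(a^{-1} ds^2 + h_s)$, with $a = \eta^*(|\NV|^2) = 1 + O(s)$ and $\mathring v(s) = dvol_{h_s}/dvol_h = 1 + O(s)$. The level set $\{\sigma=\varepsilon\}$ corresponds to $\{s=\varepsilon\}$; its unit inward normal with respect to $\bar g$ is $-s\sqrt{a}\,\partial_s\big|_{s=\varepsilon}$, and the induced boundary volume form pulls back to $\varepsilon^{-n} \mathring v(\varepsilon)\, dvol_h$. Hence the integrand equals
$$
  -\sqrt{a(\varepsilon)}\,\mathring v(\varepsilon)\, \varepsilon^{1-n}\bigl(u_1 \partial_s u_2 - u_2 \partial_s u_1\bigr)\big|_{s=\varepsilon}\, dvol_h.
$$
Now I substitute the asymptotic expansion \eqref{asymp}:
$$
  \eta^*(u_i) \sim \sum_{j\ge 0} s^{\lambda+j}\T_j(\lambda)f_i + \sum_{j\ge 0} s^{n-\lambda+j}\T_j(n-\lambda)\Sc(\lambda)f_i,
$$
and compute the $s^{n-1}$ coefficient of the Wronskian-type expression $u_1\partial_s u_2 - u_2\partial_s u_1$. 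The diagonal products $s^{\lambda+j}\T_j f_1\cdot s^{\lambda+k-1}$ (and the same with $n-\lambda$ throughout) cancel by antisymmetry; the cross terms at leading order give
$$
  \bigl((n-\lambda)-\lambda\bigr)\bigl(f_1 \Sc(\lambda)f_2 - f_2 \Sc(\lambda)f_1\bigr)s^{n-1} + O(s^{n}).
$$
The exclusion $\lambda\notin\{(n-N)/2: N\in\N\}$ guarantees that the two series have no exponent overlap, so no further terms contribute at order $s^{n-1}$.

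Combining, the integrand equals $-(n-2\lambda)\sqrt{a(\varepsilon)}\mathring v(\varepsilon)\bigl(f_1 \Sc(\lambda)f_2 - f_2 \Sc(\lambda)f_1\bigr)\bigr|_{s=\varepsilon} + O(\varepsilon)$, and passing to the limit $\varepsilon\to 0$ with $a(0)=\mathring v(0)=1$ yields
$$
  0 = -(n-2\lambda)\int_M \bigl(f_1 \Sc(\lambda)f_2 - f_2 \Sc(\lambda)f_1\bigr)\,dvol_h.
$$
Since $\lambda \ne n/2$, this gives $\Sc(\lambda)^* = \Sc(\lambda)$. The main obstacle is the bookkeeping in the last paragraph: one has to verify that all exponents $\lambda+j$ and $n-\lambda+k$ contributing to $s^{n-1}$ in the Wronskian match only the $j=k=0$ cross-terms, which is exactly where the excluded values $\lambda=(n-N)/2$ would spoil the argument (producing logarithmic terms and resonances). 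A secondary technical point is justifying interchange of the limit with the integral, which is routine because the asymptotic expansion \eqref{asymp} is uniform in $x\in M$ to any desired order.
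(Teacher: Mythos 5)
Your proof is correct and its mathematical core is the same as the paper's: Green's formula applied on the compact region $\{\sigma\ge\varepsilon\}$ of the asymptotically hyperbolic manifold, the asymptotic expansion \eqref{asymp} of eigenfunctions, and the observation that the excluded values $\lambda=\frac{n-N}{2}$ are precisely those for which pure $F$--$F$ or $G$--$G$ products could contaminate the $s^{n-1}$ coefficient. The only real difference is which form of Green's formula is used: you use the antisymmetric (Wronskian) identity $\int(u_1\Delta u_2-u_2\Delta u_1)=\int_{\partial}(u_1\partial_\nu u_2-u_2\partial_\nu u_1)$, so that the bulk term vanishes identically and the boundary integral is exactly $0$ for every $\varepsilon$; the paper instead applies the Dirichlet form \eqref{Green}, obtains a nonzero (in fact divergent as $\varepsilon\to 0$) bulk term, and exploits its manifest symmetry under $u_1\leftrightarrow u_2$. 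Your variant is a touch cleaner since one does not need to discuss the Laurent expansion of the bulk integral.

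Two small points worth tightening. First, the claim that the ``diagonal products\ldots cancel by antisymmetry'' is not correct as a pointwise statement: the pure $F$--$F$ part of the Wronskian is $s^{2\lambda}(F_1F_2'-F_2F_1')$, and already its $s^{2\lambda+1}$-coefficient $2\bigl(f_1\T_2(\lambda)f_2-f_2\T_2(\lambda)f_1\bigr)$ is generally nonzero because $\T_2(\lambda)$ is a genuine second-order operator. What actually saves the argument is exactly the second half of your sentence: these terms occur at exponents $s^{2\lambda+k}$ with $k\ge 1$, and none of these equals $s^{n-1}$ unless $\lambda=\frac{n-(k+1)}{2}$, which is excluded; likewise the $G$--$G$ terms sit at exponents $>n-1$ since $\lambda<n/2$, and the cross $F$--$G$ terms below $s^{n-1}$ do not exist. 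Second, a sign slip: the inward unit normal of $\{s\ge\varepsilon\}$ with respect to $s^{-2}(a^{-1}ds^2+h_s)$ is $+s\sqrt{a}\,\partial_s$ rather than $-s\sqrt{a}\,\partial_s$; this is harmless since it only rescales the whole (vanishing) boundary integrand by $-1$.
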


Note that the assumptions guarantee that the ladders $\lambda+\N$ and $n-\lambda +\N$ are disjoint.

\begin{proof} We recall Green's formula
\begin{equation}\label{Green}
   \int_X (du,dv)_g dvol_g + \int_X u \Delta_g(v) dvol_g
   = \int_{\partial X} u \star_g dv = \int_{\partial X} u i_{N(v) N} (dvol_g)
\end{equation}
on a compact Riemannian manifold $(X,g)$ with boundary $\partial X$. Here $N$ denotes a unit
normal field. Now let $\R \ni \lambda < n/2$ be as above. Let
$u_1$ and $u_2$ be real solutions of
$$
   -\Delta_{\eta^*(\sigma^{-2}g)} u = \lambda(n-\lambda) u
$$
on $(0,\varepsilon) \times M$ of the form $u = s^{\lambda} F + s^{n-\lambda} G$ with smooth
$F,G$. These are defined by the Poisson transforms of smooth boundary functions $f_1$ and $f_2$. Let
$$
   k \st \eta^*(\sigma^{-2}g) = s^{-2}( a^{-1}  ds^2 + h_s).
$$
Then $dvol_k = s^{-n-1} a^{-1/2} ds dvol_{h_s}$ and the restriction of
$\nu = s \sqrt{a} \partial_s$ to the boundary $s=\varepsilon$
defines a unit normal field. By \eqref{Green}, we find
\begin{align}\label{Green-a}
   & \int_{s > \varepsilon} ((du_1,du_2)_k - \lambda(n-\lambda) u_1 u_2) dvol_k \notag \\
   & = \varepsilon^{-n} \int_{s=\varepsilon} u_1 \partial_{\nu}(u_2) dvol_{h_\varepsilon}
   = \varepsilon^{-n+1} \int_{s=\varepsilon} u_1 \partial_s(u_2) \sqrt{a} dvol_{h_\varepsilon}.
\end{align}
The finite part of the expansion of the last integral in $\varepsilon$ is the coefficient of
$\varepsilon^{n-1}$ in the expansion of
$$
   \int_{s=\varepsilon} u_1 \partial_s(u_2) \sqrt{a} dvol_{h_\varepsilon}.
$$
By plugging in the expansions of $u_1$, $u_2$, $\sqrt{a}$ and
$dvol_{h_\varepsilon}$, we obtain the expression
$$
   \int_M \iota^* (\lambda F_2 G_1 + (n-\lambda) F_1 G_2) dvol_h
$$
for this coefficient. Since the left-hand side of \eqref{Green-a} is symmetric in
$u_1$ and $u_2$, the latter result equals
$$
   \int_M \iota^* (\lambda F_1 G_2+ (n-\lambda) F_2 G_1) dvol_h.
$$
It follows that
$$
  \int_M \iota^* (F_1 G_2) dvol_h = \int_M \iota^* (F_2 G_1) dvol_h.
$$
In terms of the expansion \eqref{asymp}, this means that
$$
    \int_M f_1 \SC(\lambda) (f_2) dvol_h = \int_M f_2 \SC(\lambda) (f_1) dvol_h,
$$
i.e., $\Sc(\lambda)^* = \Sc(\lambda)$.
\end{proof}


Now we are ready to define residue families.

\index{$\delta_N(g,\sigma;\mu)$}
\index{$\D_N^{res}(g,\sigma;\lambda)$ \quad residue family}

Let $\N \ni N \le n$ and assume that the condition $\SCY$ is satisfied. We consider an eigenfunction $u$
of $\Delta_{\sigma^{-2}} g$ with boundary value $f \in C^\infty(M)$ satisfying
\begin{equation}\label{eigen}
   \Delta_{\sigma^{-2}g} u + \mu (n-\mu) u = 0 \quad \mbox{for $\Re(\mu) = n/2$, $\mu \ne n/2$}.
\end{equation}
Such eigenfunctions have asymptotic expansions (as above) near the boundary. We consider the integral
\begin{equation}\label{Mu-L}
    \left\langle M_u(\lambda),\psi \right\rangle = \int_X \sigma^\lambda u \psi dvol_g
\end{equation}
with $\psi \in C_c^\infty(X)$ and $\lambda \in \C$. The function 
$\lambda \mapsto \left\langle M_u(\lambda),\psi \right\rangle$
is holomorphic if $\Re(\lambda) \gg 0$ and we regard $M_u(\lambda)$ as a holomorphic family of distributions on $X$.
If $\supp(\psi) \cap M = \emptyset$, then $M_u(\lambda)$ admits a holomorphic continuation to $\C$. $M_u(\lambda)$
generalizes  the meromorphic family of distributions $M(\lambda) = M_1(\lambda)$ discussed in Section \ref{intro}. Likewise
as $M(\lambda)$, the family $M_u(\lambda)$ admits a meromorphic continuation with simple poles in $\{-\mu - N - 1 \}$.
The proof of this fact is similar as for $u=1$ and follows by expanding the integrand near the boundary. In addition to these
poles, $M_u(\lambda)$ has simple poles in the set $\{\mu + n - N - 1\}$. However, this second ladder of poles will be
ignored in the following. The details are given in the proof of Theorem \ref{D-res-ex}. This proof also shows that the
residues have the form
\begin{equation}\label{delta-def}
   \Res_{\lambda=-\mu-1-N} \left( \int_X \sigma^\lambda u \psi dvol_g \right)
   = \int_M f \delta_N(g,\sigma;\mu)(\eta^*(\psi)) dvol_h
\end{equation}
with some meromorphic families $\delta_N(g,\sigma;\mu)$ of differential operators $C^\infty([0,\varepsilon)\times M)
\to C^\infty(M)$ of order $\le N$. The residues of $M_u(\lambda)$ are distributions on $X$ with support on the
boundary $M$ of $X$. They may be regarded as obstructions to extending $M_u(\lambda)$ as a distribution to $X$.

\begin{defn}[\bf Residue families]\label{D-res} Let $\N \ni N \le n$ and assume that the condition $\SCY$ is satisfied.
Then the one-parameter family
\begin{equation}\label{D-res-def}
   \D_N^{res}(g,\sigma;\lambda) = N!(2\lambda\!+\!n\!-\!2N\!+\!1)_N \delta_N(g,\sigma;\lambda\!+\!n\!-\!N), \; \lambda \in \C
\end{equation}
is called the residue family of order $N$. The family $\D_n^{res}(\lambda)$ will be called the critical residue family.
\end{defn}

Some comments are in order. 
The families $\delta_N(\mu)$ are defined for $\Re(\mu) = \frac{n}{2}$ (with $\mu \ne \frac{n}{2}$). 
Hence $\D_N^{res}(\lambda)$ is defined for $\Re (\lambda) = -\frac{n}{2}+N$ (with $\lambda \ne -\frac{n}{2}+N$). 
But the normalizing coefficient in \eqref{D-res-def} is chosen so that $\D_N^{res}(g,\sigma;\lambda)$ actually extends to 
a {\em polynomial}
family of order $N$ and degree $2N$ in $\lambda \in \C$. This fact will follow from Theorem \ref{residue-product}. The 
proof of the latter result actually contains a second proof of the existence of the meromorphic continuation of $\langle M_u(\lambda),\psi \rangle$. The method is a version of a Bernstein-Sato-type argument. It provides explicit knowledge 
of the position of poles of $M_u(\lambda)$ and formulas for the residues. However, the following result gives a 
more direct description of the operator $\delta_N(g,\sigma;\lambda)$ in terms of solution 
operators. The equivalence of both descriptions of residues will have interesting consequences.

\begin{theorem}\label{D-res-ex} Let $\N \ni N \le n$. Then
\begin{align}\label{D-res-sol}
   \D_N^{res}(g,\sigma;\lambda) & = N! (2\lambda\!+\!n\!-\!2N\!+\!1)_N \\
   & \times \sum_{j=0}^{N} \frac{1}{j!} \left[ \T_{N-j}^*(g,\sigma;\lambda\!+\!n\!-\!N) v_0 + \cdots +
   \T_0^*(g,\sigma;\lambda\!+\!n\!-\!N) v_{N-j} \right] \iota^* \partial_s^j. \notag
\end{align}
\end{theorem}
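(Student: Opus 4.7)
The plan is to compute the residue $\delta_N(g,\sigma;\mu)$ of $\langle M_u(\lambda),\psi\rangle$ at $\lambda=-\mu-1-N$ directly in adapted coordinates $\eta\colon [0,\varepsilon)\times M\to X$. First I would pull back the integral \eqref{Mu-L}; using $\eta^*(\sigma)=s$ and $\eta^*(dvol_g)=v(s)\,ds\,dvol_h$, it becomes
$$
\langle M_u(\lambda),\psi\rangle=\int_0^\varepsilon\!\int_M s^\lambda\,\eta^*(u)(s,x)\,\eta^*(\psi)(s,x)\,v(s)\,ds\,dvol_h(x).
$$
Then I would substitute three expansions in $s$: the asymptotic expansion \eqref{asymp} of the eigenfunction (truncated at a sufficiently high order, with a smooth remainder); the Taylor expansion $\eta^*(\psi)(s,x)=\sum_{k\ge 0}\frac{s^k}{k!}\iota^*\partial_s^k\eta^*(\psi)(x)$; and $v(s)=\sum_{l\ge 0}s^l v_l$. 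Their product produces monomials $s^{\lambda+\mu+j+k+l}$ from the first ladder in \eqref{asymp} and $s^{\lambda+n-\mu+j+k+l}$ from the second, while the remainder and the contribution from the endpoint $s=\varepsilon$ are holomorphic near $\lambda=-\mu-1-N$.

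Since $\Re\mu=n/2$ and $\mu\ne n/2$, the two ladders of poles $-\mu-1-\N$ and $\mu-n-1-\N$ are disjoint, so only the first sum in \eqref{asymp} contributes to the residue at $\lambda=-\mu-1-N$. Applying Gelfand's formula $\Res_{\lambda=-1}\int_0^\varepsilon s^\lambda\phi(s)\,ds=\phi(0)$ to the monomials with $j+k+l=N$ yields
$$
\Res_{\lambda=-\mu-1-N}\langle M_u(\lambda),\psi\rangle=\sum_{j+k+l=N}\frac{1}{k!}\int_M\T_j(\mu)(f)\cdot v_l\cdot\iota^*\partial_s^k\eta^*(\psi)\,dvol_h.
$$
Transferring the differential operators $\T_j(\mu)$ off $f$ by their formal adjoints with respect to the scalar product induced by $h$ on $M$, and comparing with the defining identity \eqref{delta-def} for $\delta_N(g,\sigma;\mu)$, one reads off
$$
\delta_N(g,\sigma;\mu)=\sum_{k=0}^N\frac{1}{k!}\Biggl(\sum_{l=0}^{N-k}\T_{N-k-l}^*(\mu)\,v_l\Biggr)\iota^*\partial_s^k.
$$
Multiplying by the prefactor $N!(2\lambda+n-2N+1)_N$ and substituting $\mu=\lambda+n-N$ then gives \eqref{D-res-sol}.

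The main technical point to check is that the formal expansion \eqref{asymp} holds modulo a genuine smooth remainder of sufficiently high order in $s$, so that the remainder contributes a function holomorphic at $\lambda=-\mu-1-N$. This is supplied by the smooth decomposition $\eta^*(u)=s^\mu\Omega^\mu\zeta^*(F)+s^{n-\mu}\Omega^{n-\mu}\zeta^*(G)$ recorded in the text, which reduces the asymptotic claim to Taylor expanding the smooth factors $\Omega^\mu\zeta^*(F)$ and $\Omega^{n-\mu}\zeta^*(G)$ to any desired order. A secondary bookkeeping issue is that the operators $\T_j(\mu)$ are rational in $\mu$ with poles at $\mu=(n-i)/2$ for $i\le j$; the identity should first be established for generic $\mu$ (equivalently generic $\lambda$), and the polynomial version in $\lambda$ then follows by meromorphic continuation, since both sides of \eqref{D-res-sol} have matching pole structure in $\lambda$.
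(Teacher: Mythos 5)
Your proposal is correct and follows essentially the same route as the paper: pull back to adapted coordinates via $\eta$, insert the expansions of $v(s)$, $\eta^*(u)$, and (implicitly in the paper, explicitly in yours) the Taylor expansion of $\eta^*(\psi)$, extract the residue with Gelfand's formula, pass to adjoints, and compare with \eqref{delta-def}. The two technical points you flag — that only the first ladder $\{-\mu-1-\N\}$ contributes because $\Re\mu=n/2$, $\mu\ne n/2$ separates it from $\{\mu-n-1-\N\}$, and that the identity should first be read for generic $\mu$ and extended by meromorphy — are left implicit in the paper's proof but are handled correctly in your argument.
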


\begin{proof} The relation \eqref{key-pb} implies
\begin{equation}\label{M-adapted}
   \langle M_u(\lambda),\psi \rangle =  \int_X \sigma^\lambda u \psi dvol_g = \int_{(0,\varepsilon) \times M} s^\lambda \eta^*(u)
   \eta^*(\psi) dvol_{\eta^*(g)}, \; \Re(\lambda) \gg 0.
\end{equation}
Here $\eta^*(u) $ is an eigenfunction of the Laplacian of the metric $s^{-2}
\eta^*(g)$. In order to simplify the notation, we write the latter integral as
$$
   \int_{[0,\varepsilon) \times M} s^\lambda u \psi dvol_{\eta^*(g)} = \int_0^\infty
   \int_M s^\lambda u \psi v ds dvol_h
$$
with an appropriate eigenfunction $u$ and test functions $\psi$ on the space
$[0,\varepsilon) \times M$. Now we expand $v$ according to \eqref{RVC-B} and $u$
according to \eqref{asymp}. The classical formula \cite{Gelfand}
\begin{equation}\label{Gelfand}
   \Res_{\lambda=-N-1} \left( \int_0^\infty s^\lambda \psi(s) ds \right) = \frac{\psi^{(N)}(0)}{N!}, \; N \in \N_0
\end{equation}
for test functions $\psi \in C_c^\infty(\overline{\R_+})$ shows that
\begin{align*}
    & \Res_{\lambda=-\mu-1-N} \left( \int_X \sigma^\lambda u \psi dvol_g \right) \\
    & = \Res_{\lambda=-\mu-1-N} \left( \int_0^\infty \sum_{a=0}^N \sum_{j+k=a}
    \int_M s^{\lambda+\mu+a} \T_k(\mu)(f) v_j \psi dvol_h \right) \\
    & = \sum_{a=0}^N \frac{1}{(N-a)!} \sum_{j+k=a} \int_M \T_k(\mu)(f) v_j \iota^* \partial_s^{N-a}(\psi) dvol_h.
\end{align*}
Since $f$ is arbitrary, taking adjoints proves the assertion.
\end{proof}

\begin{remark}\label{rel-def}
Definition \ref{D-res} generalizes the notion of
residue families $D_N^{res}(h;\lambda)$ introduced in \cite{J1}. In that case,
$g = r^2 g_+$ is the conformal compactification of a Poincar\'e-Einstein metric
$g_+$. However, the definitions in \cite{J1, J2,  FJO} use a different normalizing
coefficient. That choice is motivated by the fact that in these references, the expansion of
$g$ involves only even powers of $r$. More precisely, if $g_+$ is in normal form
relative to $h$, i.e., $\iota^*(r^2g_+)=h$, then it holds
\begin{align*}
   \D^{res}_{2N}(g,r;\lambda) & = (-2N)_N
   \left(\lambda\!+\!\frac{n}{2}\!-\!2N\!+\!\frac{1}{2}\right)_N D^{res}_{2N}(h;\lambda), \\
   \D^{res}_{2N+1}(g,r;\lambda) & =
   -2(-2N\!-\!1)_{N+1}\left(\lambda\!+\!\frac{n}{2}\!-\!2N\!-\!\frac{1}{2}\right)_{N+1} D^{res}_{2N+1}(h;\lambda).
\end{align*}
If $\lambda$ is a zero of the prefactors in these factorization identities, then the family $\D_N^{res}(g,r;\lambda)$
vanishes and therefore hides the non-trivial operator $D_N^{res}(h;\lambda)$. In particular, if $2N+1=n$, then
$$
   \D_n^{res} (g,r;0) = 0.
$$
However, for general $g$ and $\sigma$ satisfying $\SCY$, the critical value
$\D_n^{res}(g,\sigma;0)$ for odd $n$ need not vanish. For the case $n=3$, we refer to
Section \ref{P3}.

The degrees of $D_{2N}^{res}(h;\lambda)$ and $D_{2N+1}^{res}(h;\lambda)$ both equal
$N$. Hence the above relations show that the respective degrees of
$\D_{2N}^{res}(g,r;\lambda)$ and $\D_{2N+1}^{res}(g,r,\lambda)$ are $2N$ and $2N+1$.
Since in the generic case $\D_N^{res}(\lambda)$ has degree $2N$, it follows that in
the Poincar\'e-Einstein case the degrees fall on half. This drop in degree reflects
the vanishing of curvature invariants in the Poincar\'e-Einstein case. For instance,
\eqref{D1-exp} and Lemma \ref{D2-exp} show that $\D_1^{res}(g,r;\lambda)$ and
$\D_2^{res}(g,r;\lambda)$ have respective degrees $1$ and $2$. In these cases, the
vanishing of $H$, $\lo$, and $\Rho_{00}$ are responsible for the drop in degree.
\end{remark}

The following result implies that residue families of order $N \le n$ are completely
determined by the metric $g$ and the embedding $M \hookrightarrow X$.

\begin{prop}\label{sigma-free}
Let $\N \ni N \le n$. Then $\D_N^{res}(g,\sigma;\lambda)$ is determined only by the
coefficients $\sigma_{(j)}$ for $j \le N+1$ in the expansion of $\sigma$ in geodesic
normal coordinates.
\end{prop}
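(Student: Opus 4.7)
The plan is to read off the proposition from the explicit formula of Theorem \ref{D-res-ex}:
\begin{equation*}
   \D_N^{res}(g,\sigma;\lambda) = N!(2\lambda\!+\!n\!-\!2N\!+\!1)_N \sum_{j=0}^N \frac{1}{j!}
   \sum_{k=0}^{N-j} \T_{N-j-k}^*(g,\sigma;\lambda\!+\!n\!-\!N)\, v_k\, \iota^* \partial_s^j.
\end{equation*}
All ingredients on the right are built from the adapted coordinate diffeomorphism $\eta$ and the Laplace-Robin operator $L(g,\sigma;\lambda)$ in adapted coordinates. It therefore suffices, for a fixed background metric $g$, to show that each operator $\iota^*\partial_s^j\circ \eta^*$ with $j\le N$, each coefficient $v_k$ with $k\le N$, and each $\T_k^*$ with $k\le N$ is determined by $\sigma_{(2)},\ldots,\sigma_{(N+1)}$.

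First I would analyze the change of variables $\Psi = \kappa^{-1}\circ\eta$ between adapted and geodesic normal coordinates. It is pinned down by $\kappa^*(\sigma)(\Psi(s,x)) = s$ together with the flow condition $\partial_s \Psi = \kappa_*^{-1}(\mathfrak{X})$. Since in geodesic normal coordinates $\mathfrak{X} = \grad_g(\sigma)/|\grad_g(\sigma)|^2$ involves only first-order derivatives of $\sigma$, its $N$-jet in $r$ depends only on $\sigma_{(2)},\ldots,\sigma_{(N+1)}$; equivalently, inverting the scalar series $s = r(1+\sigma_{(2)}r+\sigma_{(3)}r^2+\cdots)$ in $r$ produces the $r$-component of $\Psi$ to order $N$ in $s$ from the same coefficients, while the tangential component is fixed by integrating the flow. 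Consequently the $N$-jet of $\eta^*(g) = a^{-1}ds^2 + h_s$ in $s$ is a polynomial function of $\sigma_{(2)},\ldots,\sigma_{(N+1)}$ and of the $N$-jet of $g$ in geodesic normal coordinates.

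With this in hand each ingredient is under control. By the intertwining identity \eqref{translate}, $\iota^*\partial_s^j\eta^* = \iota^*\mathfrak{X}^j$ is completely determined by the $(N-1)$-jet of $\mathfrak{X}$. The volume density $v(s)=a^{-1/2}(\det h_s/\det h)^{1/2}$ is known modulo $O(s^{N+1})$, which delivers all $v_k$ with $k\le N$. Finally, in adapted coordinates the Laplace-Robin operator takes the form \eqref{LR-adapted} with coefficients $a$, $\eta^*(\rho)$, $\eta^*(\J)$ and $\Delta_{\eta^*(g)}$. Proposition \ref{rec-rho} expresses the Taylor coefficients $\partial_s^k(\rho)|_0$ with $k \le N-1$ as polynomials in the Taylor coefficients of $h_s$ and $\J$ of order at most $N$, so all coefficients of $L$ that enter the recursion for $\T_k$ are polynomial functions of the same data.

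The remaining bookkeeping concerns the $\T_k$ with $k\le N$. The recursive algorithm
\[
   k(n-2\mu-k)f_k = \iota^*\!\left(s^{-k+1} L(-\mu)\Big(\sum_{j=0}^{k-1} s^j f_j\Big)\right)
\]
shows that $f_k$ is read off from the Taylor expansion of $L$ in $s$ only to order $k-1$, so iterating up to $k=N$ involves just the $(N-1)$-jet in $s$ of the coefficients of $L$. That jet has already been shown to depend on $\sigma_{(2)},\ldots,\sigma_{(N+1)}$ alone, which concludes the proof. The main obstacle --- and the reason the bound $N+1$ is the natural one rather than $N+2$ --- is that although $\rho$ is built from second derivatives of $\sigma$, Proposition \ref{rec-rho} replaces these normal derivatives by Taylor coefficients of $h_s$, which never exceed order $N$ in $s$ and so never call on $\sigma_{(j)}$ for $j>N+1$.
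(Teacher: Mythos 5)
Your argument is correct but takes a genuinely different route from the paper's. You start from the explicit adapted-coordinate formula of Theorem \ref{D-res-ex} and reduce the proposition to the claim that the $N$-jet of the change of variables $\kappa^{-1}\circ\eta$ (and hence of $v$, $\rho$, $h_s$, and the solution operators $\T_k$ in adapted coordinates) is determined by $\sigma_{(2)},\dots,\sigma_{(N+1)}$. The paper instead sidesteps the adapted coordinates entirely: it evaluates the residue of $\langle M_u(\lambda),\psi\rangle$ directly in geodesic normal coordinates using the diffeomorphism $\kappa$, rewriting
\[
   \langle M_u(\lambda),\psi\rangle=\int_{[0,\varepsilon)}\int_M \left(\frac{\kappa^*(\sigma)}{r}\right)^\lambda r^\lambda\, \kappa^*(u)\,\kappa^*(\psi)\,dr\,dvol_{h_r},
\]
expanding the eigenfunction $\kappa^*(u)$ in powers of $r$ with solution operators determined inductively from
$\Delta_{\kappa^*(\sigma)^{-2}g}=(\kappa^*(\sigma))^2\Delta_{dr^2+h_r}-(n-1)\kappa^*(\sigma)\nabla_{\grad(\kappa^*(\sigma))}$, and observing that both the $r$-expansion of $\kappa^*(\sigma)/r$ to order $N$ and the solution operators through order $N$ call on $\sigma_{(j)}$ only for $j\le N+1$. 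That route avoids any jet analysis of the diffeomorphism $\kappa^{-1}\circ\eta$, which is the most delicate part of your approach. Your version, on the other hand, shows directly that the representation formula from Theorem \ref{D-res-ex} which is used repeatedly later in the paper has the stated dependence, which is a useful piece of information in its own right. One small remark: the appeal to Proposition \ref{rec-rho} for controlling the $(N-1)$-jet of $\rho$ is not actually needed to get the bound $N+1$; since $\rho=-\tfrac{1}{n+1}(\Delta_g\sigma+\sigma\J)$ involves at most two normal derivatives of $\sigma$, its $(N-1)$-jet in $r$ (or $s$) manifestly depends only on $\sigma_{(j)}$ for $j\le N+1$, so the ``obstacle'' you flag at the end dissolves by inspection.
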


\begin{proof}
The claim follows by evaluating the residue definition of residue families in terms
of geodesic normal coordinates. We use the diffeomorphism $\kappa$ to write
\begin{equation}\label{M-geodesic}
   \langle M_u(\lambda),\psi \rangle = \int_X \sigma^\lambda u \psi dvol_g
   = \int_{[0,\varepsilon)} \int_M \left( \frac{\kappa^*(\sigma)}{r} \right)^\lambda r^\lambda \kappa^*(u)
   \kappa^*(\psi) dr dvol_{h_r}
\end{equation}
for $\Re(\lambda) \gg 0$ and test functions $\psi$ with sufficiently small support.
The eigenfunction $\kappa^*(u)$ of $\Delta_{\kappa^*(\sigma)^{-2} g}$ has an
asymptotic expansion in $r$ of the form
$$
   \sum_{j \ge 0} r^{\mu+j} \T_j(\mu)(f) + \cdots,
$$
where the dots indicate an asymptotic expansion with exponents $n-\mu+j$. In that
expansion, the operators $\T_j(\mu)$ are determined by recursive relations. An
induction argument using the formula
$$
   \Delta_{\kappa^*(\sigma)^{-2} g} 
   = (\kappa^*(\sigma))^2 \Delta_{dr^2 + h_r} - (n-1) \kappa^*(\sigma) \nabla_{\grad(\kappa^*(\sigma))}
$$
shows that $\T_N(\mu)$ is determined only by the coefficients of $r^j$ for $j \le
N+1$ in the expansion of $\kappa^*(\sigma)$, i.e., by $\sigma_{(j)}$ for $j \le
N+1$. Now the residue of the left-hand side of \eqref{M-geodesic}
at $\lambda=-\mu-1-N$ is determined by
the coefficient of $r^{\lambda+\mu+N}$ in the expansion of the integrand. That
coefficient involves the operators $\T_j(\mu)$ with $j \le N$ and the coefficients
of $r^{j}$ for $j \le N$ in the expansion of $\kappa^*(\sigma)/r$. The latter are
determined by $\sigma_{(j)}$ for $j \le N+1$. All other ingredients of the integrand
do not depend on $\sigma$. The proof is complete.
\end{proof}

Since the coefficients $\sigma_{(j)}$ for $j \le n+1$ are determined by the metric $g$ and 
the embedding $\iota$, the residue families $\D_N^{res}(g,\sigma;\lambda)$ for $N \le n$ 
are completely determined by the metric $g$ and the embedding $\iota$, and it is justified 
to use the simplified notation to $\D_N^{res}(g;\lambda)$.

The definition of residue families can be extended to a wider setting. Let
$\sigma \in C^\infty(X)$ be a boundary defining function so that $|d\sigma|_g^2=1$
on the boundary $M$. Then the singular metric $\sigma^{-2}g$ is asymptotically
hyperbolic. This implies the existence of a Poisson operator and the existence of an
eigenfunction $u$ of $\Delta_{\sigma^{-2}g}$ with eigenvalue $-\mu(n-\mu)$ and
arbitrary given boundary value $f\in C^\infty(M)$. The asymptotic expansion of $u$
in terms of adapted coordinates can be stated as an asymptotic expansion in powers
of $\sigma$. For $N \in \N$, we define an operator
$$
   \delta_N(g,\sigma;\mu): C^\infty(X) \to C^\infty(M)
$$
by
$$
  \Res_{\lambda=-\mu-1-N} \left(\int_X \sigma^\lambda u \psi dvol_g \right)
  = \int_M f \delta_N(g,\sigma;\mu) (\psi) dvol_{\iota^*(g)}
$$
and let
$$
   \D_N^{res}(g,\sigma;\lambda) \st N! (2\lambda\!+\!n\!-\!2N\!+\!1)_N \delta_N(g,\sigma;\lambda\!+\!n\!-\!N).
$$
These general residue families are conformally covariant in the following sense.


\begin{theorem}\label{CTL-RF} The residue family $\D_N^{res}(g,\sigma;\lambda)$ is
conformally covariant in the sense that
$$
   \D_N^{res}(\hat{g},\hat{\sigma};\lambda) \circ e^{\lambda \varphi}
   = e^{(\lambda-N) \iota^* (\varphi)} \circ \D_N^{res}(g,\sigma;\lambda)
$$
for all conformal changes $(\hat{g},\hat{\sigma}) = (e^{2\varphi}g,e^{\varphi}\sigma)$, $\varphi \in C^\infty(X)$.
\end{theorem}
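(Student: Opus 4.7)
The plan is to exploit the key invariance $\hat{\sigma}^{-2}\hat{g} = \sigma^{-2}g$: any eigenfunction $u$ of $\Delta_{\sigma^{-2}g}$ with $-\Delta u = \mu(n-\mu)u$ is automatically still an eigenfunction on the hatted side, and only the interpretation of its boundary value changes. Since the leading behavior of $u$ reads $\sigma^{\mu} f + \cdots$, substituting $\sigma = e^{-\varphi}\hat{\sigma}$ shows that the boundary value of $u$ with respect to $\hat{\sigma}$ is $\hat{f} = e^{-\mu\iota^{*}(\varphi)} f$.

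Next, I compare the defining integrals \eqref{Mu-L} in both settings. Using $\hat{\sigma}^{\lambda} = e^{\lambda\varphi}\sigma^{\lambda}$ and $dvol_{\hat{g}} = e^{(n+1)\varphi}\,dvol_{g}$,
$$
   \langle M^{\hat{g},\hat{\sigma}}_{u}(\lambda),\psi\rangle
   = \int_{X} \sigma^{\lambda} u\, e^{(\lambda+n+1)\varphi}\psi\,dvol_{g}
   = \langle M^{g,\sigma}_{u}(\lambda),\Psi(\lambda)\rangle,
$$
where $\Psi(\lambda) \st e^{(\lambda+n+1)\varphi}\psi$ is holomorphic in $\lambda$. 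At a simple pole, such a holomorphic family of test functions can be evaluated at the pole, so taking residues at $\lambda=-\mu-1-N$ gives, via the definition of $\delta_{N}$ for $(g,\sigma)$,
$$
   \int_{M} f\,\delta_{N}(g,\sigma;\mu)\bigl(e^{(n-\mu-N)\varphi}\psi\bigr)\,dvol_{h},
$$
and, via the definition of $\delta_{N}$ for $(\hat{g},\hat{\sigma})$ together with $\hat{f} = e^{-\mu\iota^{*}(\varphi)}f$ and $dvol_{\hat{h}} = e^{n\iota^{*}(\varphi)}dvol_{h}$,
$$
   \int_{M} f\,e^{(n-\mu)\iota^{*}(\varphi)}\,\delta_{N}(\hat{g},\hat{\sigma};\mu)(\psi)\,dvol_{h}.
$$
Equating these for arbitrary $f \in C^{\infty}(M)$ produces
$$
   \delta_{N}(\hat{g},\hat{\sigma};\mu) = e^{(\mu-n)\iota^{*}(\varphi)}\circ \delta_{N}(g,\sigma;\mu)\circ e^{(n-\mu-N)\varphi}.
$$

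Substituting $\mu = \lambda+n-N$ (so $n-\mu = N-\lambda$ and $n-\mu-N = -\lambda$) and multiplying by the normalization $N!(2\lambda+n-2N+1)_{N}$ yields
$$
   \D_{N}^{res}(\hat{g},\hat{\sigma};\lambda) = e^{(\lambda-N)\iota^{*}(\varphi)}\circ \D_{N}^{res}(g,\sigma;\lambda)\circ e^{-\lambda\varphi},
$$
which is equivalent to the asserted conformal covariance. The identity is first obtained on the vertical line $\Re(\lambda) = -n/2+N$, where the Poisson operator supplies an eigenfunction $u$ with arbitrary smooth boundary value $f$, and then extends to all $\lambda \in \C$ by polynomial continuation in $\lambda$. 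The only delicate point of this plan is the identification $\hat{f} = e^{-\mu\iota^{*}(\varphi)}f$ of the transformed boundary value; once that is combined with the factor $e^{(n+1)\varphi}$ from the volume form, the remaining work is pure bookkeeping of the weights $\lambda$, $n+1$, $n$, and $\mu$.
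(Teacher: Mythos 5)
Your argument is correct and it is essentially the same computation as the paper's own proof of Theorem~\ref{CTL-RF}: compare the residue of $\langle M_u(\lambda),\cdot\rangle$ at $\lambda=-\mu-1-N$ in the two conformally related pictures, using $\hat\sigma^{-2}\hat g=\sigma^{-2}g$, the rescaling of $\sigma^\lambda dvol_g$, the transformed boundary value $\hat f = e^{-\mu\iota^*(\varphi)}f$, and $dvol_{\hat h}=e^{n\iota^*(\varphi)}dvol_h$, and then substitute $\mu=\lambda+n-N$. You merely write the unhatted integral in terms of the hatted data rather than the other way around, and you make explicit two points the paper leaves implicit (that the $\lambda$-dependent test function is holomorphic so it can be frozen at the pole, and the final polynomial continuation off the critical line), but the decomposition, the key lemma, and the bookkeeping are identical.
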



\begin{proof} Let $u \in \ker(\Delta_{\sigma^{-2}g}+\mu(n-\mu))$ be an eigenfunction with leading
term $f \in C^\infty(M)$ in its expansion into powers of $\sigma$. We calculate the
residue
$$
   \Res_{\lambda=-\mu-1-N} \left(\int_X \sigma^\lambda u \psi dvol_g \right)
$$
in two ways. On the one hand, it equals
$$
   \int_M f \delta_N(g,\sigma;\mu) (\psi) dvol_{\iota^*(g)}.
$$
On the other hand, for $\Re(\lambda) \gg 0$, we have
$$
   \int_X \sigma^\lambda u \psi dvol_g = \int_X \hat{\sigma}^\lambda u (e^{(-\lambda-n-1)\varphi} \psi) dvol_{\hat{g}}
$$
and the leading term in the $\hat{\sigma}$-expansion of $u$ equals $e^{-\mu
\iota^*(\varphi)}f$. Hence the residue equals
$$
   \int_M e^{-\mu \iota^*(\varphi)} f \delta_N(\hat{g},\hat{\sigma};\mu)(e^{(\mu-n+N)\varphi} \psi)
   dvol_{\iota^*(\hat{g})}.
$$
But $dvol_{\iota^*(\hat{g})} = e^{n \iota^* \varphi}
dvol_{\iota^*(g)}$. Since $f \in C^\infty(M)$ is arbitrary, we find
$$
   \delta_N(g,\sigma;\mu) = e^{(-\mu+n)\iota^*(\varphi)} \circ \delta_N(\hat{g},\hat{\sigma};\mu)
   \circ e^{(\mu+N-n)\varphi}.
$$
This result implies the assertion.
\end{proof}

As a special case, it follows that the compositions of residue families of order
$N \le n$ in the sense of Definition \ref{D-res} with $\eta^*$ (and $\sigma$ being a solution of
the Yamabe problem) are conformally covariant. Whereas the general residue families in
Theorem \ref{CTL-RF} depend on $g$ and $\sigma$, Proposition \ref{sigma-free} shows
that the special cases in Definition \ref{D-res} only depend on $g$ (and the embedding $\iota$).

Although Definition \ref{D-res} breaks the conformal covariance (by omitting $\eta^*$), for
those values of $\lambda$ for which residue families are tangential, the resulting operators on $M$
are still conformally covariant. This observation will play a central role in Section \ref{ECPL}. 
In the following sections, it will always be clear from the context which notion of residue families is 
being used.

\section{Residue families as compositions of $L$-operators}\label{products}

In the present section, we show that the composition of residue families as defined in Definition \ref{D-res}
with $\eta^*$ (defining adapted coordinates) can be identified with compositions of Laplace-Robin operators and
the restriction operator $\iota^*$.

We recall the notation $L_N(g,\sigma;\lambda) \st L(g,\sigma;\lambda\!-\!N\!+\!1) \circ \cdots \circ L(g,\sigma;\lambda)$ 
and set $L_0 = \id$ (see \eqref{LN}).

\begin{theorem}\label{residue-product}
Let $\N \ni N \le n$ and assume that $\sigma$ satisfies the condition $\SCY$. Then
$$
   \D_N^{res}(g,\sigma;\lambda) \circ \eta^* = \iota^* L_N(g,\sigma;\lambda).
$$
\end{theorem}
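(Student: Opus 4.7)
The plan is to derive a Bernstein-Sato type functional equation for $\langle M_u(\lambda),\psi\rangle$ and then read off the claim from a single residue. Fix an eigenfunction $u$ with $-\Delta_{\sigma^{-2}g}u = \mu(n-\mu)u$ and boundary value $f \in C^\infty(M)$. The conjugation formula (Theorem \ref{MCF}), combined with $\SCY$ (so $\SC(g,\sigma)-1 = \sigma^{n+1}R_{n+1}$) and the algebraic identity $\mu(n-\mu)+\lambda(n+\lambda) = (\lambda+\mu)(n+\lambda-\mu)$, produces the basic Bernstein-Sato relation
\[
   L(g,\sigma;\lambda)(\sigma^\lambda u) = (\lambda+\mu)(n+\lambda-\mu)\sigma^{\lambda-1}u + \lambda(n+\lambda)\sigma^{\lambda+n}R_{n+1}u.
\]
Iterating and inductively tracking that each application of $L$ lowers the leading $\sigma$-power by one yields
\[
   L_N(g,\sigma;\lambda)(\sigma^\lambda u) = P_N(\lambda;\mu)\sigma^{\lambda-N}u + E_N(\lambda;u),
\]
where $P_N(\lambda;\mu) \st \prod_{j=0}^{N-1}(\lambda-j+\mu)(n+\lambda-j-\mu)$ and the remainder $E_N(\lambda;u)$ has leading order $O(\sigma^{\lambda+n-N+1})$ near $M$.

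Next I pair against $\psi \in C^\infty(X)$ supported in a small neighborhood of $M$ and integrate over $X$. For $\Re(\lambda)$ sufficiently large, the intermediate functions $L_k(\lambda)(\sigma^\lambda u)$ for $k<N$ are $C^2$ on $X$ and vanish on $M$, so an $N$-fold application of Proposition \ref{adjoint-gen}, together with Corollary \ref{L-adjoint}, transfers $L_N(\lambda)$ onto $\psi$ as the formal adjoint $L_N(\lambda)^* = L_N(-\lambda-n+N-1)$. The resulting identity
\[
    \langle M_u(\lambda),L_N(\lambda)^*(\psi)\rangle = P_N(\lambda;\mu)\langle M_u(\lambda-N),\psi\rangle + \int_X E_N(\lambda;u)\psi\,dvol_g
\]
extends meromorphically in $\lambda$. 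Since $|u| \sim \sigma^{n/2}$ near $M$, the error integral is holomorphic at $\lambda=-\mu-1$ provided $\Re(\lambda)+n-N+1+\tfrac{n}{2}>-1$ there, which reduces precisely to the standing hypothesis $N \le n$.

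Now take the residue at $\lambda=-\mu-1$. On the left, $\langle M_u(\lambda),\cdot\rangle$ has a simple pole whose residue is $f \mapsto \int_M f\,\iota^*(\cdot)\,dvol_h$ (by $\delta_0 = \iota^*$), and $L_N(\lambda)^*(\psi)$ is polynomial in $\lambda$; thus the left-hand residue equals $\int_M f\,\iota^*(L_N(-\mu-1)^*(\psi))\,dvol_h$. On the right, the error contributes nothing by the previous paragraph, so by \eqref{delta-def} one obtains $P_N(-\mu-1;\mu)\int_M f\,\delta_N(g,\sigma;\mu)(\eta^*\psi)\,dvol_h$. A direct computation gives $P_N(-\mu-1;\mu) = (-1)^N N!(n-2\mu-N)_N$. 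Since $f \in C^\infty(M)$ is arbitrary,
\[
   \iota^* L_N(-\mu-1)^*(\psi) = (-1)^N N!\,(n-2\mu-N)_N\,\delta_N(g,\sigma;\mu)(\eta^*\psi).
\]

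To finish, set $\lambda=\mu+N-n$; then $L_N(-\mu-1)^* = L_N(\lambda)$, and the Pochhammer identity $(-a)_N = (-1)^N(a-N+1)_N$ applied with $a = 2\lambda+n-N$ converts the prefactor $(-1)^N(n-2\mu-N)_N$ into $(2\lambda+n-2N+1)_N$. The identity therefore becomes $\iota^* L_N(g,\sigma;\lambda)(\psi) = N!(2\lambda+n-2N+1)_N\,\delta_N(g,\sigma;\lambda+n-N)(\eta^*\psi) = \D_N^{res}(g,\sigma;\lambda)(\eta^*\psi)$, which is precisely the theorem. The principal technical obstacle is the inductive control of $E_N(\lambda;u)$: since each factor of $L$ only lowers the $\sigma$-order by one and introduces a new $\sigma^{n}$-error from $\SC-1$, one must carefully check that after $N$ iterations the cumulative remainder still lies in $O(\sigma^{\lambda+n-N+1})$, which is exactly the order that renders the error integral analytic at $\lambda=-\mu-1$ precisely when $N \le n$.
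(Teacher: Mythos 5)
Your proposal is correct and is essentially the paper's proof reparametrized: where the paper writes the Bernstein--Sato relation in the form $\sigma^\lambda u = L(\lambda+1)\cdots L(\lambda+N)(\sigma^{\lambda+N}u)/[(\lambda+\mu+1)_N(\lambda-\mu+n+1)_N]$ and takes the residue at $\lambda=-\mu-1-N$, you apply $L_N(\lambda)$ directly to $\sigma^\lambda u$ and take the residue at $\lambda=-\mu-1$, which is the same identity after the substitution $\lambda\mapsto\lambda-N$. Your explicit tracking of the cumulative remainder $E_N$ and the observation that its integrability at $\lambda=-\mu-1$ reduces precisely to $N\le n$ is a slightly more careful version of the paper's brief remark that the extra $\SCY$ term does not contribute to the residue.
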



\begin{proof}
It suffices to prove that
\begin{equation}\label{red-main}
   \delta_N(g,\sigma;\lambda) \circ \eta^* = \frac{1}{N! (2\lambda\!-\!n\!+\!1)_N} \iota^*
   L(g,\sigma;\lambda\!-\!n\!+\!1) \circ \cdots \circ L(g,\sigma;\lambda\!-\!n\!+\!N).
\end{equation}
Let $u$ be an eigenfunction with boundary value $f \in C^\infty(M)$ satisfying
\eqref{eigen} with $\Re(\mu) = n/2$, $\mu \ne n/2$. In the following, it will be
convenient to use the notation
\begin{equation*}
   A((\lambda)_N) \st A(\lambda) \circ A(\lambda+1) \circ \cdots \circ A(\lambda+N-1)
\end{equation*}
for any $\lambda$-dependent family $A(\lambda)$ of operators. Then $L_N(\lambda) =
L((\lambda-N+1)_N)$. On the one hand, \eqref{delta-def} states that
\begin{equation}\label{eq:h1}
   \Res_{\lambda=-\mu-1-N} \left(\int_{X} \sigma^{\lambda} u \psi dvol_g \right)
   = \int_M f \delta_N(g,\sigma;\mu)(\eta^*(\psi)) dvol_h.
\end{equation}
Now we first assume that $\sigma$ satisfies the stronger assumption $\SC(g,\sigma)=1$. We apply Corollary \ref{main-c-c} to calculate
\begin{align}
   - L(g,\sigma;\lambda+1) (\sigma^{\lambda+1} u)
   & = \sigma^\lambda \left(\Delta_{\sigma^{-2}g} - (\lambda\!+\!1)(n\!+\!\lambda\!+\!1) \id \right) u \\
   & = \sigma^\lambda (-\mu(n\!-\!\mu) - (\lambda\!+\!1)(n\!+\!\lambda\!+\!1)) u \\
   & = -(\lambda\!+\!\mu\!+\!1)(\lambda\!-\!\mu\!+\!n\!+\!1) \sigma^\lambda u.
\end{align}
We regard this relation as a Bernstein-Sato-type functional equation. Hence for $\Re(\lambda) \notin -\frac{n}{2}-\N$ 
we obtain
\begin{align*}
   \sigma^\lambda u & = \frac{L(g,\sigma;\lambda\!+\!1)(\sigma^{\lambda+1} u)}
   {(\lambda\!+\!\mu\!+\!1)(\lambda\!-\!\mu\!+\!n\!+\!1)} \\
   & = \frac{L(g,\sigma;\lambda\!+\!1)L(g,\sigma;\lambda\!+\!2)(\sigma^{\lambda+2} u)}
   {(\lambda\!+\!\mu\!+\!1)(\lambda\!+\!\mu\!+\!2)(\lambda\!-\!\mu\!+\!n\!+\!1)(\lambda\!-\!\mu\!+\!n\!+\!2)} \\
   & = \dots = \frac{L(g,\sigma;(\lambda\!+\!1)_N) (\sigma^{\lambda+N} u)} {(\lambda\!+\!\mu\!+\!1)_N
   (\lambda\!-\!\mu\!+\!n\!+\!1)_N}.
\end{align*}
It follows that the integral
$$
   \lambda \mapsto \int_X \sigma^\lambda u \psi dvol_g, \; \Re(\lambda) \gg 0
$$
admits a meromorphic continuation to $\C$ with simple
poles in the set
$$
   -\mu-1-\N_0 \cup \mu-n-1-\N_0.
$$
More precisely, we get
\begin{equation}\label{int-N}
   \int_X \sigma^\lambda u \psi dvol_g = \frac{1}{(\lambda\!+\!\mu\!+\!1)_N (\lambda\!-\!\mu\!+\!n\!+\!1)_N}
   \int_X  L(g,\sigma;(\lambda\!+\!1)_N)(\sigma^{\lambda+N} u) \psi dvol_g
\end{equation}
for $\Re(\lambda) > - \frac{n}{2}-1$ and $N \ge 1$. In the following, it will be convenient to choose $\lambda$ so
 that $\Re(\lambda) > - \frac{n}{2}+1$. Now we note that a function in $C^\infty(X^\circ)$ with an asymptotic
expansion of the form $\sum_{j\ge 0} \sigma^{\nu + j} a_j$ with $\Re(\nu) > 2$ and $a_j \in C^\infty(M)$
satisfies the assumptions in Proposition \ref{adjoint-gen}. Thus, by a repeated application of Proposition
\ref{adjoint-gen}, the right-hand side of \eqref{int-N} equals
$$
   \frac{ 1}{(\lambda\!+\!\mu\!+\!1)_N (\lambda\!-\!\mu\!+\!n\!+\!1)_N} \int_X \sigma^{\lambda+N} u
   L(g,\sigma;(-\lambda\!-\!n\!-\!N)_N)(\psi) dvol_g.
$$
By the assumptions, the zeros of the product
$
  \mu \mapsto  (\lambda\!+\!\mu\!+\!1)_N (\lambda\!-\!\mu\!+\!n\!+\!1)_N
$
are simple for $\Re(\lambda) > - \frac{n}{2}+1$. Thus, using the residue formula
\begin{equation}\label{Delta0}
   \Res_{\lambda=-\mu-1} \left(\int_{X} \sigma^\lambda u \psi dvol_g \right)
   = \int_M f \iota^*(\psi) dvol_h,
\end{equation}
we find
\begin{align}\label{residue-2}
   & \Res_{\lambda=-\mu-1-N} \left(\int_{X} \sigma^\lambda u \psi dvol_g \right) \notag \\
   & = \frac{(-1)^N}{(-N)_N (2\mu\!-\!n\!+\!1)_N} \int_M f \iota^* L(g,\sigma;(\mu\!-\!n\!+\!1)_N)(\psi) dvol_h \notag \\
   & = \frac{1}{N!(2\mu\!-\!n\!+\!1)_N} \int_M f \iota^* L_N(g,\sigma;\mu\!-\!n\!+\!N)(\psi) dvol_h
\end{align}
for $N \ge 1$. Comparing this result with \eqref{eq:h1}, completes the proof of
\eqref{red-main} for $\Re(\mu) = n/2$, $\mu \ne n/2$. The assertion then follows by
meromorphic continuation. If $\sigma$ satisfies only the assumption $\SCY$, analogous
arguments show that the right-hand side of \eqref{int-N} contains an additional integral
$$
   \int_X u \psi R_{n+1} \sigma^{\lambda+n+1} dvol_g.
$$
Since $R_{n+1}$ is smooth up to the boundary and $N \le n$, this integral is regular
at $\lambda=-\mu-1-N$, i.e., does not contribute to the residue. The proof is
complete.
\end{proof}

\begin{cor}\label{factor}
Let $N \in \N$ with $N \le n$ and assume that $\sigma$ satisfies $\SCY$. Then
$$
   \D_N^{res}(g,\sigma;\lambda) = \D_{N-1}^{res}(g,\sigma;\lambda-1) \circ L(g,\sigma;\lambda).
$$
\end{cor}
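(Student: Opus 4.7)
The plan is to derive this as an immediate consequence of Theorem \ref{residue-product}, combined with the obvious factorization of the compositions $L_N(g,\sigma;\lambda)$.

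The first step is to observe the recursion
\[
   L_N(g,\sigma;\lambda) = L_{N-1}(g,\sigma;\lambda-1) \circ L(g,\sigma;\lambda),
\]
which is immediate from the definition $L_N(g,\sigma;\lambda) = L(g,\sigma;\lambda-N+1) \circ \cdots \circ L(g,\sigma;\lambda)$ by grouping the leftmost $N-1$ factors into $L_{N-1}(g,\sigma;\lambda-1) = L(g,\sigma;\lambda-N+1) \circ \cdots \circ L(g,\sigma;\lambda-1)$.

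The second step is to apply $\iota^*$ on the left of this identity and to invoke Theorem \ref{residue-product} twice. On the one hand, that theorem (applied at order $N$) yields
\[
    \iota^* L_N(g,\sigma;\lambda) = \D_N^{res}(g,\sigma;\lambda) \circ \eta^*.
\]
On the other hand, applying it at order $N-1$ (permissible since $N-1 \le n$) and with shifted parameter $\lambda-1$ gives
\[
    \iota^* L_{N-1}(g,\sigma;\lambda-1) = \D_{N-1}^{res}(g,\sigma;\lambda-1) \circ \eta^*,
\]
so that, composing with $L(g,\sigma;\lambda)$ on the right,
\[
   \iota^* L_N(g,\sigma;\lambda)
   = \D_{N-1}^{res}(g,\sigma;\lambda-1) \circ \eta^* \circ L(g,\sigma;\lambda).
\]
Equating these two expressions for $\iota^* L_N(g,\sigma;\lambda)$ yields the claim, once we adopt (as the paper does, cf.\ Theorem \ref{MT-1}) the convention of identifying $\D_N^{res}(g,\sigma;\lambda)$ with its composition $\D_N^{res}(g,\sigma;\lambda)\circ \eta^*$ viewed as an operator on $C^\infty(X)$ with support near $M$.

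There is no substantive obstacle here; the corollary is essentially a book-keeping statement about the recursive structure of $L_N$ being inherited by the residue family via Theorem \ref{residue-product}. The only point to be careful about is that both applications of Theorem \ref{residue-product} require the orders $N$ and $N-1$ to be at most $n$, which is guaranteed by the hypothesis $N \le n$, and that the $\SCY$ assumption carries through unchanged.
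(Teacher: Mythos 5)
Your proof is correct and is precisely the (unstated) reasoning the paper intends: the corollary appears with no proof immediately after Theorem \ref{residue-product}, and the factorization $L_N(\lambda) = L_{N-1}(\lambda-1) \circ L(\lambda)$ combined with two applications of that theorem (at orders $N$ and $N-1$, both $\le n$) is the whole argument. Your remark about the $\eta^*$-conventions — that the paper suppresses the composition with $\eta^*$ in Theorem \ref{MT-1} and in the corollary statement — is an accurate reading of the paper's notational shorthand and correctly identifies the only bookkeeping point one needs to be aware of.
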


\begin{remark}\label{trans2}
Theorem \ref{residue-product} identifies the composition of residue families with
$\eta^*$ with compositions of $L$-operators if $\sigma$ satisfies $\SCY$. By Theorem
\ref{D-res-ex}, residue families are linear combinations of compositions of
tangential operators and iterated normal derivatives $\iota^* \partial^k_s$. We may use 
formula \eqref{translate} to write their composition with $\eta^*$ in
terms of iterated gradients $\nabla_\NV^k$. This yields a formula for the composition
of residue families with $\eta^*$ in terms of iterated gradients and tangential
operators.
\end{remark}

For closed $M$, Theorem \ref{residue-product} implies formulas for integrated
renormalized volume coefficients in terms of compositions of Laplace-Robin
operators. First, we observe that, for $N \le n-1$, Theorem \ref{D-res-ex} implies
\begin{align*}
   \int_M \D_N^{res}(g,\sigma;-n\!+\!N)(1) dvol_h
   & = N! (-n\!+\!1)_N \sum_{j=0}^{N} \int_M \T_{N-j}^*(g,\sigma;0) (v_{j}) dvol_h \\
   & = (-1)^N \frac{(n\!-\!1)! N!}{(n\!-\!1\!-\!N)!} \sum_{j=0}^{N} \int_M v_{j} \T_{N-j}(g,\sigma;0)(1) dvol_h \\
   & = (-1)^N \frac{(n\!-\!1)! N!}{(n\!-\!1\!-\!N)!} \int_M v_{N} dvol_h.
\end{align*}
In the last equality, we used the fact that all coefficients except the leading one
in the expansion of the harmonic function $u=1$ vanish. Combining this identity with
Theorem \ref{residue-product} we obtain

\begin{cor}\label{RVC-subcritical}
Let $\N \ni N \le n-1$ and assume that $\sigma$ satisfies the condition $\SCY$. Then
\begin{equation}\label{HF-volume}
   \int_{M} v_N dvol_h
   = (-1)^N \frac{(n\!-\!1\!-\!N)!}{ (n-1)!N!} \int_{M} \iota^* L_{N}(g,\sigma;-n\!+\!N)(1) dvol_h.
\end{equation}
\end{cor}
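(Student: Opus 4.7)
The corollary is essentially assembled from the two main descriptions of residue families that have just been established, so my plan is to combine them explicitly at the tangential point $\lambda=-n+N$.

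First, I would evaluate the explicit formula of Theorem \ref{D-res-ex} for $\D_N^{res}(g,\sigma;\lambda)(1)$ at $\lambda=-n+N$. The prefactor becomes
\[
   N!(2\lambda+n-2N+1)_N\Big|_{\lambda=-n+N}=N!(-n+1)_N=(-1)^N\,\frac{N!\,(n-1)!}{(n-1-N)!},
\]
and since $\iota^*\partial_s^j(1)=\delta_{j,0}$, only the $j=0$ summand of the expansion contributes. Integrating against $dvol_h$ on the closed boundary $M$ and taking adjoints of the $\T_{N-k}^*(g,\sigma;0)$ turns that expression into
\[
   \int_M\D_N^{res}(g,\sigma;-n+N)(1)\,dvol_h
   =(-1)^N\,\frac{N!\,(n-1)!}{(n-1-N)!}\sum_{k=0}^{N}\int_M v_k\,\T_{N-k}(g,\sigma;0)(1)\,dvol_h.
\]

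Next, the key point is that in the sum only the term $k=N$ survives. Indeed, the constant function $1$ is an eigenfunction of $\Delta_{\sigma^{-2}g}$ with eigenvalue $0$ (i.e.\ $\mu=0$), and by formula \eqref{T-1},
\[
   \T_j(0)(1)=\frac{0}{j!\,(n-1)(n-2)\cdots(n-j)}\,\QC_j(0)=0\quad\text{for }1\le j\le N\le n-1,
\]
the denominator being nonsingular precisely because $N\le n-1$. Since $\T_0(0)(1)=1$, the integrated identity simplifies to
\[
   \int_M\D_N^{res}(g,\sigma;-n+N)(1)\,dvol_h
   =(-1)^N\,\frac{N!\,(n-1)!}{(n-1-N)!}\int_M v_N\,dvol_h.
\]

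Finally, I would invoke Theorem \ref{residue-product}, which asserts $\D_N^{res}(g,\sigma;\lambda)\circ\eta^*=\iota^*L_N(g,\sigma;\lambda)$ whenever $N\le n$ and $\SCY$ holds. Applying this to the constant function $1$ (so that $\eta^*(1)=1$) gives $\D_N^{res}(g,\sigma;-n+N)(1)=\iota^*L_N(g,\sigma;-n+N)(1)$, and solving for $\int_M v_N\,dvol_h$ yields the claimed identity. There is no genuine obstacle here; the only minor care required is to verify that the vanishing $\T_j(0)(1)=0$ is valid in the required range $1\le j\le N\le n-1$, i.e.\ that the pole ladder of $\T_j(\mu)$ at $\mu=\tfrac{n-j}{2}$ does not collide with $\mu=0$, which is exactly ensured by the hypothesis $N\le n-1$.
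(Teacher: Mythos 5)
Your proof is correct and follows essentially the same route as the paper: evaluate the formula of Theorem \ref{D-res-ex} at $\lambda=-n+N$, apply it to $1$ so only the $j=0$ term survives, take adjoints, use that $\T_j(0)(1)=0$ for $j\ge 1$ (the paper phrases this as the expansion of the harmonic function $u=1$ having only a leading term, you justify it via formula \eqref{T-1}; these are equivalent), and conclude with Theorem \ref{residue-product}. Your remark that $N\le n-1$ is precisely what keeps $\mu=0$ off the pole ladder of $\T_j$ is a correct and useful observation, though it is the same fact the paper implicitly relies on.
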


We shall see later in Theorem \ref{RVE} that this reproves the special case $\tau=1$ of
\cite[Proposition 3.5]{GW-reno}. The critical case $N = n$ will be discussed in Section \ref{Q-curvature}.

These identities for integrated renormalized volume coefficients admit a natural interpretation as special
cases of an interesting identity for distributions. In order to describe that point of view, we smoothly extend
$g$ and $\sigma$ to a sufficiently small neighborhood $\tilde{X}$ of $M$ so that $|\NV| \ne 0$ on $\tilde{X}$ (this
is always possible \cite{PV}). It will be convenient to assume that $\tilde{X} = \eta (I \times M)$ with a sufficiently small interval
$I=(-\varepsilon,\varepsilon)$ around $0$. For any $u \in C^\infty(\R)$, the pull-back $\sigma^*(u) \in C^\infty(X)$
defines a current by
$$
   \left\langle \sigma^*(u),\psi dvol_g \right\rangle = \int_{\tilde{X}} \sigma^*(u) \psi dvol_g, \; \psi \in C_c^\infty(\tilde{X}).
$$
By approximating the delta distribution $\delta$ at $0$ by test functions, we obtain a current $\sigma^*(\delta)$. We recall that
the pull-back $\sigma^*(u)$ of a distribution $u$ on the real line by $\sigma$ exists since the differential of $\sigma$ is
surjective. The pull-back operation itself then is continuous on distributions (and currents) \cite[Theorem 6.1.2]{Ho1}.
Since $|\NV|=1$ on $M$, it holds
\begin{equation}\label{Ho-simple}
    \left\langle \sigma^*(\delta),\psi dvol_g \right\rangle = \int_M \iota^*(\psi) dvol_h
\end{equation}
by an extension of \cite[Theorem 6.1.5]{Ho1}. We also use the notation $\delta_M$ for the latter distribution and call it the
delta distribution of $M$.  \index{$\delta_M$ \quad delta distribution of $M$} We define the action of a differential operator $D$
on currents $u$ on $\tilde{X}$ by
$$
   \langle D (u), \psi dvol_g \rangle = \langle u, D^* (\psi) dvol_g \rangle, \; \psi \in C_c^\infty(\tilde{X}).
$$
Here the formal adjoint $D^*$ of $D$ is determined by the relation
\begin{equation}\label{pair}
   \int_{\tilde{X}} D(\varphi) \psi dvol_g = \int_{\tilde{X}} \varphi D^*(\psi) dvol_g, \; \varphi \in C^\infty(\tilde{X}),
   \psi \in C_c^\infty(\tilde{X}).
\end{equation}

\begin{theorem}\label{dist-volume} Assume that $N \le n-1$ and assume that $\sigma$ satisfies $\SCY$. Then
\begin{equation}\label{Shift-delta}
   L(g,\sigma;-N) \circ \cdots \circ L(g,\sigma;-1) (\sigma^*(\delta)) = a_N\mathfrak{X}^N (\sigma^*(\delta)),
\end{equation}
where $a_N = (n\!-\!1)!/(n\!-\!1\!-\!N)!$ and $\mathfrak{X} = \NV / |\NV|^2$ is defined in Section \ref{AC-RVC}.
\end{theorem}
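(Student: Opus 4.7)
The plan is to pair both sides against an arbitrary $\psi \in C_c^\infty(\tilde{X})$ and reduce the resulting scalar identity using Theorem \ref{residue-product} together with the explicit description of $\D_N^{res}$ in Theorem \ref{D-res-ex}. Since $\tilde{X}$ is boundaryless, the proof of Corollary \ref{L-adjoint} applies without boundary terms and gives $L(g,\sigma;\lambda)^* = L(g,\sigma;-\lambda-n)$ on $C^\infty(\tilde{X})$; iterating this yields
$$
  \bigl(L(g,\sigma;-N)\circ\cdots\circ L(g,\sigma;-1)\bigr)^*
  = L(g,\sigma;1\!-\!n)\circ\cdots\circ L(g,\sigma;N\!-\!n) = L_N(g,\sigma;-n\!+\!N).
$$
Combined with \eqref{Ho-simple} and Theorem \ref{residue-product}, the left-hand side paired against $\psi\,dvol_g$ becomes
$$
  \int_M \iota^* L_N(g,\sigma;-n\!+\!N)(\psi)\, dvol_h
  = \int_M \D_N^{res}(g,\sigma;-n\!+\!N)(\eta^*\psi)\, dvol_h.
$$

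Next I would evaluate this integral via Theorem \ref{D-res-ex}. At $\lambda = -n\!+\!N$ the prefactor simplifies to $N!(-n\!+\!1)_N = (-1)^N N!\, a_N$, and every solution operator appearing is $\T_j(g,\sigma;0)$. Because the constant $1$ is already its own harmonic asymptotic expansion, \eqref{T-1} forces $\T_m(g,\sigma;0)(1) = \delta_{m,0}$ for $0 \le m \le n\!-\!1$ (which is where the hypothesis $N \le n\!-\!1$ enters). Moving each $\T^*_{N-j-k}(0)$ to the right by adjointness and pairing against $1$ leaves only the summand $k = N\!-\!j$, so the left-hand side pairing collapses to
$$
  (-1)^N N!\, a_N \sum_{j=0}^N \frac{1}{j!}\int_M v_{N-j}\, \iota^*\partial_s^j(\eta^*\psi)\, dvol_h.
$$
For the right-hand side, in adapted coordinates one has $\mathfrak{X} = \partial_s$ and $dvol_g = v(s)\,ds\,dvol_h$, so the formal adjoint is $\mathfrak{X}^* = -\partial_s - v'/v$. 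A brief induction establishes the compact identity
$$
  (\mathfrak{X}^*)^N(\varphi) = \frac{(-1)^N}{v}\,\partial_s^N(v\varphi),
$$
and therefore, using $v|_{s=0}=1$, the right-hand side pairing equals $(-1)^N a_N \int_M \iota^*\partial_s^N(v\eta^*\psi)\, dvol_h$. The Leibniz rule gives
$$
  \iota^*\partial_s^N(v\eta^*\psi) = N! \sum_{j=0}^N \frac{1}{j!}\, v_{N-j}\, \iota^*\partial_s^j(\eta^*\psi),
$$
which matches the left-hand side expression. Since $\psi$ is arbitrary, the distributional identity follows.

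The main technical obstacle is the bookkeeping needed to reconcile the two very different descriptions of the residue family at $\lambda = -n\!+\!N$: one as an iterated Laplace-Robin composition (Theorem \ref{residue-product}), the other as a sum involving the solution operators $\T_j$ and renormalized volume coefficients $v_k$ (Theorem \ref{D-res-ex}). The decisive simplification is the vanishing $\T_m(g,\sigma;0)(1)=0$ for $1 \le m \le n\!-\!1$, which is exactly what collapses the solution-operator double sum into the Leibniz expansion of $\partial_s^N(v\eta^*\psi)$ produced by the right-hand side.
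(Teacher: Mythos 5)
Your proposal is correct and follows essentially the same route as the paper's own proof: pair both sides against a test density, transfer the iterated Laplace--Robin operators through the delta current via $L(\lambda)^*=L(-\lambda-n)$, identify the resulting integral using Theorem \ref{residue-product} together with Theorem \ref{D-res-ex}, invoke $\T_j(0)(1)=0$ for $1\le j\le n-1$, and match against $(\mathfrak{X}^*)^N(\varphi)=(-1)^N v^{-1}\partial_s^N(v\varphi)$ via the Leibniz rule. The only cosmetic difference is that the paper carries a free parameter $\lambda$ through the computation and specializes to $\lambda=-1$ at the end, whereas you work at $\lambda=-1$ from the start.
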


\begin{proof} Corollary \ref{L-adjoint} implies that $L_N(\lambda)$ acts on $\sigma^*(\delta)$ by
\begin{equation}\label{dual-1}
   \langle L_N(\lambda)(\sigma^*(\delta)),\psi dvol_g \rangle
   = \langle \sigma^*(\delta), L_N(-n\!-\!\lambda\!+\!N\!-\!1) (\psi) dvol_g \rangle.
\end{equation}
Hence \begin{equation*}
   \langle L_N(\lambda)(\sigma^*(\delta)), \psi dvol_g \rangle = \int_M \iota^* L_N(-n\!-\!\lambda\!+\!N\!-\!1) (\psi) dvol_h.
\end{equation*}
Thus, Theorem \ref{residue-product} yields
$$
   \langle L_{N}(\lambda)(\sigma^*(\delta)), \psi dvol_g \rangle
   = \int_M \D_{N}^{res}(-\lambda\!-\!n+\!N\!-\!1)(\eta^*(\psi)) dvol_h.
$$
Note that this formula implies that $\langle L_{N}(\lambda)(\sigma^*(\delta)), \psi dvol_g \rangle$
only depends on the first $N$ terms in the expansion of $\sigma$. Now, by Theorem
\ref{D-res-ex}, the latter integral equals
\begin{align*}
   & N!(-2\lambda\!-\!n\!-\!1)_N \\
   & \times \sum_{j=0}^{N} \frac{1}{(N\!-\!j)!}
   \int_M [\T_{j}^*(-\lambda\!-\!1) \circ v_0 + \cdots + \T_0^*(-\lambda\!-\!1) \circ v_j ]
   (\iota^* \partial_s^{N-j}(\eta^*(\psi))) dvol_h.
\end{align*}
Hence using partial integration, we obtain
\begin{align*}
   & \langle L_{N}(\lambda)(\sigma^*(\delta)), \psi dvol_g \rangle = N!(-2\lambda\!-\!n\!-\!1)_N \\
   & \times \sum_{j=0}^{N} \frac{1}{(N\!-\!j)!} \int_M [\T_{j}(-\lambda\!-\!1)(1) + \cdots + \T_0(-\lambda\!-\!1)(1) v_{j}]
   \iota^* \partial_s^{N-j}(\eta^*(\psi)) dvol_h.
\end{align*}
Now let $\lambda = -1$. Since $\T_{j}(0)(1) = 0$ for $j \ge 1$ and $\T_0 = \id$, it follows that
\begin{align}\label{int-L}
   \langle L_N(-1)(\sigma^*(\delta)), \psi dvol_g \rangle
   & = N!(-n\!+\!1)_N \sum_{j=0}^{N} \frac{1}{(N\!-\!j)!} \int_M v_{j} \iota^* \partial_s^{N-j}(\eta^*(\psi)) dvol_h \notag \\
   & = (-n\!+\!1)_N \int_M \iota^* \partial_s^{N} (v \eta^*(\psi)) dvol_h.
\end{align}
On the other hand, partial integration shows
\begin{align*}
   \langle \mathfrak{X}^N(u),\psi dvol_g \rangle  & = \int_{\tilde{X}} \mathfrak{X}^N(u) \psi dvol_g \\
   & = \int_{I \times M} \eta^* \mathfrak{X}^N(u) \eta^*(\psi) v ds dvol_h \\
   & = \int_{I \times M} \partial_s^N \eta^*(u) \eta^*(\psi) v ds dvol_h
   & \mbox{(by \eqref{intertwine})} \\
   & = (-1)^N \int_{I \times M} \eta^*(u) v^{-1} \partial_s ^N(\eta^*(\psi) v) v ds dvol_h \\
   & = (-1)^N \int_{\tilde{X}} u \eta_*(v^{-1} \partial_s^N (v \eta^*(\psi)) dvol_g
\end{align*}
for $u \in C^\infty(\tilde{X})$ and $\psi \in C^\infty_c (\tilde{X})$. Hence $(\mathfrak{X}^N)^* (\psi)
= (-1)^N \eta_*(v^{-1} \partial_s^N (v \eta^*(\psi))$ and
\begin{equation}\label{fund-dist}
   \langle \mathfrak{X}^N(\sigma^*(\delta)),\psi dvol_g \rangle
   = (-1)^N \int_M \iota^* \partial_s^N (v \eta^*(\psi)) dvol_h.
\end{equation}
The proof is complete.
\end{proof}

Note that, for $\psi = 1$ near $M$, the arguments in the above proof show that
\begin{align*}
   \int_M \iota^* L_N(-n\!+\!N) (1) dvol_h & = \frac{(n\!-\!1)!}{(n\!-\!1\!-\!N)!} \langle \mathfrak{X}^N
   (\sigma^*(\delta)), dvol_g \rangle \\ & = (-1)^N \frac{(n\!-\!1)!}{(n\!-\!1\!-\!N)!}\int_M \iota^* \partial_s^N (v) dvol_h
\end{align*}
for $N \le n-1$. This proves that the identity \eqref{HF-volume} is a special case of Theorem \ref{dist-volume}.

\begin{remark}\label{dist-form}
Theorem \ref{dist-volume} results from our attempt to understand the distributional formula
in \cite[Proposition 3.2]{GW-reno}. In \cite{GW-reno}, this distributional identity is the key to prove
formulas like \eqref{HF-volume}. Here we follow a reverse logic.
\end{remark}

The following shift-property of residue families either follows by combining Theorem
\ref{residue-product} with Corollary \ref{comm} or directly from the residue
definition of residue families.

\begin{lem}\label{D-reduction}
Let $1 \le N \le n$ and assume that $\sigma$ satisfies the condition $\SCY$. Then
$$
   \D_N^{res}(g,\sigma;\lambda) \circ s = N (2\lambda\!+\!n\!-\!N) \D_{N-1}^{res}(g,\sigma;\lambda\!-\!1).
$$
\end{lem}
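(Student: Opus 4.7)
The plan is to give two approaches (the lemma suggests both are available). The quickest is the direct residue route. The definition of $\delta_N$ via
$$
   \Res_{\lambda=-\mu-1-N}\!\left(\int_X \sigma^\lambda u \psi\, dvol_g\right)
   = \int_M f\, \delta_N(g,\sigma;\mu)(\eta^*(\psi))\, dvol_h
$$
together with the trivial identity $\sigma \cdot \sigma^\lambda = \sigma^{\lambda+1}$ gives, after shifting the residue variable $\lambda \mapsto \lambda-1$,
$$
   \delta_N(g,\sigma;\mu)(s\, \eta^*(\psi)) = \delta_{N-1}(g,\sigma;\mu)(\eta^*(\psi)),
$$
so that $\delta_N(\mu)\circ s = \delta_{N-1}(\mu)$ on $C^\infty([0,\varepsilon)\times M)$. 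Then I would just compare the normalizing Pochhammer prefactors in \eqref{D-res-def}: the ratio
$$
   \frac{N!\,(2\lambda\!+\!n\!-\!2N\!+\!1)_N}{(N\!-\!1)!\,(2(\lambda\!-\!1)\!+\!n\!-\!2(N\!-\!1)\!+\!1)_{N-1}}
   = \frac{N!\,(2\lambda\!+\!n\!-\!2N\!+\!1)_N}{(N\!-\!1)!\,(2\lambda\!+\!n\!-\!2N\!+\!1)_{N-1}}
   = N(2\lambda\!+\!n\!-\!N),
$$
which yields exactly the claimed scalar factor. This approach uses nothing beyond the definitions and Gelfand's residue formula that was already used to introduce $\delta_N$.

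Alternatively, and more in the spirit of Section \ref{products}, I would derive the identity from Theorem \ref{residue-product} combined with the commutator relation \eqref{LN-gen-b}. Using $\eta^*(\sigma)=s$ and Theorem \ref{residue-product}, we have
$$
   \D_N^{res}(g,\sigma;\lambda)\circ s \circ \eta^*
   = \D_N^{res}(g,\sigma;\lambda)\circ \eta^* \circ \sigma
   = \iota^* \bigl(L_N(g,\sigma;\lambda)\circ \sigma\bigr).
$$
Apply \eqref{LN-gen-b}. The term $\sigma \circ L_N(\lambda-1)$ drops out after $\iota^*$ since $\iota^*(\sigma)=0$, leaving
$$
   \iota^*L_N(\lambda)\circ\sigma
   = N(n\!+\!2\lambda\!-\!N)\,\iota^* L_{N-1}(\lambda\!-\!1) + \iota^*(\mathrm{Rem}),
$$
where the remainder is the sum in \eqref{LN-gen-b}, each term containing a factor $(\SC(g,\sigma)-1)$ in the middle of $N$ factors.

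The only point requiring actual work is showing $\iota^*(\mathrm{Rem})=0$ under the hypothesis $\SCY$ and $N\le n$. Here $\SC(g,\sigma)-1 = R_{n+1}\sigma^{n+1}$, and each factor $L(\mu)$ applied to a function vanishing to order $k$ at $M$ produces a function vanishing to order at least $k-1$ (the $\nabla_{\grad_g(\sigma)}$-term is the worst). Hence after acting by the remaining $N-1$ Laplace–Robin factors, each term in $\mathrm{Rem}$ still vanishes to order at least $n+1-(N-1)=n-N+2\ge 2$ at $M$, so $\iota^*$ annihilates it. Using Theorem \ref{residue-product} once more on $\iota^* L_{N-1}(\lambda-1) = \D_{N-1}^{res}(\lambda-1)\circ \eta^*$, and noting that $\eta^*$ is surjective onto functions on $[0,\varepsilon)\times M$, we can strip the $\eta^*$ to obtain the asserted identity. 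The main (and only substantive) obstacle in either approach is the bookkeeping for this remainder-vanishing argument, which hinges crucially on the assumption $N\le n$ so that the $\sigma^{n+1}$-order vanishing dominates the $N-1$ potential order-drops coming from the $L$-factors.
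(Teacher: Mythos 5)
Your first (residue) route is essentially the paper's own proof: the substitution $\sigma\cdot\sigma^\lambda=\sigma^{\lambda+1}$ shifts the residue location by one and gives $\delta_N(\mu)\circ s=\delta_{N-1}(\mu)$, and your Pochhammer-ratio computation
$N!\,(2\lambda+n-2N+1)_N / \bigl[(N-1)!\,(2\lambda+n-2N+1)_{N-1}\bigr]=N(2\lambda+n-N)$ is correct. Your second route is precisely the alternative the paper acknowledges in the sentence preceding Lemma \ref{D-reduction} (combine Theorem \ref{residue-product} with Corollary \ref{comm}) but does not carry out, and your order-of-vanishing bookkeeping is sound: under $\SCY$ the inserted factor $\SC(g,\sigma)-1$ vanishes to order $n+1$ at $M$, each of the at most $N-1$ subsequent Laplace--Robin factors lowers the order of vanishing by at most one (the worst term being $(n+2\mu-1)\nabla_{\grad_g(\sigma)}$), so every summand in the remainder of \eqref{LN-gen-b} still vanishes to order $\ge n-N+2\ge 2$ when $N\le n$ and is killed by $\iota^*$; the $\sigma\circ L_N(\lambda-1)$ term is killed outright since $\iota^*\sigma=0$. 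This fills in a detail the paper leaves implicit and is a genuinely useful supplement (though for the record only a vanishing order $\ge 1$ is needed for $\iota^*$ to annihilate, so the inequality $N\le n+1$ would suffice here).
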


\begin{proof}
We use the residue definition of residue families (Definition \ref{D-res}).
In particular, $u$ is an eigenfunction of the Laplacian of $\sigma^{-2}g$ for the
eigenvalue $-\mu(n-\mu)$. Now the calculation
\begin{align*}
   \int_M f \delta_N(\mu) (\eta^*(\sigma \psi)) dvol_h
   & = \Res_{\lambda=-\mu-1-N} \left( \int_X \sigma^\lambda u (\sigma \psi) dvol_g \right) \\
   & = \Res_{\lambda=-\mu-1-N} \left( \int_X \sigma^{\lambda+1} u \psi dvol_g \right) \\
   & = \Res_{\lambda=-\mu-N} \left( \int_X \sigma^\lambda u \psi dvol_g \right) \\
   & = \int_M f \delta_{N-1}(\mu) (\eta^* (\psi)) dvol_h
\end{align*}
shows that $\delta_N(\mu) \circ \eta^* \circ \sigma = \delta_{N-1}(\mu) \circ
\eta^*$, i.e., $\delta_N(\mu) \circ s = \delta_{N-1}(\mu)$. The claim is a direct
consequence. Finally, we note that the relation $\delta_N(\mu) \circ s =
\delta_{N-1}(\mu)$ also is an immediate consequence of Theorem \ref{D-res-ex}.
\end{proof}

\section{Extrinsic conformal Laplacians}\label{ECPL}

If $\SC(g,\sigma) \ne 0$ and $N \in \N$, the commutator relation \eqref{LN-gen-tilde} in Corollary
\ref{comm} shows that the composition
$$
   \tilde{L}_N(g,\sigma) \st  \tilde{L}_N \left(g,\sigma;\frac{-n\!+\!N}{2}\right) =
   \tilde{L}\left(g,\sigma;\frac{-n\!-\!N}{2}\!+\!1\right) \circ \cdots
   \circ \tilde{L}\left(g,\sigma;\frac{-n\!+\!N}{2}\right)
$$
is a tangential operator, i.e., it holds $\tilde{L}_N \circ \sigma = \sigma \circ \tilde{L}'_N$ for
some operator $\tilde{L}'_N$. Thus, the operator $\tilde{L}_N(g,\sigma)$ on $C^\infty(X)$
induces an operator $\tilde{\PO}_N(g,\sigma)$ on $C^\infty(M)$ according to
\begin{equation}\label{conformal-power}
   \iota^* \tilde{L}_N(g,\sigma) = \tilde{\PO}_N(g,\sigma) \iota^*.
\end{equation}
This observation is a special case of \cite[Theorem 4.1]{GW}. If additionally $\sigma$ satisfies
the condition $\SCY$ and $N \le n$, then the resulting operator on $C^\infty(M)$ will be denoted by
$\PO_N(g,\sigma)$. In the latter case, one may also directly apply \eqref{LN-gen-b}.

Now, following Gover and Waldron, we define

\index{$\PO_N(g,\sigma)$}

\begin{defn}[\bf Extrinsic conformal Laplacians]
Let $\N \ni N \le n$.  Assume that $\sigma$ satisfies the condition $\SCY$. Then the
operators $\PO_N(g,\sigma)$ are called extrinsic conformal Laplacians. The operator $\PO_n(g,\sigma)$
is called the critical extrinsic conformal Laplacian.
\end{defn}

The notion is justified by the fact that for even $N$ these operators generalize the
GJMS-operators $P_{2N}$ which are of the form $\Delta^N + LOT$. More precisely, let
$g_+$ be a Poincar\'e-Einstein metric in normal form relative to $h$. Then
\begin{equation}\label{P-GJMS}
   \PO_{2N}(r^2g_+,r) = (2N\!-\!1)!!^2 P_{2N}(h).
\end{equation}
In the general case, the operator $\PO_{2N}(g,\sigma)$ is of the form
$$
   (2N\!-\!1)!!^2 \Delta^N_M + LOT.
$$
Here the lower order terms depend on the embedding $M \hookrightarrow X$
(Proposition \ref{LT-P-even}). For odd $N$, the leading part of $\PO_N(g)$ is not
given by a power of the Laplacian but involves $\lo$ (Proposition \ref{LT-P-odd}).
Note that $\D_1^{res}(\frac{-n+1}{2})=0$ shows that $\PO_1 = 0$. Note also that the
vanishing
$$
   L(g,\sigma;0)(1) = 0
$$
implies
\begin{equation}\label{crit-van}
   \PO_n(g,\sigma)(1) = 0.
\end{equation}

\begin{theorem}\label{left-factor} Let $\N \ni N \le n$. Assume that $\sigma$ satisfies $\SCY$.
Then
\begin{equation}\label{D-res-power}
   \D_N^{res}\left(g,\sigma;\frac{-n\!+\!N}{2}\right) = \PO_N(g,\sigma) \iota^*.
\end{equation}
More generally, the factorization identities
$$
   \D_N^{res}\left(g,\sigma;\frac{-n\!-\!k}{2}\!+\!N\right) = \PO_k(g,\sigma) \circ
   \D_{N-k}^{res}\left(g,\sigma;\frac{-n\!-\!k}{2}\!+\!N\right)
$$
for $1\le k \le N$ hold true.
\end{theorem}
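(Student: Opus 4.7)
The plan is to deduce both identities from Theorem \ref{residue-product} by exploiting the obvious composition law
$$L_N(g,\sigma;\lambda) = L_k(g,\sigma;\lambda-N+k) \circ L_{N-k}(g,\sigma;\lambda), \quad 1 \le k \le N,$$
which follows at once from $L_N(\lambda)=L(\lambda-N+1)\circ\cdots\circ L(\lambda)$. The key arithmetic observation is that at $\lambda=\frac{-n-k}{2}+N$ the argument of the outer factor becomes $\lambda-N+k=\frac{-n+k}{2}$, which is precisely the value at which $L_k$ is tangential and descends to $\PO_k$ by the definition recalled just before the theorem.

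Concretely, I would first apply Theorem \ref{residue-product} at $\lambda=\frac{-n-k}{2}+N$ to write
$$\D_N^{res}(\lambda) \circ \eta^* = \iota^* L_N(\lambda) = \iota^*\bigl(L_k(\tfrac{-n+k}{2}) \circ L_{N-k}(\lambda)\bigr).$$
Under $\SCY$ with $k \le N \le n$, the commutator relation \eqref{LN-gen-b} at $\mu=\frac{-n+k}{2}$ collapses: the explicit coefficient $k(n+2\mu-k)$ vanishes, while each remaining summand carries a factor $(\SC-1) = O(\sigma^{n+1})$ sandwiched between at most $k-1$ Laplace-Robin operators. Since each application of $L$ reduces the order of vanishing at $M$ by at most one, every such sandwiched expression still vanishes to positive order on $M$ and is therefore killed by $\iota^*$. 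This certifies the tangentiality of $L_k(\tfrac{-n+k}{2})$ and gives $\iota^* L_k(\tfrac{-n+k}{2}) = \PO_k \iota^*$.

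Substituting this into the previous display and invoking Theorem \ref{residue-product} a second time on $L_{N-k}(\lambda)$ yields
$$\D_N^{res}(\lambda) \circ \eta^* = \PO_k \circ \iota^* L_{N-k}(\lambda) = \PO_k \circ \D_{N-k}^{res}(\lambda) \circ \eta^*.$$
Since $\eta$ is a local diffeomorphism in a tubular neighborhood of $M$, the pullback $\eta^*$ is right-invertible on functions supported there, and canceling it yields the general factorization. The first identity of the theorem then drops out as the special case $k=N$ once one observes that $\D_0^{res}(\lambda)=\iota^*$, which is immediate from Theorem \ref{residue-product} with $N=0$ (or from the residue formula \eqref{Delta0}).

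The argument presents no serious obstacle, because the decisive tangentiality input is already packaged into the definition of $\PO_k$. The only point deserving care is the bookkeeping of $\iota^*$: $\D_N^{res}$ naturally acts on functions in adapted coordinates, and its $\iota^*$ must be read as restriction to $s=0$, consistently with the equality $\iota^* = \iota_0^* \circ \eta^*$ on pull-backs from $X$. Once this identification is in place, the composition law for $L_N$ together with two applications of Theorem \ref{residue-product} concludes the proof.
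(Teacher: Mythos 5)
Your argument is correct and matches the paper's proof essentially step for step: apply Theorem \ref{residue-product} to express $\D_N^{res}\left(\tfrac{-n-k}{2}+N\right)$ as $\iota^* L_N\left(\tfrac{-n-k}{2}+N\right)$, split $L_N = L_k\left(\tfrac{-n+k}{2}\right)\circ L_{N-k}\left(\tfrac{-n-k}{2}+N\right)$, identify the left factor with $\PO_k$, and recognize the right factor as $\D_{N-k}^{res}\left(\tfrac{-n-k}{2}+N\right)$ by a second application of Theorem \ref{residue-product}. Your explicit verification of the tangentiality of $L_k\left(\tfrac{-n+k}{2}\right)$ from \eqref{LN-gen-b} is sound but merely rederives what the paper has already packaged into the definition of $\PO_k$ in the paragraph immediately preceding the theorem.
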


\begin{proof} Theorem \ref{residue-product} implies
$$
   \D_N^{res}\left(\frac{-n\!-\!k}{2}\!+\!N\right)
   = \iota^* L\left(\frac{-n\!-\!k}{2}\!+\!1\right) \circ \cdots \circ L\left(\frac{-n\!-\!k}{2}\!+\!N\right).
$$
We decompose this product as
$$
   \left( \iota^* L\left(\frac{-n\!-\!k}{2}\!+\!1\right) \circ \cdots \circ
   L\left(\frac{-n\!+\!k}{2}\right) \right) \circ \left(L\left(\frac{-n\!+\!k}{2}+1\right) \circ \cdots \circ
   L\left(\frac{-n\!-\!k}{2}\!+\!N\right) \right).
$$
By the definition of $\PO_k$ and Theorem \ref{residue-product}, this composition equals
$$
   \PO_k \circ \D_{N-k}^{res}\left(\frac{-n\!-\!k}{2}\!+\!N\right).
$$
The proof is complete.
\end{proof}

Note that $\PO_1=0$ implies the vanishing property
$\D_N^{res}\left(\frac{-n-1}{2}\!+\!N\right)=0$. The trivial zero of
$D_N^{res}(\lambda)$ at $\lambda=-\frac{n+1}{2}\!+\!N$ is actually one of the zeros
in the prefactor in the definition \eqref{D-res-def}. It appears here as a trivial
zero since the solution operator $\T_1(\lambda)$ is regular (Lemma \ref{sol-1}).

Theorem \ref{left-factor} extends factorization formulas in the setting of
Poincar\'e-Einstein metrics. In that case, the left factors are GJMS-operators on
the boundary. For details, we refer to \cite{J1, J2}.

Finally, we notice an interesting direct formula for the critical extrinsic conformal Laplacian 
$\PO_n(g,\sigma)$ in terms of the Laplacian of the singular Yamabe metric $\sigma^{-2} g$. 
By composition with $\iota^*$, Corollary \ref{der-c-c} shows that
$$
    \PO_n(g,\sigma) \iota^*(f) = \iota^* \left( \sigma^{-n} \prod_{j=0}^{n-1} 
    \left(\Delta_{\sigma^{-2}g} + (n\!-\!j)j \right) \right) (f)
$$
for any $f \in C^\infty(X)$. We omit the analogous formulas in the subcritical cases.

Next, we provide a spectral theoretical description of the extrinsic conformal Laplacians.

\begin{theorem}\label{power-spec}
Let $N \in \N$ with $2 \le N \le n$. Assume that $\sigma$ satisfies the condition
$\SCY$. Then
$$
   \PO_N = (-1)^{N-1} 2 (N\!-\!1)! N! \Res_{\frac{n-N}{2}}(\T_N^*(\lambda)).
$$
\end{theorem}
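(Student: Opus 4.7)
The plan is to combine the two descriptions of the residue family $\D_N^{res}(\lambda)$ already at our disposal and read off the identity by a residue/limit computation. More precisely, by Theorem~\ref{left-factor} we have
$$
   \PO_N(g,\sigma)\,\iota^* \;=\; \D_N^{res}\!\left(g,\sigma;\tfrac{-n+N}{2}\right),
$$
while Theorem~\ref{D-res-ex} gives the alternative expression
$$
   \D_N^{res}(g,\sigma;\lambda) \;=\; N!\,(2\lambda+n-2N+1)_N \sum_{j=0}^{N}\frac{1}{j!}\!\left[\sum_{k=0}^{N-j}\T_{N-j-k}^*(\mu)\,v_k\right]\iota^*\partial_s^j,
$$
with $\mu=\lambda+n-N$. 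So the whole content is to evaluate this at $\lambda=\tfrac{-n+N}{2}$, i.e.\ at $\mu=\tfrac{n-N}{2}$, and match it against the stated residue formula.

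Next I would analyze the pole/zero structure at this value of $\lambda$. Writing $\lambda=\tfrac{-n+N}{2}+\varepsilon$, so $\mu=\tfrac{n-N}{2}+\varepsilon$, the Pochhammer prefactor becomes
$$
   (2\lambda+n-2N+1)_N \;=\; (-N+1+2\varepsilon)(-N+2+2\varepsilon)\cdots(2\varepsilon) \;=\; 2\varepsilon\,(-1)^{N-1}(N-1)! + O(\varepsilon^2),
$$
that is, it has a simple zero at $\varepsilon=0$. On the other hand, recall from Section~\ref{res-fam} that the solution operators $\T_k(\mu)$ are rational in $\mu$ with simple poles in $\{(n-l)/2\mid l\le k\}$; in particular $\T_N^*(\mu)$ has a simple pole at $\mu=\tfrac{n-N}{2}$, while $\T_{k}^*(\mu)$ for $k<N$ is regular there. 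Thus in the double sum above, the only term whose pole in $\mu$ survives the simple zero of the prefactor is the one with $j=0$ and $k=0$, involving $\T_N^*(\mu)\,v_0$; every other term is regular in $\mu$ and therefore killed by the prefactor.

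Combining, and using $v_0=1$ (cf.\ \eqref{RVC-B}), I obtain
$$
   \PO_N(g,\sigma)\,\iota^* \;=\; \lim_{\varepsilon\to 0}\,N!\,(2\lambda+n-2N+1)_N\,\T_N^*(\mu)\,\iota^*
   \;=\; N!\cdot 2(-1)^{N-1}(N-1)!\;\Res_{\tfrac{n-N}{2}}\!\big(\T_N^*(\lambda)\big)\,\iota^*,
$$
which yields the stated identity after cancelling $\iota^*$. The only nontrivial step is the pole/zero bookkeeping for the $j=0$ sum, but this is controlled exactly by the list of poles of the $\T_k$ given in Section~\ref{res-fam}; no additional computation with adapted coordinates is required. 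The argument works uniformly for $2\le N\le n$, and the hypothesis $\SCY$ enters only insofar as it is needed to guarantee the validity of Theorem~\ref{left-factor} and Theorem~\ref{D-res-ex}.
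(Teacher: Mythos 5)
Your proof is correct and takes essentially the same route as the paper: combine $\PO_N\iota^*=\D_N^{res}(\tfrac{-n+N}{2})$ from Theorem~\ref{left-factor} with the solution-operator expression from Theorem~\ref{D-res-ex}, and observe that the simple zero of the Pochhammer prefactor is cancelled only by the simple pole of $\T_N^*$, all the other $\T_k^*$ with $k<N$ being regular at $\mu=\tfrac{n-N}{2}$. Your explicit Pochhammer bookkeeping $(2\lambda+n-2N+1)_N=2\varepsilon(-1)^{N-1}(N-1)!+O(\varepsilon^2)$ is correct and merely spells out what the paper's one-line proof leaves implicit.
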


\begin{proof}
On the right-hand side of formula \eqref{D-res-sol} in Theorem \ref{D-res-ex}, all
solution operators except $\T_N(\lambda)$ are regular at $\lambda=\frac{n-N}{2}$.
The result follows by combining that observation with the identity
\eqref{D-res-power} in Theorem \ref{left-factor}.
\end{proof}

We use the spectral theoretical interpretation of the operators $\PO_N$ to separate
their leading parts.

\begin{prop}\label{LT-P-even} Let $\N \ni N \le n$. Assume that $\sigma$ satisfied $\SCY$. Then
$$
   \LT( \Res_{\lambda=\frac{n}{2}-N}(\T_{2N}(\lambda))
  = - \frac{1}{2^{2N} (N-1)! N!} \Delta^N_h.
$$
Hence
$$
   \LT(\PO_{2N}) = (2N-1)!!^2 \Delta^N_h.
$$
The remaining terms are of order $\le N-2$.
\end{prop}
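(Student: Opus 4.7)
The plan is to extract the top-derivative contribution to $\T_{2N}(\mu)$ from the recursion defining the solution operators $\T_j(\mu)$ in Section \ref{res-fam}, compute its residue at $\mu = \frac{n}{2}-N$, and then apply Theorem \ref{power-spec} to pass to $\PO_{2N}$. The recursion is obtained from the requirement $L(-\mu)(F_k) = O(\sigma^k)$, where $F_k \st \sum_{j=0}^k \sigma^j \T_j(\mu)(f)$; by Corollary \ref{main-c-c} this is equivalent to $\sigma^\mu F_k$ being an approximate eigenfunction of $\Delta_{\sigma^{-2}g}$ for the eigenvalue $-\mu(n-\mu)$.

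Working in adapted coordinates and extending each $\T_j(\mu)(f)\in C^\infty(M)$ to be $s$-independent on $[0,\varepsilon)\times M$ (so that $\nabla_\NV \T_j(\mu)(f) = |\NV|^2\partial_s\T_j(\mu)(f) = 0$), a direct computation gives, for $s$-independent $\varphi$,
$$L(-\mu)(s^j\varphi) = j(n\!-\!2\mu\!-\!j)s^{j-1}|\NV|^2\varphi - \mu(n\!-\!2\mu\!-\!1)s^j\rho\varphi - js^j(\Delta_g\sigma)\varphi - s^{j+1}\Delta_g\varphi + \mu s^{j+1}\J\varphi.$$
Among these terms only $-s^{j+1}\Delta_g\varphi$ carries derivatives of $\varphi$ of order $\ge 2$, and by \eqref{Laplace-adapted} applied to $s$-independent functions, $\iota^*\Delta_g\varphi = \Delta_h\varphi$ modulo first-order operators in $\varphi$. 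Extracting the coefficient of $s^{k-1}$ in $L(-\mu)(F_k) = 0$, the top-derivative contribution to $\T_k(\mu)$ therefore satisfies
$$k(n\!-\!2\mu\!-\!k)\,\LT(\T_k(\mu)) = \Delta_h\,\LT(\T_{k-2}(\mu)).$$
An induction shows that $\T_k(\mu)$ has differential order at most $k-1$ for odd $k$ (so odd-indexed solution operators feed only LOT into the $\T_{2N}$-recursion), and combined with $\LT(\T_0(\mu)) = \id$, induction on even $k$ yields
$$\LT(\T_{2N}(\mu)) = \frac{1}{2^N N!\,\prod_{m=1}^N (n\!-\!2\mu\!-\!2m)}\,\Delta_h^N.$$

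Writing $\mu = \tfrac{n}{2}-N+\epsilon$, we have $n-2\mu-2N = -2\epsilon$ and $\prod_{m=1}^{N-1}(n-2\mu-2m) \to 2^{N-1}(N-1)!$, giving
$$\Res_{\mu=\frac{n}{2}-N}\LT(\T_{2N}(\mu)) = -\frac{1}{2^{2N}N!(N-1)!}\Delta_h^N,$$
which establishes the first claim. Applying Theorem \ref{power-spec} with $N$ replaced by $2N$, and using that $\Delta_h$ is formally self-adjoint (hence $\LT(\T_{2N}^*) = \LT(\T_{2N})$) together with the identity $(2N-1)!! = (2N)!/(2^N N!)$, one obtains
$$\LT(\PO_{2N}) = -2(2N\!-\!1)!(2N)!\left(-\frac{1}{2^{2N}N!(N-1)!}\right)\Delta_h^N = \frac{(2N)!^2}{2^{2N}(N!)^2}\Delta_h^N = (2N\!-\!1)!!^2\,\Delta_h^N.$$

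The main obstacle is verifying cleanly that no contribution to the $s^{2N-1}$-coefficient of $L(-\mu)(F_{2N})$ other than $-\Delta_h\,\T_{2N-2}(\mu)(f)$ produces a derivative-order-$2N$ term in $\T_{2N}(\mu)(f)$. This reduces to the bookkeeping observations that (i) the subleading $s$-coefficients of $|\NV|^2$, $\rho$, $\J$, $\Delta_g\sigma$ introduce no additional derivatives of $\varphi$, and (ii) odd-indexed $\T_{2m+1}(\mu)$ have order at most $2m$, so their feedback through the $f_{k-1}$-channel remains LOT. Both follow by a single induction on $k$ once the basic expansion of $L(-\mu)(s^j\varphi)$ above is in hand, which legitimizes the closed form for $\LT(\T_{2N}(\mu))$.
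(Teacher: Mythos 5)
Your proof is correct and follows essentially the same route as the paper: you derive the recursion $k(n-2\mu-k)\LT(\T_k(\mu)) = \Delta_h\LT(\T_{k-2}(\mu))$, observe odd-indexed solution operators contribute only lower order, telescope to get $\LT(\T_{2N}(\mu)) = \frac{1}{2^N N!\prod_{m=1}^N(n-2\mu-2m)}\Delta_h^N$, compute the residue, and finish with Theorem \ref{power-spec}. The paper extracts the same recursion directly from the expansion \eqref{L-A-Sol} of $\Delta_{s^{-2}\eta^*(g)}$ acting on the asymptotic series, which by the conjugation formula (Corollary \ref{main-c-c}) is literally the same computation as your $L(-\mu)(F_k)=O(\sigma^k)$ bookkeeping.
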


As a preparation for the proof, we observe that formula \eqref{Laplace-adapted} and the identity
$$
   \Delta_{\sigma^{-2}g} = \sigma^2 \Delta_g - (n\!-\!1) \sigma \nabla_{\grad(\sigma)}
$$
imply that the Laplacian of the metric $\eta^*(\sigma^{-2}g) = s^{-2} \eta^*(g)$ takes the form
\begin{align}\label{L-op}
   & s^2 a \partial_s^2 + \frac{1}{2} s^2 a \tr (h_s^{-1} h'_s) \partial_s
   + \frac{1}{2} s^2 a'  \partial_s - (n\!-\!1) s a \partial_s - \frac{1}{2} s^2 (d \log a, d \cdot)_{h_s} + s^2 \Delta_{h_s},
\end{align}
where $a = \eta^*(|\NV|^2)$.

\begin{remark}\label{Laplace-PE}
In the special case $|\NV|=1$, the formula \eqref{L-op} simplifies to
$$
   s^2 \partial_s^2 + \frac{1}{2} s^2 \tr (h_s^{-1} h'_s) \partial_s
   - (n\!-\!1) s \partial_s + s^2 \Delta_{h_s}.
$$
In particular, this reproduces the formula for the Laplacian of a Poincar\'e-Einstein metric \cite[(2.18)]{FJO}.
\end{remark}

\begin{remark}\label{CF-proof2}
By combining the formula \eqref{L-op} with \eqref{LR-adapted}, we can give another proof of the conjugation
formula \eqref{CF}. Here the identities \eqref{vol-g} and \eqref{Laplace-s} are useful. We omit the details.
\end{remark}

The solution operators $\T_N(\lambda)$ are determined by the ansatz
$$
   \sum_{N\ge 0} s^{\lambda+N} \T_N(\lambda)(f), \; \T_0 = \id
$$
for an approximate solution $u$ of the equation $-\Delta_{s^{-2} \eta^*(g)} u =
\lambda(n\!-\!\lambda) u$ with boundary value $f$. By formula \eqref{L-op}, this means that the sum
\begin{align}\label{L-A-Sol}
   &  a \sum_{N \ge 0} (\lambda\!+\!N)(\lambda\!+\!N\!-\!1) \T_N(\lambda) s^{\lambda+N} \notag \\
   + & \frac{a}{2} \sum_{N \ge 0} (\lambda\!+\!N) \tr (h_s^{-1} h'_s) \T_N(\lambda) s^{\lambda+N+1}  \notag \\
   + &  \frac{a'}{2} \sum_{N \ge 0}  (\lambda\!+\!N) \T_N(\lambda) s^{\lambda+N+1} \notag \\
   -  & (n\!-\!1) a \sum_{N \ge 0}  (\lambda\!+\!N) \T_N(\lambda) s^{\lambda+N} \notag \\
   -  & \frac{1}{2} \sum_{N \ge 0} (d \log a,d \T_N(\lambda))_{h_s} s^{\lambda+N+2} \notag \\
   + & \sum_{N \ge 0} \Delta_{h_s} \T_N(\lambda) s^{\lambda+N+2}
\end{align}
coincides with the sum
\begin{equation}\label{L-A-Sol-2}
  -\lambda(n\!-\!\lambda) \sum_{N\ge 0} \T_N(\lambda) s^{\lambda+N}.
\end{equation}
In order to compare coefficients of powers of $s$ in \eqref{L-A-Sol} and
\eqref{L-A-Sol-2}, we also insert the expansions of $a$ and of $h_s$. Note that the
equality of the coefficients of $s^\lambda$ in \eqref{L-A-Sol} and \eqref{L-A-Sol-2}
is trivially satisfied using $\iota^*a = 1$. The equality of coefficients of $s^{\lambda+N}$ in
\eqref{L-A-Sol} and \eqref{L-A-Sol-2} yields a recursive formula for
$\T_N(\lambda)$. For the coefficient $\T_1(\lambda)$, we find $\T_1(\lambda) = -H
\id$ (Lemma \ref{sol-1}). An easy induction shows that the order of
$\T_{2N}(\lambda)$ and $\T_{2N+1}(\lambda)$ is $2N$.

Now we give the proof of Proposition \ref{LT-P-even}.

\begin{proof}[Proof of  Proposition \ref{LT-P-even}]
Let $N$ be even. Then \eqref{L-A-Sol} implies that
$$
   N(2\lambda-n+N) \LT(\T_N(\lambda)) + \Delta_h \LT(\T_{N-2}(\lambda)) = 0.
$$
It follows that the leading term of $\T_{2N}(\lambda)$ is given by
\begin{equation}\label{LT-T}
    \prod_{j=1}^N \frac{1}{2j (n-2\lambda-2j)} \Delta_h^N
   = \frac{1}{N! 2^{2N}} \frac{\Gamma(\frac{n}{2}-\lambda-N)}{\Gamma(\frac{n}{2}-\lambda)} \Delta_h^N.
\end{equation}
Note that this observation fits with \cite[Equation (4.7)]{GZ}. Hence
$$
     \LT( \Res_{\lambda=\frac{n}{2}-N}(\T_{2N}(\lambda))  = - \frac{1}{2^{2N} (N-1)! N!} \Delta_h^N.
$$
The second claim follows from that result by combining it with Theorem \ref{LT-P-even}.
\end{proof}

For odd $N$, the operator $\PO_N$ is of order $N-1$ for general metrics. In the
following result, we separate from the residue
$\Res_{\lambda=\frac{n-N}{2}}(\T_N(\lambda))$ and from $\PO_N$ respective
self-adjoint {\em leading terms} $\LT(\cdot)$ so that the remaining terms are of
order $N-3$ for general metrics.

\begin{prop}\label{LT-P-odd-n} Let $3 \le N \in \N$ be odd. Assume that $\sigma$ satisfies $\SCY$. Then
$$
   \LT( \Res_{\lambda=\frac{n-N}{2}}(\T_N(\lambda))
   =  \frac{1}{N (N\!-\!2)!} \sum_{r=0}^{\frac{N-3}{2}} m_N(r) \Delta^r \delta(\lo d)  \Delta^{\frac{N-3}{2}-r}
$$
with
\begin{equation}\label{m-coeff}
   m_N(r) \st \binom{N-1}{r} \prod_{j=1}^{\frac{n-3}{2}} (N-j) \prod_{j=1}^r \frac{1}{(N-2j)}
   \prod_{j=1}^{\frac{n-3}{2}-r} \frac{1}{(N-2j)}.
\end{equation}
Hence
\begin{equation}\label{LT-P-odd}
   \LT(\PO_N) = (2N\!-\!2) (N\!-\!1)!  \sum_{r=0}^{\frac{N-3}{2}} m_N(r) \Delta^r \delta(\lo d) \Delta^{\frac{N-3}{2}-r}.
\end{equation}
\end{prop}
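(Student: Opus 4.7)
The plan is to analyze $\T_N(\lambda)$ inductively using the recursion obtained by matching coefficients of $s^{\lambda+N}$ in the expanded eigenfunction equation \eqref{L-A-Sol}$=$\eqref{L-A-Sol-2}. First I expand the metric data in adapted coordinates using $\rho(0)=-H$: $a=\eta^*(|\NV|^2)=1+2Hs+O(s^2)$, $\tfrac{1}{2}\tr(h_s^{-1}h_s')=nH+O(s)$, and $h_s=h+2sL+O(s^2)$. The classical first-variation identity for the Laplace--Beltrami operator under $h_s$ yields
\begin{equation*}
   \mathcal{V} u \st \partial_s\Delta_{h_s}|_0\, u = (n-2)(dH,du)_h - 2\delta(\lo\,du) - 2H\Delta_h u,
\end{equation*}
and collecting all contributions to the coefficient of $s^{\lambda+N}$ produces the recursion
\begin{equation*}
   N(2\lambda+N-n)\T_N(\lambda) + (\lambda+N-1)(2\lambda+2N-n-1)H\,\T_{N-1}(\lambda) + \Delta_h\T_{N-2}(\lambda) + \mathcal{V}\T_{N-3}(\lambda) + \mathrm{LOT}=0,
\end{equation*}
where $\mathrm{LOT}$ gathers terms whose differential order on functions of $M$ is strictly less than $N-1$ (in particular $\mathcal{D}_k\T_{N-2-k}$ for $k\geq 2$ has order at most $N-k\leq N-2$).

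Next I would separate the $H$- and $\lo$-parts of $\LT(\T_N(\lambda))$ and proceed by induction on odd $N$. The inductive hypothesis is that for odd $M<N$,
\begin{equation*}
    \LT(\T_M(\lambda)) = \sum_{r=0}^{(M-3)/2} c_r^{(M)}(\lambda)\Delta^r\delta(\lo d)\Delta^{(M-3)/2-r} + d^{(M)}(\lambda)\,H\Delta^{(M-1)/2},
\end{equation*}
where the $c_r^{(M)}$ have their only relevant simple pole at $\lambda=(n-M)/2$. The even-order leading terms are fixed by \eqref{LT-T}; the key input is $\T_1(\lambda)=-\lambda H$, whose $\lambda$-dependence drives the cancellation discussed below. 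The $\lo$-part of the recursion becomes triangular: the $r=0$ position is driven by the $-2\delta(\lo d)$ part of $\mathcal{V}^{(2)}$ acting on $\LT(\T_{N-3}(\lambda))$, and for $r\geq 1$ one has $N(2\lambda+N-n)c_r^{(N)}(\lambda)=-c_{r-1}^{(N-2)}(\lambda)$. Taking residues at $\lambda=(n-N)/2$ and iterating converts the Gamma-function factors from \eqref{LT-T} into the combinatorial coefficients $m_N(r)$ of \eqref{m-coeff}. The stated formula for $\LT(\PO_N)$ then follows from Theorem \ref{power-spec} together with the self-adjointness of $\delta(\lo d)$ and of powers of $\Delta_h$.

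The principal obstacle is verifying that all $H\Delta^{(N-1)/2}$-contributions cancel at $\lambda=(n-N)/2$, which is what forces the leading part of $\PO_N$ to depend only on $\lo$. Three sources must sum to zero at that value of $\lambda$: (i) the scalar $(\lambda+N-1)(2\lambda+2N-n-1)H$ multiplying the even-order $\LT(\T_{N-1}(\lambda))$, (ii) the $H$-component $d^{(N-2)}(\lambda)H\Delta^{(N-3)/2}$ of $\LT(\T_{N-2})$ composed with $\Delta_h$, and (iii) the $-2H\Delta_h$ part of $\mathcal{V}^{(2)}$ acting on $\LT(\T_{N-3}(\lambda))$. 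For $N=3$ the identity reduces to $\lambda+2-(\lambda+2)(2\lambda+5-n)/[2(n-2\lambda-2)]=0$ at $\lambda=(n-3)/2$, which is elementary; the general case reduces to a Gamma-function identity verifiable by $\Gamma(z+1)=z\Gamma(z)$ applied to the ratios determining $\LT(\T_{N-1})$ and $\LT(\T_{N-3})$ at the relevant $\lambda$.
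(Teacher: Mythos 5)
Your core strategy for the $\lo$-part is exactly the paper's: derive the recursion obtained by matching the coefficient of $s^{\lambda+N}$, restrict to the $\Delta^r\delta(\lo d)\Delta^{(N-3)/2-r}$-components to get a triangular recursion $a_N(\lambda)c_r^{(N)}=-c_{r-1}^{(N-2)}$ seeded at $r=0$ by $-2\delta(\lo d)\circ\LT(\T_{N-3})$, then take the residue at $\lambda=(n-N)/2$ to convert the accumulated $\prod a_{2j}(\lambda)^{-1}$ factors into the combinatorial coefficients $m_N(r)$. Where you genuinely diverge is in disposing of the $H$-terms: the paper simply invokes the conformal covariance of $\PO_N$ to conclude that neither $H\Delta^{(N-1)/2}$ nor $(dH,d\Delta^{(N-3)/2})$ can appear in the residue, whereas you propose to verify the cancellation computationally. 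That is a legitimate and more self-contained alternative, and your $N=3$ check is correct.

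However, as written your plan has a gap. First, the recursion you display omits the $-(dH,d\T_{N-3}(\lambda))_h$ contribution coming from $-\tfrac{1}{2}(d\log a,d\,\cdot)_{h_s}$ in the expanded Laplacian; this term has order $N-2$ (not $\le N-3$) and you have absorbed it into a $\mathrm{LOT}$-bucket that you define as ``order $<N-1$''. But the proposition, together with the sentence preceding it in the paper, asserts that the residue has no order-$(N-2)$ part at all (the remainder beyond $\LT$ has order $\le N-3$). So the order-$(N-2)$ terms of type $(dH,d\Delta^{(N-3)/2})$ are precisely what you need to kill, and you do not address them. They come from at least three places: the missing $-(dH,d\T_{N-3})$ term, the sub-leading $2(dH,d\,\cdot)$ contribution from $\Delta\circ(H\Delta^{(N-3)/2})$ inside $\Delta\T_{N-2}$, and the $(n-2)(dH,d\,\cdot)$ summand of $\Delta'\T_{N-3}$. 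Related to this, your inductive hypothesis for $\LT(\T_M)$ (odd $M$) records only $c_r^{(M)}$ and $d^{(M)}$ components and drops the $(dH,d\Delta^{(M-3)/2})$-type terms that genuinely appear in $\T_M$ (e.g.\ $\T_3$ has a $(dH,d\,\cdot)$-part with coefficient $1/3$); these feed forward into $\Delta\T_{N-2}$ and so your induction is not closed. To make your route work you would need to track both the $H\Delta^{(N-1)/2}$- and the $(dH,d\Delta^{(N-3)/2})$-coefficients of $\T_N(\lambda)$ and show that each is regular at $\lambda=(n-N)/2$ — the $N=3$ case shows both coefficients of $N(2\lambda-n+N)\T_N$ carry a prefactor $(2\lambda-n+N)$, which is the phenomenon your Gamma-identity heuristic is trying to capture, but the bookkeeping for general $N$ is heavier than your sketch acknowledges.
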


\begin{proof} Let $N$ be odd. Note that $a = 1 + 2 H s + \cdots$ and
$\tr (h_s^{-1} h'_s ) = 2 \tr(L) + \cdots$ by Lemma \ref{rho-01} and \eqref{h-adapted}.
Comparing the coefficients of $s^{\lambda+N}$ in \eqref{L-A-Sol} and \eqref{L-A-Sol-2} yields the relation
\begin{align*}
   & N (2\lambda\!-\!n\!+\!N) \T_N(\lambda) \\
   & + 2 (\lambda\!+\!N\!-\!1) (\lambda\!+\!N\!-\!2) H \T_{N-1}(\lambda) + (\lambda\!+\!N\!-\!1) \tr (L)  \T_{N-1}(\lambda) \\
   & + (\lambda\!+\!N\!-\!1) H  \T_{N-1}(\lambda)  - 2(n\!-\!1)(\lambda\!+\!N\!-\!1)  \T_{N-1}(\lambda) \\
   & - (dH,d\T_{N-3}(\lambda))_h \\
   & + \Delta_h \T_{N-2}(\lambda) + \Delta_h'  \T_{N-3}(\lambda) = 0,
\end{align*}
up to operators of order $\le N-3$. Here $\Delta_{h_s} = \Delta_h + s \Delta_h' + \cdots$. Simplification shows that
\begin{align}\label{RR-odd}
   & N(2\lambda\!-\!n\!+\!N) \T_N(\lambda) + (\lambda\!+\!N\!-\!1)(2\lambda\!+\!2N\!-\!n\!-\!1) H \T_{N-1}(\lambda)
  - (dH,d\T_{N-3}(\lambda))_h \notag \\
   & + \Delta \T_{N-2}(\lambda) + \Delta' \T_{N-3}(\lambda) = 0,
\end{align}
up to operators of order $\le N-3$.\footnote{From now on, $\Delta$ is the Laplacian of $h$.}
The leading terms of the solution operators $\T_{N-1}(\lambda)$ and $\T_{N-3}(\lambda)$
are multiplies of powers of $\Delta$ (see \eqref{LT-T}). Moreover, we recall the variation formula
\begin{equation}\label{Laplace-var-g}
   (d/dt)|_0(\Delta_{g+sh}) = - (\Hess_g (\cdot), h)_g - (\delta^g (h),d \cdot)_g + \frac{1}{2} (d \tr_g(h),d\cdot)_g
\end{equation}
(\cite[Proposition 1.184]{Besse}). Hence     \index{$\Delta'$}
\begin{align}\label{Delta-var}
   \Delta'_h \st (d/dt)|_0(\Delta_{h+2sL}) & = - (\Hess_h(\cdot),2L)_h - 2(\delta^h (L),d \cdot)_h + n (dH,d\cdot)_h \notag \\
   & = - 2\delta^h ( L d) + n (dH,d \cdot)_h \notag \\
   & = - 2 H \Delta_h + (n-2) (dH,d\cdot)_h - 2 \delta^h (\lo d).
\end{align}
In other words, the first variation $\Delta_h'$ of the Laplacian with respect to the variation $h_s$ of $h$
is given by
\begin{equation}\label{var-1}
   -2H \Delta + (n-2) (dH,d\cdot)_h - 2 \delta (\lo d).
\end{equation}
It follows that $N(2\lambda\!-\!n\!+\!N)\T_N(\lambda)$ is a linear combination of terms of the form
\begin{equation}\label{H-terms}
   H \Delta^{\frac{N-1}{2}} \quad \mbox{and} \quad (dH,d\Delta^{\frac{N-3}{2}}),
\end{equation}
terms
\begin{equation}\label{L-terms}
   \Delta^r \delta ( \lo d) \Delta^{\frac{N-3}{2}-r}, \; r=0,\dots,\tfrac{N-3}{2}
\end{equation}
and terms of order $\le N-3$. In order to determine the coefficients of the terms
\eqref{L-terms} in $\T_N(\lambda)$, we let $\mathring{\T}_N(\lambda)$ be the sum of
these contributions to $\T_N(\lambda)$. Then \eqref{RR-odd} implies
\begin{equation}\label{RR-odd-L}
   a_N(\lambda) \mathring{\T}_N(\lambda) + \Delta \mathring{\T}_{N-2}(\lambda)
  -  2 (-1)^{\frac{N-3}{2}} \prod_{j=1}^\frac{N-3}{2} \frac{1}{a_{2j}(\lambda)} \delta (\lo d) \Delta^{\frac{N-3}{2}}
  = 0,
\end{equation}
where $a_N(\lambda) \st N(2\lambda-n+N)$. That recursive relation is solved by
$$
   (-1)^\frac{N-3}{2} a_N(\lambda) \mathring{\T}_N(\lambda) =
   2 \sum_{r=0}^{\frac{N-3}{2}} \prod_{j=1}^{\frac{N-3}{2}-r} \frac{1}{a_{2j}(\lambda)}
   \prod_{j=1}^{r} \frac{1}{a_{N-2j}(\lambda)} \Delta^r \delta (\lo d) \Delta^{\frac{N-3}{2}-r}.
$$
Hence
\begin{align*}
   & N \Res_{\lambda=\frac{n-N}{2}}(\mathring{\T}_N(\lambda)) \\
   & = 2^{-\frac{N-3}{2}} \sum_{r=0}^{\frac{N-3}{2}} \frac{1}{(\frac{N-3}{2}\!-\!r)! r!}
   \prod_{j=1}^{\frac{N-3}{2}-r} \frac{1}{(N-2j)}
   \prod_{j=1}^r \frac{1}{(N-2j)} \Delta^r \delta (\lo d) \Delta^{\frac{N-3}{2}-r}.
\end{align*}
Now the inverse of the coefficient of the term for $r=0$ equals
$$
   2^{\frac{N-3}{2}} \left(\frac{N-3}{2}\right)! \prod_{j=1}^{\frac{N-3}{2}} (N-2j) = (N-2)!.
$$
Therefore, we obtain
$$
   \Res_{\lambda=\frac{n-N}{2}} \left(\T_N(\lambda)\right) = \frac{1}{N (N-2)!} \sum_{r=0}^{\frac{N-3}{2}}
   m_N(r) \Delta^r \delta (\lo d) \Delta^{\frac{N-3}{2}-r},
$$
up to contributions by the terms in \eqref{H-terms} (containing $H$) and lower-order
terms. However, the terms in \eqref{H-terms} do not contribute. This is a consequence of
the conformal covariance of $\PO_N$. The proof is complete.
\end{proof}


The formula  \eqref{LT-P-odd} also makes clear that, if $\lo$ vanishes, the operator
$\PO_N$ is of order $< N-2$. For a discussion of the special cases of $\PO_3$ and
$\PO_5$, we refer to Lemma  \ref{LT-P3} and Lemma \ref{LT-P5}. In particular, the
proof of Lemma \ref{LT-P3} confirms the vanishing of the terms $H \Delta$ and
$(dH,d\cdot)$.

Next, we relate the operators $\PO_N$ to the scattering operator $\Sc(\lambda)$
generalizing a result of \cite{GZ}.

\begin{theorem}\label{LS}
Let $N \in \N$ with $2 \le N \le n$. Assume that $\sigma$ satisfies $\SCY$ and
that $(n/2)^2 - (N/2)^2 \notin \sigma_{pp}$. Then
\begin{equation}\label{P-S}
   \PO_N = (-1)^N 2 (N-1)! N! \Res_{\frac{n-N}{2}}(\Sc(\lambda)).
\end{equation}
\end{theorem}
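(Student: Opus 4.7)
The plan is to reduce the assertion to the identification of $\PO_N$ with a residue of $\T_N^*(\lambda)$ already established in Theorem \ref{power-spec}, and then to show that
\[
   \Res_{\lambda = \frac{n-N}{2}}(\T_N^*(\lambda)) = - \Res_{\lambda = \frac{n-N}{2}}(\Sc(\lambda)).
\]
Once this identity of operators on $C^\infty(M)$ is in hand, substituting it into Theorem \ref{power-spec} converts the sign $(-1)^{N-1}$ into $(-1)^N$ and gives the claim.

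The negative identification will come from a pole-cancellation argument for the Poisson operator $\Po(\lambda)$, in the spirit of \cite{GZ}. Set $\lambda_0 \st (n-N)/2$. By hypothesis, $\lambda_0(n-\lambda_0) \notin \sigma_{pp}$, so $\Po(\lambda)$ is holomorphic at $\lambda_0$, and for any $f \in C^\infty(M)$ the function $\eta^*(\Po(\lambda)(f))$ admits the asymptotic expansion \eqref{asymp}. Writing $\lambda = \lambda_0 + \varepsilon$ and noting that $\lambda + N = \lambda_0 + N + \varepsilon = (n+N)/2 + \varepsilon$ and $n - \lambda = (n+N)/2 - \varepsilon$, the two series in \eqref{asymp} share the same leading exponent at $\lambda_0$. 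Separating the possibly singular coefficients of $s^{(n+N)/2 \pm \varepsilon}$ and using the regularity of $\T_j(\lambda)$ and $\T_j(n-\lambda)$ for $j < N$ together with the regularity of the sum as $\varepsilon \to 0$, the principal parts at $\varepsilon = 0$ must satisfy
\[
   \Res_{\lambda = \lambda_0} \bigl(\T_N(\lambda)(f)\bigr) + \Res_{\lambda = \lambda_0}\bigl(\Sc(\lambda)(f)\bigr) = 0,
\]
while the difference produces a $\log s$ term (which is why the appearance of such log terms is linked to GJMS-type operators). Since $f$ is arbitrary, this yields the required operator identity $\Res_{\lambda_0}(\T_N(\lambda)) = -\Res_{\lambda_0}(\Sc(\lambda))$.

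To pass from $\T_N(\lambda)$ to $\T_N^*(\lambda)$ I would use that $\Sc(\lambda)$ is formally self-adjoint for real $\lambda$ by Lemma \ref{scatt-sd}. Taking adjoints in the displayed identity gives $\Res_{\lambda_0}(\T_N^*(\lambda)) = -\Res_{\lambda_0}(\Sc(\lambda))$, and the theorem follows from Theorem \ref{power-spec}.

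The main obstacle is making the pole-cancellation rigorous rather than merely formal. One needs to verify that the asymptotic expansion \eqref{asymp} can be differentiated analytically in $\lambda$ near $\lambda_0$ uniformly enough to extract Laurent coefficients, and that the two branches of the expansion combine into a single meromorphic object whose residues can be read off term by term. Under $\SCY$ the metric $\sigma^{-2}g$ is asymptotically hyperbolic, so one may either invoke the meromorphic framework for generalised eigenfunctions from \cite{GZ} directly, or proceed purely algebraically by rewriting the residue of $\T_N(\lambda)$ through the Bernstein-Sato-type functional equation used in the proof of Theorem \ref{residue-product}, thereby avoiding the analytic machinery in favor of the already-established conjugation formula and the recursive structure of the solution operators.
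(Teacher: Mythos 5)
Your proposal reproduces the paper's own argument: the residue identity $\Res_{\lambda_0}(\T_N(\lambda)) + \Res_{\lambda_0}(\Sc(\lambda)) = 0$ is obtained exactly as in the paper, by observing that $\Po(\lambda)(f)$ is holomorphic at $\lambda_0 = \frac{n-N}{2}$ (this rests on \cite[Section 3]{GZ}, which both you and the paper cite), so the simple poles in the two branches of \eqref{asymp} must cancel. The one point where you are actually a bit more careful than the paper is the passage from $\T_N$ to $\T_N^*$: the paper says only ``the claim follows from Theorem \ref{power-spec}'' even though that theorem is phrased in terms of $\T_N^*(\lambda)$, whereas you explicitly take adjoints at the real point $\lambda_0$ and invoke the self-adjointness $\Sc(\lambda)^* = \Sc(\lambda)$ from Lemma \ref{scatt-sd}; this is exactly the step the paper leaves implicit, and it is the right way to fill it. Your closing worry about rigor is legitimate but already resolved by the same reference the paper uses -- the meromorphic continuation of the Poisson and scattering operators in \cite{GZ} provides precisely the uniform meromorphy needed to extract Laurent coefficients termwise -- so no new machinery (and in particular no Bernstein--Sato detour) is required.
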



\begin{proof}
The assumptions guarantee that the scattering operator is well-defined and that
$\lambda(n-\lambda) \notin \sigma_{pp}$ for $\lambda=\frac{n-N}{2}$.
If $\Re(\lambda) < \frac{n}{2}$ so that
$\lambda(n-\lambda) \notin \sigma_{pp}$ and $\lambda \notin \frac{n}{2} - \N$,
the Poisson transform $\Po(\lambda)(f)$ yields an eigenfunction $u$ of the Laplacian
of the metric $\eta^*(\sigma^{-2}g) = s^{-2} \eta^*(g)$  with boundary value
$f \in C^\infty(M)$ and with an asymptotic expansion of the form
\begin{equation}
   \sum_{j\ge 0} s^{\lambda+j} \T_j(\lambda)(f)
  + \sum_{j \ge 0} s^{n-\lambda+j} \T_j(n-\lambda) \Sc(\lambda)(f).
\end{equation}
Although the families $\T_N(\lambda)$ and $\Sc(\lambda)$ have simple poles at
$\lambda=\frac{n-N}{2}$, the Poisson transform $\Po(\lambda)(f)$ is holomorphic at
$\lambda=\frac{n-N}{2}$ \cite[Section 3]{GZ}. That means that
$$
   \frac{\Res_{\frac{n-N}{2}}(\T_N(\lambda)) s^{\lambda+N}}{\lambda - \frac{n-N}{2}}
   + \frac{\Res_{{\frac{n-N}{2}}}(\Sc(\lambda)) s^{n-\lambda}}{\lambda-\frac{n-N}{2}}
$$
is regular at $\lambda = \frac{n-N}{2}$. Hence
$$
   \Res_{\frac{n-N}{2}}(\T_N(\lambda)) + \Res_{{\frac{n-N}{2}}}(\Sc(\lambda)) = 0
$$
and the asymptotic expansion of $u$ involves a $\log$-term
$$
   2 \Res_{\frac{n-N}{2}}(\T_N(\lambda))(f) s^{\frac{n+N}{2}} \log (s).
$$
Now the claim follows from Theorem \ref{power-spec}.
\end{proof}

\begin{remark} In \cite{GZ}, the scattering operator is defined as $\Sc(n-\lambda)$ and the
GJMS-operators $P_{2N}$ have leading part $(-\Delta)^N$. Now
\begin{align*}
   P_{2N}(h) & = \frac{1}{(2N\!-\!1)!!^2} \PO_{2N}(r^2 g_+,r)  & \mbox{(by \eqref{P-GJMS})} \\
   & = \frac{2(2N\!-\!1)! (2N)!} {(2N\!-\!1)!!^2} \Res_{\frac{n}{2}-N}(\Sc(\lambda)) & \mbox{(by Theorem \ref{LS})} \\
   & = 2^{2N} N!(N\!-\!1)! \Res_{\frac{n}{2}-N}(\Sc(\lambda)).
\end{align*}
This shows that Theorem \ref{LS} extends \cite[Theorem 1]{GZ}.
\end{remark}

\begin{cor}\label{sa} Let $\N \ni N \le n$. Assume that $\sigma$ satisfied $\SCY$. Then the operators
$\PO_N(g,\sigma)$ are formally self-adjoint as operators on $C^\infty(M)$ with respect to the scalar product defined by $h$.
\end{cor}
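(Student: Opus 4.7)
The plan is to deduce self-adjointness of $\PO_N$ from the identity \eqref{P-S} in Theorem \ref{LS} combined with the self-adjointness of the scattering operator established in Lemma \ref{scatt-sd}. Provided the spectral condition $(n/2)^2 - (N/2)^2 \notin \sigma_{pp}$ holds, Theorem \ref{LS} gives
$$
   \PO_N = (-1)^N 2(N-1)!\,N!\,\Res_{\lambda_0}(\Sc(\lambda)), \qquad \lambda_0 \st \tfrac{n-N}{2},
$$
with a real prefactor. Hence $\PO_N^* = \PO_N$ is equivalent to the formal self-adjointness (with respect to the scalar product defined by $h$) of $A \st \Res_{\lambda_0}(\Sc(\lambda))$.

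By Lemma \ref{scatt-sd}, the relation $\Sc(\lambda)^* = \Sc(\lambda)$ holds for real $\lambda < n/2$ avoiding the trivial poles $\{(n-j)/2 : j \in \N\}$ and with $\lambda(n-\lambda) \notin \sigma_{pp}$. The scattering operator has a simple pole at $\lambda_0$, so we may write the Laurent expansion
$$
   \Sc(\lambda) = \frac{A}{\lambda-\lambda_0} + B(\lambda)
$$
with $B(\lambda)$ holomorphic at $\lambda_0$. Applying $\Sc(\lambda)^* = \Sc(\lambda)$ for real $\lambda$ in a punctured neighborhood of $\lambda_0$ satisfying the admissibility conditions above, and comparing the principal parts of both sides, one obtains $A^* = A$. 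This gives the formal self-adjointness of $\PO_N$ under the spectral hypothesis.

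To remove the spectral assumption, I would use Proposition \ref{sigma-free}: the operator $\PO_N(g,\sigma)$ depends only on a finite jet of $(g,\sigma)$ along $M$. Hence we may alter $(g,\sigma)$ on a compact set disjoint from a small collar of $M$ without changing $\PO_N$. Since $\sigma_{pp} \subset (0,(n/2)^2)$ is a finite set that moves continuously under compactly supported interior perturbations of the metric, such a perturbation can be chosen to avoid all of the finitely many critical values $\{(n/2)^2 - (N/2)^2 : 1 \le N \le n\}$ simultaneously. With this adjustment Theorem \ref{LS} applies and the Laurent-coefficient argument above gives $\PO_N^* = \PO_N$.

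The main obstacle is the perturbation step: one needs to argue carefully that interior modifications of $(g,\sigma)$ leaving the jets at $M$ fixed exist and can be used to shift the discrete spectrum off the finite exceptional set. This is standard for Laplacians of asymptotically hyperbolic metrics but warrants an explicit remark; all other steps are direct consequences of results already in hand.
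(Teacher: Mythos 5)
Your core argument—identifying $\PO_N$ with a residue of the scattering operator via Theorem~\ref{LS}, then transferring the self-adjointness of $\Sc(\lambda)$ from Lemma~\ref{scatt-sd} to its Laurent coefficient at $\lambda_0=\frac{n-N}{2}$—is exactly the paper's own proof. The paper phrases the Laurent step slightly more tersely (``since $\Sc(\lambda)$ is meromorphic, its residue is self-adjoint, too''), but the content is identical.

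Where you go beyond the paper is in addressing the spectral hypothesis $(n/2)^2-(N/2)^2\notin\sigma_{pp}$ appearing in Theorem~\ref{LS}. Corollary~\ref{sa} in the paper is stated without any such restriction, yet its proof cites Theorem~\ref{LS}, which does require it; the paper leaves this discrepancy implicit. Your observation that $\PO_N(g,\sigma)$ depends only on a finite jet of $(g,\sigma)$ along $M$ (Proposition~\ref{sigma-free} and the remark following it), and therefore is unchanged by compactly supported interior perturbations, is the right mechanism for removing the assumption. However, as you yourself flag, the step asserting that the finitely many eigenvalues in $\sigma_{pp}$ can actually be moved off the exceptional set $\{(n/2)^2-(N/2)^2 : 1\le N\le n\}$ by such a perturbation is not automatic: continuity of eigenvalues alone does not guarantee that they move. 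One would need a genericity or monotonicity argument (for instance, that scaling the metric on an interior compact set strictly changes the eigenvalues, or that the set of admissible interior perturbations for which no eigenvalue hits the critical set is open and dense). Stating and citing such a result would close the argument; as written, it is plausible but incomplete. With that caveat, the proposal is correct in spirit and in fact more careful than the paper's own proof on this point.
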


\begin{proof} Lemma \ref{scatt-sd} shows that $\Sc(\lambda)$ is self-adjoint on 
$\R \setminus \left\{\frac{n-N}{2}\,|\, N \in \Z \right\}$. Since $\Sc(\lambda)$ is meromorphic, its 
residue at $\lambda = \frac{n-N}{2}$ is self-adjoint, too. Then Theorem \ref{LS} proves the assertion.
\end{proof}


\section{Extrinsic $Q$-curvatures and renormalized volume coefficients}\label{Q-curvature}

\numberwithin{theorem}{section} \numberwithin{equation}{section}

The zeroth-order terms of the GJMS-operators $P_{2N}$ led to the notion of 
Branson's $Q$-curvature \cite{sharp}. In the present section, we use residue families to
extend the notion of Branson's $Q$-curvatures to the framework of the singular Yamabe 
problem. This extends the discussion of $Q$-curvatures in \cite{J1}. The resulting curvature 
quantities will be called extrinsic $Q$-curvatures.\footnote{In \cite{GW-reno}, the critical extrinsic 
$Q$-curvature is called the {\em extrinsically coupled $Q$-curvature}.} We relate the integrated
critical renormalized volume coefficient $v_n$ to the integrated critical extrinsic $Q$-curvature.
Moreover, we discuss the Hadamard renormalization of the volume of singular 
Yamabe metrics. Here the total critical extrinsic $Q$-curvature plays an important role. 
Although the treatment is inspired by \cite{GW, GW-reno}, our arguments differ and may continue to 
illuminate these topics.

By Theorem \ref{left-factor}, it holds $\PO_n(g,\sigma) \iota^* =
\D_n^{res}(g,\sigma;0)$ if $\sigma$ satisfies $\SCY$. Now, following the philosophy
of \cite{J1}, it is natural to consider the pair $(\D_n^{res}(g,\sigma;0),\dot{\D}_n^{res}(g,\sigma;0)(1))$.

\index{$\QC_n(g,\sigma)$ \quad critical extrinsic $Q$-curvature}

\begin{defn}[\bf Critical extrinsic $Q$-curvature]
Assume that $\sigma$ satisfies $\SCY$. Then the function
\begin{equation}\label{Q-critical}
   \QC_n(g,\sigma) \st - \dot{\D}_n^{res}(g,\sigma;0)(1) \in C^\infty(M)
\end{equation}
is called the critical extrinsic $Q$-curvature of $g$.
\end{defn}

Since $L(g,\sigma;0)(1) = 0$, the identification of residue families with
products of $L$-operators (Theorem \ref{residue-product}) implies that
\begin{equation}\label{QL}
   \QC_n(g,\sigma) = - \iota^* L(g,\sigma;-n\!+\!1) \circ \cdots \circ L(g,\sigma;-1)
   \circ \dot{L}(g,\sigma;0)(1).
\end{equation}
Moreover, the definition of $L$ yields
\begin{equation}\label{b}
   \dot{L}(g,\sigma;0)(1) = (n-1) \rho - \sigma \J.
\end{equation}
Therefore, we obtain
\begin{equation}\label{QL-2}
   \QC_n(g,\sigma) = - \iota^* L(g,\sigma;-n\!+\!1) \circ \cdots \circ L(g,\sigma;-1)
    ((n-1) \rho - \sigma \J).
\end{equation}

Next, we define subcritical versions of $\QC_n$. Let $N < n$. The definition of $L$
implies that
\begin{equation}\label{aa}
   L\left(g,\sigma;\frac{-n+N}{2}\right)(1) = \left(\frac{n-N}{2}\right) (-(N\!-\!1) \rho + \sigma \J).
\end{equation}
Hence the function $\PO_N(g,\sigma)(1)$ is of the form
$$
   \left(\frac{n-N}{2}\right) \QC_N(g,\sigma)
$$
with a scalar curvature quantity $\QC_N(g,\sigma) \in C^\infty(M)$. It follows that
\begin{equation}\label{a}
   \QC_N(g,\sigma) = -\iota^* L\left(g,\sigma;\frac{-n\!-\!N}{2}\!+\!1\right) \circ \cdots
   \circ L\left(g,\sigma;\frac{-n\!+\!N}{2}\!-\!1\right) ((N\!-\!1)\rho - \sigma \J)
\end{equation}
We shall call these quantities {\em subcritical extrinsic $Q$-curvatures}. In terms
of residue families, these definitions are equivalent to the following definition.

\index{$\QC_N(g,\sigma)$ \quad extrinsic $Q$-curvature}

\begin{defn}[\bf Subcritical extrinsic $Q$-curvature]
Let $\N \ni N < n$. Assume that $\sigma$ satisfies $\SCY$.  The functions
$\QC_N(g,\sigma) \in C^\infty(M)$ which are determined by the equation
\begin{equation}\label{Q-sub-residue}
   \D_N^{res}\left(g,\sigma;\frac{-n+N}{2}\right) (1) = \left(\frac{n-N}{2}\right) \QC_N(g,\sigma)
\end{equation}
are called subcritical extrinsic $Q$-curvatures.
\end{defn}

Since the residue families $\D_N^{res}(g,\sigma;\lambda)$ for $N \le n$ are
completely determined by $g$ (and $\iota$) (Proposition \ref{sigma-free}), the quantities
$\QC_N(g,\sigma)$ for $N \le n$ are also completely determined by $g$ (and $\iota$). 
Therefore, we shall also use the notation $\QC_N(g)$. 

The identities \eqref{QL-2} and \eqref{a} show that the critical extrinsic
$Q$-curvature is a limiting case $\lim_{n \to N}$ of the subcritical extrinsic
$Q$-curvatures (continuation in dimension).


The subcritical $Q$-curvature $\QC_N$ is directly linked to the solution operator $\T_N(\lambda)$
through the relation
\begin{equation}\label{QT}
    \QC_N = (-1)^N \QC_N\left(\frac{n-N}{2}\right),
\end{equation}
where the polynomial $\QC_N(\lambda)$ is defined in \eqref{T-1}. In fact, \eqref{T-1} implies
$$
   \Res_{\frac{n-N}{2}}(\T_N)(1) = -\frac{\frac{n-N}{2}}{2 (N-1)! N!} \QC_N\left(\frac{n-N}{2}\right).
$$
On the other hand, Theorem \ref{power-spec} shows that
$$
   \Res_{\frac{n-N}{2}}(\T_N)(1) = - (-1)^{N} \frac{1}{2 (N-1)!N!} \PO_N (1)
   = - (-1)^N \frac{\frac{n-N}{2}}{2 (N-1)!N!} \QC_N.
$$
The identity \eqref{QT}  follows by combining both relations. Continuation in $n$ also gives the relation
\begin{equation}\label{QT-critical}
    \QC_n = (-1)^n \QC_n(0)
\end{equation}
in the critical case.

\begin{remark}\label{Q-GJMS}
The extrinsic $Q$-curvatures $\QC_N$ are related to Branson's $Q$-curvatures
$Q_{2N}$ as follows. We use the convention that $Q_{2N}$ for even $n$ and $2N <n$ is
defined by
$$
   P_{2N} (g) (1) = \left({\frac{n}2}-N\right) (-1)^N Q_{2N}(g),
$$
where $P_{2N} = \Delta^N + LOT$ is the GJMS-operator of order $2N$ with $\Delta$
denoting the non-positive Laplacian. Now let $g_+$ be a Poincar\'e-Einstein metric
in normal form relative to $h$. Then \eqref{P-GJMS} implies
$$
   \QC_{2N}(r^2 g_+,r) = (-1)^N(2N-1)!!^2 Q_{2N}(h).
$$
\end{remark}

\begin{remark}
Formula \eqref{QL} for the critical extrinsic $Q$-curvature $\QC_n$ is closely related to the
definition of the critical extrinsic $Q$-curvature used in \cite{GW-reno}. In fact, under the
assumption $\SC(g,\sigma)=1$, \cite[(3.14)]{GW-reno} defines $\QC_n(g,\sigma)$ by
$$
   (-1)^{n} \iota^* L(g,\sigma;-n+1) \circ \cdots \circ L(g,\sigma;-1) \circ L_{\log}(g,\sigma;1) (\log (1)).
$$
Here $L_{\log}(g,\sigma;\lambda)$ is a version of $L$ which acts on log-densities
by\footnote{For the definition of the notion of log-densities, we refer to \cite{GW}.}
$$
   L_{\log}(g,\sigma;\lambda)(u) \st (n-1) (\nabla_{\grad(\sigma)}(u) + \lambda \rho)
   - \sigma (\Delta_g (u) + \lambda \J).
$$
In particular, we have $L_{\log}(g,\sigma;\lambda)(\log(1)) = \lambda ((n-1) \rho -\sigma \J)$ and hence
$$
   L_{\log}(g,\sigma;1)(\log(1)) = (n-1) \rho - \sigma \J = \dot{L}(g,\sigma;0)(1)
$$
using \eqref{b}.
\end{remark}

Differentiation of the conformal transformation law for the critical residue family
$\D_n^{res}(g,\sigma;\lambda)$ at $\lambda=0$ yields the following result.


\begin{theorem}\label{CTL-Q} Assume that $\sigma$ satisfies $\SCY$. Then
\begin{equation}\label{CTL-Q2}
   e^{n \iota^*(\varphi)} \QC_n(\hat{g},\hat{\sigma}) = \QC_n(g,\sigma) + \PO_n(g,\sigma)(\iota^*(\varphi))
\end{equation}
for all conformal changes $(\hat{g},\hat{\sigma}) = (e^{2\varphi}g,e^\varphi \sigma)$,
$\varphi \in C^\infty(X)$.
\end{theorem}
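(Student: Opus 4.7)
The plan is to derive the formula by differentiating the conformal transformation law of the critical residue family $\D_n^{res}(g,\sigma;\lambda)$ at $\lambda = 0$, exploiting the fact that its value at $\lambda = 0$ vanishes on constants. This is the standard strategy by which Branson's critical $Q$-curvature transformation law is extracted from the intertwining property of GJMS-operators, adapted here to the extrinsic setting via Theorem \ref{CTL-RF}.

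Concretely, I would start from the conformal covariance identity of Theorem \ref{CTL-RF} with $N=n$, rewritten in the form
\begin{equation*}
   \D_n^{res}(\hat{g},\hat{\sigma};\lambda)(1)
   = e^{(\lambda-n)\iota^*(\varphi)} \, \D_n^{res}(g,\sigma;\lambda)\bigl(e^{-\lambda\varphi}\bigr),
\end{equation*}
obtained by applying both sides of Theorem \ref{CTL-RF} to $e^{-\lambda\varphi}$. At $\lambda=0$ both sides vanish: by Theorem \ref{left-factor} we have $\D_n^{res}(g,\sigma;0) = \PO_n(g,\sigma)\,\iota^*$, and by \eqref{crit-van} the critical extrinsic Laplacian annihilates constants, so $\D_n^{res}(g,\sigma;0)(1) = 0$ (and similarly in the hatted metric).

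The core step is to take $\frac{d}{d\lambda}\big|_0$ of both sides. On the left this produces $\dot{\D}_n^{res}(\hat{g},\hat{\sigma};0)(1) = -\QC_n(\hat{g},\hat{\sigma})$ by Definition \eqref{Q-critical}. On the right I would apply the Leibniz rule to the product $e^{(\lambda-n)\iota^*(\varphi)} \cdot \D_n^{res}(g,\sigma;\lambda)\bigl(e^{-\lambda\varphi}\bigr)$. Because $\D_n^{res}(g,\sigma;0)(1) = 0$, the term in which the exponential prefactor is differentiated vanishes, leaving only the second term,
\begin{equation*}
   e^{-n\iota^*(\varphi)} \frac{d}{d\lambda}\Big|_0 \D_n^{res}(g,\sigma;\lambda)\bigl(e^{-\lambda\varphi}\bigr)
   = e^{-n\iota^*(\varphi)} \bigl[ \dot{\D}_n^{res}(g,\sigma;0)(1) - \D_n^{res}(g,\sigma;0)(\varphi) \bigr].
\end{equation*}
The first inner term is $-\QC_n(g,\sigma)$, and the second is $-\PO_n(g,\sigma)(\iota^*(\varphi))$ by Theorem \ref{left-factor}. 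Equating with the left side and multiplying through by $-e^{n\iota^*(\varphi)}$ yields \eqref{CTL-Q2}.

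There is essentially no obstacle beyond bookkeeping: the only subtlety is to notice that the vanishing $\D_n^{res}(g,\sigma;0)(1)=0$ is precisely what kills the $\iota^*(\varphi)\cdot\QC_n$-type cross term that would otherwise appear from differentiating the prefactor $e^{(\lambda-n)\iota^*(\varphi)}$. This is the same mechanism that makes Branson's critical $Q$-curvature transform affinely rather than multiplicatively, and the condition $\SCY$ only enters through its role in guaranteeing that $\D_n^{res}(g,\sigma;\lambda)$ is well-defined and satisfies Theorem \ref{CTL-RF} and Theorem \ref{left-factor}.
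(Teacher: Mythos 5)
Your proof is correct and follows essentially the same route as the paper's: differentiate the conformal transformation law of Theorem \ref{CTL-RF} at $\lambda=0$, use $\D_n^{res}(g,\sigma;0)(1)=\PO_n(g,\sigma)(1)=0$ to kill the cross term coming from the exponential prefactor, and identify the surviving terms via Definition \eqref{Q-critical} and Theorem \ref{left-factor}. The only difference is a trivial bookkeeping choice (you apply the operator identity to $e^{-\lambda\varphi}$ and then differentiate, while the paper differentiates the rearranged operator identity and then applies it to $1$); the mechanism and all ingredients are identical.
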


\begin{proof} Theorem \ref{CTL-RF} implies
$$
   e^{-(\lambda-n)\iota^*(\varphi)} \circ \D_n^{res}(\hat{g},\hat{\sigma};\lambda) =
   \D_n^{res}(g,\sigma;\lambda) \circ e^{-\lambda \varphi}.
$$
By differentiating this identity with respect to $\lambda$ at $\lambda=0$, we obtain
$$
   -e^{n \iota^*(\varphi)} \iota^*(\varphi) \circ \D_n^{res}(\hat{g},\hat{\sigma};0)
   + e^{n \iota^*(\varphi)} \circ \dot{\D}_n^{res}(\hat{g},\hat{\sigma};0) =
   \dot{\D}_n^{res}(g,\sigma;0) - \D_n^{res}(g,\sigma;0) \circ \varphi.
$$
Hence
$$
   -\iota^*(\varphi) \D_n^{res}(g,\sigma;0)(1) + e^{n \iota^*(\varphi)}
   \dot{\D}_n^{res}(\hat{g},\hat{\sigma};0)(1) = \dot{\D}_n^{res}(g,\sigma;0)(1) -
   \D_n^{res}(g,\sigma;0) (\varphi).
$$
Now $\D_n^{res}(g,\sigma;0) =\PO_n(g,\sigma) \iota^*$ and \eqref{crit-van} imply the
assertion.
\end{proof}

For closed $M^n$, integration of\eqref{CTL-Q2} implies
$$
   \int_{M} \QC_n(\hat{g},\hat{\sigma}) dvol_{\hat{h}}
   = \int_{M} \QC_n(g,\sigma) dvol_h + \int_{M} \PO_n(g,\sigma)(\iota^*(\varphi)) dvol_h.
$$
Later we shall prove that $\PO_n$ is self-adjoint. Hence the second integral on the
right-hand side equals $\int_M \PO_n (1) \iota^*(\varphi) dvol_h$. By
\eqref{crit-van}, this integral vanishes. This shows

\begin{cor}\label{CI-TQ}
Let $M^n$ be closed. Assume that $\sigma$ satisfies $\SCY$. Then the integral
$$
   \int_M \QC_n(g) dvol_h
$$
is conformally invariant as a functional of $g$ and the embedding $M \hookrightarrow X$. 
\end{cor}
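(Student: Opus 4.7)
The plan is to integrate the conformal transformation law from Theorem \ref{CTL-Q} over $M$ and show that the ``anomaly'' term $\int_M \PO_n(g,\sigma)(\iota^*\varphi)\,dvol_h$ vanishes. Concretely, I would start from the identity
\[
   e^{n\iota^*(\varphi)} \QC_n(\hat{g},\hat{\sigma}) = \QC_n(g,\sigma) + \PO_n(g,\sigma)(\iota^*(\varphi))
\]
and multiply both sides by $dvol_h$. Since $\iota^*(\hat g) = e^{2\iota^*\varphi} h$, we have $dvol_{\hat h} = e^{n\iota^*\varphi} dvol_h$, so the left-hand side integrates to $\int_M \QC_n(\hat g,\hat\sigma)\,dvol_{\hat h}$ (here I use that $M$ is closed so boundary contributions are absent). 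This reduces the invariance claim to showing
\[
   \int_M \PO_n(g,\sigma)(\iota^*\varphi)\,dvol_h = 0.
\]

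For the latter, I would invoke the formal self-adjointness of $\PO_n$ on $(M,h)$ (Corollary \ref{sa}, which follows from Theorem \ref{scatt-PQ-M} combined with the self-adjointness of the residues of $\Sc(\lambda)$ established in Lemma \ref{scatt-sd}). Pairing $\PO_n(\iota^*\varphi)$ against the constant function $1$ and using self-adjointness gives
\[
   \int_M \PO_n(\iota^*\varphi)\,dvol_h = \int_M \iota^*(\varphi)\, \PO_n(1)\,dvol_h,
\]
which vanishes by the crucial identity $\PO_n(g,\sigma)(1) = 0$ recorded in \eqref{crit-van} (itself a direct consequence of $L(g,\sigma;0)(1)=0$ and the factorization $\PO_n\iota^* = \iota^* L_n(g,\sigma;0)$ from Theorem \ref{residue-product}). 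Combining the two equalities yields $\int_M \QC_n(\hat g)\,dvol_{\hat h} = \int_M \QC_n(g)\,dvol_h$, as required.

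Since all inputs — Theorem \ref{CTL-Q}, Corollary \ref{sa}, and \eqref{crit-van} — are already in hand, there is no genuine obstacle; the argument is essentially a bookkeeping exercise that runs parallel to the classical proof for Branson's $Q$-curvature in the Poincaré–Einstein case. The only point worth verifying carefully is that the condition $\SCY$ (which is assumed for $\QC_n$ to be well-defined as a functional depending only on $g$ and $\iota$, via Proposition \ref{sigma-free}) is preserved under the conformal change $(\hat g,\hat\sigma) = (e^{2\varphi}g, e^\varphi \sigma)$; this follows from $\SC(\hat g,\hat\sigma)=\SC(g,\sigma)$ and $\hat\sigma^{n+1} = e^{(n+1)\varphi}\sigma^{n+1}$, so the remainder term transforms by a smooth factor and the hypothesis $\SCY$ is conformally covariant.
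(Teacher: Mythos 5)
Your proof is correct and follows essentially the same route as the paper: integrate the transformation law of Theorem \ref{CTL-Q}, then cancel the $\PO_n$-term using formal self-adjointness (Corollary \ref{sa}) together with $\PO_n(1)=0$ from \eqref{crit-van}. The only addition beyond the paper's own one-paragraph argument is your explicit check that $\SCY$ is preserved under $(\hat g,\hat\sigma)=(e^{2\varphi}g,e^\varphi\sigma)$, which is a reasonable well-definedness remark but not strictly new content.
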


An alternative argument proving this invariance will be given in Theorem \ref{LQ}.

A generalization of Corollary \ref{CI-TQ} to general $\sigma$ will be discussed in Lemma \ref{Q-tilde}.

Next, we recall that Theorem \ref{LS} relates the operators $\PO_N$ to the
scattering operator $\Sc(\lambda)$. In particular, it holds
$$
   \PO_n = (-1)^n 2 (n-1)! n! \Res_0(\Sc(\lambda)).
$$
Since $\PO_n(1)=0$, it follows that the function $\Sc(\lambda)(1)$ is regular at
$\lambda=0$. Its value at $\lambda=0$ will be denoted by $\Sc(0)(1)$.

\begin{theorem}\label{Q-S} Assume that $\sigma$ satisfies $\SCY$. Then
$$
    \QC_n = 2 (-1)^n (n-1)! n! \Sc(0)(1).
$$
\end{theorem}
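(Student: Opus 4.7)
The plan is to follow the strategy of Theorem \ref{LS} applied to the critical value $\mu = 0$ with boundary function $f = 1$. The key observation is that $u \equiv 1$ is the unique harmonic function with this boundary behavior: the assumption that $0 \notin \sigma_{pp}$ guarantees uniqueness, so $\Po(0)(1) = 1$. I will then match this against the asymptotic expansion \eqref{asymp} of $\Po(\mu)(1)$ at $\mu = 0$, extracting the relation between $\Sc(0)(1)$ and $\T_n(0)(1)$.

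At $\mu = 0$ the two exponent ladders $s^{\mu+j}$ and $s^{n-\mu+j}$ collide, but the analysis simplifies considerably on the constant function $1$. By \eqref{T-1}, the explicit factor $\mu$ in the numerator cancels the pole of $\T_n(\mu)$ at $\mu = 0$ when applied to $1$: the factor $(n - 2\mu - n) = -2\mu$ in the denominator absorbs the $\mu$, and the remaining factors evaluate to $n!(n-1)!$, giving the regular value
\begin{equation*}
   \T_n(0)(1) = -\frac{\QC_n(0)}{2(n-1)!\,n!}.
\end{equation*}
Since $\Res_0(\T_n(\lambda))(1) = 0$, the cancellation of residues established in the proof of Theorem \ref{LS} also gives $\Res_0(\Sc(\lambda))(1) = 0$, i.e., $\Sc(\lambda)(1)$ is regular at $\lambda = 0$, so no logarithmic term appears in the expansion of $\Po(0)(1)$. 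The intermediate coefficients $\T_j(0)(1)$ vanish for $1 \le j \le n-1$ by \eqref{T-1}, so matching the $s^n$-coefficient in the expansion of $\Po(0)(1) = 1$ forces
\begin{equation*}
   \T_n(0)(1) + \Sc(0)(1) = 0.
\end{equation*}

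Combining the two displays yields $\Sc(0)(1) = \QC_n(0) / (2(n-1)!\, n!)$, and the identity $\QC_n = (-1)^n \QC_n(0)$ from \eqref{QT-critical} rearranges this into the claim. The main subtlety is the careful bookkeeping at the degenerate point $\mu = 0$, where the two exponent ladders meet and both $\T_n(\lambda)$ and $\Sc(\lambda)$ have simple poles as operators; however, restricting to $f = 1$ collapses both residues and the log-term simultaneously, so the formal matching argument goes through cleanly.
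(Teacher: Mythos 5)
Your proposal is correct and follows essentially the same argument as the paper's proof: match the $s^n$-coefficient of $\Po(0)(1)=1$ to obtain $\T_n(0)(1)+\Sc(0)(1)=0$, then evaluate $\T_n(0)(1)$ via \eqref{T-1} and convert using \eqref{QT-critical}. The only difference is that you spell out a few intermediate verifications (the regularity of $\Sc(\lambda)(1)$ at $\lambda=0$ via the residue cancellation from Theorem \ref{LS}, and the vanishing of $\T_j(0)(1)$ for $1\le j\le n-1$) which the paper either establishes just before the theorem statement or leaves implicit.
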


\begin{proof} We consider the coefficient of $s^n$ in the asymptotic expansion \eqref{asymp} of
the eigenfunction $\Po(\lambda)(1)$ for $\lambda=0$. Since $\Po(0)(1)=1$, that
coefficient vanishes. On the other hand, it is given by
$$
   \T_n(0)(1) + \Sc(0)(1);
$$
we recall that the function $\T_n(\lambda)(1)$ is regular at $\lambda=0$. Hence
$\T_n(0)(1) = - \Sc(0)(1)$. But
\begin{equation}\label{T-Q}
   \T_n(0)(1) = -\frac{1}{2 (n-1)! n!} \QC_n(0) = - (-1)^n \frac{1}{2 (n-1)!n!} \QC_n
\end{equation}
using \eqref{T-1} and \eqref{QT-critical}. This implies the assertion.
\end{proof}

\begin{remark}
Theorem \ref{Q-S} extends \cite[Theorem 2]{GZ} for the scattering operator of Poincar\'e-Einstein
metrics. In fact,
\begin{align*}
   Q_n & = (-1)^\frac{n}{2} \frac{1}{(n-1)!!^2} \QC_n & \mbox{(by Remark \ref{Q-GJMS})} \\
   & = (-1)^\frac{n}{2} \frac{2 (n-1)! n!}{(n-1)!!^2} \Sc(0)(1) & \mbox{(by Theorem \ref{Q-S})} \\
   & = (-1)^\frac{n}{2} 2^n \left(\frac{n}{2}\right)! \left(\frac{n}{2}-1\right)! \Sc(0)(1).
\end{align*}
In that case, the crucial relation between the values of $\Sc(\lambda)(1)$
and $\T_n(\lambda)(1)$ at $\lambda=0$ is provided by \cite[Proposition 3.7]{GZ}.
\end{remark}

The following result (\cite[Theorem 3.9]{GW-reno} for $\tau=1$ and $\SC=1$) is an analog of the 
identity \eqref{HF-volume} in the critical case.

\begin{theorem}\label{LQ}
Let $M$ be closed and assume that $\sigma$ satisfies $\SCY$. Let $n\ge 2$ be even. Then it holds the equality
\begin{equation}\label{vQ}
   \LA = \int_M v_n(g) dvol_h = \frac{(-1)^{n} }{(n-1)! n!} \int_M \QC_n(g) dvol_h
\end{equation}
of conformal invariants.
\end{theorem}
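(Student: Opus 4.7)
The plan is to compute $\frac{d}{d\lambda}\big|_{\lambda=0} \int_M \D_n^{res}(g,\sigma;\lambda)(1)\, dvol_h$ in two different ways and equate. On one side, this derivative equals $-\int_M \QC_n\, dvol_h$ by the definition \eqref{Q-critical}. On the other side, it can be evaluated explicitly using the description of residue families in Theorem \ref{D-res-ex}.

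First, I would specialize the formula \eqref{D-res-sol} with $N = n$ to the constant function $1$. Since $\iota^*\partial_s^j(1) = 0$ for every $j \ge 1$, only the $j = 0$ summand contributes, and, after integrating over $M$ and converting each $\T_k^*(\lambda)$ to $\T_k(\lambda)(1)$ through the $L^2(M,h)$ adjoint pairing, one obtains
\begin{equation*}
\int_M \D_n^{res}(g,\sigma;\lambda)(1)\, dvol_h = n!\, F(\lambda)\, G(\lambda),
\end{equation*}
with $F(\lambda) \st (2\lambda - n + 1)_n$ and $G(\lambda) \st \sum_{k=0}^n \int_M v_k \T_{n-k}(\lambda)(1)\, dvol_h$. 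The factor $F$ has a simple zero at $\lambda = 0$, coming from its last factor $2\lambda$, with $F'(0) = 2(-1)^{n-1}(n-1)!$; hence it is enough to compute $G(0)$.

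The delicate step is the evaluation of $\T_{n-k}(0)(1)$. By \eqref{T-1}, $\T_N(\mu)(1) = \mu \QC_N(\mu)/(N!\prod_{j=1}^N (n-2\mu-j))$ for $N \ge 1$, while $\T_0 = \id$. For $1 \le N \le n-1$ the denominator is non-vanishing at $\mu = 0$, so the numerator factor $\mu$ forces $\T_{n-k}(0)(1) = 0$ whenever $0 < k < n$. For $k = n$ the contribution is $\int_M v_n\, dvol_h = \A$. For $k = 0$ the factor $j = n$ in the denominator contributes $n - 2\mu - n = -2\mu$ which cancels the $\mu$ in the numerator; combined with \eqref{QT-critical} this yields the regular value
\begin{equation*}
\T_n(0)(1) = \frac{\QC_n(0)}{-2\, n!(n-1)!} = \frac{(-1)^{n+1}}{2\, n!(n-1)!}\, \QC_n.
\end{equation*}
Using $v_0 = 1$ (condition $\SCY$ forces $|\NV| = 1$ on $M$) and that $n$ is even, this gives $G(0) = \A - \frac{1}{2 n!(n-1)!} \int_M \QC_n\, dvol_h$.

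Finally, I would combine the two computations. One has
\begin{equation*}
n!\, F'(0)\, G(0) = -2\, n!(n-1)!\, \A + \int_M \QC_n\, dvol_h,
\end{equation*}
and this must equal $-\int_M \QC_n\, dvol_h$; solving the resulting linear equation yields $\A = \frac{1}{n!(n-1)!}\int_M \QC_n\, dvol_h$, which for even $n$ is the stated identity. The main obstacle is the careful analysis of the apparent pole of $\T_n(\mu)(1)$ at $\mu = 0$: it is precisely this removable singularity, interacting with the zero of $F$ at $\lambda = 0$, that produces the extra $\int_M \QC_n\, dvol_h$ term and is responsible for the factor of $2$ in the final identity.
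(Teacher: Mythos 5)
Your proposal is correct and reproduces the paper's second proof of Theorem \ref{LQ} almost verbatim: both start from the identity $\int_M \D_n^{res}(\lambda)(1)\,dvol_h = n!(2\lambda-n+1)_n\sum_{k}\int_M v_k\T_{n-k}(\lambda)(1)\,dvol_h$, both exploit the vanishing $\T_j(0)(1)=0$ for $1\le j\le n-1$ and the regularity of $\T_n(\lambda)(1)$ at $\lambda=0$ via \eqref{T-1} and \eqref{QT-critical}, and both conclude by differentiating the prefactor at its simple zero $\lambda=0$. Your naming of the factors as $F$ and $G$ and the explicit discussion of the pole cancellation in $\T_n(\mu)(1)$ is merely a slightly more spelled-out packaging of the same argument.
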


The quantity $\LA$ is sometimes referred to as an anomaly. This is motivated by the fact
that, in the Poincar\'e-Einstein case, the function $v_n$ is the infinitesimal conformal anomaly of the
renormalized volume \cite{G-vol}.

We shall give two proofs of that result. The arguments in the first proof will also play a role in Section 
\ref{Q-hol}. The second proof resembles the proof of the analogous result in the subcritical cases.

\begin{proof} We work in adapted coordinates. In particular, the notation will not distinguish
between objects on $X$ and their pull-backs by $\eta$. First, we note that
\begin{equation}\label{help-2}
   \iota^* \partial_s^{n-1} (v \dot{L}(0)(1)) = \iota^* \partial_s^n (v).
\end{equation}
Since $\dot{L}(0)(1) = (n-1) \rho - s \J$ (see \eqref{b}), this local identity is a special case 
of the local identity \eqref{Myst-a}. The current assumption implies $|\NV|^2 = 1 - 2 s \rho + O(s^{n+1})$ 
and the same arguments as in the proof of Theorem \ref{RVE-g} yield the assertion. Now we integrate \eqref{help-2}.
It holds
$$
    \int_M  \iota^* \partial_s^n (v) dvol_h = n! \int_M v_n dvol_h
$$
and the integral of the left-hand side of \eqref{help-2} equals
\begin{align*}
  & (-1)^{n-1} \langle \mathfrak{X}^{n-1} (\sigma^*(\delta)), \dot{L}(0)(1) dvol_g \rangle & \mbox{(by \eqref{fund-dist})} \\
  & = c_n \langle L(-n\!+\!1) \circ \cdots \circ L(-1) (\sigma^*(\delta)), \dot{L}(0)(1) dvol_g \rangle
  & \mbox{(by Theorem \ref{dist-volume})} \\
  & = c_n \langle \sigma^*(\delta), L(-n\!+\!1) \circ \cdots \circ L(-1) \circ \dot{L}(0)(1) dvol_g \rangle &
  \mbox{(by \eqref{dual-1})} \\
  & = c_n \int_M \iota^* L(-n\!+\!1) \circ \cdots \circ L(-1) \circ \dot{L}(0)(1) dvol_h & \mbox{(by \eqref{Ho-simple})} \\
  & = -c_n \int_M \QC_n dvol_h & \mbox{(by \eqref{QL})}
\end{align*}
with $c_n = (-1)^{n-1}/(n\!-\!1)!$. This completes the proof.
\end{proof}

\begin{remark}\label{dot-v}
In the Poincar\'e-Einstein case, it holds $v' = v \dot{L}(0)(1)$. This immediately proves \eqref{help-2}.
\end{remark}

The proof of Theorem \ref{LQ} rests on the local identity \eqref{help-2}. This identity will also play a role 
in Section \ref{Q-hol}. We continue with a

\begin{proof}[Second proof of Theorem \ref{LQ}]
Theorem \ref{D-res-ex} yields the identity
\begin{equation}\label{critical-c}
   \int_M \D_n^{res}(\lambda)(1) dvol_h
   = n! (2\lambda\!-\!n\!+\!1)_n \sum_{j=0}^n \int_M v_j \T_{n-j}(\lambda)(1) dvol_h.
\end{equation}
We split the sum on the right-hand side as
$$
   \sum_{j=1}^{n-1} \int_M v_j \T_{n-j}(\lambda)(1) dvol_h
  + \int_M v_n dvol_h + \int_M \T_n(\lambda)(1) dvol_h.
$$
Now we differentiate \eqref{critical-c} at $\lambda=0$. Since, $\T_j(0)(1)=0$ for $j=1,\dots,n-1$, we find
$$
   \int_M \dot{\D}_n^{res}(0)(1) dvol_h
   = 2 (-1)^{n-1} (n-1)! n! \left( \int_M v_n dvol_h + \int_M \T_n(0)(1) dvol_h \right)
$$
using that $\T_n(\lambda)(1)$ is regular at $\lambda=0$. Now \eqref{Q-critical} and \eqref{T-Q} imply
$$
   -2 \int_M \QC_n dvol_h = 2 (-1)^{n-1} (n-1)!n! \int_M v_n dvol_h.
$$
This implies \eqref{vQ}.
\end{proof}

We finish this section with a discussion of renormalized volumes of singular metrics $\sigma^{-2} g$.
First, we combine the above results to prove

\begin{theorem}\label{RVE}
Let $M$ be closed and assume that $\sigma$ satisfied $\SCY$. Then the volume
$$
    vol_{\sigma^{-2}g}(\{\sigma > \varepsilon \}) = \int_{\sigma > \varepsilon} dvol_{\sigma^{-2}g}
$$
admits the expansion
$$
    \sum_{k=0}^{n-1} \frac{c_k}{n-k} \varepsilon^{-n+k} - \LA \log \varepsilon + V + o(1),  \; \varepsilon \to 0,
$$
where
$$
   c_k = \int_M v_k(g) dvol_h \quad \mbox{and} \quad \LA = \int_M v_n(g) dvol_h.
$$
$V$ is called the renormalized volume. The coefficients in the expansion are natural functionals of the
metric background $g$, which can be written in the form
$$
   c_k = (-1)^k \frac{(n\!-\!1\!-\!k)!}{(n\!-\!1)! k!} \int_M \iota^* L_k(-n\!+\!k) (1) dvol_h
$$
for $k=0,\dots,n-1$ and
\begin{equation}\label{anomaly-SY}
   \LA = (-1)^{n-1} \frac{1}{(n\!-\!1)! n!} \int_M \iota^* \dot{L}_n(0) (1) dvol_h.
\end{equation}
\end{theorem}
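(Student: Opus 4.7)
The plan is to work in adapted coordinates and read off the expansion term by term, then identify the coefficients using Corollary \ref{RVC-subcritical} and Theorem \ref{LQ}. First, I would use the diffeomorphism $\eta$ to pull back the integration region $\{\sigma > \varepsilon\}$ near $M$ to the cylinder $[\varepsilon, s_0] \times M$ for some fixed small $s_0 > 0$; the bulk contribution from $\{\sigma > s_0\}$ is visibly smooth in $\varepsilon$ as $\varepsilon \to 0^+$ and so can be absorbed into the constant $V + o(1)$. In adapted coordinates, $\eta^*(dvol_{\sigma^{-2}g}) = s^{-n-1} v(s,x)\, ds\, dvol_h$ by \eqref{RVC-B}, so the near-boundary contribution is
$$
   \int_\varepsilon^{s_0} \int_M s^{-n-1} v(s,x)\, dvol_h\, ds.
$$

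Next, I would expand $v(s,x) = \sum_{j=0}^{n} s^j v_j(x) + s^{n+1} R(s,x)$, which is legitimate since the condition $\SCY$ ensures that $\sigma$ and hence $v$ have a smooth Taylor expansion up to the required order. Integrating each monomial via
$$
   \int_\varepsilon^{s_0} s^{-n-1+j}\, ds
   = \begin{cases} \dfrac{\varepsilon^{-(n-j)}}{n-j} + \text{const} & (j < n), \\[1mm] -\log\varepsilon + \text{const} & (j = n), \end{cases}
$$
and observing that $s^{n+1} R(s,x) \cdot s^{-n-1} = R(s,x)$ is bounded and hence contributes $O(1)$, produces exactly the claimed asymptotic expansion with
$$
   c_k = \int_M v_k(g)\, dvol_h \quad (k < n) \qquad \text{and} \qquad \LA = \int_M v_n(g)\, dvol_h.
$$

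Finally, the alternative expressions for the coefficients follow by invoking earlier results. For $k \le n-1$, the identification
$$
   c_k = (-1)^k \frac{(n-1-k)!}{(n-1)!\, k!} \int_M \iota^* L_k(-n+k)(1)\, dvol_h
$$
is precisely the content of Corollary \ref{RVC-subcritical}. For the logarithmic coefficient $\LA$, I would combine Theorem \ref{LQ} (which gives $\LA = \frac{(-1)^n}{(n-1)!\, n!} \int_M \QC_n\, dvol_h$) with formula \eqref{QL}. The crucial bridge is the identity $\dot L_n(0)(1) = L(-n+1)\circ \cdots \circ L(-1) \circ \dot L(0)(1)$, which holds because $L(0)(1) = 0$ kills every other term that would arise when differentiating the product $L_n(\lambda)(1) = L(\lambda-n+1)\circ\cdots\circ L(\lambda)(1)$ at $\lambda=0$. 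Combined with \eqref{QL}, this yields $\iota^* \dot L_n(0)(1) = -\QC_n$, and substituting produces the displayed formula \eqref{anomaly-SY}.

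The computations are essentially bookkeeping; the main technical care is in justifying the split of the volume integral into a near-boundary part (where the Taylor expansion of $v$ is used) and a bulk part (smooth in $\varepsilon$). A minor subtlety is that Theorem \ref{LQ} as stated requires $n$ even, so strictly speaking the identification of $\LA$ for odd $n$ requires its extension — which can be carried out by mimicking the second proof of Theorem \ref{LQ} (using the regularity of $\T_n(\lambda)(1)$ at $\lambda = 0$ together with \eqref{T-Q} and the identity $\QC_n = (-1)^n \QC_n(0)$ from \eqref{QT-critical}).
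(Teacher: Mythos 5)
Your proposal is correct and follows essentially the same route as the paper's (three-line) proof: expand $dvol_{\sigma^{-2}g}$ in adapted coordinates as $s^{-n-1}v(s)\,ds\,dvol_h$, integrate term-by-term, and then cite Corollary \ref{RVC-subcritical} for $c_k$ and Theorem \ref{LQ} for $\LA$. You usefully make explicit the bridge identity $\dot{L}_n(0)(1)=L(-n+1)\circ\cdots\circ L(-1)\circ\dot{L}(0)(1)$ (a consequence of $L(0)(1)=0$) that connects Theorem \ref{LQ} to formula \eqref{anomaly-SY}, and you correctly flag the parity subtlety that Theorem \ref{LQ} is stated for even $n$ while \eqref{RVE} is not.
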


\begin{proof} Using $\eta^*(dvol_{\sigma^{-2} g}) = dvol_{s^{-2} \eta^*(g)} = s^{-n-1} v(s) ds dvol_h$,
we obtain the asymptotic expansion
$$
    \int_{\sigma > \varepsilon} dvol_{\sigma^{-2}g}
   = \sum_{k=0}^{n-1} \frac{1}{n-k} \varepsilon^{-n+k} \int_M v_k dvol_h
   - \log \varepsilon \int_M v_n dvol_h + V + o(1).
$$
Now the expressions for the coefficients in terms of Laplace-Robin operators follow from Corollary 
\ref{RVC-subcritical} and Theorem \ref{LQ}.
\end{proof}

The above definition of the renormalized volume is also known as the Hadamard renormalization \cite{Albin}.

In the proof of Theorem \ref{RVE}, we deduced the formulas for the coefficients $c_k$ from the relation between
residue families and iterated Laplace-Robin operators (Theorem \ref{residue-product}). We recall that this relation
requires assuming that $\sigma$ solves the singular Yamabe problem. However, the only consequences of Theorem
\ref{residue-product} which are relevant in this context already follow from a study of the residues of the meromorphic
continuation of the integral
\begin{equation}\label{basic-int}
    \lambda \mapsto \int_X \sigma^\lambda dvol_g, \; \Re(\lambda) > -1.
\end{equation}
Therefore it is of interest to include a discussion of an extension of Theorem \ref{RVE} to general $\sigma$ which only
rests on the study of that integral. First, we note that the coefficients $c_k$ in Theorem \ref{residue-product} are related
to the residues of \eqref{basic-int}: $c_k = \int_M v_k dvol_h$. Moreover, the Hadamard renormalization $V$ of the
volume of $\sigma^{-2}g$ is related to the Riesz renormalization of the volume of $\sigma^{-2} g$ which is defined
as the constant term in the Laurent series of \eqref{basic-int} at $\lambda=-n-1$ \cite{Albin}.

We first observe that Remark \ref{CF-simple} implies
\begin{equation*}
   \SC(g,\sigma)^{-1} \circ L(g,\sigma;\lambda) + \sigma^{\lambda-1} \circ \SC(g,\sigma)^{-1} \circ \Delta_{\sigma^{-2}g}
   \circ \sigma^{-\lambda} = \lambda (n\!+\!\lambda) \sigma^{-1} \id
\end{equation*}
in a sufficiently small neighborhood of the boundary, where $\SC(g,\sigma) \ne 0$. Hence
\begin{equation}\label{BS-shift}
   \SC(g,\sigma)^{-1} L(g,\sigma;\lambda) (\sigma^{\lambda})
  = \lambda (n\!+\!\lambda)  \sigma^{\lambda-1}.
\end{equation}
This is a Bernstein-Sato-type functional equation. Let $\chi \in C_c^\infty(X)$ be a cut-off function of 
the boundary $M$ so that $\SC \ne 0$ on the support of $\chi$. In the following, we shall simplify the notation by
suppressing the arguments $(g,\sigma)$ of $L$ and $\SC$. The second integral on the right-hand side of the 
decomposition
$$
   \int_X \sigma^\lambda dvol_g = \int_X \chi \sigma^\lambda dvol_g + \int_X (1-\chi) \sigma^\lambda dvol_g
$$
is holomorphic on $\C$. Now \eqref{BS-shift} implies
$$
   \int_X \chi \sigma^\lambda dvol_g = \frac{1}{(\lambda\!+\!1) (n\!+\!\lambda\!+\!1)}
   \int_X  \chi \SC^{-1} L(\lambda\!+\!1)(\sigma^{\lambda+1}) dvol_g
$$
for $\Re(\lambda) > -1$. Partial integration using Proposition \ref{adjoint-gen} shows that for $\Re(\lambda) \gg 0$ the 
latter integral equals 
$$
   \int_X \sigma^{\lambda+1} L(-\lambda\!-\!n\!-\!1)(\chi \SC^{-1}) dvol_g.
$$
Next, we note that $v_0 = \iota^* |\NV|^{-1}$ implies the residue formula
\begin{equation}\label{res-formula}
    \Res_{\lambda=-1} \left( \int_X \sigma^\lambda \psi dvol_g \right) = \int_M \iota^* \frac{1}{|\NV|} \psi dvol_h.
\end{equation}
Now combining the above result with the residue formula \eqref{res-formula} yields
$$
   \Res_{\lambda=-2} \left( \int_X \sigma^\lambda dvol_g \right)
   = - \frac{1}{n\!-\!1} \int_M \iota^* \frac{1}{|\NV|} L(-n\!+\!1)(\SC^{-1}) dvol_h.
$$
More generally, arguments as in the proof of Theorem \ref{residue-product} resting on a repeated application 
of the functional equation \eqref{BS-shift} and Proposition \ref{adjoint-gen} show that
\begin{equation}\label{vol-pi}
   \int_X \chi \sigma^\lambda dvol_g = \frac{1}{(\lambda\!+\!1)_k (\lambda\!+\!n\!+\!1)_k}
   \int_X \sigma^{\lambda+k} \tilde{L}_k (-\lambda\!-\!n\!-\!1)(\chi) dvol_g,
\end{equation}
for $\Re(\lambda) \gg 0$, and it follows that
\begin{equation}\label{res-vol-g}
    \Res_{\lambda=-k-1}  \left( \int_X \sigma^\lambda dvol_g \right)
    = \frac{1}{(-n\!+\!1)_k k!} \int_M \iota^* \frac{1}{|\NV|} \tilde{L}_k (-n\!+\!k) (1) dvol_h
\end{equation}
for $k < n$ using the relation $\iota^* \tilde{L}_k(\lambda)(\chi) = \iota^* \tilde{L}_k(\lambda)(1)$. 
The assumption $k < n$ guarantees that the prefactor on the right-hand side of \eqref{vol-pi} is regular 
at $\lambda = -k-1$. Here we use the notation
$$
    \tilde{L}(\lambda) \st L(\lambda) \circ \SC^{-1} \quad \mbox{and} \quad  \tilde{L}_k(\lambda)
    \st \tilde{L}(\lambda\!-\!k\!+\!1) \circ \cdots \circ \tilde{L}(\lambda)
$$
(see \eqref{LR-general} and \eqref{LN-tilde}).

\begin{example}\label{res-example} If $\SC =1$, then $\iota^* \NV=1$ and
\begin{align*}
   \Res_{\lambda=-2} \left( \int_X \sigma^\lambda dvol_g \right)
    & = - \frac{1}{n\!-\!1} \int_M \iota^* L(-n\!+\!1)(1) dvol_h \\
    & = -(n-1) \int_M \iota^* \rho dvol_h = (n-1) \int_M H dvol_h
\end{align*}
using $\iota^* \rho = - H$ (Lemma \ref{rho-01}). This fits with the fact that $v_1 = (n-1)H$ (Example \ref{v1}). 
Similarly, if $\sigma = d_M$ is the distance function of $M$, then $\SC = 1+2\sigma \rho$ implies 
$\iota^* L(-n+1)(\SC^{-1}) = (n-1)^2 \iota^* \rho + 2(n-1) \iota^* \rho = (n-1)(n+1) \iota^* \rho$. Hence
\begin{align*}
   \Res_{\lambda=-2} \left( \int_X \sigma^\lambda dvol_g \right) = \int_M \iota^* \Delta_g(\sigma) dvol_h 
   = n \int_M H dvol_h.
\end{align*}
\end{example}

These results prove the first part of the following theorem.

\begin{theorem}\label{RVE-g} Let $M$ be closed and $\sigma$ be a defining function of $M$. Then the volume
$$
   vol_{\sigma^{-2}g}(\{\sigma > \varepsilon \}) = \int_{\sigma > \varepsilon} dvol_{\sigma^{-2}g}
$$
admits the expansion
$$
   \sum_{k=0}^{n-1} \frac{c_k}{n-k} \varepsilon^{-n+k} - \LA \log \varepsilon + V + o(1),  \; \varepsilon \to 0,
$$
where
$$
   c_k = \int_M v_k dvol_h
   = (-1)^k \frac{(n\!-\!1\!-\!k)!}{(n\!-\!1)! k!} \int_M \iota^* \frac{1}{|\NV|} \tilde{L}_k(-n\!+\!k)(1) dvol_h
$$
for $k=0,\dots,n-1$. Moreover, it holds
\begin{equation}\label{anomaly-g}
    \LA = (-1)^{n-1} \frac{1}{(n\!-\!1)! n!} \int_M \iota^* \frac{1}{|\NV|} \tilde{L}_{n-1}(-1)
    \left( \dot{L}(0)(1) \SC^{-1} + (d \SC^{-1}, d\sigma)_g \right) dvol_h.
\end{equation}
\end{theorem}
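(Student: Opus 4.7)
\medskip

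The plan is to compute $\LA = \Res_{\lambda=-n-1}\int_X \sigma^\lambda dvol_g$. This equals $\int_M v_n dvol_h$ by the Gelfand residue formula in adapted coordinates (exactly as for the preceding coefficients $c_k$, which were already established in the text). The task is therefore to express this critical residue as a local boundary integral. The naive extrapolation of formula \eqref{res-vol-g} breaks down at $k=n$ since the tangential operator $\tilde{L}_n(0)$ satisfies $\iota^*\tilde{L}_n(0)(1) = \tilde{\PO}_n(1)$ whose integral against $|\NV|^{-1}dvol_h$ vanishes identically (a consequence of the simple-pole structure of the left-hand side of \eqref{vol-pi} at $\lambda=-n-1$ matched against a double-pole on the right-hand side). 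Hence a $0/0$ indeterminate form arises and we must extract the next-order Laurent term.

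I would carry this out as follows. Apply \eqref{vol-pi} with $k=n-1$ iterations and set $\mu = \lambda+n+1$. The factor $(\lambda+n+1)_{n-1}$ in the denominator has a simple zero at $\mu=0$ while the remaining integral $\int_X \sigma^{\mu-2} \tilde{L}_{n-1}(-\mu)(\chi)\,dvol_g$ has a simple pole at $\mu=0$ arising from the residue at $\nu=-2$ of $\int_X \sigma^\nu \psi \, dvol_g$, namely $\int_M \iota^*\partial_s(v\,\eta^*\psi)\,dvol_h$. Matching Laurent coefficients, the $\mu^{-2}$-coefficient forces the vanishing identity, and the $\mu^{-1}$-coefficient extracts $\LA$ as a sum of two pieces: a local boundary integral involving $\iota^*\partial_s(v\,\eta^*\dot{\tilde{L}}_{n-1}(0)(1))$ and an a priori non-local finite part of $\int_X \sigma^{\mu-2}\tilde{L}_{n-1}(0)(\chi)\,dvol_g$ at $\mu=0$.

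To convert the finite part into a local boundary integral, I would employ one additional Bernstein-Sato step via the $\lambda$-differentiated identity
\begin{equation*}
   n\sigma^{-1}\SC = \dot{L}(0)(1) + L(0)(\log\sigma),
\end{equation*}
obtained by differentiating $\SC^{-1}L(\lambda)(\sigma^\lambda)=\lambda(n+\lambda)\sigma^{\lambda-1}$ at $\lambda=0$. Solving for $\sigma^{-1}$ and substituting into the remaining $\sigma^{\mu-2}$-integral as $\mu\to 0$, and then applying the extended adjoint formula \eqref{adjoint-g} to integrate by parts the $L(0)(\log\sigma)$-piece against $\tilde{L}_{n-1}(-1)$-factors (using $L(\lambda)^* = L(-\lambda-n)$ and $\tilde{L}(\lambda) = L(\lambda)\circ \SC^{-1}$), the non-local $\log\sigma$-contributions cancel. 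What survives are the boundary contributions: the direct substitution of $\sigma^{-1}$ produces $\dot{L}(0)(1)\SC^{-1}$ as the argument of $\tilde{L}_{n-1}(-1)$, while the boundary term $(n+2\lambda)\int_M \iota^*(\varphi\psi|\NV|)\,dvol_h$ in \eqref{adjoint-g}, evaluated with $\varphi$ involving $\log\sigma$-factors multiplied against $\SC^{-1}$, yields the additional term $(d\SC^{-1},d\sigma)_g = \nabla_{\NV}(\SC^{-1})$, giving \eqref{anomaly-g}.

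The main obstacle is handling the logarithmic singularity of $\log\sigma$ at the boundary during the partial integration (requiring careful regularization or an auxiliary approximation by $\sigma^\varepsilon$) and collecting the boundary terms from the extended adjoint so as to assemble precisely the combination in the theorem. A useful consistency check is that \eqref{anomaly-g} reduces to \eqref{anomaly-SY} when $\SC=1$: then $\SC^{-1}=1$, $(d\SC^{-1},d\sigma)_g=0$, $|\NV|=1$ on $M$, and $L_{n-1}(-1)\circ\dot{L}(0)(1) = \dot{L}_n(0)(1)$ by the Leibniz rule applied to $L_n(\lambda)(1)$ at $\lambda=0$ together with $L(0)(1)=0$.
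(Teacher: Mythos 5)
Your setup is sound: you correctly identify the double-pole obstruction at $k=n$, and your $\lambda$-differentiated Bernstein-Sato identity
\begin{equation*}
   n\sigma^{-1}\SC = \dot{L}(0)(1) + L(0)(\log\sigma)
\end{equation*}
is correct (one can verify it by expanding $L(0)(\log\sigma) = n\sigma^{-1}|\NV|^2 - \Delta_g(\sigma)$, whereupon the identity reduces to the definition of $\rho$). The Laurent-matching argument is also consistent: the residue at $\mu^{-2}$ vanishes because $\iota^*\tilde{L}_{n-1}(0)^*(\sigma^{-2})=0$ away from $M$, using the Bernstein-Sato functional equation with the factor $(-n)(n-n)=0$ killing the last step, so the finite part $F(0)$ is well-defined and cut-off-independent.

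However, the conversion of the a priori non-local finite part into the specific local boundary integral \eqref{anomaly-g} is precisely where all the difficulty lies, and your treatment of that step is asserted rather than carried out. Substituting $\sigma^{\mu-2}=\sigma^{\mu-1}\cdot\sigma^{-1}$ via your differentiated identity does not obviously regularize anything: $L(0)(\log\sigma)=n\sigma^{-1}|\NV|^2-\Delta_g(\sigma)$ still contains a factor $\sigma^{-1}$, so the rewriting is circular at the level of singularities. Moving $L(0)$ by adjoint onto the other factor while $\log\sigma$ remains requires careful regularization, and the claimed cancellation of all $\log\sigma$-contributions, together with the emergence of exactly the combination $\E = \dot{L}(0)(1)\SC^{-1} + (d\SC^{-1},d\sigma)_g$, is not demonstrated. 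The paper takes a genuinely different and shorter route: it first proves the local pointwise identity \eqref{Myst-a},
\begin{equation*}
   \iota^*\partial_s^{n-1}\bigl(v\,\E\bigr) = \iota^*\partial_s^n(v),
\end{equation*}
by elementary manipulation in adapted coordinates (using the definition of $\rho$, the relation $v\Delta(s)=\partial_s(v|\NV|^2)$, and $\SC=|\NV|^2+2s\rho$). This converts $\int_M v_n\,dvol_h$ into $\tfrac{1}{n}\Res_{\lambda=-n}\int_X\sigma^\lambda\E\,dvol_g$, shifting the residue to $\lambda=-n$ where only a simple pole arises; the Bernstein-Sato reduction then closes in $n-1$ clean steps with no double pole and no $\log\sigma$ regularization at all. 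Until you either prove \eqref{Myst-a} or demonstrate the cancellations in the finite-part analysis explicitly, the proof has a genuine gap at exactly the step the theorem is really about.
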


Theorem \ref{RVE-g} is due to \cite{GW-reno}. For a discussion of the relations between the current arguments and the 
proofs in these references, we refer to Remark \ref{interpretation-currents}.

\begin{proof} We work in adapted coordinates. The form of the expansion follows as in the proof of Theorem 
\ref{RVE}. Since $c_k = \int_M v_k dvol_h$ is the residue of $\int_X\sigma^\lambda dvol_g$ at $\lambda=-k-1$,
the asserted formula for $c_k$ follows by a calculation of these residues using the functional equation \eqref{BS-shift}. 
It only remains to prove the formula for the anomaly $\LA$. Note that the proof for $c_k$ with $k \le n-1$ does not 
extend to the present case since for $k=n$ the right-hand side of \eqref{vol-pi} has a double pole at $\lambda=-n-1$. 
To bypass this difficulty, we first prove the local relation\footnote{This relation can be interpreted as a local version 
of the global relation in \cite[Lemma 3.8]{GW-reno}.}
\begin{equation}\label{Myst-a}
    \iota^* \partial_s^{n-1} \left(v \left[((n-1) \rho - s\J) \SC^{-1} + |\NV|^2 \partial_s(\SC^{-1})\right] \right) 
    = \iota^* \partial_s^n(v)
\end{equation}
near $M$. Let
$$
   I \st \iota^* \partial_s^{n-1} (v \left[(n-1) \rho - s\J \right] \SC^{-1}).
$$
The definition $\rho = -\frac{1}{n+1}(\Delta (s) + s \J)$ implies that
\begin{equation}\label{help-1}
   I = - \frac{n-1}{n+1} \left(\iota^* \partial_s^{n-1} (v \Delta (s)  \SC^{-1}) 
   + 2n \iota^* \partial_s^{n-2} (v \J \SC^{-1}) \right).
\end{equation}
Now we consider the first term on the right-hand side of \eqref{help-1}.  By (6.17), it holds
$$
   v \Delta(s) = \partial_s (v |\NV|^2).
$$
Hence we obtain
$$
    \iota^* \partial_s^{n-1} (v \Delta (s) \SC^{-1}) 
   = \iota^* \partial_s^n (v |\NV|^2 \SC^{-1}) - \iota^* \partial_s^{n-1} (v |\NV|^2 \partial_s(\SC^{-1})).
$$
But $\SC = |\NV|^2 + 2 s \rho$ implies $|\NV|^2 = \SC -2s \rho$. Hence 
$$
    \iota^* \partial_s^{n-1} (v \Delta (s) \SC^{-1})  = 
    \iota^* \partial_s^n (v) - 2 n  \iota^* \partial_s^{n-1} (v \rho \SC^{-1}) 
    - \iota^* \partial_s^{n-1} (v |\NV|^2 \partial_s(\SC^{-1})).
$$
Now another application of the definition of $\rho$ shows that the middle term on the right-hand side of the last equation
equals
$$
   \frac{2n}{n+1} \iota^* \partial_s^{n-1} ((v \Delta(s) + v s \J)\SC^{-1}).
$$
Simplification of the resulting equation yields 
\begin{align}\label{red-id}
    & -\frac{n-1}{n+1} \iota^* \partial_s^{n-1} (v \Delta (s) \SC^{-1}) \notag \\
    & = \iota^* \partial_s^n (v) + \frac{2n(n-1)}{n+1} \iota^* \partial_s^{n-2} (v \J) 
     - \iota^* \partial_s^{n-1} (v |\NV|^2 \partial_s(\SC^{-1})).
\end{align}
By combining this result with \eqref{help-1}, we have proved
$$
   I =  \iota^* \partial_s^n (v) - \iota^* \partial_s^{n-1} (v |\NV|^2 \partial_s(\SC^{-1})).
$$
This implies \eqref{Myst-a}. Now the relation \eqref{Myst-a} shows that 
\begin{align*}
    \int_M v_n dvol_h = \frac{1}{n!} \int_M \iota^* \partial_s^n(v) dvol_h 
    = \frac{1}{n!} \int_M  \iota^* \partial_s^{n-1} \left( v \E \right) dvol_h,
\end{align*} 
where
\begin{equation}\label{E}
   \E \st ((n-1) \rho - s\J) \SC^{-1} + |\NV|^2 \partial_s(\SC^{-1}).
\end{equation}
Hence                   
$$
   \int_M v_n dvol_h = \frac{(n-1)!}{n!} \Res_{\lambda=-n} \left(\int_X \sigma^\lambda \E dvol_g \right) 
   = \frac{1}{n} \Res_{\lambda=-n} \left(\int_X \sigma^\lambda \E dvol_g \right).
$$
Now we apply the functional equation \eqref{BS-shift}. We assume that $\Re(\lambda) \gg 0$ and let $\chi \in 
C_c^\infty(X)$ be a cut-off function as in the discussion following \eqref{BS-shift}. First, the relation
$$
    \sigma^\lambda = \SC^{-1} \frac{1}{(\lambda\!+\!1)(n\!+\!\lambda\!+\!1)} L(\lambda\!+\!1)(\sigma^{\lambda+1}) 
$$
implies 
\begin{align*}
    \int_X \chi \sigma^\lambda \E dvol_g 
    & = \frac{1}{(\lambda\!+\!1)(n\!+\!\lambda\!+\!1)} \int_X \chi \SC^{-1} L(\lambda\!+1\!)(\sigma^{\lambda+1}) \E dvol_g \\
    & = \frac{1}{(\lambda\!+\!1)(n\!+\!\lambda\!+\!1)} \int_X \sigma^{\lambda+1} 
    \tilde{L}(-n\!-\!\lambda\!-\!1) (\chi \E) dvol_g
\end{align*}
by partial integration using Proposition \ref{adjoint-gen}.  We continue to apply this argument and find  
$$
     \int_X  \chi  \sigma^\lambda \E dvol_g = \frac{1}{(\lambda\!+\!1)_{n-1} (n\!+\!\lambda\!+\!1)_{n-1}} 
     \int_X \sigma^{\lambda+n-1} \tilde{L}_{n-1}(-n\!-\!\lambda\!-\!1)(\chi \E) dvol_g.
$$
Hence the residue formula \eqref{res-formula} yields 
\begin{align*}
   \Res_{\lambda=-n} \left(\int_X \sigma^\lambda \E dvol_g \right) &= \frac{1}{(-n\!+\!1)\dots(-1) (n\!-\!1)!} 
   \int_M \iota^* \tilde{L}_{n-1}(-1)(\chi \E) dvol_h \\
   & = (-1)^{n-1} \frac{1}{(n\!-\!1)! (n\!-\!1)!}  \int_M \iota^* \frac{1}{|\NV|} \tilde{L}_{n-1}(-1)(\E) dvol_h.
\end{align*}
Thus, we find
$$
   \int_M v_n dvol_h = (-1)^{n-1} \frac{1}{n! (n-1)!} \int_M \iota^*  \frac{1}{|\NV|} \tilde{L}_{n-1}(-1)(\E) dvol_h.
$$
This proves the formula for the anomaly using $\dot{L}(0)(1) = (n-1) \rho - s \J$ (see \eqref{b}) and the fact that 
$\grad(\sigma)$ in adapted coordinates equals $|\NV|^2 \partial_s$ (see \eqref{intertwine}).
\end{proof}

\begin{rem}\label{interpretation-currents} The formulas for the coefficients $c_k$ in Theorem \ref{RVE} and 
Theorem \ref{RVE-g} are special cases of \cite[Proposition 3.5]{GW-reno} (for $\tau=1$). Similarly, the formulas 
for $\LA$ are special cases of \cite[Theorem 3.9]{GW-reno}. But note that in \cite{GW-reno} there are no 
local coefficients $v_k$ (defined in adapted coordinates). The present proofs differ from those in \cite{GW-reno}. 
Whereas the above proofs rest on the conjugation formula and a Bernstein-Sato-type argument, the latter 
rest on a certain distributional calculus (see also Remark \ref{dist-form}). Since the expansion in Theorem 
\ref{RVE-g} can be written in the form
$$
    \sum_{k=0}^{n-1} \frac{1}{n-k} \left\langle \sigma^*(\delta^{(k)}),dvol_g \right\rangle \varepsilon^{-n+k}
   - (-1)^{n-1}\left\langle \sigma^*(\delta^{(n)}),dvol_g \right\rangle  \log \varepsilon  + V + o(1)
$$
using the formula
\begin{equation}\label{BR}
     \left\langle \sigma^*(\delta^{(k)}),dvol_g \right\rangle = (-1)^k k! \int_M v_k dvol_h
\end{equation}
for the currents $\sigma^*(\delta^{(k)})$, this proves the equivalence of Theorem \ref{RVE-g} and
\cite[Theorem 3.1]{GW-reno} combined with \cite[Proposition 3.5]{GW-reno} (for $\tau =1$). 
We continue with a proof of the relation \eqref{BR}. First, we extend $g$ and $\sigma$ smoothly
to a sufficiently small neighborhood $\tilde{X}$ of $M$ as in the discussion after Corollary \ref{RVC-subcritical}.
Then $\sigma^*(\delta^{(k)})$ is defined as a continuous functional on smooth volume forms on $\tilde{X}$ with
compact support.\footnote{Since $\sigma^*(\delta)$ has compact support, we can pair it with any smooth volume form.}
Next, we note the commutation rule
\begin{equation}\label{B-1}
    \sigma^*(u') = \mathfrak{X}_\sigma (\sigma^*(u)), \; u \in C^\infty(\R)
\end{equation}
where
$$
    \mathfrak{X}(\tilde{X}) \ni \mathfrak{X}_\sigma \st \NV/|\NV|^2, \; \NV = \grad_g(\sigma).
$$
Hence
\begin{equation}\label{B-2}
   \sigma^*(u^{(k)}) = \mathfrak{X}^k_\sigma (\sigma^*(u))
\end{equation}
for $k \in \N$. Now any $\varphi \in C^\infty(\tilde{X})$ defines a current on $\tilde{X}$ by
$$
    \langle \varphi, \psi dvol_g \rangle = \int_{\tilde{X}} \varphi \psi dvol_g, \; \psi \in C_0^\infty(\tilde{X}).
$$
An extension of \eqref{B-2} to $u=\delta$ yields an analogous relation for currents; it suffices to approximate $\delta$ by
test functions in the weak topology. The relation \eqref{B-2} for $u=\delta$ implies
\begin{align*}
    \left \langle \sigma^*(\delta^{(k)}), \psi dvol_g \right \rangle
    & = \left\langle \mathfrak{X}_\sigma^k (\sigma^*(\delta)), \psi dvol_g \right \rangle \\
    & = \left \langle \sigma^*(\delta), (\mathfrak{X}_\sigma^*)^k (\psi) dvol_g \right \rangle,
\end{align*}
where the adjoint operator $\mathfrak{X}^*_\sigma$ is defined by
$$
   \int_{\tilde{X}} \mathfrak{X}_\sigma(\varphi) \psi dvol_g = \int_{\tilde{X}} \varphi \mathfrak{X}_\sigma^*(\psi) dvol_g,
   \; \psi \in C_c^\infty(\tilde{X}).
$$
Now we calculate (using adapted coordinates and partial integration)
\begin{align*}
    \int_{\tilde{X}} \mathfrak{X}^k_\sigma (\varphi) \psi dvol_g
    & = \int_{I \times M} \eta^*(\mathfrak{X}_\sigma^k(\varphi)) \eta^*(\psi) v ds dvol_h \\
    & = \int_{I \times M} \partial_s^k (\eta^*(\varphi)) \eta^*(\psi) v ds dvol_h \qquad \mbox{(by \eqref{intertwine})} \\
    & = (-1)^k \int_{I \times M} \eta^*(\varphi) v^{-1} \partial_s^k(\eta^*(\psi) v) v ds dvol_h \qquad \mbox{(by partial integration)}
    \\
    & = (-1)^k \int_X \varphi \eta_* (v^{-1} \partial_s^k (v \eta^*(\psi))) dvol_g.
\end{align*}
This shows that $\mathfrak{X}_\sigma^* (\psi) = \eta_* (v^{-1} \partial_s^k (v \eta^*(\psi)))$. Next, we observe that
$$
   \left \langle \sigma^*(\delta), \psi dvol_g \right\rangle = \int_M \iota^* \frac{\psi}{|\NV|} dvol_h.
$$
This formula is an extension of \cite[Theorem 6.1.5]{Ho1} (for a flat background).\footnote{With appropriate interpretation, 
this identity coincides with  \cite[(2.16)]{GW-reno}. The current reference to Hörmander replaces the reference for this result 
which has been used in \cite{GW-reno}. This reference actually only utilizes formal arguments for domains in flat space $\R^n$.} 
By 
$\iota^*(v)  = v_0 = \frac{1}{|\NV|}$, these results imply
\begin{align*}
    \left\langle \sigma^*(\delta^{(k)}),\psi dvol_g \right\rangle & = (-1)^k \int_M \iota^* \partial_s^k (v \eta^*(\psi)) dvol_h.
\end{align*}
In particular, for $\psi = 1$ we find \eqref{BR}.
\end{rem}

By \eqref{anomaly-g}, the anomaly $\LA$ is proportional to 
$$
   \int_M  \iota^*  \frac{1}{\sqrt{\SC}} \tilde{L}_{n-1}(-1) (\E) dvol_h 
$$
with $\E = \dot{L}(0)(1) \SC^{-1} + (d \SC^{-1},d\sigma)_g$ (see \eqref{E}).  In the singular Yamabe case, 
this integral reduces to 
$$
   \int_M \iota^* L_{n-1}(-1) \dot{L}(0)(1) dvol_h = \int_M \iota^* \dot{L}_n(0)(1) dvol_h = - \int_M \QC_n dvol_h.
$$
This motivates Gover and Waldron \cite{GW-reno} to regard the function
\begin{equation}\label{tilde-Qn}
    \tilde{\QC}_n(g,\sigma) \st \iota^* \frac{1}{\sqrt{\SC}} \tilde{L}_{n-1}(g,\sigma;-1) (\E) 
\end{equation}
as a {\em generalized} critical extrinsic $Q$-curvature. We show that this quantity shares basic properties with 
$\QC_n$. In this context, we consider conformal changes $(\hat{g},\hat{\sigma}) = (e^{2\varphi} g, e^\varphi \sigma)$
and we let $L = L(g,\sigma)$ and $\hat{L} = L(\hat{g},\hat{\sigma})$. Similarly $\hat{\cdot}$ also denotes other functionals 
of $(g,\sigma)$ for such conformal changes. Now differentiating the conformal transformation law 
$$
    e^{-(\lambda-1) \varphi} \circ \hat{L}(\lambda) = L(\lambda) \circ e^{-\lambda \varphi}
$$
at $\lambda = 0$, gives
$$
    e^\varphi \dot{\hat{L}}(0)(1) = \dot{L}(0)(1) - L(0)(\varphi)
$$
using $L(0)(1)=0$. Hence
$$
   e^\varphi \hat{\E} = \E - \SC^{-1} L(0)(\varphi) + \sigma (d\SC^{-1},d\varphi),
$$
and the conformal covariance of $\tilde{L}(\lambda)$ implies
\begin{align*}
    e^{n \iota^*(\varphi)} \hat{\tilde{\QC}}_n & = \iota^* \frac{1}{\sqrt{\SC}} e^{n \varphi} \hat{\tilde{L}}_{n-1}(-1) (\hat{\E}) 
    =  \iota^* \frac{1}{\sqrt{\SC}}\tilde{L}_{n-1}(-1) (e^\varphi \hat{\E}) \\
    & = \iota^* \frac{1}{\sqrt{\SC}} \tilde{L}_{n-1}(-1) (\E - \SC^{-1} L(0)(\varphi) + \sigma (d\SC^{-1},d\varphi)) \\
    & = \tilde{\QC}_n - \iota^* \frac{1}{\sqrt{\SC}} \tilde{L}_{n-1}(-1) (\SC^{-1} L(0)(\varphi) - \sigma (d\SC^{-1},d\varphi)).
\end{align*}
This proves the conformal transformation law
\begin{equation}\label{CTL-tilde}
   e^{n \iota^*(\varphi)} \hat{\tilde{\QC}}_n = \tilde{\QC}_n - \iota^* \tilde{\PO}_n(\varphi)
\end{equation}
with
$$
   \tilde{\PO}_n \st \frac{1}{\sqrt{\SC}} \tilde{L}_{n-1} (-1) ( \SC^{-1} L(0)(\cdot) - \sigma (d\SC^{-1},d\cdot)).
$$
Note that the latter operator again is conformally covariant: $e^{n\varphi} \tilde{\PO}_n(\hat{g},\hat{\sigma}) 
= \tilde{\PO}_n(g,\sigma)$. This immediately follows from the conformal covariance of $L$ and the invariance of $\SC$. 
The following result generalizes Corollary \ref{CI-TQ} and clarifies the content of \cite[Proposition 3.11]{GW-reno}. 

\begin{lem}\label{Q-tilde} For closed $M^n$, the integral
$$
    \int_M \tilde{\QC}_n (g,\sigma) dvol_h 
$$
is invariant under conformal changes of the pair $(g,\sigma)$. 
\end{lem}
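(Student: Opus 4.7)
\textbf{Proof plan for Lemma \ref{Q-tilde}.} The plan is to reduce the conformal invariance of $\int_M \tilde{\QC}_n(g,\sigma)\,dvol_h$ to the conformal invariance of the singular Yamabe energy $\LA(g,\sigma) \st \int_M v_n(g,\sigma)\,dvol_h$, and then to establish the latter by comparing the $\log \varepsilon$ terms in the asymptotic expansions of $\Vol_{\sigma^{-2}g}(\{\sigma > \varepsilon\})$ and $\Vol_{\hat\sigma^{-2}\hat g}(\{\hat\sigma > \varepsilon\})$ and observing that they agree.

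First I would observe that on $M$ we have $|\NV|^2 = \SC - 2\sigma\rho = \SC$, so $\iota^*(1/|\NV|) = \iota^*(1/\sqrt{\SC})$. Consequently, the right-hand side of formula \eqref{anomaly-g} in Theorem \ref{RVE-g} can be rewritten as
\begin{equation*}
   \LA(g,\sigma) = \frac{(-1)^{n-1}}{(n-1)!\,n!} \int_M \iota^* \tfrac{1}{\sqrt{\SC}} \tilde{L}_{n-1}(-1)(\E)\,dvol_h
   = \frac{(-1)^{n-1}}{(n-1)!\,n!} \int_M \tilde{\QC}_n(g,\sigma)\,dvol_h,
\end{equation*}
and the same identity holds with $(g,\sigma)$ replaced by $(\hat g,\hat\sigma)$. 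Thus the asserted invariance of $\int_M \tilde{\QC}_n\,dvol_h$ is equivalent to $\LA(\hat g,\hat\sigma) = \LA(g,\sigma)$.

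Next, since $\hat\sigma^{-2}\hat g = \sigma^{-2}g$, both volume expansions in Theorem \ref{RVE-g} refer to the \emph{same} singular Riemannian measure $dvol_{\sigma^{-2}g}$; only the cutoff regions differ. Using $\hat\sigma = e^\varphi \sigma$, the set $\{\hat\sigma > \varepsilon\}$ equals $\{\sigma > \varepsilon e^{-\varphi}\}$. By Theorem \ref{RVE-g}, the coefficient of $\log\varepsilon$ in $\Vol_{\sigma^{-2}g}(\{\sigma > \varepsilon\})$ is $-\LA(g,\sigma)$ and the coefficient in $\Vol_{\sigma^{-2}g}(\{\sigma > \varepsilon e^{-\varphi}\})$ is $-\LA(\hat g,\hat\sigma)$. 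Hence the identity $\LA(\hat g,\hat\sigma) = \LA(g,\sigma)$ follows once I show that the difference
\begin{equation*}
   \Delta(\varepsilon) \st \Vol_{\sigma^{-2}g}(\{\sigma > \varepsilon e^{-\varphi}\}) - \Vol_{\sigma^{-2}g}(\{\sigma > \varepsilon\})
\end{equation*}
admits an asymptotic expansion in powers of $\varepsilon$ with \emph{no} $\log\varepsilon$ contribution.

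To verify this last point, I would work in adapted coordinates for $\sigma$, in which $dvol_{\sigma^{-2}g} = s^{-n-1}v(s,x)\,ds\,dvol_h$, and write $\Delta(\varepsilon)$ as the (signed) fiberwise integral of $s^{-n-1} v(s,x)$ over the region $\{\varepsilon e^{-\varphi(x)} < s < \varepsilon\}$ (or its reflection if $\varphi(x) < 0$). Expanding $v(s,x) = \sum_{k\ge 0} v_k(x) s^k$, term-by-term integration gives
\begin{equation*}
   \int_{\varepsilon e^{-\varphi(x)}}^{\varepsilon} s^{k-n-1}\,ds
   = \begin{cases} \dfrac{\varepsilon^{k-n}}{k-n}\bigl(1 - e^{(n-k)\varphi(x)}\bigr), & k \ne n,\\[4pt] \varphi(x), & k=n,\end{cases}
\end{equation*}
so integrating over $M$ yields a polynomial expansion of $\Delta(\varepsilon)$ in powers $\varepsilon^{k-n}$ plus a constant term, with no $\log\varepsilon$ contribution. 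Identifying the logarithmic coefficients on both sides then gives $\hat\LA = \LA$, completing the proof. The main technical step to execute carefully is the term-by-term analysis of $\Delta(\varepsilon)$ with uniform-in-$x$ control on the remainder of the expansion of $v(s,x)$, so that the extraction of the $\log\varepsilon$ coefficient is rigorously justified; this is however routine given the smoothness of $v$ up to the boundary.
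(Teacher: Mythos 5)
Your proposal is correct and coincides with the paper's first proof of this lemma: it reduces the claim to the conformal invariance of the anomaly $\LA$ via the identity $\iota^*(1/|\NV|)=\iota^*(1/\sqrt{\SC})$, and then establishes $\LA(\hat g,\hat\sigma)=\LA(g,\sigma)$ by checking that the difference of truncated volumes $\Vol_{\sigma^{-2}g}(\{\sigma>\varepsilon e^{-\varphi}\})-\Vol_{\sigma^{-2}g}(\{\sigma>\varepsilon\})$ carries no $\log\varepsilon$ term, exactly as in the argument borrowed from Graham. The paper also records a second, independent proof via the conformal transformation law \eqref{CTL-tilde} and the distributional identity \eqref{extension}, but your route matches the first one.
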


\begin{proof} We give two proofs. The first proof uses an argument of \cite{Graham-Yamabe}. Since the integral 
of $\tilde{\QC}_n$ is proportional to the anomaly $\LA$ in Theorem \ref{RVE-g}, it suffices to prove that $\LA(\hat{g},\hat{\sigma}) = \LA(g,\sigma)$. 
We recall that $\hat{\sigma}^{-2} \hat{g} = \sigma^{-2} g$ and prove that the expansion of the difference
$$
    \int_{\hat{\sigma} > \varepsilon} dvol_{\sigma^{-2} g} - \int_{\sigma > \varepsilon} dvol_{\sigma^{-2} g}
$$
does not contain a $\log \varepsilon$ term. Note that $\{\hat{\sigma} > \varepsilon \} 
= \{\sigma > \varepsilon e^{-\varphi} \}$. Now, using adapted coordinates, we find that the above difference equals
\begin{align*}
     & \int_{\varepsilon}^{\varepsilon e^{-\varphi}} s^{-n-1} \left( \int_M v(s) dvol_h \right) ds \\
     & = \sum_{k=0}^{n-1} \varepsilon^{-n+k} \int_M \frac{v_k}{n-k} \left(1-e^{(n-k)\varphi}\right) dvol_h 
     - \int_M \varphi v_n dvol_h + o(1).
\end{align*}
This expansion does not contain a $\log \varepsilon$ term. 

The second proof rests on the conformal transformation law \eqref{CTL-tilde}. It shows that it suffices to prove 
that $\int_M \iota^* \tilde{\PO}_n(\varphi) dvol_h = 0$ for all $\varphi \in C^\infty(X)$. Now the 
generalization\footnote{With appropriate interpretations as in Remark \ref{interpretation-currents}, the 
relation \eqref{extension} follows from \cite[Proposition 3.3]{GW-reno}.}
\begin{equation}\label{extension}
    \int_M \iota^* \frac{1}{|\NV|} \tilde{L}_{n-1}(-1)(\psi) dvol_h \sim \int_M \iota^* \partial_s^{n-1}(v \psi) dvol_h
\end{equation}
of 
\begin{align*}
  \langle \sigma^*(\delta), L_{n-1}(-1) (\psi) dvol_g \rangle \sim \int_M \iota^* \partial_s^{n-1} (v \psi) dvol_h
\end{align*}
(see \eqref{int-L}) shows that the assertion is equivalent to
\begin{align}\label{van2}
   \int_M \iota^* \partial_s^{n-1} \left( v \left[\SC^{-1} L(0)(\varphi) - s (d\SC^{-1},d\varphi)_g \right]  \right)  dvol_h= 0.
\end{align}
We recall that $L(0)(\varphi) = (n-1) a \partial_s (\varphi) - s \Delta_g(\varphi)$ with $a = |\NV|^2$ and calculate 
\begin{align*}
    & \iota^* \partial_s^{n-1} \left( v f L(0)(\varphi) \right) \\ 
    & = (n-1)  \iota^* \partial_s^{n-1}  (v f a \varphi') - (n-1)  \iota^* \partial_s^{n-2} (v f \Delta_g (\varphi)) \\
    & = (n-1) \iota^* \partial_s^{n-2} ( v' f a \varphi' + v f' a \varphi' + v f a' \varphi' + v f a \varphi'') \\
    & - (n-1) \iota^* \partial_s^{n-2}\left(v f \left (a \varphi'' + a \frac{1}{2} \tr (h_s^{-1} h_s') \varphi'
    + \frac{1}{2} a' \varphi' - \frac{1}{2} (d \log a,d\varphi)_{h_s} + \Delta_{h_s}(\varphi) \right)\right)
\end{align*}
using \eqref{Laplace-adapted}. Simplification of that result gives
$$
    (n-1) \iota^* \partial_s^{n-2} \left( v a f' \varphi' + v f \frac{1}{2} (d \log a, d \varphi)_{h_s} 
    - v f \Delta_{h_s} (\varphi)\right).
$$
Here we used that $v = a^{-\frac{1}{2}} dvol_{h_s}/dvol_h =  a^{-\frac{1}{2}} \mathring{v}$ implies
$$
    \frac{\mathring{v}'}{\mathring{v}} = \frac{1}{2} \tr (h_s^{-1} h_s')  = \frac{v'}{v} + \frac{1}{2} \frac{a'}{a}
$$
(see \eqref{vol-g}). Thus, we obtain
\begin{align*}
    & \int_M \iota^* \partial_s^{n-1} \left( v f  L(0)(\varphi) \right) dvol_h \\
    & = (n-1)  \iota^* \partial_s^{n-2} \left( \int_M  v \left(a f' \varphi' 
    + f \frac{1}{2} (d \log a, d \varphi)_{h_s} - f \Delta_{h_s}(\varphi)\right) dvol_h \right).
\end{align*}
Now partial integration on $M$ yields 
\begin{align*}
    \int_M v f \Delta_{h_s}(\varphi) dvol_h  &  = \int_M  a^{-\frac{1}{2}} f \Delta_{h_s}(\varphi) dvol_{h_s} 
    = - \int_M (d (a^{-\frac{1}{2}} f, d\varphi)_{h_s} dvol_{h_s} \\
    & = - \int_M v a^{\frac{1}{2}}  (d (a^{-\frac{1}{2}} f), d\varphi)_{h_s} dvol_h \\
   & = - \int_M v (d f,d\varphi)_{h_s} dvol_h + \frac{1}{2} \int_M v f (d \log a,d\varphi)_{h_s} dvol_h
\end{align*}
using $a^{\frac{1}{2}}  (d (a^{-\frac{1}{2}} f), d\varphi)_{h_s} 
= - \frac{1}{2} f (d \log a, d\varphi)_{h_s} + (d f,d\varphi)_{h_s}$. Differentiating 
this relation by $s$ implies
\begin{align*}
     \int_M \iota^* \partial_s^{n-1} \left( v f  L(0)(\varphi) \right) dvol_h 
     & = (n-1) \iota^* \partial_s^{n-2} \left( \int_M v a f' \varphi' + v (df,d\varphi)_{h_s} dvol_h \right) \\ 
     & = (n-1) \iota^* \partial_s^{n-2} \left( \int_M v (df,d\varphi)_g dvol_h \right).
\end{align*}
For $f = \SC^{-1}$, this identity implies the vanishing of \eqref{van2}. 
\end{proof}

Note that similar arguments show that
$$
    L(-n) ( f \sigma^*(\delta^{(n-1)})) = L(0)(f) \sigma^*(\delta^{(n-1)}) 
$$
for $f \in C^\infty(X)$ (see \cite[Proposition 3.4]{GW-reno}). We omit the details. 

\begin{example} A calculation shows that in adapted coordinates
\begin{equation}\label{TQ}
   \tilde{\QC}_2(g,\sigma) = \iota^* (|\NV|^{-5} ( \rho^2 - \rho \SC' - 2 (\SC')^2) + |\NV|^{-3} (\J - \rho' + \SC'')).
\end{equation}
In particular, if $\iota^* \SC'$ and $\iota^* \SC''$ vanish (singular Yamabe case), then $\QC_2 
= \iota^* (\rho^2 - \rho' + \J)$. By $\iota^* (\rho) = -H$ and $\iota^* (\rho') = \Rho_{00} + |\lo|^2$ 
(Lemma \ref{rho-01}), we get $\QC_2 = - \Rho_{00} - |\lo|^2 + \iota^*(\J)$. Now the Gauss equation 
$\iota^* \J = \J^h + \Rho_{00} + \frac{1}{2} |\lo|^2 - H^2$ implies
$$
   \QC_2 = \J^h - \frac{1}{2} |\lo|^2.
$$
This is the known formula for the critical extrinsic $Q$-curvature in dimension $n=2$ (Example \ref{Q2}).

If $\sigma= d_M$ is the distance function, then $|\NV|=1$ and geodesic normal coordinates are adapted coordinates. 
Now $\SC = 1 + 2 s \rho$ implies $\iota^*(\SC') = 2  \iota^*(\rho)$ and $\iota^*(\SC'') = 4 \iota^*(\rho')$. Thus, we get
$$
   \tilde{\QC}_2 = \iota^* (\J + 3 \rho' - 9 \rho^2).
$$
But $\iota^*(\rho) = - \frac{2}{3} H$ and $3 \iota^*(\rho') = - \iota^* \partial_s (\Delta_g (s) + s \J) 
= - \iota^* (2 H' + \J) = \iota^* (\Ric_{00} - \J) + |L|^2 = \Rho_{00} + |L|^2$  (using $2 H' = -|L|^2 - \Ric_{00}$ 
- see \eqref{var-form}) show that $\tilde{\QC}_2 = \Ric_{00} + |\lo|^2 - 2 H^2$. This confirms the formula 
$-2 \LA = \int_M u_2 dvol_h$ (with $u_2$ as in \eqref{u2}).
\end{example}

Finally, we use similar arguments as above to establish analogous formulas for the integrated renormalized
volume coefficients $w_k$, which are defined in terms of geodesic normal coordinates. Let $d_M$ be the 
distance function of $M$.

\begin{theorem}\label{w-holo} 
Let $M$ be closed. Assume that $\sigma$ satisfies $\SCY$. Then
$$
   \int_M w_k dvol_h  = (-1)^k \frac{(n\!-\!1\!-\!k)!}{(n\!-\!1)! k!}
   \int_M \iota^* L_k(g,\sigma;-n\!+\!k) \left( \left(\frac{d_M}{\sigma}\right)^{n-k}\right) dvol_h
$$
for $0 \le k \le n-1$ and
$$
    \int_M w_n dvol_h = \frac{(-1)^{n-1} }{(n\!-\!1)! n!} \int_M  \iota^* \dot{L}_n(g,\sigma;0)(1) dvol_h.
$$
\end{theorem}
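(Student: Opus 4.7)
The plan is to deduce the formula from the two residue computations already used in Theorem \ref{residue-product} and Theorem \ref{RVE}, by running them with the distance function $r=d_M$ in place of $s$. The critical case $k=n$ will follow almost immediately from the identity $\int_M w_n dvol_h = \int_M v_n dvol_h$ recorded in \eqref{w=v}, combined with the anomaly formula \eqref{anomaly-SY}. The subcritical cases $0\le k\le n-1$ will follow from a Bernstein-Sato residue computation applied to a carefully chosen test function.

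For the subcritical part, I would first recognize $\int_M w_k dvol_h$ as a residue. In geodesic normal coordinates one has $dvol_{\sigma^{-2}g} = r^{-n-1} w(r,x) dr dvol_h$ near $M$, so Gelfand's formula \eqref{Gelfand} gives
$$
   \int_M w_k dvol_h = \Res_{\lambda=n-k}\left(\int_X d_M^{\lambda} dvol_{\sigma^{-2}g}\right).
$$
Since $\sigma$ and $r=d_M$ both vanish to first order along $M$ with the same differential there, the quotient $f\st r/\sigma$ extends smoothly and positively to $X$ with $f|_M=1$. Writing $d_M^{\lambda}=f^{\lambda}\sigma^{\lambda-n-1}\sigma^{n+1}\cdot\sigma^{-n-1}\cdot\sigma^{n+1}$, or more simply $d_M^{\lambda}dvol_{\sigma^{-2}g}=f^{\lambda}\sigma^{\lambda-n-1}dvol_g$, and decomposing $f^{\lambda}=f^{n-k}+(\lambda-n+k)\cdot g(\lambda,\cdot)$ with $g$ holomorphic in $\lambda$, the second summand contributes no residue at $\lambda=n-k$ because the explicit zero cancels the at-most-simple pole of $\int_X\sigma^{\lambda-n-1}g(\lambda,\cdot)dvol_g$. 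Setting $\mu=\lambda-n-1$ we obtain
$$
   \int_M w_k dvol_h = \Res_{\mu=-k-1}\left(\int_X \sigma^{\mu}\,(d_M/\sigma)^{n-k} dvol_g\right).
$$

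Next I would apply to $\psi:=(d_M/\sigma)^{n-k}$ the Bernstein-Sato-type residue formula \eqref{residue-2} with $u=1$, $f=1$, and $\mu=0$. This is exactly the specialization obtained in the proof of Theorem \ref{residue-product}: iterated use of the functional equation $-L(\lambda+1)(\sigma^{\lambda+1})=-(\lambda+1)(n+\lambda+1)\sigma^{\lambda}+O(\sigma^{\lambda+n})$ (which follows from Corollary \ref{main-c-c} under $\SCY$), combined with Proposition \ref{adjoint-gen}, gives
$$
   \Res_{\mu=-k-1}\left(\int_X\sigma^{\mu}\psi dvol_g\right)=\frac{1}{k!(-n+1)_k}\int_M\iota^*L_k(g,\sigma;-n+k)(\psi)dvol_h,
$$
valid provided $k\le n-1$ so that the $O(\sigma^{\lambda+n})$ remainder contributes no pole at $\mu=-k-1$. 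The identity $(-n+1)_k=(-1)^k(n-1)!/(n-1-k)!$ reshuffles the prefactor into the claimed constant $(-1)^k(n-1-k)!/((n-1)!k!)$, completing the subcritical formula. For the critical case $k=n$, I would combine $\int_M w_n dvol_h = \int_M v_n dvol_h$ from \eqref{w=v} with \eqref{anomaly-SY}, giving the asserted formula directly in terms of $\dot L_n(0)(1)$.

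The most delicate point I anticipate is the justification of the residue replacement $f^{\lambda}\leadsto f^{n-k}$: one must verify that the error term $(\lambda-n+k)g(\lambda,\cdot)$, after being multiplied by $\sigma^{\lambda-n-1}$ and integrated, indeed has no residue at $\lambda=n-k$. Although morally this is just a zero cancelling a simple pole, one has to ensure that $\int_X\sigma^{\lambda-n-1}g(\lambda,\cdot)dvol_g$ has at worst a simple pole there, which ultimately reduces to the meromorphic structure established in the proof of Theorem \ref{RVE-g}. A safer alternative, should the above fail to be clean, is to compute the residue of $\int_X\sigma^{\mu}(d_M/\sigma)^{n-k}dvol_g$ in geodesic normal coordinates directly using the expansion $\kappa^*(\sigma)=r+O(r^2)$, read off that it equals $\int_M w_k dvol_h$, and then proceed with the Bernstein-Sato argument as above.
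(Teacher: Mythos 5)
Your argument is correct and coincides with the paper's proof in all essential respects: both compute $\Res_{\mu=-k-1}\bigl(\int_X\sigma^{\mu}(d_M/\sigma)^{n-k}\,dvol_g\bigr)$ in two ways, once via the Bernstein--Sato functional equation (giving the $L_k$ expression) and once via geodesic normal coordinates (giving $\int_M w_k\,dvol_h$), while the critical case $k=n$ is handled exactly as you say via \eqref{w=v} and Theorem \ref{RVE}. Your ``safer alternative'' in the last paragraph is precisely what the paper does, and your $f^{\lambda}\leadsto f^{n-k}$ reduction is simply a repackaging of the geodesic-normal-coordinate residue computation carried out there.
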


\begin{proof} We compare two different calculations of the residues of the family
$$
   \lambda \mapsto \int_X \sigma^\lambda \psi dvol_g, \; \Re(\lambda) > -1
$$
for appropriate test functions $\psi$. On the one hand, arguments as above using the functional equation 
\eqref{BS-shift} prove the relation 
\begin{equation}\label{res-cal-1}
   \Res_{\lambda=-k-1} \left( \int_X \sigma^\lambda \psi dvol_g \right)
   = \frac{1}{k!(-n\!+\!1)_k} \int_M \iota^* L_k(g,\sigma;-n\!+\!k)(\psi) dvol_h.
\end{equation}
On the other hand, we use geodesic normal coordinates and asymptotic expansions of the resulting
integrand. By $\kappa^*(d_M)=r$ and
$$
    \kappa^* (dvol_g) = \left( \frac{\kappa^*(\sigma)}{r}\right)^{n+1} w(r) dr dvol_h
$$
(see \eqref{def-w}), we obtain
\begin{align*}
  \int_X \sigma^\lambda \psi dvol_g & = \int_X d_M^\lambda \left(\frac{\sigma}{d_M}\right)^\lambda \psi dvol_g \\
  & = \int_{[0,\varepsilon)} \int_M r^\lambda \left( \frac{\kappa^*(\sigma)}{r}\right)^{\lambda+n+1} \kappa^*(\psi)
  w(r) dr dvol_h
\end{align*}
for test functions $\psi$ with appropriate support. The classical formula \eqref{Gelfand}
implies that
$$
  \Res_{\lambda=-k-1} \left( \int_X \sigma^\lambda \psi dvol_g \right) = \frac{1}{k!} \int_M
  \left( \left(\frac{\kappa^*(\sigma)}{r}\right)^{-k+n} \kappa^*(\psi) w(r)\right)^{(k)}(0) dvol_h.
$$
For the test function $\psi_k = (d_M/\sigma)^{n-k} \chi$ (with an appropriate
cut-off function $\chi$), the latter result yields
\begin{equation}\label{res-cal-2}
    \Res_{\lambda=-k-1} \left( \int_X \sigma^\lambda \psi_k dvol_g \right) = \int_M w_k dvol_h.
\end{equation}
Now comparing \eqref{res-cal-1} and \eqref{res-cal-2} proves the first assertion. The assertion in the critical case 
follows from $\int_M w_n dvol_h = \int_M v_n dvol_h$ (see \eqref{w=v}) and Theorem \ref{RVE}.
\end{proof}

The first part of Theorem \ref{w-holo} is a special case of \cite[Proposition 3.5]{GW-reno}.

\section{Holographic formul{\ae} for extrinsic $Q$-curvatures}\label{Q-hol}

We work in adapted coordinates. In particular, $\J=\J^g$ is identified with $\eta^*(\J)$.

The following result is a local version of Theorem \ref{LQ}.

\begin{theorem}\label{Q-holo-crit}
Let $n$ be even and assume that $\sigma$ satisfies $\SCY$. Then
\begin{equation}\label{Q-holo-form}
   \QC_n (g) = (-1)^n (n\!-\!1)!^2 \sum_{k=0}^{n-1} \frac{1}{n\!-\!1\!-\!2k}
   \T_k^*(g;0) \left( (n\!-\!1)(n\!-\!k) v_{n-k} + 2k (v \J)_{n-k-2} \right).
\end{equation}
For $k=n-1$, the second term on the right-hand side is defined as $0$.
\end{theorem}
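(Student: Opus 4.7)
The plan is to start from the formula for residue families given by Theorem \ref{D-res-ex}. Specialized to $N=n$ and applied to the constant function $1$, only the $j=0$ term of the outer sum contributes (since $\iota^*\partial_s^j(1)=0$ for $j\ge 1$), yielding
$$\D_n^{res}(g,\sigma;\lambda)(1) = n!\,(2\lambda-n+1)_n \sum_{m=0}^{n} \T_m^*(g,\sigma;\lambda)(v_{n-m}).$$
Since $\QC_n = -\dot{\D}_n^{res}(g,\sigma;0)(1)$ by definition, the task is to differentiate this expression at $\lambda=0$ and then reorganize the result into the asserted form.

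The first step is a careful analysis of the zero/pole structure at $\lambda=0$. The prefactor factors as $(2\lambda-n+1)_n = 2\lambda\cdot(2\lambda-1)\cdots(2\lambda-n+1)$ and has a simple zero at $\lambda=0$ with derivative $2(-1)^{n-1}(n-1)!$. Among the $\T_m^*(\lambda)$ with $m\le n$, only $\T_n^*$ has a pole at $\lambda=0$; however, applied to $v_0=1$ the pole has vanishing residue, because Theorem \ref{power-spec} gives $\PO_n = (-1)^{n-1}2(n-1)!n!\,\Res_0(\T_n^*(\lambda))$ while $\PO_n(1)=0$ by \eqref{crit-van}. Consequently the product rule produces the raw formula
$$\QC_n = 2(-1)^n(n-1)!\,n!\sum_{m=0}^n \T_m^*(0)(v_{n-m}),$$
where the $m=n$ summand is the regular value at $\lambda=0$ of the Laurent series of $\T_n^*(\lambda)(1)$.

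The remaining and main step is to transport this raw expression into the asserted form, whose distinctive features are that the sum runs only over $k=0,\ldots,n-1$ (the $m=n$ summand having been absorbed), that the rational weights $1/(n-1-2k)$ appear, and that the inner combination is $(n-1)(n-k)v_{n-k}+2k(v\J)_{n-k-2}$. For this I would invoke the recursion satisfied by $\T_m(\lambda)$ that comes from matching powers of $s$ in the eigenfunction equation $-\Delta_{s^{-2}\eta^*(g)}u=\lambda(n-\lambda)u$, using the explicit form \eqref{L-op} of the Laplacian. Comparing coefficients of $s^{\lambda+m}$ yields an operator identity of the schematic shape
$$m(2\lambda+m-n)\,\T_m(\lambda) = \Delta_h\,\T_{m-2}(\lambda) + (\text{corrections built from Taylor coefficients of } a, h_s \text{ and } \J),$$
and these $\J$-corrections are linked, through the Taylor expansion of $v$ controlled by Lemma \ref{rec}, to the coefficients $(v\J)_{n-k-2}$. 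Taking adjoints, evaluating at $\lambda=0$, and substituting iteratively converts each $\T_m^*(0)(v_{n-m})$ for $m\ge 1$ into a telescoping chain whose coefficients are products of the inverse factors $1/(m(m-n))$; after the relabeling $m\leftrightarrow n-k$ the denominator $m-n$ combines with the parity of the chain to yield the $n-1-2k$ appearing in the target, while the numerical prefactor $(n-1)!/(2\cdot n!)$ passes between the two normalizations.

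The principal obstacle lies in controlling the critical term $\T_n^*(0)(1)$: at $m=n$, $\lambda=0$ the recursion is degenerate (its leading coefficient vanishes), so the regular part of $\T_n^*(\lambda)(1)$ must be extracted from the next-order expansion of the recursion, and one must verify that the geometric corrections reassemble precisely into $(n-1)(n-k)v_{n-k}+2k(v\J)_{n-k-2}$ for every $k$. In the Poincar\'e-Einstein specialization of \cite{FJO}, the absence of $\rho$ and the coincidence of adapted coordinates with geodesic normal coordinates simplifies this bookkeeping considerably; the hypothesis $\SCY$ is just strong enough to ensure that, up to the required order in $s$, the same argument extends with only the expected $\J$-type correction terms appearing in the final identity.
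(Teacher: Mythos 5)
Your derivation of the ``raw formula''
$$\QC_n = 2(-1)^n (n\!-\!1)!\,n! \left[\sum_{m=0}^{n-1}\T_m^*(0)(v_{n-m}) + \left[\T_n^*(\lambda)(1)\right]_{\mathrm{reg},\lambda=0}\right]$$
from the $N=n$ representation formula, the simple zero of $(2\lambda-n+1)_n$ at $\lambda=0$, and the vanishing of $\Res_0(\T_n^*(\lambda))(1)$ (via Theorem \ref{power-spec} and $\PO_n(1)=0$), is correct. However, this raw expression is \emph{not} the asserted holographic formula --- the arguments of the $\T_k^*(0)$ are different ($v_{n-m}$ versus the combination $(n\!-\!1)(n\!-\!k)v_{n-k}+2k(v\J)_{n-k-2}$), the rational weights $1/(n\!-\!1\!-\!2k)$ are absent, and the critical term $\T_n^*(0)(1)$ still sits outside the sum. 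So the entire content of the theorem is in the conversion step, which your proposal only sketches. That sketch is not convincing as written: the recursion for $\T_m(\lambda)$ links $\T_m$ to $\T_{m-2}$ but does not act on the $v$-coefficients they are applied to, and your claimed denominators $m(m-n)$ do not combine to $n\!-\!1\!-\!2k$ under the relabeling $m=n-k$ (which gives $-k(n-k)$, not $n\!-\!1\!-\!2k$). You have correctly located the difficulty (the regular part of $\T_n^*(\lambda)(1)$) but not resolved it.

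The paper sidesteps all of this by using the factorization $\D_n^{res}(\lambda)=\D_{n-1}^{res}(\lambda-1)\,L(\lambda)$ from Corollary \ref{factor}. Since $L(0)(1)=0$, differentiating at $\lambda=0$ gives the clean expression $\dot{\D}_n^{res}(0)(1)=\D_{n-1}^{res}(-1)\big(\dot{L}(0)(1)\big)$ with $\dot{L}(0)(1)=(n-1)\rho-s\J$, so no Laurent-expansion or product-rule cancellations are needed. Applying the $(N=n-1)$ representation formula then reduces everything to the Taylor coefficients of $v\,\dot{L}(0)(1)$, and the key local identity \eqref{reduction},
$$\iota^*\partial_s^k\big(v\,\dot{L}(0)(1)\big)=-\tfrac{n-1}{n-1-2k}\,\iota^*\partial_s^{k+1}(v)-\tfrac{2k(n-1-k)}{n-1-2k}\,\iota^*\partial_s^{k-1}(v\J),$$
proved from the definition of $\rho$ and the relation \eqref{bL2}, produces exactly the combination and the weights $1/(n\!-\!1\!-\!2k)$ in the target. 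The factor $\dot{L}(0)(1)$ and the $s$-Taylor expansion of $v$ are where the $\rho$-recursion enters; this is the mechanism you are reaching for, but it enters through a single local identity rather than through the operator recursion for the $\T_m$. If you want to pursue your route, you would have to prove that your raw formula equals the paper's formula, which is precisely the content that is missing; the factorization route is both shorter and structurally clearer because the crucial fact $L(0)(1)=0$ kills the prefactor issue at once.
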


The assumption that $n$ is even guarantees that the fractions on the right-hand side
are well-defined. For odd $n$, we refer to Conjecture \ref{van}.

\begin{remark}
Assume that $g_+ = r^{-2}(dr^2 + h_r)$ is an even Poincar\'e-Einstein metric in normal
form relative to $h$ and let $g = r^2 g_+ = dr^2 + h_r$. Then
$$
   \J^g= - \frac{1}{2r} \tr (h_r^{-1} h'_r) = - \frac{1}{r} \frac{v'}{v}(r)
$$
by \cite[(6.11.8)]{J1} or Lemma \ref{rec}, and the second term on the right-hand side
of \eqref{Q-holo-form} yields $-2k (n-k) v_{n-k}$. Therefore, the sum simplifies to
$$
   \sum_{k=0}^{n-1} (n-k) \T_k^*(0) (v_{n-k})
$$
and the assertion reduces to the main result of \cite{GJ} by noting that it only
contains contributions for even $k$.
\end{remark}

Note that \eqref{Q-holo-form} can be written in the form
$$
  \QC_n = (-1)^{n} n! (n\!-\!1)! v_n + \sum_{k=1}^{n-1} \T_k^*(0)(\cdots).
$$
Since $\T_k(0)(1) = 0$ for $k \ge 1$, integration of this identity (for closed $M$)
reproduces Theorem \ref{LQ}, and the holographic formula provides a formula for the
lower-order terms.

There is a generalization of Theorem \ref{Q-holo-crit} to subcritical $Q$-curvatures.

\begin{theorem} \label{Q-gen-holo}
Let $n$ be even and $\N \ni N < n$. Assume that $\sigma$ satisfies $\SCY$. Then
\begin{align}\label{Q-holo-form-gen}
   \QC_N(g) & = (-1)^{N} (N\!-\!1)!^2 \sum_{k=0}^{N-1} \frac{1}{(2N\!-\!n\!-\!1\!-\!2k)} \notag \\
   & \times \T_k^*\left(g;\frac{n-N}{2}\right) \left( (N\!-\!1)(N\!-\!k) v_{N-k} + (n\!-\!N\!+\!2k) (v \J)_{N-k-2} \right).
\end{align}
For $k=N-1$, the second term on the right-hand side is defined as $0$.
\end{theorem}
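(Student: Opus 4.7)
The plan is to adapt the argument for the critical case (Theorem \ref{Q-holo-crit}). The starting point is the identity
\begin{equation*}
   \tfrac{n-N}{2}\,\QC_N = \PO_N(g,\sigma)(1) = \D_N^{res}(g,\sigma;\tfrac{-n+N}{2})(1),
\end{equation*}
which combines the definition of $\QC_N$ with Theorem \ref{left-factor}. The formula of Theorem \ref{D-res-ex}, applied to the constant function $1$, collapses (since $\iota^*\partial_s^j$ annihilates $1$ for $j \ge 1$) to
\begin{equation*}
   \D_N^{res}(\lambda)(1) = N!\,(2\lambda+n-2N+1)_N \sum_{k=0}^{N} \T_k^*(\lambda+n-N)(v_{N-k}).
\end{equation*}
At $\lambda_0 := \tfrac{-n+N}{2}$, the Pochhammer prefactor has a simple zero (from the last factor $2\lambda+n-N$), while $\T_N^*(\mu)$ has a simple pole at $\mu_0 := \tfrac{n-N}{2}$, and each $\T_k^*$ with $k<N$ is regular at $\mu_0$. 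The finite limit reproduces the spectral formula $\PO_N(1) = (-1)^{N-1} 2(N\!-\!1)!\,N!\,\Res_{\mu_0}(\T_N^*)(1)$ from Theorem \ref{power-spec}.

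The core task is therefore to convert $\Res_{\mu_0}(\T_N^*)(1)$ into a sum over regular operators $\T_k^*(\mu_0)$ for $k<N$ acting on the local quantities $v_{N-k}$ and $(v\J)_{N-k-2}$. This is supplied by the recursion for the solution operators obtained by comparing the coefficients of $s^{\lambda+N}$ in the eigenvalue equation $-\Delta_{s^{-2}\eta^*(g)}u = \lambda(n-\lambda)u$, using the Laplacian formula \eqref{L-op}. The diagonal coefficient equals $N(2\lambda-n+N)$, whose vanishing at $\lambda = \mu_0$ accounts for the pole, and the recursion takes the schematic form
\begin{equation*}
   N(2\lambda-n+N)\,\T_N(\lambda) = -\sum_{k=0}^{N-1}\mathcal{A}_{N,k}(\lambda)\,\T_k(\lambda),
\end{equation*}
with explicit operators $\mathcal{A}_{N,k}(\lambda)$ built out of the Taylor coefficients of $a = |\NV|^2$, of $\tfrac{1}{2}\tr(h_s^{-1}h_s') = \mathring{v}'/\mathring{v}$, and of $\J$, plus the spatial contribution coming from $s^2\Delta_{h_s}$ for $k \le N-2$. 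Taking residues at $\mu_0$, then formal adjoints with respect to $dvol_h$, and evaluating on $1$ gives
\begin{equation*}
   \Res_{\mu_0}(\T_N^*)(1) = -\tfrac{1}{2N}\sum_{k=0}^{N-1}\T_k^*(\mu_0)\bigl(\mathcal{A}_{N,k}^*(\mu_0)(1)\bigr).
\end{equation*}
Substitution back into the expression for $\PO_N(1)$ then produces the numerical prefactor $(-1)^N(N\!-\!1)!^2$ of the theorem from the product $(-1)^{N-1}\cdot 2(N\!-\!1)!\,N!\cdot\bigl(-\tfrac{1}{2N}\bigr)\cdot\tfrac{2}{n-N}$.

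The final step is to identify $\mathcal{A}_{N,k}^*(\mu_0)(1)$, which is a local scalar quantity on $M$, with the expression $(N\!-\!1)(N\!-\!k)v_{N-k} + (n\!-\!N\!+\!2k)(v\J)_{N-k-2}$ divided by $(2N-n-1-2k)$. For this I would use Lemma \ref{rec}, the decomposition $v'/v = -\tfrac{1}{2}a'/a + \mathring{v}'/\mathring{v}$ from \eqref{v-deco}, and the $\SCY$-consequence $a = 1 - 2s\rho + O(s^{n+1})$ to systematically convert the Taylor coefficients of $a$, of $\mathring{v}'/\mathring{v}$, and of $v\J$ into the renormalized volume coefficients $v_j$ and $(v\J)_j$. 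The main obstacle is this combinatorial matching: three contributions (from $a$-corrections, from $\mathring{v}'/\mathring{v}$-corrections, and from the adjoint of $s^2\Delta_{h_s}$ relative to $dvol_h$, which injects the factor $\mathring{v}\Delta_{h_s}(\mathring{v}^{-1})$) must be disentangled and reassembled into the single compact pair on the right-hand side, and the denominator $2N-n-1-2k$ must emerge naturally from the linear-in-$\lambda$ factors hidden in $\mathcal{A}_{N,k}$ at $\lambda = \mu_0$. The hypothesis that $n$ is even intervenes only to ensure these denominators are nonzero for $k = 0, \dots, N-1$. As a consistency check the formula should degenerate to Theorem \ref{Q-holo-crit} under $N \to n$, since the critical case is already established.
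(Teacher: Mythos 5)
Your plan is a genuinely different route from the paper's proof, and while the framework is sound, it breaks down at the final step. The paper first factorizes $\D_N^{res}(\lambda) = \D_{N-1}^{res}(\lambda-1) L(\lambda)$ via Corollary \ref{factor}, then computes $L(\tfrac{-n+N}{2})(1) = \tfrac{n-N}{2}(-(N-1)\rho + s\J)$ explicitly, and finally applies the representation Theorem~\ref{D-res-ex} to $\D_{N-1}^{res}$ at the shifted argument $\tfrac{-n+N}{2}-1$. The point of the factorization is that at this shifted argument the Pochhammer prefactor of $\D_{N-1}^{res}$ is nonzero and every $\T_k^*$ with $k \le N-1$ is regular at $\mu_0 = \tfrac{n-N}{2}$, so there is no pole-zero cancellation to resolve; the key local identity \eqref{reduction-2} then converts the Taylor coefficients of $v((N-1)\rho - s\J)$ into the $v_j$ and $(v\J)_j$ appearing in the theorem.

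The gap in your approach lies in the claimed identification $\tfrac{2}{n-N}\mathcal{A}_{N,k}^*(\mu_0)(1) = \tfrac{1}{2N-n-1-2k}\bigl[(N-1)(N-k)v_{N-k} + (n-N+2k)(v\J)_{N-k-2}\bigr]$, which is false. For $k = N-1$, from the recursion one finds
\begin{equation*}
  \mathcal{A}_{N,N-1}(\lambda) = H(\lambda + N - 1)(2\lambda + 2N - n - 1),
  \qquad
  \mathcal{A}_{N,N-1}(\mu_0) = \tfrac{(N-1)(n+N-2)}{2}\, H,
\end{equation*}
so $\tfrac{2}{n-N}\mathcal{A}_{N,N-1}^*(\mu_0)(1) = \tfrac{(N-1)(n+N-2)}{n-N}H$, whereas the theorem requires $\tfrac{(N-1)}{1-n}v_1 = -(N-1)H$; these agree only for $n=1$. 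For $N=2$ I checked both $k$-terms: yours give $\rho_0' - \J_0$ (for $k=0$) and, after applying $\T_1^*(\mu_0)$, $-\tfrac{n}{2}H^2$ (for $k=1$), while the theorem yields $\rho_0' - \J_0 - (n-1)H^2$ and $+\tfrac{n-2}{2}H^2$. The totals agree (both produce $\QC_2 = \rho_0' - \J_0 - \tfrac{n}{2}H^2$), but the termwise decompositions do not — the $(n-1)H^2$ is shifted between the $k=0$ and $k=1$ slots. This is possible because the operators $\T_k^*(\mu_0)$ acting on $C^\infty(M)$ are not linearly independent as a ``basis'' (for instance $\T_1^*(\mu_0)$ is just multiplication by $-\mu_0 H$, so $\T_1^*(\mu_0)(1) = -\mu_0\,\T_0^*(H)$). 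Consequently your argument, even if carried out in full, would establish a formula of the same \emph{shape} as \eqref{Q-holo-form-gen} but with different operands under the $\T_k^*$'s; proving the theorem as stated by this route would require an additional, nontrivial regrouping step that the proposal does not address, and which the paper's factorization trick makes unnecessary.
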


Again, the assumption that $n$ is even guarantees that the fractions on the right-hand side are well-defined.
For odd $n$, we refer to Conjecture \ref{van}.

In the even Poincar\'e-Einstein case, the formula is non-trivial only for even $N$. In that case,
the solution operators in \eqref{Q-holo-form-gen} act on
$$
   (N\!-\!1)(N\!-\!k) v_{N-k} - (n\!-\!N\!+\!2k) (N\!-\!k) v_{N-k}
  = (2N\!-\!n\!-\!1\!-\!2k)(N\!-\!k) v_{N-k}
$$
and the sum simplifies to
$$
   \sum_{k=0}^{N-1} \T_k^*\left(g;\frac{n-N}{2}\right) (N-k) v_{N-k}.
$$
Here only terms with even $k$ contribute. Thus the formula reduces to the main result of \cite{J3}.

The following conjecture implies that for odd $n$ the respective terms with the singular
fractions in the sums \eqref{Q-holo-form} and \eqref{Q-holo-form-gen} do not contribute.

\begin{conj}\label{van} For odd $n\ge 3$, it holds
\begin{equation}\label{van-id}
   \tfrac{n+1}{2} v_{\frac{n+1}{2}} + (v \J)_{\frac{n-3}{2}} = 0.
\end{equation}
Moreover, the holographic formulas \eqref{Q-holo-form} and \eqref{Q-holo-form-gen}
are valid also for odd $n$.
\end{conj}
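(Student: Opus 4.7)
Set $m = (n+1)/2$ so $n = 2m-1$, and observe that \eqref{van-id} is equivalent to $m\, v_m + (v\J)_{m-2} = 0$. My strategy is two-fold: first, establish this identity directly from Lemma \ref{rec} by comparing Taylor coefficients in the variable $s$; second, argue that this identity is precisely the combinatorial input needed for the holographic formulas \eqref{Q-holo-form} and \eqref{Q-holo-form-gen} to make sense, and hold, for odd $n$.

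The proof of Lemma \ref{rec} under the assumption $\SCY$ yields
$$(1-2s\rho)\, v' = v\bigl[-(n-1)\rho + 2s\rho' - s\J\bigr] + O(s^n),$$
and since $m-1 \le n-1$ the remainder does not affect the coefficient of $s^{m-1}$. I expand $v = \sum v_j s^j$, $\rho = \sum \rho_j s^j$, $\J = \sum \J_j s^j$, read off the coefficient of $s^{m-1}$ on each side (with the standard reindexing $b+1 \mapsto b'$ in the sums coming from $\rho v'$ and $v\rho'$), and rearrange to
$$m\, v_m + (v\J)_{m-2} = 2 \!\! \sum_{\substack{a+b=m-1 \\ a\ge 1}} \!\! a\, v_a \rho_b + 2 \!\! \sum_{\substack{a+b=m-1 \\ b\ge 1}} \!\! b\, v_a \rho_b - (n-1) \!\! \sum_{a+b=m-1} \!\! v_a \rho_b.$$
Using $n-1 = 2(m-1)$, each interior pair $(a,b)$ with $a,b\ge 1$ contributes $2a + 2b - (n-1) = 2(m-1) - 2(m-1) = 0$, and the two boundary pairs $(m-1,0)$ and $(0,m-1)$ cancel by the same arithmetic. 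This establishes the vanishing identity.

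For the holographic formulas at odd $n$, the unique index causing a potential singularity in \eqref{Q-holo-form} is $k = (n-1)/2$, where the bracket equals $(n-1)\bigl[\tfrac{n+1}{2}v_{(n+1)/2} + (v\J)_{(n-3)/2}\bigr]$ and vanishes by the previous step; the same cancellation occurs in \eqref{Q-holo-form-gen} at $k = (2N-n-1)/2$ when $N$ and $n$ have opposite parity. To upgrade this observation to a proof, I would retrace the derivation of \eqref{Q-holo-form} for even $n$ (modelled on the Poincar\'e-Einstein argument of \cite{FJO}) and locate the origin of the factor $1/(n-1-2k)$: it arises from a simple zero of the Pochhammer $(2\lambda-n+1)_n$ appearing in $\D_n^{res}(\lambda)(1)$, combined with a Laurent coefficient of $\sum_k \T_{n-k}^*(\lambda) v_k$ at $\lambda = 0$ that, by the same arithmetic underlying the vanishing identity, vanishes simply. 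The $0/0$ is therefore removable, and its L'Hospital limit supplies the finite contribution to $\dot{\D}_n^{res}(0)(1) = -\QC_n$. Equivalently, treating $n$ as a complex parameter, both sides are rational in $n$ with compatible singularities, so the formula for even $n$ extends by continuation.

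The coefficient identity is a clean calculation and poses no difficulty. The substantive obstacle lies in the odd-$n$ extension: either one must rewrite the Laurent expansion of $\D_n^{res}(\lambda)(1)$ near $\lambda = 0$ in a manner that manifestly avoids division by $n-1-2k$ at the sensitive index (thereby yielding the odd-$n$ formula as a finite L'Hospital-type expression), or one must rigorously justify the dimensional continuation and compute the limit. The cancellation is guaranteed by the vanishing identity, but the bookkeeping of Laurent coefficients of $\T_k^*(\lambda) v_{n-k}$ near $\lambda = 0$ -- in particular the interplay of the simple zero of the Pochhammer with the simple zero coming from the identity -- will require careful attention.
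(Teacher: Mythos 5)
The paper only states \eqref{van-id} as a conjecture and verifies it for odd $n\le 11$ by explicit calculation (Remark \ref{co-ex}); your Taylor-coefficient argument proves the first half in full generality, and it is correct. Writing $m=(n+1)/2$ and extracting the coefficient of $s^{m-1}$ from $(1-2s\rho)\,v' = v\bigl[-(n-1)\rho + 2s\rho' - s\J\bigr] + O(s^n)$ (the remainder is harmless since $m-1=(n-1)/2<n$) gives
\[
   m\,v_m + (v\J)_{m-2} \;=\; \sum_{a+b=m-1}\bigl(2a+2b-(n-1)\bigr)\,v_a\rho_b
   \;=\;\sum_{a+b=m-1}\bigl(2(m-1)-(n-1)\bigr)\,v_a\rho_b \;=\;0,
\]
once one notes that the constraints $a\ge 1$, $b\ge 1$ in your two derivative-sums may be dropped because they are multiplied by $a$ and $b$ respectively. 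The cancellation requires $n-1=2(m-1)$, i.e.\ $n$ odd, which is exactly the hypothesis. I verified $n=3$ and $n=5$ and they match Remark \ref{co-ex}. Incidentally, your identity is equivalent to specializing $k=(n-1)/2$ in the unreduced relation $\tfrac{n-2k-1}{n+1}\,\iota^*\partial_s^k(v\Delta(s)) = \iota^*\partial_s^{k+1}(v) + \tfrac{2k(k+1)}{n+1}\,\iota^*\partial_s^{k-1}(v\J)$ derived in the proof of Theorem \ref{Q-holo-crit}, whose left side is identically zero at that index; the paper is one line from your conclusion without remarking on it.

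The second half -- that \eqref{Q-holo-form} and \eqref{Q-holo-form-gen} remain valid for odd $n$ -- you rightly flag as the substantive open piece, and your sketch does not close it. The vanishing only shows the bracket at $k=(n-1)/2$ is zero, so that term reads $\tfrac{1}{0}\cdot 0$; it does not show the term drops out. Its actual contribution to the underlying sum $\sum_k\T_k^*(0)\bigl[(v\dot{L}(0)(1))_{n-1-k}\bigr]$ in the proof of Theorem \ref{Q-holo-crit} is $\T^*_{(n-1)/2}(0)\bigl[(v\dot{L}(0)(1))_{(n-1)/2}\bigr]$, a finite and generically non-zero quantity available from \eqref{d-delta}. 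To promote the conjecture to a theorem one must either exhibit an explicit factor $n-1-2k$ in the bracket whose cancellation reproduces $(v\dot{L}(0)(1))_{n-1-k}$ at the singular index -- the reduction done by hand for $\QC_3$ in Example \ref{Q3} -- or carry out rigorously the dimensional-continuation Laurent bookkeeping you gesture at. Your outline names the difficulty precisely but leaves the work undone; the $0/0$ is not simply $0$.
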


\begin{rem}\label{co-ex} For $n=3$ and $n=5$, the respective relations \eqref{van-id} read 
\begin{align*}
    2v_2 + \J_0 & = 0,  \\
    3 v_3 + \J'_0 + v_1 \J_0 & = 0.
\end{align*}
For details, we refer to the discussion in Examples \ref{v1}--\ref{v3-ex}. These identities are 
consequences of the relation \eqref{bL}. More generally, the relation \eqref{bL} implies that any 
coefficient $v_N$ can be written as a linear combination of products of derivatives of $\rho$ and 
$\J$ at $s=0$. The calculations of the resulting formulas for odd $n \le 11$ confirm the relation 
\eqref{van-id} in these special cases. For the discussion of the holographic formula for $\QC_3$ in 
general dimensions, we refer to Example \ref{Q3}.
\end{rem}

Now we present the proof of Theorem \ref{Q-holo-crit}.

\begin{proof} We evaluate the quantity $\dot{\D}_n^{res}(0)(1)$ using the factorization formula
$$
   \D_n^{res}(\lambda) = \D_{n-1}^{res}(\lambda-1) L(\lambda)
$$
(Corollary \ref{factor}). In view of $L(0)(1)=0$, it follows that
\begin{equation}\label{start-crit}
   \dot{\D}_n^{res}(0)(1) = \D_{n-1}^{res}(-1) \dot{L}(0)(1).
\end{equation}
Now we apply the representation formula
$$
   \D_{n-1}^{res}(-1) = (-1)^{n-1} (n\!-\!1)!^2 \sum_{j=0}^{n-1} \frac{1}{(n\!-\!1\!-\!j)!}
   \left[ \T_j^*(0) v_0 + \cdots + \T_0^*(0) v_j\right] \iota^* \partial_s^{n-1-j}
$$
(Theorem \ref{D-res-ex}). We find
$$
   \dot{\D}_n^{res}(0)(1) = (-1)^{n-1} (n-1)!^2 \sum_{j=0}^{n-1} \frac{1}{(n\!-\!1\!-\!j)!}
   \left[ \T_j^*(0) v_0 + \cdots + \T_0^*(0) v_j\right] \iota^* \partial_s^{n-1-j} (\dot{L}(0)(1)).
$$
In the latter sum, the operator $\T_k^*(0)$ acts on the sum
$$
   \left(v_0 \frac{1}{(n\!-\!1\!-\!k)!} \iota^* \partial_s^{n-1-k} + \cdots + v_{n-1-k}
   \iota^* \right)(\dot{L}(0)(1)).
$$
But this quantity equals the $(n\!-\!1\!-\!k)$'th
Taylor coefficient $(v \dot{L}(0)(1))_{n-1-k}$ of $v \dot{L}(0)(1)$. Now the
identity
\begin{equation}\label{reduction}
   \iota^* \partial_s^k (v \dot{L}(0)(1)) = -\frac{n\!-\!1}{(n\!-\!1\!-\!2k)} \iota^*
   \partial_s^{k+1}(v) - \frac{2k(n\!-\!1\!-\!k)}{(n\!-\!1\!-\!2k)} \iota^* \partial_s^{k-1}(v\J)
\end{equation}
for $0 \le k \le n-1$ (which extends \eqref{help-2})\footnote{For $k=0$, the second term on the
right-hand side is defined as $0$.} yields
\begin{align*}
   (v \dot{L}(0)(1))_{n-1-k} & = \frac{1}{(n\!-\!1\!-\!k)!} \iota^* \partial_s^{n-1-k} (v \dot{L}(0)(1)) \\
   & = \frac{n\!-\!1}{(n\!-\!1\!-\!2k)(n\!-\!1\!-\!k)!} \iota^* \partial_s^{n-k} (v) + \frac{2k
   (n\!-\!1\!-\!k)}{(n\!-\!1\!-\!2k)(n\!-\!1\!-\!k)!} \iota^* \partial_s^{n-2-k}(v\J) \\
   & = \frac{(n\!-\!1)(n\!-\!k)}{(n\!-\!1\!-\!2k)} v_{n-k} + \frac{2k}{(n\!-\!1\!-\!2k)} (v\J)_{n-2-k}
\end{align*}
for $0 \le k \le n-1$. This implies the claim. It only remains to prove the identity \eqref{reduction}.
Note that $\dot{L}(0)(1) = (n-1)\rho-s\J$. Using the definition of $\rho$, we obtain
\begin{equation}\label{d-delta}
   \iota^* \partial_s^k (v\dot{L}(0)(1)) = - \frac{n-1}{n+1} \left( \iota^* \partial_s^k(v \Delta (s))
   + \frac{2kn}{n-1} \iota^* \partial_s^{k-1}(v\J)\right).
\end{equation}
Now the relation \eqref{bL2} implies
\begin{align*}
   \iota^* \partial_s^k(v \Delta (s)) & = \iota^* \partial_s^{k+1}(v |\NV|^2) \\
   & = \iota^* \partial_s^{k+1}(v) - 2 (k+1) \iota^* \partial_s^k(v \rho) \\
   & = \iota^* \partial_s^{k+1}(v) + \frac{2(k+1)}{n+1} \iota^* \partial_s^k(v \Delta(s) + s v\J)
\end{align*}
for $0 \le k \le n-1$ using $|\NV|^2 = 1-2s\rho + O(s^{n+1})$. We rewrite this identity in the form
$$
   \frac{n-2k-1}{n+1} \iota^* \partial_s^k(v \Delta (s))
   = \iota^* \partial_s^{k+1}(v) + \frac{2k(k+1)}{n+1} \iota^* \partial_s^{k-1}(v \J).
$$
By substituting this result into \eqref{d-delta}, we obtain \eqref{reduction}. Note
that the above arguments extend those in the proof of Theorem \ref{LQ}.
\end{proof}

Finally, we sketch a {\em proof of Theorem \ref{Q-gen-holo}}. For $N < n$, we have
$$
   \D_N^{res} \left(\frac{-n+N}{2}\right) (1) = \left(\frac{n-N}{2}\right) \QC_N
$$
by \eqref{Q-sub-residue}. Hence
\begin{align*}
   \QC_N & =
   \D_N^{res}\left(\frac{-n+N}{2}\right) (1)  \left(\frac{n-N}{2}\right)^{-1} \\
   & = \D_{N-1}^{res}\left(\frac{-n+N}{2}-1\right) L\left(\frac{-n+N}{2}\right) (1)
   \left(\frac{n-N}{2}\right)^{-1} & \mbox{(by Corollary \ref{factor})} \\
   & = \D_{N-1}^{res}\left(\frac{-n+N}{2}-1\right) (-(N-1) \rho + s \J) & \mbox{(by \eqref{aa})}.
\end{align*}
This formula generalizes \eqref{start-crit}. Now we proceed as in the proof of
Theorem \ref{LQ}. In particular, Theorem \ref{D-res-ex} and the above identity imply
that
\begin{align*}
   \QC_N & = (-1)^{N} (N\!-\!1)!^2 \\ & \times \sum_{j=0}^{N-1} \frac{1}{j!}
   \left[ \T_{N-1-j}^*\left(\frac{n\!-\!N}{2}\right) v_0 + \cdots + \T_0^*\left(\frac{n\!-\!N}{2}\right) v_j \right]
   \iota^* \partial_s^j ((N\!-\!1)\rho-s\J).
\end{align*}
But in the latter double sum the operator $\T_k^*\left(\frac{n-N}{2}\right)$ acts on
the $(N-1-k)$'th Taylor coefficient of $((N-1)\rho-s\J)v$. A calculation using the definition of $\rho$
yields the extension
\begin{equation}\label{reduction-2}
   \iota^* \partial_s^k (v ((N\!-\!1)\rho - s\J)) = - \frac{N\!-\!1}{(n\!-\!1\!-\!2k)}
   \iota^* \partial_s^{k+1}(v) - \frac{k(n\!-\!2k\!+\!N\!-\!2)}{(n\!-\!1\!-\!2k)} \iota^*
   \partial_s^{k-1}(v\J)
\end{equation}
of \eqref{reduction} for $k=0,\dots,N-1$.\footnote{For $k=0$, the second term on the right-hand side
is defined as $0$.} Hence
\begin{equation*}
   (v ((N\!-\!1)\rho-s\J))_{N-1-k} = - \frac{(N\!-\!1)(N\!-\!k)}{(n\!-\!2N\!+\!2k\!+\!1)} v_{N-k} -
\frac{n\!-\!N\!+\!2k}{(n\!-\!2N\!+\!2k\!+\!1)} (v\J)_{N-2-k}
\end{equation*}
for $k=0,\dots,N-1$. This implies the assertion. \hfill $\square$

We finish this section with an application to the singular Yamabe obstruction $\B_{n}$
(see Section \ref{Yamabe}). We recall that our discussion of extrinsic conformal Laplacians
$\PO_N$ and extrinsic $Q$-curvatures $\QC_N$ is restricted to the range $N \le n$.
Already the first super-critical $\QC$-curvature $\QC_{n+1}$ is not well-defined.
Calculations in low-order cases point to the origin of its non-existence: $\QC_N$
has a pole in $n=N-1$. The following result interprets its residue.

\begin{theorem}\label{QB-residue} Let $n$ be even and $N \ge 3$. Then it holds
\begin{equation}\label{QB-F}
   \Res_{n=N-1} (\QC_N) = (-1)^{n-1} n! (n\!+\!1)! \frac{n}{2} \B_n.
\end{equation}
\end{theorem}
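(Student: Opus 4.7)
My plan is to localize the singularity of $\QC_N$ at $n = N-1$ via the holographic formula (Theorem~\ref{Q-gen-holo}), and then evaluate the residue by combining the recursion for the Taylor coefficients of $\rho$ (Proposition~\ref{rec-rho}) with the formula for $\B_n$ in Theorem~\ref{B-form}.

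First I would track how the dimension $n$ enters the ingredients of the holographic formula. Regarding $n$ as a continuous parameter, the recursion $(n-k)\,\partial_s^k(\rho)|_0 = (\text{polynomial in lower-order data})$ of Proposition~\ref{rec-rho} turns each $\partial_s^k(\rho)|_0$ into a rational function of $n$ with a simple pole precisely at $n = k$; by Corollary~\ref{v-rho}, this propagates to a simple pole of $v_j$ at $n = j - 1$. In the holographic formula, the coefficients $v_{N-k}$ appear for $k = 0, \dots, N-1$, and only $v_N$ (in the $k = 0$ summand) carries a pole at $n = N - 1$. Because $n = N-1$ is assumed even (so $N$ is odd), the explicit denominators $2N - n - 1 - 2k = N - 2k$ are nonzero integers at $n = N - 1$, so no additional poles arise from the explicit $n$-dependence.

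Next I would compute $\Res_{n=N-1}(v_N)$. The only part of $v_N$ singular at $n = N - 1$ is the linear leading term $-(n - 2N + 1)\,\iota^*\partial_s^{N-1}(\rho)/N!$ from Corollary~\ref{v-rho}; the remaining contributions involve only $\partial_s^k(\rho)|_0$ with $k \le N - 2$, whose poles sit at strictly smaller dimensions. Taking $k = N - 1$ in Proposition~\ref{rec-rho} and substituting $n = N-1$ on its right-hand side, then comparing term by term with the formula for $(n+1)!\,\B_n$ at $n = N - 1$ in Theorem~\ref{B-form}, I would observe that matching summands differ by an exact factor of $2$. This yields $\Res_{n=N-1}\partial_s^{N-1}(\rho)|_0 = \frac{N!}{2}\B_{N-1}$ and hence $\Res_{n=N-1}(v_N) = \frac{N}{2}\B_{N-1}$.

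Finally I would assemble the residue of $\QC_N$. The $k = 0$ summand reduces to $(-1)^N (N-1)!^2 \,(N-1)N\, v_N/(2N-n-1)$ plus a piece regular at $n = N - 1$, since $\T_0 = \id$ and $(v\J)_{N-2}$ is regular there. Evaluating the denominator at $n = N - 1$ gives $N$, so the residue of the whole expression is $(-1)^N (N-1)!^2 (N-1) \cdot (N/2)\B_{N-1}$. Using $(N-1)!^2\cdot N(N-1) = (N-1)\,N!\,(N-1)!$ and the identity $n = N - 1$, this rewrites as $(-1)^{n-1}\, n!\,(n+1)!\, (n/2)\B_n$. The main obstacle will be rigorously ruling out hidden poles in the lower-order part of $v_N$: this is handled by expanding $v'/v$ via Lemma~\ref{rec} and observing that $\partial_s^{N-1}(\rho)|_0$ appears in the coefficient of $s^{N-1}$ only linearly, while every further summand is a product of $\partial_s^k(\rho)|_0$ with $k \le N - 2$ and is therefore regular at $n = N - 1$.
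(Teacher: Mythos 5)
Your proposal reproduces the paper's proof: both localize the pole of $\QC_N$ at $n=N-1$ to the leading $v_N$ contribution via the holographic formula (Theorem~\ref{Q-gen-holo}), pass to $\partial_s^{N-1}(\rho)|_0$ via Corollary~\ref{v-rho}, and then extract its residue by comparing Proposition~\ref{rec-rho} at $k=N-1$ with Theorem~\ref{obstruction-ex}, exactly as in the paper's key identity $\B_n=\frac{2}{(n+1)!}\Res_{n=N-1}(\partial_s^{N-1}(\rho))|_0$. The route, the three ingredients, and the final arithmetic are all the same; you simply make explicit a few regularity checks (the nonvanishing of the denominators $N-2k$, and the regularity of the subleading terms of $v_N$) that the paper leaves implicit.
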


\begin{proof} Assume that $N \le n$. Theorem \ref{Q-holo-form} and
Theorem \ref{Q-gen-holo} imply that
$$
   \QC_N = (-1)^N (N\!-\!1)! N! \frac{N\!-\!1}{2N\!-\!n\!-\!1} v_N + \cdots,
$$
where the hidden terms are regular at $n=N-1$. However, the term $v_N$ has a
simple pole at $n=N-1$. More precisely, it follows from Corollary \ref{v-rho} that
$$
   v_N = -(n\!+\!1\!-\!2N) \frac{1}{N!} \partial^{N-1}_s (\rho)|_0 + \cdots,
$$
where the hidden terms are regular at $n=N-1$, and Proposition \ref{rec-rho} explains the origin
of the pole of $\partial^{N-1}_s (\rho)|_0$. But a comparison of Proposition \ref{rec-rho} and
Theorem \ref{obstruction-ex} shows that\footnote{See also \cite[Proposition 6.4]{GW-LNY}.}
\begin{equation}\label{B-rho}
   \B_{n} = \frac{2}{(n+1)!} \Res_{n=N-1} (\partial_s^{N-1}(\rho))|_0.
\end{equation}
The result follows from these facts.
\end{proof}

For a detailed discussion of the relation between $\B_2$ and $\QC_3$, we refer to Section \ref{Q-low}.

Combining Theorem \ref{QB-residue} with $\Res_{n=N-1} (\PO_N) \sim \Res_{n=N-1}
(\QC_N)$ allows us to derive the conformal invariance $e^{(n+1) \iota^*(\varphi)}
\hat{\B}_{n} = \B_{n}$ of the obstruction from the conformal covariance of $\PO_N$.

The above observations resemble the result that the residues
$$
   \Res_{n=4}(P_6) = -16 (\delta (\B d) - (\B,\Rho))
$$
and
$$
   \Res_{n=6}(P_8) = -48 (\delta(\OB_6 d) - (\OB_6,\Rho))
$$
of super-critical GJMS-operators $P_6$ and $P_8$ are conformally covariant. Here
$\B$ is the Bach tensor, and $\OB_6$ is the Fefferman-Graham obstruction tensor in
dimension $n=6$. For more details, we refer to \cite[Section 11.3]{J2}.

\section{Comments on further developments}\label{f-comm}

We recall that $\PO_n \iota^* = \D_n^{res}(0)$. For even $n$, this critical extrinsic conformal Laplacian
is elliptic. For odd $n$, the leading part of this operator is determined by $\lo$. The lower-order terms are
not known in general. For $n=3$, the explicit formula in Proposition \ref{P3F} shows that $\PO_3$ vanishes iff
$\lo=0$, i.e., iff $M$ is totally umbilic. The vanishing of $\PO_n$ is a conformally invariant condition. Now
if $\PO_n(g) = 0$, we define
\begin{equation}
   \PO_n'(g) = \dot{\D}_n^{res}(g;0) \quad \mbox {and} \quad \QC_n'(g) = \frac{1}{2} \ddot{\D}_n^{res}(g;0).
\end{equation}
Then $\PO_n': C^\infty(X) \to C^\infty(M)$ is a conformally covariant operator
$$
   e^{n \iota^*(\varphi)} \PO_n'(\hat{g}) = \PO_n'(g), \; \varphi \in C^\infty(X)
$$
such that the pair
$$
   (\PO_n', \QC_n')
$$
satisfies the fundamental identity
\begin{equation}\label{FI-prime}
   e^{n \iota^*(\varphi)} \QC_n'(\hat{g}) = \QC_n'(g) - \PO_n'(g)(\varphi) + \iota^*(\varphi) \PO_n'(g)(1)
\end{equation}
for all $\varphi \in C^\infty(X)$. This identity follows by twice differentiation of the conformal
transformation law of the critical residue family $\D_n^{res}(\lambda)$ at $\lambda=0$. Note that
$\PO_n'(g)(1) = - \QC_n(g)$. It is interesting
to analyze this construction further. First of all, it is a question of independent interest to characterize
the vanishing of $\PO_n(g)$ for odd $n$. Is it true that $\QC_n(g)(1) = 0$? The operator $\PO_n$ may be regarded
as a boundary operator for a conformally covariant boundary value problem for the critical GJMS-operator
$P_{n+1}$ on $X$. For $n=3$, such boundary value problems were recently analyzed in \cite{Case}. A
conformally covariant boundary operator of third order together with a $Q$-curvature like scalar curvature quantity
was discovered in \cite{CQ} in connection with the study of Polyakov formulas on four-dimensional manifolds with
boundary. Later it was studied from various perspectives. For details, we refer to \cite{Grant, J1, Case, GP} and
the references in these works.

It would be interesting to develop an extension of the present theory in higher
codimension situations again using solutions of singular Yamabe problems. We briefly
describe some aspects of a special case of such a theory. Let $S^m \hookrightarrow
S^n$ be an equatorial subsphere of $S^n$, $m \le n-1$. It is well-known that the
complement of $S^m$ in $S^n$ with the round metric $g$ is conformally equivalent to
the product $\mathbb{H}^{m+1} \times S^{n-m-1}$ with the respective hyperbolic and
round metric of constant scalar curvature $-(m+1)(m+2)$ and $(n-m-1)(n-m)$ on the
factors. The conformal factor $\sigma \in C^\infty(S^n\setminus S^m)$ can be defined
in terms of a Knapp-Stein intertwining operator on $S^n$ applied to the
delta-distribution of $S^m$ \cite[Section 2.4]{J1}. More explicitly, assume that
$S^m$ is defined by the equations $x_{m+2} = \cdots = x_{n+1}=0$. Then $\sigma$ is
the restriction of $(\sum_{j=m+2}^{n+1} x_j^2)^{1/2}$ to $S^n$. A stereographic
projection yields an isometry of $\sigma^{-2} g$ and
$$
   (x_{m+1}^2 + \cdots + x_{n}^2)^{-1} \sum_{i=1}^n dx_i^2
  = r^{-2} \sum_{i=1}^{m+1} dx_i^2 + g_{S^{n-m-1}},
$$
where $r^2 = \sum_{i=m+1}^{n} x_i^2$. The above $\sigma$ is a generalization of the
height function $\He$ in Section \ref{SBO} (the case $m=n-1$). Now, for any
eigenfunction $\tau$ of the Laplacian on $S^{n-m-1}$ (spherical harmonics), there is
an intertwining operator
$$
   D_{N,\tau}(\lambda): C^\infty(S^n) \to C^\infty(S^{m})
$$
for the subgroup $SO(1,m+1) \subset SO(1,n+1)$ leaving $S^m$ invariant (symmetry breaking operator).
These families can be constructed in terms of the residues of the family
$$
   \lambda \mapsto \int_{S^n} \sigma^\lambda u \psi dvol_{S^n},
$$
where $u$ is an eigenfunction of $\Delta_{\sigma^{-2}g}$ on the complement of $S^m$
which is compatible with $\tau$. The conformal factor $\sigma$ is a solution of the
singular Yamabe problem on the complement of $S^m$. In fact, the scalar curvature of
$\sigma^{-2}g$ is $-(n+1)(2m-n+2)$. It is negative iff $m > (n-2)/2$, i.e., iff the
dimension of the hyperbolic space exceeds the dimension of the sphere. This is a
special case of \cite{LN}, where it is proved that if $M \subset S^n$ is a smooth
submanifold of dimension $m$, a solution of the singular Yamabe problem with
negative scalar curvature exists iff $m  > (n-2)/2$. The analogous result for $S^n$
replaced by an arbitrary $X$ is in \cite{AMO1}. For a description of the asymptotic
expansions of solutions of the singular Yamabe problem in the negative case, we refer
to \cite{M}. Related representation theoretical aspects of the case $S^m
\hookrightarrow S^n$ are studied in \cite{MO}. Here the spectral decomposition of
spherical principal series representations of $O(1,n+1)$ under restriction to
$O(1,m+1) \times O(n-m)$ is made fully explicit.

The conformal invariance of the integral
$$
    \int_M \QC_n(g) dvol_h
$$
for closed $M$ (see \eqref{GI}) generalizes the conformal invariance of the total Branson
$Q$-curvature
$$
    \int_M Q_n(h) dvol_h.
$$
The classification of scalar Riemannian curvature quantities of a manifold $(M,h)$
which - like $Q_n$ - upon integration give rise to a conformal invariant has been the subject
of the Deser-Schwimmer conjecture \cite{DS}. S. Alexakis has achieved this classification in a series of 
works (see \cite{Alex} and its references). The present context suggests asking
for an analogous classification of scalar Riemannian invariants of a manifold $(X,g)$ which, upon
integration over closed submanifolds, yield conformal invariants of $g$. For first results in that direction,
we refer to \cite{mondino}. For related results around ${\bf Q}_4$, we refer to \cite{J5} and \cite{BGW}.


\section{Calculations and further results}\label{comm-cal}

\numberwithin{theorem}{subsection} \numberwithin{equation}{subsection}

In the present section, we collect formulas and computational details used in the
main body of the text. In addition, we illustrate various aspects of the general theory
by low-order examples. This often gives additional insight into the situation's nature and 
complexity. The results may also serve as material for future research. Moreover, we 
add a few further results.

For the reader's convenience, we start with an outline of the content of this section.

In Section \ref{basic-id}, we recall basic forms of the hypersurface Gauss equations,
recall the transformation laws of the second fundamental form and of some derived constructions
under conformal changes of the metric, derive a basic formula for the Laplacian, which plays a
key role in the proof of the conjugation formula and clarify the relation between
high-order iterated normal derivatives $\nabla_\NV^k$ and their analogs in adapted coordinates.

In Section \ref{model}, we determine the first three terms in the expansion of a
general metric $g$ in geodesic normal and adapted coordinates. These results are
fundamental for the later calculations. We also illustrate the results in several 
model cases with constant curvature and vanishing trace-free
part of $L$. These model cases may serve as valuable test examples of identities of the general theory.

In Section \ref{appY}, we first determine the first four terms in the asymptotic expansion of
solutions of the singular Yamabe problem in geodesic normal coordinates. Then, we use these results to 
calculate the obstructions $\B_2$ and $\B_3$ in these terms. Finally, we prove a formula for the obstructions
in terms of a formal residue of the super-critical coefficient $\sigma_{(n+2)}$. The explicit
form of the first few coefficients $\sigma_{(k)}$ enables us to confirm this formula in low-order cases.

In Section \ref{reno-vol-low}, we derive explicit formulas for the first three renormalized volume
coefficients $v_k$ (in adapted coordinates).

In Section \ref{TC+B}, we derive explicit formulas for the first two normal derivatives of $\rho$.
The discussion illustrates the efficiency of the recursive relation for the Taylor coefficients of $\rho$
expressed in Proposition \ref{rec-rho}. We further use these results in Section \ref{Q-low} for a
detailed discussion of $\QC_2$ and $\QC_3$.

In Section \ref{B2-details}, we show that the well-known formula for the obstruction $\B_2$
naturally follows from Theorem \ref{B-form}. Moreover, in Section \ref{B3-flat-back}, we evaluate 
the special case $n=3$ of Theorem \ref{B-form} for a flat background metric. We find that the 
formula coincides with the one derived in Section
\ref{appY} as well with a formula of Gover and Waldron. In addition, we verify that in the
conformally flat case, the result fits with the formula for $\B_3$ established in \cite{GGHW}.

In Section \ref{var-aspects}, we illustrate the role of the obstruction in the variational formula for
the singular Yamabe energy \cite{Graham-Yamabe}, \cite{GW-reno} in low-order cases. In \cite{GW-reno}
and \cite{GGHW}, the authors developed a new variational calculus. In contrast
to these references, here we only use classical style arguments as in \cite{Will, CM, Huisken, Graham-Yamabe}, 
for instance.

In Section \ref{P3}, we provide direct proofs of the conformal covariance of $\PO_3$
and the fundamental conformal transformation law of $\QC_3$.

Section \ref{sol-low} is devoted to a derivation of explicit formulas for the first two
solution operators $\T_1(\lambda)$ and $\T_2(\lambda)$ and the resulting first two
residue families $\D_1^{res}(\lambda)$ and $\D_2^{res}(\lambda)$. In addition, we
determine the leading term of $\PO_3$ through the leading term of the third solution
operator $\T_3(\lambda)$.

In the last section, we describe low-order renormalized volume coefficients $w_k$ in terms of 
Laplace-Robin operators. These results imply low-order cases of Theorem \ref{w-holo}.

Throughout this section, we apply some additional conventions. We use
indices $i,j$ for tensorial objects on $M$ and $0$ for $\NV$ when viewed as a normal
vector of $M$. In particular, $h_{ij}$ are the components of the metric $h$ on the boundary
and $\Rho_{00} = \Rho_X (\NV,\NV)$ is the restriction to $M$ of the Schouten tensor of
$g$ for the normal vector $\NV$. Similar conventions are used in adapted coordinates.

Sometimes, it will be convenient to distinguish curvature quantities of $g$ and $h$ not
by superscripts but by adding a bar to those of $g$ and leaving those of $h$ unbared.
Then $\bar{R}$, $\overline{\Ric}$ and $\overline{\scal}$ are the curvature tensor, the Ricci
tensor and the scalar curvature of $g$, respectively. For instance, $\Ric^g(\NV,\cdot)$
and $\overline{\Ric}_0$ are the same $1$-forms on $M$.

\index{$\bar{R}$}
\index{$\overline{\Ric}$}
\index{$\bar{\Rho}$}
\index{$\overline{\scal}$}

As before, we often use the same notation for a function on $X$ and its pull-back
by $\kappa$ or $\eta$ without mentioning it. A prime denotes derivatives in the variable $r$ and $s$. 
The restriction of a function $f(s)$ to $s=0$ is also denoted by $f_0$.
This notation often replaces $\iota^* (f)$.  For instance, we write $\rho'_0$ for the restriction 
$\iota^*(\partial_s (\rho))$ of $\partial_s(\rho)$ to $s=0$. If confusion is excluded, we 
sometimes even omit the symbols indicating restriction.


\subsection{Some basic identities}\label{basic-id}

\subsubsection{Gauss equations}\label{basic-Gauss}

Let $M^n$ be a hypersurface in $(X^{n+1},g)$ with the induced metric $h=\iota^*(g)$.
Then it holds
\begin{align}
   \Ric^g_{ij} - \Ric^h_{ij} & = R^g_{0ij0} + L^2_{ij} - n H L_{ij}, \label{GRicci} \\
   \scal^g - \scal^h & = 2 \Ric^g_{00} + |L|^2 - n^2 H^2 \label{G1}
\end{align}
on $M$. In the bar-notation, \eqref{G1} reads
$$
    \iota^* \overline{\scal} - \scal = 2 \overline{\Ric}_{00} +|L|^2 - n^2 H^2.
$$
The following result follows from the Gauss equation \eqref{G1}.

\begin{lem}\label{g12} For $n \ge 2$, it holds
\begin{equation}\label{G2}
   \iota^* \J^g - \J^h  = \Rho^g_{00} + \frac{1}{2(n\!-\!1)} |\lo|^2 - \frac{n}{2} H^2.
\end{equation}
\end{lem}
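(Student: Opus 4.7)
The plan is to derive the formula as a direct algebraic consequence of the Gauss equation \eqref{G1} combined with the definitions of $\J$ and the Schouten tensor, together with the orthogonal decomposition $L = \lo + H h$.

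First I would rewrite \eqref{G1} in terms of $\J$. Since $X$ has dimension $n+1$, the convention $\J^g = \scal^g/(2n)$ gives $\scal^g = 2n\,\J^g$, while on $M$ (dimension $n$) we have $\scal^h = 2(n-1)\,\J^h$. Substituting into \eqref{G1} yields
\begin{equation*}
   2n\,\iota^*\J^g - 2(n-1)\J^h = 2\,\Ric^g_{00} + |L|^2 - n^2 H^2.
\end{equation*}

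Next I would trade $\Ric^g_{00}$ for $\Rho^g_{00}$. On $X^{n+1}$ the Schouten tensor satisfies $(n-1)\Rho^g = \Ric^g - \J^g g$, and since $\NV$ is a unit normal we have $g_{00}=1$, hence $\Ric^g_{00} = (n-1)\Rho^g_{00} + \J^g$. Plugging this in and collecting the $\J^g$ terms on the left gives
\begin{equation*}
   2(n-1)(\iota^*\J^g - \J^h) = 2(n-1)\Rho^g_{00} + |L|^2 - n^2 H^2.
\end{equation*}

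Finally, I would use $L = \lo + H h$. Since $\lo$ is trace-free with respect to $h$ and $|h|_h^2 = n$, this expansion gives $|L|^2 = |\lo|^2 + n H^2$, so
\begin{equation*}
   \frac{|L|^2 - n^2 H^2}{2(n-1)} = \frac{|\lo|^2 - n(n-1)H^2}{2(n-1)} = \frac{|\lo|^2}{2(n-1)} - \frac{n}{2} H^2,
\end{equation*}
and dividing the previous display by $2(n-1)$ yields the stated identity. There is no real obstacle here; the only point requiring care is keeping track of the two different dimensions (using $n+1$ for the Schouten normalization on $X$ and $n$ for $\J^h$ on $M$), which is what produces the coefficient $1/(2(n-1))$ in front of $|\lo|^2$.
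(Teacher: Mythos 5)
Your proposal is correct and follows essentially the same route the paper takes: rewrite \eqref{G1} in terms of $\J$, trade $\Ric^g_{00}$ for $(n-1)\Rho^g_{00}+\J^g$ via the Schouten definition on $X^{n+1}$, cancel the $\J^g$ terms to produce the common factor $2(n-1)$, and then apply $|L|^2 = |\lo|^2 + nH^2$ before dividing through. The bookkeeping on the two dimensions $n+1$ and $n$ is exactly the point the paper relies on as well, so there is nothing to add.
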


\begin{proof} The Gauss equation \eqref{G1} is equivalent to
$$
   2n \iota^* \J^g - 2(n\!-\!1) \J^h = 2((n\!-\!1) \Rho^g_{00} + \J^g) + |L|^2 - n^2 H^2.
$$
Hence
$$
   2(n\!-\!1) \iota^* \J^g - 2(n\!-\!1) \J^h = 2(n\!-\!1)\Rho^g_{00} + |\lo|^2 - n(n-1) H^2.
$$
This implies the assertion.
\end{proof}

\index{$\JF$ \quad Fialkow tensor}

Next, let
\begin{equation}\label{Fialkow-tensor}
   \JF \st \iota^* (\Rho^g) - \Rho^h + H \lo + \frac{1}{2} H^2 h.
\end{equation}
$\JF$ is a conformally invariant symmetric bilinear form, i.e., it holds
$\hat{\JF} = \JF$ (\cite[Theorem 6.22.11]{J1}.\footnote{In \cite{GW-LNY, GW-reno}, the tensor
$\JF$ is called the Fialkow tensor following \cite{V} (in turn being inspired by \cite{J1}). For more details on
the relation to Fialkow's classical work, we refer to \cite[Section 6.23]{J1}.}  Moreover, it satisfies the
fundamental relation
\begin{align}\label{FW-relation}
    (n\!-\!2) \JF_{ij} & = (n\!-\!2) \left(\iota^* \Rho^g_{ij} - \Rho^h_{ij} + H \lo_{ij}
    + \frac{1}{2}  H^2 h_{ij} \right) \notag \\
    & \stackrel{!}{=} W_{0ij0} + \lo^2_{ij} - \frac{|\lo|^2}{2(n\!-\!1)} h_{ij} 
\end{align}
(\cite[Lemma 6.23.3]{J1}), where $W$ is the Weyl tensor of $g$. We recall that
\index{$W$ \quad Weyl tensor}
\begin{equation}\label{RW-deco}
   R = W - \Rho \owedge g,
\end{equation}
where the Kulkarni-Nomizu product of the bilinear forms $b_1$ and $b_2$ is defined by
\begin{align*}
  & (b_1 \owedge b_2) (X,Y,Z,W)  \\
  & \st b_1 (X,Z) b_2 (Y,W) - b_1 (Y,Z) b_2(X,W) + b_1(Y,W) b_2(X,Z) - b_1(X,W) b_2(Y.Z).
\end{align*}

\index{$\owedge$ \quad Kulkarni-Nomizu product}

\subsubsection{Conformal change and the second fundamental form}\label{CTLL}

Let $N$ be a fixed unit normal field of $M$ defining $L$. Let $\hat{L}$ denote the second fundamental form
of $M$ with respect to the metric $\hat{g} = e^{2\varphi} g$. Then it holds
$$
    e^{-\varphi} \hat{L} = L + \nabla_N(\varphi) h.
$$
As a consequence, we find
$$
   e^{\varphi} \hat{H} = H + \nabla_n(\varphi).
$$
Both relations combine into the conformal invariance property
\begin{equation}\label{CTL-L}
    e^{-\varphi} \loh = \lo
\end{equation}
of the trace-free part $\lo = L - H h$ of $L$. It follows that $\loh^2 = \lo^2$, where 
$(\lo^2) _{ij} \st h^{ab} \lo_{ia} \lo_{jb}$. For $|L|^2 = \tr_h (L^2)
= h^{ij} (L^2)_{ij} = h^{ij} h^{ab} L_{ia} L_{jb}$, we find
$e^{2\varphi} |\loh|^2 = |\lo|^2$.\footnote{Of course, the norm on the left-hand side is taken with respect to $\hat{h}$.} 
Hence 
\begin{equation}\label{tf-square}
   (\loh^2)_\circ = (\lo^2)_\circ.
\end{equation}

\subsubsection{Some formulas for the Laplacian}\label{Laplace-expansion}

Here we discuss some useful formulas for the Laplacian. In particular, we prove the identity 
\eqref{Laplace-basis}.

\begin{lem}\label{LB-M}
Let $\sigma$ be a defining function of $M$ and $\partial_0 = \NV = \grad_g(\sigma)$.
We assume that $|\partial_0|_g =1$ on $M$. Then it holds
\begin{equation}\label{LBM}
    \Delta_g (u) = \partial_0^2 (u) + \Delta_h (u) + n H \partial_0 (u)
    - \langle du, \nabla_{\partial_0}(\partial_0) \rangle
\end{equation}
on $M$.
\end{lem}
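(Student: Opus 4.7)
The plan is to evaluate both sides of \eqref{LBM} at an arbitrary point $p \in M$ via the coordinate-free identity $\Delta_g(u) = \tr_g(\nabla^2 u)$ applied to a carefully chosen frame. First, I would select an orthonormal frame $\{e_1,\dots,e_n\}$ of $TM$ defined in a neighborhood of $p$ on $M$, and extend each $e_i$ arbitrarily to a vector field on a neighborhood of $p$ in $X$. Coupled with $\NV = \grad_g(\sigma)$, the tuple $\{\NV,e_1,\dots,e_n\}$ then forms a frame of $TX$ near $p$ which is orthonormal at every point of $M$ (by $|\NV|_g = 1$ on $M$ together with $\NV\perp TM$), though typically not so off $M$.

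Since the Gram matrix of this frame equals the identity along $M$, at the point $p$ one has
$$
\Delta_g(u)(p) = \sum_{a=0}^{n}\bigl[f_a(f_a(u)) - (\nabla^g_{f_a}f_a)(u)\bigr](p),
$$
where $f_0 = \NV$ and $f_i = e_i$. The $a=0$ summand contributes exactly $\partial_0^2(u)(p) - \langle du, \nabla_{\partial_0}\partial_0\rangle(p)$. For the tangential contributions, I would invoke the Gauss formula in the sign convention of Section \ref{basic-Gauss},
$$
\nabla^g_{e_i}e_i\,|_M = \nabla^h_{e_i}e_i - L(e_i,e_i)\NV,
$$
together with the fact that the $e_i$ are tangent to $M$ along $M$, so that $e_i(e_i(u))|_M = e_i(e_i(u|_M))$. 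Summing over $i$ yields $\Delta_h(u|_M) + \tr_h(L)\,\partial_0(u) = \Delta_h(u) + nH\,\partial_0(u)$ on $M$, and combining with the $a=0$ term delivers \eqref{LBM}.

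The only genuinely subtle step is the identification $e_i(e_i(u))|_M = e_i(e_i(u|_M))$, which relies on the fact that directional derivatives along tangent directions of $M$, evaluated on $M$, depend only on the restriction of the argument to $M$; applied twice this gives the claim, since $e_i(u)|_M$ itself is tangential. Once this is in place the computation is mechanical, and the hypothesis $|\NV|_g = 1$ on $M$ is used precisely to guarantee that the restricted frame is orthonormal, so that no extra conformal factors appear in front of $\Delta_h(u)$ or $nH\,\partial_0(u)$.
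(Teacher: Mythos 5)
Your proof is correct and proceeds by the same underlying idea as the paper's—expand $\Delta_g = \tr(\nabla^2 u)$ in an orthonormal frame adapted to $M$ and separate normal and tangential contributions—but the execution differs in two small ways that make your version a little cleaner. The paper chooses the tangential frame to be tangent to all the nearby level sets $\sigma^{-1}(c)$, introduces the dual coframe $\{dx^0,dx^i\}$, and recovers the $nH\,\partial_0(u)$ term by direct manipulation of the resulting connection coefficients; you instead take an arbitrary smooth extension of an orthonormal frame for $TM$ and feed the Gauss formula
$$
\nabla^g_{e_i}e_i\big|_M = \nabla^h_{e_i}e_i - L(e_i,e_i)\,\NV
$$
into the pointwise identity $\Delta_g(u) = \sum_a\bigl[f_a(f_a(u)) - (\nabla^g_{f_a}f_a)(u)\bigr]$, which immediately produces $\Delta_h(u)$ plus the $\tr_h(L)\,\partial_0(u) = nH\,\partial_0(u)$ term in one line. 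The arbitrariness of the extension off $M$ is genuinely harmless because the formula only needs orthonormality at the single point $p$, and your remark about $e_i(e_i(u))|_M = e_i(e_i(u|_M))$ (resting on $e_i$ being tangent to $M$ along $M$) correctly disposes of the only potential pitfall. The sign convention $L(X,Y)=-g(\nabla^g_X Y,\NV)$ is also handled consistently. In short: same strategy, but you outsource the frame bookkeeping to the Gauss formula rather than redoing it by hand, at the modest cost of having to justify that the arbitrary extension does not matter.
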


\begin{proof}
Let $\left \{\partial_i \right \}$ be an orthonormal basis of the tangent spaces of
the level surfaces $\sigma^{-1}(c)$ (for small $c$) and let $\partial_0 =
\grad_g(\sigma)$. By assumption, these form an orthonormal basis on $M$. Let
$\left\{ dx^i \right\}$ together with $\left\{dx^0\right\}$ be the dual basis. We
calculate $\iota^* \Delta_g (f)$ using $\Delta = \tr (\nabla^2)$. First of all, we
have $\nabla (u) = \partial_i (u) dx^i + \partial_0 (u) dx^0$. Hence\footnote{As
usual, we sum over repeated indices.}
\begin{align*}
   \nabla^2(u) & = \partial_{ij}(u) dx^i \otimes dx^j + \partial_i(u) \nabla_{\partial_j}(dx^i) \otimes dx^j \\
   & + \partial_{i0}(u) dx^i \otimes dx^0 + \partial_i(u) \nabla_{\partial_0} (dx^i) \otimes dx^0 \\
   & + \partial_{0j}(u) dx^0 \otimes dx^j + \partial_0(u) \nabla_{\partial_j}(dx^0) \otimes dx^j \\
   & + \partial_{00}(u) dx^0 \otimes dx^0 + \partial_0(u) \nabla_{\partial_0}(dx^0) \otimes dx^0.
\end{align*}
Now taking traces gives
\begin{align*}
  \tr (\nabla^2)(u) & = \sum_{i=0}^n \partial_{ii}(u)
   + \partial_i(u) \langle \nabla_{\partial_j}(dx^i), \partial_j \rangle
   + \partial_i(u) \langle \nabla_{\partial_0} (dx^i), \partial_0 \rangle
   + \partial_0(u) \langle \nabla_{\partial_j}(dx^0), \partial_j \rangle \\
   & + \partial_0^2(u) + \partial_0(u) \langle \nabla_{\partial_0}(dx^0), \partial_0 \rangle
\end{align*}
on $M$. Thus, we obtain
\begin{align*}
  \tr (\nabla^2)(u) & = \sum_{i=0}^n \partial_{ii}(u)
   + \partial_i(u) \langle \nabla_{\partial_j}(dx^i), \partial_j \rangle
   - \partial_i(u) \langle dx^i, \nabla_{\partial_0} (\partial_0) \rangle
   - \partial_0(u) \langle dx^0, \nabla_{\partial_j}(\partial_j) \rangle \\
   & + \partial_0^2(u)
   - \partial_0(u) \langle dx^0, \nabla_{\partial_0}(\partial_0) \rangle.
\end{align*}
But since $\nabla^g_{\partial_i}(\partial_j) = \nabla^h_{\partial_i}(\partial_j)$ on
$M$, the last display simplifies to
\begin{align*}
   \Delta_g (u) & = \partial_0^2(u) + \Delta_h (u)
   - \partial_0(u) \langle dx^0, \nabla_{\partial_j}(\partial_j) \rangle \\
   & - \partial_i(u) \langle dx^i, \nabla_{\partial_0}
   (\partial_0) \rangle - \partial_0(u) \langle dx^0, \nabla_{\partial_0}(\partial_0) \rangle
\end{align*}
on $M$. By $L (\partial_i,\partial_j) = - \langle \nabla_{\partial_i} (\partial_j),
dx^0 \rangle $, we obtain
$$
   \Delta_g (u) = \partial_0^2(u) + \Delta_h (u) + n  H \partial_0(u) - \langle
   du,\nabla_{\partial_0}(\partial_0) \rangle
$$
on $M$. This completes the proof.
\end{proof}

\begin{remark}\label{Laplace-N} Without the assumption that $|\partial_0|=1$ on $M$, an extension 
of the above arguments shows that
\begin{equation*}
   \Delta_g (u) = \frac{1}{|\partial_0|^2} \partial_0^2 (u) + \Delta_h (u)
    + n H \frac{1}{|\partial_0|} \partial_0 (u)
    - \frac{1}{|\partial_0|^2} \langle du, \nabla_{\partial_0}(\partial_0) \rangle
\end{equation*}
on $M$. Similar arguments prove \eqref{Laplace-basis}.
\end{remark}

\begin{remark}\label{Laplace-standard}
In the situation of Lemma \ref{LB-M}, assume that $\sigma$ is the distance function of $M$. Then
$\nabla_\NV (\NV) = 0$ and we recover the well-known formula
\begin{equation*}
    \Delta_g = \nabla_{\NV}^2 + \Delta_h + n H \nabla_{\NV}
\end{equation*}
on $M$.
\end{remark}

The following result reproves \cite[Lemma B2]{GW-LNY}.

\begin{cor}\label{LRF}
If $\sigma$ satisfies $\SCY$, then
\begin{equation}\label{LR}
   \Delta_g = \nabla_\NV^2 + \Delta_h  + (n-1) H \nabla_\NV
\end{equation}
on $M$.
\end{cor}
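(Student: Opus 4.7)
\medskip

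\textbf{Proof plan.} The plan is to derive this as a direct consequence of Lemma \ref{LB-M} by identifying the term $\langle du,\nabla_\NV(\NV)\rangle$ on $M$. Since $\SC(g,\sigma)|_M=|d\sigma|_g^2|_M$ and $\SCY$ forces $\SC(g,\sigma)=1+O(\sigma^{n+1})$, we have $|\NV|_g^2=1$ on $M$, so the hypothesis of Lemma \ref{LB-M} is satisfied and that lemma gives
\[
   \Delta_g(u)=\NV\NV(u)+\Delta_h(u)+n H\,\NV(u)-\langle du,\nabla_\NV(\NV)\rangle\qquad\text{on }M.
\]
It therefore suffices to show $\nabla_\NV(\NV)=H\NV$ on $M$, since then $\langle du,\nabla_\NV(\NV)\rangle=H\nabla_\NV(u)$ and combining with the $nH\nabla_\NV$ term yields the asserted coefficient $(n-1)H$.

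The key observation is that because $\NV=\grad_g(\sigma)$ is the gradient of a function, the Hessian $\Hess(\sigma)(V,W)=g(\nabla_V\NV,W)$ is symmetric, hence for every vector field $V$
\[
   g(\nabla_\NV(\NV),V)=g(\nabla_V(\NV),\NV)=\tfrac{1}{2}V(|\NV|_g^2).
\]
In other words, $\nabla_\NV(\NV)=\tfrac{1}{2}\grad_g(|\NV|_g^2)$ everywhere (no assumption on $\SCY$ needed for this identity).

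Now I would compute $\grad_g(|\NV|_g^2)|_M$ using $\SCY$. Writing $|\NV|_g^2=\SC(g,\sigma)-2\sigma\rho=1-2\sigma\rho+\sigma^{n+1}R_{n+1}$, differentiation gives
\[
   \grad_g(|\NV|_g^2)=-2\rho\,\NV-2\sigma\grad_g(\rho)+O(\sigma^n),
\]
and restriction to $M$ together with $\iota^*(\rho)=-H$ (Lemma \ref{rho-01}) produces $\grad_g(|\NV|_g^2)|_M=2H\NV|_M$. Thus $\nabla_\NV(\NV)|_M=H\NV|_M$, which is the identity needed to conclude.

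The argument is short and no step is a serious obstacle: the only non-formal input is the identification $\iota^*(\rho)=-H$, which was already established (e.g.\ inside the proof of Lemma \ref{rec-2}), and the symmetry of the Hessian of a scalar function. In particular, no use of adapted coordinates is required, although one could alternatively read the result off from the expansion \eqref{Laplace-adapted} of $\Delta_g$ at $s=0$ using $a|_0=1$, $a'|_0=2H$, $\frac{1}{2}\tr(h_s^{-1}h_s')|_0=nH$ and the vanishing of the tangential gradient of $a$ on $M$.
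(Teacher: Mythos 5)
Your proof follows the paper's argument essentially verbatim: reduce via Lemma \ref{LB-M} to showing $\nabla_\NV(\NV)=H\NV$ on $M$, use the symmetry of $\Hess_g(\sigma)$ to get $\nabla_\NV(\NV)=\tfrac12\grad_g(|\NV|^2)$, and then invoke $\SCY$ together with $\iota^*(\rho)=-H$. You merely spell out the last restriction-to-$M$ step more explicitly than the paper does.
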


\begin{proof}
By Lemma \ref{LB-M}, it suffices to prove that $\nabla_\NV(\NV) = H \NV$ on $M$.
For any $X \in \mathfrak{X}(X)$, we calculate
$$
    \langle \nabla_\NV(\NV),X\rangle = \Hess_g(\sigma)(\NV,X) = \Hess_g(\sigma)(X,\NV)
   = \langle \nabla_X(\NV),\NV \rangle = 1/2 \langle d (|\NV|^2),X \rangle.
$$
Now the assertion follows from $\SCY$ using $\rho = -H$ on $M$ (Lemma \ref{rho-01}).
\end{proof}

\subsubsection{Iterated normal derivatives}\label{ho-normals}

The identity
$$
   \iota^* \partial_s^k \circ \eta^* = \iota^* (|\NV|^{-2} \nabla_\NV)^k
$$
(see \eqref{translate}) relates iterated normal derivatives $\iota^* \partial_s^k
\circ \eta^*$ with respect to $s$ to iterated weighted gradients $\iota^*
(|\NV|^{-2} \nabla_\NV)^k$ of $\sigma$. Moreover, if $\SC(g,\sigma)=1$, i.e., if
$|\NV|^2 = 1- 2 \sigma \rho$, any iterated weighted gradient $\iota^* (|\NV|^{-2}
\nabla_\NV)^k$ can be written as a composition with $\iota^*$ of a linear
combination of iterated gradients $\nabla^j_\NV$ for $j \le k$ and polynomials in
the curvature quantities $\iota^* \nabla_\NV^j(\rho) \in C^\infty(M)$ for $j \le
k-1$. This follows by an easy induction using $\nabla_\NV (\sigma) = |\NV|^2$. In
particular, we obtain the following low-order formulas.

\begin{example}\label{HON} If $\SC(g,\sigma)=1$, then it holds
\begin{equation*}
   \iota^* \partial_s \eta^* = \iota^* \nabla_\NV \quad \mbox{and} \quad 
   \iota^* \partial_s^2 \eta^* = \iota^* (\nabla_\NV^2 + 2 \rho \nabla_\NV).
\end{equation*}
\end{example}

\begin{proof} We calculate
$$
   \iota^* \partial_s^2 \eta^* = \iota^* (|\NV|^{-2} \nabla_\NV)^2
   = \iota^* \nabla_\NV (1 + 2 \sigma \rho) \nabla_\NV
   = \iota^* (\nabla_\NV^2 + 2 \nabla_\NV(\sigma) \rho \nabla_\NV)
$$
using $|\NV|^2 = 1-2\sigma \rho$. Now $\iota^* \nabla_\NV (\sigma) = \iota^* (|\NV|^2) = 1$
implies the second identity. 
\end{proof}

These formulas can easily be inverted to express iterated normal gradients in terms of iterated
normal derivatives with respect to $s$. In particular, we obtain the following identities.

\begin{example}\label{trans-low-2} If $\SC(g,\sigma)=1$, then it holds
\begin{equation*}
   \iota^* \nabla_\NV = \iota^* \partial_s \eta^* \quad \mbox{and} \quad 
   \iota^* \nabla_\NV^2 = (\iota^* \partial_s^2 - 2 \rho_0 \iota^* \partial_s ) \eta^*.
\end{equation*}
\end{example}

The above discussion obviously generalizes to the case that $\sigma$ only satisfies the condition $\SCY$
with a non-trivial remainder.

\subsection{Expansions of the metric. Model cases}\label{model}

We start by discussing the normal forms of a given metric $g$ in geodesic normal coordinates
and in adapted coordinates (as defined in Section \ref{AC-RVC}). The formulas for these normal forms
contain respective families $h_r$ and $h_s$ of metrics on $M$. In order to simplify notation,
we shall use the same notation for the coefficients of their Taylor series in $r$ and $s$. It always will
be clear from the context which coefficients are meant. We derive formulas for the first few Taylor
coefficients of $h_r$ and $h_s$. A series of geometrically intuitive examples follow the discussion.

\index{$h_r$}
\index{$h_s$}
\index{$h_{(k)}$ \quad coefficients of $h_r$ or $h_s$}

\begin{prop}\label{h-low-order}
We expand the family $h_r$ in the normal form $dr^2 + h_r$ of $g$ in geodesic normal
coordinates as
$$
   h_r = h + h_{(1)} r + h_{(2)} r^2 + \cdots.
$$
The coefficients $h_{(k)}$ can be expressed in terms of the curvature of the metric $g$, its
covariant derivatives, and the second fundamental form $L$. The expansion starts
with
\begin{equation}\label{h-geodesic}
    (h_r)_{ij} = h_{ij} + 2L_{ij} r + ((L^2)_{ij} - R_{0ij0}) r^2 + (h_{(3)})_{ij} r^3 + \cdots
\end{equation}
with
\begin{equation}\label{h-cubic}
  3 (h_{(3)})_{ij}  =  - \nabla_{\partial_r} (R)_{0ij0} - 2 L_i^k R_{0jk0} - 2 L_j^k R_{0ik0}.
\end{equation}
Here $(L^2)_{ij} = L_{ik} L^k_j = L_{ik} L_{js} h^{ks}$.

Next, assume that $\sigma$ satisfies $\SCY$. We expand the family $h_s$ in the
normal form \eqref{normal-adapted} as
$$
   h_s = h + h_{(1)} s + h_{(2)} s^2 + \cdots.
$$
The coefficients $h_{(k)}$ can be expressed in terms of the curvature of the metric
$g$, its covariant derivatives, and the second fundamental form $L$. The expansion
starts with
\begin{equation}\label{h-adapted}
   (h_s)_{ij} = h_{ij} + 2 L_{ij} s + ((L \lo)_{ij} - R_{0ij0}) s^2 + (h_{(3)})_{ij} s^3  + \cdots
\end{equation}
with
\begin{align}\label{h-adapted-cubic}
  3 (h_{(3)})_{ij}  & =  - \nabla_{\partial_s} (R)_{0ij0} - 2 \lo_i^k R_{0jk0} - 2 \lo_j^k R_{0ik0} \notag \\
  & + \Hess_{ij}(H) - H R_{0ij0} - 3 H (L \lo)_{ij} + 2 L_{ij} \rho_0'.
\end{align}
Here $(L \lo)_{ij} = L_{ik} \lo_{js} h^{ks}$ and $\rho_0' = \Rho_{00} + |\lo|^2/(n-1)$.
\end{prop}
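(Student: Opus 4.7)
The plan is to establish both expansions by exploiting the geometric structure particular to each coordinate system.

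For geodesic normal coordinates, the essential fact is that $\partial_r$ is a unit geodesic field, so the tangential coordinate fields $\partial_i$ commute with $\partial_r$ and propagate as Jacobi fields along the normal geodesics. Writing $L(r)_{ij} = g(\nabla_{\partial_i}\partial_r, \partial_j)$, one immediately obtains $(h_r)'_{ij} = 2 L(r)_{ij}$, hence $h_{(1)} = 2L$. The Jacobi equation $\nabla_{\partial_r}^2 \partial_i = R(\partial_r, \partial_i)\partial_r$ gives $(h_r)''_{ij} = 2(L(r)^2)_{ij} - 2 R_{0ij0}(r)$, yielding $h_{(2)} = L^2 - R_{0\cdot\cdot 0}$ at $r = 0$. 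A further differentiation, together with the Riccati identity $\nabla_{\partial_r} L(r)|_0 = -R_{0\cdot\cdot 0} - L^2$ and a standard use of the second Bianchi identity for $\nabla_{\partial_r}(R_{0ij0})$, produces \eqref{h-cubic} after symmetrization in $i, j$.

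For adapted coordinates, the coordinate field $\partial_s = \mathfrak{X} = \NV/|\NV|^2$ is neither unit length nor geodesic, so the Riccati argument does not apply directly. Instead I would compute $\iota^*\partial_s^k (h_s)_{ij}$ via the intertwining identity \eqref{translate}, which gives $\iota^*\partial_s^k \circ \eta^* = \iota^*(|\NV|^{-2}\nabla_\NV)^k = \iota^*\mathfrak{X}^k$. Under $\SCY$, the expansion $|\NV|^2 = 1 - 2\sigma\rho + O(\sigma^{n+1})$ lets one rewrite $\mathfrak{X}^k$ as $\nabla_\NV^k$ plus lower-order corrections whose coefficients are polynomials in $\iota^*\nabla_\NV^j\rho$ for $j < k$ (cf.\ Example \ref{HON} and Section \ref{ho-normals}). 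Applying this to $(h_s)_{ij} = g(V_i, V_j)$, where $V_i$ is the unique vector field with $V_i|_M = \partial_i$ and $[V_i, \mathfrak{X}] = 0$, and using the boundary values $\rho_0 = -H$ and $\rho'_0 = \Rho_{00} + |\lo|^2/(n-1)$ from Lemma \ref{rho-01}, reduces the entire calculation to curvature and extrinsic data. The first two orders follow quickly: $h_{(1)} = 2L$ as before, and by combining Example \ref{HON} with the geodesic formula $h_{(2)} = L^2 - R_{0\cdot\cdot 0} + 2\rho_0 L = L^2 - 2HL - R_{0\cdot\cdot 0} = L\lo - R_{0\cdot\cdot 0}$.

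The main obstacle will be the cubic coefficient \eqref{h-adapted-cubic}, where three distinct mechanisms contribute simultaneously. The Jacobi/Riccati hierarchy of the ambient geometry produces the terms $-\nabla_{\partial_s} R_{0ij0}$ and $-2\lo_i^k R_{0jk0} - 2\lo_j^k R_{0ik0}$. The first tangential variation of the boundary value $\rho_0 = -H$ generates the $\Hess_{ij}(H)$ term. The normal derivative of the rescaling factor $|\NV|^{-2}$ contributes the $2 L_{ij}\rho'_0$ term, while the coupling between the ambient hierarchy and the rescaling produces the mixed terms $-H R_{0ij0}$ and $-3H (L\lo)_{ij}$. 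Concretely I would write $\iota^*\mathfrak{X}^3$ as $\iota^*\nabla_\NV^3$ plus explicit corrections in $\rho_0, \rho'_0$, apply it to $g(V_i, V_j)$, reduce the resulting third-order gradient via a single covariant differentiation of $R$ along $\partial_s$, and then substitute the boundary values. Manifest symmetry in $i, j$ and the fact that \eqref{h-adapted-cubic} reduces to \eqref{h-cubic} in the limit $\rho \to 0$ serve as natural consistency checks.
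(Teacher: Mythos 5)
Your geodesic-coordinate strategy via Jacobi fields and the Riccati hierarchy is a legitimate alternative to the paper's proof, which instead computes the Christoffel symbols of $dr^2+h_r$ and reads off $R^0_{0jk}=\tfrac14 g^{rl}g'_{kr}g'_{jl}-\tfrac12 g''_{jk}$ directly. One technical caution: with the paper's conventions, the derivative of the bilinear form $L(r)_{ij}=g(\nabla_{W_i}\partial_r,W_j)$ in the Jacobi frame satisfies $L'(0)=L^2-R_{0\cdot\cdot0}$, not $-L^2-R_{0\cdot\cdot0}$; the latter is the Riccati equation for the covariant derivative $\nabla_{\partial_r}S$ of the Weingarten endomorphism, and conflating the two drops the frame-variation terms. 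Also, the second Bianchi identity is not needed to reach \eqref{h-cubic}: one differentiation of the Jacobi equation plus the definition of $\nabla_{\partial_r}R$ suffices.

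The adapted-coordinate half of your argument has a genuine gap. You propose to bootstrap $h_s$ from $h_r$ by applying the scalar intertwining formula \eqref{translate} (and Example \ref{HON}) to replace $\iota^*\partial_s^k$ by $\iota^*\nabla_{\NV}^k$ plus $\rho$-corrections, and then quote the geodesic coefficients. This cannot work as stated, because $h_s$ and $h_r$ are built from \emph{different} tangential vector fields: $(h_s)_{ij}=g(V_i,V_j)$ with $[V_i,\mathfrak{X}]=0$, while $(h_r)_{ij}=g(W_i,W_j)$ with $[W_i,\partial_r]=0$, and $V_i\neq W_i$ off $M$ whenever $\sigma$ is not the distance function. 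The scalar translate formula only reparametrizes the normal direction; it does not convert $\iota^*\nabla_{\NV}^2\big(g(V_i,V_j)\big)$ into $\iota^*\partial_r^2\big(g(W_i,W_j)\big)$. The symptom is visible already in your claimed derivation of $h_{(2)}$: you write $L^2-R_{0\cdot\cdot0}+2\rho_0L=L^2-2HL-R_{0\cdot\cdot0}$ and then assert this equals $L\lo-R_{0\cdot\cdot0}$, but $L\lo=L(L-Hh)=L^2-HL$, so the two differ by $HL$. The proposed correction $2\rho_0L$ is simply not the right correction, and the discrepancy stems precisely from the unaccounted change of tangential transport. The paper avoids all of this by computing the Christoffel symbols and curvature components directly for the adapted metric $a^{-1}ds^2+h_s$ with $a=1-2s\rho$ — here the extra Christoffel symbols $\Gamma^0_{0j}=\tfrac12 g^{00}\partial_j(g_{00})$ and $\Gamma^0_{00}=\tfrac12 g^{00}g'_{00}$ are the source of the Hessian, $HR_{0ij0}$, $H(L\lo)$, and $L\rho'_0$ contributions — and this is the approach you should follow. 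Finally, your consistency check "$\rho\to0$ recovers \eqref{h-cubic}" forces $H=0$ and $\rho_0'=0$, so it cannot detect errors in precisely the terms that distinguish \eqref{h-adapted-cubic} from \eqref{h-cubic}.
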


The notation $\rho_0'$ will be justified in Lemma \ref{rho-01}.

\begin{proof} We recall the standard formulas
$$
   R^s_{ijk} = \Gamma_{jk}^l \Gamma_{il}^s - \Gamma_{ik}^l \Gamma_{jl}^s
   + \partial_i (\Gamma_{jk}^s) - \partial_j (\Gamma_{ik}^s),  \quad R_{ijkl} = R_{ijk}^s g_{sl}
$$
and                                     \index{$\Gamma_{ij}^k$ \quad Christoffel symbol}
$$
   \Gamma_{ij}^m = \frac{1}{2} g^{km}  (\partial_i (g_{jk}) + \partial_j (g_{ik}) - \partial_k (g_{ij}))
$$
for the components of the curvature tensor of a metric $g$.\footnote{As usual, we
sum over repeated indices and denote derivatives by a lower index.} For the metric
$g = dr^2 + h_r$, we find $\Gamma_{ij}^0 = -\frac{1}{2} \partial_r(h_r)_{ij}$. Hence
$L_{ij} = - \Gamma_{ij}^0 = \frac{1}{2} (h_{(1)})_{ij}$ on $M$. This proves $h_{(1)}
= 2L$. In order to verify the formula for $h_{(2)}$ in \eqref{h-geodesic}, we
calculate the components $R_{0ij0}$ of the curvature tensor of the metric $dr^2 +
h_r$. We decompose $R_{0jk}^0$ as
$$
   R_{0jk}^0 = \left( \Gamma_{jk}^l \Gamma_{0l}^0 - \Gamma_{0k}^l \Gamma_{jl}^0
  +  \partial_r (\Gamma_{jk}^0) -  \partial_j (\Gamma_{0k}^0) \right)
  + \Gamma_{jk}^0 \Gamma_{00}^0 - \Gamma_{0k}^0 \Gamma_{j0}^0,
$$
where the summations run only over tangential indices. Now, for the metric $g = dr^2 + h_r$, we
find the Christoffel symbols
$$
   \Gamma_{ij}^0 = - \frac{1}{2} g_{ij}', \quad \Gamma_{0j}^0 = 0,
   \quad \Gamma_{00}^0 = 0, \quad \Gamma_{0k}^l = \frac{1}{2} g^{rl} g_{kr}',
$$
where $^\prime$ denotes $\partial_r$. It follows that
\begin{equation}\label{curv-geo}
   R_{0jk}^0 = \frac{1}{4} g^{rl} g_{kr}' g_{jl}'  - \frac{1}{2} g_{jk}''.
\end{equation}
We evaluate this formula at $r=0$. Using $g_{jk}' = 2 L_{jk}$, we find
$$
   R_{0jk0} = R_{0jk}^0 = h^{rl} L_{kr} L_{jl}  - (h_{(2)})_{jk} = (L^2)_{jk} - (h_{(2)})_{jk}.
$$
This implies the formula for $h_{(2)}$ in \eqref{h-geodesic}. Next, we prove the
formula for the cubic term. First, we note that
$$
     \nabla_{\partial_r}(R)_{0jk0}  = \partial_r (R_{0jk0})
    - R(\partial_r,\nabla_{\partial_r}(\partial_j),\partial_k,\partial_r)
    - R(\partial_r,\partial_j,\nabla_{\partial_r}(\partial_k),\partial_r)
$$
using $\nabla_{\partial_r}(\partial_r) = 0$. Now \eqref{curv-geo} implies
\begin{align*}
   \partial_r (R_{0jk0})  = \partial_r (R_{0jk}^0 g_{00}) = \partial_r (R_{0jk}^0)
   =  \frac{1}{4} (g^{rl})'  g_{kr}' g_{jl}' + \frac{1}{4} g^{rl} g_{kr}'' g_{jl}'
  + \frac{1}{4} g^{rl} g_{kr}' g_{jl}''- \frac{1}{2} g_{jk}'''.
\end{align*}
Evaluation of this formula for $r=0$ yields
\begin{align*}
    \partial_r (R_{0jk0}) & = - 2 L^{rl} L_{kr} L_{jl} + h^{rl} (h_{(2)})_{kr} L_{jl} + h^{rl} L_{kr} (h_{(2)})_{jl}
    - 3 (h_{(3)})_{jk} \\
   & = - L_j^r R_{0kr0} - L_k^l R_{0jl0} - 3 (h_{(3)})_{jk}.
\end{align*}
The two remaining terms in the formula for $\partial_r (R_{0jk0})$ for $r=0$ are
$$
   - R_{0jl0} L_k^l - R_{0lk0} L_j^l.
$$
Thus, we find
$$
   \nabla_{\partial_r}(R)_{0jk0} = -2 L_j^r R_{0kr0} - 2 L_k^l R_{0jl0} - 3 (h_{(3)})_{jk}.
$$
This proves \eqref{h-cubic}.

Similarly, the formula for $h_s$ in \eqref{h-adapted} and \eqref{h-adapted-cubic}
follows from a calculation of the Christoffel symbols and the components $R_{0ij0} =
R_{0ij}^0 g_{00}$ of the curvature tensor of the metric
$$
    \eta^*(g) = \eta^*(|\NV|^2)^{-1} ds^2 + h_s = a^{-1} ds^2 + h_s
$$
with $a = \eta^*(|\NV|^2)$. In order to simplify the notation, we shall write $g$
instead of $\eta^*(g)$ and $\rho$ instead of $\eta^*(\rho)$. For the above metric,
we find the Christoffel symbols\footnote{As usual the prime means derivative in $s$.}
$$
   \Gamma_{ij}^0 = - \frac{1}{2} g^{00} g_{ij}',
   \quad  \Gamma_{0j}^0 = \frac{1}{2} g^{00} \partial_j (g_{00}),
   \quad \Gamma_{00}^0 = \frac{1}{2} g^{00} g_{00}',
   \quad \Gamma_{0k}^l = \frac{1}{2} g^{rl} g_{kr}'.
$$
Here $g^{00} = a = 1- 2s\rho$ (by assumption) and $g_{00} = a^{-1} = 1 + 2s\rho +
\cdots$. In particular, $g^{00}=1$, $g_{00}' = -2H$ and $(g^{00})' = 2H$ on $M$.
Here we used that $\rho = -H$ on $M$ (Lemma \ref{rho-01}). Hence $L_{ij} = -
\Gamma_{ij}^0 = \frac{1}{2} (h_{(1)})_{ij}$. The components $R_{0jk0}$ of the
curvature tensor are given by
$$
    R_{0jk0} = R_{0jk}^0 g_{00}
$$
with
\begin{align}\label{R-adapted}
   R_{0jk}^0 & = \frac{1}{2} \Gamma_{jk}^l g^{00} \partial_l (g_{00})
   + \frac{1}{4} g^{00} g^{rl} g_{kr}' g_{jl}'
   - \frac{1}{2} ((g^{00})' g_{jk}' + g^{00} g_{jk}'')  \notag\\
   & - \frac{1}{2} ( \partial_j (g^{00}) \partial_k(g_{00}) + g^{00} \partial^2_{kj}(g_{00})) \notag \\
   & - \frac{1}{4} (g^{00})^2  g_{jk}' g_{00}' - \frac{1}{4} (g^{00})^2 \partial_j(g_{00}) \partial_k(g_{00}).
\end{align}
Evaluation of this formula for $s=0$ gives
$$
    R_{0jk0} = h^{rl} L_{kr} L_{jl} - (h_{(2)})_{jk} - 2H L_{jk} + H L_{jk}
   = (L^2)_{jk} - H L_{jk} - (h_{(2)})_{jk}.
$$
But $L^2 - H L = L (L - H h) = L \lo$. Hence $R_{0jk0} = (L \lo)_{jk} -
(h_{(2)})_{jk}$. This implies the formula for $h_{(2)}$ in \eqref{h-adapted}.
Finally, we prove the formula for the cubic term. Here we utilize the expansions
$$
    g_{00} = (1-2 \rho s)^{-1} = 1 + 2\rho s + 4 \rho^2 s^2 + \cdots
   = 1 + 2\rho_0 s + (2\rho_0'+4\rho_0^2) s^2 + \cdots
$$
and
$$
  g^{00} = 1 -2\rho s = 1- 2\rho_0 s - 2\rho_0' s^2 + \cdots.
$$
We proceed as above and calculate $\nabla_{\partial_s} (R)_{0jk0}$ for $s=0$. In the
present case, it holds
\begin{align*}
    \nabla_{\partial_s}(R)_{0jk0} & = \partial_s (R_{0jk0})
    - R(\partial_s,\nabla_{\partial_s}(\partial_j),\partial_k,\partial_s)
   - R(\partial_s,\partial_j,\nabla_{\partial_s}(\partial_k),\partial_s) \\
    & - R(\nabla_{\partial_s}(\partial_s),\partial_j,\partial_k,\partial_s)
   - R(\partial_s,\partial_j,\partial_k,\nabla_{\partial_s}(\partial_s));
\end{align*}
note that $\nabla_{\partial_s}(\partial_s)$ does not vanish in general. First, we
calculate the term $\partial_s (R_{0jk0})$ for $s=0$ using \eqref{R-adapted}. We
obtain
\begin{align*}
   \partial_s (R_{0jk0}) & = \partial_s (R_{0jk}^0) - 2 H R_{0jk}^0 \\
   & = -\Gamma_{jk}^l \partial_l(H) + \frac{1}{2} H h^{rl} g_{kr}' g_{jl}'
   + \frac{1}{4} (g^{rl})' g_{kr}'  g_{jl}'
   + \frac{1}{4} h^{rl} g_{kr}'' g_{jl}'
   + \frac{1}{4} h^{rl} g_{kr}' g_{jl}''  \\
   & + 4 \rho_0' L_{jk} - 2 H g_{jk}'' - \frac{1}{2} g_{jk}''' + \partial^2_{kj}(H) \\
   & + \frac{1}{2} H g_{jk}'' - \rho_0' g_{jk}' - 2 H R_{0jk0}
\end{align*}
using the expansions of $g_{00}$ and $g^{00}$. We further simplify that sum by using
the known formula for the first derivative of $h_s$ and $\Hess_{jk} =
\partial^2_{jk} - \Gamma_{jk}^l \partial_l$. Then
\begin{align*}
   \partial_s (R_{0jk0}) & = \Hess_{jk}(H) + 2 H L_k^l L_{jl} - 2 L^{rl} L_{kr} L_{jl}
  +  L_j^r (h_{(2)})_{kr} + L_k^l (h_{(2)})_{jl} \\
   & + 4  L_{jk} \rho_0' - 4 H (h_{(2)})_{jk} - 3 (h_{(3)})_{jk} + H (h_{(2)})_{jk} - 2 L_{jk} \rho_0' \\
   & =  \Hess_{jk}(H) + 2 H (L^2)_{jk} - 2 (L^3)_{jk} +  L_j^r (h_{(2)})_{kr} + L_k^l (h_{(2)})_{jl} \\
   & -3 H (h_{(2)})_{jk} + 2  L_{jk} \rho_0' - 3 (h_{(3)})_{jk} - 2 H R_{0jk0}
\end{align*}
on $M$. The four remaining contributions to $\nabla_{\partial_s}(R)_{0jk0}$ for $s=0$ equal
\begin{align*}
   & - \Gamma_{0j}^l R_{0lk0} - \Gamma_{0k}^l R_{0jl0} - \Gamma_{00}^0 R_{0jk0} - \Gamma_{00}^0 R_{0jk0}  \\
  & =  - \frac{1}{2} g^{rl} g_{jr}' R_{0lk0} - \frac{1}{2} g^{rl} g_{kr}' R_{0jl0} - g^{00} g_{00}'  R_{0jk0} \\
   & = - L_j^l  R_{0lk0} - L_k^l R_{0jl0}  + 2 H R_{0jk0} \\
   & = -\lo_j^l  R_{0lk0} - \lo_k^l R_{0jl0}.
\end{align*}
Hence
\begin{align*}
   \nabla_{\partial_s}(R)_{0jk0} & = \Hess_{jk}(H) + 2 H (L^2)_{jk} - 2 (L^3)_{jk} \\
   & + L_j^r ((L \lo)_{kr} - R_{0kr0}) + L_k^l ((L \lo)_{jl} - R_{0jl0})  - 3 H ((L \lo)_{jk} - R_{0jk0}) \\
   & - \lo_j^l  R_{0lk0} - \lo_k^l R_{0jl0} + 2 L_{jk} \rho_0' - 3 (h_{(3)})_{jk} - 2 H R_{0jk0} \\
   & = \Hess_{jk}(H) - L_j^r  R_{0kr0} - L_k^l R_{0jl0} - 3 H (L \lo)_{jk} + 3 H R_{0jk0} \\
   &  -\lo_j^l  R_{0lk0} - \lo_k^l R_{0jl0} + 2  L_{jk} \rho_0' - 3 (h_{(3)}) _{jk} - 2 H R_{0jk0} \\
   & = \Hess_{jk}(H)  - 2 \lo_j^r  R_{0kr0} - 2 \lo_k^r R_{0jr0} - 3 H (L \lo)_{jk} - H R_{0jk0} \\
   & + 2 L_{jk}  \rho_0' - 3 (h_{(3)}) _{jk}
\end{align*}
by the formula for $h_{(2)}$. This proves \eqref{h-adapted-cubic}.
\end{proof}

Formula \eqref{h-geodesic} was given in \cite[(2.4)]{Graham-Yamabe} and the formula
\eqref{h-cubic} for the cubic coefficient was displayed in \cite[(2.11)]{GG}.\footnote{In our
conventions, $R$ and $L$ have opposite signs as in \cite{Graham-Yamabe, GG}.}

The proof of Proposition \ref{h-low-order} shows that $h_{(2)}$ depends only on $\rho_0$. Similarly, $h_{(3)}$
only depends on $\rho_0$ and $\rho_0'$. More generally, it is easy to see that $h_{(k)}$
only depends on $\partial_s^j(\rho)|_0$ for $j \le k-2$.

\begin{cor}\label{h3-trace}
Formula \eqref{h-adapted-cubic} for the cubic term $h_{(3)}$ of the expansion of $g$
in adapted coordinates yields
\begin{equation}\label{h12-trace-n}
   \tr (h_{(1)}) = 2 n H, \quad \tr(h_{(2)}) = |\lo|^2 - \Ric_{00}
\end{equation}
and
\begin{equation}\label{h3-trace-n}
   3 \tr (h_{(3)}) = - \nabla_{\partial_s}(\Ric)_{00} - 4 \lo^{ik} R_{0ik0} + \Delta H
   - H \Ric_{00} - 3 H|\lo|^2 + 2 n H \rho_0'.
\end{equation}
\end{cor}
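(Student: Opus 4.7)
The plan is to obtain all three trace identities by a direct contraction of the formulas established in Proposition~\ref{h-low-order}, namely \eqref{h-adapted} and \eqref{h-adapted-cubic}, with the induced metric $h^{ij}$. No new geometric input is required beyond the definitions $nH = \tr_h(L)$, $\lo = L - H h$, $\Ric_{00} = h^{ij} R_{0ij0}$, $\Delta = h^{ij}\Hess_{ij}$, and the fact that $\lo$ is trace-free.

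First I would dispatch $\tr(h_{(1)}) = 2nH$ immediately from $h_{(1)} = 2L$. For $\tr(h_{(2)})$, tracing \eqref{h-adapted} gives $\tr(h_{(2)}) = \tr_h(L\lo) - h^{ij} R_{0ij0}$; the second term is $\Ric_{00}$ by definition, and for the first term the key algebraic identity is
\begin{equation*}
   \tr_h(L\lo) = h^{ij} h^{ks} L_{ik}\lo_{js} = (\lo + Hh)^{ks}\lo_{ks} = |\lo|^2,
\end{equation*}
where the $Hh$-contribution vanishes because $\lo$ is trace-free. This already yields the second identity in the statement.

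The cubic identity is obtained by the same mechanism applied to \eqref{h-adapted-cubic} term by term. The only step worth pausing over is the first: one needs $h^{ij} \nabla_{\partial_s}(R)_{0ij0} = \nabla_{\partial_s}(\Ric)_{00}$. In adapted coordinates this is legitimate because $h^{ij}$ on the boundary can be extended using $h_s^{ij}$, and the derivative of this contraction minus $\nabla_{\partial_s}(\Ric_{00})$ is a combination of terms of the form $(\partial_s h^{ij})R_{0ij0}$ evaluated at $s=0$; since this will contribute a term proportional to $\lo^{ij} R_{0ij0}$ that gets absorbed into the adjacent coefficient, one must just be careful about bookkeeping at $s=0$. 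The remaining terms are routine: the two Codazzi-type terms $-2\lo_i^k R_{0jk0}$ and $-2\lo_j^k R_{0ik0}$ each contribute $-2\lo^{ik} R_{0ik0}$; the Hessian term contributes $\Delta H$; the $-HR_{0ij0}$ term contributes $-H\Ric_{00}$; the $-3H(L\lo)_{ij}$ term contributes $-3H|\lo|^2$ by the same trace argument as above; and $2L_{ij}\rho_0'$ contributes $2nH\rho_0'$.

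The main (minor) obstacle is thus ensuring the conventions align: specifically verifying $\tr_h(L\lo)=|\lo|^2$, which is the recurrent simplification, and handling the covariant derivative of the $(0,2)$-Ricci contraction consistently with the adapted coordinate frame. Once these are settled, the three identities follow by inspection.
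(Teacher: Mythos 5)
Your proof is correct and takes the same route as the paper, which states the corollary without proof precisely because it is a direct trace contraction of \eqref{h-adapted} and \eqref{h-adapted-cubic} using $h_{(1)}=2L$, $\tr_h(L\lo)=|\lo|^2$, and $h^{ij}R_{0ij0}=\Ric_{00}$. One small clarification on the step you flagged: the identity $h^{ij}\nabla_{\partial_s}(R)_{0ij0}=\nabla_{\partial_s}(\Ric)_{00}$ at $s=0$ holds with no $(\partial_s h^{ij})$ correction, because the covariant derivative commutes with the metric trace (so $\nabla_{\partial_s}(\Ric)_{00}=g^{\mu\nu}\nabla_{\partial_s}(R)_{0\mu\nu 0}$), the purely-normal component $\nabla_{\partial_s}(R)_{0000}$ vanishes by the antisymmetry of $\nabla R$ in its first pair of arguments, and $g^{ij}|_{s=0}=h^{ij}$; no additional bookkeeping is needed.
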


If $g_+ = r^{-2} g = r^{-2} (dr^2 + h_r)$ is a Poincar\'e-Einstein metric in normal
form relative to $h = h_0$, then $h_{(1)}=0$ and $h_{(2)} = -\Rho^h$. Comparing this
result with \eqref{h-geodesic}, implies that $R^g_{0ij0} = \Rho^h_{ij}$. Hence
$\Ric^g_{00} = \J^h$.

We illustrate the above results in some simple model cases.

\begin{example}\label{ball}
Let $S^n \subset \R^{n+1}$ be defined by $|x|=1$. Let $g_0$ be the Euclidean metric
on $\R^{n+1}$. Then the function $\sigma = (1-|x|^2)/2$ yields the Poincar\'e metric
$$
   \sigma^{-2} g_0 = \frac{4}{(1-|x|^2)^2} \sum_{i=1}^{n+1} dx_i^2
$$
on the unit ball $|x|<1$. Now $\NV = \grad(\sigma) = - \sum_i x_i \partial_i$. Hence the
normalized gradient field $\mathfrak{X} = \NV/|\NV|^2$ is given by $\mathfrak{X} =
-\frac{1}{|x|^2} \sum_i x_i \partial_i$. It follows that the map
$$
   \eta: (-1/2,1/2) \times S^n \ni (s,x) \mapsto \sqrt{1-2s} x \in \R^{n+1}
$$
defines the adapted coordinates. Note that $\eta^*(\sigma)=s$. Hence we obtain
$$
   \eta^*(g_0) = \frac{1}{1-2s} ds^2 + (1-2s) h,
$$
where the round metric $h$ on $S^n$ is induced by $g$. In particular, $h_s = h -
2sh$, i.e., $h_{(1)} = -2h$ and $h_{(2)}=0$ The coefficient $h_{(1)}$ is to be
interpreted as $2L$ being defined by the unit normal field $-\partial_r$. The
vanishing of the quadratic and the cubic term is confirmed by the general formulas.
Note also that $\rho = 1$. It follows that $v(s)=(1-2s)^{\frac{n-1}{2}}$ and
$\mathring{v}(s) = (1-2s)^{\frac{n}{2}}$. These results confirm the relation \eqref{Basic-R}
using $a=1-2s$. Moreover, the identity \eqref{van-id} in Conjecture \ref{van} is trivially satisfied.
\end{example}

\begin{example}\label{subsphere}
Let $S^n \subset S^{n+1}$ be an equatorial subsphere. Let $g$ be the round metric on
$S^{n+1}$. Then the height-function $\sigma = \He \in C^\infty(S^{n+1})$ defines the
metric $\He^{-2} g$ on both connected components of the complement of the zero locus
of $\He$. It is isometric to the Poincar\'e-metric on the unit ball (see Section
\ref{f-comm}). The map
$$
   \eta: (-1,1) \times S^n \ni (s,x) \mapsto (\sqrt{1-s^2}x,s) \in S^{n+1}
$$
defines the adapted coordinates. Note that $\eta^*(\sigma)=s$. Hence we obtain
$$
   \eta^*(g) = \frac{1}{1-s^2} ds^2 + (1-s^2) h,
$$
where the round metric $h$ on $S^n$ is induced by $g$. In particular, $h_{(1)} = 0$
and $h_{(2)}= -h$. The coefficient $h_{(1)}$ vanishes since $L=0$ and the
coefficient $h_{(2)}$ is to be interpreted as $-R_{0ij0}$. The vanishing of the
cubic term is confirmed by the general formula. Note that $\He$ is an eigenfunction
of the Laplacian on $S^{n+1}$ with eigenvalue $-(n+1)$ and $\J = \frac{n+1}{2}$.
Hence $\rho=\frac{1}{2} \He$ and $\eta^*(\rho) = \frac{1}{2} s$. Finally, we have
$v(s)=(1-s^2)^{\frac{n-1}{2}}$ and $\mathring{v}(s) = (1-s^2)^{\frac{n}{2}}$.
These results confirm the relation \eqref{Basic-R} using $a = 1-s^2$. Moreover,
the identity \eqref{van-id} in Conjecture \ref{van} reduces to the trivial relation
$$
    \binom{\frac{n-1}{2}}{\frac{n+1}{4}} (-1)^{\frac{n+1}{4}}
   + \binom{\frac{n-1}{2}}{\frac{n-3}{4}} (-1)^{\frac{n-3}{4}} = 0.
$$
\end{example}

In the above two examples, either the curvature of the background metric or the
second fundamental form vanishes. We finish this section with the discussion of a
model case with non-trivial curvature and non-trivial second fundamental form.

\begin{example}\label{mixed}
Let $\mathbb{H}^{n+1}$ be the upper half-space with the hyperbolic metric $g_+ =
r^{-2} (dr^2 + dx^2)$. For $c>0$, we let $X = \{r \ge c\}$ with boundary $M = \{ r=c
\}$ and background metric $g=g_+$. The metric $g$ restricts to $h = c^{-2} dx^2$ on
$M$. The defining function $\sigma = 1 - c/r$ is smooth up to the boundary $M$. It
solves the singular Yamabe problem since $\sigma^{-2} g_+ = (r-c)^{-2} (dr^2 +
dx^2)$ has scalar curvature $-n(n+1)$. The map
$$
   \eta: (0,1) \times \R^n \ni (s,x) \mapsto \left(\frac{c}{1-s},x\right) \in X
$$
defines the adapted coordinates. In fact, $\eta^*(\sigma) = s$. We obtain the normal form
$$
   \eta^*(g) = \frac{1}{(1-s)^2} ds^2 + \frac{(1-s)^2}{c^2} dx^2
$$
of $g$ in adapted coordinates. In particular, $h_{(1)} = -2h$, $h_{(2)} = h$ and
$h_{(3)} = 0$. These results fit with the general formulas in Proposition
\ref{h-low-order} since $L=-h$ and $R_{0ij0} = -h_{ij}$. The vanishing
of $h_{(3)}$ follows using $\lo = 0$, $H = -1$ and $\Rho_{00} = - 1/2$. A
calculation using the formula $\Delta_{g_+} = r^2 \partial_r^2 - (n-1) r \partial_r
+ \Delta_{\R^n}$ for the Laplacian of $g_+$ yields
$$
   \rho = \frac{1}{2} + \frac{c}{2r}.
$$
Hence $\eta^*(\rho) = 1- s/2$. Note that $\NV = \grad_g(\sigma) = -c \partial_r$,
$|\NV|^2 = r^{-2} c^2$ and $\eta^*(|\NV|^2) = (1-s)^2 \stackrel{!}{=} 1-
2s\eta^*(\rho)$. Finally, we have $v(s) = (1-s)^{n-1}$ and $\mathring{v}(s) =
(1-s)^n$. By $\J = - \frac{n+1}{2}$, the identity \eqref{van-id} in Conjecture
\ref{van} reduces to the trivial relation
$$
    \binom{n-1}{\frac{n+1}{2}} (-1)^{\frac{n+1}{2}}
    - \binom{n-1}{\frac{n-3}{2}} (-1)^{\frac{n-3}{2}} = 0.
$$
Finally, these results confirm the relation \eqref{Basic-R} using $a=(1-s)^2$.
\end{example}


\subsection{Approximate solutions of the singular Yamabe problem. The residue formula}\label{appY}


In the present section, we determine the first few terms in the expansion
$\sigma_F = r + \sigma_{(2)} r^2 + \dots$ of a solution of the equation
\begin{equation}\label{start-sol}
   \SC(g,\sigma_F) = 1+O(r^{n+1}).
\end{equation}
We also describe the obstruction $\B_n$ in terms of a formal residue of the supercritical
term $\sigma_{n+2}$.

By \eqref{Y-F}, equation \eqref{start-sol} takes the form
\begin{align*}
  & \partial_r(\sigma_F)^2 + h_r^{ij} \partial_i (\sigma_F) \partial_j (\sigma_F) \notag \\
  & - \frac{2}{n+1} \sigma_F \left (\partial_r^2 (\sigma_F) + \frac{1}{2} \tr (h_r^{-1} h_r') \partial_r (\sigma_F)
  + \Delta_{h_r} (\sigma_F) + \bar{\J} \sigma_F \right) = 1+ O(r^{n+1}).
\end{align*}
Here and in the following, we use the bar notation for curvature quantities of $g$.

We shall formulate the results in terms of the volume coefficients $u_k$ of the metric $g$
in geodesic normal coordinates (see \eqref{v-geo}) using
\begin{equation}\label{volume-geodesic}
   \frac{u'(r)}{u(r)} = \frac{1}{2} \tr (h_r^{-1} h_r').
\end{equation}
The following results describe the first three Taylor coefficients of a solution $\sigma_F$ of the 
singular Yamabe problem in general dimensions.

\index{$\sigma_{(2)}$, $\sigma_{(3)}$, $\sigma_{(4)}$}

\begin{lem}\label{sigma-v} It holds
\begin{align*}
    \sigma_{(2)} & = \frac{1}{2n} u_1,  \\
    \sigma_{(3)} & =  \frac{2}{3(n-1)} u_2 - \frac{1}{3n} u_1^2 + \frac{1}{3(n-1)} \bar{\J}
\end{align*}
and
\begin{align}\label{sigma4-g}
   \sigma_{(4)} & = \frac{3}{4(n-2)} u_3 - \frac{9n^2-20n+7}{12n(n-1)(n-2)} u_1 u_2
   + \frac{6n^2-11n+1}{24n^2(n-2)} u_1^3 \notag \\
   & + \frac{2n-1}{6n(n-1)(n-2)} u_1 \bar{\J} + \frac{1}{4(n-2)} \bar{\J}' + \frac{1}{4(n-2)}  \Delta (\sigma_{(2)}).
\end{align}
\end{lem}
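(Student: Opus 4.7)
The plan is to substitute the ansatz $\sigma_F = r + \sigma_{(2)} r^2 + \sigma_{(3)} r^3 + \sigma_{(4)} r^4 + O(r^5)$ into the expansion \eqref{Y-F} of $\SC(g,\sigma_F)$ and match coefficients of $r^1$, $r^2$ and $r^3$. Since the coefficients $\sigma_{(k)}$ depend only on the tangential variable $x \in M$, the terms $h_r^{ij}\partial_i(\sigma_F)\partial_j(\sigma_F)$ and $\Delta_{h_r}(\sigma_F)$ contribute only at orders $\geq r^4$ and $\geq r^2$ respectively, so the lowest orders involve only the $r$-derivatives of $\sigma_F$ together with the expansions of $\tr(h_r^{-1}h_r')$ and of $\bar{\J} = \bar{\J}_0 + r\bar{\J}'_0 + \cdots$. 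The relation \eqref{volume-geodesic} gives the key expansion
\[
   \tfrac{1}{2}\tr(h_r^{-1}h_r') = \frac{u'(r)}{u(r)} = u_1 + (2u_2 - u_1^2) r + (3u_3 - 3u_1 u_2 + u_1^3) r^2 + \cdots,
\]
obtained by formally inverting $u(r) = 1 + u_1 r + u_2 r^2 + u_3 r^3 + \cdots$. By the general structural statement in the body of Section \ref{Yamabe}, matching the coefficient of $r^k$ for $k \leq n$ produces a relation of the form $(k-1-n)\sigma_{(k+1)} = \text{LOT}$, which is uniquely solvable because of the non-vanishing prefactor.

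For $k=1$ the only contributions come from $(\partial_r \sigma_F)^2$, from $\sigma_F \partial_r^2 \sigma_F$, and from $\sigma_F \cdot \tfrac{1}{2}\tr(h_r^{-1}h_r') \cdot \partial_r \sigma_F$; collecting them gives
\[
   4\sigma_{(2)} - \tfrac{4}{n+1}\sigma_{(2)} - \tfrac{2}{n+1} u_1 = 0,
\]
which simplifies to $\sigma_{(2)} = u_1/(2n)$. For $k=2$, the computation picks up additional contributions from $\bar{\J}_0 \sigma_F^2$, from the subleading coefficient of $\tr(h_r^{-1}h_r')$, and from the quadratic term in $(\partial_r\sigma_F)^2$; substituting $\sigma_{(2)} = u_1/(2n)$ and clearing denominators yields $\sigma_{(3)} = \tfrac{2}{3(n-1)} u_2 - \tfrac{1}{3n} u_1^2 + \tfrac{1}{3(n-1)}\bar{\J}$ after a short simplification.

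The case $k=3$ determining $\sigma_{(4)}$ is the main obstacle: at this order one must collect (i) the cubic cross term $12\sigma_{(2)}\sigma_{(3)}$ from $(\partial_r\sigma_F)^2$, (ii) the first appearance of $\Delta_{h_r}(\sigma_F)$, which contributes $-\tfrac{2}{n+1}\Delta(\sigma_{(2)})$, (iii) the $r$-derivative $\bar{\J}'_0$ of $\bar{\J}$ paired with the linear part of $\sigma_F^2$, (iv) the $r^2$-coefficient $3u_3 - 3u_1 u_2 + u_1^3$ of $\tfrac{1}{2}\tr(h_r^{-1}h_r')$ paired with the constant term of $\partial_r\sigma_F$, and several lower-weight cross terms. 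Substituting the previously determined expressions for $\sigma_{(2)}$ and $\sigma_{(3)}$ and carefully grouping the resulting rational coefficients over the common denominator $n(n-1)(n-2)$ produces precisely the formula \eqref{sigma4-g}. The arithmetic here is tedious but entirely mechanical; the bookkeeping of the cubic combinations $u_1^3$, $u_1 u_2$, $u_1 \bar{\J}$, and the separation of the tangential derivative contributions $\Delta(\sigma_{(2)})$ and $\bar{\J}'$ from the purely normal data constitute the one delicate part of the argument.
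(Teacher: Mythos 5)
Your approach — expanding $\SC(g,\sigma_F)$ via \eqref{Y-F} in powers of $r$, using $\frac{1}{2}\tr(h_r^{-1}h_r') = u'/u$, and solving the resulting relations $(k-1-n)\sigma_{(k+1)} = \text{LOT}$ order by order — is exactly the method the paper sets up in Section \ref{Yamabe}, and the coefficient equations you record at orders $r$ and $r^2$ are correct and do yield the stated $\sigma_{(2)}$ and $\sigma_{(3)}$. The order-$r^3$ bookkeeping you describe (the cross term $12\sigma_{(2)}\sigma_{(3)}$ from $(\partial_r\sigma_F)^2$, the first appearance of $\Delta_{h_r}(\sigma_F)$ contributing $-\tfrac{2}{n+1}\Delta(\sigma_{(2)})$, the pairing of $\bar{\J}'_0$ with $\sigma_F^2$, and $3u_3-3u_1u_2+u_1^3$ paired with $\partial_r\sigma_F$) is also correct and, after substituting $\sigma_{(2)}$ and $\sigma_{(3)}$ and collecting over the common denominator, reproduces \eqref{sigma4-g}; this matches the paper's intended (but unwritten) computation.
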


Alternatively, the above formulas can be derived from the description of the solution $\sigma$ in \cite[Appendix]{GGHW}. 
In fact, these formulas describe expansions of $\sigma$ into power series of any defining function with coefficients that live
on the background space $X$. The calculation then requires expanding these coefficients into power series of the distance
function. We omit the details.

The volume coefficients $u_j$ may be expressed in terms of the Taylor coefficients of $h_r$. Such
relations follow from \eqref{volume-geodesic} by Taylor expansion in $r$ and resolving the resulting relations
for $u_j$. We find
\begin{align*}
    u_1 & = \frac{1}{2} \tr (h_{(1)}), \\
    u_2 & = \frac{1}{8} (\tr (h_{(1)})^2 + 4 \tr (h_{(2)}) - 2 \tr (h_{(1)}^2))
\end{align*}
and
\begin{align*}
     u_3 & = \frac{1}{48} (\tr (h_{(1)})^3 + 12 \tr (h_{(1)}) \tr (h_{(2)}) + 24 \tr (h_{(3)})
    - 6 \tr(h_{(1)}) \tr (h_{(1)}^2) \\ & - 24 \tr(h_{(1)} h_{(2)}) + 8 \tr (h_{(1)}^3)).
\end{align*}
These formulas are valid in general dimensions. The expressions for $h_{(j)}$ (for $j \le 3$) in
Proposition \ref{h-low-order} imply
\begin{align*}
   u_1 & = n H, \\
   2 u_2 & =  -\overline{\Ric}_{00} - |L|^2 + n^2 H^2 = \overline{\Ric}_{00} + \scal - \overline{\scal}
\end{align*}
(by the Gauss equation) or equivalently
\begin{equation}\label{u2}
   2 u_2 =  -\overline{\Ric}_{00} - |\lo|^2 + n(n-1) H^2.
\end{equation}
Moreover, we find
\begin{equation}\label{u3}
   6 u_3 = - \bar{\nabla}_0 (\overline{\Ric})_{00} + 2 (L,\bar{\G}) - 3 n H \overline{\Ric}_{00}
  + 2 \tr(L^3) - 3n H |L|^2 + n^3 H^3,
\end{equation}
where $\bar{\G}_{ij} \st \bar{R}_{0ij0}$, or equivalently                        \index{$\bar{\G}$}
\begin{align*}
   6 u_3 & = -\bar{\nabla}_0 (\overline{\Ric})_{00} + 2 (\lo,\bar{\G}) - (3n-2) H \overline{\Ric}_{00} \\
   & + 2 \tr (\lo^3) - 3(n-2) H |\lo|^2 + n(n-1)(n-2) H^3.
\end{align*}
For these formulas for $u_1,u_2,u_3$, see also \cite[(2.14)]{GG}.

Note that
\begin{equation}\label{det-2}
   - |L|^2 + n^2 H^2
   = \sum_{ij} \begin{vmatrix}  L_{ii}  & L_{ij}  \\ L_{ji}  & L_{jj}  \end{vmatrix} = 2 \sigma_2(L)
\end{equation}
and                            \index{$\sigma_k(L)$ \quad elementary symmetric polynomial}
\begin{equation}\label{det-3}
   2 \tr(L^3) - 3n H |L|^2 + n^3 H^3
   = \sum_{ijk} \begin{vmatrix}  L_{ii}  & L_{ij}  & L_{ik} \\ L_{ji}  & L_{jj} & L_{jk} \\
   L_{ki} & L_{kj} & L_{kk}   \end{vmatrix} = 6 \sigma_3(L)
\end{equation}
in an orthonormal basis. Here $\sigma_k(L)$ denotes the $k$-th elementary symmetric polynomial
in the eigenvalues of the shape operator. The identities \eqref{det-2} and \eqref{det-3} are special
cases of Newton's identities relating elementary symmetric polynomials to power series.

In these terms, we have
\begin{align*}
    2 u_2 & = -\overline{\Ric}_{00} + 2 \sigma_2(L) \\
    6 u_3 & = -\bar{\nabla}_0 (\overline{\Ric})_{00} + 2 (\lo,\bar{\G}) - (3n-2) H \overline{\Ric}_{00}
   + 6 \sigma_3(L).
\end{align*}
Note also that versions of the Gauss identity express the quantities $\sigma_2(L)$ and $\sigma_3(L)$
in terms of the curvatures of $g$ and of the induced metric on $M$ \cite[Corollary 3.2]{AGV}.
Finally, we note that \eqref{det-3} vanishes in $n=2$ - this identity is equivalent to $\tr(\lo^3) = 0$ in $n=2$.

The above results imply

\begin{lem}\label{sigma23} It holds $2 \sigma_{(2)} = H$ and
\begin{equation}\label{sigma3}
    6 \sigma_{(3)}  = -2 \bar{\Rho}_{00} - \frac{2}{n-1} |\lo|^2
   = 2( \J - \iota^* \bar{\J}) - \frac{1}{n-1} |\lo|^2 - n H^2
\end{equation}
for $n \ge 2$.
\end{lem}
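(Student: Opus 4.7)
The plan is to read both identities directly off Lemma \ref{sigma-v} by substituting the explicit expressions for the volume coefficients $u_1, u_2$ recorded just before the statement, then to convert from the ambient Ricci tensor to the ambient Schouten tensor, and finally to invoke the Gauss equation of Lemma \ref{g12} to obtain the alternative form.

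First, the identity $u_1 = nH$ combined with $\sigma_{(2)} = \frac{1}{2n} u_1$ immediately yields $2\sigma_{(2)} = H$. For $\sigma_{(3)}$, I substitute $u_1 = nH$ and $2 u_2 = -\overline{\Ric}_{00} - |\lo|^2 + n(n-1) H^2$ into
$$
\sigma_{(3)} = \frac{2}{3(n-1)} u_2 - \frac{1}{3n} u_1^2 + \frac{1}{3(n-1)} \bar{\J}.
$$
The two $H^2$-contributions (from $u_2$ and from $u_1^2$) cancel, and one is left with
$$
6 \sigma_{(3)} = \frac{2}{n-1}\bigl(-\overline{\Ric}_{00} + \bar{\J}\bigr) - \frac{2}{n-1} |\lo|^2.
$$

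Next, since $X$ has dimension $n+1$, the Schouten tensor of $g$ obeys $(n-1) \bar{\Rho} = \overline{\Ric} - \bar{\J}\, g$ by the convention in Section \ref{notation}, so in particular $(n-1) \bar{\Rho}_{00} = \overline{\Ric}_{00} - \bar{\J}$. This converts the previous display into the first equality in \eqref{sigma3}:
$$
6 \sigma_{(3)} = -2 \bar{\Rho}_{00} - \frac{2}{n-1} |\lo|^2.
$$

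For the second equality, I apply Lemma \ref{g12}, which gives $\bar{\Rho}_{00} = \iota^*\bar{\J} - \J - \frac{1}{2(n-1)} |\lo|^2 + \frac{n}{2} H^2$ (with $\J = \J^h$). Substituting this into the previous line and simplifying produces
$$
6 \sigma_{(3)} = 2(\J - \iota^*\bar{\J}) - \frac{1}{n-1} |\lo|^2 - n H^2,
$$
which is the remaining claim. There is no real obstacle here beyond careful bookkeeping of the $n$-factors and signs; the only subtlety is remembering that the ambient dimension is $n+1$, so the normalization constant in the Schouten tensor is $n-1$ rather than $n-2$.
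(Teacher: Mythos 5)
Your argument is correct and tracks the paper's own (largely implicit) route: the paper states Lemma~\ref{sigma-v} and the explicit values $u_1 = nH$, $2u_2 = -\overline{\Ric}_{00}-|\lo|^2+n(n-1)H^2$ precisely so that the first equality of \eqref{sigma3} drops out by the cancellation and Schouten-conversion you perform (noting, as you do, that the ambient dimension is $n+1$, so $(n-1)\bar{\Rho}_{00}=\overline{\Ric}_{00}-\bar{\J}$), and the paper explicitly says the second equality follows from the first by \eqref{G2}, which is exactly your use of Lemma~\ref{g12}. No gaps.
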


The second equality follows by combining the first equality with \eqref{G2}.

The results of Lemma \ref{sigma23} are contained in \cite[(2.6) and Section 4]{Graham-Yamabe}. See also
\cite[(2.16)--(2.19)]{GG}.\footnote{These references use a different convention for $L$ and $H$.} 

The above formulas for $v_j$ are equivalent to the corresponding formulas in \cite[Theorem 3.4]{AGV}. We
also refer to \cite[Theorem 9.22, Problem 9.1]{Gray} for the corresponding results in higher codimensions.
However, the methods of proof in these references are different.

For $n=1$, the above results easily imply
\begin{equation}\label{B1=0}
   \B_1 = (r^{-2} (\SC(S_2)-1))|_0 = -2u_2 - \bar{\J}_0 = \overline{\Ric}_{00} - \bar{\J}_0 = 0
\end{equation}
using $\lo=0$ (see also Remark \ref{B1}). This result is well-known \cite{Graham-Yamabe}.


It also is of interest to explicate the above formulas for flat backgrounds. In fact, it follows from the identity
\begin{equation}\label{volume-flat}
   u(r) = \det (\id + r L)
\end{equation}
(see \cite[Section 3.4]{Gray}) for a flat background that the formulas for $\sigma_{(k)}$ (for $k \le 4$)
can be expressed in terms of $L$. Newton's identities imply
\begin{align*}
   u_1 & = n H, \\
   u_2 & = \frac{1}{2} (n (n-1) H^2 - |\lo|^2), \\
   u_3 & = \frac{1}{6} ( H^3 n(n-1)(n-2) - 3 (n-2) H |\lo|^2 + 2 \tr(\lo^3))
\end{align*}
and a direct calculation yields the following result.

\begin{lem}\label{sigma-flat} The expansion of a solution of the Yamabe problem for $M^n \hookrightarrow \R^{n+1}$
has the form
$$
  \sigma_F = r + \frac{r^2}{2} H - \frac{r^3}{3(n-1)} |\lo|^2 + r^4 \sigma_{(4)} + \cdots
$$
with the coefficient
\begin{align*}
   \sigma_{(4)}  & = \frac{1}{24(n-2)} \left(6 \tr(\lo^3) + \frac{7n-11}{n-1} H |\lo|^2 + 3 \Delta(H)\right).
\end{align*}
\end{lem}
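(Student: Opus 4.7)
The plan is to specialize Lemma \ref{sigma-v} to the flat case by setting $\bar{\J}=0$ and $\bar{\J}'=0$, and then substitute explicit expressions for the volume coefficients $u_1,u_2,u_3$ derived from the identity $u(r)=\det(\id + rL)$ in \eqref{volume-flat}. Expanding this determinant and applying Newton's identities yields
\[
    u_1 = nH, \qquad u_2 = \tfrac{1}{2}\bigl(n(n-1)H^2 - |\lo|^2\bigr),
\]
\[
    u_3 = \tfrac{1}{6}\bigl(n(n-1)(n-2) H^3 - 3(n-2) H |\lo|^2 + 2\tr(\lo^3)\bigr),
\]
as already recorded in the excerpt just before the statement. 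The first two Taylor coefficients $\sigma_{(2)} = H/2$ and $\sigma_{(3)} = -|\lo|^2/(3(n-1))$ are then immediate consequences of Lemma \ref{sigma-v} after the cancellation $\frac{1}{3(n-1)}\bigl(n(n-1)H^2\bigr) - \frac{n H^2}{3} = 0$.

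For $\sigma_{(4)}$, I would separate the contribution to \eqref{sigma4-g} into monomials in the independent quantities $H^3$, $H|\lo|^2$, $\tr(\lo^3)$, and $\Delta(H)$, using that $\bar{\J}=\bar{\J}'=0$ and $\Delta(\sigma_{(2)}) = \tfrac{1}{2}\Delta(H)$ in the flat case. The coefficient of $\tr(\lo^3)$ reads $\tfrac{3}{4(n-2)}\cdot\tfrac{1}{3} = \tfrac{1}{4(n-2)}$, and the coefficient of $\Delta(H)$ is $\tfrac{1}{8(n-2)}$; both match the claim. The coefficient of $H|\lo|^2$ is
\[
   -\tfrac{3}{4(n-2)}\cdot\tfrac{1}{2} + \tfrac{9n^2-20n+7}{12n(n-1)(n-2)}\cdot\tfrac{n}{2}
   = -\tfrac{3}{8(n-2)} + \tfrac{9n^2-20n+7}{24(n-1)(n-2)},
\]
which simplifies using $9n^2-20n+7 = 9(n-1)(n-2)+(7n-11)$ to the desired $\tfrac{7n-11}{24(n-1)(n-2)}$.

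The main (and really only) obstacle is verifying that the $H^3$ contributions cancel. Those terms produce
\[
   \tfrac{n(n-1)}{8} - \tfrac{(9n^2-20n+7)n}{24(n-2)} + \tfrac{(6n^2-11n+1)n}{24(n-2)}.
\]
The factorization $-(9n^2-20n+7)+(6n^2-11n+1) = -3n^2+9n-6 = -3(n-1)(n-2)$ collapses the last two summands to $-\tfrac{n(n-1)}{8}$, which exactly cancels the first summand. Once this algebraic identity is checked, the assembly of the remaining three contributions into the stated closed form is straightforward, and the lemma follows. No non-trivial geometric input is needed beyond \eqref{volume-flat} and the general formulas in Lemma \ref{sigma-v}; the entire argument is a controlled substitution combined with the cancellation of $H^3$.
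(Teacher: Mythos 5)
Your approach — specializing Lemma \ref{sigma-v} to a flat background by setting $\bar{\J}=\bar{\J}'=0$ and substituting $u_1,u_2,u_3$ obtained from $u(r)=\det(\id+rL)$ via Newton's identities — is exactly the route the paper takes (the paper's own ``proof'' is just ``a direct calculation''), and your handling of the $H^3$ cancellation via $-(9n^2-20n+7)+(6n^2-11n+1)=-3(n-1)(n-2)$ is correct.

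There is, however, an arithmetic slip in your $H|\lo|^2$ bookkeeping that renders the displayed line false as written. The coefficient of $H|\lo|^2$ inside $u_3$ is $-\tfrac{n-2}{2}$, not $-\tfrac{1}{2}$, so the contribution from $\tfrac{3}{4(n-2)}u_3$ is
\[
   \tfrac{3}{4(n-2)}\cdot\Bigl(-\tfrac{n-2}{2}\Bigr) = -\tfrac{3}{8},
\]
not $-\tfrac{3}{8(n-2)}$. With your value the sum $-\tfrac{3}{8(n-2)}+\tfrac{9n^2-20n+7}{24(n-1)(n-2)}$ equals $\tfrac{9n^2-29n+16}{24(n-1)(n-2)}$, which only agrees with the target $\tfrac{7n-11}{24(n-1)(n-2)}$ when $n=3$. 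Using the correct constant $-\tfrac{3}{8}=-\tfrac{9(n-1)(n-2)}{24(n-1)(n-2)}$, the identity $9n^2-20n+7 = 9(n-1)(n-2)+(7n-11)$ you already invoked does exactly the right cancellation and yields $\tfrac{7n-11}{24(n-1)(n-2)}$ for all $n$. So the structure of the argument is sound; only this one factor needs to be repaired.
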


Note that the coefficients in formula \eqref{sigma4-g} for $\sigma_{(4)}$ have a simple pole in $n=2$. The following result
calculates the formal residue at $n=2$.

\begin{cor}\label{sigma4-res} It holds
$$
   \res_{n=2}(\sigma_{(4)}) = \frac{3}{4} u_3 + \frac{1}{32} u_1^3 - \frac{1}{8} u_1 u_2
  + \frac{1}{4} u_1 \bar{\J} + \frac{1}{4} \bar{\J}' + \frac{1}{4} \Delta (\sigma_{(2)}).
$$
For a flat background, we obtain
$$
     \res_{n=2}(\sigma_{(4)}) = \frac{1}{8} (H |\lo|^2 + \Delta (H)).
$$
\end{cor}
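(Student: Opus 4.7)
The plan is a direct two-step calculation. The first assertion is purely algebraic: each of the six summands in the formula for $\sigma_{(4)}$ displayed in Lemma \ref{sigma-v} has a simple pole at $n=2$ coming from the factor $1/(n-2)$ (the remaining rational factors in $n$ are regular there). Hence I would extract $\res_{n=2}(\sigma_{(4)})$ termwise by computing $\lim_{n\to 2}(n-2)\sigma_{(4)}$. The three polynomial coefficients specialize as
\begin{align*}
\left.\frac{9n^2-20n+7}{12n(n-1)}\right|_{n=2}&=\frac{3}{24}=\frac{1}{8},\\
\left.\frac{6n^2-11n+1}{24n^2}\right|_{n=2}&=\frac{3}{96}=\frac{1}{32},\\
\left.\frac{2n-1}{6n(n-1)}\right|_{n=2}&=\frac{3}{12}=\frac{1}{4},
\end{align*}
while the remaining three coefficients $3/4$, $1/4$, $1/4$ are already constant. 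Collecting these values reproduces the first displayed identity.

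For the second assertion I would specialize the first to a flat background of dimension $n=2$. Here several ingredients collapse: $\bar\J=0$ and $\bar\J'=0$, the formula $u_1=nH$ gives $u_1=2H$, the formula $2u_2=n(n-1)H^2-|\lo|^2-\overline{\Ric}_{00}$ gives $u_2=H^2-\tfrac12|\lo|^2$, and the formula $6u_3=n(n-1)(n-2)H^3-3(n-2)H|\lo|^2+2\tr(\lo^3)+\cdots$ vanishes because at $n=2$ the prefactors $(n-2)$ kill the first two terms, $\tr(\lo^3)=0$ for a traceless symmetric $2\times 2$ matrix, and all curvature terms are zero. Finally $\sigma_{(2)}=H/2$ by Lemma \ref{sigma23}, so $\tfrac14\Delta(\sigma_{(2)})=\tfrac18\Delta(H)$.

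Substituting these into the general residue formula gives
\[
\tfrac{1}{32}(2H)^3-\tfrac{1}{8}(2H)(H^2-\tfrac12|\lo|^2)+\tfrac18\Delta(H)
=\tfrac14H^3-\tfrac14H^3+\tfrac18H|\lo|^2+\tfrac18\Delta(H),
\]
which is precisely $\tfrac18(H|\lo|^2+\Delta(H))$, as claimed. There is no real obstacle here beyond bookkeeping: the statement is essentially a specialization-and-simplification exercise whose only subtlety is to notice the vanishing of $\tr(\lo^3)$ in dimension two and the dimension-induced vanishing of the leading term of $u_3$ in the flat case.
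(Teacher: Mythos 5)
Your proof is correct and follows essentially the same route the paper (implicitly) intends: specialize the rational coefficients in \eqref{sigma4-g} at $n=2$, then in the flat case invoke $\bar\J=\bar\J'=0$, $u_1=2H$, $u_2=H^2-\tfrac12|\lo|^2$, $\sigma_{(2)}=H/2$, and the vanishing $u_3=0$ at $n=2$ which rests on $\tr(\lo^3)=0$ for trace-free symmetric $2\times2$ matrices — exactly the two observations the paper records in the remark directly after the corollary.
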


Note that for a flat background it holds $u_3 = 0$ in $n=2$. We also recall that $\tr(\lo^3) = 0$ for $n=2$.
Alternatively, one can use formula \cite[(2.18)]{GG} for $\sigma_{(4)}$ to confirm this residue formula for
general backgrounds.

Corollary \ref{sigma4-res} and the following result are special cases of the residue formula in Lemma \ref{res-form-B}.

\begin{cor}\label{B-res} It holds
$
   \res_{n=2}(\sigma_{(4)}) = - \frac{3}{8} \B_2.
$
\end{cor}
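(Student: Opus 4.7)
The plan is to establish this by a direct verification, specialising the explicit formula of Corollary \ref{sigma4-res} to $n=2$ and comparing the result to the formula \eqref{B2} for $\B_2$. First I would substitute into Corollary \ref{sigma4-res} the values $u_1 = n H$, the expression \eqref{u2} for $u_2$, and \eqref{u3} for $u_3$, evaluated at $n=2$. At this dimension several simplifications occur automatically: $\tr(\lo^3) = 0$ and the term $n(n-1)(n-2) H^3$ vanishes, so $6 u_3 = -\bar{\nabla}_0 (\overline{\Ric})_{00} + 2(\lo,\bar{\G}) - 4 H \overline{\Ric}_{00}$. Together with $2 \sigma_{(2)} = H$ from Lemma \ref{sigma23}, a direct algebraic computation collapses the right-hand side of Corollary \ref{sigma4-res} to
\begin{equation*}
   \res_{n=2}(\sigma_{(4)}) = -\tfrac{1}{8} \bar{\nabla}_0(\overline{\Ric})_{00}
    + \tfrac{1}{4} (\lo,\bar{\G}) - \tfrac{3}{8} H \overline{\Ric}_{00}
    + \tfrac{1}{8} H |\lo|^2 + \tfrac{1}{2} H \bar{\J} + \tfrac{1}{4} \bar{\J}'
    + \tfrac{1}{8} \Delta(H).
\end{equation*}

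Next I would convert $(\lo,\bar{\G})$ to a quantity appearing in the formula for $\B_2$ using the Ricci decomposition $R = W - \Rho \owedge g$ from \eqref{RW-deco}. A short calculation with the Kulkarni-Nomizu product gives $\bar{\G}_{ij} = W_{0ij0} + \iota^*(\bar{\Rho})_{ij} + \bar{\Rho}_{00} h_{ij}$; since $W \equiv 0$ in dimension $n+1 = 3$ and $\tr(\lo) = 0$, one obtains $(\lo,\bar{\G}) = (\lo, \iota^*(\bar{\Rho}))$. Substituting this and reading off from \eqref{B2} that
\begin{equation*}
   -\tfrac{3}{8} \B_2 = \tfrac{1}{8} \bigl(\Delta(H) + \delta^h(\overline{\Ric}(\NV,\cdot)) + H |\lo|^2 + (\lo, \iota^*(\bar{\Rho}))\bigr),
\end{equation*}
the identity to be proved reduces (after cancellation of the $\Delta(H)$, $H|\lo|^2$ and part of the $(\lo,\iota^*(\bar{\Rho}))$ contributions) to the single scalar equation
\begin{equation*}
   \delta^h(\overline{\Ric}(\NV,\cdot)) = -\bar{\nabla}_0(\overline{\Ric})_{00}
   + (\lo,\iota^*(\bar{\Rho})) - 3 H \overline{\Ric}_{00} + 4 H \bar{\J} + 2 \bar{\J}'
\end{equation*}
on $M^2 \hookrightarrow X^3$.

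This final identity should be obtained by combining the contracted second Bianchi identity on $X$, which reads $\bar{\nabla}^a \overline{\Ric}_{a0} = \tfrac{1}{2} \bar{\nabla}_0 \bar{\scal} = n\bar{\nabla}_0 \bar{\J}$, with the standard formula relating the ambient divergence $\bar{\nabla}^i \overline{\Ric}_{i0}$ on $M$ to the intrinsic divergence $\delta^h(\overline{\Ric}(\NV,\cdot))$; the discrepancy is produced by the Christoffel symbols $\bar{\Gamma}^0_{ij} = -L_{ij}$ and $\bar{\Gamma}^k_{i0} = L_i^k$ at the boundary, giving terms proportional to $H \overline{\Ric}_{00}$ and $(L, \iota^*(\overline{\Ric}))$. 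Converting $\iota^*(\overline{\Ric})$ back to $\iota^*(\bar{\Rho})$ via $(n-1)\bar{\Rho} = \overline{\Ric} - \bar{\J} g$ (in dimension $n+1=3$, i.e.\ $\bar{\Rho} = \overline{\Ric} - \bar{\J} g$) and using $\bar{\nabla}_0 \bar{\J} = \bar{\J}'$ at $s=0$ should yield exactly the claimed identity.

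The main obstacle is purely bookkeeping: keeping track of signs and of the distinction between ambient and intrinsic divergences, while correctly separating the trace-free part $\lo$ from $L = \lo + H h$. Once the Bianchi-plus-Codazzi computation above is carried out cleanly, Corollary \ref{B-res} follows, and as a sanity check one recovers the flat-background case already verified in Corollary \ref{sigma4-res} together with the second formula in \eqref{B2} (where the ambient Ricci and Schouten terms vanish and the identity reduces to $\frac{1}{8}(H|\lo|^2 + \Delta H) = -\tfrac{3}{8}\B_2$).
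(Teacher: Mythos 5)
Your argument is correct, and it arrives at the result by a genuinely different route from the paper. The paper's proof never leaves the $u_j$-language: it expands $\B_2 = (r^{-3}(\SC(S_3)-1))|_{r=0}$ directly into a formula $\B_2 = -2u_3 - \tfrac{1}{12}u_1^3 + \tfrac{1}{3}u_1 u_2 - \tfrac{2}{3}u_1\bar{\J} - \tfrac{2}{3}\bar{\J}' - \tfrac{2}{3}\Delta(\sigma_{(2)})$ (obtained by the same Taylor expansion of $\SC$ that produced the formula for $\sigma_{(4)}$) and then matches it term-by-term against the expression in Corollary \ref{sigma4-res}; the agreement is purely linear-algebraic and requires no Bianchi, Codazzi, or Gauss identity. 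Your proof instead unpacks the $u_j$ into curvature quantities, passes to the curvature-tensor formula \eqref{B2} for $\B_2$, and reduces the claim to the single identity $\delta^h(\overline{\Ric}(\NV,\cdot)) = -\bar{\nabla}_0(\overline{\Ric})_{00} + (\lo,\iota^*(\bar{\Rho})) - 3H\overline{\Ric}_{00} + 4H\bar{\J} + 2\bar{\J}'$, which you correctly identify as a contracted second Bianchi statement; indeed, it is exactly the specialisation of \eqref{Einstein-00} to $n=2$ (after substituting $\bar{G} = \overline{\Ric} - 2\bar{\J}\bar{g}$, $\bar{\scal} = 4\bar{\J}$ and $\lo^{ij}\overline{\Ric}_{ij} = (\lo,\iota^*\bar{\Rho})$ valid in ambient dimension $3$). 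Your route is longer and imports more infrastructure — it relies on \eqref{B2} and on the Bianchi computation that the paper carries out separately in Section \ref{B2-details} — but it is also more geometrically transparent, and as a by-product it explicitly verifies the consistency between the $u_j$-formula for $\B_2$ and the formula \eqref{B2}, a check the paper explicitly leaves to the reader at the end of Section \ref{appY}. The sketched Christoffel/Weingarten argument for the final identity is correct in outline, and your flat-background sanity check matches.
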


\begin{proof} We recall that the obstruction $\B_2$ is defined by
$$
   \B_2 = (r^{-3} (\SC(S_3)-1))|_0,
$$
where $S_3= r + \sigma_{(2)} r^2 + \sigma_{(3)} r^3$. A calculation yields
\begin{equation*}\label{B2-new-g}
   \B_2 = - 2 u_3 - \frac{1}{12} u_1^3 + \frac{1}{3} u_1 u_2
   - \frac{2}{3} u_1\bar{\J} - \frac{2}{3} \bar{\J}' - \frac{2}{3} \Delta (\sigma_{(2)}).
\end{equation*}
We omit the details. We recall that the term $u_3$ vanishes in $n=2$ in the flat case but not in
the curved case.
\end{proof}

In Section \ref{B2-details}, we shall derive another formula for $\B_2$ from Theorem \ref{B-form}.
Although the equivalence of both formulas is non-trivial, we leave the check of consistency to the reader.

Similarly, we may either directly evaluate the definition $\B_3 = (r^{-4} (\SC(S_4)-1))|_0$ or
use the residue formula \eqref{res-f} to derive a formula for $\B_3$ from the residue of
$\sigma_{(5)}$ at $n=3$. The results read as follows.

\begin{lem}\label{B3-general-back} It holds
\begin{align*}\label{B3-start}
   \B_3 & = - 2u_4 + \frac{1}{2} u_1 u_3 + \frac{1}{3} u_2^2 - \frac{7}{18} u_1^2 u_2 + \frac{2}{27} u_1^4
   - \frac{1}{3} \bar{\J} u_2 - \frac{5}{12} \bar{\J}' u_1 - \frac{1}{4} \bar{\J}'' \notag \\
   & - \frac{1}{2} \Delta (\sigma_{(3)}) - \frac{1}{3} u_1 \Delta(\sigma_{(2)})
   - \frac{1}{2} \Delta' (\sigma_{(2)}) + |d\sigma_{(2)}|^2,
\end{align*}
where
\begin{equation*}
  6 \sigma_{(2)} = u_1 \quad \mbox{and} \quad 9 \sigma_{(3)} = 3 u_2 - u_1^2 + \frac{3}{2} \bar{\J}
\end{equation*}
and $\Delta' = (d/dt)|_0 (\Delta_{h+2sL})$ (see \eqref{var-1}). For a flat background, it holds $\bar{\J}=0$,
the term $u_4$ vanishes, and we obtain
\begin{equation}\label{B3-flat-algo}
   12 \B_3 = \Delta (|\lo|^2) + 6 H \tr (\lo^3)  + |\lo|^4 + 3 |dH|^2  - 6 H \Delta(H) - 3 \Delta' (H).
\end{equation}
\end{lem}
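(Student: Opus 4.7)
The plan is to prove the formula for $\B_3$ by direct evaluation of the definition
$\B_3 = (r^{-4}(\SC(g,S_4)-1))|_{r=0}$, with $S_4 = r + \sigma_{(2)} r^2 + \sigma_{(3)} r^3 + \sigma_{(4)} r^4$, specializing the expansion \eqref{Y-F} of $\SC$ to $n=3$, in which case the prefactor is $\tfrac{2}{n+1}=\tfrac12$. Concretely, I would work with
$$
 \SC(g,S_4) = (\partial_r S_4)^2 + h_r^{ij}\partial_i S_4\,\partial_j S_4
 - \tfrac{1}{2} S_4\bigl(\partial_r^2 S_4 + c(r)\partial_r S_4 + \Delta_{h_r} S_4\bigr) - \tfrac{1}{2} S_4^2\, \bar\J,
$$
where $c(r) \st u'(r)/u(r) = u_1 + (2u_2-u_1^2) r + (3u_3 - 3u_1 u_2 + u_1^3) r^2 + \cdots$, and $\Delta_{h_r} = \Delta + r\Delta' + \tfrac{r^2}{2}\Delta'' + \cdots$ with $\Delta'$ the variational expression \eqref{var-1} and the tangential metric $h_r^{ij} = h^{ij} - 2 r L^{ij} + \cdots$ by Proposition \ref{h-low-order}. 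The functions $\bar\J, \sigma_{(2)}, \sigma_{(3)}$ are $r$-independent but $\bar\J(r)$ itself Taylor-expands as $\bar\J_0 + r\bar\J' + \tfrac{r^2}{2}\bar\J'' + \cdots$.

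Second, I would extract the coefficient of $r^4$. The six sources of contributions are: $(\partial_r S_4)^2$ giving $9\sigma_{(3)}^2 + 16\sigma_{(2)}\sigma_{(4)}$; the cross term $h_r^{ij}\partial_i S_4\partial_j S_4$ giving $|d\sigma_{(2)}|^2_h$ since $\partial_i S_4$ begins at order $r^2$; the two pieces of $\tfrac12 S_4\partial_r^2 S_4$ and $\tfrac12 S_4 c(r)\partial_r S_4$ which combine with the Taylor data of $c(r)$ to produce the polynomial expressions $\tfrac{7}{18}u_1^2 u_2$, $\tfrac{2}{27}u_1^4$, $\tfrac12 u_1 u_3$, $\tfrac13 u_2^2$, together with products of $\sigma_{(j)}$ which are subsequently eliminated via Lemma \ref{sigma-v}; the piece $\tfrac12 S_4 \Delta_{h_r} S_4$ giving $\tfrac12\Delta(\sigma_{(3)})$, $\tfrac13 u_1 \Delta(\sigma_{(2)})$ (using $\sigma_{(2)}=u_1/6$), and $\tfrac12 \Delta'(\sigma_{(2)})$; finally $\tfrac12 S_4^2 \bar\J$ gives $\tfrac13\bar\J u_2$, $\tfrac{5}{12}\bar\J' u_1$, and $\tfrac14 \bar\J''$ using $2\sigma_{(2)}=u_1/3$ etc.

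Third, the vanishing of the $r^3$-coefficient of $\SC(g,S_4)-1$ at $n=3$ determines $\sigma_{(4)}$ (the prefactor $k-1-n$ at $k=3$ equals $-1$); substituting that determination and the explicit $\sigma_{(2)}=u_1/6$ and $\sigma_{(3)} = (3u_2-u_1^2+\tfrac{3}{2}\bar\J)/9$ from Lemma \ref{sigma-v} into the coefficient of $r^4$ yields the formula for $\B_3$ in the general case. The flat case then follows by setting $\bar\J\equiv 0$ and observing that $u(r)=\det(\id+rL)$ is a degree-$3$ polynomial for $M^3\hookrightarrow\R^4$, so $u_4=0$; substituting $u_1=3H$, $2u_2=6H^2-|\lo|^2$, and $6u_3=6\,\sigma_3(L) = 2\tr(L^3)-9 H|L|^2 + 27 H^3 = 2\tr(\lo^3)-3H|\lo|^2 + 6H^3$ (Newton's identities, together with \eqref{det-3}) into the general formula and collecting terms, using $\Delta(\sigma_{(3)})=-\Delta(|\lo|^2)/6$, yields \eqref{B3-flat-algo} after routine algebra.

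The main obstacle is the bookkeeping of the $r^4$-coefficient: many pieces must be combined and the elimination of $\sigma_{(4)}$ via its defining relation intertwines the universal and the $\bar\J$-dependent contributions. A useful cross-check is the residue formula \eqref{res-f}: the Laurent coefficient of $\sigma_{(5)}$ at $n=3$ should be proportional to $\B_3$, as in Corollary \ref{B-res}. Since this proof is entirely algorithmic given the expansions of $h_r$, $c(r)$, $\Delta_{h_r}$, and the lower Taylor coefficients of $\sigma_F$, no conceptual difficulty arises; the challenge is purely the careful symbolic computation, which in the flat case is also independently verifiable against \eqref{B3-closed} and against the universal tractor formula of \cite{GW-LNY}.
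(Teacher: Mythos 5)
Your approach matches the one the paper alludes to: the paper explicitly offers two routes, either ``directly evaluate the definition $\B_3 = (r^{-4}(\SC(S_4)-1))|_0$'' or apply the residue formula \eqref{res-f} to $\sigma_{(5)}$, and omits the computation. You take the first, and the overall structure is sound; the flat-case derivation of \eqref{B3-flat-algo} in your last step is arithmetically correct (the $H^4$ and $H^2|\lo|^2$ coefficients cancel, and the surviving terms after multiplying by $12$ are exactly $\Delta(|\lo|^2)+6H\tr(\lo^3)+|\lo|^4+3|dH|^2-6H\Delta(H)-3\Delta'(H)$).

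One presentational imprecision worth fixing: you attribute the entire $\tfrac13 u_1\Delta(\sigma_{(2)})$ term to the piece $\tfrac12 S_4\Delta_{h_r}S_4$. Directly, that piece only contributes $\tfrac12\sigma_{(2)}\Delta(\sigma_{(2)}) = \tfrac{u_1}{12}\Delta(\sigma_{(2)})$ (together with $\tfrac12\Delta(\sigma_{(3)})$ and $\tfrac12\Delta'(\sigma_{(2)})$ as you say). The missing $\tfrac{u_1}{4}\Delta(\sigma_{(2)})$ is produced only after substituting \eqref{sigma4-g} for $\sigma_{(4)}$: the aggregate $\sigma_{(4)}$-linear coefficient in the $r^4$-coefficient of $\SC(g,S_4)$ (from $16\sigma_{(2)}$ of $(\partial_r S_4)^2$, $-7\sigma_{(2)}$ of $-\tfrac12 S_4\partial_r^2 S_4$, and $-\tfrac52 u_1$ of $-\tfrac12 S_4 c(r)\partial_r S_4$) equals $9\sigma_{(2)}-\tfrac52 u_1 = -u_1$, and multiplying by the $\tfrac14\Delta(\sigma_{(2)})$ contribution inside $\sigma_{(4)}$ for $n=3$ supplies the remaining $\tfrac{u_1}{4}\Delta(\sigma_{(2)})$ (with the correct sign). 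This is exactly the ``intertwining'' you warn about in your last paragraph, so the logic is unaffected --- just move the attribution so the reader does not conclude that $\tfrac12 S_4\Delta_{h_r}S_4$ alone produces a coefficient $\tfrac13 u_1$.
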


The above formula for $\B_3$ also follows from the following result through the residue formula
\begin{equation}\label{res-5}
    \res_{n=3}(\sigma_{(5)}) = - \frac{2}{5} \B_3.
\end{equation}
The evaluation of that formula rests on the following result for the coefficient $\sigma_{(5)}$.

\begin{lem}\label{sigma-5} In general dimensions, it holds
\begin{align*}
    \sigma_{(5)} & =
   - \frac{n+1}{10(n-3)} |d\sigma_{(2)}|^2 \\
   & + \frac{1}{5(n-3)} \Delta'(\sigma_{(2)})
   + \frac{1}{5(n-3)} \Delta(\sigma_{(3)}) + \frac{3n-1}{20(n-3)(n-2)n} \Delta(\sigma_{(2)}) u_1 \\
   & + \frac{1}{10(n-3)} \bar{\J}''  + \frac{n-1}{4(n-3)(n-2)n} \bar{\J}' u_1
   + \frac{2(3n-5)}{15(n-3)(n-1)^2} \bar{\J} u_2 \\
   & - \frac{4n-3}{20 (n-2)(n-1)n} \bar{\J} u_1^2 + \frac{1}{30(n-1)^2} \bar{\J}^2 \\
   & + \frac{48n^4-247n^3+387n^2-179n+3}{60(n-3)(n-2)(n-1)n^2} u_1^2 u_2
   - \frac{2(3n^2-11n+10)}{15(n-3)(n-1)^2} u_2^2 \\
   & - \frac{24n^4-110n^3+133n^2-24n-3}{120 (n-3)(n-2)n^3} u_1^4
   - \frac{16n^2-53n+27}{20(n-3)(n-2)n} u_1 u_3 + \frac{4}{5(n-3)} u_4.
\end{align*}
\end{lem}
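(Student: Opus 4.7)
\textbf{Proof plan for Lemma \ref{sigma-5}.} The approach is a direct recursive computation in the spirit of the derivations of $\sigma_{(2)}, \sigma_{(3)}, \sigma_{(4)}$ in Lemma \ref{sigma-v}. Substitute the truncated Ansatz
\[
S_5 = r + \sigma_{(2)} r^2 + \sigma_{(3)} r^3 + \sigma_{(4)} r^4 + \sigma_{(5)} r^5
\]
into the expanded form \eqref{Y-F} of the Yamabe condition $\SC(g,S_5) = 1 + O(r^5)$, and extract the coefficient of $r^4$. This coefficient must vanish, which yields a linear equation for $\sigma_{(5)}$ whose right-hand side is a polynomial expression in $\sigma_{(2)}, \sigma_{(3)}, \sigma_{(4)}$, their tangential derivatives, and the Taylor coefficients of $h_r$ (encoded by the volume coefficients $u_j$) and of $\bar{\J}$.

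The key algebraic step is to isolate the coefficient of $\sigma_{(5)}$ on the left. The term $\partial_r(\sigma_F)^2$ contributes $2 \cdot 5 \sigma_{(5)} r^4 = 10 \sigma_{(5)} r^4$, and the term $-\tfrac{2}{n+1} \sigma_F \, \partial_r^2(\sigma_F)$ contributes $-\tfrac{40}{n+1} \sigma_{(5)} r^4$, while all other terms in \eqref{Y-F} contribute strictly lower-order (in the recursive hierarchy) multiples of $\sigma_{(5)}$ at order $r^4$. The sum of these leading contributions is $\tfrac{10(n-3)}{n+1} \sigma_{(5)} r^4$, so after solving we get the prefactor $\tfrac{n+1}{10(n-3)}$. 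This explains the universal pole at $n=3$ (consistent with \eqref{res-5} and Lemma \ref{B3-general-back}).

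The remainder of the coefficient of $r^4$ is assembled from: (i) the square $(3\sigma_{(3)})^2$ and the cross term $2 \cdot 2\sigma_{(2)} \cdot 4\sigma_{(4)}$ from $\partial_r(\sigma_F)^2$; (ii) the tangential gradient $h_r^{ij}\partial_i S_5 \partial_j S_5$, which at order $r^4$ gives $|d\sigma_{(2)}|_h^2$; (iii) the Laplacian expansions $\Delta_{h_r}(S_5) = \Delta(\sigma_{(3)}) r^3 + [\Delta(\sigma_{(4)}) + \Delta'(\sigma_{(2)})] r^4 + \cdots$ (with $\Delta' = (d/dr)|_0 \Delta_{h_r}$), multiplied against $-\tfrac{2}{n+1}\sigma_F$; (iv) the volume contribution $\tfrac{1}{2}\tr(h_r^{-1} h_r') = u'/u$, whose Taylor expansion in $r$ provides linear entries of $u_1, u_2, u_3, u_4$; and (v) the scalar-curvature term $-\tfrac{1}{n(n+1)} \sigma_F^2 \bar{\scal}$ expanded via $\bar{\J} = \bar{\J}_0 + \bar{\J}' r + \tfrac{1}{2}\bar{\J}'' r^2 + \cdots$. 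After collecting, substitute the expressions for $\sigma_{(2)}, \sigma_{(3)}, \sigma_{(4)}$ from Lemma \ref{sigma-v}, which introduces the additional poles at $n=1, 2$, and divide by $\tfrac{10(n-3)}{n+1}$.

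\textbf{The main obstacle} is purely bookkeeping: one must carefully track the rational-function prefactors $1/(n-1), 1/(n-2), 1/n, \ldots$ brought in by the substitutions of $\sigma_{(3)}$ and $\sigma_{(4)}$, multiply them with the overall $1/(n-3)$ prefactor, and combine like terms (for instance, the products $u_1 u_2, u_1^3, \bar{\J} u_1^2, \bar{\J} u_2, \bar{\J}^2, u_2^2, u_1 u_3, u_1 \Delta(\sigma_{(2)}), \bar{\J}' u_1$, and $u_4$) to match the displayed coefficients. No genuinely new identity is needed beyond the Taylor expansion $\tfrac{1}{2}\tr(h_r^{-1} h_r') = \sum_{k\ge 0} (k+1) u_{k+1} r^k / \bigl(\sum_{k\ge 0} u_k r^k\bigr)$ together with the variation formula \eqref{Delta-var} used to express $\Delta'$; the verification is an entirely mechanical collection of terms, best carried out with computer algebra and cross-checked against the known residue \eqref{res-5} and the flat-background reduction \eqref{B3-flat-algo}.
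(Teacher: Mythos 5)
Your proposal follows exactly the same recursion the paper uses to derive $\sigma_{(2)},\sigma_{(3)},\sigma_{(4)}$ in Lemma \ref{sigma-v} (the paper in fact omits the proof of Lemma \ref{sigma-5} and only points to that method), and you correctly isolate the leading coefficient $10-\tfrac{40}{n+1}=\tfrac{10(n-3)}{n+1}$ of $\sigma_{(5)}$. One small slip: the Taylor expansion you write down in item (iii) is shifted by one order — it should read $\Delta_{h_r}(\sigma_F)=\Delta(\sigma_{(2)})r^2+[\Delta(\sigma_{(3)})+\Delta'(\sigma_{(2)})]r^3+[\Delta(\sigma_{(4)})+\Delta'(\sigma_{(3)})+\tfrac12\Delta''(\sigma_{(2)})]r^4+\cdots$, so that $\Delta'(\sigma_{(2)})$ (not $\Delta(\sigma_{(4)})$) enters the $r^4$-coefficient of $\sigma_F\,\Delta_{h_r}(\sigma_F)$, and the cross term $\sigma_{(2)}\Delta(\sigma_{(2)})$ from the $r^2$-piece also contributes; with that correction the plan is sound.
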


We omit the details of the proof. An alternative formula for $\sigma_{(5)}$ is given in \cite[Appendix]{GGHW}. 
Here the same comments as after Lemma \ref{sigma-v} apply.

\begin{remark}\label{sigma-residue-formula}
Lemma \ref{sigma-5} implies
$$
    \res_{n=2}(\sigma_{(5)}) = - \frac{1}{2} u_1 \res_{n=2} (\sigma_{(4)}) = \frac{3}{4} H \B_2.
$$
This relation extends the identities
\begin{align*}
    \res_{n=1}(\sigma_{(4)}) & = - \frac{1}{2} u_1 \res_{n=1}(\sigma_{(3)}), \\
    \res_{n=1}(\sigma_{(3)}) & = - \frac{1}{2} u_1 \res_{n=1}(\sigma_{(2)}) = 0.
\end{align*}
More generally, we conjecture the factorization identities
$$
    \res_{n=k-3}(\sigma_{(k)}) = - \frac{1}{2} u_1 \res_{n=k-3} (\sigma_{(k-1)})
$$
for $k \ge 4$.
\end{remark}

Combining \eqref{B3-flat-algo} with the variation formula $\Delta'(u) = -2 (\Hess(u),L) - 3 (du,dH)$ (see
\eqref{var-1}) shows that
$$
   12 \B_3 = \Delta (|\lo|^2) + 12 |dH|^2 + 6 (\Hess(H),\lo)  + |\lo|^4  + 6 H \tr(\lo^3).
$$
In Section \ref{B3-flat-back}, we shall alternatively derive that result from Theorem \ref{B-form}.

A direct proof that, for conformally flat backgrounds, Lemma \ref{B3-general-back} is equivalent to Lemma
\ref{B3-CF} will be given in a separate work.

Finally, we establish the residue formula for the singular Yamabe obstruction in full generality.

\begin{lem}\label{res-form-B} It holds
\begin{equation}\label{res-f}
   \res_{n=k-2}(\sigma_{(k)}) =-\frac{k-1}{2k} \B_{k-2}, \; k \ge 4.
\end{equation}
\end{lem}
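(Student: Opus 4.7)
The plan is to read off $\sigma_{(k)}$ directly from the recursion produced by \eqref{Y-F} and to identify the residue at $n=k-2$ with the obstruction $\B_{k-2}$.

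First, substitute $\sigma_F = r + \sigma_{(2)} r^2 + \sigma_{(3)} r^3 + \cdots$ into \eqref{Y-F} and isolate the coefficient of $r^{k-1}$ in $\SC(g,\sigma_F)-1$. The key observation is that $\sigma_{(k)}$ appears in this coefficient only through the two one-dimensional terms $(\partial_r \sigma_F)^2$ and $-\tfrac{2}{n+1}\sigma_F\,\partial_r^2 \sigma_F$ in \eqref{Y-F}; all other terms either carry a prefactor $\sigma_F$, $\sigma_F^2$ or $h_r^{ij}\partial_i\sigma_F\partial_j\sigma_F$ and are of too high order in $r$ (or produce only tangential derivatives of $\sigma_{(k)}$). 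A direct computation using $\sigma_{(1)}=1$ shows that $\sigma_{(k)}$ enters the coefficient of $r^{k-1}$ exactly as
\[
  \Bigl(2k - \tfrac{2k(k-1)}{n+1}\Bigr)\sigma_{(k)} \;=\; \frac{2k(n-k+2)}{n+1}\,\sigma_{(k)}.
\]
Therefore the recursion determining $\sigma_{(k)}$ takes the form
\begin{equation}\label{reckey}
  \frac{2k(n-k+2)}{n+1}\,\sigma_{(k)} + \mathrm{LOT}_{k-1} = 0,
\end{equation}
where $\mathrm{LOT}_{k-1}$ is an explicit polynomial expression in $\sigma_{(2)},\ldots,\sigma_{(k-1)}$ and in the Taylor coefficients of $h_r$ and $\bar\J$.

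Next, solving \eqref{reckey} yields
\[
  \sigma_{(k)}(n) = -\,\frac{n+1}{2k(n-k+2)}\,\mathrm{LOT}_{k-1}(n),
\]
and, by induction on $j$, each $\sigma_{(j)}$ with $2 \le j \le k-1$ is regular at $n=k-2$ (its only possible pole occurs at $n=j-2 \le k-3$). Hence $\mathrm{LOT}_{k-1}$ is regular at $n=k-2$ and
\[
  \res_{n=k-2}\sigma_{(k)} \;=\; -\,\frac{k-1}{2k}\,\mathrm{LOT}_{k-1}\big|_{n=k-2}.
\]

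Finally, one identifies $\mathrm{LOT}_{k-1}|_{n=k-2}$ with $\B_{k-2}$. Working in dimension $n=k-2$, the truncation $S_{k-1} = r + \sigma_{(2)} r^2 + \cdots + \sigma_{(k-1)} r^{k-1}$ is well-defined, and by the recursive construction $\SC(g,S_{k-1}) - 1 = O(r^{k-1})$. Since $S_{k-1}$ contains no $\sigma_{(k)} r^k$ term, the coefficient of $r^{k-1}$ in $\SC(g,S_{k-1})-1$ is precisely $\mathrm{LOT}_{k-1}$ evaluated at $n=k-2$. By the definition $\B_{k-2} = (r^{-(k-1)}(\SC(g,S_{k-1})-1))|_{r=0}$ in \eqref{B-def}, this restriction equals $\B_{k-2}$, which gives \eqref{res-f}.

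The main obstacle is the bookkeeping in the first step: one must verify carefully that no contribution to the coefficient of $r^{k-1}$ involving $\sigma_{(k)}$ is missed, in particular from the cross terms containing $\tr(h_r^{-1}h_r')\partial_r\sigma_F$, $\Delta_{h_r}\sigma_F$ and $\sigma_F^2\bar\scal$. A uniform order count (each such term gains at least one extra power of $r$ relative to $(\partial_r\sigma_F)^2$ when the factor $\sigma_{(k)}r^k$ is extracted) confirms that only the two terms above contribute, and then the numerical coefficient $2k(n-k+2)/(n+1)$ emerges from an elementary computation. Once this step is secured, the rest of the argument is a residue calculation and a direct appeal to the definition of $\B_{k-2}$.
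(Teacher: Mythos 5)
Your proof is correct and follows essentially the same route as the paper's: the key step in both is isolating the coefficient $\frac{2k(n-k+2)}{n+1}$ of $\sigma_{(k)}$ in the $r^{k-1}$ coefficient of $\SC(g,\sigma_F)-1$ (the paper packages this as the identity \eqref{S-deco}), and then the residue falls out from the factor $\frac{n+1}{2k(n-k+2)}$ together with the definition \eqref{B-def} of $\B_{k-2}$ as the restriction of the $r^{k-1}$ coefficient of $\SC(g,S_{k-1})-1$ at $n=k-2$.
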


\begin{proof} Let $S_k$ be the Taylor polynomial $r + r^2 \sigma_{(2)} + \cdots + r^k \sigma_{(k)}$
of degree $k$. Then a calculation shows that
\begin{equation}\label{S-deco}
   \SC(S_k) = \SC(S_{k-1}) +  r^{k-1} \frac{2k (n-k+2)}{n+1} \sigma_{(k)} + O(r^k).
\end{equation}
Assume that $S_{k-1}$ is the $(k-1)$-th approximate solution of the singular Yamabe problem, i.e.,
$\SC(S_{k-1}) = 1+ O(r^{k-1})$. Then $S_k$ is the $k$-th approximate solution if in the expansion
$$
   \SC(S_k) = 1 + r^{k-1} \left (\frac{2k (n-k+2)}{n+1} \sigma_{(k)} + \cdots \right) + O(r^k)
$$
with an unknown coefficient $\sigma_{(k)}$ the coefficient of $r^{k-1}$ vanishes, i.e., if
$\SC(S_k) = 1 + O(r^k)$. This can be solved for $\sigma_{(k)}$ if $k =2,3,\dots,n+1$.\footnote{This algorithm
yields Lemma \ref{sigma-v}.} In the case $k=n+2$, we only have
$$
   \SC(S_{n+1}) = 1 + O(r^{n+1}),
$$
and the restriction of the latter remainder is the obstruction $\B_n$. Now \eqref{S-deco} implies
$$
   \frac{k-1}{2k} (r^{-k+1} (\SC(S_{k-1})-1)|_0 +  \res_{n=k-2} (\sigma_{(k)})
  =  \frac{k-1}{2k}  (r^{-k+1}(\SC(S_k)-1)|_0
$$
if $S_{k-1}$ is the $(k-1)$-th approximate solution of the singular Yamabe problem. Then the left-hand
side is well-defined and the right-hand side vanishes. This implies the assertion.
\end{proof}

\subsection{Renormalized volume coefficients (adapted coordinates)}\label{reno-vol-low}

Here we consider the first three coefficients in the expansion of $v(s)$ as defined in \eqref{RVC-B} 
in adapted coordinates. As usual we identify $\rho$ with $\eta^*(\rho)$. We also recall 
that derivatives with respect to $s$ are denoted by a prime.

We first derive formulas for $v_1$ and $v_2$ from Proposition \ref{h-low-order}.

\begin{example}\label{v-low-order} Note that
$$
   a^{-1/2} = (1-2s\rho)^{-1/2} = 1 + s \rho_0 + s^2 (\frac{3}{2} \rho_0^2 + \rho'_0) + \cdots.
$$
Hence the factorization $v(s) = a^{-1/2} \mathring{v}(s)$ and the identities
\begin{equation*}
   \mathring{v}_1 = \frac{1}{2} \tr (h_{(1)}) \quad \mbox{and} \quad 
   \mathring{v}_2 = \frac{1}{8} \tr (h_{(1)})^2 +  \frac{1}{2} \tr (h_{(2)}) - \frac{1}{4} \tr (h_{(1)}^2))
\end{equation*}
show that the expansion of $v(s)$ starts with
\begin{align*}
   & 1 + \frac{1}{2} (2 \rho_0 + \tr (h_{(1)})) s \\
   & + \frac{1}{2} \left(\tr (h_{(2)}) - \frac{1}{2} |h_{(1)}|^2
   + \frac{1}{4} \tr(h_{(1)})^2 + \rho_0 \tr (h_{(1)}) + 3 \rho_0^2 + 2 \rho'_0 \right) s^2 + \cdots.
\end{align*}
Now \eqref{h-adapted} and $\rho_0 = -H$ (Lemma \ref{rho-01}) imply that the linear
coefficient equals $(n-1)H$. Hence $v_1=(n-1)H$. This result fits with the formula
\cite[(4.6)]{GW-reno} for its integral. Moreover, using Lemma \ref{rho-01}, we
obtain
$$
   \frac{1}{2} \left( |\lo|^2 - \overline{\Ric}_{00} - 2 |L|^2 + n^2 H^2
   - 2n H^2 + 3 H^2 + 2 \bar{\Rho}_{00} + 2 \frac{|\lo|^2}{n-1}\right)
$$
for the quadratic coefficient. By $\overline{\Ric}_{00} = (n-1) \bar{\Rho}_{00} + \bar{\J}_0$ and
$|L|^2 =|\lo|^2 + n H^2$, the latter sum simplifies to
\begin{equation}\label{v2}
   v_2 = \frac{1}{2} \left(-\frac{n-3}{n-1} |\lo|^2 - (n-3) \bar{\Rho}_{00} + (n-1)(n-3) H^2 - \bar{\J}_0 \right).
\end{equation}
In particular, \eqref{v2} shows that $2 v_2 = - \bar{\J}_0$ for $n=3$. This proves the relation \eqref{van-id} in
Conjecture \ref{van} for $n=3$.
\end{example}

In the following examples, we demonstrate how the identity \eqref{bL} can be used to calculate the 
renormalized volume coefficients $v_k$ (for $k=1,2,3$). This alternative method does not require 
the calculation of composition of $L$-operators and does not use explicit formulas for the Taylor coefficients of $h_s$.

\begin{example}\label{v1}
The restriction of \eqref{bL} to $s=0$ implies $v_1=-(n-1)\rho_0$. Now the fact
$\rho_0 = \iota^* \rho = -H$ (Lemma \ref{rho-01}) implies $v_1 = (n-1)H$.
\end{example}

\begin{example}\label{v2-ex}
We restrict the derivative of \eqref{bL} in $s$ to $s=0$. Then
$$
   2v_2 - v_1^2 = -(n\!-\!3) \rho'_0 - \bar{\J}_0 - 2(n\!-\!1) \rho^2_0.
$$
Now we combine this result with the value of $v_1$ (Example \ref{v1}) and Lemma \ref{rho-01}
to conclude that
\begin{align}\label{v2n}
   -2v_2 & = \bar{\J}_0 + (n\!-\!3) \left(\bar{\Rho}_{00} + \frac{1}{n\!-\!1} |\lo|^2 \right) - (n\!-\!3)(n\!-\!1) H^2 \notag \\
   & = \bar{\J}_0 + (n\!-\!3) \rho_0' - (n\!-\!3)(n\!-\!1) H^2.
\end{align}
This result fits with \eqref{v2} and with the formula \cite[(4.8)]{GW-reno} for its
integral. Using \eqref{G2}, we finally obtain
\begin{equation}\label{v2n-2}
   -2v_2 = \J + (n\!-\!2) \bar{\Rho}_{00} + \frac{2n\!-\!5}{2(n\!-\!1)} |\lo|^2 - (n\!-\!2)(n\!-\!3/2) H^2.
\end{equation}
In particular, for $n=2$, we have
\begin{equation}\label{v2-2}
   -2v_2 = \J - \frac{1}{2} |\lo|^2.
\end{equation}
For closed $M$, the total integral of this quantity is a conformal invariant (by
Gauss-Bonnet). Note that $2v_2=\QC_2$ (by Example \ref{Q2}) which confirms Theorem
\ref{LQ} for $n=2$.
\end{example}

\begin{example}\label{v3-ex}
The equality of the coefficients of $s^2$ in \eqref{bL} yields the identity
$$
  3v_3 - 3 v_1 v_2 + v_1^3 = - \frac{n\!-\!5}{2} \rho''_0 - \bar{\J'}_0 - 4(n\!-\!2) \rho'_0 \rho_0
  - 2 \rho_0 \bar{\J}_0 - 4(n\!-\!1) \rho_0^3.
$$
Hence, using $\rho_0 = -H$, we obtain
$$
   3v_3 = 3 v_1 v_2 - v_1^3 - \frac{n\!-\!5}{2} \rho''_0 - \bar{\J}'_0 + 4(n\!-\!2) \rho'_0 H
   + 2 H \bar{\J}_0 + 4(n\!-\!1) H^3.
$$
Combining this formula with the formulas for $v_1$ and $v_2$ in Example \ref{v1} and Example
\ref{v2-ex} gives
$$
  6 v_3 = (n\!-\!5)(n\!-\!3)(n\!-\!1) H^3 - (n\!-\!5)(3n\!-\!5) H \rho'_0
  - (3n\!-\!7) H \bar{\J}_0 - (n\!-\!5) \rho''_0 - 2 \bar{\J}'_0.
$$
In particular, this formula implies
$$
   6 v_3 = - 8 H \bar{\J}_0 - 2 \bar{\J}'_0.
$$
if $n=5$. This proves the relation \eqref{van-id} in Conjecture \ref{van} for $n=5$. Finally, we note that
$\iota^* \nabla_{\NV}^2(\rho)$ corresponds to $\rho_0'' - 2 \rho_0 \rho_0'$ (see Example \ref{trans-low-2}).
Hence we can rewrite the latter formula as
\begin{align}\label{v3-inter}
   6 v_3 & = (n\!-\!5)(n\!-\!3)(n\!-\!1) H^3 - (3n\!-\!7) H \iota^* \bar{\J} \notag \\
   & - (n\!-\!5)(3n\!-\!7) H \iota^* \nabla_\NV (\rho)- (n\!-\!5) \iota^* \nabla_\NV^2(\rho)
   - 2 \iota^* \nabla_\NV(\bar{\J}).
\end{align}
In particular, for $n=3$ we get
$$
   6 v_3 = - 2 H \iota^* \bar{\J} + 4 H \iota^* \nabla_\NV (\rho) + 2 \iota^* \nabla_\NV^2(\rho)
   - 2 \iota^* \nabla_\NV( \bar{\J} ).
$$
This quantity actually equals a multiple of $\QC_3$, up to a divergence term; for a discussion of $\QC_3$ 
we refer to Example \ref{Q3}. By \cite[Appendix B]{GW-reno}, the result \eqref{v3-inter} implies 
that\footnote{There seems to be a misprint in the contribution of the term $H \iota^* \J$.}
$$
   -\iota^* L(-n+1)L(-n+2)L(-n+3)(1) = 6 (n-1)(n-2)(n-3) v_3,
$$
up to a divergence term. Therefore, the result confirms the formula for $c_3$ in
Theorem \ref{RVE}. In order to express the sum in \eqref{v3-inter} in terms of
standard curvature terms, it remains to determine $\iota^* \nabla_\NV^k (\rho)$ for
$k=1,2$. Explicit formulas for these terms will be derived in Section \ref{TC+B} (Lemma \ref{rho-01}, 
Lemma \ref{rho-two}). The case $k=2$ was first treated in \cite[Lemma 6.8]{GW-LNY} (see
Remark \ref{GW-rho-2}). It is only here where we need the full information of Proposition
\ref{h-low-order}.
\end{example}

Of course, the renormalized volume coefficients $v_k$, which are defined in terms of adapted coordinates, 
are to be distinguished from the renormalized volume coefficients $w_k$, which are defined in terms of 
geodesic normal coordinates \cite{Graham-Yamabe}. By \eqref{def-w}, the latter ones are defined by the 
relation
$$
   w(r) = (1 + \sigma_{(2)} r +  \sigma_{(3)} r^2 + \cdots)^{-(n+1)} u(r).
$$
In particular, we find $w_1 = -(n+1) \sigma_{(2)} + u_1 = \frac{n-1}{2} H$. The above relation implies
$$
   w_2 = 6 \sigma_{(2)}^2 - 3 \sigma_{(3)} - 3 \sigma_{(2)} u_1 + u_2
$$
for $n=2$, and its evaluation yields
$$
    2 w_2 = - \J + \frac{1}{2} |\lo|^2.
$$
Note that $w_1 = v_1$ (for $n=1$) and $w_2 = v_2$ (for $n=2$). In general, the coefficients 
$w_n$ and $v_n$ differ by a non-trivial total divergence. In particular, we find
$$
   w_3 = -20 \sigma_{(2)}^3 + 20 \sigma_{(2)} \sigma_{(3)} - 4 \sigma_{(4)} + 10 \sigma_{(2)}^2 u_1 
   - 4 \sigma_{(3)} u_1 - 4 \sigma_{(2)} u_2 + u_3
$$ 
for $n=3$, and an evaluation yields
\begin{align*}
    6 v_3 & = 6 w_3 + \Delta(H).
\end{align*}
Explicit formulas for $w_1, w_2$ (in general dimensions) were derived in \cite[(4.5)]{Graham-Yamabe}. 
In Section \ref{w-LR}, these coefficients will be described in terms of $L$-operators. 

\subsection{Low-order Taylor coefficients of $\rho$}\label{TC+B}

Assuming that $\sigma$ satisfies the condition $\SCY$, 
we derive formulas for the first few Taylor coefficients of $\rho$ in the variable $s$. 

We first use Lemma \ref{sigma23} to derive formulas for the restrictions of $\rho$ and
$\nabla_\NV(\rho)$ to $M$. The following result reproves part of \cite[Lemma 6.6]{GW-LNY}.

\begin{lem}\label{rho-01} Let $n \ge 2$. Then $\iota^* \rho = - H$ and
\begin{equation}\label{rho-1}
   \iota^* \nabla_\NV(\rho) = \Rho_{00} + \frac{|\lo|^2}{n-1}.
\end{equation}
\end{lem}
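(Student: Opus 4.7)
\textbf{Plan for the proof of Lemma \ref{rho-01}.}

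For the first identity $\iota^*\rho = -H$, the plan is to evaluate the defining relation $(n+1)\rho = -\Delta_g(\sigma) - \sigma\J$ on $M$. Since $\iota^*\sigma = 0$, only $\iota^*\Delta_g(\sigma)$ matters. I would apply Lemma \ref{LB-M} to the function $u = \sigma$, noting that under $\SCY$ we have $|\NV|^2_g = 1$ on $M$, so the hypothesis $|\partial_0|_g = 1$ is satisfied. Since $\sigma$ vanishes on $M$, $\iota^*\Delta_h(\sigma) = 0$. The remaining terms give $\partial_0^2(\sigma) = \partial_0(|\NV|^2)$ and $\langle d\sigma,\nabla_{\partial_0}(\partial_0)\rangle = g(\nabla_{\partial_0}\partial_0,\partial_0) = \tfrac{1}{2}\partial_0(|\NV|^2)$. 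Combining with $|\NV|^2 = 1 - 2\sigma\rho + O(\sigma^{n+1})$ (from $\SCY$), one obtains $\iota^*\partial_0(|\NV|^2) = -2\iota^*\rho$, so $\iota^*\Delta_g(\sigma) = -\iota^*\rho + nH$. Substituting into the defining relation yields $(n+1)\iota^*\rho = \iota^*\rho - nH$, i.e. $\iota^*\rho = -H$.

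For the second identity, I would switch to adapted coordinates and use the intertwining relation \eqref{intertwine}, which gives $\iota^*\nabla_\NV(\rho) = \partial_s(\eta^*\rho)|_0$. The plan is to apply the recursion of Proposition \ref{rec-rho} with $k = 1$, namely
\begin{equation*}
   (n-1)\,\partial_s(\rho)|_0 \;=\; -\partial_s\!\left(\tfrac{\mathring v'}{\mathring v}\right)\!\Big|_0 + 2\rho(0)\,\tfrac{\mathring v'}{\mathring v}\Big|_0 - \J|_0 .
\end{equation*}
The values on the right are extracted from the known Taylor expansion of $h_s$ in Proposition \ref{h-low-order}: from $h_{(1)} = 2L$ one gets $\tfrac{\mathring v'}{\mathring v}|_0 = \tfrac{1}{2}\tr_h(h_{(1)}) = nH$, and differentiating $\tfrac{1}{2}\tr(h_s^{-1}h_s')$ at $s = 0$ using $(h_s^{-1})'|_0 = -h^{-1}(2L)h^{-1}$ and $h_{(2)}$ from \eqref{h-adapted} (with $\tr(h_{(2)}) = |\lo|^2 - \overline{\Ric}_{00}$ by Corollary \ref{h3-trace}) yields $\partial_s(\mathring v'/\mathring v)|_0 = -2|L|^2 + |\lo|^2 - \overline{\Ric}_{00}$. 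Using $|L|^2 = |\lo|^2 + nH^2$, this becomes $-|\lo|^2 - 2nH^2 - \overline{\Ric}_{00}$.

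Inserting these values with $\rho(0) = -H$ from the first part, the $2nH^2$ contributions cancel and the recursion reduces to
\begin{equation*}
   (n-1)\,\partial_s(\rho)|_0 \;=\; |\lo|^2 + \overline{\Ric}_{00} - \J|_0 .
\end{equation*}
The plan is then to convert Ricci to Schouten in dimension $n+1$ using the convention $(n-1)\Rho^g = \Ric^g - \J^g g$ from Section \ref{notation}, together with $g_{00} = |\NV|^2|_M = 1$, giving $\overline{\Ric}_{00} = (n-1)\Rho_{00} + \J|_0$. The $\J|_0$ terms cancel and one obtains $\partial_s(\rho)|_0 = \Rho_{00} + |\lo|^2/(n-1)$, as required. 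The main technical point, already addressed by Proposition \ref{h-low-order}, is the correct identification of the quadratic coefficient $h_{(2)}$ in adapted coordinates (rather than geodesic normal coordinates), where the mean-curvature contribution enters through $L\lo$ rather than $L^2$; the rest is bookkeeping between $L$, $\lo$, $H$, and the Gauss/Schouten identities.
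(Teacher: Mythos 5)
Your proof is correct. For the first identity $\iota^*\rho = -H$ you take a genuinely different route from the paper: the paper expands the defining relation $\Delta_g(\sigma)=-(n+1)\rho-\sigma\J$ in geodesic normal coordinates and reads off the $r^0$-coefficient, invoking $\sigma_{(2)}=H/2$ from Lemma~\ref{sigma23}, while you apply Lemma~\ref{LB-M} directly to $u=\sigma$ and feed in $|\NV|^2=1-2\sigma\rho+O(\sigma^{n+1})$ from $\SCY$ to get $\iota^*\Delta_g(\sigma)=-\iota^*\rho+nH$, which gives the result immediately without any reference to the Taylor data of $\sigma$. Your version is coordinate-free and more self-contained; the paper's version exploits the expansion coefficients it has already computed. (Be aware that Corollary~\ref{LRF} is \emph{not} available at this point, since its proof quotes $\rho|_M=-H$, so it is important that you use Lemma~\ref{LB-M} rather than its $\SCY$-specialization.) For the second identity you use Proposition~\ref{rec-rho} with $k=1$ together with $\tr(h_{(2)})=|\lo|^2-\overline{\Ric}_{00}$ in adapted coordinates; this is exactly the ``alternative proof'' the paper itself supplies immediately after the main proof of Lemma~\ref{rho-01}. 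The paper's primary proof instead differentiates the defining relation of $\rho$ once in geodesic normal coordinates and uses $\sigma_{(3)}$ from Lemma~\ref{sigma23} together with the Gauss identity~\eqref{G2}. Both yield $(n-1)\rho_0'=|\lo|^2+\overline{\Ric}_{00}-\J_0$, and the final Ricci-to-Schouten conversion $\overline{\Ric}_{00}=(n-1)\Rho_{00}+\J_0$ on $M$ (valid since $\dim X=n+1$ and $g_{00}|_M=1$) is correct.
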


\begin{proof} We calculate in geodesic normal coordinates. We expand the defining relation
\begin{equation}\label{rho-def}
   \Delta_g (\sigma) = -(n\!+\!1) \rho - \sigma \J
\end{equation}
of $\rho$ into a power series of $r$. The Laplacian takes the form $\partial_r^2 + \frac{1}{2} \tr
(h_r^{-1} h'_r) \partial_r + \Delta_{h_r}$. Hence the restriction of \eqref{rho-def} to $r=0$ implies
$2 \sigma_{(2)} + n H = -(n+1)\rho_0$, and Lemma \ref{sigma23} yields the first assertion. Next,
we restrict the derivative of \eqref{rho-def} in $r$ to $r=0$. Then
$$
   6 \sigma_{(3)} + \frac{1}{2} \partial_r (\tr(h_r^{-1} h'_r))|_0
   + \tr(h_r^{-1} h'_r)|_0 \sigma_{(2)} = -(n\!+\!1) \partial_r(\rho)|_0 - \J_0.
$$
But \eqref{h-geodesic} implies the identities
$$
   \tr(h_r^{-1} h'_r)|_0 = 2 \tr (L) \quad \mbox{and}
   \quad \partial_r (\tr(h_r^{-1} h'_r))|_0 = - 2|L|^2 - 2 \Ric_{00}.
$$
Hence the above relation transforms into
$$
   6 \sigma_{(3)} - |L|^2 - \Ric_{00} + 2 \tr(L) \sigma_{(2)}
   = -(n\!+\!1) \partial_r(\rho)|_0 - \J_0.
$$
We combine this result with \eqref{G1} for $\Ric_{00}$, \eqref{sigma3} for
$\sigma_{(3)}$ and $|L|^2 = |\lo|^2 + n H^2$ to obtain
$$
   \partial_r(\rho)|_0 = \iota^* (\J^g) - \J^h + \frac{|\lo|^2}{2(n\!-\!1)} + \frac{n}{2} H^2.
$$
Now \eqref{G2} implies the assertion.
\end{proof}

The equation \eqref{rho-1} justifies the notation $\rho_0'$ in Proposition \ref{h-low-order}.

Next, we provide an alternative proof of Lemma \ref{rho-01}. This illustrates the efficiency
of the differential equation for $\rho$ (Lemma \ref{rec-2}), the resulting recursive formula
in Proposition \ref{rec-rho} and the formula for the obstruction (Theorem \ref{obstruction-ex})
in low-order cases.

First of all, the restriction of \eqref{magic-rec} to $s=0$ yields $n \rho(0) +
\frac{1}{2} \tr (h_{(1)}) = 0$. Using $h_{(1)} = 2L$ by \eqref{h-adapted}, we get
$\rho(0) = -H$. This reproves the first part of Lemma \ref{rho-01}. Next, the
formula \eqref{rec-rho-full} for $k=1$ yields
$$
   (n-1) \rho'_0 - 2 \rho_0 \left(\frac{\mathring{v}'}{\mathring{v}}\right)|_0
  + \partial_s \left(\frac{\mathring{v}'}{\mathring{v}} \right)|_0 + \J_0 = 0
$$
or, equivalently,
\begin{equation}\label{rho-prime}
   (n-1) \rho_0' -  \rho_0 \tr (h_{(1)}) +  \tr (h_{(2)}) - \frac{1}{2} \tr (h_{(1)}^2) + \J_0 = 0.
\end{equation}
By \eqref{h-adapted} and $\tr(L^2) = \tr (\mathring{L}^2) + n H^2$, this gives
\begin{align*}
    0 & = (n-1) \rho_0' + 2n H^2 + \tr (\mathring{L}^2) - \Ric_{00} - 2 \tr (L^2) + \J_0 \\
       & = (n-1) \rho_0' - \Ric_{00} - |\mathring{L}|^2 + \J_0 \\
       & = (n-1) \rho_0' - (n-1) \Rho_{00}-  |\mathring{L}|^2 .
\end{align*}
This reproves the second part of Lemma \ref{rho-01}.

\begin{remark}\label{B1}
For $n=1$, Theorem \ref{obstruction-ex} states that
$$
   \B_1 = - \partial_s \left( \frac{\mathring{v}'}{\mathring{v}}\right)|_0
  + 2 \rho_0 \left( \frac{\mathring{v}'}{\mathring{v}}\right)|_0 - \J_0
$$
or, equivalently,
$$
   \B_1 = -\tr (h_{(2)}) + \frac{1}{2} \tr (h_{(1)}^2) + \rho_0 \tr (h_{(1)}) - \J_0.
$$
By $\lo = 0$, $\tr(h_{(1)}) = 2H$, $\tr(h_{(1)}^2) = 4 H^2$, $\tr(h_{(2)}) = -\Ric_{00}$
and $\rho_0 = -H$, this formula simplifies to
$$
    \B_1 = \Ric_{00} - \J_0 = \Ric_{00} - K_0 = 0
$$
using $\Ric = K g$, $K$ being the Gauss curvature of $g$.
\end{remark}

We continue discussing the second-order derivative of $\rho$ in general dimensions
$n \ge 3$. The arguments are parallel to the discussion in Section \ref{B2-details}.
Proposition \ref{rec-rho} for $k=2$ gives
\begin{equation*}
   -(n\!-\!2) \rho_0'' = \partial_s^2 \left( \frac{\mathring{v}'}{\mathring{v}}\right)|_0
   - 4 \rho_0 \partial_s \left( \frac{\mathring{v}'}{\mathring{v}}\right)|_0
   - 4 \rho_0' \left( \frac{\mathring{v}'}{\mathring{v}}\right)|_0 +2 \J'_0.
\end{equation*}
This formula is equivalent to
\begin{align}\label{rho-second}
   & -(n\!-\!2) \rho_0'' \notag \\
   & = \tr ( 3 h_{(3)} - 3 h_{(1)} h_{(2)} + h_{(1)}^3)
   - 4 \rho_0 \tr (h_{(2)}) + 2 \rho_0 \tr (h_{(1)}^2) - 2 \rho_0' \tr (h_{(1)}) + 2 \J_0'.
\end{align}
In order to make that formula explicit, we use Proposition \ref{h-low-order} and in particular
Corollary \ref{h3-trace}. We obtain
\begin{align}\label{rho-second-sum}
    &  -(n\!-\!2) \rho_0'' = -\nabla_{\partial_s}(\Ric)_{00} + 2 \partial_s(\J)
  - 4 \lo^{ij} R_{0ij0} + \Delta H - H \Ric_{00} - 3 H|\lo|^2  - 2n H \rho_0' \notag \\
    & - 6 \tr (L^2 \lo) + 6 L^{ij} R_{0ij0} + 8 \tr (L^3) + 4 H |\lo|^2 - 4 H \Ric_{00} - 8 H |L|^2.
\end{align}
Now, by the obvious relation
$$
   -\nabla_{\partial_s}(\Ric)_{00} + 2 \partial_s(\J) = - \nabla_{\partial_s} (G)_{00} - (n-2) \partial_s(\J)
$$
for the Einstein tensor $G = \Ric - \frac{1}{2} \scal g$ and the identity
\eqref{Einstein-00}, we get
\begin{align*}
   -(n\!-\!2) \rho_0''  & = \delta^h (\Ric(\partial_s,\cdot)) - \lo^{ij} \Ric_{ij} + (n\!+\!1) H \Ric_{00}
  - H \scal - (n\!-\!2) \J_0' - 2n H \rho_0' \notag \\
   & - 4 \lo^{ij} R_{0ij0} + \Delta H - H \Ric_{00} - 3 H|\lo|^2  \notag \\
   & - 6 \tr (L^2 \lo) + 6 L^{ij} R_{0ij0} + 8 \tr (L^3) + 4 H |\lo|^2 - 4 H \Ric_{00} - 8 H |L|^2 .
\end{align*}
The decomposition \eqref{KN} shows that
\begin{align*}
   L^{ij} R_{0ij0} & = L^{ij} (W_{0ij0} + \Rho_{ij} + \Rho_{00} g_{ij}) \\
  & =  L^{ij} W_{0ij0} + L^{ij} \Rho_{ij} + n H \Rho_{00} \\
  & = L^{ij} W_{0ij0} + \lo^{ij} \Rho_{ij} + H (\J - \Rho_{00}) + n H \Rho_{00} \\
  & = L^{ij} W_{0ij0} + \lo^{ij} \Rho_{ij} + H \J + (n-1) H \Rho_{00} \\
  & =  \lo^{ij} W_{0ij0} + \lo^{ij} \Rho_{ij} + H \Ric_{00}.
\end{align*}
Similarly, we find
$$
   \lo^{ij} R_{0ij0} =  \lo^{ij} W_{0ij0} + \lo^{ij} \Rho_{ij}.
$$
The latter two results and the formula \eqref{rho-1} for $\rho_0'$ imply
\begin{align*}
   -(n\!-\!2) \rho_0'' & = \delta^h (\Ric(\partial_s,\cdot)) - \lo^{ij} \Ric_{ij} + (n\!+\!1) H \Ric_{00}
   - H \scal - (n\!-\!2) \J_0' \notag \\
   & - 4  \lo^{ij} W_{0ij0} - 4 \lo^{ij} \Rho_{ij} + \Delta H - H \Ric_{00} - 3 H|\lo|^2  \notag \\
   & - 6 \tr (L^2 \lo) + 6 \lo^{ij} W_{0ij0} + 6 \lo^{ij} \Rho_{ij} + 6 H \Ric_{00} \notag \\
   & + 8 \tr (L^3) +  4 H |\lo|^2 - 4 H \Ric_{00} - 8 H |L|^2 - 2n H \left(\Rho_{00} + \frac{|\lo|^2}{n\!-\!1} \right).
\end{align*}
Simplification gives
\begin{align*}
   -(n\!-\!2) \rho_0'' & = \delta^h (\Ric(\partial_s,\cdot)) + \Delta H + 2  \lo^{ij} W_{0ij0} + 2 \lo^{ij} \Rho_{ij}
  - \lo^{ij} \Ric_{ij} \notag \\
   & + (n\!+\!2) H \Ric_{00} - 2n H \Rho_{00} - H \scal   \notag \\
   & + H|\lo|^2  - 8 H |L|^2 - 6 \tr (L^2 \lo) + 8 \tr (L^3) - \frac{2n}{n\!-\!1} H |\lo|^2 - (n\!-\!2) \J_0'.
\end{align*}
Further simplification using
$$
   |L|^2 = |\lo|^2 + n H^2, \quad \tr (L^2 \lo) = 2 H |\lo|^2 \quad \mbox{and}
   \quad \tr (L^3) = \tr (\lo^3) +3 H |\lo|^2 + n H^3
$$
yields
\begin{align}\label{rho2-a}
    -(n\!-\!2) \rho_0'' & = \delta^h (\Ric(\partial_s,\cdot)) + \Delta H + 2 (\lo^{ij} W_{0ij0} +  \tr (\lo^3))
    - (n\!-\!3) \lo^{ij} \Rho^g_{ij} \notag \\
    & + \frac{(n\!-\!2)(n\!+\!1)}{n\!-\!1} H \Ric_{00} - \frac{n\!-\!2}{n\!-\!1} H \scal
    + \frac{3n\!-\!5}{n\!-\!1} H |\lo|^2 - (n\!-\!2) \J_0' .
\end{align}
Next, we apply the basic identity \eqref{FW-relation}. It implies that
\begin{equation}\label{F-L-trace}
   \lo^{ij} W_{0ij0} + \tr (\lo^3) \stackrel{!}{=}  (n-2) \lo^{ij} \JF_{ij}.
\end{equation}
Now, combining the formula \eqref{ddL} for $\Delta H$ and \eqref{F-L-trace} with \eqref{rho2-a}, yields
\begin{align*}
   -(n\!-\!2) \rho_0'' & = \frac{n\!-\!2}{n\!-\!1} \delta^h (\Ric(\partial_s,\cdot)) + \frac{1}{n\!-\!1} \delta \delta (\lo)
   + 2 (n\!-\!2) \lo^{ij} \JF_{ij} -  (n\!-\!3) \lo^{ij} \Rho^g_{ij} \\
   & + \frac{(n\!-\!2)(n\!+\!1)}{n\!-\!1} H \Ric_{00} - \frac{n\!-\!2}{n\!-\!1} H \scal
   + \frac{3n\!-\!5}{n\!-\!1} H |\lo|^2 - (n\!-\!2) \J_0'.
\end{align*}
Hence
\begin{align*}
   \rho_0'' & = - \frac{1}{(n\!-\!1)(n\!-\!2)} \delta \delta (\lo) - \frac{1}{n\!-\!1} \delta^h (\Ric(\partial_s,\cdot))
  - 2 \lo^{ij} \JF_{ij} + \frac{n\!-\!3}{n\!-\!2} \lo^{ij} \Rho^g_{ij} \\
  & - \frac{n\!+\!1}{n\!-\!1} H \Ric_{00} + \frac{1}{n\!-\!1} H \scal
  -  \frac{3n\!-\!5}{(n\!-\!1)(n\!-\!2)} H |\lo|^2 + \J_0' .
\end{align*}
Finally, we use the defining relation $\iota^* \Rho^g = \JF + \Rho^h - H \lo - 1/2 h H^2$ of $\JF$
to replace in this identity the Schouten tensor $\Rho^g$ by the Schouten tensor $\Rho^h$. Thus,
we have proved the following result.

\begin{lem}\label{rho-two} For $n \ge 3$, it holds
\begin{align}\label{rho-2-ex}
   \rho_0'' & = - \frac{1}{(n\!-\!1)(n\!-\!2)} \delta \delta (\lo) - \frac{1}{n\!-\!1} \delta^h (\Ric^g(\partial_s,\cdot))
   - \frac{n\!-\!1}{n\!-\!2} (\lo,\JF) + \frac{n\!-\!3}{n\!-\!2} (\lo,\Rho^h) \notag \\
   & - \frac{n\!+\!1}{n\!-\!1} H \Ric_{00} + \frac{1}{n\!-\!1} H \scal - \frac{n\!+\!1}{n\!-\!1} H |\lo|^2 + \J_0'.
\end{align}
\end{lem}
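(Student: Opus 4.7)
The plan is to derive the formula for $\rho_0''$ by combining the recursive relation for Taylor coefficients of $\rho$ (Proposition \ref{rec-rho}) with the explicit expansions of $h_s$ from Proposition \ref{h-low-order} and the Gauss equations. First, I would apply Proposition \ref{rec-rho} with $k=2$, which gives
\begin{equation*}
   -(n\!-\!2) \rho_0'' = \partial_s^2 \left( \frac{\mathring{v}'}{\mathring{v}}\right)\!\!\big|_0
   - 4 \rho_0 \partial_s \left( \frac{\mathring{v}'}{\mathring{v}}\right)\!\!\big|_0
   - 4 \rho_0' \left( \frac{\mathring{v}'}{\mathring{v}}\right)\!\!\big|_0 + 2 \J_0'.
\end{equation*}
Using $\mathring{v}'/\mathring{v} = \tfrac{1}{2}\tr(h_s^{-1}h_s')$, I would expand the right-hand side in terms of $\tr(h_{(1)}), \tr(h_{(2)}), \tr(h_{(1)}^2), \tr(h_{(3)}), \tr(h_{(1)}h_{(2)}), \tr(h_{(1)}^3)$. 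These traces are explicitly computed in Proposition \ref{h-low-order} and Corollary \ref{h3-trace}, producing formula \eqref{rho-second-sum}.

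Next, I would systematically convert the raw expression into geometrically meaningful curvature quantities. The Einstein-tensor identity $-\nabla_{\partial_s}(\Ric)_{00} + 2\partial_s(\J) = -\nabla_{\partial_s}(G)_{00} - (n-2)\partial_s(\J)$, combined with a contracted second Bianchi identity (as in \eqref{Einstein-00}), rewrites the derivative terms as $\delta^h(\Ric(\partial_s,\cdot))$ plus curvature contractions. I would then use the decomposition \eqref{RW-deco} for $R = W - \Rho \owedge g$ to rewrite $\lo^{ij}R_{0ij0}$ and $L^{ij}R_{0ij0}$ in terms of the Weyl tensor and the Schouten tensor $\Rho^g$, and substitute the value of $\rho_0'$ from Lemma \ref{rho-01}. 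Basic identities like $|L|^2 = |\lo|^2 + nH^2$, $\tr(L^2\lo) = 2H|\lo|^2$, and $\tr(L^3) = \tr(\lo^3) + 3H|\lo|^2 + nH^3$ will clean up the algebra.

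At this stage the expression contains the combination $\lo^{ij}W_{0ij0} + \tr(\lo^3)$, which by the Fialkow identity \eqref{FW-relation} equals $(n-2)\lo^{ij}\JF_{ij}$—this is the crucial step that replaces Weyl and cubic-$\lo$ terms by the conformally invariant Fialkow tensor. I would also invoke the Codazzi-Mainardi-type identity \eqref{ddL} to replace $\Delta H$ by $\tfrac{1}{n-1}\delta\delta(\lo) + \tfrac{1}{n-1}\delta^h(\Ric(\partial_s,\cdot))$ (up to the appropriate coefficient). Finally, using the defining relation $\iota^*\Rho^g = \JF + \Rho^h - H\lo - \tfrac{1}{2}H^2 h$, I would trade $(\lo,\Rho^g)$ for $(\lo,\Rho^h)$ and $(\lo,\JF)$, collect coefficients, and arrive at the asserted formula.

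The main obstacle is purely combinatorial: keeping the numerical coefficients correct through a long chain of rewrites involving traces of products of $L$ and $\lo$, curvature decompositions, and Bianchi-type identities. In particular, the cancellation that produces the clean coefficients $-\tfrac{1}{(n-1)(n-2)}$ in front of $\delta\delta(\lo)$ and $-\tfrac{n-1}{n-2}$ in front of $(\lo,\JF)$ depends sensitively on correctly applying \eqref{FW-relation} and the Codazzi identity \eqref{ddL} simultaneously; a sign error in either would propagate through. No new conceptual input beyond Proposition \ref{rec-rho}, Proposition \ref{h-low-order}, Lemma \ref{rho-01}, and the Gauss/Codazzi/Fialkow identities is needed.
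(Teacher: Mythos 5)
Your proposal reproduces the paper's proof step for step: Proposition \ref{rec-rho} with $k=2$, expansion via Corollary \ref{h3-trace} into \eqref{rho-second-sum}, the Einstein-tensor/\eqref{Einstein-00} rewriting, the Kulkarni-Nomizu decomposition for $L^{ij}R_{0ij0}$ and $\lo^{ij}R_{0ij0}$, the Fialkow identity \eqref{FW-relation}, the Codazzi-derived identity \eqref{ddL}, and the final substitution using the defining relation of $\JF$. This is exactly the argument given in Section \ref{TC+B}.
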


In Section \ref{Q-low}, we shall connect this result with the holographic formula for $\QC_3$.

\begin{remark}\label{GW-rho-2}
Lemma \ref{rho-two} is equivalent to \cite[Remark 6.9]{GW-LNY} and \cite[Remark 3.11]{GW-Willmore},
up to the sign of the term $(\lo,\Rho^h)$. In fact, the quoted result calculates the restriction of 
$\nabla_{\NV}^2(\rho)$ to $M$. By Example \ref{trans-low-2}, it corresponds to $\rho''_0+ 2 H \rho_0'$. 
But \eqref{rho-2-ex} implies
\begin{align*}
   \rho''_0+ 2 H\rho_0' & = (\cdots)  -\frac{n\!+\!1}{n\!-\!1} H \Ric_{00} + \frac{1}{n\!-\!1} H \scal
   - \frac{n\!+\!1}{n\!-\!1} H |\lo|^2 + \J_0' \\
   & + 2 H \left(\frac{1}{n\!-\!1} \Ric_{00} - \frac{1}{n\!-\!1} \J_0 + \frac{|\lo|^2}{n\!-\!1}\right)  + \J_0' \\
   & = (\cdots) - H \Ric_{00} + 2 H \J_0 - H|\lo|^2 + \J_0' \\
   & = (\cdots) - H ((n\!-\!1) \Rho_{00} + |\lo|^2) + H \J_0 + \J_0',
\end{align*}
where $(\cdots)$ indicates the four terms in the first line of \eqref{rho-2-ex}. This proves the claim. The
present alternative proof rests on the recursive formula in Proposition \ref{rec-rho} and the explicit
formula for $h_{(3)}$ in Proposition \ref{h-low-order}.
\end{remark}

\begin{remark}
The proof of Proposition \ref{h-low-order} shows that the calculation of the coefficient $h_{(2)}$ involves
$h_{(1)}$ and $\rho_0$. Once $h_{(2)}$ has been determined, we calculated $\rho_0'$ using \eqref{rho-prime}.
Similarly, the calculation of $h_{(3)}$ in the proof of Lemma \ref{h-low-order} involves the lower-order coefficients
of $h_s$ and $\rho_0$, $\rho_0'$. Once $h_{(3)}$ has been determined, the recursive formula \eqref{rho-second}
yields $\rho_0''$. A continuation of that iterative process yields the higher-order coefficients, at least in principle.
\end{remark}

We finish this section with some comments concerning the functions $\rho$ and the
singular Yamabe obstructions in Examples \ref{ball}--\ref{subsphere}. In Example
\ref{ball}, it holds $\mathring{v}'/\mathring{v} = - n/(1-2s)$. Hence the
differential equation \eqref{magic-rec} reduces to
$$
   - s \rho' + n \rho - n(1-2s \rho)/(1-2s) = 0.
$$
One readily checks that $\rho=1$ is the unique solution with initial value $\rho(0)
= 1$. The vanishing of the obstruction is reproduced by the formula
\eqref{obstruction-magic}. Indeed, we obtain
$$
   (n+1)! \B_n = 2n \partial_s^n (1/(1-2s))|_0 - 4 n^2 \partial_s^{n-1} (1/(1-2s))|_0 =
   2n 2^n n! - 4 n^2 2^{n-1} (n-1)! = 0.
$$
Similarly, in Example \ref{subsphere}, it holds $\mathring{v}'/\mathring{v} = - ns/(1-s^2)$ and
the differential equation \eqref{magic-rec} reduces to
$$
   - s\rho' + n\rho - n s (1-2s\rho)/(1-s^2) + \tfrac{n+1}{2} s = 0.
$$
One easily checks that $\rho = \frac{1}{2} s$ is the unique solution with the
initial value $\rho(0)=0$. Again, the vanishing of the obstruction is reproduced by
$$
   (n+1)! \B_n = 2 \partial_s^n(ns/(1-s^2))|_0 - 4 \binom{n}{2} \partial_s^{n-2}(ns/(1-s^2))|_0
  = 2n n! - 4n \binom{n}{2} (n-2)! = 0
$$
for odd $n$ and trivially for even $n$.

\subsection{The obstruction $\B_2$ for general backgrounds}\label{B2-details}


In the present section, we prove the equivalence of both formulas for the singular
Yamabe obstruction $\B_2$ displayed in \eqref{B2} and derive the second of these
formulas from Theorem \ref{B-form}.

The Codazzi-Mainardi identity states that
\begin{equation}\label{CME}
   \nabla^h_Y (L)(X,Z) - \nabla^h_X(L)(Y,Z) = R^g (X,Y,Z,N)
\end{equation}
for $X,Y,Z \in \mathfrak{X}(M)$ if $L(X,Y) = - h(\nabla^g_X(Y),N)$ for some unit
normal vector field $N$ \cite[Theorem 1.72]{Besse}. Now, we decompose the curvature
tensor $R$ as
\begin{align}\label{KN}
   & R(X,Y,Z,W) = W(X,Y,Z,W)  \\
   & + \Rho(Y,Z) g (X,W) - \Rho(X,Z) g(Y,W) - \Rho(Y,W)g(X,Z) + \Rho(X,W)g(Y,Z), \notag
\end{align}
where $W$ is the trace-free Weyl tensor (see \eqref{RW-deco}), and take traces in
\eqref{CME} in the arguments $X,Z$. Then
$$
   n \langle dH, Y \rangle - \langle \delta^h (L), Y \rangle  = -(n-1) \Rho(N,Y).
$$
Hence
$$
   \delta^h (\lo) - (n-1) dH = (n-1) \Rho(N,\cdot)
$$
(see
[Lemma 6.25.2]{J1}). Now, taking a further divergence, yields
\begin{equation}\label{ddL}
   \delta^h \delta^h (\lo) - (n-1) \Delta H = (n-1) \delta^h (\Rho(N,\cdot)).
\end{equation}
For $n=2$, this proves the equivalence of both formulas in \eqref{B2}.

We continue by showing that the second formula for $\B_2$ in \eqref{B2} is a special
case of Theorem \ref{B-form}. First, we observe that the formula
$$
    - 3 \B_2 = \partial_s^2 \left( \frac{\mathring{v}'}{\mathring{v}} \right)|_0
    - 4 \rho_0  \partial_s \left (\frac{\mathring{v}'}{\mathring{v}} \right)|_0
    - 4 \rho_0' \left(\frac{\mathring{v}'}{\mathring{v}}\right)|_0 + 2 \J_0'
$$
in Theorem \ref{B-form} is equivalent to
$$
   -3 \B_2  =  \tr ( 3 h_{(3)} - 3 h_{(1)} h_{(2)} + h_{(1)}^3)
   - 4 \rho_0 \tr (h_{(2)}) + 2 \rho_0 \tr (h_{(1)}^2) - 2 \rho_0' \tr (h_{(1)}) + 2 \J_0'.
$$
In order to make that sum explicit, we use the results in Proposition \ref{h-low-order}.
By Corollary \ref{h3-trace} for $n=2$ and $\rho_0 = - H$, we obtain
\begin{align}\label{B2-sum}
    -3 \B_2  & = -\nabla^g_{\partial_s}(\Ric)_{00} + 2 \partial_s(\J)
    - 4 \lo^{ik} R_{0ik0} + \Delta H - H \Ric_{00} - 3 H|\lo|^2  \\
    & - 6 \tr (L^2 \lo) + 6 L^{ij} R_{0ij0} + 8 \tr (L^3) + 4 H |\lo|^2 - 4 H \Ric_{00} - 8 H |L|^2 - 4 H \rho_0'. \notag
\end{align}

Now we prove the identity
\begin{equation}\label{Einstein-00}
   \nabla^g_{\partial_s}(G)_{00}
   = -\delta^h (\Ric(\partial_s,\cdot)) + \lo^{ij} \Ric_{ij} - (n+1) H \Ric_{00} + H \scal
\end{equation}
for the Einstein tensor $G \st \Ric - \frac{1}{2} \scal g$. Note that $G= \Ric - 2
\J g$ in dimension $n=3$. It is well-known that the second Bianchi identity implies the
relation $2 \delta^g (\Ric) = d\scal$. Hence
\begin{align*}
   \nabla^g_{\partial_s}(\Ric)(\partial_s,\partial_s)
   & = \delta^g(\Ric)(\partial_s) - g^{ij} \nabla^g_{\partial_i}(\Ric)(\partial_j,\partial_s) \\
   & =  \frac{1}{2} \langle d \scal,\partial_s \rangle \\
   & - g^{ij} \partial_i (\Ric(\partial_j,\partial_s))
   + g^{ij} \Ric(\nabla^g_{\partial_i}(\partial_j),\partial_s) + g^{ij} \Ric(\partial_j,\nabla^g_{\partial_i}(\partial_s)) \\
   & = \frac{1}{2} \langle d \scal,\partial_s \rangle \\ & - h^{ij} \partial_i (\Ric(\partial_j,\partial_s))
   + h^{ij} \Ric (\nabla^h_{\partial_i}(\partial_j) - L_{ij} \partial_s,\partial_s)
   + h^{ij} \Ric(\partial_j,\nabla^g_{\partial_i}(\partial_s))  \\
   & = \frac{1}{2} \langle d \scal,\partial_s \rangle - \delta^h (\Ric(\partial_s,\cdot)) - n H \Ric_{00}
   + h^{ij} \Ric(\partial_j,\nabla^g_{\partial_i}(\partial_s))
\end{align*}
on $M$. Therefore, using $\nabla_{\partial_i}^g(\partial_s) = L_i^k \partial_k$, we obtain
\begin{align*}
   \nabla^g_{\partial_s}(G)_{00} & = -\delta^h (\Ric(\partial_s,\cdot)) - n H \Ric_{00} + h^{ij} L_i^k \Ric_{jk} \\
   & = -\delta^h (\Ric(\partial_s,\cdot)) - n H\Ric_{00} + L^{ij} \Ric_{ij}.
\end{align*}
This implies \eqref{Einstein-00}.

Next, we observe that the decomposition \eqref{KN} yields
\begin{align*}
   L^{ij} R_{0ij0} & = L^{ij} (W_{0ij0} + \Rho_{ij} + \Rho_{00} g_{ij}) \\
   & = L^{ij} \Rho_{ij} + 2 H \Rho_{00} \\
   & = \lo^{ij} \Rho_{ij} + H (\J - \Rho_{00}) + 2 H \Rho_{00} = \lo^{ij} \Rho_{ij} + H \J + H \Rho_{00} \\
   & = \lo^{ij} \Rho_{ij} + H \Ric_{00}
\end{align*}
since the Weyl tensor $W$ vanishes in dimension $3$. Similarly, we find
\begin{equation*}
   \lo^{ij} R_{0ij0} = \lo^{ij} \Rho_{ij}.
\end{equation*}

These results and the formula \eqref{rho-01} for $\rho_0'$ show that the sum
\eqref{B2-sum} simplifies to (recall that $n=2$)
\begin{align*}
   & \delta^h (\Ric(\partial_s,\cdot)) - \lo^{ij} \Ric_{ij} + 3 H \Ric_{00}
  - H \scal - 4 \lo^{ij} \Rho_{ij} + \Delta H - H \Ric_{00} - 3 H |\lo|^2 \\
   & - 6 \tr (L^2 \lo) + 6 \lo^{ij} \Rho_{ij} + 6 H \Ric_{00} + 8 \tr(L^3)
   + 4 H |\lo|^2 - 4 H \Ric_{00} - 8 H |L|^2 \\
   & - 4H \Ric_{00} + H \scal - 4 H |\lo|^2 \\
   & =  \delta^h (\Ric(\partial_s,\cdot)) + \Delta H + \lo^{ij} \Ric_{ij},
\end{align*}
up to terms that are at least quadratic in $L$. In order to deal with these terms, we note that
$$
   |L|^2 = |\lo|^2 + 2 H^2, \quad \tr (L^2 \lo) = 2 H |\lo|^2 \quad \mbox{and}
  \quad \tr (L^3) = 3 H |\lo|^2 + 2 H^3
$$
using $\tr(\lo^3)=0$. It follows that the remaining terms are
$(- 3 - 12 + 24 + 4 - 8 - 4) H |\lo|^2 = H |\lo|^2$. Summarizing we obtain
$$
   -3 \B_2 = \delta^h (\Ric(\partial_s,\cdot)) + \Delta H + \lo^{ij} \Ric_{ij}+ H |\lo|^2.
$$
This proves the second formula for $\B_2$ in \eqref{B2}.

\begin{remark}\label{GW-myst}
The identity \eqref{Einstein-00} is equivalent to \cite[Lemma B.7]{GW-LNY} or
\cite[Lemma A.7]{GW-Willmore}. In addition to the derivation of the above formula
for the singular Yamabe obstruction $\B_2$, in these references, this crucial identity is used to
prove a formula for the second normal derivative of $\rho$ and a formula for
$\QC_3$. The calculations lead to the same results as here. Our discussion of the second normal
derivative of $\rho$ and the explicit formula for $\QC_3$ is contained in Sections
\ref{TC+B} and \ref{Q-low}. It uses the identity \eqref{Einstein-00}.
\end{remark}

\subsection{The obstruction $\B_3$ for conformally flat backgrounds}\label{B3-flat-back}

We first evaluate Theorem \ref{B-form} for the obstruction $\B_3$ of a three-manifold $M \hookrightarrow \R^4$. 
The resulting formula is equivalent to a formula in \cite{GW-LNY} (see Lemma \ref{B-relation}). This will imply a 
formula for $\B_3$ in a conformally flat background.

\begin{prop}\label{B3-evaluation}
The Yamabe obstruction of a hypersurface $(M^3,h)$ in the four-dimensional flat space $\R^4$ is given by the formula
\begin{equation}\label{B3-final}
   12 \B_3 = \Delta (|\lo|^2) + 12 |dH|^2 + 6 (\lo,\Hess (H)) - 2 |\lo|^4 + 6 \tr (\lo^4) + 6 H \tr (\lo^3)
\end{equation}
or, equivalently,
\begin{equation}\label{B3-final2}
   12 \B_3 = \Delta (|\lo|^2) + 12 |dH|^2 + 6 (\lo,\Hess (H)) + |\lo|^4 + 6 H \tr (\lo^3).
\end{equation}
Here the Hessian, the Laplacian, scalar products, and traces are taken with respect to the metric $h$ on $M$.
\end{prop}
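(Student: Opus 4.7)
Specialize the general obstruction formula in Theorem \ref{B-form} to $n = 3$ with flat background, and compute all the required ingredients. For a flat background, $\bar{\J} \equiv 0$ and all curvature components of $g$ vanish, so the term $-2n\,\partial_s^{n-1}(\J)|_0$ drops out of \eqref{ob-magic-2}. Writing $T_k := \partial_s^k(\mathring{v}'/\mathring{v})|_0$, the formula collapses to
\begin{equation*}
24\,\B_3 \;=\; -2\,T_3 + 12\,\rho_0\,T_2 + 24\,\rho_0'\,T_1 + 12\,\rho_0''\,T_0.
\end{equation*}
By Lemma \ref{rho-01}, $\rho_0 = -H$, and since $\Rho_{00} = 0$, also $\rho_0' = \tfrac{1}{2}|\lo|^2$. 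Thus the task reduces to computing $\rho_0''$ together with $T_0, T_1, T_2, T_3$.

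For $\rho_0''$, specialize Lemma \ref{rho-two} to $n=3$ and flat $g$ (so $\Ric^g = 0$, $\J_0' = 0$, $\scal^g = 0$). Using $\JF = -\Rho^h + H\lo + \tfrac{1}{2}H^2 h$ (because $\iota^*\Rho^g = 0$), the flat case $W = 0$ of \eqref{FW-relation} giving $\JF = \lo^2 - \tfrac{|\lo|^2}{4}h$, the Codazzi identity \eqref{ddL} in the form $\delta^h\delta^h(\lo) = 2\Delta H$, and the resulting inner products $(\lo, \JF) = \tr(\lo^3)$ and $(\lo,\Rho^h) = -\tr(\lo^3) + H|\lo|^2$, direct substitution yields $\rho_0'' = -\Delta H - 2\tr(\lo^3) - 2H|\lo|^2$. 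For $T_0, T_1, T_2$, use $T_0 = \tr(L) = 3H$ directly, then solve Proposition \ref{rec-rho} for $k = 1, 2$ in terms of the known $\rho_0, \rho_0', \rho_0''$. This gives $T_1 = -|\lo|^2 - 6H^2$ and $T_2 = \Delta H + 2\tr(\lo^3) + 12H|\lo|^2 + 24H^3$.

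\emph{The main obstacle is $T_3$.} Proposition \ref{rec-rho} cannot supply it: for $k = n = 3$ the left-hand coefficient $(n-k)$ vanishes, which is exactly the mechanism producing the obstruction. I must therefore compute $T_3$ directly, which in turn requires extending Proposition \ref{h-low-order} by one order to obtain $h_{(4)}$ in adapted coordinates. The route is to continue the Christoffel-symbol/curvature-component analysis of the metric $a^{-1} ds^2 + h_s$ with $a = 1 - 2s\rho$ one step further than in the proof of Proposition \ref{h-low-order}; flatness $R \equiv 0$ eliminates most terms. A cleaner alternative exploits the fact that for a flat background, geodesic normal coordinates give $h_r = (I + rL)^2$ exactly, hence $u(r) = \det(I + rL)$; one then converts to adapted coordinates using the explicit polynomial expansion of $\sigma$ through $O(r^5)$ supplied by Lemmas \ref{sigma-flat} and \ref{sigma-5}, i.e., invert $s = \sigma(r,x)$ to $r = r(s,x)$ and push forward $u$ to obtain $\mathring{v}(s)$.

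Substituting $\rho_0, \rho_0', \rho_0'', T_0, T_1, T_2, T_3$ into the specialized identity above and simplifying using the variation formula \eqref{var-1} for $\Delta'$ together with the Codazzi identity $\delta^h(\lo) = 2\, dH$ (for $n=3$ and flat), the expected cancellations should leave
\begin{equation*}
12\,\B_3 = \Delta(|\lo|^2) + 12\,|dH|^2 + 6\,(\lo,\Hess(H)) + |\lo|^4 + 6 H\,\tr(\lo^3),
\end{equation*}
which is \eqref{B3-final2}. The equivalence with \eqref{B3-final} reduces to the pointwise identity $\tr(\lo^4) = \tfrac{1}{2}|\lo|^4$ for any trace-free symmetric $3\times 3$ matrix: multiplying the Cayley--Hamilton equation $\lo^3 = \tfrac{1}{2}|\lo|^2\lo + (\det\lo)\,I$ by $\lo$ and tracing yields this relation, whence $-2|\lo|^4 + 6\tr(\lo^4) = |\lo|^4$.
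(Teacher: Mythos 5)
Your primary route mirrors the paper's own proof: specialize Theorem \ref{B-form} to $n=3$ with flat background, assemble $\rho_0, \rho_0', \rho_0''$ from Lemmas \ref{rho-01} and \ref{rho-two}, recognize that $T_3 = \partial_s^3(\mathring{v}'/\mathring{v})|_0$ is the genuinely new ingredient requiring $h_{(4)}$, and obtain $h_{(4)}$ by pushing the Christoffel-symbol analysis of $a^{-1}ds^2 + h_s$ one order further — this is precisely Lemma \ref{h4} — with the Cayley--Hamilton argument for $|\lo|^4 = 2\tr(\lo^4)$ being a clean version of Corollary \ref{Newton-C3}. One caution, though: your proposed ``cleaner alternative'' for $T_3$ would not succeed as sketched. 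Converting $u(r)=\det(\id + rL)$ in geodesic normal coordinates into $\mathring{v}(s)$ in adapted coordinates is not a matter of inverting $s=\sigma(r,x)$ in $r$ alone, because the diffeomorphism $\zeta = \kappa^{-1}\circ\eta$ between the two coordinate systems acquires a nontrivial tangential component once $s>0$ (the flow of $\mathfrak{X}=\NV/|\NV|^2$ leaves the normal geodesics), so the relevant pushforward involves the full Jacobian determinant of $\zeta$, not merely $\partial r/\partial s$. That the adapted and geodesic pictures genuinely diverge at this order is consistent with the paper's remark that $v_3$ and $w_3$ differ by a nonzero divergence already when $n=3$. Carry out the Christoffel route.
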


Newton's identity
$$
   24 \sigma_4 (L) = \tr(L)^4 - 6 \tr(L)^2 |L|^2 + 3 |L|^4 + 8 \tr(L) \tr(L^3) - 6 \tr(L^4)
$$
for the elementary symmetric polynomial $\sigma_4(L)$ of the eigenvalues of $L$ (or rather of the shape
operator) implies
\begin{align*}
   24 \sigma_4(L) & = n(n-1)(n-2)(n-3) H^4 - 6 (n-2)(n-3) H |\lo|^2 + 8(n-3) H \tr(\lo^3) \\
   & + 3 (|\lo|^4 - 2 \tr(\lo^4)).
\end{align*}
Since $\sigma_4(L) = 0$ in dimension $n=3$, we get

\begin{cor}\label{Newton-C3} $|\lo|^4 = 2 \tr(\lo^4)$ for $n=3$.
\end{cor}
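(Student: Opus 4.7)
The plan is to read off the identity directly from Newton's formula displayed immediately above the statement, specialized to $n=3$. Setting $n=3$ in
\begin{align*}
   24 \sigma_4(L) & = n(n-1)(n-2)(n-3) H^4 - 6 (n-2)(n-3) H^2 |\lo|^2 + 8(n-3) H \tr(\lo^3) \\
   & + 3 (|\lo|^4 - 2 \tr(\lo^4))
\end{align*}
collapses the first three terms on the right-hand side: the factors $(n-3)$, $(n-2)(n-3)$ and $n(n-1)(n-2)(n-3)$ all vanish. Hence only the last term survives, and the identity reduces to $24\sigma_4(L) = 3(|\lo|^4 - 2\tr(\lo^4))$.

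Next, I would invoke the fact that $\sigma_4(L) = 0$ whenever $L$ acts on a tangent space of dimension $n=3$, simply because the fourth elementary symmetric polynomial in three eigenvalues is identically zero (no four-element subsets). This is a purely linear-algebraic observation that requires no further computation. Combining this with the previous display yields $|\lo|^4 - 2\tr(\lo^4) = 0$, which is the asserted identity.

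There is essentially no obstacle here: the corollary is an immediate consequence of Newton's identity together with a dimensional vanishing of the top elementary symmetric polynomial. As a sanity check, one could also argue directly from the Cayley--Hamilton theorem for the shape operator associated to $\lo$: since $\tr(\lo)=0$, its characteristic polynomial reads $\lo^3 - \tfrac12 |\lo|^2\, \lo - \det(\lo) I = 0$, and taking traces of $\lo$ times this relation recovers $\tr(\lo^4) = \tfrac12 |\lo|^4$, in agreement with the result.
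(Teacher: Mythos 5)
Your argument is exactly the paper's: set $n=3$ in the expanded Newton identity so that the $(n-3)$ factors kill the first three terms, and use the vanishing of $\sigma_4(L)$ in three dimensions to conclude $3(|\lo|^4 - 2\tr(\lo^4)) = 0$. (You also correctly have $H^2|\lo|^2$ in the second term, where the paper's display has a typo $H|\lo|^2$.) The Cayley--Hamilton sanity check at the end is a clean independent verification, but the main line of reasoning coincides with the paper's proof.
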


Corollary \ref{Newton-C3} implies the equivalence of \eqref{B3-final} and \eqref{B3-final2}.

We derive formula \eqref{B3-final} as a consequence of the identity
\begin{equation}\label{3-flat}
   12\B_3 = - \partial_s^3 \left( \frac{\mathring{v}'}{\mathring{v}}\right)|_0
   + 6   \rho_0 \partial_s^2 \left( \frac{\mathring{v}'}{\mathring{v}}\right)|_0
   + 12 \rho_0'  \partial_s \left( \frac{\mathring{v}'}{\mathring{v}}\right)|_0
   + 6 \rho_0'' \left( \frac{\mathring{v}'}{\mathring{v}}\right)|_0 - 3 \partial_s^2(\bar{\J})|_0
\end{equation}
in Theorem \ref{B-form}. Note that, for a flat background, the last term vanishes. By the relations
\begin{align*}
    \left( \frac{\mathring{v}'}{\mathring{v}}\right)|_0 & = \frac{1}{2} \tr (h_{(1)}), \\
    \partial_s \left( \frac{\mathring{v}'}{\mathring{v}}\right)|_0  & = \frac{1}{2} \tr (2 h_{(2)} - h_{(1)}^2), \\
    \partial_s^2 \left( \frac{\mathring{v}'}{\mathring{v}}\right)|_0  & = 2! \frac{1}{2} \tr (3 h_{(3)}
    - 3 h_{(1)} h_{(2)} + h_{(1)}^3), \\
    \partial_s^3 \left( \frac{\mathring{v}'}{\mathring{v}}\right)|_0 & = 3! \frac{1}{2} \tr (4 h_{(4)} - 4 h_{(1)} h_{(3)}
   - 2 h_{(2)}^2 + 4 h_{(1)}^2 h_{(2)} - h_{(1)}^4),
\end{align*}
the identity \eqref{3-flat} is equivalent to
\begin{align}\label{B3-flat}
 12 \B_3 & = - \tr( 12 h_{(4)} - 12 h_{(1)} h_{(3)} - 6 h_{(2)}^2 + 12 h_{(1)}^2 h_{(2)} - 3 h_{(1)}^4) \notag \\
  & + 6 \rho_0  \tr (3 h_{(3)} - 3 h_{(1)} h_{(2)} + h_{(1)}^3) \notag \\
  & + 6 \rho_0'  \tr (2 h_{(2)} - h_{(1)}^2) \notag \\
  & + 3 \rho_0''  \tr (h_{(1)}).
\end{align}
In order to evaluate that sum, we use the formulas for $h_{(k)}$ ($k \le 3$) in
Proposition \ref{h-low-order} and the formulas for the first two normal derivatives
of $\rho$ in Section \ref{TC+B}. In addition, it remains to determine the
coefficient $\tr(h_{(4)})$. The following result even provides a closed formula for
the Taylor coefficient $h_{(4)}$ of $h_s$ (for a flat background).

\begin{lem}\label{h4} Let $n=3$. Then
\begin{align}\label{h4-ex}
    12 h_{(4)}  & = - 9 H \Hess (H) + L \Hess (H) + \Hess (H) L - 6 dH \otimes dH - \gamma \notag \\
    & - \Hess (|\lo|^2) + 4 \lo^2 |\lo|^2 + 15 H^2 ({(\lo^2)}_\circ + H \lo) - H \lo |\lo|^2 + 3 L \rho_0'',
\end{align}
where
\begin{equation}\label{gamma-form}
    \gamma_{jk} \st  2 h^{lm} (\nabla_j(L)_{km} + \nabla_k(L)_{jm} - \nabla_m(L)_{jk}) \partial_l(H)
\end{equation}
and
\begin{equation}\label{rho-pp}
    \rho_0'' = - \Delta (H) - 2 \tr (\lo^3) - 2 H |\lo|^2.
\end{equation}
\end{lem}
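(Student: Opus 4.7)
The plan is to establish the two formulas separately, treating \eqref{rho-pp} as a direct specialization of Lemma \ref{rho-two} and \eqref{h4-ex} via an extension of the strategy used in the proof of Proposition \ref{h-low-order}.

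First I would obtain \eqref{rho-pp} by evaluating \eqref{rho-2-ex} at $n=3$ for a flat ambient metric $g$. Since the background is flat, $\Ric^g$, $\scal^g$, $\Rho^g$, $\J^g$, and $\nabla^g_{\partial_s}(\J^g)$ all vanish, killing five of the eight terms in \eqref{rho-2-ex}; the coefficient $(n-3)/(n-2)$ annihilates a sixth. The Fialkow identity \eqref{FW-relation} with $W=0$ reduces the Fialkow tensor to $\JF = \lo^2 - \tfrac{1}{4} |\lo|^2 h$ in dimension three, so $(\lo, \JF) = \tr(\lo^3)$ because $(\lo, h) = 0$. Finally the Codazzi--Mainardi consequence \eqref{ddL} for $n=3$ with flat $g$ gives $\delta^h \delta^h(\lo) = 2\Delta H$. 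Assembling these ingredients yields \eqref{rho-pp} directly.

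For \eqref{h4-ex}, the plan is to push the calculation used to compute $h_{(3)}$ in Proposition \ref{h-low-order} one step further. Because the background is flat, the identity $R_{0jk0} \equiv 0$ holds at all orders in $s$, where the left-hand side is given by the intrinsic expression \eqref{R-adapted} for the curvature of $\eta^*(g) = a^{-1} ds^2 + h_s$. The unknown $h_{(4)}$ is the coefficient of $s^4$ in $h_s$, and it enters linearly through the $-\tfrac{1}{2} g^{00} g_{jk}''$ term of \eqref{R-adapted} when one extracts the coefficient of $s^2$ in that identity. All other quantities are known: $h_{(1)} = 2L$ and $h_{(2)} = L\lo$ (using $R_{0ij0}|_0 = 0$), $h_{(3)}$ from \eqref{h-adapted-cubic} with $\nabla^g_{\partial_s}(R)_{0ij0} = 0$, and the $s$-expansion $a = 1 - 2s\rho$ with $\rho_0 = -H$, $\rho_0' = |\lo|^2/2$ (from Lemma \ref{rho-01} and flatness), and $\rho_0''$ from the first part of the lemma. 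Tangential derivatives of $L$ enter through the Christoffel contributions $\Gamma_{jk}^l g^{00} \partial_l(g_{00})$ and the cross-term $\partial_j(g^{00}) \partial_k(g_{00})$ in \eqref{R-adapted}; these combine precisely into the quantity $\gamma$ defined in \eqref{gamma-form}. Repeated differentiation of $g_{00} = (1-2s\rho)^{-1}$ and $h_s$ in $s$ generates the Hessian and squared-gradient contributions $\Hess(H)$, $\Hess(|\lo|^2)$, and $dH \otimes dH$, while the algebraic terms $15H^2 ((\lo^2)_\circ + H\lo)$, $4\lo^2 |\lo|^2$, and $H\lo|\lo|^2$ come from the pointwise products $g^{rl} g_{kr}' g_{jl}'$ and $\Gamma^l_{jk} \partial_l(g_{00})$ at the relevant order.

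The main obstacle will be bookkeeping: applying $\partial_s^2$ to \eqref{R-adapted} produces a large number of monomials, and identifying which combinations assemble into the compact geometric expressions on the right of \eqref{h4-ex} requires repeated use of the Codazzi--Mainardi identity \eqref{CME}, the Newton identity $|\lo|^4 = 2\tr(\lo^4)$ in dimension three (Corollary \ref{Newton-C3}), and the polynomial identities relating $L$ and $\lo$ (such as $\tr(L^3) = \tr(\lo^3) + 3H|\lo|^2 + 3H^3$ and $L^2\lo = \lo^3 + 2H\lo^2 + H^2\lo$) that were already used in the cubic step. Once these identities are applied, the resulting expression collapses to \eqref{h4-ex}; cross-checking against the explicit $\B_3$ formula \eqref{B3-final} through the identity \eqref{3-flat} then serves as a sanity check.
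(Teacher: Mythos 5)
Your plan matches the paper's proof in both parts: \eqref{rho-pp} by specializing Lemma \ref{rho-two} to $n=3$ with a flat ambient metric and invoking $\delta\delta(\lo)=2\Delta H$ and $(\lo,\JF)=\tr(\lo^3)$, and \eqref{h4-ex} by expanding the flatness relation $R^0_{0jk}=0$ for the metric $a^{-1}ds^2+h_s$ to order $s^2$, where $h_{(4)}$ enters linearly through the $-\tfrac12 g^{00}g_{jk}''$ term. One small bookkeeping slip worth correcting: the cross-term $\partial_j(g^{00})\partial_k(g_{00})$ does not contribute to $\gamma$ — at order $s^2$ it yields only $d\rho_0\otimes d\rho_0=dH\otimes dH$ — while $\gamma$ comes entirely from the first variation $(\Gamma_{jk}^l)'$ of the tangential Christoffel symbols contracted with $\partial_l(g_{00})$, exactly as in the paper's term \eqref{term0}.
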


\begin{proof}
We use the same notation as in the proof of Proposition \ref{h-low-order}. We expand the curvature
components $R_{0jk}^0$ of the metric
$$
   \eta^*(g) = a^{-1} ds^2 + h + h_{(1)} s + h_{(2)} s^2 + h_{(3)} s^3 + h_{(4)} s^4 + \cdots
$$
into power series of $s$. We recall that $a = \eta^*(|\NV|^2)$. In order to simplify the notation, we
write $g$ for the metric $\eta^*(g)$ and identify $\eta^*(\rho)$ with $\rho$. By assumption, it holds
$a = 1- 2s \rho$. As usual, the $0$-components refer to $\partial_s$. Since $g$ is flat, the components
$R_{0jk}^0$ vanish. Now, for the above metric, we find the Christoffel symbols
$$
   \Gamma_{ij}^0 = - \frac{1}{2} g^{00} g_{ij}',
   \quad  \Gamma_{0j}^0 = \frac{1}{2} g^{00} \partial_j (g_{00}),
   \quad \Gamma_{00}^0 = \frac{1}{2} g^{00} g_{00}',
   \quad \Gamma_{0k}^l = \frac{1}{2} g^{rl} g_{kr}',
$$
where $'$ denotes the derivative in $s$. We recall that $g^{00} = a = 1- 2s \rho$ (by assumption) and
$g_{00} = a^{-1} = 1 + 2s\rho + \cdots$. Hence
\begin{align}\label{R-adapted-2}
   0 \stackrel{!}{=} R_{0jk}^0  & = \frac{1}{2} \Gamma_{jk}^l g^{00} (g_{00})_l
  + \frac{1}{4} g^{00} g^{rl} g_{kr}' g_{jl}' - \frac{1}{2} ((g^{00})' g_{jk}' + g^{00} g_{jk}'') \notag \\
  & - \frac{1}{2} ((g^{00})_j (g_{00})_k + g^{00} (g_{00})_{kj})
  - \frac{1}{4} (g^{00})^2 g_{jk}' g_{00}' - \frac{1}{4} (g^{00})^2 (g_{00})_j (g_{00})_k.
\end{align}
Now we display the Taylor expansions of all $6$ terms in \eqref{R-adapted-2} up to order $s^2$.
Using these results, it is easy to see that the coefficients of $s$ reproduce the result of
the earlier calculation of $h_{(3)}$ in the proof of Proposition \ref{h-low-order} (Remark \ref{h3-rev}).

First, we observe that the Christoffel symbols $\Gamma_{jk}^l$ for $g$ restrict to the Christoffel
symbols $\Gamma_{jk}^l$ for $h$. Moreover, we recall the general variation formula
$$
   \delta (\Gamma_{jk}^l) = \frac{1}{2} g^{lm} (\nabla_j (\delta(g))_{km}
   + \nabla_k (\delta(g))_{jm} - \nabla_{m}(\delta(g))_{jk}).
$$
Let $(\Gamma_{jk}^l)' = \delta (\Gamma_{jk}^l)$ denote the variation of the Christoffel symbols for the variation
$g = h + 2 L s + \cdots$. By $g_{00} = 1 + 2 s \rho + \cdots$ and $\rho_0 = - H$, it follows that the first term in \eqref{R-adapted}
contributes by
\begin{align}\label{term0}
   s^2 \frac{1}{2} (\Gamma_{jk}^l)' (-2 \partial_l(H)) & = - s^2 (\Gamma_{jk}^l)' \partial_l(H) \notag \\
   & = - s^2 h^{lm}  (\nabla_j(L)_{km} + \nabla_k(L)_{jm} - \nabla_{m}(L)_{jk}) \partial_l(H) \notag \\
   & = - s^2 \frac{1}{2} \gamma_{jk}.
\end{align}
The remaining contributions of the first term in \eqref{R-adapted} match with the contributions by the second part
of the fourth term to
\begin{align}\label{term1}
   -\frac{1}{2} s [\Hess (2\rho_0)] - \frac{1}{2} s^2 \left[\Hess (2\rho_0' + 4 \rho_0^2)
   - 2 \rho_0 \Hess (2\rho_0) \right] + \cdots
\end{align}
with $\Hess$ defined with respect to $h$.  Next, the second term contributes by $1/4$ times\footnote{Here we
use the fact that $[h_{(1)},h_{(2)}] = 0$ for a flat background.}
\begin{align}\label{term2}
  h_{(1)}^2  & + s \big[4h_{(1)} h_{(2)} - h_{(1)}^3 - 2 h_{(1)}^2 \rho_0\big] \notag \\
  & + s^2 \big[3 h_{(1)} h_{(3)} + 3 h_{(3)} h_{(1)} + 4 h_{(2)}^2 - 5 h_{(1)}^2 h_{(2)} + h_{(1)}^4  \notag \\
  & + 2 h_{(1)}^3 \rho_0 - 4 h_{(1)} h_{(2)} \rho_0 - 4 h_{(2)} h_{(1)} \rho_0 - 2 h_{(1)}^2 \rho_0'\big] + \cdots.
\end{align}
Finally, we find
\begin{itemize}
\item Term three contributes by $-1/2$ times
\begin{align}\label{term3}
   2 h_{(2)} - 2 h_{(1)} \rho_0 & + s \big[6h_{(3)} - 8h_{(2)} \rho_0 - 4 h_{(1)} \rho_0'\big] \notag\\
   & + s^2 \big[12 h_{(4)} - 18 h_{(3)}\rho_0 - 12 h_{(2)} \rho_0' - 3 h_{(1)} \rho_0''\big] + \cdots
\end{align}
\item The first part of term four contributes by $-1/2$ times
\begin{equation}\label{term4}
   s^2 (-4) \partial_j(\rho_0) \partial_k(\rho_0)
\end{equation}
\item Term five contributes by $-1/4$ times
\begin{equation}\label{term5}
   2 h_{(1)} \rho_0 + s \big[4 h_{(2)} \rho_0 + 4 h_{(1)} \rho_0' \big]
   + s^2 \left[ 6 h_{(3)} \rho_0 + 8 h_{(2)} \rho_0' + 3 h_{(1)} \rho_0'' \right] + \cdots
\end{equation}
\item Term six contributes by $-1/4$ times
\begin{equation}\label{term6}
     s^2 4  \partial_j(\rho_0) \partial_k(\rho_0)
\end{equation}
\end{itemize}

Summarizing the coefficients of $s^2$ in \eqref{term0}-\eqref{term6} we obtain
\begin{align}\label{h4-ev}
   0 = & - \frac{1}{2} (\gamma + \Hess (2\rho_0' + 4 \rho_0^2) - 4 \rho_0 \Hess (\rho_0)) \notag \\
   & + \frac{1}{4} \big[3 h_{(1)} h_{(3)} + 3 h_{(3)} h_{(1)} + 4 h_{(2)}^2 -  5 h_{(1)}^2 h_{(2)} + h_{(1)}^4
  + 2 h_{(1)}^3 \rho_0 - 8 h_{(1)} h_{(2)} \rho_0 - 2 h_{(1)}^2 \rho_0'\big]  \notag\\
   & - \frac{1}{2} \big[12 h_{(4)} - 18 h_{(3)}\rho_0 - 12 h_{(2)} \rho_0' - 3 h_{(1)} \rho_0'' \big] \notag\\
   & - \frac{1}{4} \left[ 6 h_{(3)} \rho_0 + 8 h_{(2)} \rho_0' + 3 h_{(1)} \rho_0'' \right] \notag\\
   & + 2 d \rho_0 \otimes d \rho_0 \notag\\
   & - d  \rho_0 \otimes d\rho_0.
\end{align}
Note that the third and the fourth line can be summarized to
$$
   -\frac{1}{4} (24 h_{(4)} - 30 h_{(3)} \rho_0 - 16 h_{(2)} \rho_0' - 3 h_{(1)} \rho_0'').
$$
Therefore, we find the formula
\begin{align}\label{h4-gen}
   24 h_{(4)} & = - 4 \Hess (\rho_0') - 8 \Hess (\rho_0^2) + 8 \rho_0 \Hess(\rho_0) - 2 \gamma \notag \\
   & + \big[3 h_{(1)} h_{(3)} + 3 h_{(3)} h_{(1)} + 4 h_{(2)}^2 -  5 h_{(1)}^2 h_{(2)} + h_{(1)}^4
  + 2 h_{(1)}^3 \rho_0 - 8 h_{(1)} h_{(2)} \rho_0 - 2 h_{(1)}^2 \rho_0'\big]  \notag \\
  & +30 h_{(3)} \rho_0 + 16 h_{(2)} \rho_0' + 3 h_{(1)} \rho_0'' \notag \\
  & + 4 d \rho_0 \otimes d \rho_0.
\end{align}

Now we apply the known formulas for $h_{(j)}$ ($j \le 3$) (Proposition \ref{h-low-order}) and
$\rho_0,\rho_0',\rho_0''$ (Lemma \ref{rho-01}, Lemma \ref{rho-two}) to make that sum fully explicit.
First, we prove the remarkable simplification
\begin{align}\label{rem-sim}
   & 3 h_{(1)} h_{(3)} + 3 h_{(3)} h_{(1)} + 4 h_{(2)}^2 -  5 h_{(1)}^2 h_{(2)} + h_{(1)}^4
  + 2 h_{(1)}^3 \rho_0 - 8 h_{(1)} h_{(2)} \rho_0 - 2 h_{(1)}^2 \rho_0' \notag \\
   & = 2 L \Hess(H) + 2 \Hess(H) L.
\end{align}
In fact, we calculate
\begin{align*}
  & 3 h_{(1)} h_{(3)} + 3 h_{(3)} h_{(1)} \\
  & = 2 L \Hess (H) + 2 \Hess(H) L - 6 H L^2 \lo + 2 L^2 |\lo|^2 - 6 H L \lo L + 2 L^2 |\lo|^2
\end{align*}
and
\begin{align*}
   2 h_{(1)}^3 \rho_0 - 8h_{(1)} h_{(2)} \rho_0 - 2 h_{(1)}^2 \rho_0' = -16 H L^3 + 16 H L^2 \lo - 4 L^2 |\lo|^2.
\end{align*}
The sum of these two results gives
$$
   2 L \Hess (H) + 2 \Hess(H) L + 4 H (\lo^3 + 2 H \lo^2 + H^2 \lo) - 16 H (\lo^3 + 3 H \lo^2 + 3 H^2 \lo + H^3 \id).
$$
Moreover, we get
\begin{align*}
   & 4 h_{(2)}^2 -  5 h_{(1)}^2 h_{(2)} + h_{(1)}^4  \\
   & = 4 \lo^4 + 8 H \lo^3 + 4 H^2 \lo^2 - 20 \lo^4-60 H \lo^3 - 60 H^2 \lo^2 - 20 H^3 \lo \\
   & + 16 \lo^4 + 64 H \lo^3 + 96 H^2 \lo^2 + 64 H^3 \lo + 16 H^4 \id \\
   & = 12 H \lo^3 + 40 H^2 \lo^2 + 44 H^3 \lo + 16 H^4 \id.
\end{align*}
Summing these identities proves \eqref{rem-sim}.

The above results imply
\begin{align*}
   12 h_{(4)} & = - \Hess (|\lo|^2)  - 4 \Hess (H^2) + 4 H \Hess (H) + 2 d H \otimes dH - \gamma \\
   & + L \Hess(H) + \Hess(H) L + \alpha \\
   & = - \Hess (|\lo|^2) + L \Hess(H) + \Hess(H) L - 8 H \Hess (H) - 6 dH \otimes dH - \gamma + \alpha,
\end{align*}
where
\begin{align*}
   & \alpha \st 15 h_{(3)} \rho_0 + 8 h_{(2)} \rho_0' + 3/2 h_{(1)} \rho_0'' \\
   & = - 5 H \Hess (H) + 15 H^2 L \lo - 5 H L |\lo|^2 + 4 L \lo |\lo|^2 + 3 L \rho_0'' \\
   & = - 5 H \Hess (H) + 15 H^2 \lo^2 + 15 H^3 \lo - 5 H \lo |\lo|^2 - 5 H^2 |\lo|^2 \id + 4 \lo^2 |\lo|^2
   + 4 H \lo |\lo|^2 \\
   & + 3 L \rho_0'' \\
   & = -5 H \Hess (H) + 4 \lo^2 |\lo|^2 + 5 H^2 (3 \lo^2 - |\lo|^2 \id) - H \lo |\lo|^2 + 15 H^3 \lo + 3 L \rho_0''.
\end{align*}
Summarizing the last two results implies the first assertion.

The formula for $\rho_0''$ in $n=3$ is a direct consequence of the formula for $\rho_0''$ for general $n$
(Lemma \ref{rho-two}) using $(\lo,\JF) = \tr (\lo^3)$ and $\delta \delta (\lo) = 2 \Delta (H)$ (by Codazzi-Mainardi).
\end{proof}

Lemma \ref{h4} implies

\begin{cor}\label{trace-h4}
$$
   12 \tr (h_{(4)}) = - \Delta (|\lo|^2) -  9 H \Delta (H) + 2 (L,\Hess(H)) - 12 |dH|^2 + 4 |\lo|^4 + 9 H \rho_0''.
$$
\end{cor}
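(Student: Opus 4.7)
The plan is to derive the formula for $12\tr(h_{(4)})$ by taking the $h$-trace of the closed expression for $12 h_{(4)}$ given in Lemma \ref{h4}, carefully tracking each of the ten summands.

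Most of the terms are straightforward. Since $\tr(\Hess(f)) = \Delta f$ and $\tr(L) = nH = 3H$ (with $n=3$), $\tr(\lo) = 0$, and $\tr((\lo^2)_\circ) = 0$, I obtain:
\begin{align*}
  \tr(-9H \Hess(H)) & = -9H\Delta(H), \\
  \tr(L \Hess(H) + \Hess(H) L) & = 2(L,\Hess(H)), \\
  \tr(-6\, dH\otimes dH) & = -6|dH|^2, \\
  \tr(-\Hess(|\lo|^2)) & = -\Delta(|\lo|^2), \\
  \tr(4\lo^2 |\lo|^2) & = 4|\lo|^4, \\
  \tr(15 H^2((\lo^2)_\circ + H\lo)) & = 0, \\
  \tr(-H \lo |\lo|^2) & = 0, \\
  \tr(3 L \rho_0'') & = 9 H \rho_0''.
\end{align*}
Summing these accounts for every term on the right-hand side of the target identity except for an extra $-6|dH|^2$ coming from $-\tr(\gamma)$, which must be supplied by the term $\gamma$ defined in \eqref{gamma-form}.

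The one non-trivial step is therefore to verify that $\tr(\gamma) = 6|dH|^2$ in dimension $n=3$. Writing out
\[
    \tr(\gamma) = 2 h^{jk} h^{lm}\bigl(\nabla_j(L)_{km} + \nabla_k(L)_{jm} - \nabla_m(L)_{jk}\bigr)\partial_l(H),
\]
I would apply the Codazzi--Mainardi equation \eqref{CME}. Since the background is flat, $R^g(X,Y,Z,N) = 0$, so $\nabla_j(L)_{km} = \nabla_k(L)_{jm}$ is totally symmetric in all three lower indices. Tracing in two of the three indices in two different ways gives
\[
   h^{jk}\nabla_j(L)_{km} = h^{jk}\nabla_m(L)_{jk} = \nabla_m(\tr_h L) = n\,\partial_m(H),
\]
so each of the first two summands in $\gamma$ contributes $n|dH|^2$ and the third contributes $-n|dH|^2$, yielding $\tr(\gamma) = 2n|dH|^2 = 6|dH|^2$ for $n=3$.

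Combining the two computations gives
\[
   12\tr(h_{(4)}) = -\Delta(|\lo|^2) - 9H\Delta(H) + 2(L,\Hess(H)) - 6|dH|^2 - 6|dH|^2 + 4|\lo|^4 + 9H\rho_0'',
\]
which is precisely the asserted identity. The only real obstacle is the Codazzi trace identity for $\gamma$; everything else is bookkeeping on the closed formula of Lemma \ref{h4} and the trace-freeness of $\lo$.
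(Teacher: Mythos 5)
Your proposal is correct and follows essentially the same route as the paper: take the $h$-trace of the closed formula for $12 h_{(4)}$ in Lemma \ref{h4}, note that the terms involving $(\lo^2)_\circ$ and $\lo$ vanish by trace-freeness, and reduce the whole verification to $\tr(\gamma)=6|dH|^2$. The only cosmetic difference is in how you handle $\gamma$: you invoke the total symmetry of $\nabla L$ (a consequence of Codazzi--Mainardi when $R^g=0$) and then trace, whereas the paper reads off the two divergence terms $h^{jk}\nabla_j(L)_{km}=\delta(L)_m$ directly and uses the Codazzi consequence $\delta(L)=3\,dH$ in dimension $3$. These are the same fact packaged differently, and both give $\tr(\gamma)=2n|dH|^2=6|dH|^2$; the bookkeeping on the remaining terms is identical.
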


\begin{proof} It only remains to prove that
\begin{equation}\label{trace-gamma}
   \tr (\gamma) = 6 (dH,dH).
\end{equation}
But
\begin{align*}
   \tr (\gamma) & = 2 h^{jk} h^{lm} (\nabla_j(L)_{km} + \nabla_k(L)_{jm} - \nabla_m(L)_{jk}) \partial_l(H) \\
   & = 2 h^{lm}  (\delta(L)_m + \delta (L)_m - \nabla_m (\tr(L))) \partial_l(H) \\
   & = 6 h^{lm} \partial_m(H) \partial_l(H) \\
   & = 6 (dH,dH)
\end{align*}
by $\delta(L) = 3 dH$ (Codazzi-Mainardi). The proof is complete.
\end{proof}

We proceed with the evaluation of \eqref{B3-flat}.

\begin{lem}\label{h4-supp}
\begin{align*}
   & \tr(12 h_{(1)} h_{(3)} + 6 h_{(2)}^2 - 12 h_{(1)}^2 h_{(2)} + 3 h_{(1)}^4) \\
   & = 8 (L,\Hess(H)) + 6 \tr (\lo^4) + 8 |\lo|^4 + 36 H \tr (\lo^3) + 126 H^2 |\lo|^2 +  144 H^4.
\end{align*}
\end{lem}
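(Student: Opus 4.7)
The plan is to reduce everything to a direct computation using the explicit formulas for $h_{(1)}$, $h_{(2)}$, $h_{(3)}$ provided by Proposition \ref{h-low-order}, specialized to a flat background in dimension $n=3$ (the setting of the ambient section). First I would record that in this case the curvature terms $R_{0ij0}$ and $\nabla_{\partial_s}(R)_{0ij0}$ vanish, $\Rho_{00}=0$, and $\rho_0'=|\lo|^2/(n-1) = |\lo|^2/2$ by Lemma \ref{rho-01}. Substituting into \eqref{h-adapted} and \eqref{h-adapted-cubic} yields the compact expressions
\[
   h_{(1)} = 2L, \quad h_{(2)} = L\lo, \quad 3 h_{(3)} = \Hess(H) - 3H L\lo + L|\lo|^2.
\]
A useful preliminary observation is that $L$ and $\lo = L - Hh$ commute, so all powers in the four summands can be manipulated freely under the trace.

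Next I would compute each of the four contributions separately. Concretely,
\[
   12 h_{(1)} h_{(3)} = 8L\bigl(\Hess(H) - 3H L\lo + L|\lo|^2\bigr), \qquad 6h_{(2)}^2 = 6L^2\lo^2,
\]
\[
   -12 h_{(1)}^2 h_{(2)} = -48 L^3\lo, \qquad 3 h_{(1)}^4 = 48 L^4.
\]
Taking traces produces $8(L,\Hess(H))$ together with the scalars $-24H\tr(L^2\lo)$, $8|\lo|^2|L|^2$, $6\tr(L^2\lo^2)$, $-48\tr(L^3\lo)$, $48\tr(L^4)$.

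The remaining work is to re-expand each of these traces in the basis of $\lo$ and $H$ via $L = \lo + Hh$ and $\tr(\lo)=0$ (recalling $\dim M = 3$, so $\tr(h)=3$). The needed identities, all immediate, are
\begin{align*}
   |L|^2 &= |\lo|^2 + 3H^2, \\
   \tr(L^2\lo) &= \tr(\lo^3) + 2H|\lo|^2, \\
   \tr(L^2\lo^2) &= \tr(\lo^4) + 2H\tr(\lo^3) + H^2|\lo|^2, \\
   \tr(L^3\lo) &= \tr(\lo^4) + 3H\tr(\lo^3) + 3H^2|\lo|^2, \\
   \tr(L^4) &= \tr(\lo^4) + 4H\tr(\lo^3) + 6H^2|\lo|^2 + 3H^4.
\end{align*}

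Finally I would collect coefficients. Term by term: $\tr(\lo^4)$ appears with weight $6-48+48 = 6$; $|\lo|^4$ with weight $8$; $H\tr(\lo^3)$ with weight $-24+12-144+192 = 36$; $H^2|\lo|^2$ with weight $-48+24+6-144+288 = 126$; and $H^4$ with weight $144$. Combined with the leading $8(L,\Hess(H))$, this reproduces the stated identity. There is no real obstacle here beyond careful bookkeeping of coefficients; the only place where a confusion could arise is in verifying that $L$ and $\lo$ commute (so that the matrix products $h_{(1)}h_{(3)}$, $h_{(2)}^2$, etc.\ are symmetric and trace manipulations are unambiguous), but this is immediate from $\lo = L - Hh$.
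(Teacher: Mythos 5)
Your proof is correct and takes essentially the same route as the paper: substitute the explicit formulas $h_{(1)}=2L$, $h_{(2)}=L\lo$, $3h_{(3)}=\Hess(H)-3HL\lo+L|\lo|^2$ (with $R=0$, $\Rho_{00}=0$, $\rho_0'=|\lo|^2/2$), reduce the left side to the six traced $L$-expressions, and then re-expand everything via $L=\lo+Hh$ with $\tr(\lo)=0$, $\tr(h)=3$. The bookkeeping of coefficients is accurate and matches the paper's intermediate display; the only cosmetic difference is that you note explicitly that $L$ and $\lo$ commute, a point the paper leaves tacit.
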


\begin{proof} By the known formulas for the coefficients $h_{(k)}$ for $k \le 3$, we find
\begin{align*}
  & \tr(12 h_{(1)} h_{(3)} + 6 h_{(2)}^2 - 12 h_{(1)}^2 h_{(2)} + 3 h_{(1)}^4) \\
  & = 8 \tr (L \Hess (H)) - 24 H \tr (L^2 \lo) + 8 \tr(L^2) |\lo|^2 + 6 \tr (L^2 \lo^2)
  - 48 \tr (L^3 \lo) + 48 \tr (L^4).
\end{align*}
The latter sum equals the sum of $8 \tr (L \Hess (H))$ and
\begin{align*}
   & -24 H \tr (\lo^3 + 2H \lo^2) + 8 \tr(\lo^2 + 2H \lo + H^2 \id) |\lo|^2 + 6 \tr( (\lo^2 + 2H \lo + H^2) \lo^2) \\
   & - 48 \tr (\lo^4 + 3 H \lo^3 + 3 H^2 \lo^2) + 48 \tr(\lo^4 + 4 H \lo^3 + 6 H^2 \lo^2 + H^4 \id).
\end{align*}
The result follows by simplification.
\end{proof}

The following result evaluates the lower-order terms in \eqref{B3-flat}.

\begin{lem}\label{B3-flat-low}
\begin{align*}
  & 6 \rho_0  \tr (3 h_{(3)} - 3 h_{(1)} h_{(2)} + h_{(1)}^3) + 6 \rho_0'  \tr (2 h_{(2)} - h_{(1)}^2)
  + 3 \rho_0''  \tr (h_{(1)}) \notag \\
  & = - 6 H \Delta (H) -  6 |\lo|^4 - 12 H \tr (\lo^3) - 108 H^2 |\lo|^2 - 144 H^4 + 18 H \rho_0''.
\end{align*}
\end{lem}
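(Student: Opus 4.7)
The plan is to reduce the claim to a routine algebraic expansion by exploiting two simplifications that the setting provides. First, the background is flat, so $R^g\equiv 0$, and Proposition \ref{h-low-order} collapses to
\[
h_{(1)}=2L,\qquad h_{(2)}=L\lo,\qquad 3h_{(3)}=\Hess(H)-3H(L\lo)+2L\rho_0'.
\]
Second, in dimension $n=3$ with flat background, Lemma \ref{rho-01} gives $\rho_0=-H$ and $\rho_0'=\tfrac{1}{2}|\lo|^2$, while Lemma \ref{h4} supplies $\rho_0''=-\Delta(H)-2\tr(\lo^3)-2H|\lo|^2$ (this last value is not actually needed in closed form, since $\rho_0''$ appears only through the term $18H\rho_0''$ on both sides). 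With these substitutions in hand, each of the three traces on the left reduces to a polynomial in $\lo$, $H$, $\Delta(H)$ and $\rho_0''$.

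Next, I will evaluate the three traces separately, always using the decomposition $L=\lo+H\cdot\id$ together with $\tr(\id)=3$ and $\tr(\lo)=0$ to expand $\tr(L^k)$ and $\tr(L^k\lo)$. Concretely, for the first bracket $\tr(3h_{(3)}-3h_{(1)}h_{(2)}+h_{(1)}^3)$, taking the trace of the formula for $3h_{(3)}$ yields $\Delta(H)-3H|\lo|^2+6H\rho_0'$; the middle term contributes $-6\tr(L^2\lo)=-6\tr(\lo^3)-12H|\lo|^2$; and the last term contributes $8\tr(L^3)=8\tr(\lo^3)+24H|\lo|^2+24H^3$. Multiplying by $6\rho_0=-6H$ and inserting $\rho_0'=\tfrac{1}{2}|\lo|^2$ in the resulting $-36H^2\rho_0'$ gives the contribution
\[
-6H\Delta(H)-12H\tr(\lo^3)-72H^2|\lo|^2-144H^4.
\]
For the second bracket, $\tr(2h_{(2)}-h_{(1)}^2)=2|\lo|^2-4(|\lo|^2+3H^2)=-2|\lo|^2-12H^2$, and multiplication by $6\rho_0'=3|\lo|^2$ yields $-6|\lo|^4-36H^2|\lo|^2$. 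The third bracket is simply $3\rho_0''\cdot\tr(h_{(1)})=18H\rho_0''$.

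Summing the three contributions and collecting like terms produces exactly the right-hand side of the lemma. The main obstacle is not conceptual but bookkeeping: the prefactors $-6H$ and $3|\lo|^2$ multiply sums each containing several monomials in $H$, $|\lo|^2$, and $\tr(\lo^3)$, and the target formula arises only after a nontrivial cancellation of the two sources of $H^2|\lo|^2$ (namely $-54H^2|\lo|^2$ from the first bracket, $-18H^2|\lo|^2$ from substituting $\rho_0'$ inside the first bracket, and $-36H^2|\lo|^2$ from the second bracket, totalling $-108H^2|\lo|^2$). Once one is careful to substitute $\rho_0'=\tfrac{1}{2}|\lo|^2$ consistently (but to leave $\rho_0''$ symbolic, since the stated identity is an identity in which $\rho_0''$ appears explicitly on both sides), every remaining step is a direct simplification.
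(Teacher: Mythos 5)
Your computation is correct and follows essentially the same route as the paper: reduce all traces via $L=\lo+H\,\id$ (with $n=3$, flat background, $\rho_0=-H$, $\rho_0'=\tfrac12|\lo|^2$), then collect terms, keeping $\rho_0''$ symbolic since it only enters through the $18H\rho_0''$ term. The only cosmetic difference is that the paper substitutes $\rho_0'=\tfrac12|\lo|^2$ immediately, getting $3\tr(h_{(3)})=\Delta(H)$, whereas you carry the $6H\rho_0'$ term one step further before substituting; the resulting bookkeeping and cancellations ($-54-18-36=-108$ for $H^2|\lo|^2$) are identical in substance.
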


\begin{proof} By $3 \tr (h_{(3)}) = \Delta (H)$, the sum equals
\begin{align*}
   & -6 H (\Delta (H) - 6 \tr (L^2 \lo) + 8 \tr (L^3)) + 6 |\lo|^2 \tr (L \lo - 2 L^2) + 18 H \rho_0'' \\
   & = -6 H \Delta (H) + 36 H \tr (L^2 \lo) - 48 H \tr (L^3) - 6 |\lo|^4  - 36 H^2 |\lo|^2 + 18 H \rho_0'' \\
   & = -6 H \Delta (H) + 36 H (\tr(\lo^3) + 2H |\lo|^2) - 48 ( H\tr(\lo^3) + 3H^2 |\lo|^2 + 3 H^4) \\
   & - 6 |\lo|^4  -36 H^2 |\lo|^2 + 18 H \rho_0''.
\end{align*}
Simplification completes the proof.
\end{proof}

Now we summarize the above results. We obtain
\begin{align*}
   12 \B_3 & = \Delta (|\lo|^2) + 9 H \Delta (H) - 2 (L,\Hess(H)) + 12 |dH|^2 - 4 |\lo|^4 - 9 H \rho_0'' \\
   & +  8 (L,\Hess(H)) + 6 \tr (\lo^4) + 8 |\lo|^4 + 36 H \tr (\lo^3) + 126 H^2 |\lo|^2 + 144 H^4 \\
   & - 6 H \Delta (H) -  6 |\lo|^4 - 12 H \tr (\lo^3) - 108 H^2 |\lo|^2 - 144 H^4 + 18 H \rho_0'' \\
   & = \Delta (|\lo|^2) + 12 |dH|^2 +  6 (\lo,\Hess(H)) - 2 |\lo|^4 + 6 \tr (\lo^4)  \\
   & + 9 H \Delta (H) + 18 H^2 |\lo|^2 + 24 H \tr (\lo^3) + 9 H \rho_0''.
\end{align*}
The relation $\rho_0'' = - \Delta (H) - 2 \tr (\lo^3) - 2 H |\lo|^2$ implies the assertion. This completes
the proof of \eqref{B3-final}.
\medskip

\begin{rem}\label{h3-rev}
The linear terms in the expansions \eqref{term1}-\eqref{term6} of Christoffel symbols show that
\begin{align*}
   0 & = \Hess (H) + \frac{1}{4} (2 h_{(1)} h_{(2)} + 2 h_{(2)} h_{(1)} - h_{(1)}^3 - 2 \rho_0 h_{(1)}^2) \\
   & - \frac{1}{2} ( 6h_{(3)} - 8 \rho_0 h_{(2)} - 4 \rho_0' h_{(1)}) \\
   & - \frac{1}{4} (4 \rho_0 h_{(2)} + 4 \rho_0' h_{(1)}).
\end{align*}
Hence
\begin{align*}
   0 & =  \Hess(H)  + h_{(1)} h_{(2)} - \frac{1}{4} h_{(1)}^3 + \frac{1}{2} H h_{(1)}^2
  - 3 h_{(3)} - 3 H h_{(2)} + \rho_0' h_{(1)} \\
   & = \Hess(H) - 3 h_{(3)} - 3 H L \lo + 2 \rho_0' L
\end{align*}
using the formulas for $h_{(1)}$ and $h_{(2)}$ in Proposition \ref{h-low-order}.
This reproduces the formula \eqref{h-adapted-cubic} for $h_{(3)}$ for a flat background.
\end{rem}

We round up this section with a discussion of the relation of the formula for $\B_3$ in Proposition \ref{B3-evaluation}
to alternative formulas in the literature. In \cite[(6)]{GW-announce} and \cite[Proposition 4.18]{GW-LNY}, it is stated
that for a conformally flat background $\B_3$ equals $\BB_3$, where        \index{$\BB_3$}
\begin{equation}\label{GGHW-B3}
   6 \BB_3 \st |\nabla \lo|^2 + 2 (\lo,\Delta (\lo)) + 3/2 (\delta(\lo),\delta(\lo)) - 2 \J |\lo|^2 + |\lo|^4
\end{equation}
with $\J = \J^h$. In the flat case, using $\delta(\lo) = 2 dH$ (Codazzi-Mainardi) and the Gauss identity
$$
   \J = \frac{3}{2} H^2 - \frac{1}{4} |\lo|^2,
$$
this formula reads
\begin{equation}\label{GW-B3}
   6 \BB_3 = |\nabla \lo|^2 + 2 (\lo, \Delta (\lo)) + 6 |dH|^2 - 3H^2 |\lo|^2 + 3/2 |\lo|^4.
\end{equation}

\begin{lem}[{\cite[Remark 12]{JO-Y}}]\label{Id-basic}
For $n=3$, it holds
\begin{equation}\label{Id-1}
    \frac{1}{2} \Delta (|\lo|^2) = 3 (\lo,\Hess(H)) + |\nabla \lo|^2 + 3 H \tr(L^3) - |L|^4
\end{equation}
and
\begin{equation*}\label{Id-2}
   \delta \delta (\lo^2) = 4 (\lo,\Hess(H)) + |\nabla \lo|^2 + 2 |dH|^2 + 3H \tr(L^3) - |L|^4.
\end{equation*}
Hence we have the difference formula
\begin{equation}\label{basic-div}
   \frac{1}{2} \Delta (|\lo|^2) - \delta \delta (\lo^2) = - (\lo,\Hess(H)) - 2 |dH|^2.
\end{equation}
\end{lem}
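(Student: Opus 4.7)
My plan is to derive the first two identities by Bochner-type computations on $M$, using that the background $\R^4$ is flat so that the Codazzi--Mainardi equation \eqref{CME} forces $\nabla L$ to be totally symmetric, and so that the intrinsic curvature $R^h$ of $M$ is governed entirely by $L$ via the Gauss equation $R^h_{ijkl}=L_{ik}L_{jl}-L_{il}L_{jk}$. The third identity is then an immediate subtraction.

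For the first identity I would start from the pointwise Bochner formula
\begin{equation*}
   \tfrac{1}{2}\Delta|\lo|^2 = (\lo,\Delta\lo) + |\nabla\lo|^2,
\end{equation*}
and reduce $(\lo,\Delta\lo)$ to $(\lo,\Delta L)$ using $\Delta\lo = \Delta L - \Delta(H)\,h$ together with $(\lo,h)=0$. To compute $\Delta L_{ij}=\nabla^k\nabla_k L_{ij}$, I first apply Codazzi (which, since $R^g=0$, says $\nabla_k L_{ij}$ is symmetric in all three indices) to rewrite it as $\nabla^k\nabla_i L_{jk}$, and then commute the two covariant derivatives. The commutator produces two curvature terms of the form $R^h\cdot L$, which I expand via Gauss into polynomial expressions in $L$; the uncommuted piece becomes $\nabla_i\nabla^k L_{jk}=n\,\Hess(H)_{ij}=3\Hess(H)_{ij}$ by the Codazzi trace. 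Contracting against $\lo$ and repeatedly using $(\lo,h)=0$, $\tr(\lo)=0$, and $L=\lo+Hh$ should collect everything into the claimed combination $3(\lo,\Hess H)+3H\tr(L^3)-|L|^4$.

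For the second identity, I would compute $\delta\delta(\lo^2)=\nabla^i\nabla^j(\lo^2)_{ij}$ directly. Writing $(\lo^2)_{ij}=\lo_i{}^k\lo_{kj}$ and expanding using the Leibniz rule gives terms of the shape $(\nabla^j\lo_i{}^k)(\nabla^i\lo_{kj})$, $\lo_i{}^k\,\nabla^i\nabla^j\lo_{kj}$ and $(\nabla^j\nabla^i\lo_i{}^k)\lo_{kj}$. Using $\delta(\lo)=(n-1)dH=2\,dH$ (Codazzi), the divergence pieces produce the $|dH|^2$ and $\Hess(H)$ contributions; the symmetric gradient piece contributes $|\nabla\lo|^2$ up to the Codazzi symmetry; and the remaining second-derivative commutators again yield $R^h$-contractions, which Gauss converts into the $H\tr(L^3)-|L|^4$ terms. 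Subtracting the two identities gives the divergence formula \eqref{basic-div}.

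The main obstacle will be the bookkeeping of the two curvature commutators and careful use of the Gauss identity to rewrite $R^h_{ijkl}$, $\Ric^h_{ij}=L_{ij}\tr(L)-(L^2)_{ij}$, etc., in such a way that the answer collapses to the low-complexity combinations appearing in the statement. The dimension $n=3$ enters only through the Codazzi trace normalisation $\delta(L)=n\,dH=3\,dH$ and through the absence of a Weyl contribution to $R^h$; otherwise the argument is structural. Once both identities are in hand, \eqref{basic-div} follows by inspection.
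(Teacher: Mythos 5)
The paper does not prove this lemma---it cites it from \cite[Remark 12]{JO-Y}---so there is no in-paper proof to compare against. Your Bochner-plus-Codazzi plan is correct and essentially the natural approach; I spot-checked the first identity: with $R^g=0$, $\nabla L$ is totally symmetric, so $\Delta L_{ij}=\nabla^k\nabla_i L_{kj}=\nabla_i(\delta L)_j+[\nabla^k,\nabla_i]L_{kj}$, where $\nabla_i(\delta L)_j = n\Hess_{ij}(H)=3\Hess_{ij}(H)$ and the Gauss-converted commutator gives $nH(L^2)_{ij}-|L|^2L_{ij}$; pairing with $\lo$ and using $(\lo,L^2)=\tr(L^3)-H|L|^2$, $(\lo,L)=|L|^2-3H^2$ yields exactly $(\lo,\Delta\lo)=3(\lo,\Hess H)+3H\tr(L^3)-|L|^4$. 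Two remarks that will shorten the bookkeeping. First, your Leibniz expansion of $\delta\delta(\lo^2)$ lists three terms but there are four: you omit $(\nabla^i\lo_i{}^k)(\nabla^j\lo_{kj})=|\delta\lo|^2=4|dH|^2$, which supplies half of the $|dH|^2$ coefficient (the cross term $(\nabla^j\lo_{ik})(\nabla^i\lo^{kj})$ gives $|\nabla\lo|^2-2|dH|^2$ after Codazzi, netting $+2|dH|^2$). Second, no second curvature commutation is needed: applying Codazzi alone to the inner derivative in $(\nabla^i\nabla^j\lo_{ik})\lo^{jk}$ rewrites it as $(\lo,\Delta\lo)-(\lo,\Hess H)$, so that $\delta\delta(\lo^2)=(\lo,\Delta\lo)+(\lo,\Hess H)+|\nabla\lo|^2+2|dH|^2$; the second displayed identity then follows by substituting the value of $(\lo,\Delta\lo)$ from the Bochner computation, and the difference formula \eqref{basic-div} drops out by subtraction with the quartic-in-$L$ terms never appearing.
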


As a consequence, we obtain

\begin{lem}\label{B-relation} $\BB_3 = \B_3$.
\end{lem}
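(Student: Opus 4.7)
The plan is to reduce the identity $\BB_3 = \B_3$ to a single pointwise curvature relation and then verify that relation with a Simons-type formula. Comparing the formula \eqref{B3-final2} for $12\B_3$ with twice the formula \eqref{GW-B3} for $6\BB_3$, I would first cancel the term $12|dH|^2$ which appears in both expressions, and then apply the Bochner-type identity
\begin{equation*}
   \Delta(|\lo|^2) = 2|\nabla \lo|^2 + 2(\lo,\Delta \lo)
\end{equation*}
in the formula for $12\B_3$. The contributions $2|\nabla \lo|^2$ then match on both sides, and the problem is reduced to establishing the pointwise identity
\begin{equation}\label{Bred}
   (\lo,\Delta \lo) - 3 H^2 |\lo|^2 + |\lo|^4 = 3(\lo,\Hess(H)) + 3 H \tr(\lo^3)
\end{equation}
for a hypersurface $M^3 \hookrightarrow \R^4$.

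The key step is to evaluate the left-hand side of \eqref{Bred} using Simons' identity for the second fundamental form in a flat ambient background. For a hypersurface in flat space one has
\begin{equation*}
   \Delta L_{ij} = n\Hess_{ij}(H) + nH(L^2)_{ij} - |L|^2 L_{ij},
\end{equation*}
which for $n=3$ becomes $\Delta L_{ij} = 3\Hess_{ij}(H) + 3H(L^2)_{ij} - |L|^2 L_{ij}$. Since $\lo = L - H h$, this yields
\begin{equation*}
   \Delta \lo_{ij} = 3\Hess_{ij}(H) + 3H(L^2)_{ij} - |L|^2 L_{ij} - \Delta(H) h_{ij}.
\end{equation*}

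Contracting with $\lo^{ij}$ and using $\tr(\lo) = 0$ kills the final $\Delta(H) h_{ij}$ term. The remaining traces are computed by expanding $L = \lo + Hh$: one finds $\lo^{ij}(L^2)_{ij} = \tr(\lo^3) + 2H|\lo|^2$, $(\lo,L)_h = |\lo|^2$, and $|L|^2 = |\lo|^2 + 3H^2$. Substituting these back gives
\begin{equation*}
   (\lo,\Delta \lo) = 3(\lo,\Hess H) + 3H\tr(\lo^3) + 3H^2|\lo|^2 - |\lo|^4,
\end{equation*}
which is exactly \eqref{Bred}. The main (and only real) obstacle is correctly bookkeeping the cubic-in-$L$ and $H$-mean-curvature terms when passing between $L$ and $\lo$; the Simons identity itself is a classical result that I assume without further comment.
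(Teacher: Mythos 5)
Your proof is correct. The organization differs slightly from the paper's: you subtract the two formulas first (after applying the Bochner identity $\Delta(|\lo|^2) = 2|\nabla\lo|^2 + 2(\lo,\Delta\lo)$) and reduce to the single pointwise identity \eqref{Bred}, which you verify with Simons' formula stated directly in terms of $\lo$. The paper instead cites the identity \eqref{Id-1} (itself a reformulation of Simons' formula, proved in \cite{JO-Y}), substitutes it into $6\B_3$ and $6\BB_3$ separately using \eqref{B3-final} (not \eqref{B3-final2}), and then invokes the $n=3$ Newton constraint $|\lo|^4 = 2\tr(\lo^4)$ (Corollary \ref{Newton-C3}) to see the difference vanish. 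The two routes are equivalent -- your starting point \eqref{B3-final2} already has the Newton identity built in, so you never see $\tr(\lo^4)$, and the $\lo$-form of Simons' formula you derive is algebraically equivalent to \eqref{Id-1}. Your bookkeeping is a touch leaner, but the underlying ingredients (Bochner + Simons + the Gauss/Newton algebra in dimension three) are identical to the paper's.
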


\begin{proof}
On the one hand, we apply the identity \eqref{Id-1} to calculate
\begin{align*}
   12 \B_3 & = 6 (\lo,\Hess(H)) + 2 |\nabla \lo|^2 + 6 H \tr(L^3) - 2 |L|^4  \\
   & + 12 |dH|^2 + 6 (\lo,\Hess(H)) - 2 |\lo|^4 + 6 \tr (\lo^4) + 6 H \tr (\lo^3)  \\
   & = 12 (\lo,\Hess(H)) + 2 |\nabla L|^2 + 6 |dH|^2 \\
   & + 6 H \tr(L^3) + 6 H \tr(\lo^3) - 2 |L|^4 - 2 |\lo|^4 + 6 \tr(\lo^4)
\end{align*}
using the relation $|\nabla L|^2 = |\nabla \lo|^2 + 3 |dH|^2$. Hence
$$
  6 \B_3 = 6 (\lo,\Hess(H)) + |\nabla L|^2 + 3 |dH|^2 + 3 H \tr(L^3) + 3 H \tr(\lo^3) - |L|^4 - |\lo|^4 + 3 \tr(\lo^4).
$$
On the other hand, we use the identity $\Delta (|\lo|^2) = 2 (\lo,\Delta(\lo)) + 2 |\nabla \lo|^2$ and
\eqref{Id-1} to find
\begin{align*}
   6 \BB_3 & = \Delta (|\lo|^2) - |\nabla \lo|^2 + 6 |dH|^2- 3 H^2 |\lo|^2 + 3/2 |\lo|^4 \\
   & = 6 (\lo,\Hess(H))  + |\nabla L|^2 + 3 |dH|^2 + 6 H \tr (L^3) - 3 H^2 |\lo|^2 -2 |L|^4 + 3/2 |\lo|^4.
\end{align*}
Hence the difference $6 (\BB_3 - \B_3)$ equals
\begin{align*}
   & 3 H \tr(L^3) - 3 H \tr(\lo^3) - |L|^4 + |\lo|^4  + 3/2 |\lo|^4 - 3 H^2 |\lo|^2 - 3 \tr(\lo^4) \\
   & = 3 H (3 H \tr (\lo^2) + 3 H^3) - 6H^2|\lo|^2 - 9H^4  - 3 H^2 |\lo|^2 + 3 (1/2 |\lo|^4 - \tr (\lo^4)) \\
   & = 3 ((1/2 |\lo|^4 - \tr (\lo^4)) \\
   & = 0
\end{align*}
by Corollary \ref{Newton-C3}. This proves the assertion.
\end{proof}

Finally, we show that the formula for $\B_3$ established in Proposition \ref{B3-evaluation} implies

\begin{lem}[{\cite{GGHW}}]\label{B3-CF}
For a conformally flat background, it holds
\begin{equation}\label{B3-GW-0}
   6 \B_3 = 3 \delta \delta ((\lo^2)_\circ) + 3 (\Rho,(\lo^2)_\circ)  + |\lo|^4.
\end{equation}
\end{lem}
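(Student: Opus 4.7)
The strategy is to reduce the claim to the case of a flat background metric by exploiting the fact that both sides transform under conformal change $(\hat g,\hat\sigma)=(e^{2\varphi}g,e^\varphi\sigma)$ by the same factor $e^{-4\iota^*(\varphi)}$. For the left-hand side this is Lemma \ref{B-CTL} in dimension $n=3$. For the right-hand side one combines three ingredients: the trace-free square $(\lo^2)_\circ$ is conformally invariant as a $(0,2)$-tensor by \eqref{tf-square}; $|\hat\lo|^4=e^{-4\iota^*(\varphi)}|\lo|^4$ follows from \eqref{CTL-L}; and the operator $b\mapsto\delta^h\delta^h(b)+(\Rho^h,b)$ acting on trace-free symmetric $(0,2)$-tensors on $M^3$ is conformally covariant of the matching weight, as stated following \eqref{B3-closed}. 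Since $g$ is conformally flat, I may therefore assume $g$ is the standard Euclidean metric on $\R^4$.

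In this flat case the left-hand side is given explicitly by Proposition \ref{B3-evaluation}, i.e., by \eqref{B3-final2}:
\begin{equation*}
   12\B_3 = \Delta(|\lo|^2) + 12|dH|^2 + 6(\lo,\Hess(H)) + |\lo|^4 + 6H\tr(\lo^3).
\end{equation*}
The remaining task is to evaluate the right-hand side $E\st 3\delta\delta((\lo^2)_\circ)+3(\Rho^h,(\lo^2)_\circ)+|\lo|^4$ and match. First I compute $\Rho^h$ using the Gauss equation \eqref{GRicci} (which gives $\Ric^h=3HL-L^2$ in the flat case), Lemma \ref{g12} (giving $\J^h=\tfrac32 H^2-\tfrac14|\lo|^2$), and the decomposition $L=\lo+Hh$. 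Rewriting $\lo^2$ as $(\lo^2)_\circ+\tfrac13|\lo|^2 h$, I obtain
\begin{equation*}
   \Rho^h = H\lo - (\lo^2)_\circ + \Big(\tfrac12 H^2 - \tfrac{1}{12}|\lo|^2\Big)h.
\end{equation*}
The $h$-term drops when paired with the trace-free tensor $(\lo^2)_\circ$; the identity $(\lo,(\lo^2)_\circ)=\tr(\lo^3)$ is immediate, and $|(\lo^2)_\circ|^2=\tr(\lo^4)-\tfrac13|\lo|^4=\tfrac16|\lo|^4$ uses Corollary \ref{Newton-C3} (valid in $n=3$). Hence $(\Rho^h,(\lo^2)_\circ)=H\tr(\lo^3)-\tfrac16|\lo|^4$.

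For the divergence term I write $\delta\delta((\lo^2)_\circ)=\delta\delta(\lo^2)-\tfrac13\Delta(|\lo|^2)$ and insert the key divergence identity \eqref{basic-div}, $\delta\delta(\lo^2)=\tfrac12\Delta(|\lo|^2)+(\lo,\Hess(H))+2|dH|^2$, to get $\delta\delta((\lo^2)_\circ)=\tfrac16\Delta(|\lo|^2)+(\lo,\Hess(H))+2|dH|^2$. Summing, $E=\tfrac12\Delta(|\lo|^2)+3(\lo,\Hess(H))+6|dH|^2+3H\tr(\lo^3)+\tfrac12|\lo|^4$, which is exactly $6\B_3$ by the displayed formula above.

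\textbf{Main obstacle.} The computational steps in the flat case are routine once one has the Gauss-equation expression for $\Rho^h$, Corollary \ref{Newton-C3}, and the divergence identity \eqref{basic-div}. The conceptual step that needs care is the conformal reduction, specifically verifying that the operator $b\mapsto \delta^h\delta^h(b)+(\Rho^h,b)$ has the correct conformal weight so that, combined with the conformal invariance of $(\lo^2)_\circ$, the scalar $\delta\delta((\lo^2)_\circ)+(\Rho^h,(\lo^2)_\circ)$ transforms with weight $-4$ and thus matches $\B_3$. This is classical but should be checked against the transformation laws of $\delta^h$ and $\Rho^h$ on trace-free $(0,2)$-tensors.
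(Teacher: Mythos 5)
Your proposal is correct and follows essentially the same route as the paper: reduce to a flat background via the conformal covariance of both sides, then match against \eqref{B3-final2} using the Gauss equations for $\Rho^h$ and $\J^h$, Corollary \ref{Newton-C3}, and the divergence identity \eqref{basic-div}. The paper's Lemma \ref{B3-flat-equiv} unpacks $(\lo^2)_\circ$ into the form \eqref{B3-GW} and invokes the Fialkow tensor relation, whereas you keep everything trace-free and evaluate $\Rho^h$ directly from \eqref{GRicci} and \eqref{G2}; this is a slightly cleaner bookkeeping of the same computation, not a different argument.
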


Note that both $\B_3$ and the right-hand side of \eqref{B3-GW-0} are conformally
invariant. In fact, the operator $T: b \mapsto \delta \delta (b) + (\Rho,b)$ acting
on trace-free symmetric bilinear forms $b$ on $M^3$ is well-known to be conformally
invariant in the sense that $e^{4\varphi} \hat{T} (b)  = T (b)$. In \eqref{B3-GW-0},
the operator $T$ acts on the trace-free part $(\lo^2)_\circ$ of $\lo^2$. The
conformal invariance $\hat{\lo}^2 = \lo^2$ implies the conformal invariance of the
trace-free part of $\lo^2$ (Section \ref{CTLL}). This shows the claimed conformal
invariance. In particular, the right-hand side of \eqref{B3-GW-0} has the same
conformal transformation law as $\B_3$.

In more explicit terms, formula \eqref{B3-GW-0} reads
\begin{align}\label{B3-GW}
   6 \B_3 & = 3 \delta \delta (\lo^2) - \Delta (|\lo|^2) + 3 (\Rho,\lo^2) - |\lo|^2 \J + |\lo|^4,
\end{align}
and it suffices to verify \eqref{B3-GW} in the flat case.

\begin{lem}\label{B3-flat-equiv} For a flat background, the formulas \eqref{B3-final2} and \eqref{B3-GW}
are equivalent.
\end{lem}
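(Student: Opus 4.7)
The strategy is to specialize the right-hand side of \eqref{B3-GW} to the flat case in dimension $n=3$, rewrite every ingredient in terms of $H$ and $\lo$, and check that the result matches the right-hand side of \eqref{B3-final2}.

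\emph{Step 1: Replace $\Rho^h$ and $\J$ by expressions in $L$.} Since the background is flat, $\Rho^g = 0$, $W=0$, and the Gauss identity gives $\J = \tfrac{3}{2} H^2 - \tfrac{1}{4} |\lo|^2$ (as already used after \eqref{GGHW-B3}). The Fialkow relation \eqref{FW-relation} in $n=3$ with $W=0$ becomes $\JF = \lo^2 - \tfrac{1}{4}|\lo|^2 h$, and the defining identity of the Fialkow tensor gives
\[
   \Rho^h = -\JF + H\lo + \tfrac{1}{2} H^2 h = -\lo^2 + \tfrac{1}{4}|\lo|^2 h + H\lo + \tfrac{1}{2}H^2 h.
\]

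\emph{Step 2: Compute $(\Rho^h,\lo^2)$ and $|\lo|^2 \J$.} Using $\tr(\lo^2)=|\lo|^2$, $(\lo,\lo^2) = \tr(\lo^3)$, $(\lo^2,\lo^2)=\tr(\lo^4)$, and the $n=3$ identity $|\lo|^4 = 2\tr(\lo^4)$ from Corollary \ref{Newton-C3}, a direct calculation gives
\[
   (\Rho^h,\lo^2) = -\tfrac{|\lo|^4}{4} + H\tr(\lo^3) + \tfrac{1}{2}H^2|\lo|^2,
\]
\[
   |\lo|^2 \J = \tfrac{3}{2} H^2 |\lo|^2 - \tfrac{1}{4}|\lo|^4.
\]
Therefore
\[
   3(\Rho^h,\lo^2) - |\lo|^2 \J + |\lo|^4 = \tfrac{1}{2}|\lo|^4 + 3 H\tr(\lo^3).
\]

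\emph{Step 3: Rewrite the divergence term.} The difference formula \eqref{basic-div} yields
\[
   3\,\delta\delta(\lo^2) = \tfrac{3}{2}\Delta(|\lo|^2) + 3(\lo,\Hess(H)) + 6|dH|^2.
\]

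\emph{Step 4: Combine.} Inserting the results of Steps 2 and 3 into the right-hand side of \eqref{B3-GW} gives
\[
   6\B_3 = \tfrac{1}{2}\Delta(|\lo|^2) + 3(\lo,\Hess(H)) + 6|dH|^2 + \tfrac{1}{2}|\lo|^4 + 3 H\tr(\lo^3),
\]
and doubling reproduces \eqref{B3-final2} exactly. Thus \eqref{B3-final2} and \eqref{B3-GW} are equivalent in the flat case.

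The only non-routine input is the trace identity $|\lo|^4 = 2\tr(\lo^4)$ peculiar to $n=3$ (Corollary \ref{Newton-C3}); all other manipulations are algebraic substitutions and one application of the divergence identity \eqref{basic-div}. The main obstacle is bookkeeping: keeping track of the trace parts when expressing $\Rho^h$ through the Fialkow tensor and $\lo^2$, since the coefficients of $|\lo|^4$ must cancel in a slightly delicate way.
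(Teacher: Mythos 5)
Your proof is correct and follows essentially the same route as the paper's: both arguments reduce the claim to the identity $6(\lo^2,\Rho^h)-2|\lo|^2\J=6H\tr(\lo^3)-|\lo|^4$ (the paper's \eqref{JP}), derived from the Fialkow relation \eqref{Fial}, the $n=3$ Gauss identity, and Corollary \ref{Newton-C3}, and then invoke the divergence identity \eqref{basic-div}. The only cosmetic difference is that you build up from \eqref{B3-GW} to reach \eqref{B3-final2}, while the paper checks directly that the difference of the two right-hand sides vanishes.
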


\begin{proof} The assertion is equivalent to the vanishing of the sum
\begin{align*}
    & 6 \delta \delta (\lo^2) - 2 \Delta (|\lo|^2) + 6 (\Rho,\lo^2) - 2 |\lo|^2 \J + 2 |\lo|^4 \\
    & - \Delta (|\lo|^2) - 12 |dH|^2 - 6 (\lo,\Hess (H)) - |\lo|^4 - 6 H \tr (\lo^3).
\end{align*}
But the identity
\begin{equation}\label{Fial}
    \JF = \iota^* \bar{\Rho} - \Rho + H \lo + \frac{1}{2} H^2 h \stackrel{!}{=} \lo^2 - \frac{1}{4} |\lo|^2 h
\end{equation}
for the Fialkov tensor $\JF$ (see \eqref{FW-relation}) and the Gauss identity $\J = 3/2 H^2 - 1/4 |\lo|^2$
(see \eqref{G2}) imply
\begin{equation}\label{JP}
    6(\lo^2,\Rho) - 2 |\lo|^2 \J =  6 H \tr(\lo^3) - |\lo|^4.
\end{equation}
Hence the above sum simplifies to
$$
   6  \delta \delta (\lo^2) - 3 \Delta (|\lo|^2)  - 12 |dH|^2 - 6 (\lo,\Hess (H)).
$$
By \eqref{basic-div}, this sum vanishes. The proof is complete.
\end{proof}

\begin{rem}\label{B3-general-proof}
Alternatively, one may derive formula \eqref{B3-GW-0} for conformally flat backgrounds by direct
evaluation of the definition of the obstruction $\B_3$. For details, we refer to \cite{JO-Y}.
\end{rem}

\subsection{Variational aspects}\label{var-aspects}

Here we relate the obstructions $\B_2$ (for general backgrounds) and $\B_3$ (for
conformally flat backgrounds) to singular Yamabe {\em energy functionals}. The
discussion illustrates the general results of \cite{GW-reno, Graham-Yamabe} and connects
with the classical literature.

We first consider the classical situation of a variation of the Willmore functional. For a closed surface $f: M^2
\hookrightarrow \R^3$, we consider a normal variation $f_t: M^2 \hookrightarrow \R^3$ of $f$:
$$
   f_t (x) = f (x) + t u(x) N_0,
$$
where $u \in C^\infty(M)$ and $N_0$ is the unit normal of $M$. The variation field of $f_t$ is
$u N_0$. We set $M_t = f_t (M)$ and let          \index{$\W_2$ \quad Willmore functional}
\begin{equation}\label{W2-flat}
   \W_2(f) \st \int_{f(M)} |\lo|^2 dvol_h,
\end{equation}
where the metric $h$ is induced by the Euclidean metric on $\R^3$. We often identify $M$ with $f(M)$. Set
$$
   \var (\W_2)[u] \st (d/dt)|_0 (\W_2(M_t)).
$$
In order to calculate that variation, we recall the well-known variation formulas
\begin{align*}
    \var (h)[u] & = 2 u L, \\
    \var (L)[u] & = -\Hess(u) + u L^2, \\
    2 \var (H)[u] & = -\Delta (u) - u |L|^2
\end{align*}
and
$$
   \var(dvol_h)[u] = 2 u H dvol_h,
$$
where $L^2$, $|L|^2$ and $\Delta (u)$ are defined by the metric $h$ on $M$. First, we note that
$$
   \var( |\lo|^2)[u] = 2 (\lo ,\var(\lo)[u]) - 4 u (L,\lo^2);
$$
the second term comes from raising $2$ indices: $|\lo|^2 = \tr (\lo^2) = h^{ia} h^{jb} \lo_{ij}\lo_{kb}$.
Hence
\begin{align*}
   \var (|\lo|^2)[u]  & = 2 (\lo,\var(L)[u] - H  \var (h)[u]) - 4 u \tr(\lo^3 + H \lo^2) \\
   & = - 2 ((\lo,\Hess(u) - u L^2) + 2 H u (\lo,L)) + 4 u H |\lo|^2
\end{align*}
using $\tr(\lo^3)=0$. Now partial integration gives
\begin{align*}
   \var(\W_2)[u]  & = - \int_M u \left[ 2 \delta \delta (\lo) -2  (\lo,L^2) + 4 H (\lo,L)
   + 4 H |\lo|^2 - 2 H |\lo|^2 \right] dvol_h;
\end{align*}
the last term comes from the variation of the volume. Simplification yields
\begin{equation}\label{W2}
   \var (\W_2)[u] = -\int_M u ( 2 \Delta (H) + 2 H |\lo|^2) dvol_h
\end{equation}
using $\delta \delta (\lo) = \Delta(H)$ and again $\tr (\lo^3)=0$. This proves the classical result that in
a flat background, the Euler-Lagrange equation of the Willmore functional $\W_2$ is
$$
   \Delta(H) + H |\lo|^2 = 0.
$$
By $|\lo|^2 = 2 (H^2- K)$, where $K$ is the Gauss curvature, the  Euler-Lagrange equation of
the Willmore functional $\W_2$ reads
$$
   \Delta (H) + 2 H(H^2-K) = 0.
$$
This equation is known as the Willmore equation. It was already mentioned in \cite{Thom} and Schadow (1922). 
We refer to \cite[Section 7.4]{Will} for more details.

The variation formula \eqref{W2} implies the special case
\begin{equation}\label{AB-2}
    \var(\A_2)[u] = \frac{3}{2} \int_{M^2} u \B_2 dvol_h
\end{equation}
of the variation formula
\begin{equation}\label{var-form-1}
   \var(\A_n)[u] = (n+2)(n-1) \int_M u \LO_n dvol_h = \frac{(n+1)(n-1)}{2} \int_{M^n}  u \B_n dvol_h
\end{equation}
for $\A_n = \int_M v_n dvol_h$. The first equality in \eqref{var-form-1} was proved in
\cite[Theorem 3.1]{Graham-Yamabe}. The variational formula in terms of $\B_n$ was
established in \cite[Section 5]{GW-reno} by different arguments. For the second
equality, we refer to \eqref{obstruction-two}. In fact, \eqref{v2-2} shows that
$$
   \A_2 = \int_{M^2} v_2 dvol = \int_{M^2} \left(-\frac{1}{2} \J + \frac{1}{4} |\lo|^2 \right) dvol_h.
$$
Hence
$$
   \var(\A_2)[u] = \frac{1}{4} \var(\W_2)[u] = -\frac{1}{2} \int_M u (\Delta (H) + H |\lo|^2) dvol_h
$$
by Gauss-Bonnet.  On the other hand, we have
$$
   \B_2 = -\frac{1}{3} (\Delta (H) + H|\lo|^2).
$$
This implies \eqref{AB-2}.

These results generalize as follows to closed surfaces $M^2 \hookrightarrow (X^3,g)$
in general backgrounds. For the following discussion, we also refer to \cite[Section
5.1]{V}.

We consider normal variations with a variation field of the form $u N_0$ with a unit
normal field $N_0$. The following formula is well-known (see \cite[Section 3]{Ros},
\cite[Theorem 3.2]{Huisken}, \cite[Theorem 4.1]{V}). It can be proved by calculation
in geodesic normal coordinates. For $u=1$, it plays a central role in \cite[Chapter
3]{Gray}. It holds
\begin{equation}\label{VL}
   \var(L_{ij})[u] = -\Hess_{ij}(u) + u (L^2)_{ij} - u \bar{R}_{0ij0}.
\end{equation}
Hence
\begin{align*}
   \var(|\lo|^2)[u] & = 2 (\lo,\var(L)[u] - H \var(h)[u]) - 4 u \tr (\lo^3 + H \lo^2) \\
   & = 2 (\lo, - \Hess (u) + u L^2) - 2 u \lo^{ij} R_{0ij0} - 4 H u (\lo,L) - 4 H |\lo|^2.
\end{align*}
Now simplification and partial integration gives
$$
   \var(\W_2)[u] = - \int_M u (2 \delta \delta (\lo) + 2 H |\lo|^2 + 2 \lo^{ij} R_{0ij0}) dvol_h.
$$
Since the Weyl tensor vanishes in dimension $3$, we have $\lo^{ij} R_{0ij0}
= \lo^{ij} (\Rho_{ij} + \Rho_{00} h_{ij}) = \lo^{ij} \Rho_{ij}$. Thus the integrand
is given by letting the operator
\begin{equation}\label{op-2}
   b \mapsto \delta \delta (b) + (\iota^*(\Rho), b) + H (\lo,b)
\end{equation}
act on $b = \lo$. The above operator is well-known to be conformally covariant on trace-free symmetric
bilinear forms (see \cite[Section 5.1.4]{V}). This implies the conformal invariance of the integrand.

The above calculation shows that the Euler-Lagrange equation of the Willmore functional
$\W_2$ is
\begin{equation}\label{EL}
   \delta \delta (\lo) + H |\lo|^2 + \lo^{ij} \bar{R}_{0ij0} = 0.
\end{equation}
By Codazzi-Mainardi, $\delta \delta (\lo)$ equals $\Delta (H) + \delta (\bar{\Rho}_{0})$. Thus
we obtain
$$
   \Delta (H) + \delta (\overline{\Ric}_{0}) + H |\lo|^2 + \lo^{ij} \bar{R}_{0ij0} = 0.
$$

We also observe that the left-hand side of \eqref{EL} coincides with
$$
   \delta \delta (\lo) + H |\lo|^2 + (\lo,\iota^* (\bar{\Rho}))
   = \delta \delta (\lo) + H |\lo|^2 + (\lo,\iota^* (\overline{\Ric}))
$$
since  $\lo^{ij} \bar{R}_{0ij0} = \lo^{ij} \overline{\Ric}_{ij}$. In fact, since the Weyl tensor
vanishes in dimension $3$, it holds
$$
    \bar{R}_{0ij0} = \bar{\Rho}_{ij} + \bar{\Rho}_{00} h_{ij}
$$
and we obtain
$$
   \lo^{ij} \bar{R}_{0ij0} = \lo^{ij} \bar{\Rho}_{ij} = \lo^{ij} \overline{\Ric}_{ij}.
$$
It follows that the variation of the Willmore functional $\W_2$ yields the Yamabe obstruction $\B_2$:
$$
   \var (\A_2)[u] = - \frac{1}{4} \var (\W_2) [u] = -\frac{3}{2} \int u \B_2 dvol_h
$$
confirming \eqref{AB-2} for general backgrounds.

\index{$\W_3$ \quad Willmore functional}

We continue with an analogous discussion of the variation of
\begin{equation}\label{W3}
   \W_3 \st \int_{M^3} \tr(\lo^3) dvol_h
\end{equation}
for variations of a closed three-manifold $M^3 \hookrightarrow X^4$ in a conformally flat background $(X,g)$.
We first determine the variation of $\W_3$ for a general background metric $g$. Here we use the variation
formulas
\begin{align}\label{var-form}
    \var (h)[u] & = 2 u L, \notag \\
    \var (L)[u] & = - \Hess(u) + u L^2 - u \bar{R}_{0 \cdot \cdot 0}, \notag \\
    3 \var (H)[u] & = - \Delta (u) - u |L|^2 - u \overline{\Ric}_{00}
\end{align}
and
$$
   \var(dvol_h)[u] = 3 u H dvol_h.
$$
Note that the operator $\Delta (u) + u |L|^2 + u \overline{\Ric}_{00}$ is the Jacobi operator
appearing in the second variation formula for the area of minimal surfaces \cite{CM}.

First, we observe that
$$
   \var(\tr (\lo^3))[u] = 3 (\lo^2,\var(\lo)[u]) - 6 u (L,\lo^3);
$$
the second term comes from raising $3$ indices: $\tr (\lo^3) = h^{ia} h^{jb} h^{kc} \lo_{ij}\lo_{kb} \lo_{ca}$.
Hence
\begin{align*}
   & \var (\tr (\lo^3))[u] \\
   & = 3  (\lo^2,\var(\lo)[u]) - 6 u (L,\lo^3) \\
   & = 3 (\lo^2,\var (L)[u] - \var (H)[u] h - H \var(h)[u]) - 6 u (L,\lo^3) \\
   & = 3 (\lo^2,-\Hess(u) + u L^2) - 3 u (\lo^2,\bar{R}_{0 \cdot\cdot 0}) + (\Delta(u) + u |L|^2
  + u \overline{\Ric}_{00}) |\lo|^2 \\
   & - 6 u H (\lo^2,L) - 6 u (L,\lo^3).
\end{align*}
Now partial integration and simplification yields
\begin{align}\label{var-W3-g}
  & \var\big(\int_M \tr(\lo^3) dvol_h\big)[u] \notag \\
  & = \int_M u \big[ - 3 \delta \delta (\lo^2) + \Delta (|\lo|^2) - 3 \tr (\lo^4)
  + |\lo|^4 - 3 H \tr(\lo^3) \big] dvol_h \notag \\
  & +  \int_M u ( - 3 (\lo^2,\bar{R}_{0 \cdot \cdot 0}) + |\lo^2| \overline{\Ric}_{00}) dvol_h.
\end{align}

Now, for a conformally flat background, we reformulate this variation formula in a conformally invariant way.
The following result is also covered in \cite[Section 4]{GGHW} using a different method.

\begin{lem}\label{var-W3} Assume that $n=3$ and that the Weyl tensor of $(X,g)$ vanishes. Then
\begin{equation*}\label{var-W3-CI}
   \var \left( \int_M \tr(\lo^3) dvol_h \right)[u]
   = - 3 \int_M u \left(\delta \delta ((\lo^2)_\circ) + (\Rho, (\lo^2)_\circ) + \frac{1}{3} |\lo|^4\right) dvol_h.
\end{equation*}
\end{lem}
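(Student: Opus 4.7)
The plan is to start from the general variation formula \eqref{var-W3-g} and reduce the non-invariant terms to a conformally invariant combination using the hypothesis that the Weyl tensor of $g$ vanishes. First, since $n=3$, Corollary \ref{Newton-C3} gives $\tr(\lo^4) = \tfrac{1}{2}|\lo|^4$, so the intrinsic-looking terms collapse to
\[
   -3\tr(\lo^4) + |\lo|^4 = -\tfrac{1}{2}|\lo|^4.
\]
Thus the integrand of \eqref{var-W3-g} becomes
\[
   -3\delta\delta(\lo^2) + \Delta(|\lo|^2) - \tfrac{1}{2}|\lo|^4 - 3H\tr(\lo^3) - 3(\lo^2,\bar{R}_{0\cdot\cdot 0}) + |\lo|^2\,\overline{\Ric}_{00}.
\]

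Next I would rewrite the two ambient-curvature terms using $W=0$. By \eqref{KN}, vanishing of $W$ gives $\bar{R}_{0ij0} = \bar{\Rho}_{ij} + \bar{\Rho}_{00} h_{ij}$, so
\[
   (\lo^2,\bar{R}_{0\cdot\cdot 0}) = (\lo^2,\bar{\Rho}) + \bar{\Rho}_{00}|\lo|^2,
\]
and the convention $\overline{\Ric} = (n-2)\bar{\Rho} + \bar{\J}g$ in dimension $n+1=4$ gives $\overline{\Ric}_{00} = 2\bar{\Rho}_{00} + \bar{\J}_0$. Then the Fialkow identity \eqref{Fial} (which in the $W=0$ case reads $\iota^*\bar{\Rho} - \Rho + H\lo + \tfrac{1}{2}H^2 h = \lo^2 - \tfrac{1}{4}|\lo|^2 h$) turns $(\lo^2,\bar{\Rho})$ into $(\lo^2,\Rho) - H\tr(\lo^3) - \tfrac{1}{2}H^2|\lo|^2 + \tr(\lo^4) - \tfrac{1}{4}|\lo|^4$, and using $\tr(\lo^4)=\tfrac{1}{2}|\lo|^4$ again this simplifies to $(\lo^2,\Rho) - H\tr(\lo^3) - \tfrac{1}{2}H^2|\lo|^2 + \tfrac{1}{4}|\lo|^4$. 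The Gauss equation \eqref{G2} for $n=3$, namely $\bar{\J}_0 - \bar{\Rho}_{00} = \J + \tfrac{1}{4}|\lo|^2 - \tfrac{3}{2}H^2$, handles the remaining scalar combination $-\bar{\Rho}_{00}|\lo|^2 + \bar{\J}_0 |\lo|^2 = (\bar{\J}_0 - \bar{\Rho}_{00})|\lo|^2$.

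Assembling these substitutions, the $H$-dependent pieces $3H\tr(\lo^3)$, $\pm\tfrac{3}{2}H^2|\lo|^2$ cancel exactly, and the $|\lo|^4$ contributions $-\tfrac{1}{2}-\tfrac{3}{4}+\tfrac{1}{4}$ combine to $-|\lo|^4$. The integrand is therefore reduced to
\[
   -3\delta\delta(\lo^2) + \Delta(|\lo|^2) - 3(\lo^2,\Rho) + \J|\lo|^2 - |\lo|^4.
\]
Finally, to recognise the conformally invariant form on the right-hand side of the claim, I would expand $(\lo^2)_\circ = \lo^2 - \tfrac{1}{3}|\lo|^2 h$, which gives $\delta\delta((\lo^2)_\circ) = \delta\delta(\lo^2) - \tfrac{1}{3}\Delta(|\lo|^2)$ and $(\Rho,(\lo^2)_\circ) = (\Rho,\lo^2) - \tfrac{1}{3}\J|\lo|^2$; multiplying the asserted integrand by $-3$ reproduces the displayed expression exactly.

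The main obstacle is purely bookkeeping: making sure the trace corrections arising from $(\lo^2)_\circ$, the Fialkow substitution, and the Gauss equation combine with the right signs so that every extrinsic curvature term ($H\tr(\lo^3)$, $H^2|\lo|^2$, and the various $|\lo|^4$ contributions from $\tr(\lo^4)$) cancels cleanly. Once one commits to systematically eliminating $\bar{\Rho}_{ij}$, $\bar{\Rho}_{00}$, and $\bar{\J}_0$ via \eqref{Fial} and \eqref{G2} and to the identity $\tr(\lo^4)=\tfrac{1}{2}|\lo|^4$ from Corollary \ref{Newton-C3}, the manifestly conformally covariant operator $b\mapsto \delta\delta(b)+(\Rho,b)$ on trace-free bilinear forms emerges naturally, confirming the conformal invariance of the integrand in a way that also serves as a consistency check.
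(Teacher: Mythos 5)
Your argument is correct and reaches the same reduced integrand $-3\delta\delta(\lo^2) + \Delta(|\lo|^2) - 3(\Rho,\lo^2) + \J|\lo|^2 - |\lo|^4$ as the paper does, but by a somewhat different packaging of the Gauss-equation bookkeeping. The paper's proof eliminates the ambient curvature terms by substituting the hypersurface Gauss equation for the Ricci tensor $\Ric_{ij} = \overline{\Ric}_{ij} - \bar R_{0ij0} + 3HL_{ij} - (L^2)_{ij}$ together with the scalar Gauss relation \eqref{G2}, and invokes $|\lo|^4 = 2\tr(\lo^4)$ (Corollary \ref{Newton-C3}) only once at the very end of the resulting cancellation. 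You instead absorb the rank-two Gauss contraction into the Fialkow identity \eqref{Fial} (valid here because $W=0$, via \eqref{FW-relation}) to trade $(\lo^2,\bar\Rho)$ for $(\lo^2,\Rho)$ plus explicit $H$-corrections, and apply Corollary \ref{Newton-C3} twice: once to collapse $-3\tr(\lo^4) + |\lo|^4$ and once inside the Fialkow substitution. The Fialkow route is a touch more streamlined and keeps the computation organized around a manifestly conformally invariant tensor, making the eventual cancellation of all $H$-terms feel less accidental; the paper's route is more elementary in that it appeals only to the raw Gauss identities already recorded in Section \ref{basic-Gauss}. Either way the final recognition of $\delta\delta((\lo^2)_\circ) + (\Rho,(\lo^2)_\circ)$ is identical.
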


\begin{proof} By \eqref{var-W3-g}, the claim is equivalent to the identity
\begin{align*}
   & \delta \delta ((\lo^2)_\circ) + (\Rho, (\lo^2)_\circ) + \frac{1}{3} |\lo|^4 \\
   & = \delta \delta (\lo^2) - \frac{1}{3} \Delta (|\lo|^2)
   + \tr (\lo^4) - \frac{1}{3}|\lo|^4 + H \tr(\lo^3)
  + (\lo^2,\bar{R}_{0 \cdot \cdot 0}) - \frac{1}{3} |\lo^2| \overline{\Ric}_{00}.
\end{align*}
Note that
$$
\delta \delta ((\lo^2)_\circ) = \delta \delta (\lo^2) - \frac{1}{3} \Delta (|\lo|^2)
$$
and
\begin{align*}
   (\Rho, (\lo^2)_\circ) = (\Rho,\lo^2) - \frac{1}{3} (\Rho,|\lo|^2 h)
  = (\Ric,\lo^2) - \J |\lo|^2 - \frac{1}{3} \J |\lo|^2  = (\Ric,\lo^2) - \frac{4}{3} \J |\lo|^2.
\end{align*}
Moreover, it holds
\begin{align*}
    (\lo^2,\bar{R}_{0 \cdot \cdot 0}) - \frac{1}{3} |\lo^2| \overline{\Ric}_{00}
    & = (\lo^2,\bar{\Rho}) + \bar{\Rho}_{00} |\lo|^2 - \frac{1}{3} |\lo^2| \overline{\Ric}_{00}  \\
    & = (\lo^2,\bar{\Rho}) + \frac{1}{3} \bar{\Rho}_{00} |\lo|^2 - \frac{1}{3} \bar{\J} |\lo|^2
\end{align*}
(by the vanishing of the Weyl tensor). Hence the claim reduces to
\begin{align*}
    & (\Ric,\lo^2) - \frac{4}{3} \J |\lo|^2 + \frac{1}{3} |\lo|^4 \\
    & = \tr (\lo^4) - \frac{1}{3}|\lo|^4 + H \tr(\lo^3)
   + (\bar{\Rho},\lo^2) + \frac{1}{3} \bar{\Rho}_{00} |\lo|^2 - \frac{1}{3} \bar{\J} |\lo|^2 .
\end{align*}
Now the Gauss equations
\begin{align*}
    \Ric_{ij} = \overline{\Ric}_{ij} - \bar{R}_{0ij0} + 3 H L_{ij} - (L^2)_{ij} \quad \mbox{and} \quad
    \J - \overline{\J} = - \bar{\Rho}_{00} - \frac{1}{4} |\lo|^2 + \frac{3}{2} H^2
\end{align*}
 (see \eqref{GRicci} and \eqref{G1}) imply
\begin{align*}
    (\Ric,\lo^2) - \frac{4}{3} \J |\lo|^2 & = (\overline{\Ric},\lo^2) - (\bar{\Rho},\lo^2) - \bar{\Rho}_{00} |\lo|^2
   + 3 H (L,\lo^2) - (L^2,\lo^2) \\
   & - \frac{4}{3} \left(\bar{\J} - \bar{\Rho}_{00} - \frac{1}{4} |\lo|^2 + \frac{3}{2} H^2\right) |\lo|^2 \\
   & = 2 (\bar{\Rho},\lo^2) + \bar{\J} |\lo|^2 - (\bar{\Rho},\lo^2) - \bar{\Rho}_{00} |\lo|^2 + \frac{4}{3} \bar{\Rho}_{00} |\lo|^2
   - \frac{4}{3} \bar{\J} |\lo|^2 \\
   & + 3 H(L,\lo^2) - (L^2,\lo^2) + \frac{1}{3} |\lo|^4 - 2 H^2 |\lo|^2 \\
   & =  (\bar{\Rho},\lo^2) + \frac{1}{3} \bar{\Rho}_{00} |\lo|^2 - \frac{1}{3} \bar{\J} |\lo|^2 \\
   & + 3 H(L,\lo^2) - (L^2,\lo^2) + \frac{1}{3} |\lo|^4 - 2 H^2 |\lo|^2.
\end{align*}
Hence it suffices to prove that
$$
   3 H(L,\lo^2) - (L^2,\lo^2) + \frac{2}{3} |\lo|^4 - 2 H^2 |\lo|^2
  =  \tr (\lo^4) - \frac{1}{3} |\lo|^4 + H \tr (\lo^3).
$$
By simplification, this identity is equivalent to
$$
   \frac{2}{3}|\lo|^4 - \tr (\lo^4) = \tr (\lo^4) - \frac{1}{3} |\lo|^4,
$$
i.e., $|\lo|^4 = 2 \tr (\lo^4)$ (see Corollary \ref{Newton-C3}). The proof is complete.
\end{proof}

In terms of
\begin{equation}\label{A3}
  \A_3 = \int_{M^3} v_3 dvol_h = -\frac{2}{3} \int_{M^3} \tr (\lo^3) dvol_h = -\frac{2}{3} \W_3
\end{equation}
we obtain

\begin{cor}\label{var-A3}
$
   \var (\A_3)[u] = 4 \int_{M^3} u \B_3 dvol_h.
$
\end{cor}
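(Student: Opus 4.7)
The plan is to combine the identity $\A_3 = -\frac{2}{3} \W_3$ in \eqref{A3} with the variation formula for $\W_3$ in Lemma \ref{var-W3} and the explicit formula for $\B_3$ in the conformally flat case from Lemma \ref{B3-CF}. Since the statement is phrased for $M^3 \hookrightarrow X^4$ in the context of a conformally flat background (the setting in which both of those earlier results apply), no new geometric input is required; the argument is essentially a matching of numerical constants.

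First I would write
\[
\var(\A_3)[u] = -\tfrac{2}{3}\, \var(\W_3)[u].
\]
Then I would substitute the formula from Lemma \ref{var-W3},
\[
\var(\W_3)[u] = -3 \int_M u \left(\delta\delta((\lo^2)_\circ) + (\Rho,(\lo^2)_\circ) + \tfrac{1}{3}|\lo|^4\right) dvol_h,
\]
to obtain
\[
\var(\A_3)[u] = 2 \int_M u \left(\delta\delta((\lo^2)_\circ) + (\Rho,(\lo^2)_\circ) + \tfrac{1}{3}|\lo|^4\right) dvol_h = \tfrac{2}{3} \int_M u \left(3\delta\delta((\lo^2)_\circ) + 3(\Rho,(\lo^2)_\circ) + |\lo|^4\right) dvol_h.
\]

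Finally, applying Lemma \ref{B3-CF}, which identifies the bracketed integrand as $6\B_3$, yields $\var(\A_3)[u] = \tfrac{2}{3}\cdot 6 \int_M u \B_3 \,dvol_h = 4 \int_M u \B_3 \,dvol_h$, as claimed. There is no genuine obstacle here: all the geometric work has already been done in establishing the variation of $\W_3$ and the explicit formula for $\B_3$; the only thing to watch is the arithmetic of the constants $-\tfrac{2}{3}$, $-3$, and $\tfrac{1}{6}$, whose product is indeed $\tfrac{1}{3}\cdot 12 = 4$. One could also cross-check the constant against the general variation formula \eqref{var-form-1}, which predicts $\var(\A_n)[u] = \tfrac{(n+1)(n-1)}{2}\int_M u\B_n \,dvol_h$; for $n=3$ this gives exactly the coefficient $4$.
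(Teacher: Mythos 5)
Your argument matches the paper's proof exactly: the paper's one-line proof is literally ``Combine \eqref{B3-GW-0} with Lemma \ref{var-W3},'' which, together with \eqref{A3}, is precisely the chain you carry out. The arithmetic is right, and the cross-check against \eqref{var-form-1} (which for $n=3$ gives $\frac{(n+1)(n-1)}{2}=4$) is a sensible sanity check.
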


\begin{proof} Combine \eqref{B3-GW-0} with Lemma \ref{var-W3}.
\end{proof}

This is a special case of \eqref{var-form}.

Note that the second equality in \eqref{A3} follows by combining \eqref{F-L-trace} with
\eqref{vQ3} and \eqref{Q3-ex}.

Another proof of Corollary \ref{var-A3} (even for general backgrounds) has been given in \cite{GGHW} using a method
that rests on a certain distributional calculus. Finally, the above classical style arguments have been extended to the general
case in \cite{JO-Y}.

\subsection{Low-order extrinsic $Q$-curvatures}\label{Q-low}

Here we discuss the low-order extrinsic $Q$-curvatures $\QC_2$ and $\QC_3$ from the perspective of
their holographic formulas.

\begin{example}\label{Q2}
We consider $\QC_2$ in general dimensions. The holographic formula
\eqref{Q-holo-form-gen} states that
$$
   -\QC_2 = \frac{1}{n-3} (2v_2 + (n-2) (v\J)_0) + \frac{1}{n-1} \T_1^*\left(\frac{n}{2}-1\right) (v_1)
$$
for even $n$. Using $v_1 = (n-1)H$ and the formula \eqref{v2n} for $v_2$ as well as
$\T_1(\lambda)=-\lambda H$ (Lemma \ref{sol-1}), we obtain
$$
  - \QC_2 = \J_0 -  \rho_0'  +  \frac{n}{2}H^2 = \J_0 - \Rho_{00} - \frac{|\lo|^2}{n-1} + \frac{n}{2} H^2.
$$
In particular, we see that the holographic formula makes sense for all $n \ge 2$. By
the hypersurface Gauss identity \eqref{G2}, the above formula simplifies to
\index{$\QC_2$ \quad extrinsic $Q$-curvature of order $2$}
\begin{equation}\label{Q2-gen}
   -\QC_2 = \J^h - \frac{|\lo|^2}{2(n-1)}.
\end{equation}
Note that this result fits with the formula        \index{$\PO_2$ \quad second-order conformal Laplacian}
\begin{equation}\label{PO2}
   \PO_2 = \Delta_h - \left(\frac{n}{2}-1\right)\left (\J^h - \frac{|\lo|^2}{2(n-1)}\right)
\end{equation}
\cite[Proposition 8.5]{GW-LNY}. Independently, the latter formula for $\PO_2$
will be derived below from the solution operator $\T_2(\lambda)$ (Lemma \ref{sol-2}
and the discussion following it). We also recall that $\QC_2 = - Q_2$ in the
Poincar\'e-Einstein case. Finally, we note that the formula for $\QC_2$ is singular
for $n=1$ and $\Res_{n=1}(\QC_2) = \frac{1}{2} |\lo|^2 = 0$.
\end{example}

\begin{example}\label{Q3}
The holographic formula \eqref{Q-holo-form-gen} for $\QC_3$ states that
\begin{align*}
   \tfrac{1}{4} \QC_3 & = \tfrac{1}{n-5} (6 v_3 + (n\!-\!3) (v\J)_1)
   + \tfrac{1}{n-3} \T_1^*(\tfrac{n-3}{2})
   (4 v_2 + (n\!-\!1) (v\J)_0) + \tfrac{1}{n-1} \T_2^*(\tfrac{n-3}{2})(2v_1)
\end{align*}
for even $n$. As a byproduct of the following discussion, we will see that the
fractions in that formula do not prevent its validity in odd dimensions. First, we
note that the above formula is equivalent to
\begin{align*}
   \frac{1}{4} \QC_3 & = \frac{1}{n\!-\!5} (6 v_3 + (n\!-\!3) \J'_0 + (n\!-\!3) v_1 \J_0) \\
   & + \frac{1}{n\!-\!3} \T_1^*\left(\frac{n-3}{2}\right) (4v_2 + (n\!-\!1) \J_0)
   + \frac{1}{n\!-\!1} \T_2^*\left(\frac{n-3}{2}\right) (2v_1).
\end{align*}
Now the formulas
\begin{align*}
   v_1 & = (n\!-\!1) H \\
   2 v_2 & = (n\!-\!3)(n\!-\!1) H^2 - \iota^* (\J) - (n\!-\!3) \iota^* \nabla_\NV(\rho)  & \mbox{(by \eqref{v2n})} \\
   6 v_3 & = (n\!-\!5)(n\!-\!3)(n\!-\!1) H^3 - (3n\!-\!7) H \iota^* (\J) - (n\!-\!5)(3n\!-\!7) H \iota^* \nabla_\NV(\rho) \\
  & - (n\!-\!5) \iota^* \nabla_\NV^2(\rho) - 2 \iota^* \nabla_\NV(\J) & \mbox{(by \eqref{v3-inter})}
\end{align*}
show that the fractions are reduced. Indeed, it holds
\begin{align*}
    6 v_3 + (n\!-\!3) \J'_0 + (n\!-\!3) v_1 \J_0 & = 0 \qquad \mbox{if $n=5$}, \\
    4 v_2 + (n\!-\!1) \J_0 & = 0 \qquad \mbox{if $n=3$}
\end{align*}
(these are the relations mentioned in Remark \ref{co-ex}). Then we obtain
\begin{align}\label{Q3-ex-holo}
    \tfrac{1}{4} \QC_3 & = (n\!-\!3)(n\!-\!1) H^3 - (3n\!-\!7) H \iota^* \nabla_\NV(\rho) + \iota^* \nabla_\NV(\J)
    - \iota^* \nabla_\NV^2(\rho) + (n\!-\!2) H \iota^* (\J) \notag \\
    & + \T_1^*\left(\frac{n\!-\!3}{2}\right) (\iota^* (\J) - 2 \iota^* \nabla_\NV(\rho)
   + 2(n\!-\!1)H^2) + \T_2^*\left(\frac{n\!-\!3}{2}\right) (2H).
\end{align}
A calculation using Lemma \ref{rho-01}, Lemmas \ref{sol-1}--\ref{sol-2} and the hypersurface Gauss
identity \eqref{G2} shows that
\begin{equation}\label{Q3-rho}
   \frac{1}{4} \QC_3 = \Delta H + H \iota^* \J + \iota^* \nabla_\NV (\J) - (n\!-\!1) H \iota^* \nabla_\NV(\rho)
   - \iota^* \nabla_\NV^2(\rho).
\end{equation}
In particular, for $n=3$, we find
\begin{align*}
   \frac{1}{4} \QC_3 = \Delta H + H\iota^* (\J)  + \iota^* \nabla_\NV(\J) - 2 H \iota^* \nabla_\NV(\rho)
   -\iota^* \nabla_\NV^2(\rho).
\end{align*}
By comparison with Example \ref{v3-ex}, we see that
$$
   12 v_3 = - \QC_3 + 4\Delta H
$$
and consequently
\begin{equation}\label{vQ3}
   12 \int_{M^3} v_3 dvol_h = - \int_{M^3} \QC_3 dvol_h.
\end{equation}
This confirms Theorem \ref{LQ} for $n=3$.
\end{example}

\begin{remark}
Formula \eqref{Q3-rho} also follows from the last display in the proof of \cite[Proposition 8.5]{GW-LNY}
or \cite[Proposition 5.4]{GW-Willmore} which evaluates the composition of three Laplace-Robin operators.
\end{remark}

We continue with the discussion of the holographic formula for $\QC_3$ in general
dimensions $n \ge 3$. In fact, we prove that an evaluation of \eqref{Q3-rho} shows
that the explicit formula \eqref{rho-2-ex} for $\rho_0''$ is equivalent to a simple
formula for $\QC_3$.

\begin{prop}\label{Q3-fully}
Assume that $n \ge 3$. Then the formula \eqref{rho-2-ex} for $\rho_0''$ is equivalent to
\index{$\QC_3$ \quad extrinsic $Q$-curvature of order $3$}
\begin{align}\label{Q3F}
   \frac{1}{4} \QC_3  & = \frac{1}{n\!-\!2} (\delta \delta (\lo) - (n\!-\!3) (\lo, \Rho^h) + (n\!-\!1) (\lo,\JF)) \\
   & = \frac{1}{n\!-\!2} (\delta \delta (\lo) - 2(n\!-\!2) (\lo,\Rho^h) + (n\!-\!1) (\lo,\Rho^g) +
   (n\!-\!1) H |\lo|^2). \notag
\end{align}
\end{prop}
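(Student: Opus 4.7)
The plan is to verify Proposition \ref{Q3-fully} by a direct substitution argument, taking the known formula \eqref{Q3-rho} for $\QC_3$ (which is valid in arbitrary dimension, as inspection of its derivation shows) and unfolding all the iterated normal derivatives into quantities controlled by Lemma \ref{rho-01} and Lemma \ref{rho-two}. The starting point is
$$
\tfrac{1}{4}\QC_3 = \Delta H + H\,\iota^*\J + \iota^*\nabla_\NV(\J) - (n-1)H\,\iota^*\nabla_\NV(\rho) - \iota^*\nabla_\NV^2(\rho).
$$
Using Example \ref{trans-low-2} and $\rho_0 = -H$ (Lemma \ref{rho-01}), one has $\iota^*\nabla_\NV^2(\rho) = \rho_0'' - 2\rho_0\rho_0' = \rho_0'' + 2H\rho_0'$, and $\iota^*\nabla_\NV(\J) = \J_0'$, which rewrites the display as
$$
\tfrac{1}{4}\QC_3 = \Delta H + H\J_0 + \J_0' - (n+1)H\rho_0' - \rho_0''.
$$

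Next, I would substitute the explicit expression for $\rho_0''$ from Lemma \ref{rho-two} together with $\rho_0' = \Rho^g_{00} + |\lo|^2/(n-1)$ from Lemma \ref{rho-01}. The $\J_0'$ arising in $-\rho_0''$ cancels with the $+\J_0'$ already present. Rewriting $\Ric^g_{00} = (n-1)\Rho^g_{00} + \J^g$ and $\scal^g = 2n\J^g$, the combined coefficients of $H\Rho^g_{00}$, $H|\lo|^2$, and $H\J^g$ all vanish (for the last one, $H\J_0 + \tfrac{n+1}{n-1}H\J_0 - \tfrac{2n}{n-1}H\J_0 = 0$), leaving the compact intermediate identity
$$
\tfrac{1}{4}\QC_3 = \Delta H + \tfrac{1}{(n-1)(n-2)}\delta\delta(\lo) + \tfrac{1}{n-1}\delta^h\!\bigl(\Ric^g(\partial_s,\cdot)\bigr) + \tfrac{n-1}{n-2}(\lo,\JF) - \tfrac{n-3}{n-2}(\lo,\Rho^h).
$$

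Then I would invoke the Codazzi--Mainardi consequence \eqref{ddL}, which in the form $\Delta H = \tfrac{1}{n-1}\delta\delta(\lo) - \delta^h(\Rho^g(N,\cdot))$ eliminates $\Delta H$. Because $g(\partial_s,X) = 0$ for $X$ tangent to $M$ (since $\partial_s$ is parallel to $\NV$ along $M$), the decomposition $\Ric^g = (n-1)\Rho^g + \J g$ gives $\delta^h(\Ric^g(\partial_s,\cdot)) = (n-1)\delta^h(\Rho^g(\partial_s,\cdot))$, so the divergence terms cancel and $\Delta H + \tfrac{1}{n-1}\delta^h(\Ric^g(\partial_s,\cdot))$ collapses to $\tfrac{1}{n-1}\delta\delta(\lo)$. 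Adding this to the $\tfrac{1}{(n-1)(n-2)}\delta\delta(\lo)$ term gives the total coefficient $\tfrac{1}{n-2}$ for $\delta\delta(\lo)$, proving the first equality in \eqref{Q3F}. The second equality then follows by substituting the definition \eqref{Fialkow-tensor} of $\JF$ and using $(\lo,h) = 0$.

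The argument is essentially routine algebra; the only subtle point is the bookkeeping cancellation of the three families of $H$-scalar-curvature terms, which requires using both the Gauss-type identity $\Ric^g_{00} = (n-1)\Rho^g_{00} + \J^g$ and the dimension-$(n+1)$ trace relation $\scal^g = 2n\J^g$ in the right order. No new geometric input beyond Lemma \ref{rho-01}, Lemma \ref{rho-two}, the Codazzi--Mainardi identity, and Example \ref{trans-low-2} is needed, so the proposition is really a statement that \eqref{Q3-rho} and \eqref{rho-2-ex} contain equivalent information.
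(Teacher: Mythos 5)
Your argument is correct and follows essentially the same route as the paper's own proof: both start from the rewriting of \eqref{Q3-rho} in adapted coordinates via Example \ref{trans-low-2}, then feed in the formula for $\rho_0''$ (Lemma \ref{rho-two}) together with $\rho_0' = \Rho^g_{00} + |\lo|^2/(n-1)$ (Lemma \ref{rho-01}), and use the Codazzi--Mainardi consequence \eqref{ddL} to absorb $\Delta H$ into a $\delta\delta(\lo)$ term. The only cosmetic difference is the direction of the computation: the paper writes out the asserted equality \eqref{Q3-rho}$=$\eqref{Q3F}, solves for $\rho_0''$, and checks agreement with \eqref{rho-2-ex}, whereas you substitute \eqref{rho-2-ex} directly into \eqref{Q3-rho} and simplify until \eqref{Q3F} emerges; since every step is reversible, the two versions establish the same equivalence. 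Your bookkeeping of the $H\Rho^g_{00}$, $H|\lo|^2$, and $H\J$ cancellations (using $\Ric^g_{00}=(n-1)\Rho^g_{00}+\J^g$ and $\scal^g = 2n\J^g$ on the $(n+1)$-dimensional $X$) and the observation $\delta^h(\Ric^g(\partial_s,\cdot)) = (n-1)\delta^h(\Rho^g(\partial_s,\cdot))$ from $g(\partial_s,X)=0$ for tangential $X$ are all correct.
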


Note that \eqref{Q3F} fits with the explicit formula for $\PO_3$ in Proposition \ref{P3F}.

The proof of Proposition \ref{Q3-fully} will show that the displayed formula for $\QC_3$ follows by combining
the holographic formula for $\QC_3$ (in the form \eqref{Q3-rho}) with the formulas for the first two normal
derivatives of $\rho$.

Note also that in dimension $n=3$, the above formula reads
\index{$\QC_3$ \quad critical extrinsic $Q$-curvature of order $3$}
\begin{equation}\label{Q3-ex}
   \QC_3 = 4 \delta \delta (\lo) + 8 (\lo,\JF).
\end{equation}
It immediately follows from that expression that the integral $\int_{M^3} \QC_3 dvol_h$ is conformally
invariant as a functional of $g$.


\begin{proof}
It suffices to prove the equivalence of \eqref{rho-2-ex} and the first identity. For this,
we make explicit the equality of \eqref{Q3-rho} and \eqref{Q3F}. In terms of adapted coordinates,
it states the equality
\begin{equation*}
    \Delta H + H \J_0 + \J_0' - (n\!-\!1) H \rho_0' -  \rho_0'' - 2 H \rho_0' =
    \frac{1}{n\!-\!2} \delta \delta (\lo) - \frac{n\!-\!3}{n\!-\!2} \lo^{ij} \Rho^h_{ij}
   + \frac{n\!-\!1}{n\!-\!2} \lo^{ij} \JF_{ij}.
\end{equation*}
Here we used that $\iota^* \nabla_\NV^2(\rho)$ corresponds to $\rho_0'' - 2 \rho_0
\rho_0' = \rho_0'' + 2 H \rho_0'$ (see Example \ref{trans-low-2}). By the identity \eqref{ddL} for $\Delta H$,
this relation is equivalent to
\begin{equation*}
   \rho_0'' = \frac{1}{n\!-\!1} \delta \delta (\lo)  -\frac{1}{n\!-\!2} \delta \delta (\lo)
   + \frac{n\!-\!3}{n\!-\!2} \lo^{ij} \Rho^h_{ij} - \frac{n\!-\!1}{n\!-\!2} \lo^{ij} \JF_{ij}
   + H \J_0 - (n+1) H \rho_0' + \J_0'.
\end{equation*}
Now
$$
   H \J_0 - (n\!+\!1) H \rho_0' = - \frac{n\!+\!1}{n\!-\!1} H \Ric_{00} + \frac{1}{n\!-\!1} H \scal
  - \frac{n\!+\!1}{n\!-\!1} H |\lo|^2
$$
shows that this expression for $\rho_0''$ coincides with the one given in  \eqref{rho-2-ex}.
The proof is complete.
\end{proof}

The above formula for $\QC_3$ is singular for $n=2$. But the second formula in \eqref{Q3F} shows that
\begin{equation}\label{QB2}
   \Res_{n=2} (\QC_3) = 4 (\delta \delta (\lo) + (\lo, \Rho^g) + H |\lo|^2),
\end{equation}
up to the term $-8(n-2) (\lo,\Rho^h) = -8(\lo,\Ric^h)$ (which vanishes in dimension $n=2$ by
$\Ric^h = K h$, $K$ being the Gauss curvature). The right-hand side of \eqref{QB2} is proportional
to the obstruction $\B_2$. From that perspective, its conformal invariance follows from the conformal
covariance of $\PO_3$. The above argument to derive $\B_2$ from the constant term of $\PO_3$ is
due to \cite[Remark 8.6]{GW-LNY}. For the general relation between the singular Yamabe obstruction and
the super-critical $Q$-curvature $Q_{n+1}$, we refer to Theorem \ref{QB-residue}.

\subsection{The pair $(\PO_3,\QC_3)$ }\label{P3}

The following explicit formula for $\PO_3$ was first proven in \cite[Proposition 8.5]{GW-LNY}
by evaluation of the relevant composition of three Laplace-Robin
operators.\footnote{See also the identical \cite[Proposition 5.4]{GW-Willmore}.}

\index{$\PO_3$ \quad extrinsic conformal Laplacian of order $3$}
\begin{prop}\label{P3F}
For $n \ge 3$, the operator
$$
   \PO_3 = 8 \delta (\lo d) + \frac{n\!-\!3}{2} \frac{4}{n\!-\!2} (\delta \delta (\lo)
  - (n\!-\!3) (\lo,\Rho^h) + (n\!-\!1) (\lo,\JF))
$$
is conformally covariant:
$$
   e^{\frac{n+3}{2} \varphi} \circ \PO_3(\hat{g}) = \PO_3(g) \circ e^{\frac{n-3}{2}\varphi}, \; \varphi \in C^\infty(X).
$$
\end{prop}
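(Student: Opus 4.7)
The conformal covariance of $\PO_3$ is not the main content of the statement, since it follows from the general theory already developed. Indeed, $\PO_3(g,\sigma)$ is defined, via \eqref{conformal-power}, as the tangential operator induced by $\iota^*L_3(g,\sigma;\frac{-n+3}{2})$, and Theorem \ref{residue-product} identifies this with the residue family $\D_3^{res}(g,\sigma;\frac{-n+3}{2})$. The conformal transformation law of residue families (Theorem \ref{CTL-RF}), combined with the factorization $\D_3^{res}(g,\sigma;\frac{-n+3}{2}) = \PO_3 \iota^*$ of Theorem \ref{left-factor}, forces the stated transformation law. That the resulting operator depends only on $g$ and the embedding is Proposition \ref{sigma-free}.

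The substantive task is therefore to verify the explicit closed form. I would do this in two steps, invoking structural information already available. First, the second assertion of Proposition \ref{LT-P-odd-n} applied to $N=3$ yields the decomposition
\[
   \PO_3 \;=\; 8\,\delta(\lo\, d)\;+\;c,
\]
where the leading term $8\,\delta(\lo\,d)$ comes from the single surviving summand $r=0$ in \eqref{LT-P-odd} with $m_3(0)=1$ and the prefactor $(2N-2)(N-1)! = 4\cdot 2 = 8$, and where the remainder $c \in C^\infty(M)$ is of differential order $N-3 = 0$, i.e., multiplication by a function.

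To pin down $c$, I would evaluate both sides on the constant function $1$. Since $d(1)=0$, the leading part vanishes, and the defining relation \eqref{Q-sub-residue} of the subcritical extrinsic $Q$-curvature gives
\[
   c \;=\; \PO_3(1) \;=\; \tfrac{n-3}{2}\,\QC_3 .
\]
Substituting the closed form of $\QC_3$ provided by Proposition \ref{Q3-fully},
\[
   \tfrac{1}{4}\,\QC_3 \;=\; \tfrac{1}{n-2}\bigl(\delta\delta(\lo) - (n-3)(\lo,\Rho^h) + (n-1)(\lo,\JF)\bigr),
\]
yields precisely the asserted expression for $\PO_3$.

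The main obstacle is not this final assembly but rather the structural input in the first step, namely the claim that no first-order remainder or curvature-mediated second-order remainder survives beyond $8\,\delta(\lo\,d)$. A priori one could expect contributions of the form $H\Delta$ or $(dH,d\,\cdot\,)$, and the reason these do not appear is the conformal covariance of $\PO_3$, as argued at the end of the proof of Proposition \ref{LT-P-odd-n}. Once that reduction is granted, the identification of $c$ via the holographic formula \eqref{Q3F} is immediate. A self-contained alternative route would be to compute the composition $\iota^*L(\frac{-n-1}{2})\circ L(\frac{-n+1}{2})\circ L(\frac{-n+3}{2})$ directly in adapted coordinates using \eqref{LR-adapted}; this is tractable but computationally heavier, and the route above exploits the spectral-theoretic perspective developed in the paper.
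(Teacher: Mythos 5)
Your proof is correct, but it is a genuinely different argument from the one in the paper. The paper's proof of Proposition \ref{P3F} does \emph{not} derive the explicit formula at all: it explicitly attributes the closed form to Gover--Waldron (by evaluating three Laplace--Robin compositions) and then verifies the stated conformal transformation law by a direct term-by-term calculation, using the conformal transformation rules for $\hat\delta$ on one-forms and symmetric two-tensors, together with $\hat\lo = e^{\pm\varphi}\lo$ (sign depending on index placement) and $\hat\JF = \JF$. You do the opposite: you cite the abstract covariance from the residue-family machinery (Theorems \ref{CTL-RF}, \ref{residue-product}, \ref{left-factor}) and then \emph{derive} the closed form internally, via the leading-term structure (Proposition \ref{LT-P-odd-n} for $N=3$, or more lightly Lemma \ref{LT-P3} which handles $N=3$ by an explicit computation that does not itself invoke covariance) plus evaluation on $1$ and the formula for $\QC_3$ in Proposition \ref{Q3-fully}.

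Each route has something to recommend it. The paper's calculation is a direct cross-check of covariance for an imported formula and is useful precisely because the formula comes from an external source. Your derivation is more in the spirit of the paper's stated goal of reproducing Gover--Waldron results from the spectral-theoretic perspective: it produces the formula rather than merely verifying a property of it. Two small points worth being aware of. First, the step ``no first- or zeroth-order remainder beyond $8\,\delta(\lo\,d)$ except a multiplication operator'' is exactly the content of Proposition \ref{LT-P-odd-n} with $N=3$; as you note, the absence of $H\Delta$ and $(dH,d\,\cdot)$ terms is justified there by conformal covariance, but for $N=3$ the paper's Lemma \ref{LT-P3} gives the same conclusion by brute simplification, which avoids any appearance of circularity and would be a cleaner citation here. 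Second, the identification $c = \PO_3(1) = \tfrac{n-3}{2}\QC_3$ followed by Proposition \ref{Q3-fully} uses the holographic formula route, which the paper treats carefully for odd $n$ only through the fraction-cancellation argument in Example \ref{Q3}; since the proposition is stated for all $n \ge 3$, you implicitly rely on that cancellation, which is fine but should be flagged if one wants the proof to be fully explicit for odd $n$.
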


The formula for the leading term of $\PO_3$ will be derived in Lemma \ref{LT-P3}.

\begin{proof} We first calculate
\begin{align*}
   & e^{\frac{n+3}{2} \varphi} \hat{\delta} (\hat{\lo} d)(e^{-\frac{n-3}{2} \varphi} u)
   = \delta (e^{\frac{n-3}{2}\varphi} \lo d)(e^{-\frac{n-3}{2}\varphi} u) +
   \frac{n\!-\!3}{2} e^{\frac{n-3}{2}\varphi} i_{\grad(\varphi)}(\lo d)(e^{-\frac{n-3}{2} \varphi} u) \\
   & = \delta (\lo d)(u) - \frac{n\!-\!3}{2} \delta (\lo d\varphi \cdot u) +
   \frac{n\!-\!3}{2} i_{\grad(\varphi)}(\lo du) - \left(\frac{n\!-\!3}{2}\right)^2 (d\varphi,\lo d\varphi) u \\
   & = \delta (\lo d)(u) - \frac{n\!-\!3}{2} \delta (\lo d\varphi) u - \left(\frac{n\!-\!3}{2}\right)^2
   (d\varphi,\lo d\varphi) u
\end{align*}
using $\hat{\lo} = e^{-\varphi} \lo$ for the endomorphism $\lo$ corresponding to the
trace-free second fundamental form and the transformation law $e^{(a+2)\varphi}
\hat{\delta} e^{-a\varphi} = \delta + (n\!-\!2\!-\!a) i_{\grad(\varphi)}$ on
$\Omega^1(M)$. Next, the term with the Schouten tensor yields
$$
   (\lo,\Rho^h) - (\lo,\Hess(\varphi)) + (\lo,d\varphi \otimes \varphi).
$$
Finally, it holds
\begin{align*}
   & e^{3\varphi} \hat{\delta} \hat{\delta} (\hat{\lo}) = \delta(e^\varphi \hat{\delta} (e^\varphi \lo)) +
   (n\!-\!3) i_{\grad(\varphi)} e^{\varphi} \hat{\delta} (e^\varphi \lo) \\
   & = \delta \delta (\lo) + (n\!-\!1) \delta (\lo d\varphi) + (n\!-\!3) i_{\grad(\varphi)} \delta (\lo)
   + (n\!-\!3)(n\!-\!1) i_{\grad(\varphi)} i_{\grad(\varphi)} (\lo)
\end{align*}
using $\hat{\lo} = e^{\varphi} \lo$ and the transformation law
$$
   e^{(a+2)\varphi} \hat{\delta} (e^{-a\varphi} b) = \delta (b) +
   (n\!-\!2\!-\!a) i_{\grad(\varphi)}(b) - \tr(b) d\varphi
$$
on symmetric bilinear forms $b$. Now simplification proves the claim.
\end{proof}

\begin{remark}\label{P3-tractor}
The operator $\frac{n-2}{4} \PO_3$ differs from the tractor calculus operator
$$
    D_M^A L_{AB} D_X^B
$$
by the contribution of $(\lo,\JF)$ \cite[Section 6.3]{Grant}. Here $L_{AB}$ is
a tractor calculus version of the second fundamental form. This identification again
implies its conformal covariance.
\end{remark}

Proposition \ref{P3F} yields an explicit formula for $\QC_3$ (see
\eqref{Q3}). As a direct cross-check of that formula, we note that in
dimension $n=3$
\begin{align*}
   e^{3\varphi} \hat{\QC}_3  & = 4 e^{3\varphi} \hat{\delta} \hat{\delta} (\hat{\lo})
   + 8 e^{3\varphi} (\hat{\lo}, \hat{\JF})_{\hat{h}} \\
   & = 4 \delta e^\varphi \hat{\delta} (e^{\varphi} \lo) +8 (\lo,\JF)_h \\
   & = 4 (\delta \delta (\lo) + 2 \delta (\lo d)(\varphi)) + 8 (\lo,\JF)_h \\
   & = \QC_3 + \PO_3 (\varphi).
\end{align*}
This also confirms Theorem \ref{CTL-Q} for $n=3$.

Similar arguments provide an elementary proof of the conformal invariance of the obstruction $\B_2$.

\subsection{Low-order solution operators}\label{sol-low}

Under the assumption $\SCY$, we make the low-order solution operators
$\T_1(\lambda)$ and $\T_2(\lambda)$ explicit in general dimensions. For that
purpose, we use the ansatz $u = s^\lambda f + s^{\lambda+1} \T_1(\lambda) f +
s^{\lambda+2} \T_2(\lambda) f + \dots$ for an approximate solution of the equation
$-\Delta_{\sigma^{-2}g}(u)=\lambda(n-\lambda)u$ in adapted coordinates.

\begin{lem}\label{sol-1} $\T_1(\lambda) = - \lambda H$.
\end{lem}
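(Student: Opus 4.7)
The plan is to determine $\T_1(\lambda)$ by plugging the ansatz
$$u = s^\lambda f + s^{\lambda+1}\T_1(\lambda)f + s^{\lambda+2}\T_2(\lambda)f + \cdots$$
into the eigenvalue equation $-\Delta_{s^{-2}\eta^*(g)}u = \lambda(n-\lambda)u$ and reading off the coefficient of $s^{\lambda+1}$. Using the expression \eqref{L-op} for the Laplacian in adapted coordinates, this amounts to matching the $s^{\lambda+1}$-coefficients in \eqref{L-A-Sol} and \eqref{L-A-Sol-2}. Only the lowest-order boundary data of the metric enter, so the calculation is short.

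First I would record the boundary values that are needed. Under $\SCY$ we have $a = |\NV|^2 = 1 - 2s\rho + O(s^{n+1})$, so $\iota^*(a) = 1$ and, using $\iota^*(\rho) = -H$ from Lemma \ref{rho-01}, also $\iota^*(a') = 2H$. From $h_{(1)} = 2L$ in Proposition \ref{h-low-order} one gets $\iota^*(\tr(h_s^{-1}h_s')) = 2\tr(L) = 2nH$. The terms $\Delta_{h_s}$ and $(d\log a, d\cdot)_{h_s}$ in \eqref{L-op} carry a factor $s^{\lambda+N+2}$, so they do not contribute at order $s^{\lambda+1}$.

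Next I would collect contributions at $s^{\lambda+1}$. The $N=1$ summand of the first and fourth terms of \eqref{L-A-Sol} contribute
$$\bigl[(\lambda+1)\lambda - (n-1)(\lambda+1)\bigr]\T_1(\lambda) = (\lambda+1)(\lambda-n+1)\T_1(\lambda).$$
The $N=0$ summand of the first term, expanded to linear order in $s$, contributes $2H\lambda(\lambda-1)$; the $N=0$ summand of the second and third terms contribute $nH\lambda$ and $H\lambda$, respectively; and the $N=0$ summand of the fourth term, expanded to linear order, contributes $-2(n-1)H\lambda$. Their sum is $H\lambda(2\lambda - n + 1)$. Matching with the right-hand side $-\lambda(n-\lambda)\T_1(\lambda)$ in \eqref{L-A-Sol-2} gives
$$\bigl[(\lambda+1)(\lambda-n+1) + \lambda(n-\lambda)\bigr]\T_1(\lambda) = -H\lambda(2\lambda-n+1).$$
A direct simplification of the bracket yields $2\lambda - n + 1$, so the common factor cancels and one obtains $\T_1(\lambda) = -\lambda H$, as claimed.

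There is no real obstacle; the only mild nuisance is bookkeeping — making sure that, in extracting the coefficient of $s^{\lambda+1}$, one captures both the $N=1$ leading contributions (which produce $\T_1(\lambda)$) and the sub-leading pieces coming from the expansions of $a$, $a'$ and $\tr(h_s^{-1}h_s')$ acting on $\T_0(\lambda) = \id$. Once these are listed correctly, the algebraic simplification $(\lambda+1)(\lambda-n+1) + \lambda(n-\lambda) = 2\lambda - n + 1$ is immediate and the factor $2\lambda-n+1$ cancels cleanly with the right-hand side to produce the stated formula.
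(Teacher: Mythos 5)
Your proposal is correct and follows essentially the same route as the paper: substitute the ansatz into the Laplacian in adapted coordinates (formula \eqref{L-op}, organized as \eqref{L-A-Sol}) and match the coefficient of $s^{\lambda+1}$ against \eqref{L-A-Sol-2}. The bookkeeping you describe --- the $N=1$ summands of the first and fourth terms producing the factor $(\lambda+1)(\lambda-n+1)$ on $\T_1(\lambda)$, the sub-leading expansions of $a$, $a'$ and $\tr(h_s^{-1}h_s')$ acting on $\T_0=\id$ producing $\lambda(2\lambda-n+1)H$, and the final cancellation of the common factor $2\lambda-n+1$ --- reproduces the paper's computation term by term.
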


\begin{proof} By the assumption $\SCY$, the coefficients in \eqref{L-op} expand as
$a = 1-2s\rho + O(s^{n+1}) = 1 + 2s H + \cdots$ (using $\iota^* (\rho) = -H$) and
$\tr (h_s^{-1} h'_s) = 2 \tr(L) + \cdots$. Now we let the operator \eqref{L-op}
act on $u$. The expansion of the result starts with
$$
   s^2 \partial_s^2 (s^\lambda) f - (n-1) s\partial_s (s^\lambda) f \stackrel{!}{=}
   -\lambda(n-\lambda) s^\lambda f.
$$
The next term in the expansion reads
\begin{align*}
   & s^2 \partial_s^2(s^{\lambda+1}) \T_1(\lambda)(f)+ 2 s^3 H \partial_s^2(s^\lambda) f
   + s^2 \tr(L) \partial_s(s^\lambda) f \\
   & - (n-1) s \partial_s(s^{\lambda+1}) \T_1(\lambda)(f) - 2(n-1) s^2 H \partial_s(s^\lambda) f
   + H s^2 \partial_s(s^\lambda) f \\
   & = (\lambda\!-\!n\!+\!1)(\lambda\!+\!1) s^{\lambda+1} \T_1(\lambda)(f)
  + (2\lambda\!-\!n\!+\!1) \lambda H s^{\lambda+1} f.
\end{align*}
This result equals $-\lambda(n-\lambda) s^{\lambda+1} \T_1(\lambda)(f)$ iff
$\T_1(\lambda)(f)=-\lambda H$.
\end{proof}

It is worth emphasizing that $\T_1(\lambda)$ is regular in $\lambda$. In fact, this
property does not hold for general asymptotically hyperbolic metrics \cite[Section 4]{Gu05}.

As a consequence of Lemma \ref{sol-1}, we find
\begin{align}\label{D1-exp}
   \D_1^{res}(\lambda) & = (2\lambda\!+\!n\!-\!1)(\iota^* \partial_s + (v_1 +
   \T_1^*(\lambda\!+\!n\!-\!1)) \iota^*) \notag \\
   & = (2\lambda\!+\!n\!-\!1)(\iota^* \partial_s -\lambda H \iota^*).
\end{align}
This formula obviously confirms Theorem \ref{residue-product} for the operator
$\iota^* L(\lambda)$.

The formula for $\T_2(\lambda)$ is a bit more complicated.

\begin{lem}\label{sol-2} Assume that $n \ge 2$. Then
\begin{align*}
   \T_2(\lambda) & = \frac{-\Delta_h + \lambda \J^h}{2(2\lambda\!-\!n\!+\!2)} +
   \frac{\lambda}{2} \Rho_{00} - \frac{\lambda}{2(2\lambda\!-\!n\!+\!2)}
   \left(\frac{n\!-\!1\!-\!2\lambda}{n\!-\!1} - \frac{1}{2(n\!-\!1)} \right) |\lo|^2 \\
   & + \frac{\lambda}{2(2\lambda\!-\!n\!+\!2)} \left((\lambda\!+\!1)(2\lambda\!-\!n\!+\!3)
   - \frac{n}{2}\right) H^2.
\end{align*}
\end{lem}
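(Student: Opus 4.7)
The plan is to substitute the ansatz
\[
u = s^{\lambda} f + s^{\lambda+1}\T_1(\lambda)f + s^{\lambda+2}\T_2(\lambda) f + O(s^{\lambda+3})
\]
into the equation $-\Delta_{s^{-2}\eta^*(g)} u = \lambda(n-\lambda) u$ and to extract the coefficient of $s^{\lambda+2}$. Using the explicit expression \eqref{L-op} for the Laplacian of the asymptotically hyperbolic metric $s^{-2}\eta^*(g)$, each of the six summands there contributes; the recursion then identifies the right-hand side of the formula.

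The required local expansions are collected as follows. From the assumption $\SCY$ together with Lemma~\ref{rho-01} we have
\[
a = 1 - 2s\rho + O(s^{n+1}) = 1 + 2sH - 2s^2\rho_0' + O(s^3), \qquad a' = 2H - 4s\rho_0' + O(s^2),
\]
where $\rho_0' = \Rho_{00} + |\lo|^2/(n-1)$. Using Proposition~\ref{h-low-order} one computes $\tr(h_{(1)}) = 2nH$ and $\tr(h_{(2)}) - \tfrac{1}{2}\tr(h_{(1)}^2) = -|\lo|^2 - \Ric_{00} - 2nH^2$ (the latter by $|L|^2 = |\lo|^2 + nH^2$), whence
\[
\tr(h_s^{-1}h_s') = 2nH + 2s\bigl(-|\lo|^2 - \Ric_{00} - 2nH^2\bigr) + O(s^2).
\]
Moreover, $\Delta_{h_s} = \Delta_h + O(s)$ and the term $-\tfrac{1}{2}s^2(d\log a, d\,\cdot)_{h_s}$ contributes only at order $s^{\lambda+3}$, since $d_M\log a|_{s=0}=0$. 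Lemma~\ref{sol-1} gives $\T_1(\lambda)=-\lambda H$.

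Collecting all contributions of order $s^{\lambda+2}$ and separating the $\T_2(\lambda)f$ terms on the left, a direct computation yields
\[
2(2\lambda-n+2)\,\T_2(\lambda) f \;=\; -\Delta_h f + 2\lambda(n-1-\lambda)\rho_0'\,f + \lambda(\lambda+1)(2\lambda-n+3)H^2 f - \lambda|\lo|^2 f - \lambda\Ric_{00}\,f
\]
after using $(\lambda+2)(\lambda-n+2) + \lambda(n-\lambda) = 2(2\lambda-n+2)$. Substituting $\rho_0' = \Rho_{00} + |\lo|^2/(n-1)$ and $\Ric_{00} = (n-1)\Rho_{00} + \J^g_0$ reduces the right-hand side to a linear combination of $\Delta_h f$, $\J^g_0 f$, $\Rho_{00} f$, $H^2 f$, and $|\lo|^2 f$. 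Finally, the hypersurface Gauss identity~\eqref{G2}, $\J^g_0 = \J^h + \Rho_{00} + |\lo|^2/(2(n-1)) - nH^2/2$, replaces $\J^g_0$ by $\J^h$ modulo $\Rho_{00}$, $H^2$ and $|\lo|^2$ terms; routine algebraic simplification of the resulting coefficients recovers the stated formula.

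The main obstacle is the bookkeeping: each of the three terms in the ansatz must be differentiated (twice, once, or zero times in $s$) against the expansions of $a$, $a'$ and $\tr(h_s^{-1}h_s')$ to the correct order, and the resulting polynomial in $\lambda$ must be simplified carefully. The only conceptual step beyond this direct expansion is the use of \eqref{G2} to transform the scalar-curvature contribution into the intrinsic quantity $\J^h$ that appears in the final answer.
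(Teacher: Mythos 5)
Your approach is exactly the paper's: expand the eigenequation $\Delta_{s^{-2}\eta^*(g)}u + \lambda(n-\lambda)u = 0$ against the explicit expression \eqref{L-op}, extract the coefficient of $s^{\lambda+2}$, and then use Lemma~\ref{rho-01} together with the Gauss identity \eqref{G2}. The preparatory expansions you cite for $a$, $a'$ and $\tr(h_s^{-1}h_s')$ are correct, the observation that the $(d\log a,d\cdot)_{h_s}$ term does not contribute at this order is right, and $g_2$ indeed comes in with coefficient $2(2\lambda-n+2)$.

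However, the intermediate recursion you display has a sign error in the $\rho_0'$ and $\Ric_{00},|\lo|^2$ contributions, and it does \emph{not} reduce to the stated formula. Collecting the $s^{\lambda+2}$ coefficient in the vanishing equation, the $\rho_0'$ terms enter from $a$, $a'$ and the $-(n-1)sa\partial_s$ piece with total coefficient $-2\lambda(\lambda-1)-2\lambda+2(n-1)\lambda = 2\lambda(n-1-\lambda)$, and the $\Ric_{00}+|\lo|^2$ terms enter from $\tfrac{1}{2}a\,\tr(h_s^{-1}h_s')\partial_s$ with coefficient $-\lambda$. Solving for $\T_2(\lambda)$ therefore gives
\[
2(2\lambda-n+2)\T_2(\lambda)f = -\Delta_h f - 2\lambda(n-1-\lambda)\rho_0' f + \lambda(\Ric_{00}+|\lo|^2)f + \lambda(\lambda+1)(2\lambda-n+3)H^2 f,
\]
whereas you wrote $+2\lambda(n-1-\lambda)\rho_0'f - \lambda(\Ric_{00}+|\lo|^2)f$. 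These signs do not cancel: if you push your version through the substitutions $\rho_0'=\Rho_{00}+|\lo|^2/(n-1)$, $\Ric_{00}=(n-1)\Rho_{00}+\J_0^g$ and \eqref{G2}, you obtain $-\lambda\J^h$ and $-\lambda(2\lambda-n+2)\Rho_{00}$ and $+\tfrac{n}{2}$ in the $H^2$ bracket, i.e.\ the opposite signs of the first two terms and the wrong last sign in the $H^2$ coefficient of the lemma. So the claim that ``routine algebraic simplification recovers the stated formula'' fails as written; the sign error needs to be corrected before the reduction step goes through.
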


\begin{proof} By $\SCY$, the coefficients in \eqref{L-op} expand as
$$
   a = 1-2s\rho + \cdots = 1 + 2s H - 2s^2 \left(\Rho_{00} + \frac{|\lo|^2}{n-1}\right) + \cdots,
$$
up to a remainder in $O(s^{n+1})$, and
$$
   \tr (h_s^{-1} h'_s) = 2 \tr(L) - s (2\Ric_{00} + 2 |\lo|^2 + 4n H^2) + \cdots.
$$
Hence the equality of the coefficients of $s^{\lambda+2}$ in the expansion of the
eigenequation $\Delta_{s^{-2}\eta^*( g)} u = -\lambda(n-\lambda) u$ yields the
relation
\begin{align*}
   & 2(2\lambda\!-\!n\!+\!2) \T_2(\lambda)(f) + (\lambda\!+\!1)(2\lambda\!-\!n\!+\!3) H \T_1(\lambda)(f) \\
   & = \left[-2\lambda(\lambda\!-\!1) \rho_0' - \lambda (\Ric_{00} + |\lo|^2) + 2(n\!-\!1)
   \lambda \rho_0' -2\lambda \rho_0' + \Delta \right](f) = 0.
\end{align*}
This condition for $\T_2(\lambda)$ simplifies to
\begin{align*}
   & 2(2\lambda\!-\!n\!+\!2) \T_2(\lambda)(f) - \lambda(\lambda\!+\!1)(2\lambda\!-\!n\!+\!3) H^2 f \\
   & = \lambda \left[ -2(n\!-\!1\!-\!\lambda) \rho_0' + \Ric_{00} + |\lo|^2 \right] f - \Delta f \\
   & = \lambda \left [-2(n\!-\!1\!-\!\lambda)\left(\Rho_{00} + \frac{|\lo|^2}{n-1}\right)
   + ((n\!-\!1) \Rho_{00} + \J_0) + |\lo|^2 \right] f - \Delta f \\
   & = -\lambda \left((n\!-\!1\!-\!2\lambda) \Rho_{00} + \frac{n\!-\!1\!-\!2\lambda}{n\!-\!1} |\lo|^2
   - \J_0 \right) f - \Delta f
\end{align*}
using Lemma \ref{rho-01}. Finally, \eqref{G2} implies
\begin{align*}
   & 2(2\lambda\!-\!n\!+\!2) \T_2(\lambda)(f) \\
   & = - \lambda \left((n\!-\!2\!-\!2\lambda) \Rho_{00} - \J^h + \left(\frac{n\!-\!1\!-\!2\lambda}{n\!-\!1}
   - \frac{1}{2(n\!-\!1)} \right) |\lo|^2 + \frac{n}{2} H^2 \right) f \\
   & + \lambda(\lambda\!+\!1)(2\lambda\!-\!n\!+\!3) H^2 f - \Delta f.
\end{align*}
This completes the proof.
\end{proof}

In particular, Lemma \ref{sol-2} implies
$$
   -4 \Res_{\frac{n}{2}-1}(\T_2(\lambda)) = \Delta_h - \left( \frac{n}{2}-1\right)
   \left(\J^h - \frac{ |\lo|^2}{2(n-1)} \right) = \PO_2(g).
$$
This is a special case of Theorem \ref{power-spec}. It follows that
$$
   -\QC_2(g) = \J^h - \frac{1}{2(n-1)} |\lo|^2.
$$

For $n=2$, Lemma \ref{sol-2} implies
$$
   \T_2(0)(1) = \frac{1}{4} \J^h - \frac{1}{8} |\lo|^2.
$$
For the function $\QC_2(\lambda)$ defined in \eqref{T-1}, it follows that
$\QC_2(0)= - \J^h + \frac{1}{2} |\lo|^2 \stackrel{!}{=} \QC_2$.
This confirms \eqref{QT-critical} for $n=2$.

The above results yield an explicit formula for $\D_2^{res}(\lambda)$.

\begin{lem}\label{D2-exp}
\begin{align}\label{D2-final}
   \D_2^{res}(\lambda) & = (2\lambda\!+\!n\!-\!3 ) \Big[(2\lambda\!+\!n\!-\!2) \iota^* \partial_s^2 - 2
   (2\lambda\!+\!n\!-\!2)(\lambda\!-\!1) H \iota^* \partial_s \notag \\
   & - \Big[ \Delta_h + \lambda \J^h - \lambda(2\lambda\!+\!n\!-\!2) \Rho_{00}
   - \lambda \left (\frac{2\lambda\!+\!n\!-\!2}{2(n\!-\!1)} + \frac{2\lambda\!+\!n\!-\!1}{2(n\!-\!1)} \right) |\lo|^2 \notag \\
   & -(2\lambda\!+\!n\!-\!2)(\lambda\!-\!1/2) \lambda H^2\Big] \iota^* \Big].
\end{align}
\end{lem}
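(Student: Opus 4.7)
The plan is to apply the explicit formula of Theorem \ref{D-res-ex} for $N=2$ and substitute the already-computed low-order data. With $\mu \st \lambda + n - 2$, the formula reads
\[
   \D_2^{res}(\lambda) = 2(2\lambda+n-3)(2\lambda+n-2) \sum_{j=0}^{2} \frac{1}{j!}
   \Bigl[\sum_{k=0}^{2-j} \T_{2-j-k}^*(\mu) \circ v_k \Bigr] \iota^* \partial_s^j,
\]
so the task reduces to identifying the three bracketed operators. Since $v_0 = \iota^*|\NV|^{-1} = 1$ under $\SCY$ and $\T_0 = \id$, the $j=2$ bracket is $\tfrac{1}{2}\id$, yielding the coefficient $(2\lambda+n-3)(2\lambda+n-2)$ for $\iota^*\partial_s^2$.

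For $j=1$, I would apply Lemma \ref{sol-1} giving $\T_1^*(\mu) = \T_1(\mu) = -\mu H$ (multiplication is self-adjoint) together with $v_1 = (n-1)H$ (Example \ref{v1}) to obtain
\[
   \T_1^*(\mu)(1) + v_1 = -(\lambda+n-2)H + (n-1)H = (1-\lambda)H,
\]
producing the stated coefficient $-2(2\lambda+n-3)(2\lambda+n-2)(\lambda-1)H$ of $\iota^*\partial_s$.

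The constant term is the one requiring real bookkeeping. Here I would use Lemma \ref{sol-2}, noting that $\T_2(\mu)$ is a sum of $\Delta_h$, $\J^h$, $\Rho_{00}$, $|\lo|^2$ and $H^2$ multiplications with $\mu$-dependent scalar coefficients, so that $\T_2^*(\mu) = \T_2(\mu)$. Substituting $\mu = \lambda+n-2$ one gets $2\mu-n+2 = 2\lambda+n-2$, $n-1-2\mu = -(2\lambda+n-3)$, and $(\mu+1)(2\mu-n+3) = (\lambda+n-1)(2\lambda+n-1)$; multiplying by the overall factor $2(2\lambda+n-3)(2\lambda+n-2)$ cancels the denominator $2(2\mu-n+2)$ and leaves the $\Delta_h$ and $\J^h$ terms with coefficients $-(2\lambda+n-3)$ and $-(2\lambda+n-3)(\lambda+n-2)$ respectively — giving the $\lambda\J^h$ contribution once combined with the $v_1 \J^h$-like pieces. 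The remaining $\Rho_{00}$, $|\lo|^2$, and $H^2$ coefficients are assembled by adding three inputs: the multiplicative part of $\T_2^*(\mu)$ applied to $1$, the term $\T_1^*(\mu)(v_1) = -(n-1)\mu H^2$ coming from $j=0$, $k=1$, and $v_2$ itself, for which I would insert \eqref{v2n} (or \eqref{v2n-2}) expressing $-2v_2$ in terms of $\J^h$, $\Rho_{00}$, $|\lo|^2$, $H^2$.

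The main obstacle is purely a simplification one: the coefficients of $\Rho_{00}$, $|\lo|^2$, and $H^2$ each receive contributions from all three sources (the three bilinear form pieces of $\T_2(\mu)$, the $\T_1^*(\mu)(v_1)$ cross term, and $v_2$), and I expect several partial cancellations as one passes from $\bar{\J}_0$ to $\J^h$ through the hypersurface Gauss identity \eqref{G2}. A clean way to organize the verification is to separate the final coefficient of $\iota^*$ into the three blocks $\bigl[-\Delta_h - \lambda \J^h\bigr]$, $\bigl[\lambda(2\lambda+n-2)\Rho_{00}\bigr]$, and the $|\lo|^2$, $H^2$ pieces, and check each block independently against the sources listed above. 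Once every $\lambda$-polynomial matches, no further geometric input is needed and the lemma follows.
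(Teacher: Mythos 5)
Your plan is exactly the calculation the paper intends (apply Theorem \ref{D-res-ex} for $N=2$, then substitute Lemma \ref{sol-1}, Lemma \ref{sol-2}, and $v_1,v_2$ from Examples \ref{v1} and \ref{v2-ex}), and the $j=2$ and $j=1$ blocks are already handled correctly. One sign slip to catch in the $j=0$ bookkeeping: after the global prefactor $2(2\lambda+n-3)(2\lambda+n-2)$, the $\J^h$-contribution of $\T_2^*(\mu)$ is $+(2\lambda+n-3)(\lambda+n-2)$, not $-(2\lambda+n-3)(\lambda+n-2)$, and it only becomes the target $-(2\lambda+n-3)\lambda\J^h$ after adding the $v_2$ contribution $-(2\lambda+n-3)(2\lambda+n-2)\J^h$ from \eqref{v2n-2}.
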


We omit the details of the calculation.

This result fits with the formula for $\iota^* L(\lambda-1) L(\lambda)$ in \cite[Lemma 7.9]{GW-LNY},
i.e.,
$$
   \D_2^{res}(\lambda) \circ \eta^* = \iota^* L(\lambda-1) L(\lambda).
$$
In order to see this, it only remains to express the normal derivatives in the
variable $s$ by iterated gradients. But $\iota^* \nabla_\NV$ corresponds to
$\iota^*\partial_s$ and $\iota^* \nabla_\NV^2$ corresponds to $\iota^* (\partial_s^2 +
2 H \partial_s)$ (see Example \ref{trans-low-2}).

Note that the prefactor in \eqref{D2-final} implies that $\D_2^{res}(-\frac{n-3}{2}) = 0$.
The conformally covariant term in brackets (for $\lambda=-(n-3)/2$ and up to the contribution by $|\lo|^2$)
has been used in \cite[Theorem 1.1]{Case} as a boundary operator associated to the Paneitz operator on $X$.
For general $\lambda$, it appears in \cite[Proposition 3.3]{Case} (up to the term containing $|\lo|^2$).
Concerning the classification of such boundary operators, we refer to \cite[Remark 3.9]{Case}.

Lemma \ref{D2-exp} immediately shows that $\D_2^{res}(-\frac{n}{2}+1)=
\PO_2(g)\iota^*$. In addition, we find the remarkable identity
\begin{equation}\label{factor-big}
   \D_2^{res}\left(\frac{-n+1}{2}\right) \circ \eta^* = 2 \iota^* P_2(g),
\end{equation}
where $P_2(g)$ is the Yamabe operator of $g$. In fact, we calculate
\begin{align*}
   & \D_2^{res}\left(\frac{-n+1}{2}\right) \\
   & = 2 \left(\iota^* \partial_s^2 + (n\!+\!1) H \iota^* \partial_s + \Delta_h \iota^*
   - \frac{n\!-\!1}{2} \left[\J^h + \Rho_{00} - \frac{n}{2} H^2 +
   \frac{1}{2(n\!-\!1)} |\lo|^2 \right] \iota^* \right) \\
   & = 2 \left(\iota^* \partial_s^2 + (n\!+\!1) H \iota^* \partial_s +
   \Delta_h \iota^* - \frac{n\!-\!1}{2} \iota^* \J^g \right)
\end{align*}
using \eqref{G2}. Hence
\begin{align*}
    \D_2^{res}\left(\frac{-n\!+\!1}{2}\right) \circ \eta^* & = 2 \left(\iota^* \nabla_\NV^2 + (n\!-\!1) H \iota^* \nabla_\NV +
    \Delta_h \iota^* - \frac{n\!-\!1}{2}  \iota^* \J^g \right) \\
    & = 2 \iota^*\left(\Delta_g - \frac{n\!-\!1}{2}\J^g \right) = 2 \iota^* P_2(g)
\end{align*}
using $\iota^* \nabla_\NV^2 = (\iota^* \partial_s^2 + 2 H \iota^* \partial_s) \circ \eta^*$ (Example
\ref{trans-low-2}) and the identity \eqref{LR}.

For Poincar\'e-Einstein metrics, the relation \eqref{factor-big} is one of the
identities in a second set of so-called factorization identities \cite[Theorem 3.2]{J2}. It is an open
problem whether the higher-order residue families in the general case continue to satisfy such identities.

Finally, we note that Lemma \ref{D2-exp} implies that
\begin{equation}\label{Q2-van}
   \QC^{res}_2(0) \st \D_2^{res}(0)(1) =0.
\end{equation}
This is a special case of the following conjecture.

\begin{conj}\label{van-Q}
$\QC_N^{res}(0) \st \D_N^{res}(0)(1) \stackrel{!}{=} 0$ for $N \ge 1$.
\end{conj}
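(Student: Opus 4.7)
The plan is to derive the vanishing directly from the factorization identity in Corollary \ref{factor} combined with the elementary fact that $L(g,\sigma;0)$ annihilates constants. The approach rests on the observation that, under the hypothesis $\SCY$ used throughout Section \ref{res-fam}, the residue family has a natural ``right factor'' structure that isolates the dependence on $\lambda$ in a single Laplace--Robin factor.

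Concretely, I would proceed as follows. First, verify from the definition
$$
L(g,\sigma;\lambda) = (n+2\lambda-1)(\nabla_{\grad_g \sigma} + \lambda\rho) - \sigma(\Delta_g + \lambda\J)
$$
that $L(g,\sigma;0)(1) = 0$ (both first-order terms carry the factor $\lambda$ and the zeroth-order terms carry either $\lambda$ or $\sigma\cdot 0$). This is the identity already used in \eqref{crit-van} and in the derivation of \eqref{QL}. Second, apply Corollary \ref{factor} at $\lambda = 0$ to obtain
$$
\D_N^{res}(g,\sigma;0) = \D_{N-1}^{res}(g,\sigma;-1) \circ L(g,\sigma;0).
$$
Evaluating on the constant function $1$ and using $L(g,\sigma;0)(1) = 0$ gives $\D_N^{res}(g,\sigma;0)(1) = \D_{N-1}^{res}(g,\sigma;-1)(0) = 0$, as desired. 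This proves the conjecture in the range $1 \le N \le n$ where Definition \ref{D-res} and Theorem \ref{residue-product} are available. Alternatively one may avoid the recursion and apply Theorem \ref{residue-product} directly: since $\eta^*(1) = 1$, the identity $\D_N^{res}(\lambda)(1) = \iota^* L_N(\lambda)(1)$ holds, and $L_N(0) = L(-N+1)\circ\cdots\circ L(-1)\circ L(0)$ annihilates $1$ by the rightmost factor.

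The main obstacle is the range $N > n$. For such $N$ the family $\D_N^{res}$ is not covered by Definition \ref{D-res} but only by the more general construction of Theorem \ref{CTL-RF} (valid for defining functions with $|d\sigma|_g^2 = 1$ on $M$), and Theorem \ref{residue-product} (hence Corollary \ref{factor}) breaks down precisely because the error term $O(\sigma^{n+1})$ from condition $\SCY$ is no longer negligible at the relevant residue $\lambda = -\mu - 1 - N$. To handle this supercritical range, one would either (i) restrict to genuine singular Yamabe metrics where $\SC(g,\sigma) = 1$ holds exactly, so that the Bernstein--Sato-type functional equation \eqref{BS-shift} can be iterated past $N = n$ without collecting remainders, or (ii) keep track of those remainder contributions and show they do not obstruct the factorization when applied to the constant function~$1$. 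Option (i) looks cleanest: the proof of Theorem \ref{residue-product} would then extend verbatim to all $N \in \N$ (the only pole to watch is the one at $\lambda + \mu + 1 = 0$, which only imposes a harmless restriction on $\Re(\mu)$), and the one-line argument above closes the case. I therefore expect the content of the conjecture to reduce, for $N > n$, to showing that the residue definition of $\D_N^{res}$ continues to satisfy the right-factorization by $L(\lambda)$; this is the real technical point to establish.
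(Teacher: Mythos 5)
The statement you are addressing is stated in the paper as a \emph{conjecture}, so there is no proof in the paper to compare against; I will instead assess your argument on its own merits.

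Your argument for the range $1 \le N \le n$ under the hypothesis $\SCY$ is correct and is in fact immediate from the paper's own tools. Since $\eta^*(1)=1$, Theorem \ref{residue-product} gives $\D_N^{res}(g,\sigma;0)(1)=\iota^* L_N(g,\sigma;0)(1)$, and $L_N(0)=L(-N+1)\circ\cdots\circ L(-1)\circ L(0)$ annihilates the constant $1$ because the rightmost factor satisfies $L(g,\sigma;0)(1)=0$. This is precisely the observation the paper itself exploits for the critical case $N=n$ in \eqref{crit-van} (establishing $\PO_n(1)=0$), and it is also the source of the overall factor of $\lambda$ that appears explicitly in the polynomial $\QC_2^{res}(\lambda)$ of Lemma \ref{Q2-curvature-simple}. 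In this range your two variants (via Corollary \ref{factor}, or directly via Theorem \ref{residue-product}) are equivalent and both close. So for $N\le n$ the conjectured vanishing is not really open; it follows from the paper's factorization theorem in one line.

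You also correctly identify that the genuine content of the conjecture lies in the supercritical range $N>n$, where Theorem \ref{residue-product} and Corollary \ref{factor} are no longer available. Your proposed remedy (i), passing to an exact singular Yamabe scale $\SC(g,\sigma)=1$ so that the Bernstein--Sato iteration \eqref{int-N} runs with no remainder, is the natural first move, but it is not quite ``verbatim'': when $\SC=1$ exactly the defining function $\sigma$ acquires a $\sigma^{n+2}\log\sigma$ term (see \eqref{sol-log-d}), so $\sigma$ is not smooth up to the boundary and the residue construction of $M_u(\lambda)$ at the poles $\lambda=-\mu-1-N$ with $N\ge n+1$ picks up contributions from the log term and may produce double poles or derivative-of-delta residues. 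One would have to verify that this does not obstruct either the polynomiality of $\D_N^{res}(\lambda)$ or the right-factorization by $L(\lambda)$, or alternatively carry out your option (ii) of tracking the $O(\sigma^{n+1})$ remainder from $\SCY$ and checking that its contribution to the residue vanishes when evaluated on the constant function. Neither of these is carried out in the proposal, so the $N>n$ case remains a sketch. Net: the proposal correctly resolves the part of the conjecture covered by the paper's own machinery and correctly isolates the real technical obstruction, but it does not yet establish the supercritical case.
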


This vanishing result is well-known for residue families of even order in the Poincar\'e-Einstein case
\cite[Theorem 1.6.6]{BJ}.

A conformally covariant second-order family of differential operators which
interpolates between the GJMS operators $\iota^* P_2(g)$ and $P_2(h) \iota^*$ (with
$h = \iota^* (g))$ was given in \cite[Theorem 6.4.1]{J1}. This result suggests restating
the above formula for $\D_2^{res}(\lambda)$ in the perhaps more enlightening
form
\begin{align}\label{L2-nice}
    \iota^* L(\lambda-1) L(\lambda) & = (2\lambda\!+\!n\!-\!3) \Big[ (2\lambda\!+\!n\!-\!2) \iota^* P_2(g)
    - (2\lambda\!+\!n\!-\!1) \PO_2(g) \iota^* \notag \\
    & -(2\lambda\!+\!n\!-\!1)(2\lambda\!+\!n\!-\!2) H \iota^* \nabla_\NV \notag \\
    & + 2 \left(\lambda\!+\!\frac{n\!-\!1}{2}\right)
    \left(\lambda\!+\!\frac{n\!-\!2}{2}\right) (\QC_2(g) + \iota^* Q_2(g) + \lambda H^2) \iota^* \Big],
\end{align}
where
$$
    P_2(g) = \Delta_g - \frac{n\!-\!1}{2} \J^g = \Delta_g - \frac{n\!-\!1}{2} Q_2(g)
$$
is the Yamabe operator of $g$ and
$$
    \PO_2(g) = \Delta_h - \left(\frac{n}{2}-1\right)\left (\J^h - \frac{|\lo|^2}{2(n-1)}\right)
   = \Delta_h + \left(\frac{n}{2}-1\right) \QC_2(g)
$$
is the extrinsic Yamabe operator on $M$ (see \eqref{PO2}).\footnote{Note that we use the
conventions $\PO_2(g)(1) = \frac{n-2}{2}
\QC_2(g)$ on $M$ and $P_2(g)(1) = -\frac{n-1}{2} Q_2(g)$ on $X$.} In order to prove that formula, it is
enough to relate the cubic polynomial in square brackets to the corresponding cubic polynomial in \eqref{D2-final}.
It is easy to relate the terms with derivatives. Moreover, the coincidence of the zeroth order terms
follows from the Gauss identity - we omit the calculation.

\eqref{L2-nice} immediately shows that $\D_2^{res}(g;-\frac{n}{2}+1) = \PO_2(g) \iota^*$ and
\begin{equation*}
   \D_2^{res}\left(g;\frac{-n+1}{2}\right) \circ \eta^* = 2 \iota^* P_2(g).
\end{equation*}

The formula \eqref{L2-nice} leads to a simple formula for the $Q$-curvature polynomial
$$
   \QC_2^{res}(g;\lambda) \st \D_2^{res}(g;\lambda)(1).
$$

\begin{lem}\label{Q2-curvature-simple} It holds
\begin{align}\label{Q2-simple}
    & \QC_2^{res}(g;\lambda) = (2\lambda\!+\!n\!-\!3) \lambda \notag \\
    & \times \left[ (2\lambda\!+\!n\!-\!2) \iota^* Q_2(g) + (2\lambda\!+\!n\!-\!1) \QC_2(g)
   + 2\left(\lambda\!+\!\frac{n\!-\!1}{2}\right) \left(\lambda\!+\!\frac{n\!-\!2}{2}\right) H^2 \right].
\end{align}
\end{lem}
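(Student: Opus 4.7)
The plan is to derive \eqref{Q2-simple} as an immediate consequence of the reformulation \eqref{L2-nice} of $\iota^* L(\lambda-1) L(\lambda)$ by evaluating both sides on the constant function $1$. Since $\D_2^{res}(g;\lambda)\circ \eta^* = \iota^* L_2(g,\sigma;\lambda) = \iota^* L(\lambda-1) L(\lambda)$ by Theorem \ref{residue-product}, and $\eta^*(1)=1$, we have $\QC_2^{res}(g;\lambda) = \iota^* L(\lambda-1) L(\lambda) (1)$, so it suffices to apply the right-hand side of \eqref{L2-nice} to the constant function $1$.

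First I would record the three simple evaluations that make the calculation collapse: $\iota^* P_2(g)(1) = -\tfrac{n-1}{2}\iota^* Q_2(g)$ by the conventions recalled after \eqref{L2-nice}; $\PO_2(g)\iota^*(1) = \tfrac{n-2}{2}\QC_2(g)$ by the defining relation of $\QC_2$; and $H\iota^*\nabla_\NV(1) = 0$, which kills the first-order normal derivative term. The final zeroth-order bracket simply produces $\QC_2(g) + \iota^* Q_2(g) + \lambda H^2$. Substituting these into \eqref{L2-nice} yields
\[
   \QC_2^{res}(g;\lambda) = (2\lambda+n-3)\Bigl[ -\tfrac{n-1}{2}(2\lambda+n-2)\iota^* Q_2(g) - \tfrac{n-2}{2}(2\lambda+n-1)\QC_2(g) + A_\lambda \bigl(\QC_2(g)+\iota^* Q_2(g)+\lambda H^2\bigr)\Bigr],
\]
where $A_\lambda \st 2(\lambda+\tfrac{n-1}{2})(\lambda+\tfrac{n-2}{2}) = \tfrac{1}{2}(2\lambda+n-1)(2\lambda+n-2)$.

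The remaining work is pure algebra: collect the coefficients of $\iota^* Q_2(g)$, $\QC_2(g)$ and $H^2$ inside the brackets. For $\iota^* Q_2(g)$, the coefficient is $\tfrac{1}{2}(2\lambda+n-2)\bigl[-(n-1)+(2\lambda+n-1)\bigr] = \lambda(2\lambda+n-2)$; for $\QC_2(g)$, it is $\tfrac{1}{2}(2\lambda+n-1)\bigl[-(n-2)+(2\lambda+n-2)\bigr] = \lambda(2\lambda+n-1)$; and the coefficient of $H^2$ is already $\lambda A_\lambda$. Factoring the common $\lambda$ out of the bracket produces precisely the right-hand side of \eqref{Q2-simple}.

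There is no real obstacle here beyond bookkeeping: the only subtlety is the sign convention that distinguishes $P_2(g)(1)$ from $\PO_2(g)(1)$, which must be tracked carefully so that the simplification $-(n-1)+(2\lambda+n-1) = 2\lambda$ and $-(n-2)+(2\lambda+n-2) = 2\lambda$ produces the clean factor $\lambda$ that is manifestly responsible for the trivial zero $\QC_2^{res}(g;0)=0$ (compatible with Conjecture \ref{van-Q} for $N=2$, cf.\ \eqref{Q2-van}). As consistency checks, I would verify the two special values $\lambda = -\tfrac{n}{2}+1$ and $\lambda = -\tfrac{n-1}{2}$: the former recovers $\QC_2^{res}(g;-\tfrac{n}{2}+1) = \tfrac{n-2}{2}\QC_2(g) = \PO_2(g)(1)$, and the latter gives $\QC_2^{res}(g;-\tfrac{n-1}{2}) = -\tfrac{n-1}{2}\iota^* Q_2(g) = \iota^* P_2(g)(1)$, matching \eqref{D-res-power} and \eqref{factor-big}, respectively.
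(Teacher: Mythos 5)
Your derivation is correct and follows exactly the route the paper intends: apply \eqref{L2-nice} to the constant function $1$ (which kills the normal-derivative term), use $P_2(g)(1)=-\tfrac{n-1}{2}Q_2(g)$ and $\PO_2(g)(1)=\tfrac{n-2}{2}\QC_2(g)$, and collect coefficients. One small slip in your final consistency check: from \eqref{factor-big} one has $\D_2^{res}\bigl(g;\tfrac{-n+1}{2}\bigr)(1) = 2\,\iota^* P_2(g)(1) = -(n-1)\,\iota^* Q_2(g)$, not $-\tfrac{n-1}{2}\iota^* Q_2(g)$; your formula \eqref{Q2-simple} does produce the correct value $-(n-1)\iota^* Q_2(g)$ at $\lambda=-\tfrac{n-1}{2}$, so the check passes — you merely dropped the factor $2$ when stating what it should equal.
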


This result clearly implies \eqref{Q2-van}. It suggests considering the quadratic polynomial in brackets
as the actual interesting object. In the critical dimension $n=2$, we also find $\dot{\D}_2^{res}(0)(1)
= \dot{\QC}_2^{res}(0) = -\QC_2$.

One should compare Lemma \ref{Q2-curvature-simple} with \cite[(6.4.11)]{J1}.

Note that in the Poincar\'e-Einstein case (i.e., if $g = r^2 g_+$), it holds $\iota^* Q_2(g) = \iota^* \J^g
= \J^h = Q_2(h)$, $\QC_2(g) = - Q_2(h)$ (Remark \ref{Q-GJMS}) and $H=0$. Hence \eqref{Q2-simple}
reduces to
$$
    (2\lambda\!+\!n\!-\!3) \lambda \left[(2\lambda\!+\!n\!-\!2) Q_2(h) - (2\lambda\!+\!n\!-\!1) \QC_2(h) \right]
    = (2\lambda\!+\!n\!-\!3) \lambda Q_2(h)
$$
(see also Remark \ref{rel-def}).

\index{$\QC_N^{res}(\lambda)$}

It is an open problem whether for $N \ge 3$ the $Q$-curvature polynomial
$\QC_N^{res}(\lambda) \st \D_N^{res}(\lambda)(1)$ similarly can be reduced to a lower degree
polynomial and whether these polynomials admit a recursive description as in the Poincar\'e-Einstein case
\cite{J2, J4}.

Next, we determine the leading part of $\PO_3$ from the leading part of the solution
operator $\T_3(\lambda)$. $\PO_3$ is an operator of second-order. Let $\LT(\PO_3)$
denote its terms that contain one or two derivatives. The same notation will be
used for other second-order operators.

\begin{lem}\label{LT-P3} It holds
$$
   \LT( \Res_{\lambda=\frac{n-3}{2}}(\T_3(\lambda)) = \frac{1}{3} \delta (\lo d).
$$
Hence
$$
   \LT(\PO_3) = 8 \delta (\lo d).
$$
\end{lem}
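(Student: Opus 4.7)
The plan is to compute the second-order part of $\T_3(\lambda)$ directly from the eigenfunction ansatz $u \sim \sum_{N \ge 0} s^{\lambda+N} \T_N(\lambda)(f)$ inserted into the equation $-\Delta_{s^{-2}\eta^*(g)} u = \lambda(n-\lambda) u$ in adapted coordinates, and then extract the residue at $\lambda = (n-3)/2$. The inputs I would use are the formula \eqref{L-op} for the Laplacian, the expansions $a = 1 + 2Hs + \cdots$ and $\tr(h_s^{-1}h_s') = 2nH + \cdots$, the variation formula \eqref{var-1} for $\Delta_h'$, and the already-computed solution operators $\T_0 = \id$, $\T_1(\lambda) = -\lambda H$ (Lemma \ref{sol-1}), and $\T_2(\lambda)$ whose second-order part is $-\Delta/[2(2\lambda-n+2)]$ (Lemma \ref{sol-2}).

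Matching the coefficients of $s^{\lambda+3}$ and collecting all derivative-containing contributions requires a careful accounting of five sources of $H\Delta$ terms: from $a_1 \T_2$ in the $\partial_s^2$ term, from $\tr(h_s^{-1}h_s')\T_2$ and $a'\T_2$ in the $\partial_s$ pieces, from $a_1\T_3$ in the $-(n-1)a\partial_s$ term, from $\Delta_h \T_1 = -\lambda \Delta(H\,\cdot)$, and from $\Delta_h'\T_0$. After simplification one arrives at
\begin{equation*}
3(2\lambda-n+3)\,\LT(\T_3(\lambda)) \;=\; \frac{3(\lambda+2)(2\lambda-n+3)}{2(2\lambda-n+2)}\,H\Delta \;+\; 2\,\delta(\lo d),
\end{equation*}
modulo operators of order $\le 0$. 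The decisive structural feature is the factor $(2\lambda-n+3)$ appearing in the numerator of the $H\Delta$ coefficient.

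Dividing through by $3(2\lambda-n+3)$, the $H\Delta$ coefficient of $\LT(\T_3(\lambda))$ becomes $(\lambda+2)/[2(2\lambda-n+2)]$, which is regular at $\lambda = (n-3)/2$ (taking the finite value $-(n+1)/4$), while the $\delta(\lo d)$ coefficient $2/[3(2\lambda-n+3)]$ has a simple pole there with residue $\tfrac{1}{3}$. Consequently $\Res_{\lambda=(n-3)/2}\LT(\T_3(\lambda)) = \tfrac{1}{3}\,\delta(\lo d)$, which is the first claimed formula. The second formula $\LT(\PO_3) = 8\,\delta(\lo d)$ then follows from Theorem \ref{power-spec} applied with $N=3$, namely $\PO_3 = 24\,\Res_{\lambda=(n-3)/2}\T_3^*(\lambda)$, combined with the self-adjointness of $\delta(\lo d)$ (since $\lo$ is a symmetric bilinear form).

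The main obstacle is ensuring that the algebra actually produces the crucial $(2\lambda-n+3)$ factor in the $H\Delta$ coefficient. Note that a naive grouping via the preliminary recursion \eqref{RR-odd} obscures this cancellation, so one has to work directly with the unsimplified expansion to expose it. This explicit computation realizes, in the case $N=3$, the structural claim appearing in the proof of Proposition \ref{LT-P-odd-n} that the $H\Delta$ and $(dH,d\,\cdot)$ terms do not contribute to $\LT$, and it does so without invoking the conformal covariance of $\PO_3$.
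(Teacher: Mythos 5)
Your proposal is correct and follows essentially the same route as the paper's proof: you derive the recursion for the coefficient of $s^{\lambda+3}$ from the expansion \eqref{L-op}, insert the known leading parts of $\T_1$, $\T_2$ and $\Delta'$ from \eqref{var-1}, and observe that the $H\Delta$ (and $(dH,d\cdot)$) contributions carry an overall factor $(2\lambda-n+3)$ that cancels against the prefactor of $\T_3$, so only the $\delta(\lo d)$ piece of $\Delta'$ survives the residue; the step to $\PO_3$ via Theorem \ref{power-spec} and self-adjointness of $\delta(\lo d)$ is also exactly what the paper does. Two small inaccuracies: your displayed intermediate identity should carry an additional term $(2\lambda-n+3)(dH,d\cdot)_h$ on the right-hand side — this is a first-order operator, so it cannot be hidden under "modulo operators of order $\le 0$," though it drops out of the residue by the same mechanism you invoke for $H\Delta$, and you do acknowledge this in your last paragraph; also the source "$a_1\T_3$" in your list should read "$a_1\T_2$," and the remark that \eqref{RR-odd} "obscures" the cancellation is not quite right, since the $N=3$ instance of \eqref{RR-odd} is precisely the relation from which both the paper's proof and your own simplification proceed.
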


\begin{proof}
The leading part of the solution operator $\T_3(\lambda)$ is determined by the equation
\begin{align*}
  & 3(2\lambda\!-\!n\!+\!3) \LT(\T_3(\lambda)) + 2 (\lambda\!+\!1)(\lambda\!+\!2) H \LT(\T_2(\lambda))
  + (\lambda+2) \tr(L) \LT(\T_2(\lambda)) \\ & + (\lambda+2) H \LT(\T_2(\lambda))
  - 2 (n\!-\!1)(\lambda+2) H \LT(\T_2(\lambda)) - (dH,d\cdot)_h \\
  & + \LT(\Delta \T_1(\lambda)) + \Delta_h' = 0,
\end{align*}
where $\Delta_{h_s} = \Delta_h + s \Delta_h' + \frac{s^2}{2!} \Delta_h'' + \cdots$. Simplification yields
\begin{align*}
   & -3(2\lambda\!-\!n\!+\!3) \LT(\T_3(\lambda)) \\
  & = (\lambda\!+\!2)(2\lambda\!-\!n\!+\!5) H \LT(\T_2(\lambda)) - (dH,d\cdot)_h + \LT(\Delta \T_1(\lambda)) + \Delta'.
\end{align*}
But the first variation $\Delta'$ of the Laplacian with respect to the
variation $h_s$ of $h$ is given by \eqref{var-1}. Using $\LT( \T_2(\frac{n-3}{2}))= \frac{1}{2} \Delta$ (Lemma \ref{sol-2}),
it follows that $$-6 \LT( \Res_{\lambda=\frac{n-3}{2}}(\T_3(\lambda))$$ equals
\begin{align*}
   & \frac{n\!+\!1}{2} H \Delta - (dH,d\cdot)_h - \frac{n\!-\!3}{2} \LT(\Delta \circ H) -2H \Delta + (n\!-\!2)
   (dH,d\cdot)_h - 2 \delta (\lo d) \\
   & = -(dH,d\cdot)_h - (n\!-\!3) (dH,d\cdot)_h + (n\!-\!2) (dH,d\cdot)_h - 2 \delta (\lo d) \\
   & = - 2 \delta (\lo d).
\end{align*}
This completes the proof.
\end{proof}

Similar arguments prove

\begin{lem}\label{LT-P5} It holds
$$
  \LT( \Res_{\lambda=\frac{n-5}{2}}(\T_5(\lambda))
  = \frac{1}{30} (\Delta \delta (\lo d) + \delta(\lo ) \Delta).
$$
Hence
$$
   \LT(\PO_5) = 192 (\Delta \delta (\lo d) + \delta(\lo d) \Delta).
$$
\end{lem}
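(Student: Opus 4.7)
The plan is to specialize the machinery developed in the proof of Proposition \ref{LT-P-odd-n} to the case $N = 5$, where the summation range $r \in \{0, \ldots, (N-3)/2\}$ reduces to just $r = 0$ and $r = 1$, producing exactly the two terms $\delta(\lo d)\,\Delta$ and $\Delta\,\delta(\lo d)$ in the leading part.

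First I would extract the $\lo$-dependent leading part $\mathring{\T}_5(\lambda)$ from the recursion \eqref{RR-odd}, modulo operators of order $\le 2$ and modulo terms involving $H$ or $dH$. Since $\T_4(\lambda)$ is essentially a multiple of $\Delta^2$ by Proposition \ref{LT-P-even} and contains no $\lo$, the only $\lo$-contributions come from $\Delta\,\mathring{\T}_3(\lambda)$ and $\Delta'_h\,\T_2(\lambda)$. Using $\Delta'_h = -2\delta(\lo d) + \text{($H$-terms)}$ from \eqref{Delta-var}, the known leading parts $\LT(\T_2(\lambda)) = -\Delta/a_2(\lambda)$ (Lemma \ref{sol-2}) and $\mathring{\T}_3(\lambda) = (2/a_3(\lambda))\,\delta(\lo d)$ (extracted from the proof of Lemma \ref{LT-P3}), with $a_N(\lambda) \st N(2\lambda - n + N)$, the recursion \eqref{RR-odd-L} for $N = 5$ then yields
\begin{equation*}
a_5(\lambda)\,\mathring{\T}_5(\lambda) \;=\; -\frac{2}{a_3(\lambda)}\,\Delta\,\delta(\lo d) \;-\; \frac{2}{a_2(\lambda)}\,\delta(\lo d)\,\Delta .
\end{equation*}

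Next I would evaluate the residue at $\lambda = (n-5)/2$. There $a_5$ has a simple zero of derivative $10$, while $a_2((n-5)/2) = a_3((n-5)/2) = -6$, so the residue formula gives
\begin{equation*}
\LT\bigl(\Res_{\lambda=(n-5)/2}(\T_5(\lambda))\bigr) \;=\; \tfrac{-2}{10}\cdot\tfrac{-1}{6}\,\bigl(\Delta\,\delta(\lo d) + \delta(\lo d)\,\Delta\bigr) \;=\; \tfrac{1}{30}\bigl(\Delta\,\delta(\lo d) + \delta(\lo d)\,\Delta\bigr).
\end{equation*}
Finally, Theorem \ref{power-spec} with $N = 5$ gives $\PO_5 = (-1)^{4}\cdot 2\cdot 4!\cdot 5!\cdot\Res_{\lambda=(n-5)/2}(\T_5^*(\lambda))$; since $\delta(\lo d)$ is self-adjoint (with $\lo$ symmetric and $\Delta$ self-adjoint), the sum $\Delta\,\delta(\lo d) + \delta(\lo d)\,\Delta$ equals its own adjoint, and the numerical factor $2 \cdot 4! \cdot 5!/30 = 5760/30 = 192$ produces the claimed formula for $\LT(\PO_5)$.

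The main obstacle is ruling out additional second-order contributions to $\LT(\PO_5)$ involving $H$, $dH$, or products of lower-order curvature data arising through the terms $H\,\T_{N-1}(\lambda)$, $(dH, d\T_{N-3}(\lambda))_h$, and the higher Taylor coefficients of $a = 1 - 2s\rho$ and $h_s$ in \eqref{L-op}. As at the end of the proof of Proposition \ref{LT-P-odd-n}, this is resolved by invoking the conformal covariance of $\PO_5$: any standalone term of the form $H\Delta^2$, $\Delta H\,\Delta$, or $(dH, d\Delta\cdot)_h$ in $\LT(\PO_5)$ is not conformally covariant on its own and cannot be balanced by pure-$\lo$ contributions, hence must vanish. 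Once this verification is in place, the specialization of Proposition \ref{LT-P-odd-n} to $N = 5$ yields precisely the two displayed identities.
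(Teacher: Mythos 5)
Your proposal is correct and takes essentially the same route as the paper, which simply refers to the computation in the proofs of Proposition \ref{LT-P-odd-n} and Lemma \ref{LT-P3}: you extract the $\lo$-part $\mathring{\T}_5$ from the recursion, evaluate $a_2, a_3$ at $\lambda=(n-5)/2$ and divide by $a_5'=10$ to get the $1/30$, then apply Theorem \ref{power-spec} and self-adjointness of $\delta(\lo d)$. All the arithmetic ($a_2=a_3=-6$, $2\cdot4!\cdot5!/30=192$) checks out, and appealing to conformal covariance of $\PO_5$ to kill the $H$-terms is the same device used at the end of the proof of Proposition \ref{LT-P-odd-n}.
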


These results are special cases of Proposition \ref{LT-P-odd}.

\subsection{Low order cases of Theorem \ref{w-holo}}\label{w-LR}


The following result illustrates Theorem \ref{w-holo} for $k=1,2$. 

\begin{lem} Assume that $\sigma$ satisfies $\SCY$. Then
\begin{equation*}
   \iota^* L(-n\!+\!1)  \left( \left( \frac{r}{\sigma} \right)^{n-1} \right) = - \frac{(n\!-\!1)^2}{2} H
\end{equation*}
for $n \ge 1$ and
\begin{align*}
   & \iota^* L(-n\!+\!1) L(-n\!+\!2) \left( \left( \frac{r}{\sigma} \right)^{n-2} \right) \\
   & = \frac{(n\!-\!1)(n\!-\!2)}{6} \left( -\frac{n\!-\!5}{n\!-\!1} |\lo|^2 - 2 (n\!-\!2) \iota^* \J^g
   + 2 (n\!-\!5) \J^h + \frac{(n\!-\!2)(n\!-\!3)}{2} H^2 \right)
\end{align*}
for $n \ge 2$. Here $L(\lambda)$ is short for $L(g,\sigma;\lambda)$.
\end{lem}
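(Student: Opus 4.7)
The plan is to verify both identities by direct computation in geodesic normal coordinates, exploiting the explicit Taylor data provided by Lemma \ref{sigma23} for $\sigma$ and Lemma \ref{rho-01} for $\rho$ and its first normal derivative. First I would write $g = dr^2 + h_r$ with $r = d_M$, recall that $\sigma$ admits the expansion $\sigma = r + \sigma_{(2)}r^2 + \sigma_{(3)}r^3 + O(r^4)$ with $2\sigma_{(2)} = H$ and $6\sigma_{(3)} = -2\Rho_{00} - \frac{2}{n-1}|\lo|^2$, and set $f := r/\sigma = 1 - \sigma_{(2)}r + (\sigma_{(2)}^2 - \sigma_{(3)})r^2 + O(r^3)$.

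The central observation is a boundary reduction of $L(\lambda)$: since $\iota^*(\sigma) = 0$ and the tangential components of $\grad_g(\sigma)$ vanish on $M$ (as $\partial_j\sigma = O(r^2)$ for tangential $j$), while $\iota^*\rho = -H$, one has
\begin{equation*}
   \iota^* L(\lambda)(u) = (n+2\lambda-1)\bigl[\iota^*(\partial_r u) - \lambda H \iota^*(u)\bigr]
\end{equation*}
for every $u \in C^\infty(X)$. For the first formula, substituting $u = f^{n-1}$ (so $\iota^*(u) = 1$ and $\iota^*(\partial_r u) = -(n-1)\sigma_{(2)} = -\tfrac{n-1}{2}H$) at $\lambda = -n+1$, with prefactor $-(n-1)$, immediately yields $-\tfrac{(n-1)^2}{2}H$.

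For the second formula, write $v := f^{n-2}$, let $u := L(-n+2)(v)$, and apply the boundary reduction again with $\lambda = -n+1$. I would compute $\iota^*(u)$ via the same reduction at $\lambda = -n+2$ and then determine $\iota^*(\partial_r u)$ by differentiating the full expression $L(-n+2)(v) = -(n-3)[\nabla_{\grad\sigma}(v) - (n-2)\rho v] - \sigma[\Delta_g v - (n-2)\J v]$ in $r$ and restricting to $r=0$. The ingredients required are the second-order Taylor coefficient $v_{rr}|_0 = (n-2)(n-1)\sigma_{(2)}^2 - 2(n-2)\sigma_{(3)}$, the normal derivative $\rho_r|_0 = \Rho_{00} + |\lo|^2/(n-1)$ from Lemma \ref{rho-01}, the Laplacian $\iota^*(\Delta_g v) = v_{rr}|_0 + nH\,v_r|_0$ obtained from $\Delta_g = \partial_r^2 + \frac{1}{2}\tr(h_r^{-1}h_r')\partial_r + \Delta_{h_r}$ and $\tr(h_{(1)}) = 2nH$, together with $\iota^*(\J) = \iota^*\J^g$. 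Collecting yields a linear combination of $\Rho_{00}$, $|\lo|^2$, $H^2$ and $\iota^*\J^g$, after which the Gauss identity \eqref{G2} is used to trade $\Rho_{00}$ for $\iota^*\J^g$, $\J^h$, $|\lo|^2$ and $H^2$, bringing the expression into the claimed form.

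The main obstacle is the bookkeeping of the $H^2$ terms. Three independent sources contribute to this coefficient, namely the contribution of $v_{rr}|_0$, the cross term $\sigma_{rr}(0)\cdot v_r|_0 = H\cdot[-(n-2)H/2]$ from $\partial_r\nabla_{\grad\sigma}(v)$, and the trace term $\frac{1}{2}\tr(h_{(1)})\cdot v_r|_0$ inside $\iota^*(\Delta_g v)$; the required proportionality $\tfrac{(n-2)(n-3)}{2}$ inside the bracketed expression only emerges after substituting the $H^2$ piece of the Gauss identity and combining with the $-(n-1)^2 H\,\iota^*(u)$ term from the outer application of $L(-n+1)$. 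The $|\lo|^2$, $\iota^*\J^g$, and $\J^h$ coefficients, by contrast, assemble cleanly once $\Rho_{00}$ is eliminated.
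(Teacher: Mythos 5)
Your proposal is correct and takes essentially the same route as the paper's proof: both work in geodesic normal coordinates, expand $(r/\sigma)^{n-k}$ in powers of $r$ using Lemma \ref{sigma23}, invoke Lemma \ref{rho-01} for $\rho_0$ and $\rho_r|_0$, and finish with the Gauss identity \eqref{G2}. Your explicit boundary-reduction identity $\iota^* L(\lambda)(u) = (n+2\lambda-1)[\iota^*(\partial_r u) - \lambda H\iota^*(u)]$ is precisely what the paper uses when it writes $\iota^* L(-n+1) = -(n-1)\iota^*(\partial_r + (n-1)H)$; the only cosmetic difference is that you track $\iota^*(u)$ and $\iota^*(\partial_r u)$ for $u = L(-n+2)(f^{n-2})$ as a whole, whereas the paper applies $\iota^* L(-n+1)L(-n+2)$ separately to the $r$-monomials $1, \sigma_{(2)} r, \sigma_{(3)} r^2, \sigma_{(2)}^2 r^2$ and sums. (One small caution for the write-up: your list of $H^2$ sources omits the $\rho|_0\, v_r|_0 = \tfrac{n-2}{2}H^2$ piece coming from $\partial_r(\rho v)|_0$ inside $L(-n+2)$; it is needed to obtain the coefficient $\tfrac{(n-2)(n-3)}{2}$.)
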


By \cite[(4.5)]{G-vol}, it holds $w_1 = \frac{n-1}{2} H$ and the right-hand side of the second identity
equals $2(n\!-\!1)(n\!-\!2) w_2$. Hence
$$
   w_1 = - \frac{1}{n-1} \iota^* L(-n\!+\!1)\left( \left( \frac{r}{\sigma} \right)^{n-1} \right)
$$
for $n \ge 2$ and
$$
   w_2
  = \frac{1}{2(n\!-\!1)(n\!-\!2)} \iota^* L(-n\!+\!1) L(-n\!+\!2) \left( \left(\frac{r}{\sigma} \right)^{n-2} \right)
$$
for $n \ge 3$.

\begin{proof} We expand in geodesic normal coordinates. First, we note that
$$
   \left( \frac{r}{\sigma} \right)^{n-1} = 1 - (n\!-\!1) \sigma_{(2)} r + \cdots
$$
and recall that $\sigma_{(2)} = \frac{1}{2} H$. Now we calculate
\begin{align*}
   \iota^* L(-n\!+\!1) (1) = -(n\!-\!1)^2 H \quad \mbox{and} \quad
   \iota^* L(-n\!+\!1) (H r) = - (n\!-\!1) H
\end{align*}
using $\rho_0 = -H$. This proves the first identity. Similarly, we find
$$
   \left( \frac{r}{\sigma} \right)^{n-2} = 1 - (n\!-\!2) \sigma_{(2)} r +
   \left(-(n\!-\!2) \sigma_{(3)} + \binom{n\!-\!1}{2} \sigma_{(2)}^2\right) r^2 + \cdots.
$$
Now $\grad_g(\sigma) = \partial_r + H r \partial_r + \cdots$ and we obtain
\begin{align*}
   & \iota^* L(-n\!+\!1)L(-n\!+\!2)(1) \\
   & = -(n\!-\!1) \iota^* (\partial_r + (n\!-\!1) H) ((-n\!+\!3)
   (\partial_r + H r \partial_r - (n\!-\!2) \rho) - r (\Delta_g - (n\!-\!2) \J^g))(1) \\
   & = -(n\!-\!1)(n\!-\!2) ((n\!-\!3) \partial_r (\rho)|_0 + \J^g|_0 - (n\!-\!1)(n\!-\!3) H^2).
\end{align*}
Note that this result coincides with $2(n\!-\!1)(n\!-\!2)v_2$, where $v_2$ is as in \eqref{v2n}.
Moreover, we get
\begin{align*}
   \iota^* L(-n\!+\!1)L(-n\!+\!2)(\sigma_{(3)} r^2) &
   =  (n\!-\!1)(n\!-\!2) (-2(n-3) \sigma_{(3)} - 2 \sigma_{(3)}) \\
   & = 2(n\!-\!1)(n\!-\!2) \sigma_{(3)}
\end{align*}
and
\begin{equation*}
    \iota^* L(-n\!+\!1)L(-n\!+\!2)(\sigma_{(2)}^2 r^2) = 2(n\!-\!1)(n\!-\!2) \sigma_{(2)}^2.
\end{equation*}
Finally, we obtain
$$
   \iota^* L(-n\!+\!1)L(-n\!+\!2)(\sigma_{(2)} r) = \frac{1}{2}(n\!-\!1)(n\!-\!2)(2n\!-\!3) H^2
$$
using
$$
   \Delta_g (\sigma_{(2)} r) = \frac{1}{2} \tr (h_r^{-1} h_r') \sigma_{(2)} + r \Delta_h (\sigma_{(2)})
   = n H \sigma_{(2)} + O(r).
$$
We omit the details. Note that only in the latter calculation the contribution $H r\partial_r$ in $\grad_g(\sigma)$
plays a role. Now, using Lemma \ref{g12}, Lemma \ref{sigma23} and Lemma \ref{rho-01}, these results imply
the second assertion.
\end{proof}



\printindex

\end{document}